\definecolor{Nord0}{HTML}{2e3440}
\definecolor{Nord4}{HTML}{d8dee9}
\definecolor{Nord7}{HTML}{8fbcbb}
\definecolor{Nord8}{HTML}{88c0d0}
\definecolor{Nord9}{HTML}{81a1c1}
\definecolor{Nord10}{HTML}{5e81ac}
\definecolor{Nord11}{HTML}{bf616a}
\definecolor{Nord12}{HTML}{d08770}
\definecolor{Nord13}{HTML}{ebcb8b}
\definecolor{Nord14}{HTML}{a3be8c} 
\definecolor{Nord15}{HTML}{b48ead} 
\theoremstyle{definition}
\newtheorem{theorem}{Theorem}[section]
\newtheorem{proposition}[theorem]{Proposition}
\newtheorem{corollary}[theorem]{Corollary}
\newtheorem{lemma}[theorem]{Lemma}
\newtheorem{definition}[theorem]{Definition}
\newtheorem{notation}[theorem]{Notation}
\newtheorem{assumption}[theorem]{Assumption}
\newtheorem{convention}[theorem]{Convention}
\newtheorem{remark}[theorem]{Remark}
\newtheorem{claim}{Claim}[theorem]
\newcommand{\mc}{\mathcal}
\newcommand{\PP}{\mathbb{P}}
\renewcommand{\a}{\alpha}
\renewcommand{\b}{\beta}
\newcommand{\e}{\varepsilon}
\newcommand{\UU}{\mc{U}}
\newcommand{\VV}{\mc{V}}
\newcommand{\WW}{\mc{W}}
\renewcommand{\hat}{\widehat}
\newcommand{\total}[1]{\overline{X_{#1}}}
\newcommand{\hatt}[1]{\widehat{X_{#1}}}
\newcommand{\hatto}[1]{\widehat{X^\circ_{#1}}}
\newcommand{\Geod}[0]{\mathrm{Geod}}
\newcommand{\diam}[0]{\mathrm{diam}}
\newcommand{\XX}[0]{\mathcal{X}}
\newcommand{\YY}[0]{\mathcal{Y}}
\newcommand{\OO}[0]{\mathcal{O}}
\newcommand{\internal}[1]{X_{#1}^\circ}
\newcommand{\tCone}[0]{\fitTilde{\Cone}}
\newcommand{\pCone}[0]{\mathrm{pCone}}
\newcommand{\Cone}[0]{\mathrm{Cone}}
\newcommand{\fitTilde}[1]{\ThisStyle{%
  \setbox0=\hbox{$\SavedStyle#1$}%
  \stackengine{-.1\LMpt}{$\SavedStyle#1$}{%
    \stretchto{\scaleto{\SavedStyle\mkern.2mu\AC}{.5150\wd0}}{.6\ht0}%
  }{O}{c}{F}{T}{S}%
}}
\newcommand{\llangle}[1][]{\savebox{\@brx}{\(\m@th{#1\langle}\)}%
  \mathopen{\copy\@brx\mkern2mu\kern-0.9\wd\@brx\usebox{\@brx}}}
\newcommand{\rrangle}[1][]{\savebox{\@brx}{\(\m@th{#1\rangle}\)}%
  \mathclose{\copy\@brx\mkern2mu\kern-0.9\wd\@brx\usebox{\@brx}}}
\title{Combining relatively hyperbolic groups over a complex of groups}
\author{Darius Alizadeh}
\begin{document}
\maketitle

\begin{abstract}
Given a complex of groups $G(\YY) = (G_\sigma, \psi_a, g_{a,b})$ where all $G_\sigma$ are relatively hyperbolic, the $\psi_a$ are inclusions of full relatively quasiconvex subgroups, and the universal cover $X$ is CAT$(0)$ and $\delta$--hyperbolic, we show $\pi_1(G(\YY))$ is relatively hyperbolic. The proof extends the work of Dahmani and Martin by constructing a model for the Bowditch boundary of $\pi_1(G(\YY))$. We prove the model is a compact metrizable space on which $G$ acts as a geometrically finite convergence group, and a theorem of Yaman then implies the result. More generally, this model shows how any suitable action of a relatively hyperbolic group on a simply connected cell complex encodes a decomposition of the Bowditch boundary into the boundary of the cell complex and the boundaries of cell stabilizers. We hope this decomposition will be helpful in answering topological questions about Bowditch boundaries.

\end{abstract}


\tableofcontents
\newpage

\section{Introduction}
\label{sec:introduction}

This work is another chapter in the story of combination theorems which began with \cite{BestvinaFeighn}, where Bestvina and Feighn give conditions on a graph of hyperbolic spaces which guarantee the resulting space is itself hyperbolic. Their conditions deal directly with the metric on the resulting space, and this metric approach has been elaborated on in \cite{alibegovic}, \cite{Mj_2008} with applications to limit groups and relatively hyperbolic groups. An alternative, dynamical approach was opened by a theorem of Bowditch in \cite{Bowditch1998}, which equates geometrically finite convergence groups with relatively hyperbolic groups. Dahmani \cite{Dahmani_2003} applied this theorem to prove a combination theorem for graphs of relatively hyperbolic groups and more. He combined Bowditch boundaries over the Bass--Serre tree to build a compact metrizable space on which the fundamental group acts as a geometrically finite convergence group, then applied Bowditch's theorem to conclude the fundamental group was relatively hyperbolic. Martin \cite{Martin} pushed this dynamical strategy into arbitrary dimensions by combining boundaries of hyperbolic groups over the development of a complex of groups, and we extend Martin's work to relatively hyperbolic groups. 

Recall from \cite{BH}[I.7] that for $\kappa \in \mathbb{R}$, an $M_\kappa$--complex is a cell complex is where each $n$--cell is modeled on a simplex in the model space $M^n_\kappa$, which is the unique simply--connected Riemannian manifold of dimension $n$ with constant curvature $\kappa$. For example, a cube complex is an $M_0$--complex. See \thref{defn of M_kappa} for details. Here is our main theorem.

\begin{restatable}{theorem}{MainTheorem}\label{main theorem complex of groups}
    Let $G(\YY) = (G_\sigma,\psi_a,g_{a,b})$ be a nonpositively curved developable complex of groups over a scwol $\YY$, where each $G_\sigma$ is a relatively hyperbolic group and each $\psi_a$ is the inclusion of a full relatively quasiconvex subgroup. Let $\XX$ be the universal cover of $G(\YY)$, and let $X$ be the geometric realization of $\XX$ equipped with an $M_\kappa$ structure. Suppose $X$ is $\delta$--hyperbolic and the action of $G = \pi_1(G(\YY))$ on $X$ is acylindrical. Then $G$ is relatively hyperbolic. The maximal parabolic subgroups of $G$ are virtually parabolic subgroups of vertex stabilizers, and the stabilizer of each simplex in $X$ is a full relatively quasiconvex subgroup of $G$. 
\end{restatable}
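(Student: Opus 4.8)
The plan is to follow Bowditch's characterization: produce a compact metrizable space $\partial G$ on which $G = \pi_1(G(\YY))$ acts as a geometrically finite convergence group with the prescribed parabolics, and then invoke Yaman's theorem. The space $\partial G$ will be assembled from the Bowditch boundaries $\partial G_\sigma$ of the vertex groups glued along the boundaries of the quasiconvex edge groups, together with a copy of $\partial X$ recording the "direction at infinity" inside the complex $\XX$. Concretely, I would build a candidate boundary as a quotient of a disjoint union $\partial X \sqcup \bigsqcup_{v} (G \times_{G_v} \partial G_v)$, where the gluing identifies the boundary of a full relatively quasiconvex subgroup $\psi_a(G_b) \leq G_a$ (which, being full relatively quasiconvex, has a well-defined limit set embedding $\partial G_b \into \partial G_a$) with the corresponding subset coming from the adjacent vertex, and identifies parabolic points across incident cells consistently. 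A point of $\partial X$ is approached by an unbounded geodesic ray in $X$; the key is that along such a ray one passes through infinitely many cells, and the induced sequence of cell stabilizers shrinks, so the ray determines a single point of $\partial G$ rather than a coset. This is precisely the "decomposition of the Bowditch boundary into the boundary of the cell complex and the boundaries of cell stabilizers" advertised in the abstract.

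The main steps, in order, are: (1) set up the topology on $\partial G$ — the cell-stabilizer pieces are compact, and one topologizes neighborhoods of points in $\partial X$ using the combinatorial/CAT$(\kappa)$ structure of $X$, declaring that a neighborhood of $\xi \in \partial X$ contains the union of the boundary pieces attached to cells "beyond" a large ball, as in Martin's construction but with each $\partial G_\sigma$ in place of a point; (2) prove $\partial G$ is compact, Hausdorff, and second countable (hence metrizable), the delicate point being compactness, which uses $\delta$-hyperbolicity of $X$ and properness of the boundary pieces, together with a diagonal argument showing any sequence either concentrates in finitely many cells (reducing to compactness of the $\partial G_\sigma$) or escapes to $\partial X$; (3) define the $G$-action and verify it is a convergence action — given an infinite sequence $g_n \in G$, one analyzes the image cells $g_n \sigma_0$: if they escape in $X$ one gets north-south dynamics from $\partial X$ and acylindricity controls the stabilizers, while if they stay in a bounded region one passes to a vertex group and uses that $G_\sigma$ already acts as a convergence group on $\partial G_\sigma$; (4) identify the conical limit points and bounded parabolic points and check geometric finiteness, with the parabolics being the conjugates of the (virtually) parabolic subgroups of the $G_\sigma$ that are not "absorbed" into larger hyperbolic pieces along the complex; (5) conclude via Yaman that $G$ is relatively hyperbolic, and finally verify the last two sentences — that maximal parabolics are virtually parabolic in vertex stabilizers (read off from step 4) and that each simplex stabilizer is full relatively quasiconvex in $G$ (its limit set in $\partial G$ is exactly the attached boundary piece, which is closed and whose complement is a disjoint union of open pieces, giving quasiconvexity, and fullness follows because the peripheral structure was chosen to match).

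The hard part will be step (2)–(3): proving that the glued space is genuinely compact and that the $G$-action is a convergence action when sequences of group elements move cells around in $X$ without escaping to infinity. This is where acylindricity of the action on $X$ is essential — it prevents a sequence from "spiraling" with stabilizers that fail to shrink, and it is the replacement for the properness/acylindricity hypotheses in Dahmani's and Martin's setups. A subtle point absent in Martin's hyperbolic-vertex-groups case is that the vertex groups $G_\sigma$ are only relatively hyperbolic, so $\partial G_\sigma$ carries its own parabolic points, and one must check that the gluing along $\partial\psi_a(G_b)$ is compatible with both the conical/parabolic dichotomy in $\partial G_a$ and in $\partial G_b$ — this is exactly where the hypothesis that $\psi_a$ is the inclusion of a \emph{full} relatively quasiconvex subgroup is used, since fullness guarantees that a parabolic of $G_b$ is either a parabolic of $G_a$ or already finite-index in a parabolic of $G_a$, so no parabolic subgroup gets "accidentally enlarged" in an uncontrolled way. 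I expect the bulk of the technical work, and the place where genuinely new arguments beyond transcribing Martin and Dahmani are needed, to be in establishing these compatibilities and in the compactness/convergence proofs; everything downstream (geometric finiteness, the structure of parabolics, full relative quasiconvexity of simplex stabilizers, and the appeal to Yaman) should then follow along established lines.
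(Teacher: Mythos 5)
Your proposal follows the same overall strategy as the paper — glue the Bowditch boundaries of cell stabilizers along limit sets, add $\partial X$ as a compactification, establish compactness, metrizability, and a geometrically finite convergence action, then invoke Yaman — and the roles you assign to fullness, acylindricity, and $\delta$-hyperbolicity are correct. However, there is a genuine missing idea in the way you propose to construct the model. You intend to build $\partial G$ directly as a gluing of the sets $\partial G_\sigma$ together with $\partial X$. The paper instead builds a larger space $\overline{Z}$ that contains an \emph{interior} $Z$: for each simplex $\sigma$ a piece $\hatto{\sigma}$ modeled on a cusped space for $G_\sigma$, glued over the complex, so that the boundary $\partial_{Stab}G$ sits as the frontier of $Z$. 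This is not cosmetic. Cells $\sigma$ with finite stabilizer have $\partial G_\sigma = \varnothing$ and hence are literally invisible in a boundary-only model, yet they still carry essential topological information: to define a neighborhood of a parabolic point $\xi \in \partial G_v$ one must track which cosets of the (possibly finite) stabilizers $G_\sigma$ for $\sigma \subset st(v)$ lie near $\xi$, since an infinite sequence of such cosets can converge to $\xi$ even though none of them contributes a point to the boundary. The paper handles this by letting the sets $U_v$ in a $\xi$-family range over open subsets of $\total{v}$ (cusped space plus boundary) rather than of $\partial G_v$ alone, and the crucial Convergence Property (\thref{Convergence Property}) is proved at the level of cusped spaces precisely so that it applies to finite subgroups; the paper even remarks explicitly that Dahmani's boundary-only analogue does not suffice here. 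Without some surrogate for $Z$, the Nesting Lemma and the topology near $\partial_{Stab}G$ cannot be set up correctly, and the compactness argument will fail in the presence of cells with finite stabilizer.

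A secondary gap: your dichotomy in the convergence-action proof (either $g_n\sigma_0$ escapes in $X$, or it stays bounded and one reduces to a single vertex group) omits a genuinely separate regime, namely when $(g_nv_0)$ stays bounded but no pair of vertices $u,w$ satisfies $g_nu=w$ infinitely often. In that case one cannot reduce to any fixed $G_v$. The paper devotes a separate lemma (\thref{Small translation infinite}) to this, with an argument built on the finiteness claims R1--R3 and the auxiliary notion of a constant ``suited to'' a subset, and it is quite different in flavor from the bounded-and-fixes-a-vertex case. This is not a fundamental strategic error, but it is a case your outline would need to recognize and handle separately.
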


The relevant terms are explained in Section \ref{section:Background}. Just as a group action on a tree induces a graph of groups structure in Bass--Serre theory, a cocompact action on a simply connected cell complex induces a complex of groups over the quotient. These cocompact actions are usually easier to describe and arise more naturally, e.g., groups acting on $\textrm{CAT}(0)$--cube complexes. Here is a rephrasing of our main theorem from this point of view.

\begin{theorem}\thlabel{main theorem}
    Let $X$ be a $\delta$--hyperbolic, $\textrm{CAT}(0)$ $M_\kappa$--complex with $\kappa \leq 0$. Let $G$ be a group acting on $X$ cocompactly and acylindrically. Write $G_\sigma$ for the point-wise stabilizer of a simplex $\sigma$ of $X$. Suppose that for every pair of simplices $\sigma' \subset \sigma$ in $X$, $G_{\sigma'}$ and $G_{\sigma}$ are relatively hyperbolic and $G_{\sigma}$ is a full RQC subgroup of $G_{\sigma'}$. Then $G$ is relatively hyperbolic. The maximal parabolic subgroups of $G$ are virtually the maximal parabolic subgroups of vertex stabilizers, and each $G_\sigma$ is a full RQC subgroup of $G$. 
\end{theorem}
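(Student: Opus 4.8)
The plan is to follow the Dahmani--Martin dynamical strategy: build an explicit compact metrizable space $\partial G$ on which $G$ acts as a geometrically finite convergence group, then invoke Yaman's theorem (the converse to Bowditch's characterization) to conclude that $G$ is relatively hyperbolic with the prescribed peripheral structure. First I would reduce Theorem \ref{main theorem} to Theorem \ref{main theorem complex of groups}: a cocompact acylindrical action on a $\CAT(0)$ $M_\kappa$--complex $X$ yields a developable complex of groups $G(\YY)$ over the scwol $\YY$ associated to $X/G$, with vertex/simplex groups the pointwise stabilizers $G_\sigma$, with the maps $\psi_a$ the inclusions $G_\sigma \into G_{\sigma'}$ for $\sigma' \subset \sigma$ (which are full RQC by hypothesis), and with universal cover recovering $X$; nonpositive curvature of the complex of groups is exactly the $\CAT(0)$/$M_\kappa$ hypothesis on $X$. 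So it suffices to prove Theorem \ref{main theorem complex of groups}.

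Next I would construct the candidate boundary. The rough picture: a point of $\partial G$ should be either a point of the Gromov boundary $\partial X$ of the hyperbolic complex, or a point of the Bowditch boundary $\partial(G_\sigma, \mathcal{P}_\sigma)$ of some simplex stabilizer $G_\sigma$ that does not ``already appear'' in $\partial X$ — the parabolic points of $G_\sigma$, together with the part of its conical limit set not visible at infinity in $X$. Concretely, I would assemble the underlying set as a quotient of $\partial X \sqcup \bigsqcup_\sigma \partial(G_\sigma,\mathcal{P}_\sigma)$, identifying a limit point of $G_\sigma$ in $\partial(G_\sigma,\mathcal{P}_\sigma)$ with the corresponding point of $\partial X$ whenever the $G_\sigma$--orbit it names escapes to infinity in $X$, and identifying points of $\partial G_\sigma$ with points of $\partial G_{\sigma'}$ across the inclusions $G_\sigma \into G_{\sigma'}$ via the Bowditch-boundary embedding that holds because $G_\sigma$ is full RQC in $G_{\sigma'}$ (full relative quasiconvexity is exactly what makes this map a well-defined topological embedding of Bowditch boundaries). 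The peripheral structure on $G$ is then read off: maximal parabolics are the (virtually) maximal parabolics of vertex stabilizers, since those are the only subgroups whose limit sets stay bounded in $X$ and also fail to be conical inside the simplex groups. I would topologize $\partial G$ so that $\partial X$ is a closed subspace and neighborhoods of a point coming from $\partial G_\sigma$ combine a neighborhood in $\partial G_\sigma$ with the ``shadow'' of a large ball in $X$; the acylindricity hypothesis is what controls how many simplex stabilizers can simultaneously be relevant near a given point and is essential for metrizability and for the convergence property.

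Then I would verify the hypotheses of Yaman's theorem for the diagonal $G$--action on $\partial G$: (i) $\partial G$ is compact, metrizable, and perfect; (ii) $G$ acts as a convergence group — given a sequence $g_n \in G$, extract a subsequence with north--south dynamics, splitting into cases according to whether the $g_n$ translate toward $\partial X$ (use the convergence action of $G$ on $X$ via hyperbolicity) or stay within a bounded neighborhood of some $G_\sigma$ (use that $G_\sigma$ already acts as a convergence group on $\partial(G_\sigma,\mathcal{P}_\sigma)$, plus acylindricity to control the interaction with neighboring stabilizers); (iii) the action is geometrically finite, i.e. every limit point is conical or bounded parabolic — conical points are those visible in $\partial X$ plus conical points of the $G_\sigma$, and the bounded parabolic points are exactly the images of bounded parabolic points of the $G_\sigma$, with the bounded-parabolic condition inherited because each $G_\sigma$ is full RQC. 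Finally, fullness of $G_\sigma$ as an RQC subgroup of $G$ follows because its limit set in $\partial G$ is a closed invariant copy of $\partial(G_\sigma,\mathcal{P}_\sigma)$ meeting the parabolic points of $G$ in full peripheral orbits, which is precisely the characterization of full relative quasiconvexity in terms of the Bowditch boundary.

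The main obstacle is step (ii)--(iii): proving the convergence property and geometric finiteness of the combined action. This is where Martin's argument in the hyperbolic case has to be upgraded to accommodate parabolic points — one must show that a sequence in $G$ cannot ``wrap around'' infinitely many simplex stabilizers without converging (this is exactly where acylindricity, rather than mere properness, is needed, replacing Martin's use of finite cell stabilizers), and that the collapsing of each $\partial G_\sigma$ into the big boundary does not create new non-conical, non-parabolic limit points. Establishing the correct topology on $\partial G$ and checking it is metrizable — e.g. via a Bowditch-style ``$\mathcal{A}$-quasi-convergence'' criterion or by producing an explicit metric from the $M_\kappa$ geometry of $X$ together with chosen metrics on the $\partial G_\sigma$ — is the other substantial technical piece, and I expect it to occupy the bulk of the paper.
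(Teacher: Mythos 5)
Your high-level plan matches the paper's strategy: reduce the complex-of-groups statement to a group action on a hyperbolic $\CAT(0)$ $M_\kappa$--complex, build a compact metrizable model of $\partial G$ by assembling the Bowditch boundaries of simplex stabilizers together with $\partial X$, prove that $G$ acts on this model as a geometrically finite convergence group, and invoke Yaman's theorem. You also correctly identify where acylindricity enters (replacing Martin's finite cell stabilizers to bound the domains $D(\xi)$) and where full relative quasiconvexity is used (to make the inclusion maps between Bowditch boundaries behave well and to get the limit-set property). So as a road map this is right.

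There is, however, one genuine conceptual misstep in your description of the gluing. You propose identifying a point of $\partial(G_\sigma,\mathcal{P}_\sigma)$ with a point of $\partial X$ ``whenever the $G_\sigma$--orbit it names escapes to infinity in $X$.'' But this never happens: since $G_\sigma$ fixes the simplex $\sigma$ pointwise and acts by isometries, every $G_\sigma$--orbit in $X$ lies in a metric ball around $\sigma$, so no $G_\sigma$--orbit escapes to infinity in $X$. Correspondingly, the paper's boundary is the \emph{disjoint} union $\partial G = \partial_{\mathrm{Stab}}G \sqcup \partial X$; the only identifications are among the $\partial G_\sigma$'s, along the embeddings $\partial G_{\sigma} \into \partial G_{\sigma'}$ for faces $\sigma' \subset \sigma$. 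Points of $\partial X$ are the limits of geodesics from a basepoint that cross infinitely many simplices (so pass through infinitely many cosets of stabilizers); they are of a different nature from the limit points of any single $G_\sigma$. If you carried out your identification step, you would find it vacuous, but the misconception would likely confuse the crucial dichotomy used later to show every boundary point is conical or bounded parabolic.

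The other substantive thing your proposal omits is the role of cusped spaces. The paper does not topologize a bare disjoint union of boundaries directly; it builds a larger compactum $\overline{Z} = Z \sqcup \partial_{\mathrm{Stab}}G \sqcup \partial X$ with a dense ``interior'' $Z$ assembled from Groves--Manning cusped spaces for the $G_\sigma$ (via Haefliger's complex-of-spaces machinery), then shows $\partial G$ is the limit set of the $G$--action on $\overline{Z}$. This scaffolding is what makes the neighborhood bases tractable: a neighborhood of $\xi \in \partial_{\mathrm{Stab}}G$ is described by a ``$\xi$--family'' of open sets in the cusped spaces attached to the vertices of $D(\xi)$, together with a cone in $X$ — precisely the ``shadow'' you gesture at — and checking that these form a basis, are regular, and yield compactness is where the bulk of the technical work lies. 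Without the interior $Z$, I think you would have a much harder time even stating the neighborhood basis in a way that handles the transition between a boundary point $\xi$ of a simplex group and the geodesics in $X$ that pass near $D(\xi)$; the cusped spaces give you something to move through. So the proposal is correct in outline, but both the precise construction (no $\partial X$ identifications; include the interior $Z$) and the hard work (the topology lemmas — Crossing, Refinement, Star, Shadow — and the three convergence cases) would need to be carried out essentially as in the paper.
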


Like \cite{Dahmani_2003} and \cite{Martin}, we construct a model of the boundary for $G$ by gluing the Bowditch boundaries of simplex stabilizers together and show $G$ acts on our model as a geometrically finite convergence group. Unlike Martin however, we must deal with parabolic points in these boundaries. Much of the work in defining this model and its topology comes from \cite{Martin}. It is rather complex, and we hope to clarify many of the details involved. Further, while the conclusion of \thref{main theorem} is that a certain group is relatively hyperbolic, the real fruit of this labor is this model. Given a group acting on a tree, Bass--Serre theory explains how the tree encodes the combinatorial structure of the group. Given a relatively hyperbolic group acting on a cell complex, this model shows how the cell complex encodes the boundary of the group. We describe an illustrative example. 

Let $\Sigma$ be the surface of genus 2, $c$ be a geodesic which separates $\Sigma$ into two tori with boundary, and let $G = \pi_1(\Sigma)$. Since $G$ acts geometrically on the universal cover $\mathbb{H}^2$, $G$ is hyperbolic and $\partial G = S^1$. Let $\gamma \in G$ be the element corresponding to $c$. Then each conjugate of $\gamma$ acts on $\mathbb{H}^2$ as a hyperbolic isometry fixing two points in $S^1$, namely the endpoints of some lift of $c$. It turns out $G$ is hyperbolic relative to $\langle \gamma \rangle$, and using a theorem of Tran \cite{TRAN_2013}, the Bowditch boundary of $\partial_{\langle \gamma \rangle}G$ is given by contracting the boundary of each parabolic subgroup to a point. So each lift of $c$ to $\mathbb{H}^2$ gives a pair of points in the boundary $S^1$, and $\partial_{\langle \gamma \rangle}G$ is given by contracting each pair to a point. Each contraction takes the boundary circle and pinches it together at two points, so the resembles a tree of circles, see Figure \ref{fig:example tree}. If we fix a basepoint $x_0 \in \mathbb{H}^2$, then some geodesics from $x_0$ to the boundary $S^1$ will cross infinitely many lifts of $c$. These points are not in one of the circles of the tree of circles, but correspond to points in the boundary of the underlying tree.

The two tori with boundary and $c$ make $\Sigma$ into a graph of spaces, which induces a graph of groups structure for $G$:
    \[\textcolor{red}{(F_2, [a_1,b_1])} \longleftarrow \langle \gamma \rangle \longrightarrow \textcolor{blue}{(F_2, [a_2,b_2])}.\]
The edge group is simply $\gamma$, which is hyperbolic relative to itself and has Bowditch boundary a single point. Each vertex group is a free group, and $\gamma$ maps to the commutator. A free group on two generators is hyperbolic relative to this commutator and its Bowditch boundary is $S^1$, which can be seen by taking a hyperbolic structure on the cusped torus. Let $X$ be the Bass--Serre tree of this splitting. Then $G$ acts on $X$ as in \thref{main theorem}, and $X$ is exactly the underlying tree of the tree of circles in the previous paragraph. 

To construct our model, we take a copy of the Bowditch boundary for each cell of $X$ and glue the boundaries using the inclusion of cell stabilizers and limit sets. In this example, this means a circle for each vertex, a point for each edge, and gluing these circles along those points. The resulting space is almost the tree of circles above, but is not compact -- a sequence of points in circles which escape to infinity will not converge. Such a sequence represents a sequence of vertices in $X$, so we compactify by adding $\partial X$. The proof of \thref{main theorem} shows how to construct such a model in general.

\begin{figure}
    \begin{center}
    \begin{tikzpicture}

     
    \draw[fill=red] (-.25,.97) circle[radius=2pt];
    \draw[fill=blue] (.25,-.97) circle[radius=2pt];
    \draw[shorten >=2pt,shorten <=2pt](-.25,.97) -- (.25,-.97);
    \begin{scope}[shift={(-.25,.97)}]
    \foreach \a in {0,20,40,...,220}{
    \draw[fill = blue] (\a: 1.05cm) circle[radius=1.5pt];
    \draw[shorten <=2pt, shorten >=1.5pt] (0:0cm) -- (\a:1.05cm);
    
    \begin{scope}[shift={(\a:1.05cm)}]
        \foreach \b in {270,310,350,390}{
        \draw[fill = red] (\b+1.3*\a:.15cm) circle[radius = .75pt];
        \draw[shorten <=1.5pt, shorten >=.75pt] (0,0) -- (\b+1.3*\a: .15cm); 
        }
    \end{scope}
    }
    \end{scope}

    \begin{scope}[shift={(.25,-.97)}]
    \foreach \a in {170,190,210,...,390}{
    \draw[fill = red] (\a: 1.05cm) circle[radius=1.5pt];
    \draw[shorten <=2pt, shorten >=1.5pt] (0:0cm) -- (\a:1.05cm);
    
    \begin{scope}[shift={(\a:1.05cm)}]
        \foreach \b in {270,310,350,390}{
        \draw[fill = blue] (\b+1.1*\a:.15cm) circle[radius = .75pt];
        \draw[shorten <=1.5pt, shorten >=.75pt] (0,0) -- (\b+1.1*\a: .15cm); 
        }
    \end{scope}
    }
    \end{scope}
    \node at (-4,0) {\includegraphics[height = 5cm]{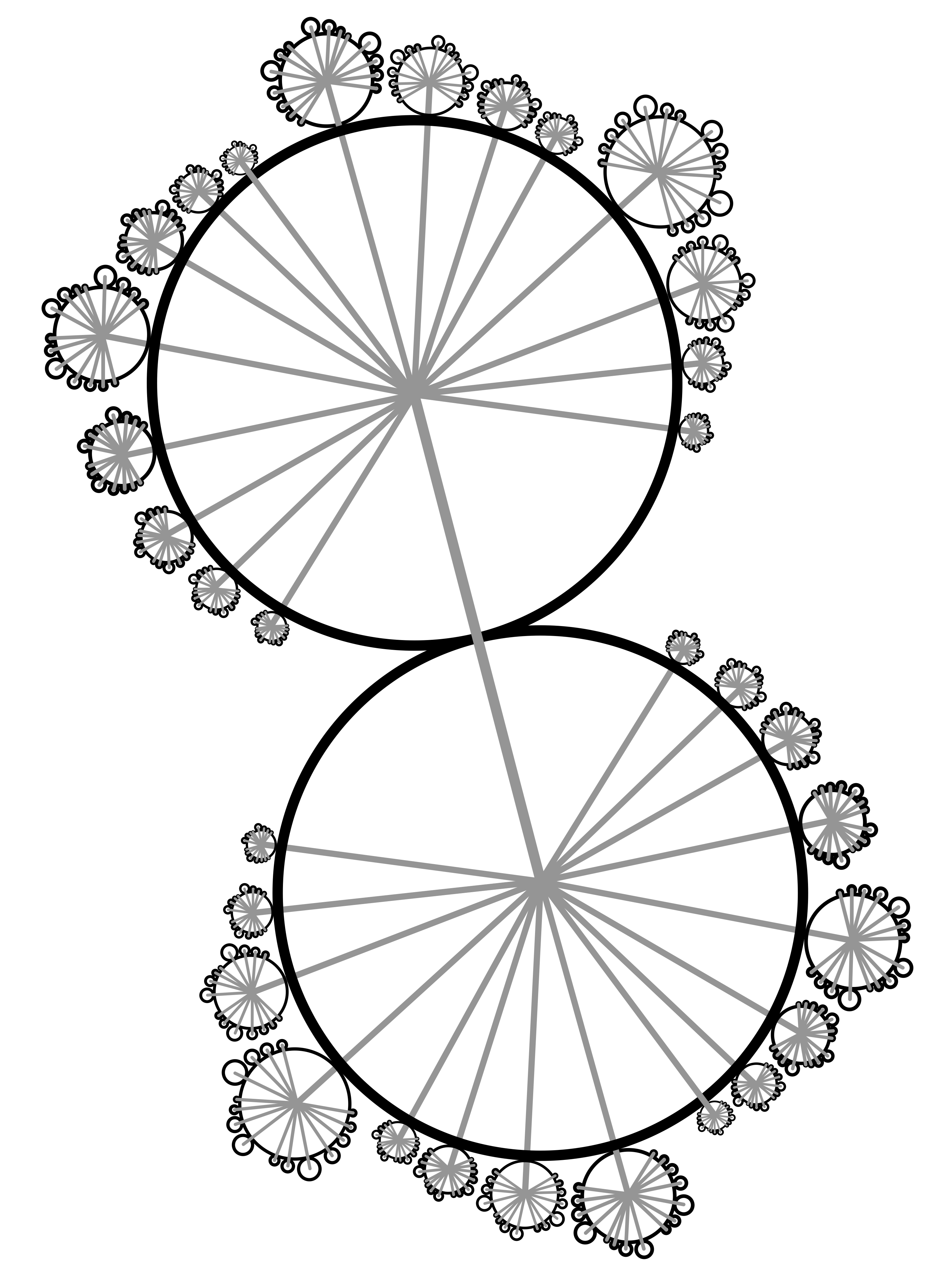}};
    \node at (-9,0) {\includegraphics[height = 5cm]{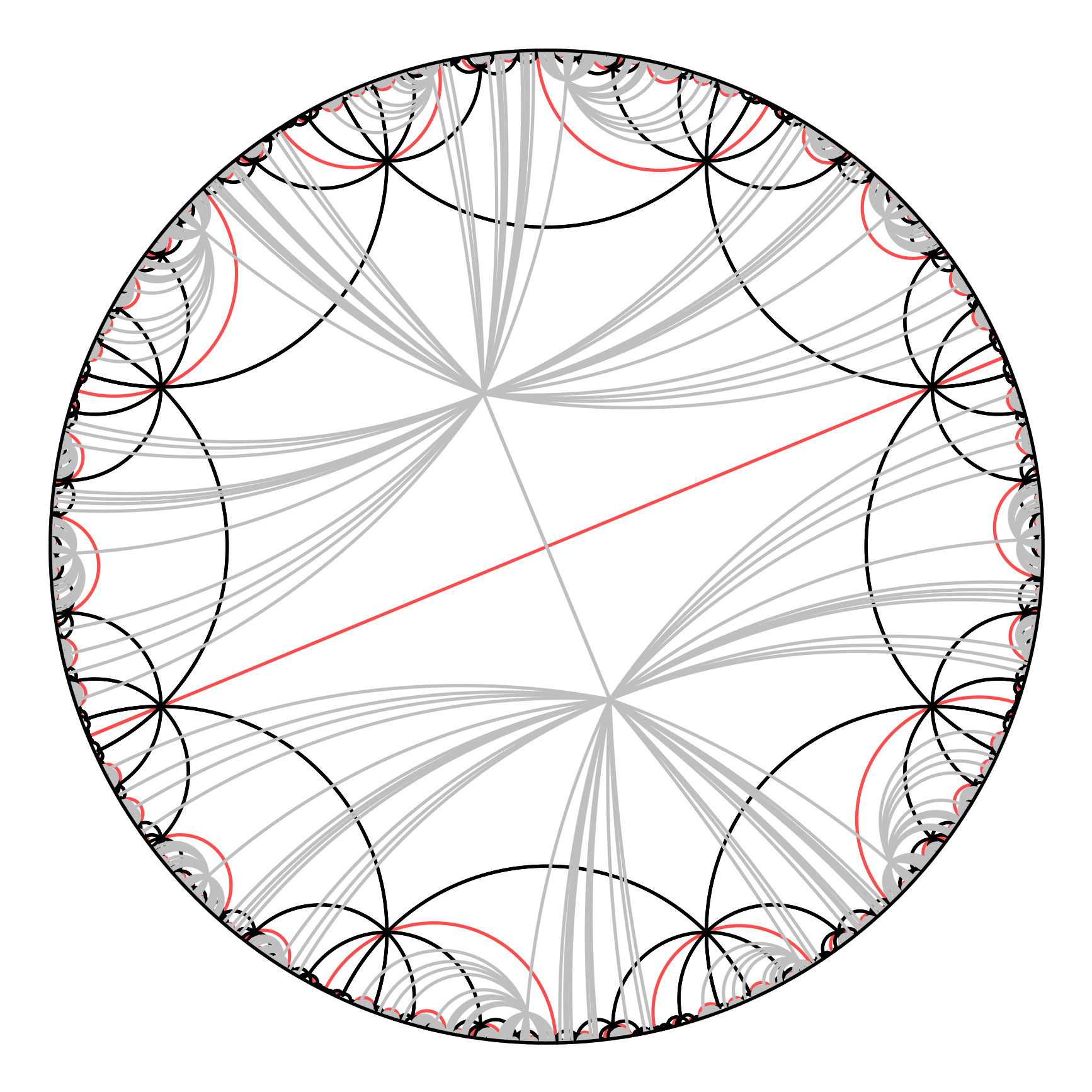}};
    \end{tikzpicture}\end{center}
    
    \caption{On the left is $\mathbb{H}^2$ tiled by octagons, describing an action of $G$. The red curves represent lifts of $c$. In the center is the tree of circles we get by contracting the endpoints of each lift of $c$. These images come from \cite{benzvi2022hyperbolicboundariesvshyperbolic}. On the right is part of the Bass--Serre tree of the splitting for $G$. Red and blue vertices are stabilized by conjugates of $[a_1,b_1]$ and $[a_2,b_2]$ respectively.}
    \label{fig:example tree}
\end{figure}

We describe the layout. Section \ref{section:Background} provides the necessary background on relatively hyperbolic groups and $M_{\kappa}$--complexes. Section \ref{section:Constructing overlineZ} defines our model $\overline{Z}$ as a set and proves some basic properties about it. Section \ref{section:Geometric Tools} proves many lemmas which we use throughout the later sections. In Sections \ref{section:The Topology} and \ref{section: Properties of the Topology}, we define a neighborhood basis for points in $\overline{Z}$ and show that it indeed forms the basis for a compact metrizable topology. In section \ref{section:Dynamics}, we show $G$ acts as a convergence group on $\overline{Z}$, understand its limit set, and apply Yaman's \thref{Geometrically Finite Convergence implies RelHyp} to conclude.

\section{Background}\label{section:Background}

\subsection{Relatively Hyperbolic Groups}
For this subsection, let $G$ be a group acting on a compact metrizable space $M$.
\begin{definition}[Convergence Groups]\thlabel{defn:Convergence group}
    Given a sequence $(g_n)_n$ in $G$ and points $\xi_+,\xi_- \in M$, we call the triple $(g_n,\xi_+,\xi_-)$ an \emph{attracting repelling triple} (ART) if for any compact set $K \subset M\setminus \{\xi_-\}$, the sets $g_nK$ converge to $\xi_+$ uniformly. In this situation, $(g_n)_n$ is called a \emph{convergence sequence} and the points $\xi_+$ and $\xi_-$ are the \emph{attracting} and \emph{repelling} points of the sequence $(g_n)_n$. We say $G$ acts as a \emph{convergence group} on $M$ if every infinite sequence in $G$ has a subsequence which is a convergence sequence.
\end{definition}

 If $(g_n,\xi_+,\xi_-)$ is an ART, it is possible that $\xi_+ = \xi_-$. It's clear that being a convergence group passes to subgroups. Acting as a convergence group is equivalent to acting properly discontinuously on the space of distinct triples of $M$ \cite{Bowditch1999}. 
 
\begin{definition}[Conical and Parabolic Limit Points]
    Let $G$ be a convergence group on $M$. A point $\xi \in M$ is a \emph{conical limit point} if there exists a sequence $(g_n)_n$ in $G$ and points $\xi_+ \neq \xi_-$ in $G$ such that $g_n\xi \longrightarrow \xi_-$ and $g_n\xi'\longrightarrow \xi_+$ for all $\xi' \neq \xi$ in $M$.
    
    A subgroup $P < G$ is called \emph{parabolic} if it is infinite, fixes a point $\xi \in M$, and contains no loxodromics (elements which fix exactly $2$ points of $M$). This fixed point $\xi$ is unique and is called a \emph{parabolic point}. Further, $\xi$ is a \emph{bounded parabolic point} if $\mathrm{Stab}_G(\xi)$ acts properly discontinuously and cocompactly on $M \setminus \{\xi\}$. 
\end{definition}

\begin{definition}[Geometrically Finite]\thlabel{Geometrically Finite}
    Let $G$ act as a convergence group on $M$. The action of $G$ is \emph{geometrically finite} if every point of $M$ is either a bounded parabolic point or conical limit point.
\end{definition}

In \cite{Hruska_2010}, Hruska lays out six equivalent definitions of relative hyperbolicity. We will only be concerned with two of them. We note that some of these definitions allow finite parabolic subgroups, which we do not allow. Recall from \cite[III.H]{BH} that if $\Sigma$ is a proper $\delta$--hyperbolic metric space, then its boundary $\partial \Sigma$ is defined by equivalence classes of asymptotic rays. Then $\overline{\Sigma}:=\Sigma \cup \partial \Sigma $ can be topologized as a compact metrizable space, and if $G$ acts by isometries on $\Sigma$, this extends to an action on $\overline{\Sigma}$.

\begin{definition}[Relatively Hyperbolic]\thlabel{defn:Relatively Hyperbolic}
    Let $G$ be a group acting properly discontinuously by isometries on a proper hyperbolic metric space $\Sigma$ so that $G$ acts on $\partial \Sigma$ as a geometrically finite convergence group. Let $\mathbb{P}$ be a collection of one representative from each conjugacy class of maximal parabolic subgroups, and assume each element of $\mathbb{P}$ is finitely generated. Then we say $G$ is \emph{hyperbolic relative to} $\mathbb{P}$, or $(G,\mathbb{P})$ is relatively hyperbolic. The collection $\mathbb{P}$ is the \emph{peripheral structure}. The boundary $\partial \Sigma$ is the \emph{Bowditch boundary} of $(G,\mathbb{P})$, denoted $\partial_\mathbb{P}G$.
\end{definition}

The peripheral structure is fundamental to how we view $G$ as a relatively hyperbolic group --- different peripheral structures on the same group $G$ lead to different Bowditch boundaries. However, when context makes the peripheral structure clear, we will be glib and say that $G$ is relatively hyperbolic with Bowditch boundary $\partial G$. The following remarkable theorem shows that the only geometrically finite convergence actions on compact metrizable spaces are relatively hyperbolic groups acting on their Bowditch boundaries. This also implies the Bowditch boundary depends only on the group $G$ and its peripheral structure. 

\begin{theorem} \thlabel{Geometrically Finite Convergence implies RelHyp}\cite{Yaman_2004} \cite{Bowditch1998}
    Let $G$ be a geometrically finite convergence group on a perfect compact metrizable space $M$. Let $\mathbb{P}$ be a collection of representatives of conjugacy classes of maximal parabolic subgroups, and assume that each element of $P$ is finitely generated. Then $(G,\mathbb{P})$ is relatively hyperbolic and $M$ is equivariantly homeomorphic to $\partial_{\mathbb{P}}G$. 
\end{theorem}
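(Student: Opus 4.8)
The statement is \cite{Yaman_2004} (building on \cite{Bowditch1998}), so I will only outline the strategy. The plan is to manufacture, out of the purely dynamical data, a proper geodesic $\delta$--hyperbolic space $\Sigma$ on which $G$ acts properly discontinuously by isometries and whose boundary $\partial\Sigma$ is $G$--equivariantly homeomorphic to $M$. Granting this, the action of $G$ on $\partial\Sigma$ transported from $M$ is a geometrically finite convergence action, its maximal parabolic subgroups coincide with those of the action on $M$ (being parabolic is a dynamical property preserved by an equivariant homeomorphism), and these are finitely generated by hypothesis; hence \thref{defn:Relatively Hyperbolic} exhibits $(G,\mathbb{P})$ as relatively hyperbolic with $\partial_{\mathbb{P}}G = \partial\Sigma \cong M$. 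Since a convergence action is determined by the induced properly discontinuous action on the space of distinct triples \cite{Bowditch1999}, and Bowditch's boundary does not depend on the chosen proper hyperbolic model, this homeomorphism is then model--independent.

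First I would organise the points of $M$. A standard argument shows that no point is simultaneously a conical limit point and a bounded parabolic point, so $M$ splits $G$--invariantly into conical limit points and bounded parabolic points. A compactness argument, using that each maximal parabolic $P$ acts properly discontinuously and cocompactly on $M\setminus\{p_P\}$, shows there are only finitely many $G$--orbits of bounded parabolic points, with representatives $p_1,\dots,p_k$ whose stabilizers $P_1,\dots,P_k$ represent the conjugacy classes in $\mathbb{P}$ (every maximal parabolic fixes a unique point, which is one of these up to translation). Properness of the action on the space $\Theta^3(M)$ of ordered triples of distinct points of $M$ then lets one choose a $G$--invariant, pairwise disjoint family of ``horoball neighbourhoods'' $U_p$ of the parabolic points with $G$ acting cocompactly on the thick part $M\setminus\bigcup_p U_p$; from a compact fundamental domain there one extracts a finite $S\subseteq G$ with $G=\langle S\cup P_1\cup\dots\cup P_k\rangle$, i.e.\ $G$ is finitely generated relative to $\mathbb{P}$.

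Next I would build $\Sigma$: form the Cayley graph of $G$ with respect to $S$ and glue a combinatorial horoball to each left coset $gP_i$, producing a locally finite (hence proper) geodesic graph on which $G$ acts properly discontinuously, cocompactly away from the horoballs, with each $P_i$ stabilising a horoball and acting cocompactly on its horosphere. (Yaman's original construction instead assembles $\Sigma$ from a net in the thick part of $M$ with metric horoballs glued over the $U_p$; either model works.) The essential and hardest point is that $\Sigma$ is $\delta$--hyperbolic: one must convert the topological hypothesis into a metric one, e.g.\ by verifying a linear isoperimetric inequality or a thin--triangles condition for the coned--off graph using that every divergent sequence in $G$ has a subsequence forming an attracting--repelling triple on $M$, equivalently by checking Bowditch's criterion that $G$ acts on a \emph{fine} hyperbolic graph with finitely many edge orbits and finite edge stabilizers. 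This step, together with the boundary identification below, is the real content of the theorem and the main obstacle, precisely because no a priori metric on $M$ is available.

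Finally I would identify $\partial\Sigma$ with $M$. Define $\partial\Sigma\to M$ by sending the endpoint of a geodesic ray $r$ from the identity vertex to the limit in $M$ of the translates $r(n)\cdot o$ of a fixed basepoint $o\in M$; hyperbolicity together with the ART structure guarantees that this limit exists and depends only on the asymptoty class of $r$, so the map is well defined, continuous, and $G$--equivariant. Rays running into the horoball over $gP_i$ map to $g\cdot p_i$, while all other rays land on conical limit points. Since $M$ is perfect, compact and metrizable and $\partial\Sigma$ is compact Hausdorff, it remains to check injectivity (distinct ray endpoints are separated by an attracting--repelling triple) and surjectivity (every point of $M$ is conical or bounded parabolic, and each is hit), after which the map is automatically a homeomorphism. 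Combined with the reduction of the first paragraph, this gives that $(G,\mathbb{P})$ is relatively hyperbolic and $M$ is $G$--equivariantly homeomorphic to $\partial_{\mathbb{P}}G$; the details are carried out in \cite{Yaman_2004,Bowditch1998}.
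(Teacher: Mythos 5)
The paper states this theorem as an external citation to Yaman \cite{Yaman_2004} and Bowditch \cite{Bowditch1998} and gives no proof of it, so there is no in-paper argument to compare your sketch against. Your outline is a reasonable high-level summary of the Yaman--Bowditch strategy (organise conical and bounded parabolic points, establish finitely many orbits of parabolic points and relative finite generation, construct a proper geodesic model---either Yaman's metric horoball construction over $M$ or a cusped graph as in \cite{GrovesManning2006}---prove it hyperbolic, and identify its boundary equivariantly with $M$), and you correctly flag that the hyperbolicity verification and boundary identification from purely dynamical input are where the real work lies; no objection to treating the theorem as a black box, which is exactly what the paper does.
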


Our goal is to show a particular $G$ is relatively hyperbolic. We will do it by constructing a candidate for $M$ and applying \thref{Geometrically Finite Convergence implies RelHyp}. Tukia has shown the set $\mathbb{P}$ is always finite:

\begin{proposition}\cite{Tukia1998}\thlabel{finitely many parabolics}
    If $G$ is a geometrically finite convergence group on a compact metrizable space $M$, then there are finitely many conjugacy classes of maximal parabolic subgroups.
\end{proposition}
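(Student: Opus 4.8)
The plan is to reduce the statement to a count of $G$-orbits of parabolic points. Given a maximal parabolic subgroup $P$, it fixes a unique point $\xi_P$, which is a parabolic point; since a conical limit point is never a parabolic point, geometric finiteness forces $\xi_P$ to be a bounded parabolic point. Conversely, for a parabolic point $\xi$ (necessarily bounded, by the same remark), its stabilizer $\stab{\xi}$ is infinite and contains no loxodromic element: a loxodromic $g$ fixing $\xi$ would fix a second point $\eta \ne \xi$, and then $\langle g\rangle$ would fail to act properly discontinuously at $\eta$ on $M\setminus\{\xi\}$. Hence $\stab{\xi}$ is a parabolic subgroup, and it is evidently maximal. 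These two assignments are mutually inverse, and $\stab{g\xi} = g\,\stab{\xi}\,g^{-1}$, so conjugacy classes of maximal parabolic subgroups are in bijection with $G$-orbits of parabolic points. It therefore suffices to prove there are only finitely many orbits of parabolic points.

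Suppose not, and choose parabolic points $p_1, p_2, \dots$ in pairwise distinct $G$-orbits. By compactness of $M$, after passing to a subsequence we may assume $p_n \to p$ for some $p \in M$, and (discarding at most one term) that $p_n \ne p$ for all $n$. By \thref{Geometrically Finite}, $p$ is either a bounded parabolic point or a conical limit point. In the bounded parabolic case, let $P = \stab{p}$ and fix a compact $D \subset M \setminus \{p\}$ with $PD = M\setminus\{p\}$; writing $p_n = h_n q_n$ with $h_n \in P$ and $q_n \in D$, the points $q_n = h_n^{-1}p_n$ are again parabolic, still lie in pairwise distinct $G$-orbits, and---after a further subsequence---converge to some $q \in D$ with $q \ne p$. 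In all cases the remaining task is to see that infinitely many parabolic points in pairwise distinct orbits cannot accumulate at a single point.

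This contradiction comes from the proper discontinuity of the $G$-action on the space of distinct triples of $M$, fed by the dynamical description near the accumulation point supplied by geometric finiteness. Near a bounded parabolic point one exploits the cocompact action of its stabilizer on the complement; near a conical limit point one combines the expanding convergence dynamics there with the cocompact actions of the individual stabilizers $\stab{p_n}$ to trap infinitely many triples built from the $p_n$ (together with fixed auxiliary points) in a single compact subset of the triple space---which is impossible. Organizing this carefully is the crux of the argument, and is precisely what is carried out in \cite{Tukia1998}; I expect the conical case to be the main obstacle. As an alternative to the last two paragraphs, once the first paragraph is in place one could instead invoke \thref{Geometrically Finite Convergence implies RelHyp} to realize $(G,\mathbb{P})$ as acting properly discontinuously by isometries on a proper hyperbolic space and read off the finiteness from the standard geometry of relatively hyperbolic groups, though this rests on the same circle of ideas.
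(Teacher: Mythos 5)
The paper does not prove this proposition; it simply cites \cite{Tukia1998}, so there is no "paper proof" to compare against. Evaluating your argument on its own terms: the first paragraph is correct and standard. You correctly establish that $P \mapsto \xi_P$ and $\xi \mapsto \stab{\xi}$ are mutually inverse and $G$-equivariant, so conjugacy classes of maximal parabolic subgroups are in bijection with $G$-orbits of parabolic points, reducing the problem to finiteness of orbits of parabolic points.

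However, the remainder is not a proof. Your second paragraph extracts a convergent subsequence $p_n \to p$ and, in the bounded-parabolic case, uses cocompactness of $\stab{p}$ on $M\setminus\{p\}$ to replace the $p_n$ by $q_n \to q \ne p$; but this only moves the accumulation point once and does not terminate the argument, since $q$ may itself be another parabolic point that you again have to dodge. Your third paragraph explicitly concedes that "organizing this carefully is the crux of the argument, and is precisely what is carried out in \cite{Tukia1998}"---so the core of Tukia's theorem (the genuine interplay between proper discontinuity on the triple space, the conical-limit dynamics, and the cocompact parabolic actions) is deferred rather than supplied. In particular, the conical case, which you flag as "the main obstacle," is exactly the place where a real idea is needed and none is given. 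Your proposed alternative---deducing finiteness from \thref{Geometrically Finite Convergence implies RelHyp}---is also problematic: Yaman's theorem as stated presupposes a choice of $\mathbb{P}$, a collection of conjugacy-class representatives, and to even pose the hypothesis "each element of $\mathbb{P}$ is finitely generated" in a way that feeds into the equivalent cusped-space definition of relative hyperbolicity requires $\mathbb{P}$ to be finite, which is precisely what is being proved. More to the point, Yaman's proof itself leans on Tukia's finiteness result (or a reproof of it), so invoking it here is circular in the relevant literature. In short: the reduction to orbits of parabolic points is sound, but the claimed proposition is not proven by what you have written, and the gap is exactly the hard part of Tukia's theorem.
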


The boundary of a hyperbolic group naturally compactifies the Cayley graph of the group, as described in \cite[III.H]{BH}. In \cite{GrovesManning2006}, Groves and Manning provide a space analogous to the Cayley graph for relatively hyperbolic groups by attaching combinatorial horoballs to the Cayley graph. Called the cusped space, this graph is compactified by the Bowditch boundary.

\begin{definition}\cite[Definition 3.1]{GrovesManning2006} Let $\Gamma$ be any graph. The \emph{combinatorial horoball based on $\Gamma$} has vertices $\Gamma^{(0)} \times (\ \mathbb{N} \sqcup \{ 0 \})$ and three kinds of edges:
\begin{enumerate}
    \item If $e$ is an edge of $\Gamma$ joining $v$ to $w$, then there is a corresponding edge $\overline{e}$ joining $(v,0)$ to $(w,0)$.
    \item If $k > 0$ and $0 < d_\Gamma(v,w) <2^k$ then there is a single edge joining $(v,k)$ to $(w,k)$.
    \item If $k \geq 0$ and $v \in \Gamma^{(0)}$, then there is an edge joining $(v,k)$ to $(v,k+1)$.
\end{enumerate}
\end{definition}

\begin{definition}\cite[Definition 3.12]{GrovesManning2006}
    Let $G$ be a finitely generated group with $\mathbb{P} = \{P_1, \ldots P_n\}$ a finite collection of finitely generated subgroups, and let $S$ be a finite generating set for $G$ so that $P_i \cap S$ generates $P_i$ for each $i$. Let $\Gamma$ be the Cayley graph of $G$ with respect to $S$. The \emph{cusped space} for $(G, \mathbb{P},S)$ is defined by attaching a combinatorial horoball to each coset of $P_i$, viewed as a subset of $\Gamma$. 
\end{definition}

\begin{theorem}\cite[Theorem 3.25]{GrovesManning2006} Let $G$ be a finitely generated group with $\mathbb{P}$ a finite collection of finitely generated subgroups, and let $X$ be a cusped space for $(G,\mathbb{P)}$. Then $(G,\mathbb{P})$ is relatively hyperbolic in a sense equivalent to \thref{defn:Relatively Hyperbolic} if and only if $X$ is $\delta$--hyperbolic, and in this case $\partial X = \partial_{\mathbb{P}}G$. 
\end{theorem}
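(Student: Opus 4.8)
The plan is to prove both implications separately and then identify the boundary using uniqueness. Write $\Gamma$ for the Cayley graph of $(G,S)$, so the cusped space $X$ is obtained from $\Gamma$ by gluing a combinatorial horoball $\mathcal{H}(gP_i)$ along each peripheral coset $gP_i \subset \Gamma^{(0)}$. The key input, which I would establish first purely combinatorially from the definition, is that every combinatorial horoball is $\delta_0$--hyperbolic for a universal $\delta_0$, with a rigid geodesic structure: a geodesic between base vertices $(v,0)$ and $(w,0)$ rises monotonically to depth about $\log_2 d_\Gamma(v,w)$, travels horizontally a bounded number of steps, and descends, so $d_{\mathcal{H}}((v,0),(w,0)) = 2\log_2 d_\Gamma(v,w) + O(1)$ (capped once $d_\Gamma(v,w)$ exceeds the coset diameter). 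This yields, for each coset, a single point $p_{gP_i} \in \partial X$ at the bottom of the horoball, a $G$--equivariant system of horoball neighbourhoods of these points, and the observation that any isometry of $X$ moving $p_{gP_i}$ must move the horoball it bounds.

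For the implication ``$X$ $\delta$--hyperbolic $\Rightarrow$ $(G,\mathbb{P})$ relatively hyperbolic'', note that $G$ acts properly discontinuously and isometrically on the proper geodesic $\delta$--hyperbolic space $X$, hence by the standard argument acts as a convergence group on the compact metrizable space $\partial X$. It remains to verify geometric finiteness. Each $p_{gP_i}$ is a bounded parabolic point: its stabiliser is exactly $gP_ig^{-1}$ (by the rigidity just noted), and $gP_ig^{-1}$ acts cocompactly on $gP_i$, hence on $X$ with an open horoball neighbourhood of $p_{gP_i}$ removed, hence properly discontinuously and cocompactly on $\partial X \setminus \{p_{gP_i}\}$. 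After deleting a suitable $G$--equivariant family of open horoball neighbourhoods of the $p_{gP_i}$, the action on what remains of $X$ is cocompact because $\Gamma/G$ is compact and the horoballs account for all of the non-cocompactness; a standard argument then identifies every remaining boundary point as a conical limit point. As $\mathbb{P}$ is finite with finitely generated members, this matches \thref{defn:Relatively Hyperbolic}.

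For the converse, suppose $(G,\mathbb{P})$ is relatively hyperbolic. Then by Bowditch's work one may fix a fine hyperbolic graph witnessing this, equivalently (Osin) a presentation of $G$ with linear relative isoperimetric function. I would deduce hyperbolicity of $X$ by attaching $2$--cells to $X$ along the relators of such a presentation together with the horoball relators, and checking that the resulting simply connected $2$--complex satisfies a linear combinatorial isoperimetric inequality: given a loop, first homotope it out of the horoballs, which is cheap by the logarithmic horoball distance estimate, then fill the resulting loop in $\Gamma$ using cone-relators and peripheral relators with Osin's linear bound. The standard fact that a simply connected complex with a linear isoperimetric inequality has hyperbolic $1$--skeleton gives that $X$ is $\delta$--hyperbolic. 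Finally $\partial X \cong \partial_{\mathbb{P}}G$ equivariantly: by the first implication $G$ acts on $\partial X$ as a geometrically finite convergence group with maximal parabolics the conjugates of the $P_i$, and by \thref{Geometrically Finite Convergence implies RelHyp} such an action on a compact metrizable space is unique up to $G$--equivariant homeomorphism.

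The main obstacle is the converse: extracting a genuinely global hyperbolicity statement for the non-cocompact space $X$ from an abstract relative hyperbolicity hypothesis. The delicate points are making the ``push out of the horoballs'' step quantitative and uniform, so that the logarithmic gain inside the horoballs is not swamped by the horizontal filling, and correctly bookkeeping the interaction between horoball relators and peripheral relators in the isoperimetric count --- essentially the \emph{fineness} of Bowditch's graph reappearing as a bound on how many $2$--cells meet an edge. By contrast the forward direction is comparatively routine once the rigid geodesic structure of combinatorial horoballs is in hand.
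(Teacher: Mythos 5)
The paper does not prove this statement; it is imported verbatim from Groves--Manning \cite[Theorem 3.25]{GrovesManning2006}, so there is no in-paper proof to compare against. Taken on its own, your proposal is a reasonable reconstruction of the known argument: the combinatorial-horoball rigidity and the logarithmic distance formula are exactly Groves--Manning's Lemmas 3.9--3.10, and your route for the converse --- attach $2$--cells for a finite relative presentation plus horoball relators, push a loop out of the horoballs at logarithmic cost, fill the projected loop in $\Gamma$ using Osin's linear relative Dehn function, and conclude hyperbolicity from a linear combinatorial isoperimetric inequality --- is essentially their Theorems 3.23--3.24, the actual content of the result. Your forward direction (from a hyperbolic cusped space directly to a geometrically finite convergence action) is the Bowditch-style argument that the paper in effect appeals to via Hruska's equivalences; that is a legitimate variant of what Groves--Manning do.

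One step in your forward direction is misstated. You write that $gP_ig^{-1}$ acts cocompactly on $gP_i$, ``hence on $X$ with an open horoball neighbourhood of $p_{gP_i}$ removed, hence properly discontinuously and cocompactly on $\partial X\setminus\{p_{gP_i}\}$.'' The middle assertion is false: removing a single horoball leaves a non-cocompact space with all the other horoballs in it, and $gP_ig^{-1}$ has no claim to cocompactness there. What you actually need, and what the logarithmic horoball geometry provides, is that $gP_ig^{-1}$ acts cocompactly on any fixed-depth \emph{horosphere} about $p_{gP_i}$; one then argues that this horosphere coarsely sweeps out $\partial X\setminus\{p_{gP_i}\}$, which is the substance of Bowditch's criterion for a bounded parabolic point. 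Similarly, ``a standard argument identifies every remaining boundary point as a conical limit point'' deserves the sentence that a geodesic ray not converging to a horoball point must exit every horoball it enters and therefore returns infinitely often to a fixed compact set in the truncated space. Finally, you rightly flag the converse as the hard part; the uniform ``fineness'' bookkeeping you mention is precisely where the real work of Theorem 3.24 lives, and a complete write-up would need to reproduce it rather than gesture at it.
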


\cite[Section 4]{Hruska_2010} explains how to adapt the cusped space when the peripheral subgroups are not finitely generated, as well as showing the equivalence of the various definitions of relative hyperbolicity. The Bowditch boundary only contains information about infinite subgroups through their limit sets, and for us the cusped space is a convenient way to manage finite subgroups. Except for a few lemmas like the next one, we do not deal in the details of cusped spaces, so we direct the reader to \cite{GrovesManning2006} for further details.

\begin{lemma}\thlabel{Parabolics are almost cocompact}
    Let $(G,\mathbb{P})$ be relatively hyperbolic with cusped space $C$. Let $P$ be a finite index subgroup of an element of $\PP$ fixing $\xi \in \partial C = \partial_\PP G$, and let $\mathbb{F} = \{F_1,\ldots F_m\}$ be a collection of finite subgroups of $G$. Then there exists a compact subset $K \subset \overline{C} = C \cup \partial C$ so that 
    \begin{enumerate}
        \item $P(\partial C \cap K) = \partial C \setminus \{\xi\}$, that is, $K \cap \partial C$ is a coarse fundamental domain for $P$ acting on $\partial G \setminus \{\xi\}$, and
        \item for any coset $gF_i$, there is some $p \in P$ so that $pgF_i \cap K \neq \varnothing$.
    \end{enumerate}
\end{lemma}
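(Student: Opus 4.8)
The plan hinges on first showing that $\xi$ is a \emph{bounded} parabolic point for the action of $G$ on $\partial C$. By \thref{defn:Relatively Hyperbolic}, $G$ acts on $\partial C = \partial_\PP G$ as a geometrically finite convergence group. The subgroup $P$ is infinite, having finite index in a peripheral subgroup $P_\xi \in \PP$, and it contains no loxodromic, being a subgroup of the parabolic subgroup $P_\xi$; so $P$ is a parabolic subgroup with parabolic point $\xi$. Since a parabolic point is never a conical limit point, \thref{Geometrically Finite} forces $\xi$ to be a bounded parabolic point, and a standard argument then identifies $\stabg{\xi}$ with the unique maximal parabolic subgroup fixing $\xi$, which is $P_\xi$. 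In particular $r := [P_\xi : P] < \infty$, and $P_\xi$ acts properly discontinuously and cocompactly on $\partial C \setminus \{\xi\}$.

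For part (1) I would transfer a compact fundamental domain across the finite-index inclusion $P \le P_\xi$: cocompactness yields a compact $L \subseteq \partial C \setminus \{\xi\}$ with $P_\xi L = \partial C \setminus \{\xi\}$, and picking $t_1,\dots,t_r \in P_\xi$ with $P_\xi = \bigcup_{j=1}^r P t_j$ gives $\partial C \setminus \{\xi\} = P\big(\bigcup_j t_j L\big)$. Thus $L_0 := \bigcup_j t_j L$ is compact in $\partial C \setminus \{\xi\}$ with $P L_0 = \partial C \setminus \{\xi\}$, and any compact $K$ with $K \cap \partial C = L_0$ (hence $\xi \notin K$) satisfies (1).

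For part (2) I would use the combinatorial horoball at $\xi$. Each coset $gF_i$ sits at depth $0$ in the cusped space -- it never enters the interior of a combinatorial horoball -- and has $C$-diameter at most $D := \max_i \diam_C(F_i) < \infty$. After translating the action so that the horoball accumulating at $\xi$ is the one glued to the coset $P_\xi$, the depth-$0$ vertices of that horoball are exactly $P_\xi \subseteq \G$, on which $P_\xi$ acts simply transitively. Hence for each $i$ the cosets $gF_i$ contained in $P_\xi$ form a single $P_\xi$-orbit under left translation, so at most $r$ orbits of $P$; choosing one representative coset from each and letting $V \subseteq C$ be their finite, hence bounded, union, every such coset admits $p \in P$ with $pgF_i \subseteq V$. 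Then $K := L_0 \cup N_D(V)$, with $N_D(V)$ the closed $D$-neighbourhood of $V$ (a compact subset of $C$), is compact in $\overline{C}$, meets $\partial C$ in exactly $L_0$, and serves for both (1) and (2) -- the delicate point being that it is precisely the cosets interacting with the horoball at $\xi$ that the surrounding argument needs moved into $K$.

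I expect part (2) to be the main obstacle: one must pin down, inside the cusped space, which cosets $gF_i$ the later argument actually needs pushed into $K$, and then check that simple transitivity of $P_\xi$ on the depth-$0$ level of its horoball, together with $[P_\xi:P] < \infty$ and the boundedness of the finite subgroups, reduces this to finitely many $P$-orbits. Part (1), by contrast, is the routine finite-index transfer above.
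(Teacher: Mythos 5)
Part (1) of your proposal is correct and matches the paper: show $\xi$ is a bounded parabolic point for $P_\xi$ and transfer a compact fundamental domain across the finite-index inclusion $P \le P_\xi$. This is routine.

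Part (2) has a genuine gap. The lemma quantifies over \emph{all} cosets $gF_i$ with $g \in G$ arbitrary, but your argument only engages cosets $gF_i$ contained in $P_\xi$ — and a coset $gF_i$ is contained in $P_\xi$ only when both $g \in P_\xi$ and $F_i \le P_\xi$. You never address a coset like $gF_i$ with $g$ lying in some other part of $G$, or an $F_i$ not contained in $P_\xi$; these form the overwhelming majority of the cosets the statement covers, and they have no reason to live on a single horoball or form finitely many $P$-orbits. Your closing caveat (``it is precisely the cosets interacting with the horoball at $\xi$ that the surrounding argument needs'') is not supported by the statement — the lemma as written, and as invoked via \thref{Parabolics are almost cocompact remark} in the proof of \thref{Parabolic Points}, requires a single compact $K$ that simultaneously captures (after $P$-translation) every coset of every $F_i$. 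The paper's mechanism for this is different: for each coset $gF_i$, choose $p \in P$ realizing $d_C(gF_i, P)$, so that $p^{-1}gF_i$ has its closest point in $P$ at the identity; let $B$ be the set of all such normalized cosets and $\overline{B}$ the closure of $\bigcup B$ in $\overline{C}$. Compactness of $\overline{C}$ gives compactness of $\overline{B}$, and the crux is to exclude $\xi$ from $\overline{B}$: because $1$ realizes $d(p^{-1}gF_i, P)$, the geodesics $[1, p^{-1}gF_i]$ leave the horoball of $P$ immediately rather than going up it, so by Arzel\`a--Ascoli no accumulation point can be $\xi$. Setting $K = K_1 \cup \overline{B}$ then handles both parts at once. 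Without some analogue of this nearest-point normalization and exclusion-of-$\xi$ step, your construction of $K$ does not meet the requirements of part (2).
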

\begin{proof}
    Because $G$ is relatively hyperbolic, $\xi$ is a bounded parabolic point. Because $P$ is finite index in the stabilizer of $\xi$, we can choose a compact set $K_1 \subset \partial C \setminus \{\xi\}$ so that $PK_1 =\partial C \setminus \{\xi\}$.

    Consider the set $B = \{gF_i \, | \, d_C(gF_i,P) \text{ is realized by } d_C(gF_i,1)\}$ and let $\overline{B}$ be the closure of $\bigcup_{gF_i \in B} gF_i$ in $\overline{C}$. Notice $\overline{B}$ is compact because $\overline{C}$ is compact. Any point $\eta \in \overline{B}$ is the limit of a sequence of cosets $(g_nF_{i_n})_n$, and by Arzela --Ascoli the geodesics $[1,g_nF_{i_n}]$ subconverge to a geodesic $[1,\eta)$. Because the geodesics $[1,g_nF_{i_n}]$ all travel immediately away from the horoball containing $P$ for longer and longer distances, their limit cannot fellow travel the vertical geodesic $[1,\xi)$ forever. Thus $\xi \notin \overline{B}$ and $\overline{B}$ is compact in $\overline{C} \setminus \{\xi\}$. 

    Let $K = K_1 \cup \overline{B}$ and we show $K$ satisfies the conclusion. Clearly $K$ is compact since $K_1, \overline{B}$ are and clearly $(1)$ is satisfied because $K_1 \subset K \cap \partial C$. For any coset $gF_i$, suppose $d_C(gF_i,P)$ is realized by $d_C(gF_i,p)$ for some $p \in P$. Then $d_C(p^{-1}gF_i,P)$ is realized by $d_C(p^{-1}gF_i,1)$, and $p^{-1}gF_i \in B$. So $p^{-1}gF_i \subset \overline{B}$, and $(2)$ is satisfied.
\end{proof}

\subsection{Quasiconvexity}

Quasiconvex subgroups play an important role in hyperbolic groups. Depending on context, relatively quasiconvex and full relatively quasiconvex subgroups play the analogous role in relatively hyperbolic groups. 

\begin{definition}\label{defn:LimitSet}[Limit Set, \cite{Bowditch1998}\cite{Tukia1994}]\label{limit set}
If $G$ is a convergence group on a compact metrizable space $M$ and $H$ is an infinite subgroup, the \emph{limit set} $\Lambda H$ has three equivalent characterizations:
\begin{enumerate}
    \item the unique minimal nonempty closed $H$--invariant subset of $M$,
    \item the set of points in $M$ at which $H$ does not act properly discontinuously,
    \item the set of attractive points of convergence sequences in $H$. 
\end{enumerate}
If $H$ is finite, $\Lambda H$ is empty.
\end{definition}

\begin{proposition}\cite{Bowditch1998}[Prop $3.1, 3.2$]\thlabel{limit points are limit points}
    Let $G$ be a convergence group on a compact metrizable space $M$. Then conical limit points are in $\Lambda G$ and conical limit points are not parabolic points. 
\end{proposition}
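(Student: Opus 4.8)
I want to prove the two assertions of \thref{limit points are limit points}: that every conical limit point lies in $\Lambda G$, and that no conical limit point is a parabolic point. Let $\xi$ be a conical limit point, witnessed by a sequence $(g_n)_n$ and points $\xi_+ \neq \xi_-$ with $g_n\xi \to \xi_-$ and $g_n\xi' \to \xi_+$ for every $\xi' \neq \xi$.

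For the first claim, the plan is to produce a convergence sequence in $G$ whose attracting point is $\xi$ and then invoke characterization (3) of the limit set in \thref{defn:LimitSet}. The natural candidate is $(g_n^{-1})_n$: intuitively, if $g_n$ pushes everything except $\xi$ toward $\xi_+$, then $g_n^{-1}$ should pull points back toward $\xi$. Concretely, first pass to a subsequence so that $(g_n^{-1})_n$ is a convergence sequence (possible since $G$ is a convergence group), say with attracting point $\eta_+$ and repelling point $\eta_-$; I should also refine so $g_n\xi_+$ converges to some point, necessarily $\xi_+$ itself once we use the defining property at $\xi' = \xi_+ \neq \xi$. Then I argue $\eta_+ = \xi$: pick any point $\zeta \notin \{\eta_-\}$ and also $\zeta \neq \xi_+$; then $g_n^{-1}\zeta \to \eta_+$, but applying $g_n$ we should recover $\zeta$ from $g_n(g_n^{-1}\zeta)$ — the point is to show $\eta_+$ cannot be any point other than $\xi$, because if $\eta_+ = \zeta_0 \neq \xi$ then $g_n \zeta_0 \to \xi_+$ while also $g_n(g_n^{-1}\zeta) = \zeta$ is constant, and comparing these using the uniform convergence on compacta away from the repelling point of $(g_n)_n$ forces a contradiction unless $\zeta_0 = \xi$. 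I need to be a little careful identifying the repelling point of $(g_n)_n$ (it should be $\xi_+$, since $g_n^{-1}$ sends things to $\eta_+$) and matching it against $\eta_-$. Once $\eta_+ = \xi$, characterization (3) gives $\xi \in \Lambda G$.

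For the second claim, suppose toward a contradiction that $\xi$ is also a parabolic point, so $\mathrm{Stab}_G(\xi)$ is a parabolic subgroup $P$: infinite, fixing $\xi$, with no loxodromics. The strategy is to combine the conical sequence with the parabolic stabilizer to manufacture a loxodromic element, or else to derive a contradiction directly from the dynamics. One clean route: since $P$ is infinite it contains a convergence subsequence $(p_m)_m$; because $P$ fixes $\xi$, both the attracting and repelling points of $(p_m)_m$ must be $\xi$ (a parabolic group fixing only $\xi$ can have no other fixed point, and the attracting/repelling points of a convergence sequence in $P$ are $P$-limit points hence equal $\xi$). Now use the conical sequence $(g_n)$: $g_n$ sends $\xi$ to $\xi_-$ and everything else to $\xi_+ \neq \xi_-$. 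Conjugating, $g_n p_{m(n)} g_n^{-1}$ for a suitable choice of indices should behave like a sequence with distinct attracting and repelling points near $\xi_-$ and $\xi_+$ — more precisely, one shows that an appropriate such conjugate fixes (or nearly fixes) two distinct points, contradicting that $P$, hence also its conjugates, contains no loxodromic; alternatively, one shows $\xi_- \in \Lambda P = \{\xi\}$, forcing $\xi_- = \xi$, which contradicts that $g_n \xi = \xi_- \neq \xi_+$ together with $\xi$ being moved. I expect the cleanest phrasing is: if $\xi$ were parabolic, then $\Lambda \mathrm{Stab}_G(\xi) = \{\xi\}$, but the conical condition lets us show $\xi_-$ (or $\xi_+$) lies in this limit set and is distinct from $\xi$, a contradiction.

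The main obstacle is the bookkeeping in the first part: carefully selecting nested subsequences so that $(g_n)$, $(g_n^{-1})$, and the relevant point-sequences all converge, and then correctly identifying which of the resulting special points coincide (matching $\eta_-$ with $\xi_+$, and proving $\eta_+ = \xi$ rather than merely $\eta_+ \neq \xi_+$). The convergence-group axiom only gives uniform convergence on compacta avoiding a single point, so I must make sure every compact set I feed it genuinely avoids the relevant repelling point — this is where an extra point like $\xi_+$ needs to be excluded by hand. The second part is then comparatively short once the limit-set identity $\Lambda P = \{\xi\}$ for a parabolic stabilizer is in hand, which itself follows from characterization (1) of the limit set applied to the $P$-invariant set $\{\xi\}$.
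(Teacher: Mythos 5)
The paper does not give its own proof of this proposition; it cites Bowditch. So there is no in-paper argument to compare against, and I am assessing your proposal as a standalone proof.

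\textbf{Part 1.} Your strategy (show $\xi$ is the attracting point of $(g_n^{-1})$ and invoke characterization (3) of $\Lambda G$) is correct, but the middle step is muddled. After passing to a subsequence so that $(g_n^{-1},\eta_+,\eta_-)$ is an ART, equivalently $(g_n,\eta_-,\eta_+)$ is an ART, the clean closing move is: pick $\xi'\neq\xi$ with $\xi'\neq\eta_+$ (a convergence group with a conical limit point forces $|M|\ge 3$); then $g_n\xi'\to\xi_+$ by conicality and $g_n\xi'\to\eta_-$ by the ART, so $\eta_-=\xi_+$. If $\eta_+\neq\xi$, the ART forces $g_n\xi\to\eta_-=\xi_+$, contradicting $g_n\xi\to\xi_-\neq\xi_+$; hence $\eta_+=\xi$. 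Your version instead compares $g_n\zeta_0$ with $g_n(g_n^{-1}\zeta)=\zeta$ for an auxiliary $\zeta_0$, which does not isolate $\xi$; the decisive input is the \emph{special} convergence $g_n\xi\to\xi_-$, and you should use it directly.

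\textbf{Part 2} has a genuine gap. The route you call cleanest---showing $\xi_+$ or $\xi_-$ lies in $\Lambda(\mathrm{Stab}_G\xi)=\{\xi\}$---cannot work. The points $\xi_\pm$ are determined by the external sequence $(g_n)$, which does not lie in $P:=\mathrm{Stab}_G(\xi)$ (indeed, if infinitely many $g_n$ lay in a single coset $gP$, the defining conditions would force $\xi_+=g\xi=\xi_-$, contradicting $\xi_+\neq\xi_-$), so there is no mechanism for $\xi_\pm$ to show up as accumulation points of $P$-orbits. In the Fuchsian/Kleinian model, $\xi_\pm$ are the endpoints of a conical approach geodesic---generic limit points---while $\Lambda P$ is the single cusp; they do not coincide. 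Your alternative route is also not a proof as stated: $g_np_mg_n^{-1}$ lies in $g_nPg_n^{-1}$ rather than in $P$, so producing a ``loxodromic-like'' element among these conjugates would not contradict parabolicity of $P$; and ``nearly fixing two distinct points'' is not the same as being loxodromic. The standard argument (Bowditch, Tukia) is more substantial: one uses proper discontinuity of $P$ on $M\setminus\{\xi\}$---or, for bounded parabolic points, cocompactness there---to translate the escaping orbit $g_n\xi$ back into a fixed compact set by elements of $P$, and derives a contradiction from the dynamics of the resulting mixed sequence. That idea is missing from your sketch, and neither of your proposed routes supplies it.
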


\cite{Hruska_2010} lays out many definitions of relative quasiconvexity as well, including the first of the following definitions.

\begin{definition}\thlabel{RQC}
    Let $G$ act on a compact metrizable space $M$ as a geometrically finite convergence group, let $\mathbb{P}$ be the set of maximal parabolic subgroups, and let $H$ be a subgroup.
    \begin{enumerate}
        \item $H$ is \emph{relatively quasi-convex} in $G$, or \emph{RQC}, if the action of $H$ on $\Lambda H$ is geometrically finite. 
        \item $H$ is \emph{fully relatively quasi-convex} in $G$, or \emph{fully RQC}, if it is RQC and, for any infinite sequence of elements $(g_n)_n$ each in a distinct $H$ coset, we have $\bigcap_n g_n\Lambda H = \varnothing$, 
        \item $H$ is \emph{full relatively quasi-convex} in $G$, or \emph{full RQC}, if it is RQC and, for every maximal parabolic subgroup $P$ of $G$, $H\cap P$ is either finite or finite index in $P$. 
    \end{enumerate}
\end{definition}

The reader should be disturbed by these names. Fortunately, fully RQC and full RQC are equivalent conditions, as we will see in \thref{Finite Height}. The fully implies full direction is due to Dahmani \cite[Lemma 1.7]{Dahmani_2003}, and he also shows that a subgroup of a hyperbolic group is metrically quasiconvex as a subset of the Cayley graph if and only if it is fully RQC in the above sense. 

The following proposition shows that an RQC subgroup inherits a peripheral structure from the larger group which makes it relatively hyperbolic. 

\begin{proposition}\thlabel{RQC means induced structure is the same}\cite[Theorem 9.1]{Hruska_2010}
    Let $H$ be an RQC subgroup of a relatively hyperbolic group $(G,\PP)$ and let 
    \[\overline{\mathbb{O}} = \{H \cap P^g \: | \; P \in \mathbb{P}, \, g \in G, \, H \cap P^g \text{ infinite} \}\]
    Then $\overline{\mathbb{O}}$ consists of finitely many $H$ conjugacy classes. If $\mathbb{O}$ is a set of representatives of $H$ conjugacy classes from $\overline{\mathbb{O}}$, $(H,\mathbb{O})$ is relatively hyperbolic and $ \partial_{\mathbb{O}}H$ is $H$--equivariantly homeomorphic to $\Lambda H \subset \partial _{\mathbb{P}}G$.
\end{proposition}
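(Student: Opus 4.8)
The plan is to feed the geometrically finite convergence action of $H$ on its limit set $\Lambda H$, supplied by hypothesis (\thref{RQC}(1)), into Tukia's \thref{finitely many parabolics} and Yaman's \thref{Geometrically Finite Convergence implies RelHyp}; all the real content is in identifying the maximal parabolic subgroups of that action. First I would set up the restricted action: $\Lambda H$ is a closed $H$--invariant subset of the compact metrizable space $\partial_\PP G$, hence compact metrizable, and being a convergence group (\thref{defn:Convergence group}) passes to the subgroup $H$ and restricts to the closed invariant set $\Lambda H$, since the attracting and repelling points of a convergence sequence in $H$ again lie in $\Lambda H$. (The elementary case $|\Lambda H| \le 2$, where $H$ is virtually cyclic or parabolic, I would treat directly; otherwise $\Lambda H$ is perfect.) By \thref{RQC}(1) this action is geometrically finite in the sense of \thref{Geometrically Finite}.

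The heart of the argument is the claim: for $\xi \in \Lambda H$, $\xi$ is a bounded parabolic point of the $H$--action on $\Lambda H$ if and only if $\xi$ is a parabolic point of $G$ with $H \cap \mathrm{Stab}_G(\xi)$ infinite, in which case $\mathrm{Stab}_H(\xi) = H \cap \mathrm{Stab}_G(\xi) = H \cap P^g$ for some $P \in \PP$ and $g \in G$. The observation making this work is that the convergence--group classification of an infinite--order element is intrinsic: the two fixed points of a loxodromic element $h \in H$ lie in $\Lambda\langle h \rangle \subseteq \Lambda H$, so $h$ is loxodromic for the $H$--action on $\Lambda H$ exactly when it is loxodromic for the $G$--action on $\partial_\PP G$. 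Hence, if $\xi$ is a bounded parabolic point of the $H$--action, $\mathrm{Stab}_H(\xi)$ is an infinite subgroup of $G$ fixing $\xi$ with no loxodromics, i.e.\ a parabolic subgroup of $G$; geometric finiteness of $G$ makes $\xi$ a bounded parabolic point of $G$, so $\mathrm{Stab}_G(\xi)$ is a maximal parabolic subgroup and thus conjugate to an element of $\PP$, and $\mathrm{Stab}_H(\xi) = H \cap \mathrm{Stab}_G(\xi)$. Conversely, if $H \cap P^g$ is infinite then, lying inside the parabolic subgroup $P^g$, it has no loxodromics, so its limit set is the single point $\xi = g\xi_0$ fixed by $P^g$; then $\xi \in \Lambda H$, $\mathrm{Stab}_H(\xi)$ is parabolic for the $H$--action, and geometric finiteness of that action (this is the one place \thref{RQC}(1) is essential) upgrades $\xi$ to a bounded parabolic point, with $\mathrm{Stab}_H(\xi) = H \cap P^g$. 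Finally, two such stabilizers are $H$--conjugate precisely when their fixed points lie in one $H$--orbit, which is the equivalence relation defining $\mathbb{O}$ inside $\overline{\mathbb{O}}$.

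Given this dictionary, the proposition follows formally. \thref{finitely many parabolics}, applied to the geometrically finite $H$--action on $\Lambda H$, gives finitely many $H$--conjugacy classes of maximal parabolic subgroups, hence finitely many $H$--classes in $\overline{\mathbb{O}}$. And \thref{Geometrically Finite Convergence implies RelHyp}, applied to the same action on the perfect compact metrizable space $\Lambda H$ --- after checking that the representatives in $\mathbb{O}$ are finitely generated, as that theorem requires --- gives that $(H, \mathbb{O})$ is relatively hyperbolic with $\partial_\mathbb{O} H$ being $H$--equivariantly homeomorphic to $\Lambda H \subseteq \partial_\PP G$.

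The step I expect to be the main obstacle is the middle one: keeping the convergence--dynamics bookkeeping honest, i.e.\ verifying that ``loxodromic element'' and ``bounded parabolic point'' behave well under restriction from $\partial_\PP G$ to $\Lambda H$, and that the stabilizer of a bounded parabolic point of a geometrically finite convergence group is exactly a maximal parabolic subgroup --- plus the minor loose end of finite generation of the induced peripheral subgroups, needed to invoke Yaman's theorem in the form stated here.
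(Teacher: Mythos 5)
The paper does not prove this proposition --- it cites Hruska's Theorem 9.1 directly, so there is no internal argument to compare yours against. Your proof is the natural dynamical argument and it is correct in substance. The crux is exactly where you locate it: showing that $\xi \in \Lambda H$ is a bounded parabolic point for the restricted $H$--action if and only if $H \cap \mathrm{Stab}_G(\xi)$ is an infinite parabolic subgroup of $G$, which rests on the observation that an infinite-order $h \in H$ has $\mathrm{Fix}(h) = \Lambda\langle h\rangle \subseteq \Lambda H$, so ``loxodromic'' is the same notion tested in $\partial_\PP G$ or in $\Lambda H$. Two spots worth tightening. In the backward direction you implicitly use that the full stabilizer $\mathrm{Stab}_G(\xi)$ is exactly $P^g$ rather than something larger: this needs the standard facts that a parabolic point in a geometrically finite convergence group is automatically a \emph{bounded} parabolic point (via \thref{limit points are limit points}), that no loxodromic can fix a parabolic point, and that two maximal parabolic subgroups sharing a fixed point coincide --- say these out loud. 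And the finite-generation requirement in the paper's phrasing of \thref{Geometrically Finite Convergence implies RelHyp} is a real friction, not a minor loose end: the induced peripheral subgroups $H\cap P^g$ need not be finitely generated in general, and Hruska's Theorem 9.1 is proved for countable groups with possibly non-finitely-generated peripherals via the machinery in his Section 4 (which the paper gestures at). So a self-contained proof from only the tools quoted in this paper would need the stronger version of Yaman's theorem that drops finite generation, or else a separate argument that $H \cap P^g$ is finitely generated in the cases at hand.
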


\begin{remark}
    
    \begin{enumerate}
    \item If $G$ is relatively hyperbolic and $\Sigma$ is as in \thref{defn:Relatively Hyperbolic}, then $\Lambda H$ can be seen as the points of $\partial \Sigma$ which are limit points of some $H$--orbit in $\Sigma$. Changing which base point defines the orbit translates the orbit a bounded amount, leaving the limit set unchanged.
    
    \item If $H_1 < H_2 < G$ are groups with $G$ relatively hyperbolic, $H_2$ RQC in $G$, and $H_1$ RQC in $H_2$, then $H_1$ is RQC in $G$. The limit set of $H_1$ in $\partial G$ is the image of the inclusion $\partial H_1 \longrightarrow \partial H_2  \longrightarrow \partial G$. 

    \item Parabolic subgroups are RQC since their limit set is a single point. Further, maximal parabolic subgroups are full RQC; If $P,P'$ are maximal parabolic subgroups of a relatively hyperbolic group and $P \cap P'$ is infinite, we must have $P = P'$. This really shows maximal parabolic subgroups are almost malnormal, for if $P' = P^g$ and $P \cap P^g$ is infinite, then it must have a limit set, and it must be the unique fixed point of $P$. Hence $g$ fixes this point and $g \in P$ by the maximality of $P$.
    \end{enumerate}
\end{remark}

\begin{proposition}\thlabel{lipschitz} \cite[Lemma 3.1]{Agol_2009}
    Let $H$ be a RQC subgroup of a relatively hyperbolic group $(G,\mathbb{P})$ and let $CH$ and $CG$ be the corresponding cusped spaces. The inclusion $i:H \hookrightarrow G$ induces an $H$--equivariant Lipschitz map $\hat{i}:CH \rightarrow CG$ with quasiconvex image.
\end{proposition}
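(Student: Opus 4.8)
The plan is to build $\hat i$ from its behaviour on the level-$0$ part (a Cayley graph) and on the combinatorial horoballs, verify the Lipschitz bound by a computation inside one horoball, and obtain quasiconvexity of the image by upgrading $\hat i$ to a quasi-isometric embedding and invoking stability of quasigeodesics. Concretely: by \thref{RQC means induced structure is the same}, if $\mathbb{O}=\{O_1,\dots,O_k\}$ is a set of representatives of the $H$--conjugacy classes among the infinite groups $H\cap P^g$, then $(H,\mathbb{O})$ is relatively hyperbolic; write $O_j=H\cap Q_j$ with $Q_j=g_jP_{i_j}g_j^{-1}$ a conjugate of some $P_{i_j}\in\PP$. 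Choose a finite generating set $S$ of $G$ with $P\cap S$ generating each $P\in\PP$ and a finite generating set $T$ of $H$ with $O_j\cap T$ generating each $O_j$ (possible since each $O_j$ is finitely generated). Define $\hat i$ on $\mathrm{Cay}(H,T)$ by sending the vertex $h$ to $h\in\mathrm{Cay}(G,S)$ and each edge to a geodesic of $\mathrm{Cay}(G,S)$ with the same endpoints, chosen one per $H$--orbit of edges so $\hat i$ is $H$--equivariant. A horoball of $CH$ sits over a coset $hO_j$, and since $O_jg_j\subseteq Q_jg_j=g_jP_{i_j}$ we get $hO_jg_j\subseteq hg_jP_{i_j}$, a coset of $P_{i_j}$ over which $CG$ carries a horoball; so define $\hat i$ on the horoball over $hO_j$ by $(x,n)\mapsto(xg_j,n)$, extended over edges. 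This is again $H$--equivariant since left translation commutes with right translation by $g_j$.

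For the Lipschitz bound, the only edges with non-obviously-short images are the horizontal edges deep in a horoball. Such an edge of $CH$ at level $n$ over $hO_j$ joins $(x,n)$ to $(y,n)$ with $x,y\in hO_j$ at $(O_j\cap T)$--word distance $<2^n$. Right translation by $g_j$ carries the subgraph of $\mathrm{Cay}(H,T)$ on $hO_j$ into the subgraph of $\mathrm{Cay}(G,S)$ on $hg_jP_{i_j}$ with Lipschitz constant $C:=\max_j\max_{t\in O_j\cap T}|g_j^{-1}tg_j|_{P_{i_j}\cap S}$, which is finite because each $g_j^{-1}tg_j$ lies in the finitely generated group $P_{i_j}$. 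Thus $xg_j$ and $yg_j$ are at distance $<C\,2^n$ there, so in $CG$ one joins $(xg_j,n)$ to $(yg_j,n)$ by climbing $\lceil\log_2 C\rceil+1$ levels, crossing one horizontal edge, and descending --- length $\le 2\lceil\log_2 C\rceil+3$; this is the standard fact that a Lipschitz map of base graphs induces one of combinatorial horoballs. With the bound $\max_{t\in T}|t|_S$ for level-$0$ edges, the length-$1$ images of vertical edges, and the constant $|g_j|_S+1$ absorbing the level-$0$ discrepancy between the rules $h\mapsto h$ and $x\mapsto xg_j$, this gives $\hat i$ $\Lambda$--Lipschitz for an explicit $\Lambda$.

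Finally, $\hat i(CH)$ is quasiconvex in $CG$. I would get this from the stronger statement that $\hat i$ is a quasi-isometric embedding: the Lipschitz bound is the upper half, and with a lower bound $d_{CG}(\hat i p,\hat i q)\ge\frac1{\Lambda'}d_{CH}(p,q)-c$ the image of any $CH$--geodesic becomes a quasigeodesic in the $\delta$--hyperbolic space $CG$, hence fellow-travels an honest geodesic by stability of quasigeodesics, whence every $CG$--geodesic between points of $\hat i(CH)$ stays uniformly close to $\hat i(CH)$. The lower bound --- that $H$ is undistorted in the cusped space of $G$ --- is where RQC is essential, and I expect it to be the main obstacle. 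It is one of Hruska's equivalent reformulations of relative quasiconvexity \cite{Hruska_2010} (the orbit map into the cusped space is a quasi-isometric embedding); a hands-on proof would trace a $CG$--geodesic between two points of $He$, noting that inside horoballs the logarithmic compression of the combinatorial metric turns even distorted inclusions $O_j\hookrightarrow Q_j$ into quasi-isometric embeddings automatically, while in the thick part the bounded coset penetration property of $(G,\PP)$ together with the bounded coned-off diameter of peripheral cosets forces the geodesic to shadow a $CH$--geodesic up to bounded error. This is the route of \cite[Lemma 3.1]{Agol_2009}.
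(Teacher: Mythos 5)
Your construction of $\hat i$ and your Lipschitz computation match the paper's proof sketch (up to renaming $c_i$ as $g_j$), and your horoball-level estimate fills in the details the paper compresses into ``keeping track of the generating sets.'' For quasiconvexity of the image, the paper is more direct: Groves--Manning define $C$-relative quasiconvexity precisely as the image of $\hat i$ being $C$-quasiconvex in the cusped space, and \cite[Theorem A.10]{Manning_2009} proves this equivalent to the dynamical RQC hypothesis, so quasiconvexity follows in one citation. Your route --- prove $\hat i$ is a quasi-isometric embedding, then apply stability of quasigeodesics in the hyperbolic $CG$ --- is also sound, but the lower distance bound you flag as the main obstacle is exactly what \cite{Manning_2009} (equivalently Hruska's reformulations) already packages; you would be re-deriving a slightly stronger statement before invoking stability, where citing the tailored equivalence finishes immediately.
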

\begin{proof}[Proof Sketch]
    As above, the peripheral structure of $H$ can be expressed as $\mathbb{D} = \{D_i = H \cap P_i^{c_i} \; | \; P_i \in \mathbb{P}, c_1,\ldots, c_\ell \in G, H \cap P_i^{c_i} \text{ infinite}\}$, where some of the $P_i$ may be identical. To extend $i$ to $\hat{i}$, consider a point of a horoball in $H$ is given by $(hD_i,hd,n)$, where $h \in H, d \in D_i$, and $n \geq 1$. Define
    \[\hat{i}(hD_i,hd,n) = (hc_iP_i, hdc_i,n).\]
    Keeping track of the generating sets for $H,G$ used to build $CH, CG$ shows $\hat{i}$ is Lipschitz. 
    
    In \cite[Definition 3.11]{GrovesManning2006}, a subgroup of $(G,\mathbb{P})$ is defined as $C$\emph{--relatively quasiconvex} exactly when the image of $\hat{i}$ is $C$--quasiconvex as a subset of the cusped space for $G$. \cite[Theorem A.10]{Manning_2009} shows $H$ is $C$--relatively quasiconvex if and only if $H$ is RQC in the sense of \thref{RQC}. By assumption, $H$ is RQC in the sense above, so \cite{Manning_2009} implies this image is metrically quasiconvex as in the statement. 
\end{proof}

In \cite{Yang2010}, Yang studies intersections of RQC subgroups and gives us the following useful properties.

\begin{proposition} \cite[Thm $1.1$]{Yang2010}\thlabel{General Limit Set Property}
    Let $H_1,H_2$ be RQC subgroups of a relatively hyperbolic group $G$. Then 
    \[\Lambda H_1 \cap \Lambda H_2 = \Lambda (H_1\cap H_2) \sqcup E\]
    where the exceptional set $E$ consists of parabolic points $\xi \in\Lambda H_1 \cap \Lambda H_2$ so that $\mathrm{Stab}_{H_1}(\xi) \cap \mathrm{Stab}_{H_2}(\xi)$ is finite.
\end{proposition}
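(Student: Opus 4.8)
The plan is to split $\Lambda H_1\cap\Lambda H_2$ according to whether a point is a parabolic point of $G$, and to treat the two resulting cases with different tools. Two inclusions are immediate: $\Lambda(H_1\cap H_2)\subseteq\Lambda H_1\cap\Lambda H_2$ since a convergence sequence in $H_1\cap H_2$ is also a convergence sequence in each $H_j$ with the same attracting point, and $E\subseteq\Lambda H_1\cap\Lambda H_2$ by definition of $E$. Throughout I would use the structural fact that the intersection of two RQC subgroups of a relatively hyperbolic group is again RQC, so that $H_1\cap H_2$ acts geometrically finitely on $\Lambda(H_1\cap H_2)$, which is its Bowditch boundary by \thref{RQC means induced structure is the same}; this is a standard result on intersections of RQC subgroups that I would cite (see \cite{Yang2010}).

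\emph{Parabolic points.} Here everything rests on one dichotomy: \emph{if $\xi$ is a parabolic point of $G$ and $K\le G$ is RQC, then $\xi\in\Lambda K$ if and only if $\mathrm{Stab}_K(\xi)$ is infinite}. Indeed $\mathrm{Stab}_K(\xi)=K\cap\mathrm{Stab}_G(\xi)$ is a subgroup of the maximal parabolic $\mathrm{Stab}_G(\xi)$, whose limit set is $\{\xi\}$; if it is infinite then monotonicity of limit sets gives $\xi\in\Lambda K$, and conversely if $\xi\in\Lambda K$ then geometric finiteness of $K$ on $\Lambda K$ makes $\xi$ either a conical limit point of $K$---which is impossible, as it would then be a conical limit point of $G$, contradicting \thref{limit points are limit points}---or a bounded parabolic point of $K$, whence $\mathrm{Stab}_K(\xi)$ is infinite. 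Applying this with $K=H_1$, $K=H_2$, and $K=H_1\cap H_2$, a parabolic point $\xi$ lies in $\Lambda H_1\cap\Lambda H_2$ exactly when $\mathrm{Stab}_{H_1}(\xi)$ and $\mathrm{Stab}_{H_2}(\xi)$ are both infinite, and it lies in $\Lambda(H_1\cap H_2)$ exactly when $\mathrm{Stab}_{H_1}(\xi)\cap\mathrm{Stab}_{H_2}(\xi)$ is infinite. Among the parabolic points of $\Lambda H_1\cap\Lambda H_2$, the latter condition fails precisely on $E$, so over the parabolic points $\Lambda H_1\cap\Lambda H_2$ is the disjoint union of $\Lambda(H_1\cap H_2)$ and $E$.

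\emph{Non-parabolic points.} Since $E$ contains only parabolic points, it remains to show that a point $\xi\in\Lambda H_1\cap\Lambda H_2$ which is not a parabolic point of $G$ lies in $\Lambda(H_1\cap H_2)$. I would pass to a cusped space $CG$ for $(G,\PP)$, identify $\partial CG$ with $\partial_\PP G$, and fix basepoint $o=1$. By \thref{lipschitz} each $\hat i_j\colon CH_j\to CG$ is $H_j$--equivariant, Lipschitz, and has quasiconvex image with limit set $\Lambda H_j$; the explicit formula in its proof shows $\hat i_j$ preserves horoball levels and carries the Cayley part of $CH_j$ coarsely onto the orbit $H_j o$, which sits in the group part $G\subseteq CG$ at level $0$. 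Quasiconvexity forces the geodesic ray $\gamma$ from $o$ to $\xi$ into bounded neighborhoods of both $\hat i_1(CH_1)$ and $\hat i_2(CH_2)$. Since $\xi$ is not parabolic in $G$ and $H_2$ is RQC, $\xi$ is a conical limit point of $H_2$ (a bounded parabolic point of $H_2$ would have an infinite parabolic stabilizer and hence be parabolic in $G$); geometrically, this produces times $t_n\to\infty$ with $\gamma(t_n)$ at bounded distance from $H_2 o$. Being that close to level $0$ bounds the horoball level of the nearby point of $\hat i_1(CH_1)$, so level preservation and the Lipschitz bound place $\gamma(t_n)$ at bounded distance from $H_1 o$ as well. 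Picking $h_n^{(1)}\in H_1$ and $h_n^{(2)}\in H_2$ with $h_n^{(1)}o$ and $h_n^{(2)}o$ uniformly close to $\gamma(t_n)$, the element $(h_n^{(1)})^{-1}h_n^{(2)}$ moves $o$ a bounded amount, so by properness of the action on $CG$ it takes only finitely many values; along a subsequence it is a fixed element $f$, and then $h_n^{(1)}(h_m^{(1)})^{-1}=h_n^{(2)}(h_m^{(2)})^{-1}$ lies in $H_1\cap H_2$ for all such $n,m$. Fixing $m$, the $(H_1\cap H_2)$--orbit of $h_m^{(1)}o$ contains the points $h_n^{(1)}o$, which converge to $\xi$; hence $\xi\in\Lambda(H_1\cap H_2)$, and this completes the proof.

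The main obstacle is the non-parabolic case, and within it the behavior of $\gamma$ near horoballs. One must know that a conical point of $H_j$ keeps $\gamma$ returning to the Cayley part of $\hat i_j(CH_j)$ rather than escaping up a horoball, and---more delicately---that when $\gamma(t_n)$ is near the Cayley part of $\hat i_2(CH_2)$ it is also near the Cayley part of $\hat i_1(CH_1)$. This is exactly where a ``near miss'' would instead yield an exceptional parabolic point with finite intersection of stabilizers, so the level bookkeeping through the level-preserving maps $\hat i_j$ is what keeps the two cases apart. Everything else is monotonicity of limit sets, the standard dictionary between conical and bounded-parabolic limit points and the geometry of the cusped space, and properness of the $G$--action on $CG$.
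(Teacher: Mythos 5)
The paper does not prove this proposition: it is quoted verbatim from \cite{Yang2010}, and the companion fact (Yang's Proposition 1.3, that $H_1\cap H_2$ is again RQC) is quoted separately just afterward. There is therefore no in-paper argument to compare your proof against, so I evaluate it on its own terms.

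Your proof is correct. The split into parabolic and non-parabolic points of $G$ does what you intend. Over parabolic points, the dichotomy \emph{for RQC $K\le G$ and a parabolic point $\xi$, $\xi\in\Lambda K$ iff $\mathrm{Stab}_K(\xi)$ is infinite} is argued correctly in both directions (monotonicity of limit sets for the easy direction; geometric finiteness of $K$ on $\Lambda K$ plus \thref{limit points are limit points} to rule out $\xi$ being conical for the hard direction). Applying it with $K=H_1$, $K=H_2$, $K=H_1\cap H_2$ and noting $\mathrm{Stab}_{H_1\cap H_2}(\xi)=\mathrm{Stab}_{H_1}(\xi)\cap\mathrm{Stab}_{H_2}(\xi)$ yields exactly the decomposition $\Lambda(H_1\cap H_2)\sqcup E$ on the parabolic locus. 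The non-parabolic case is the real content and your cusped-space argument is sound: depth in $CG$ is $1$-Lipschitz, the maps $\hat i_j$ from \thref{lipschitz} preserve depth and send the Cayley part of $CH_j$ coarsely onto $H_j\cdot o$, so a conical return of $[o,\xi)$ near $H_2 o$ (depth $0$) forces the nearby point of $\hat i_1(CH_1)$ to bounded depth and hence within bounded distance of $H_1\cdot o$ (the bounded discrepancy coming from the conjugators $c_i$ in the explicit formula for $\hat i_1$). Properness of the $G$--action on $CG$ then gives a constant offset $f=(h_n^{(1)})^{-1}h_n^{(2)}$ along a subsequence, the products $h_n^{(1)}(h_m^{(1)})^{-1}$ land in $H_1\cap H_2$, and they translate $h_m^{(1)}o$ to $h_n^{(1)}o\to\xi$, giving $\xi\in\Lambda(H_1\cap H_2)$.

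One dependency worth making explicit in the write-up: the hard direction of the dichotomy for $K=H_1\cap H_2$ (used to see that $\Lambda(H_1\cap H_2)$ and $E$ are disjoint on the parabolic locus) requires that $H_1\cap H_2$ act geometrically finitely on its limit set, i.e.\ that $H_1\cap H_2$ is RQC. You acknowledge citing this, and it is logically independent of the statement being proved, so there is no circularity; but you should state plainly that the argument establishes Yang's Theorem~1.1 \emph{assuming} the RQC-intersection fact rather than proving both at once.
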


\begin{proposition}\cite[Prop $1.3$]{Yang2010}
    If $H_1, H_2$ are RQC subgroups of a relatively hyperbolic group, then $H_1 \cap H_2$ is RQC.
\end{proposition}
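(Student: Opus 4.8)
\emph{Proof plan.} By \thref{RQC}(1) it suffices to show that $K := H_1\cap H_2$ acts as a geometrically finite convergence group on $M := \Lambda K$. If $K$ is finite then $M=\varnothing$ and the claim is trivial, and if $K$ is elementary (so $|M|\le 2$) it is immediate, so I will assume $K$ is non-elementary; then $M$ is infinite and perfect and, since $K\le H_1,H_2$, so are $\Lambda H_1$ and $\Lambda H_2$. First I would check that $K$ acts on the closed $K$--invariant set $M\subseteq\partial G$ as a convergence group: an infinite sequence in $K$ subconverges to a convergence sequence of $G$ whose attracting and repelling points both lie in $\Lambda K=M$ (they are the attracting points of the sequence and of its inverse), so restricting the attracting--repelling dynamics to $M$ produces a convergence sequence for $K\curvearrowright M$. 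It then remains to classify each $\xi\in M$ as a bounded parabolic or conical limit point for $K\curvearrowright M$.

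The backbone is a dichotomy from geometric finiteness of $G$: every $\xi\in M\subseteq\partial G$ is a conical limit point or a parabolic point of $G$. Two general facts transfer this to $H_1$ and $H_2$. (i) A conical limit point of a subgroup $H_j$ is a conical limit point of $G$: a convergence sequence in $H_j$ realizing conicality of $\xi$, viewed in $G$, is forced to have repelling point $\xi$ and the same attracting point (here one uses $\Lambda H_j$ infinite), hence realizes conicality in $\partial G$. (ii) If $\xi$ is a bounded parabolic point of $H_j$ then $\operatorname{Stab}_{H_j}(\xi)$ is an infinite loxodromic--free subgroup of $\operatorname{Stab}_G(\xi)$ fixing $\xi$, so $\xi$ is a parabolic point of $G$ by \thref{limit points are limit points}. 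Since $\xi\in M\subseteq\Lambda H_1\cap\Lambda H_2$ and $H_1,H_2$ are RQC (hence geometrically finite on their limit sets), (i) and (ii) give: if $\xi$ is conical for $G$, it is conical for both $H_1$ and $H_2$; if $\xi$ is a parabolic point of $G$, it is a bounded parabolic point for both $H_1$ and $H_2$, $\operatorname{Stab}_G(\xi)$ is a maximal parabolic of $G$, and each $\operatorname{Stab}_{H_i}(\xi)$ is infinite. In the parabolic case, $\xi\notin E$ in the notation of \thref{General Limit Set Property} because $\xi\in\Lambda K$, so $\operatorname{Stab}_K(\xi)=\operatorname{Stab}_{H_1}(\xi)\cap\operatorname{Stab}_{H_2}(\xi)$ is infinite; it is loxodromic--free and fixes only $\xi$, so $\xi$ is a parabolic point for $K\curvearrowright M$, and one then checks it is \emph{bounded}: properness of the stabilizer action is inherited from $G$, and cocompactness of $\operatorname{Stab}_K(\xi)$ on $M\setminus\{\xi\}$ is obtained by a compactness argument in the spirit of \thref{Parabolics are almost cocompact}, carried out in the cusped space near the horoball fixed by $\xi$.

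The remaining and hardest case is $\xi$ conical for $G$: one must promote ``$\xi$ is conical for $H_1$ and for $H_2$ and lies in $\Lambda K$'' to ``$\xi$ is conical for $K$''. For this I would pass to the cusped space $CG$, a proper $\delta$--hyperbolic space, and invoke \thref{lipschitz}: $A_j := \widehat{i}_j(CH_j)$ is an $H_j$--invariant quasiconvex subset of $CG$. Let $\rho$ be a geodesic ray from a basepoint $x_0$ to $\xi$. Since $\xi\in\partial A_1\cap\partial A_2$ and the $A_j$ are quasiconvex, $\rho$ eventually lies in a uniformly bounded neighbourhood of $A_1$ and of $A_2$; since $\xi$ is not a parabolic point of $G$, $\rho$ penetrates the combinatorial horoballs only boundedly, so eventually it lies boundedly close to the ``shallow'' parts of $A_1$ and $A_2$, which are coarsely the orbits $H_1\cdot x_0$ and $H_2\cdot x_0$. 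A bounded--packing / Short--type estimate for quasiconvex subgroups in $CG$ --- the relatively hyperbolic analogue of the fact that an intersection of quasiconvex subgroups of a hyperbolic group is quasiconvex, and precisely the place where one must verify that the exceptional set $E$ contributes nothing at bounded horoball depth --- then forces $\rho(t)$ to return within a bounded distance of $K\cdot x_0$ for arbitrarily large $t$, which is exactly the criterion for $\xi$ to be a conical limit point of $K$. Equivalently one can bypass the case split and show directly that $\widehat{i}_{12}(CK)$ is quasiconvex in $CG$, which by \thref{lipschitz} is equivalent to $K$ being RQC; this rests on the same coarse--intersection estimate, the substance being that a $CG$--geodesic between two points of the honest hull $\widehat{i}_{12}(CK)$ cannot be pulled into the deep horoball ``fingers'' of $A_1\cap A_2$ created by points of $E$. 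I expect this coarse--intersection (bounded--packing) input to be the only step requiring genuine work; the rest is bookkeeping with the results above.
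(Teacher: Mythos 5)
The paper does not prove this proposition; it is cited as \cite[Prop.~1.3]{Yang2010} and used as a black box, so there is no in-paper argument to compare against. I will therefore evaluate your sketch on its own merits.

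The architecture is sound: the restriction of the convergence action to the closed invariant set $\Lambda K$, the transfer of the conical/parabolic dichotomy from $G$ to $H_1$ and $H_2$ via the mutual exclusivity of conical and parabolic points (\thref{limit points are limit points}), and the use of the disjointness $\Lambda H_1\cap\Lambda H_2=\Lambda K\sqcup E$ from \thref{General Limit Set Property} to rule $\xi$ out of $E$ are all correct. But the two places where you wave your hands are exactly where the theorem lives, and neither closes as cheaply as you suggest. In the conical case everything hinges on a ``Short--type coarse-intersection estimate'' that the intersection of bounded neighborhoods of $\hat{i}_1(CH_1)$ and $\hat{i}_2(CH_2)$ lies in a bounded neighborhood of $\hat{i}_{12}(CK)$. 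You yourself observe that this is equivalent, via \thref{lipschitz} and Manning's criterion, to $\hat{i}_{12}(CK)$ being quasiconvex, i.e.\ to the proposition. So the step you have isolated as ``the only step requiring genuine work'' is in substance the whole theorem, not a plug-in; a real proof must establish that estimate from scratch in the cusped space, and handling the interaction with the combinatorial horoballs and the exceptional set $E$ is precisely the substance one cannot defer. In the parabolic case you assert cocompactness of $\operatorname{Stab}_K(\xi)$ on $\Lambda K\setminus\{\xi\}$ ``in the spirit of \thref{Parabolics are almost cocompact},'' but that lemma is stated for a \emph{finite-index} subgroup of a maximal parabolic, whereas here $\operatorname{Stab}_K(\xi)=\operatorname{Stab}_{H_1}(\xi)\cap\operatorname{Stab}_{H_2}(\xi)$ can easily have infinite index in each factor (the $H_i$ are only assumed RQC, not full --- picture two rank-two sublattices of a rank-three peripheral meeting in a line), so you cannot simply restrict the coarse fundamental domains for the $\operatorname{Stab}_{H_i}(\xi)$-actions. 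Bounded parabolicity of $\operatorname{Stab}_K(\xi)$ on $\Lambda K\setminus\{\xi\}$ is a genuine assertion needing its own argument, and the most natural route again goes through the cusped-space quasiconvexity that you have deferred. In short: the skeleton is right and is a sensible reduction, but both endpoints are unproven, and the first of them is circular as stated.
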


\begin{corollary}[Limit Set Property]\thlabel{Limit Set Property}
    If $H_1, H_2$ are full RQC subgroups of a relatively hyperbolic group $G$, then $H_1 \cap H_2$ is full RQC and
    \[\Lambda H_1 \cap \Lambda H_2 = \Lambda (H_1\cap H_2).\]
\end{corollary}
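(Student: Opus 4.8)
The plan is to combine the two results of Yang quoted just above --- Proposition~\ref{General Limit Set Property} (which says $\Lambda H_1 \cap \Lambda H_2 = \Lambda(H_1 \cap H_2) \sqcup E$ with $E$ an exceptional set of ``bad'' parabolic points) and the proposition that $H_1 \cap H_2$ is again RQC --- and then to upgrade RQC to full RQC and to rule out the exceptional set $E$ using the ``full'' hypothesis. Since these are really two separate assertions, I would prove them in that order, as the fullness of $H_1 \cap H_2$ is what feeds into killing $E$.

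First I would show $H_1 \cap H_2$ is full RQC. By Yang's proposition it is RQC, so by Definition~\ref{RQC}(3) it suffices to check that for every maximal parabolic $P < G$, the intersection $(H_1 \cap H_2) \cap P = (H_1 \cap P) \cap (H_2 \cap P)$ is either finite or finite index in $P$. Since $H_1$ and $H_2$ are full RQC, each $H_i \cap P$ is finite or finite index in $P$. If either is finite then their intersection is finite; if both are finite index in $P$, then $(H_1 \cap P) \cap (H_2 \cap P)$ is finite index in $P$ (an intersection of two finite-index subgroups of $P$ is finite index in $P$). So in all cases the condition holds, and $H_1 \cap H_2$ is full RQC. (Implicitly I am using, as the remark after Definition~\ref{RQC} promises and \thref{Finite Height} will confirm, that ``full RQC'' and ``fully RQC'' coincide; I only need the ``full'' formulation here.)

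Next I would eliminate the exceptional set $E$. Suppose $\xi \in E$, so $\xi$ is a parabolic point of $G$ lying in $\Lambda H_1 \cap \Lambda H_2$ with $\operatorname{Stab}_{H_1}(\xi) \cap \operatorname{Stab}_{H_2}(\xi)$ finite. Let $P = \operatorname{Stab}_G(\xi)$, the maximal parabolic fixing $\xi$. Then $\operatorname{Stab}_{H_i}(\xi) = H_i \cap P$. Since $\xi \in \Lambda H_i$ is a parabolic point for the geometrically finite action of $H_i$ on $\Lambda H_i$ (by RQC of $H_i$, \thref{RQC}(1)), the stabilizer $H_i \cap P$ must be infinite --- a parabolic point has an infinite stabilizer by definition. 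Fullness of $H_i$ then forces $H_i \cap P$ to be finite index in $P$, for $i = 1, 2$. But then $\operatorname{Stab}_{H_1}(\xi) \cap \operatorname{Stab}_{H_2}(\xi) = (H_1 \cap P) \cap (H_2 \cap P)$ is an intersection of two finite-index subgroups of $P$, hence finite index in $P$, hence infinite (as $P$ is infinite, being parabolic). This contradicts $\xi \in E$. Therefore $E = \varnothing$, and Proposition~\ref{General Limit Set Property} gives $\Lambda H_1 \cap \Lambda H_2 = \Lambda(H_1 \cap H_2)$.

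The argument is almost entirely bookkeeping; the only real content is the observation that a point of $E$ is simultaneously a parabolic point for each $\Lambda H_i$, so its $H_i$-stabilizer is infinite, and fullness then upgrades ``infinite'' to ``finite index in $P$''. The one place to be slightly careful is matching up the various notions: that $\operatorname{Stab}_{H_i}(\xi)$ being infinite and $\xi$ parabolic in $M$ really does make $\xi$ a parabolic point of the convergence action of $H_i$ on $\Lambda H_i$ (so that one may invoke its stabilizer being infinite), and that for RQC $H_i$ the set of maximal parabolics of the induced structure is exactly $\{H_i \cap P^g : H_i \cap P^g \text{ infinite}\}$ as in \thref{RQC means induced structure is the same} --- both of which are available from the background already assembled. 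I do not anticipate a genuine obstacle here; this is the kind of statement whose proof is short once the right definitions are lined up.
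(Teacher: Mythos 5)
Your proof is correct and follows essentially the same route as the paper: invoke Yang's two results, then use fullness of $H_1$ and $H_2$ to kill the exceptional set $E$ and to verify the fullness condition for $H_1 \cap H_2$. The only difference is cosmetic — you prove fullness of $H_1\cap H_2$ before showing $E=\varnothing$ (the paper does the reverse; neither step depends on the other), and you spell out explicitly why $H_i \cap P$ must be \emph{infinite} before fullness can upgrade it to finite index (namely that a $G$-parabolic $\xi \in \Lambda H_i$ is a parabolic point of $H_i$'s geometrically finite action on $\Lambda H_i$), a point the paper leaves implicit when it invokes fullness.
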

\begin{proof}
    From the previous two propositions we know $H_1 \cap H_2$ is RQC and that the intersection of limit sets is the limit set of the intersection together with some exceptional points. If $\xi \in \Lambda H_1 \cap \Lambda H_2$ is the fixed point of a maximal parabolic subgroup $P$, then we must have $P\cap H_i$ finite index in $P$ for each $i$ since the $H_i$ are full. Since the intersection of finite index subgroups is again finite index, we know $(P\cap H_1) \cap (P\cap H_2) = P\cap (H_1\cap H_2)$ is finite index in $P$, and therefore infinite. So there are no exceptional points and we have the equality above. Similarly, if $P\cap (H_1 \cap H_2)$ is infinite, then $P \cap H_i$ is infinite for each $i$, and because each $H_i$ is full, $P\cap(H_1 \cap H_2) = (P\cap H_1) \cap (P\cap H_2)$ is an intersection of finite index subgroups of $P$, hence is finite index in $P$. This shows $H_1 \cap H_2$ is full RQC.
\end{proof}

To show that full RQC and fully RQC are equivalent, we need the following definitions.

\begin{definition}
    Let $G$ be a group and $H$ a subgroup. The \emph{height} of $H$ in $G$ is the minimal number $n$ so that for any collection of distinct cosets $\{g_1 H, g_2H, \ldots g_{n+1}H\}$, the intersection $\bigcap_i H^{g_i}$ is finite. If $G$ is relatively hyperbolic, the \emph{relative height} of $H$ in $G$ is the minimal number $n$ so that for any collection of distinct cosets $\{g_1 H, g_2H, \ldots g_{n+1}H\}$, the intersection $\bigcap_i H^{g_i}$ is either finite or parabolic. 
\end{definition}

In \cite{HruskaWise}, Hruska and Wise use the term ``height" for what we are calling ``relative height" and prove the following. 

\begin{proposition} \cite[Corollary $8.6$]{HruskaWise}
    Let $H$ be a RQC subgroup of a relatively hyperbolic group $G$. Then $H$ has finite relative height in $G$.
\end{proposition}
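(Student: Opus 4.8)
The plan is to adapt the Gitik--Mitra--Rips--Sageev height argument for quasiconvex subgroups of hyperbolic groups to the relative setting, working in the cusped space $CG$ of $(G,\PP)$, which is $\delta$--hyperbolic. By \thref{lipschitz} the inclusion $H \hookrightarrow G$ induces an $H$--equivariant quasi-isometric embedding $\hat{i}\colon CH \to CG$ whose image $Q := \hat{i}(CH)$ is $\kappa$--quasiconvex, and by \thref{RQC means induced structure is the same} its limit set is $\Lambda Q = \Lambda H$. For every $g \in G$ the translate $gQ$ is again $\kappa$--quasiconvex, with $\Lambda(gQ) = g\,\Lambda H = \Lambda(H^g)$. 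The one geometric input I would isolate is the standard fact that there is a constant $R_1 = R_1(\delta,\kappa)$, \emph{independent of $g$}, such that any bi-infinite geodesic of $CG$ with both ideal endpoints in $\Lambda(gQ)$ is contained in the $R_1$--neighbourhood of $gQ$.

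Now suppose for contradiction that $H$ has infinite relative height, so for every $n$ there are distinct cosets $g_1 H, \dots, g_n H$ with $N := \bigcap_i H^{g_i}$ infinite and not parabolic. After replacing each $g_i$ by $g_1^{-1}g_i$ (which preserves distinctness of the cosets and conjugates $N$, hence the hypotheses) we may assume $g_1 = 1$, so $N \le H$. Since $N$ is infinite and not parabolic, the classification of infinite subgroups of a relatively hyperbolic group acting on $\partial_\PP G$ provides a loxodromic element $\ell \in N$. Its fixed points $\ell^{\pm}$ are fixed by $\langle \ell\rangle$, which contains a loxodromic, so $\ell^{\pm}$ are not parabolic points; in particular they are not centres of horoballs of $CG$. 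Moreover $\ell^{\pm} \in \Lambda N \subseteq \Lambda(H^{g_i}) = \Lambda(g_iQ)$ for every $i$, so by the previous paragraph any bi-infinite geodesic $\gamma$ from $\ell^-$ to $\ell^+$ satisfies $\gamma \subseteq N_{R_1}(g_iQ)$ for all $i$ simultaneously.

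The decisive step is to sample $\gamma$ inside the locally finite part of $CG$. Since $\ell^-,\ell^+$ are distinct and are not horoball centres, $\gamma$ is not contained in any single combinatorial horoball, hence meets the Cayley graph $\Gamma \subseteq CG$; fix a vertex $p \in \gamma \cap \Gamma^{(0)}$, regarded as an element of $G$. For each $i$ pick $y_i \in g_iQ$ with $d_{CG}(p,y_i) \le R_1$ and write $y_i = g_i z_i$ with $z_i \in Q$; then $g_i^{-1}p$ lies in $N_{R_1}(Q)\cap \Gamma^{(0)}$ because $G$ preserves $\Gamma$. Unwinding the definitions of the horoballs of $CH$ and $CG$ shows that $N_{R_1}(Q)\cap \Gamma^{(0)}$ is contained in a bounded $\Gamma$--neighbourhood of $H$, say $N^{\Gamma}_r(H)$ with $r = r(\delta,\kappa,\hat{i})$ independent of $n$ — this is where the non-local-finiteness of the cusped space must be dealt with, by bounding how deeply a short $CG$--geodesic between two $\Gamma$--vertices can shortcut through a horoball. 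As $\Gamma$ is locally finite, $N^{\Gamma}_r(H) = H\cdot B_\Gamma(1,r)$ with $B_\Gamma(1,r)$ a finite set, so $g_i^{-1}p = h_i b_i$ with $h_i \in H$ and $b_i \in B_\Gamma(1,r)$, and therefore $g_iH = p\,b_i^{-1}H$. Thus the $n$ distinct cosets $g_iH$ all lie in $\{\,p\,b^{-1}H : b \in B_\Gamma(1,r)\,\}$, a set of size at most $|B_\Gamma(1,r)|$; taking $n > |B_\Gamma(1,r)|$ is a contradiction, and in fact this shows the relative height of $H$ is at most $|B_\Gamma(1,r)|$.

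The main obstacle is exactly this last point: because the peripheral subgroups may be infinite, the cusped space is not locally finite, so one cannot simply count vertices in a bounded neighbourhood of $Q$ as in the hyperbolic case. The remedy, as above, is to force the sample point onto $\Gamma$ — which is possible precisely because the chosen limit points come from a loxodromic element and so avoid every horoball centre — and then to transfer the estimate into the locally finite graph $\Gamma$; verifying that a bounded $CG$--neighbourhood of $Q$ meets $\Gamma$ in a bounded $\Gamma$--neighbourhood of $H$ is the technical heart of the argument. An alternative, more conceptual route is to derive finite relative height from the bounded packing property of relatively quasiconvex subgroups; the argument sketched here is essentially an unwinding of that property in this situation.
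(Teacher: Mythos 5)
The paper does not prove this proposition --- it is cited verbatim from Hruska--Wise --- so there is no in-paper argument to compare against, and I will assess your sketch on its own merits. It is a correct adaptation of the Gitik--Mitra--Rips--Sageev finite-height argument to the cusped space, and it is close in spirit to what Hruska--Wise actually do (they establish bounded packing for relatively quasiconvex subgroups and deduce finite relative height; as you note at the end, your argument directly unwinds that). Two remarks. (i) Your stated reason for the extra care, ``because the peripheral subgroups may be infinite, the cusped space is not locally finite,'' is inaccurate: with finitely generated peripherals and a compatible generating set, the Groves--Manning cusped space \emph{is} locally finite (the horizontal neighbours of a depth-$k$ horoball vertex form a ball of radius $2^k-1$ in a locally finite coset Cayley graph, hence a finite set). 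The genuine obstruction is the absence of a \emph{uniform} volume bound: balls about a depth-$k$ vertex grow with $k$, so the naive vertex count is not independent of the base point. Your repair --- forcing the sample point $p$ onto $\Gamma^{(0)}$, then bounding how deeply a $CG$-geodesic of length $\le 2R_1$ between depth-$0$ vertices can shortcut through a horoball, so that $N_{R_1}(Q)\cap\Gamma^{(0)}\subset N^{\Gamma}_r(H)$ with $r$ depending only on $R_1$, $\hat{i}$, and the finitely many conjugators $c_1,\dots,c_\ell$ appearing in the peripheral structure of $H$ --- is exactly the right fix, and local finiteness of $CG$ (hence properness) is in fact what legitimises the Arzela--Ascoli step behind the $R_1$-neighbourhood claim in your first paragraph. (ii) Two inputs deserve explicit citation: that a bi-infinite geodesic with both ideal endpoints in $\Lambda Q$ lies uniformly close to a quasiconvex set $Q$ in a proper hyperbolic space; and Tukia's theorem that an infinite subgroup of a convergence group with no loxodromic fixes a point and is therefore parabolic, whose contrapositive is your ``$N$ infinite and not parabolic $\Rightarrow N$ contains a loxodromic,'' together with the standard fact (cf.\ \thref{limit points are limit points}) that the fixed points of a loxodromic are conical, hence not parabolic points and not horoball centres. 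With those citations supplied, the argument is complete and even yields an explicit bound $|B_\Gamma(1,r)|$ on the relative height.
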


If $H$ is full RQC, we can upgrade this to truly finite height.

\begin{proposition}
    If $H$ is a full RQC subgroup of a relatively hyperbolic group $(G,\mathbb{P})$, then $H$ has finite height. 
\end{proposition}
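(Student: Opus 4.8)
The plan is to bootstrap from the finite \emph{relative} height of $H$ (the just-quoted \cite[Corollary 8.6]{HruskaWise}) to genuine finite height, using fullness to control the parabolic intersections that relative height alone permits. I will fix the convention $H^g = gHg^{-1}$ and use the easy observation that every conjugate $H^g$ is again full RQC: its limit set is $g\Lambda H$, so its action there stays geometrically finite, and $H^g\cap P = g(H\cap g^{-1}Pg)g^{-1}$ is finite or finite index in $P$ for every maximal parabolic $P$ of $G$. Two finiteness inputs will be needed. First, by \thref{RQC means induced structure is the same} the infinite subgroups $H\cap P^g$ with $P\in\mathbb{P}$ form finitely many $H$--conjugacy classes $D_1=H\cap Q_1,\dots,D_r=H\cap Q_r$ with the $Q_i$ maximal parabolic in $G$; since $H$ is full, each $D_i$ has finite index in $Q_i$, so $M := \max_i[Q_i:D_i] < \infty$. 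Second, $(H,\mathbb{O})$ being relatively hyperbolic, its Bowditch boundary $\Lambda H = \partial_\mathbb{O}H$ has only finitely many $H$--orbits of parabolic points (cf.\ \thref{finitely many parabolics}); call that number $K$.

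I will then show that $H$ has height at most $m_0 := \max(n,KM)$, where $n$ is the relative height. Suppose instead there are distinct cosets $g_1H,\dots,g_NH$ with $N=m_0+1$ and $L := \bigcap_{i=1}^N H^{g_i}$ infinite. Since $N\geq n+1$, relative height forces $L$ --- being contained in the intersection of some $n+1$ of the $H^{g_i}$ --- to be finite or parabolic, hence parabolic; let $\xi$ be its fixed point. As $L$ is an infinite parabolic subgroup, $\xi$ is a parabolic point, hence (by \thref{limit points are limit points} and geometric finiteness) a bounded parabolic point, and $P := \mathrm{Stab}_G(\xi)$ is the maximal parabolic subgroup containing $L$. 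For every $i$ the subgroup $H^{g_i}\cap P$ contains $L$, so it is infinite, hence --- $H^{g_i}$ being full --- of finite index in $P$; conjugating by $g_i^{-1}$, the group $H\cap g_i^{-1}Pg_i$ is a finite-index, in particular infinite parabolic, subgroup of the maximal parabolic $g_i^{-1}Pg_i = \mathrm{Stab}_G(g_i^{-1}\xi)$, so each $g_i^{-1}\xi$ is a parabolic point of $\Lambda H$.

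Now I would apply pigeonhole: some $H$--orbit of parabolic points of $\Lambda H$ contains at least $N/K$ of the points $g_i^{-1}\xi$; relabelling, say $g_1^{-1}\xi,\dots,g_{N'}^{-1}\xi$ lie in one orbit with $N'\geq N/K$, and write $g_i^{-1}\xi = h_i\eta$ with $h_i\in H$. Then $(g_ih_i)^{-1}\xi = \eta$ for all such $i$, so $g_ih_i$ and $g_1h_1$ differ by left multiplication by an element $p_i\in P=\mathrm{Stab}_G(\xi)$, whence $g_iH = p_ig_1H$. Since the $g_iH$ are distinct and $p_ig_1H = p_jg_1H$ exactly when $p_i,p_j$ lie in the same coset of $H^{g_1}\cap P$ in $P$, this gives $N' \leq [P : H^{g_1}\cap P] = [\,g_1^{-1}Pg_1 : H\cap g_1^{-1}Pg_1\,]$. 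Finally $H\cap g_1^{-1}Pg_1$ is an infinite intersection of $H$ with a maximal parabolic, hence $H$--conjugate to some $D_j$; matching up fixed points shows $g_1^{-1}Pg_1$ is the corresponding conjugate of $Q_j$, so this index equals $[Q_j:D_j]\leq M$. Combining, $N/K \leq N' \leq M$, i.e.\ $N \leq KM \leq m_0 < N$, a contradiction; so $H$ has finite height.

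The reduction to the parabolic case is handed to us for free by finite relative height, so the real work --- and the step I expect to be the main obstacle --- is the combinatorial heart: showing that only boundedly many distinct cosets $g_iH$ can simultaneously carry an infinite parabolic $L$ with fixed point $\xi$. The two ingredients making this bound uniform are the finitely-many-orbits-of-parabolic-points statement for $\Lambda H$ and, crucially, the fact (from the finiteness of the induced peripheral structure in \thref{RQC means induced structure is the same} together with fullness) that $[P : H^g\cap P]$ is bounded independently of $g$ and of the maximal parabolic $P$; verifying that $\xi$'s stabilizer really is a maximal parabolic, and that these indices match the $[Q_j:D_j]$, are the places that need care.
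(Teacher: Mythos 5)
Your proof is correct and follows essentially the paper's argument: reduce to the parabolic case via finite relative height, then run a double pigeonhole using the uniform index bound $M$ from fullness, the finitely many induced parabolic classes $K$, and almost-malnormality of maximal parabolics. You track the $H$-orbits of parabolic points $g_i^{-1}\xi$ in $\Lambda H$ where the paper (after normalizing $g_1=1$) tracks the dual data of $H$-conjugacy classes of subgroups $H\cap P^{g_i^{-1}}$, and your bound $\max(n,KM)$ is a bit sharper than the paper's $n(m+1)(k+1)$; otherwise the combinatorial structure is the same.
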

\begin{proof}

As in \thref{RQC means induced structure is the same}, $G$ induces a peripheral structure $\mathbb{O} = \{P_1,\ldots P_k\}$ on $H$. Explicitly, $P_i = H \cap P'_i$, where $P'_i<G$ is a maximal parabolic subgroup with $H\cap P_i'$ infinite, and $\mathbb{O}$ contains one representative of each $H$--conjugacy class of such subgroups. Let 
\[m = \max_i [P_i':P_i] =\max \{[P : H \cap P] \; | \; H \cap P \text{ infinite, } P \text{ maximal parabolic in } G \}.\]

From the previous proposition, $H$ has finite relative height, say $n$. We claim $H$ has height at most $N = n(m+1)(k+1)$. 

For a contradiction, suppose $\{g_1H,\ldots , g_{N+1}H\}$ are distinct $H$--cosets in $G$ with $\bigcap_i H^{g_i}$ infinite. After conjugating by $g_1^{-1}$, we may assume $g_1 = 1$ so that this intersection is contained in $H$. Because $N+1$ is larger than the relative height of $H$, we must have $\bigcap_i H^{g_i} < H\cap P$ for some maximal parabolic subgroup $P < G$. For each $i$, we have $\bigcap_i H^{g_i} \subset H^{g_i} \cap P$, hence $H \cap P^{g_i^{-1}}$ is infinite, hence $H \cap P^{g_i^{-1}}$ is a maximal parabolic subgroup of $H$. Apply the pigeonhole principle to $W := \{H \cap P^{g_i^{-1}} \; | \; i =1,\ldots N+1\}$ where the pigeons are elements of $W$ and the holes are $H$--conjugacy classes of maximal parabolic subgroups. Because $N+1 > (m+1)(k+1)$ and there are $k$ different $H$--conjugacy classes of maximal parabolic subgroups, some $H$--conjugacy class has at least $m+1$ representatives in $W$. After reindexing, $H\cap P^{g_1^{-1}}, \ldots, P^{g_{m+1}^{-1}}$ are conjugate in $H$, so there are $h_i \in H$ so that $H\cap P^{g_1^{-1}} = H \cap P^{h_ig_i^{-1}}$. Then 
\[ H \cap P^{g_1^{-1}} = H \cap P^{h_i g_i^{-1}} \subset P^{g_1^{-1}}\cap P^{h_ig_i^{-1}},\]
and the sets on the left are infinite. Because $P$ is almost malnormal, this implies $P^{g_1^{-1}} = P^{h_ig_1^{-1}}$, equivalently $P = P^{g_1h_ig_i^{-1}}$, and $g_1h_ig_i^{-1} \in P$ for $i = 1,\ldots , m+1$. By the definition of $m$, $D:= H^{g_1}\cap P$ has index at most $m$ in $P$, so we may write $P$ as a disjoint union of cosets: 
\[P = D{r_1}\sqcup D{r_2} \sqcup \cdots \sqcup D r_m.\]
Viewing these cosets as pigeonholes and the elements $g_1h_ig_i^{-1}$ as pigeons, there is some coset which contains two of these elements. This means there is some $1 \leq i,j, \leq m +1$ and $r \in P, p_i,p_j \in D$ so that $p_i r= g_1h_ig_i^{-1}$ and $p_j r = g_1h_jg_j^{-1}$. Rearranging this, we have $p_i^{-1}g_1h_ig_i^{-1} = r = p_j^{-1}g_1h_jg_j^{-1}$. Since $p_i,p_j \in D:= H^{g_1}\cap P$, we can write $p_i = g_1 q_ig_1^{-1}, p_j = g_1 q_jg_1^{-1}$ for some $q_i,q_j \in H$. Substituting this into the previous equality, we have $g_1q_i^{-1}h_ig_i^{-1} =g_1q_j^{-1}h_jg_j^{-1}$. Canceling the $g_1$ on each side and rearranging, we have
\[g_i = g_j h_j^{-1}q_jq_ih_i.\]
But $h_j^{-1}q_jq_ih_i$ is a product of elements of $H$, so this implies $g_iH = g_jH$, contradicting the assumption that $g_i$ were distinct coset representatives. Thus $N$ is a bound on the height of $H$.

\end{proof}

We remark that there is a simpler proof that \emph{fully} RQC subgroups have finite height, but finite height is necessary for proving the full implies fully direction in the following corollary.

\begin{corollary}[Finite Height]\thlabel{Finite Height}
    Let $H$ be a subgroup of a relatively hyperbolic group $G$. Then $H$ is fully RQC if and only if $H$ is full RQC, and either condition implies $H$ has finite height in $G$.     
\end{corollary}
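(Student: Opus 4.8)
The statement packages together two claims: (i) \emph{fully RQC} $\Leftrightarrow$ \emph{full RQC}, and (ii) either condition implies finite height. Claim (ii) for \emph{full RQC} is exactly the proposition immediately preceding, so the plan is to reduce everything to the equivalence in (i) and to deriving the \emph{fully} case of (ii) along the way. I would organize the argument as a cycle: full RQC $\Rightarrow$ finite height $\Rightarrow$ fully RQC $\Rightarrow$ full RQC.

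\emph{Full RQC $\Rightarrow$ finite height.} This is the preceding proposition, so nothing to do.

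\emph{Finite height (plus RQC) $\Rightarrow$ fully RQC.} Suppose $H$ is RQC with finite height $N$ in $G$, and let $(g_n)_n$ be an infinite sequence of elements lying in pairwise distinct cosets $g_nH$. I want $\bigcap_n g_n\Lambda H = \varnothing$. Suppose not, so some $\xi \in \bigcap_n g_n\Lambda H$; equivalently $\xi \in \Lambda(H^{g_n})$ — wait, more precisely $g_n^{-1}\xi \in \Lambda H$ for all $n$, so $\xi \in \Lambda(g_nHg_n^{-1}) = \Lambda(H^{g_n^{-1}})$ with a harmless index convention. Since $G$ is a convergence group, $\mathrm{Stab}_G(\xi)$ either is a parabolic group or contains a loxodromic; but in any case I would split into two cases. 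If $\xi$ is a conical limit point: by \thref{limit points are limit points} it is not a parabolic point, and a conical limit point cannot lie in the limit set of infinitely many conjugates of a finite-height subgroup — indeed $\xi \in \Lambda(H^{g_n^{-1}})$ forces, after passing to the subgroups $\mathrm{Stab}_{H^{g_n^{-1}}}(\xi) < \mathrm{Stab}_G(\xi)$, an infinite descending family; the key point is that since $\xi$ is conical it is actually \emph{in} each $\Lambda(H^{g_n^{-1}})$ only if $H^{g_n^{-1}} \cap \mathrm{Stab}_G(\xi)$ is infinite (here one uses that for an RQC subgroup a point of its limit set that is not parabolic is conical for that subgroup, hence its stabilizer intersection is infinite). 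That yields infinitely many distinct conjugates $H^{g_n^{-1}}$ with infinite common intersection inside $\mathrm{Stab}_G(\xi)$, contradicting finite height. If $\xi$ is a parabolic point with maximal parabolic $P = \mathrm{Stab}_G(\xi)$: then $\xi \in \Lambda(H^{g_n^{-1}})$ and $\Lambda(H^{g_n^{-1}})\cap\{\xi\}$ being the relevant intersection, fullness gives that $H^{g_n^{-1}}\cap P$ is finite or finite index in $P$; but $\xi \in \Lambda(H^{g_n^{-1}})$ rules out the finite case (a finite intersection contributes nothing to the limit set), so $H^{g_n^{-1}}\cap P$ is finite index in $P$, hence infinite, for all $n$ — again infinitely many conjugates with infinite common intersection (all inside $P$), contradicting finite height. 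Here is the one subtlety I'd want to be careful with: this case splitting used fullness of $H$, which is fine because we're proving ``full $\Rightarrow$ fully,'' but I must make sure the whole cycle is consistent and doesn't assume what it's proving at a given link. So concretely the cycle should be: (a) full $\Rightarrow$ finite height (previous prop); (b) full + finite height $\Rightarrow$ fully (the argument just sketched, legitimately using fullness); (c) fully $\Rightarrow$ full (below). That is a valid implication chain giving the equivalence.

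\emph{Fully RQC $\Rightarrow$ full RQC.} Suppose $H$ is fully RQC; I must show for each maximal parabolic $P < G$, $H \cap P$ is finite or finite index in $P$. Suppose $H \cap P$ is infinite but of infinite index; I derive a contradiction with the defining property $\bigcap_n g_n\Lambda H = \varnothing$. Since $[P : H\cap P] = \infty$, choose infinitely many elements $p_n \in P$ in pairwise distinct $(H\cap P)$-cosets; I would then check they also lie in pairwise distinct $H$-cosets (if $p_iH = p_jH$ then $p_i^{-1}p_j \in H \cap P$, contradiction), so $(p_n)$ is an admissible sequence for the fully RQC condition. Now $H \cap P$ is infinite and fixes $\xi$ (the parabolic point of $P$), so $\xi \in \Lambda H$ — more precisely $\Lambda(H\cap P) = \{\xi\} \subset \Lambda H$ since a parabolic subgroup has limit set a point. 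Then for every $n$, $p_n \xi = \xi$ (as $p_n \in P = \mathrm{Stab}_G(\xi)$), so $\xi = p_n\xi \in p_n\Lambda H$ for all $n$; hence $\xi \in \bigcap_n p_n\Lambda H \neq \varnothing$, contradicting fully RQC. Therefore $H \cap P$ is finite or finite index in $P$, i.e. $H$ is full RQC.

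Assembling (a)(b)(c) proves $H$ fully RQC $\Leftrightarrow$ $H$ full RQC, and either one implies finite height: full $\Rightarrow$ finite height is (a), and fully $\Rightarrow$ full $\Rightarrow$ finite height by (c) then (a).

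\textbf{Main obstacle.} The routine directions are (c) (fully $\Rightarrow$ full) and the parabolic case of (b); they are short coset/limit-set bookkeeping. The genuinely delicate step is the \emph{conical} case of (b): turning ``$\xi \in \Lambda(H^{g_n^{-1}})$ for all $n$ with $\xi$ conical in $G$'' into ``$H^{g_n^{-1}}\cap\mathrm{Stab}_G(\xi)$ infinite for all $n$,'' which is where one really uses that $H$ is RQC (so that within $\Lambda H$, non-parabolic limit points of $H$ are conical \emph{for} $H$, forcing an infinite point stabilizer in $H$) together with \thref{limit points are limit points} to know $\xi$ is not a parabolic point of $G$ and hence the relevant subgroup isn't being forced into a peripheral. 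One must also double-check the index convention on the conjugates so that ``pairwise distinct cosets $g_nH$'' really yields ``infinitely many pairwise distinct conjugates $H^{g_n^{-1}}$'' — distinct cosets $g_iH \neq g_jH$ do not by themselves give $H^{g_i^{-1}} \neq H^{g_j^{-1}}$, but if infinitely many of the $H^{g_n^{-1}}$ coincided then finitely many conjugates would already have infinite common intersection violating finite height directly, so either way one lands on a contradiction; I would phrase the finite-height contradiction to absorb both possibilities.
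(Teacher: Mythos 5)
Your decomposition of the statement and the directions ``fully $\Rightarrow$ full'' and ``full $\Rightarrow$ finite height'' match the paper exactly. The divergence is in the third link of your cycle, ``full + finite height $\Rightarrow$ fully,'' where you attempt a case split on whether the hypothetical common point $\xi \in \bigcap_n g_n\Lambda H$ is conical or parabolic in $G$. The paper avoids this split entirely: since the conjugates $H^{g_n}$ are full RQC, iterating the Limit Set Property (\thref{Limit Set Property}) gives $\bigcap_{n=1}^{N+1}\Lambda H^{g_n} = \Lambda\bigl(\bigcap_{n=1}^{N+1} H^{g_n}\bigr)$, which is empty because the group intersection is finite by finite height. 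That one line replaces both your cases.

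Your conical case has a genuine gap. You assert that if $\xi$ is a conical limit point of $G$ lying in $\Lambda H'$ for an RQC subgroup $H'$, then $\xi$ is conical for $H'$, ``hence its stabilizer intersection is infinite,'' i.e.\ $\mathrm{Stab}_{H'}(\xi)$ is infinite. But conical limit points typically have finite (often trivial) stabilizer, even within the subgroup whose limit set contains them; being conical does not produce any element of the group fixing $\xi$. So you never actually establish that $H^{g_n^{-1}}\cap\mathrm{Stab}_G(\xi)$ is infinite. Moreover, even granting that claim, having each individual intersection $H^{g_n^{-1}}\cap\mathrm{Stab}_G(\xi)$ infinite does not give that the common intersection $\bigcap_{n=1}^{N+1} H^{g_n^{-1}}$ is infinite (the step that would actually contradict finite height) without further input about the structure of $\mathrm{Stab}_G(\xi)$, which you do not supply. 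Your parabolic case, by contrast, is essentially correct, but notice that the step ``$\xi\in\Lambda(H^{g_n^{-1}})$ rules out the finite case'' is precisely the assertion that there are no exceptional points in the sense of Yang's theorem (\thref{General Limit Set Property}), and ruling those out is exactly what the Limit Set Property corollary does for full RQC subgroups. So you are already implicitly invoking that corollary in the case you get right; invoking it directly, as the paper does, closes the conical gap for free and shortens the argument to a single chain of equalities.
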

\begin{proof}
    The fully implies full direction is \cite[Lemma 1.7]{Dahmani_2003}, and we repeat the proof here for completeness. Suppose $H$ is fully RQC in $G$ and $P$ is a maximal parabolic subgroup fixing $\xi \in \partial G$. Suppose $H\cap P$ is infinite so that $\xi \in \Lambda( H \cap P)$. For a contradiction, suppose $(p_n)_n$ is an infinite sequence of distinct coset representatives for $H \cap P$. Each $p_n$ represents a distinct $H$ coset too, since $p_n H = p_mH$, implies $p_np_m^{-1} \in H \cap P$, hence $p_n(H\cap P) = p_m(H\cap P)$ and $n = m$. Each $p_n$ also fixes $\xi$, so
    
    \[\xi \in \bigcap_n p_n\Lambda(H\cap P)\subset \bigcap_n p_n\Lambda H.\]
    But the intersection on the right is empty because $H$ is fully RQC. This contradiction shows $H$ is also full RQC.
    
    For the other direction, suppose $H$ is full RQC. Then $H$ has finite height in $G$ by the previous proposition. If $(g_n)_n$ is an infinite sequence of elements in distinct $H$ cosets, then $\bigcap_n H^{g_n}$ is finite because $H$ has finite height. In fact, if $H$ has height $N$, then $\bigcap_{n=1}^{N+1}H^{g_n}$ is finite. By the Limit Set Property \ref{Limit Set Property} and because the limit set of a finite group is empty, we have
    \[\bigcap^\infty_{n=1} g_n \Lambda H \subset  \bigcap_{n=1}^{N+1} g_n\Lambda H = \bigcap_{n=1}^{N+1} \Lambda H^{g_n} = \Lambda \Big(\bigcap_{n=1}^{N+1} H^{g_n}\Big) = \varnothing.\]
\end{proof}

\begin{proposition}(Convergence Property)\thlabel{Convergence Property}
    Let $(G,\mathbb{P})$ be a relatively hyperbolic group, $H$ a RQC subgroup, and $(g_nH)_n$ a sequence of distinct cosets. Let $X$ be the cusped space for $(G,\mathbb{P})$. Let $Y$ be the image of the Lipschitz map provided by Proposition \ref{lipschitz}. After taking a subsequence, there is some $\xi \in \partial G$ so that $g_n(Y \sqcup \Lambda H) \longrightarrow \xi$ uniformly. 
\end{proposition}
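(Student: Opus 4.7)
The plan is to work inside the cusped space $X = CG$ of \thref{defn:Relatively Hyperbolic}, fix a basepoint $o \in X$, and exploit that every translate $g_n Y$ is a uniformly quasiconvex subset of the $\delta$--hyperbolic space $X$. Since $Y$ is $K$--quasiconvex by \thref{lipschitz}, every $g_n Y$ is $K$--quasiconvex as well. The strategy is to locate the common limit $\xi$ as a subsequential limit of nearest-point projections of $o$ onto $g_n Y$, and then use a standard Gromov-product estimate on quasiconvex sets far from the basepoint to conclude that the entire set $g_n(Y \sqcup \Lambda H)$ collapses uniformly to $\xi$.

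First I would establish that $d(o, g_n Y) \to \infty$ along a subsequence. Because $\hat{i}\colon CH \to X$ is $H$--equivariant and $H$ acts cocompactly on $CH$, the subgroup $H$ acts cocompactly on $Y$ with some compact fundamental domain $K \subset Y$. If $d(o, g_n Y)$ stayed bounded along a subsequence, for each such $n$ there would be an element $h_n \in H$ with $g_n h_n^{-1} \cdot K$ meeting a fixed ball $B(o, R)$. Properness of the $G$--action on the cusped space then forces $\{g_n h_n^{-1}\}$ to lie in a finite subset of $G$, so infinitely many of the cosets $g_n H$ coincide, contradicting distinctness. Once divergence is secured, compactness of $\overline{X}$ lets me pass to a further subsequence so that the nearest points $y_n \in g_n Y$ to $o$ converge to some $\xi \in \partial X = \partial_\PP G$.

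It remains to show that all of $g_n(Y \sqcup \Lambda H)$ converges uniformly to $\xi$. The key estimate is the standard fact that if $Z \subset X$ is $K$--quasiconvex and $a, b \in Z$, then $(a \mid b)_o \geq d(o, Z) - (K+\delta)$, since the geodesic $[a,b]$ lies in the $K$--neighborhood of $Z$. Applying this with $Z = g_n Y$ and taking liminfs along sequences, the same bound holds for all $a, b$ in the closure $g_n(Y \sqcup \Lambda H) \subset \overline{X}$, using that $\Lambda H$ consists precisely of boundary limits of sequences in $Y$. Combining this with $(y_n \mid \xi)_o \to \infty$ and the hyperbolic three-point inequality $(a \mid \xi)_o \geq \min\{(a \mid y_n)_o,\, (y_n \mid \xi)_o\} - \delta$ yields $(a \mid \xi)_o \to \infty$ uniformly over $a \in g_n(Y \sqcup \Lambda H)$, which translates to uniform convergence in any compatible metric on $\overline{X}$. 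The main technical obstacle is the first step: correctly combining the $H$--cocompactness of $Y$ with properness of the $G$--action on $X$ to force $d(o, g_n Y) \to \infty$. Once this is in hand, the rest is routine $\delta$--hyperbolic geometry together with the characterization of convergence in $\overline{X}$ via Gromov products.
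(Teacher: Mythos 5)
Your divergence step rests on a false claim. The subgroup $H$ does \emph{not} act cocompactly on its cusped space $CH$: whenever the induced peripheral structure of $H$ is nonempty, $CH$ contains combinatorial horoballs of infinite depth, and the quotient $CH/H$ contains infinite vertical rays, so it is not compact. Consequently there is no compact fundamental domain $K\subset Y$ for the $H$--action on $Y$, and the argument ``$g_nh_n^{-1}\cdot K$ meets $B(o,R)$, so properness finishes it'' does not get started (it would only work in the degenerate case where $Y$ is just a copy of a Cayley graph of $H$). The conclusion $d(o,g_nY)\to\infty$ is nevertheless true, and it is exactly what the paper uses; the correct justification goes through local finiteness of the cusped space $X$ directly. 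If $d(o,g_nY)\le R$ for infinitely many $n$, the (nearest) points realizing these distances lie in the finite ball $B(o,R)$, so after a subsequence some fixed vertex $x$ lies in $g_nY$ for all $n$ in the subsequence. If $x$ has depth $0$ then $x\in g_nY\cap G=g_nH$, so all these cosets equal $xH$; if $x$ has positive depth, then by the formula for $\hat{i}$ in \thref{lipschitz} its group coordinate lies in $g_nHc_i$ for one of the finitely many constants $c_1,\dots,c_\ell$, so the coset $g_nH$ takes only finitely many values. Either way the distinctness of the cosets is contradicted. (Equivalently, replace a positive-depth nearest point by the depth-zero vertex directly above it, at bounded extra cost, and argue only with depth-zero vertices, which is essentially the paper's ``distinct cosets are disjoint and $X$ is proper.'')

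Once this step is repaired, the remainder of your argument is correct and is essentially the paper's proof in different clothing: the paper also takes nearest points $x_n\in g_nY$, extracts a limit ray to $\xi$ by Arzel\`a--Ascoli, and proves the quasiconvexity estimate $(1,y)_{x_n}\le K+\delta$ for $y\in g_nY$, which is exactly your lower bound $(a\mid b)_o\ge d(o,g_nY)-(K+\delta')$ rewritten at the other vertex of the triangle. The only difference is cosmetic: you conclude via extended Gromov products and the characterization of uniform convergence to a boundary point, while the paper concludes with an explicit fellow-traveling (tripod) argument along the limiting ray; your passage to the closure $g_n(Y\sqcup\Lambda H)$ by taking liminfs matches the paper's remark that $Y\sqcup\Lambda H$ is the closure of $Y$ in $\overline{X}$.
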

\begin{proof}
    Recall $Y$ consists of $H$ together with some points of positive depth and is quasiconvex for some constant $K$. Choose $x_n \in g_nY$ so that $d(1,x_n) = d(1,g_nY)$. The cosets $g_nH$ are all distinct, hence disjoint, and $X$ is proper, so $d(1,x_n) \longrightarrow \infty$. Applying Arzela--Ascoli, we can choose a subsequence so that the geodesics $[1,x_n]$ converge to some ray $r$ from $1$ to a point $\xi \in \partial X = \partial G$. The set $Y \sqcup \Lambda H$ is the closure of $Y$ in the compact metrizable space $X \sqcup \partial X$, so to show the claim it suffices to show $g_nY \longrightarrow \xi$ uniformly. To show this, it suffices to show that for any $M$, there exists $N$ so that if $n  \geq N$ and $y \in Y$, the geodesic ray $[1,g_ny]$ stays within $\delta$ of $r$ for time $M$. 

    We can assume $d(1,x_n) \geq K+\delta$ for all $n$. We claim that for all $n$ and $y\in g_nY$, $(1,y)_{x_n} \leq K+\delta$. Given $y \in g_nY$, consider the geodesic triangle made by $1,x_n$ and $y$. Let $z \in [1,x_n]$ and $z' \in [x_n,y]$ be the corresponding tripod points. Then $d(z,z') \leq \delta$ because $X$ is $\delta$--hyperbolic, and there is some $z'' \in g_nY$ so that $d(z',z'') \leq K$ because $g_nY$ is $K$--quasiconvex. Because $x_n$ achieves the distance $d(1,g_nY)$, we must have $d(z,g_nY) = d(z,x_n) = (1,y)_{x_n}$, but using $z',z''$ and the triangle inequality we have $d(z,g_nY) \leq K + \delta$. Hence $(1,y)_{x_n} \leq K+\delta$ as claimed. 
    
    Fix $M$ and choose $N$ so that $n \geq N$ implies $[1,x_n]$ stays within $\delta$ of $r$ for time at least $M+2\delta+K$, which is possible because the $x_n$ converge to $\xi$. For $n \geq N$ and $y' \in g_nY$, let $x \in [1,x_n], y \in [1,y'],z \in r$ be the points at distance $M+1\delta$ from $1$. Considering the comparison tripod for the triangle on $1,y',x_n$ and noticing that $d(1,x_n) \geq M +\delta + K$ and $(1,y')_{x_n} \leq K +\delta$, we see that $x,y$ are both closer to $1$ than the tripod points on either leg, hence $d(x,y) \leq \delta$. Further, by the assumption on $n$, we have $d(x,z) \leq \delta$. Therefore $d(y,z) \leq 2\delta$ and 
    \[(y,z)_1 = M+2\delta -\tfrac{1}{2} d(y,z) \geq M.\]

    Since $y \in [1,y'], z \in r$, this implies $[1,y']$ and $r$ stay within $\delta$ of eachother for time at least $M$, and we are finished.
\end{proof}

Dahmani proves an analogous result to the Convergence Property \ref{Convergence Property} in \cite[Prop $1.8$]{Dahmani_2003} using only the Bowditch boundary, but we need the version above to manage finite groups which do not interact with the boundary. 

\begin{lemma}\thlabel{NoSubsequence3}
    Suppose $H_1,H_2$ are full RQC subgroups of a relatively hyperbolic group $(G,\mathbb{P})$ and $(a_n)_n$ a sequence in $G$. Let $X$ be the cusped space for $(G,\mathbb{P})$ and let $Q_1,\,Q_2$ be the images of cusped spaces for $H_1,\,H_2$ in $X$ from \thref{lipschitz}. Then there is a sequence $(k_n)_n$ in $H_1$ and a subsequence of $(a_n)_n$ (still denoted $(a_n)_n$) so that the translates $k_na_n Q_2$ are either constant or converge to a point in $\partial X \setminus \Lambda Q_1 = \partial G \setminus \Lambda H_1$. 
\end{lemma}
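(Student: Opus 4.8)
The plan is to choose $k_n \in H_1$ so that a coarse closest--point pair between $Q_1$ and $a_nQ_2$ gets dragged into a fixed coarse fundamental domain for the action of $H_1$ on $Q_1$, then split according to whether $d_X(Q_1, a_nQ_2)$ stays bounded (forcing the constant case) or tends to infinity (producing the limit point), and finally rule out that the limit point lies in $\Lambda Q_1 = \Lambda H_1$. First I would record what I need about $Q_1$: by \thref{lipschitz} it is a $K_0$--quasiconvex subgraph of the locally finite space $X$ containing the basepoint $1$, the action of $H_1$ on it is the restriction of the $G$--action, and both that action and the map of \thref{lipschitz} preserve the \emph{depth} of a vertex (distance to the Cayley graph level of $X$). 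The crucial point --- and the only place fullness of $H_1$ rather than mere relative quasiconvexity is used --- is that there is a constant $D_0$ so that any horoball $\mathcal H$ of $X$ meeting $Q_1$ in a vertex of depth $> D_0$ is \emph{coarsely contained} in $Q_1$: such an $\mathcal H$ lies over a coset $hc_iP_i$ with $H_1 \cap P_i^{c_i}$ infinite, hence of finite index in $P_i^{c_i}$ because $H_1$ is full, so the image under \thref{lipschitz} of the corresponding horoball of the cusped space of $H_1$ is coarsely all of $\mathcal H$. There are finitely many $H_1$--orbits of such ``deep'' horoballs, $H_1$ acts cocompactly on $Q_1$ with the depth--$> D_0$ part of each deep horoball removed and cocompactly on each bounded--depth slice of a deep horoball, so choosing compact transversals produces a single compact $K \subset X$ into which every vertex of $Q_1$ of depth at most some $D_1$ can be carried by an element of $H_1$. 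The same finite--index fact applied to $H_2$ shows that if $Q_1$ and $a_nQ_2$ both reach depth $> D_1$ in a common horoball, then that horoball is coarsely in both, so $d_X(Q_1, a_nQ_2) = O(1)$ and the closest approach is realized up to a uniform error at bounded depth.

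Then, for each $n$, I would pick a coarse closest--point pair $y_n \in Q_1$, $z_n \in a_nQ_2$ with $\mathrm{depth}(y_n) \le D_1$ --- possible by the last observation together with the combinatorial--horoball fact that a geodesic leaving a deep point descends through all intermediate depths near its initial vertical line --- set $d_n := d(y_n, z_n) = d_X(Q_1, a_nQ_2) + O(1)$, and choose $k_n \in H_1$ with $k_ny_n \in K$. If $(d_n)_n$ has a bounded subsequence, then along it $k_nz_n$ lies within $O(1)$ of $K$, hence has bounded depth, hence (depth being preserved by $k_na_n$ and by the map of \thref{lipschitz}) lies within $O(1)$ of $k_na_nH_2$; so $k_na_nh_n$ lands in a fixed finite set of vertices for some $h_n \in H_2$, a further subsequence makes $k_na_nh_n$ equal to a constant $g$, and $k_na_nQ_2 = gQ_2$ is constant, as required. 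So I may assume $d_n \to \infty$.

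To finish, I would pass to a subsequence with $k_ny_n \to y_\infty \in K$; by Arzela--Ascoli the geodesics $[k_ny_n, k_nz_n]$ subconverge to a ray $r = [y_\infty, \xi)$ with $\xi \in \partial X = \partial G$. Since each $k_na_nQ_2$ is $K_0$--quasiconvex (isometries preserve the constant) and $k_nz_n$ is, up to $O(1)$, a closest point of it to the basepoint $1 \in Q_1$, the closest--point argument in the proof of \thref{Convergence Property} --- with the compact $K$ in place of $\{1\}$ --- shows $k_na_n(Q_2 \sqcup \Lambda H_2) \to \xi$ uniformly. It remains to check $\xi \notin \Lambda H_1$. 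If instead $\xi \in \Lambda Q_1$, then, $Q_1$ being quasiconvex and $y_\infty \in Q_1$, the ray $r$ lies in $N_{K_0}(Q_1)$; but since $k_nz_n$ is a coarse closest point of $k_na_nQ_2$ to $Q_1$, the geodesic $[k_ny_n, k_nz_n]$ leaves $Q_1$ coarsely monotonically, i.e.\ $d([k_ny_n, k_nz_n](T), Q_1) \ge T - O(1)$ for $T \le d_n$, so letting $n \to \infty$ with $T$ fixed gives $d(r(T), Q_1) \ge T - O(1)$, contradicting $r \subset N_{K_0}(Q_1)$ once $T$ is large. Hence $\xi \notin \Lambda H_1$, and $(k_n)_n$ with the chosen subsequence of $(a_n)_n$ is as required.

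The technical heart, and what I expect to be the main obstacle, is the horoball bookkeeping behind $K$: since $H_1$ does not act cocompactly on $Q_1$, one must identify its ``deep'' horoballs precisely as the ones coming from infinite --- hence, by fullness, finite--index --- parabolic intersections, verify that a closest--point vertex forced into such a horoball can be slid down to bounded depth without leaving $Q_1$ or spoiling the closest--point property, and confirm that the maps of \thref{lipschitz} preserve depth so that bounded depth survives translation by $k_na_n$. Once this coarse fundamental domain is set up, the boundary dynamics --- uniform convergence via the argument of \thref{Convergence Property}, and the exclusion of $\Lambda H_1$ via coarse monotonicity of distance to a quasiconvex set --- are routine adaptations of arguments already in the paper.
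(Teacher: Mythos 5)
Your proposal is essentially the paper's argument: use fullness to bound the depth of the (coarse) closest point of $Q_1$ to $a_nQ_2$, translate by $k_n\in H_1$ to land that point in a fixed compact/finite set, split into the constant case versus $d(Q_1,a_nQ_2)\to\infty$, show in the latter case that the limit ray leaves every neighborhood of $Q_1$ so its endpoint is not in $\Lambda Q_1$, and then invoke the Convergence Property for uniform convergence. The paper phrases the bookkeeping via a coarse projection $\pi:X\to Q_1$ and local finiteness of $X$ rather than your explicit compact transversal $K$, but these are cosmetic differences; your more explicit coarse--monotonicity estimate $d(r(T),Q_1)\ge T-O(1)$ for ruling out $\xi\in\Lambda Q_1$ is a slightly cleaner version of the paper's remark that the geodesics ``travel further and further from $Q_1$.''
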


\begin{proof}

    By \thref{lipschitz}, $Q_1,Q_2$ are $K$--quasiconvex. Let $\pi:X \longrightarrow Q_1$ be the projection of $X$ onto $Q_1$. Slightly abusing language, we will call an image of a horoball from the cusped space for $H_i$ a horoball of $Q_i$.  

    Suppose the peripheral structure on $H_1$ is given by $\mathbb{D} = \{H_1 \cap P_i^{c_i} \; | \; P_i \in \mathbb{P}, c_1,\ldots, c_\ell \in G, H_1 \cap P_i^{c_i} \text{ infinite}\}$, where some of the $P_i$ may be identical. A point of $Q_1$ with positive depth is in a horoball, so it has the form
    \[(hc_iP_i,hdc_i,n)\]
    where $h \in H_1$ and $d \in H_1\cap c_iP_ic_i^{-1}$. We can translate such a point by $(hd)^{-1} \in H_1$ to the point $(c_iP_i,c_i,n)$. Points of depth $0$ are simply elements of $H_1$, which can be translated to the identity by an element of $H_1$. Thus $Q_1$ is the $H_1$ orbit of the identity together with finitely many $H_1$ orbits of vertical paths starting at the $(c_iP_i,c_i,1)$. 

    Because $H_1$ is full RQC, $H_1 \cap P_i^{c_i}$ is finite index in $P_i^{c_i}$ for every $i$, so there is a constant $M$ so that if $A'$ is a horoball of $Q_1$ contained in a horoball $A$ of $X$, then $A$ is contained in the $M$ neighborhood of $A'$. 
    
    We claim that if $x \in X$ has depth $0$, then $\pi(x)$ has depth at most $M$. If $\pi(x)$ has positive depth, let $y$ be the first point of $[x,\pi(x)]$ in the horoball of $X$ containing $\pi(x)$. Then $y$ has depth $0$ because it is the first point in this horoball, but also $d(y,Q_1) \leq M$, so $[y,\pi(x)]$ can travel at most $M$ vertically. Thus $\pi(x)$ has depth at most $M$. 

    For each $n$, choose a point $x_n \in a_nQ_2$ achieving the distance $d(a_nQ_2,Q_1)$. If $a_nQ_2 \cap Q_1 \neq \varnothing$, we can assume $x_n$ has depth $0$. If $a_nQ_2 \cap Q_1 = \varnothing$, then $\pi(x_n)$ has depth at most $M$ by the previous paragraph. We choose $k_n \in H_1$ so that $k_n\pi(x_n)$ is either the identity or some $(c_iP_i,c_i,n)$, where $n \leq M$. The definition of $\pi$ is only coarse so $\pi$ may not be equivariant, but this implies $\pi(k_nx_n)$ is some uniformly bounded distance from $k_n\pi(x_n)$. Since $X$ is locally finite, this implies there are finitely many choices for $\pi(k_nx_n)$, and after a subsequence we can assume $\pi(k_nx_n)$ is constant, say $z \in Q_1$.   

    After a further subsequence, we can assume that $k_nx_n$ is either constant or the geodesics $[z,k_nx_n]$ converge to a geodesic $[z,\eta)$ for some $\eta \in \partial X$. If $k_nx_n$ is constant, then $k_n x_n \in k_na_nQ_2$, and the translates of $Q_2$ are disjoint, so $k_na_nQ_2$ is constant and we are finished. In the second case, the distances $d(k_nx_n,z) = d(k_nx_n,Q_1)$ must be unbounded and the geodesics $[z,k_nx_n]$ travel further and further from $Q_1$, hence $\eta \notin \Lambda Q_1$. Applying the Convergence Property \ref{Convergence Property}, we can assume the $k_na_nQ_2$ converge to some point in $\partial X$, and since $k_nx_n \longrightarrow \eta$, this point must be $\eta \notin \Lambda Q_1$, and we're finished.
\end{proof}

\begin{proposition}\thlabel{kg1 remark}
    Let $(G,\mathbb{P})$ be a relatively hyperbolic group with cusped space $C$. There exists a cell complex $K(G)$ satisfying the following.
    \begin{enumerate}
        \item $K(G)$ is a $K(G,1)$. 
        \item The universal cover $\fitTilde{K(G)}$ has $1$--skeleton equal to $C$, possibly with duplicated edges.
        \item If $H$ is a full RQC subgroup of $G$ and $\psi: C_H \longrightarrow C$ is a Lipschitz map as in \thref{lipschitz}, then there is a map $\varphi:K(H) \longrightarrow K(G)$ realizing the inclusion $H \longrightarrow G$ on fundamental groups and which lifts to a map $\fitTilde{\varphi}:K(H) \longrightarrow K(G)$ restricting to $\psi$ on $1$--skeletons.
    \end{enumerate}

    $G$ admits a $K(G,1)$ whose universal cover has $1$--skeleton equal to $C$, with duplicated edges if $G$ has $2$--torsion. If $H$ is a full RQC subgroup of $G$ and $C_H$
\end{proposition}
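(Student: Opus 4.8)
The plan is to realize $\fitTilde{K(G)}$ as a contractible free $G$--CW--complex whose $1$--skeleton is a mild modification of the cusped space $C$, and then set $K(G) = G\backslash\fitTilde{K(G)}$. Up to one genuine wrinkle coming from $2$--torsion, this is the standard way to build an equivariant Eilenberg--MacLane complex extending a given free, connected, $1$--dimensional $G$--complex, so I will only sketch the steps.

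First I would check that $G$ acts freely on the vertex set of $C$: on the depth--$0$ vertices (identified with $G$) the action is by left multiplication, hence free, and for a horoball vertex $g\cdot(hP_i,x,n) = (ghP_i,\,gx,\,n)$, so $gx=x$ already forces $g=1$. The action can however invert an edge $e=\{v,w\}$, and one checks this happens exactly when some order--two element of $G$ interchanges $v$ and $w$, i.e.\ only when $G$ has $2$--torsion. To remove inversions I would, $G$--equivariantly over each such orbit, replace $e$ by a pair of parallel edges joining $v$ to $w$, arranged so that the (order two) setwise stabilizer of $\{v,w\}$ swaps the two copies. The resulting graph $C'$ differs from $C$ only by duplicated edges, carries a free $G$--action without inversions, and hence $C'\to B := G\backslash C'$ is the regular $G$--cover of a connected graph, with $\pi_1(C') = \ker\bigl(\pi_1(B)\to G\bigr)$ a free group.

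Next I would attach $2$--cells to $B$ along a (possibly infinite) set of loops normally generating $\ker(\pi_1 B\to G)$, so that the resulting $2$--complex $B^{(2)}$ has $\pi_1(B^{(2)})\cong G$; the associated $G$--cover $C^{(2)}$ is then simply connected, still has $1$--skeleton $C'$, and carries a free $G$--action. Then I would kill the higher homotopy of $C^{(2)}$ $G$--equivariantly: for each $k\ge 2$ choose, on orbit representatives of a generating set for the $k$--th homotopy group of the complex built so far, attaching maps of $(k{+}1)$--cells, and spread them over their $G$--orbits using freeness. Since nothing of dimension $\le 2$ is added, the result $\fitTilde{K(G)}$ is contractible with $1$--skeleton $C'$, and $K(G):=G\backslash\fitTilde{K(G)}$ is a $K(G,1)$ whose universal cover is $\fitTilde{K(G)}$. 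This proves (1) and (2).

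For (3), run the same construction for $H$, producing $K(H)$ with $\fitTilde{K(H)}^{(1)}=C_H'$. The map $\psi\colon C_H\to C$ of \thref{lipschitz} is $H$--equivariant; after choosing the edge--duplications for $H$ and $G$ compatibly (or making a harmless adjustment) so that $\psi$ becomes a genuine graph morphism $C_H'\to C'$, extend $\psi$ skeleton by skeleton over $\fitTilde{K(H)}$: each $2$--cell is attached along a loop whose $\psi$--image is a loop in the simply connected $\fitTilde{K(G)}$ and hence bounds a disk, and higher cells extend since $\fitTilde{K(G)}$ is contractible --- carrying out every extension $H$--equivariantly (on orbit representatives), where $H$ acts on $\fitTilde{K(G)}$ through $H\hookrightarrow G$. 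This yields an $H$--equivariant map $\fitTilde\varphi\colon\fitTilde{K(H)}\to\fitTilde{K(G)}$ restricting to $\psi$ on $1$--skeleta; passing to quotients gives $\varphi\colon K(H)=H\backslash\fitTilde{K(H)}\to H\backslash\fitTilde{K(G)}\to G\backslash\fitTilde{K(G)}=K(G)$, which induces $H\hookrightarrow G$ on $\pi_1$ and lifts to $\fitTilde\varphi$. I expect the main obstacle to be entirely bookkeeping: keeping the $G$--action free and inversion--free after duplicating edges, performing the equivariant cell attachments coherently over possibly infinitely many orbits, and --- the point I would be most careful about --- replacing the \emph{coarse} Lipschitz map of \thref{lipschitz} by an honest cellular map between the modified $1$--skeleta, since the statement requires $\fitTilde\varphi$ to restrict to $\psi$ exactly.
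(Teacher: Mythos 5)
Your proof is correct and reaches the same conclusion, but it handles the key technical issue --- arranging a free $G$--action while keeping $C$ (up to duplicated edges) as the $1$--skeleton of $\fitTilde{K(G)}$ --- by a different mechanism. After resolving edge inversions, you attach all higher cells \emph{downstairs} in the quotient $B = G\backslash C'$ and pass to the $G$--cover, so freeness of the resulting action comes for free from covering space theory. The paper instead attaches $2$--cells \emph{upstairs} in $C'$ following Groves--Manning and then has to repair non-freeness directly: whenever a nontrivial element fixes a $2$--cell boundary (and so would fix the cell's center), the paper duplicates that $2$--cell into several parallel copies permuted by the stabilizer, only afterward descending to the quotient to attach higher cells. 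Your downstairs-first route avoids the $2$--cell duplication entirely and is the more standard construction of an aspherical $K(G,1)$ from a presentation complex; the paper's upstairs construction keeps closer contact with the explicit Groves--Manning $2$--complex, which is convenient for its later use of \thref{Haefliger}. Your final remark is also well-placed: the Lipschitz map $\hat{i}$ of \thref{lipschitz} need not send edges of $C_H$ to edges of $C$ (the generating set used for $H$ may not sit inside the one used for $G$), so ``restricting to $\psi$ on $1$--skeletons'' does require either a compatible choice of generating sets, a subdivision, or a coarse reading; the paper glosses over this point, which you correctly identify as the place requiring care.
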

\begin{proof}
    Let $C$ be a cusped space for $(G,\mathbb{P})$. \cite{GrovesManning2006} describes how to add $2$--cells to $C$ using a relative presentation for $G$ to get a simply connected space. However, if $G$ is not torsion free, the action on this space may not be free; An element of order $2$ may invert an edge, and if an element fixes the boundary of an $2$--cell, it will fix the center of the cell. Therefore if $e$ is an edge of $C$ which is inverted by some element of $G$, we replace $e$ with two edges so that an element which inverts $e$ swaps these two edges instead. For each $2$--cell added to make $C$ simply connected with a boundary of length $n$, we add $n$ $2$--cells along the same boundary so that an element which fixes this boundary permutes these added $2$--cells instead. After these modifications, we have a simply connected $2$--complex on which $G$ acts freely, say $C'$. Then $\pi_1(C'/G) = G$, and it is a standard construction to add higher dimensional cells to $C'/G$ so that it is aspherical, and these added cells lift to a higher dimensional cell structure on $C'$. This constructs $K(G)$.

    With $H, C_H, \psi$ from (3), $\psi$ extends to a map $\fitTilde{K(H)} \longrightarrow \fitTilde{K(G)}$ which projects to a map realizing the inclusion on fundamental groups. 
\end{proof}

\subsection{$M_\kappa$--Complexes}
\begin{definition}\cite[I.2]{BH}
    For $\kappa < 0$, the model space $M_\kappa^n$ is simply $\mathbb{H}^n$ with the metric scaled so that the curvature is $\kappa$. For $\kappa = 0$, $M_\kappa^n = \mathbb{R}^n$, and for $\kappa > 0$, $M_\kappa = S^n$ with the metric scaled so that the curvature is $\kappa$. 
\end{definition}

\begin{definition}[$M_\kappa$--complex]\cite[I.7]{BH}\thlabel{defn of M_kappa}
    Let $\kappa \leq 0$. A simplicial complex $X$ is called an $M_\kappa$--\emph{complex} if it satisfies the following conditions.
    \begin{itemize}
        \item Each simplex of $X$ is modeled after a geodesic simplex in $M^n_\kappa$, which is the convex hull of finitely many points in general position.
        \item If $\sigma,\sigma'$ are two simplices meeting in a face $\tau$, then the identity map from $\tau \subset \sigma$ to $\tau \subset \sigma'$ is an isometry. 
    \end{itemize}
\end{definition}

The path metric, also called the simplicial metric, on an $M_\kappa$--complex is natural and pleasant by the following theorem.

\begin{theorem}\cite[Theorem 1.1]{Bridson1991}
    If $X$ is an $M_\kappa$--complex with $\kappa \leq 0$ and finitely many isometry types of simplices, then the simplicial path metric is complete and geodesic.
\end{theorem}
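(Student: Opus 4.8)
The plan is to work with the explicit description of the simplicial metric as an infimum over \emph{strings}. A string from $x$ to $y$ is a finite sequence $x = x_0, x_1, \dots, x_k = y$ such that consecutive points $x_{i-1}, x_i$ lie in a common simplex $S_i$, with length $\sum_{i=1}^{k} d_{S_i}(x_{i-1}, x_i)$, where $d_{S_i}$ denotes the intrinsic $M_\kappa$--metric on $S_i$; the simplicial distance $d(x,y)$ is the infimum of the lengths of all strings from $x$ to $y$. From this description $d$ is immediately a pseudometric: symmetry follows by reversing strings and the triangle inequality by concatenating them, and $(X,d)$ is a length space essentially by definition, since subdividing the steps of a near-minimal string produces near-midpoints. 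None of this uses the hypothesis; finiteness of the set of isometry types is exactly what upgrades ``length pseudometric'' to ``complete geodesic metric.''

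The combinatorial heart of the argument is to extract from the finitely many isometry types of simplices a single positive constant $\varepsilon = \varepsilon(X)$ that controls the local picture at every point of $X$ simultaneously. A concrete candidate is the minimum, taken over the finitely many model simplices $S$ and over all pairs of faces $T, T'$ of $S$ with $T \cap T' = \varnothing$, of the intrinsic distance $d_S(T, T')$; being a minimum of finitely many strictly positive numbers, this is positive. First I would use $\varepsilon$ to show $d$ is a genuine metric: a string starting at $x$ of total length less than $\varepsilon$ cannot escape the finitely-shaped simplices adjacent to the support of $x$, so $x \neq y$ forces $d(x,y) > 0$. The more delicate use of $\varepsilon$ is to bound the complexity of near-minimal strings: after a normalization removing backtracking and coalescing redundant simplices in the itinerary, every time a minimizing string passes from one simplex to a combinatorially distant one it must accumulate length at least $\varepsilon$, so a minimizing string of length $L$ can be taken to have at most a bounded number $C(L)$ of steps, with $C(L)$ depending only on $L$ and $\varepsilon$. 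I expect this step --- making the normalization and the counting precise --- to be the main obstacle. It is also exactly where the hypothesis is indispensable: an $M_\kappa$--complex with finitely many shapes need not be locally compact (for instance an infinite wedge of unit edges), so Hopf--Rinow does not apply directly, and the uniform bound $C(L)$ is what substitutes for local compactness.

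Granting this, completeness and the existence of geodesics both follow from compactness arguments localized inside the finitely many \emph{compact} model simplices. For completeness, a Cauchy sequence eventually has consecutive terms within $\varepsilon/2$, hence is eventually trapped in the union of the (possibly infinitely many, but uniformly small) closed simplices meeting a fixed small ball; that union is complete, being assembled from complete model simplices glued along common faces, so the sequence converges. For geodesicity, given $x$ and $y$ choose strings $\Sigma_n$ from $x$ to $y$ whose lengths decrease to $d(x,y)$, normalize them so that they have a uniformly bounded number of steps, and pass to a subsequence along which the itinerary $(S_1, \dots, S_k)$ is constant and, using completeness and the compactness of each model simplex, the intermediate points converge in $X$. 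The resulting limiting string has length exactly $d(x,y)$; replacing each of its steps by the intrinsic geodesic segment of the corresponding simplex yields an honest path of length $d(x,y)$, that is, a geodesic. One last check is that the normalization conditions pass to the limit, so that the concatenated segments meet without backtracking and the path really is length-minimizing; this is routine provided the normalization was set up as a closed condition.
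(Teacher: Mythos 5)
This is not a result the paper proves: it is quoted verbatim from Bridson's thesis (it also appears, with full details, as Theorem I.7.19 and the surrounding material in \cite{BH}), so there is no in--paper argument to compare against. What you have written is a reasonable high-level reconstruction of Bridson's own strategy: define the length metric via $m$-strings, extract a positive separation constant from the finitely many shapes, bound the combinatorial complexity (\emph{size}, in Bridson's terminology) of near-minimal ``taut'' strings, and then extract a limiting geodesic string by compactness of the model simplices. You have also correctly located the difficulty in the normalization-and-counting step, and you are right that this is where the finiteness hypothesis is irreplaceable and why the usual Hopf--Rinow shortcut is unavailable.

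The one concrete place the sketch would need repair is the separation constant. Taking $\varepsilon$ to be the minimum of $d_S(T,T')$ over disjoint faces $T,T'$ is positive, but it does not deliver the claim you then make, that a string of length less than $\varepsilon$ starting at $x$ cannot leave the star of $\sigma_x$. The trouble is with faces that are not disjoint but also do not contain $\sigma_x$: if $x$ lies near a vertex $A$ of an edge $\sigma_x = AB$ inside a triangle $ABC$, the face $AC$ does not contain $\sigma_x$, yet $d_S(x,AC)$ is small, so a very short string can already hop off the star. The constant actually used in Bridson's proof is the pointwise function $\varepsilon(x)$, defined as the infimum over simplices $S$ containing $x$ of $d_S\bigl(x,\bigcup\{T \subset S : x \notin T\}\bigr)$; this is strictly positive at every $x$ but not uniformly bounded below, and the finitely-many-shapes hypothesis is invoked to show it is bounded below on each star and to run the taut-string counting. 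Your outline is otherwise faithful to the known proof, but as written this step is not merely ``the main obstacle'' left to fill in --- the proposed constant is the wrong one, and a correct argument has to work with the pointwise $\varepsilon(x)$ instead.
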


$M_\kappa$--complexes are very general and Bridson's thesis \cite{Bridson1991} developed much of the machinery we will use. For the remainder of this section, we fix a $\mathrm{CAT}(0)$ $M_\kappa$--complex $X$ with finitely many isometry types of simplices.

\begin{definition}
    Given two subsets $K,K'$ of $X$, let $\Geod(K,K')$ denote the set of points lying on a geodesic segment from a point of $K$ to a point of $K'$.
\end{definition}

\begin{definition}[Simplicial Neighborhood]
        Let $K$ be a subcomplex of $X$. 
    \begin{enumerate}
        \item The \emph{open simplicial neighborhood of} $K$, denoted $N(K)$, is the union of open simplices of $X$ whose closure meets $K$. If $K=\sigma$ is a single simplex, we write $st(\sigma) = N(\sigma)$.
        \item The \emph{closed simplicial neighborhood of} $K$, denoted $\overline{N}(K)$, is subcomplex spanned by closed simplices that meet $K$. Equivalently, it is the closure of $N(K)$. 
        \item The \emph{simplicial link of} $K$, denoted $Lk(K)$, is $N(K) \setminus K$. 
    \end{enumerate}
\end{definition}

For example, if $X$ is a graph and $K=v$ is a vertex, then $N(K)$ is $v$ together with the interior of each edge with $v$ as an endpoint, $Lk(v)$ is the interior of each of these edges, and $\overline{N}(K)$ is $N(K)$ together with the other vertices at the end of those edges.

\begin{definition}[Path of Simplices]
A \emph{path of simplices} is a sequence of open simplices $\sigma_1,\ldots ,\sigma_n$ so that either $\overline{\sigma_i} \subset \overline{\sigma_{i+1}}$ or $\overline{\sigma_{i+1}} \subset \overline{\sigma_i}$ for each $i = 1,\ldots , n-1$. Equivalently, it is a finite path in the 1--skeleton of the first barycentric subdivision of $X$. The integer $n$ is the \emph{length} of the path of simplices.    
\end{definition}

\begin{lemma}\thlabel{geod meets finitely many}\cite[Lemma 3.5]{Martin} 
For finite subcomplexes $K,K' \subset X$, $\Geod(K,K')$ meets finitely many open simplices. 
\end{lemma}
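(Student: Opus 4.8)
The plan is to reduce to the case of single closed simplices and then combine a compactness argument with the standard structural facts about geodesics in $M_\kappa$--complexes. First I would reduce: since $K$ and $K'$ are finite subcomplexes, write $K=\overline{\sigma_1}\cup\dots\cup\overline{\sigma_p}$ and $K'=\overline{\sigma'_1}\cup\dots\cup\overline{\sigma'_q}$ as finite unions of closed simplices. Any geodesic from a point of $K$ to a point of $K'$ has its endpoints in some $\overline{\sigma_i}$ and some $\overline{\sigma'_j}$, so $\Geod(K,K')=\bigcup_{i,j}\Geod(\overline{\sigma_i},\overline{\sigma'_j})$, a finite union, and it suffices to treat a single pair $\sigma,\sigma'$ of closed simplices.

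Next I would record that $\Geod(\overline{\sigma},\overline{\sigma'})$ is compact: since $X$ is $\CAT(0)$ geodesics depend continuously on their endpoints (convexity of the $\CAT(0)$ metric gives $d(\gamma_{x,y}(t),\gamma_{x',y'}(t))\le(1-t)d(x,x')+t\,d(y,y')$), so the map $\overline{\sigma}\times\overline{\sigma'}\times[0,1]\to X$ sending $(x,y,t)$ to the point dividing $[x,y]$ in ratio $t:(1-t)$ is continuous, with image exactly $\Geod(\overline{\sigma},\overline{\sigma'})$; also every geodesic involved has length at most $D:=\diam(\overline{\sigma})+d(\overline{\sigma},\overline{\sigma'})+\diam(\overline{\sigma'})$. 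Compactness by itself is not enough to conclude, since $X$ need not be locally finite and even a compact connected subset of $X$ can meet infinitely many simplices, so the geodesic structure must genuinely be used. I would also invoke Bridson's analysis of geodesics in $M_\kappa$--complexes with finitely many isometry types of simplices \cite{Bridson1991}: a geodesic of length at most $D$ crosses only $N=N(D)$ open simplices, so each individual geodesic from $\overline{\sigma}$ to $\overline{\sigma'}$ lies in a subcomplex with at most $N$ cells.

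The heart of the matter is passing from single geodesics to the family. Assume for contradiction that $\Geod(\overline{\sigma},\overline{\sigma'})$ meets infinitely many distinct open simplices $\tau_1,\tau_2,\dots$, and pick a point $z_n$ of $\Geod(\overline{\sigma},\overline{\sigma'})$ in $\tau_n$, lying on a geodesic $\gamma_n$ from $x_n\in\overline{\sigma}$ to $y_n\in\overline{\sigma'}$ parametrised on $[0,1]$, say $z_n=\gamma_n(t_n)$. Using compactness of $\overline{\sigma}\times\overline{\sigma'}\times[0,1]$, pass to a subsequence with $x_n\to x$, $y_n\to y$, $t_n\to t$; then $\gamma_n\to\gamma:=[x,y]$ uniformly and $z_n\to z:=\gamma(t)$. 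Let $c$ be the open simplex containing $z$. Since the open star $\st(c)$ is an open neighbourhood of $z$ and $\gamma^{-1}(\st(c))$ is an open neighbourhood of $t$, choosing $\delta>0$ with $\gamma([t-\delta,t+\delta])\subseteq\st(c)$ and using uniform convergence gives, for all large $n$: $\gamma_n([t-\delta,t+\delta])\subseteq\st(c)$, $t_n\in[t-\delta,t+\delta]$, and $c$ is a face of $\tau_n$. Thus each $\tau_n$ is crossed by a geodesic segment of length at most $2\delta$ lying in $\st(c)$, and it remains to show that only finitely many open simplices of $\st(c)$ can arise this way. For this I would use the local structure near $c$: the closed star $\overline{N}(c)$ is a subcomplex with finitely many isometry types of simplices (the joins of $\overline{c}$ with the simplices of the link of $c$), closed simplices are convex, and the segments $\gamma_n|_{[t-\delta,t+\delta]}$ all lie uniformly close to the limit segment $\gamma|_{[t-\delta,t+\delta]}$; combining this with Bridson's description of how a geodesic interacts with a fixed cell should rule out a convergent family of geodesics fanning out into infinitely many cofaces of $c$, yielding the contradiction.

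The step I expect to be the main obstacle is precisely this last one: bounding the cells met by the whole \emph{family} $\{\gamma_n\}$ rather than by a single geodesic. The finiteness of $K$ and $K'$ \emph{as subcomplexes} (not merely as compact sets) is essential — the statement genuinely fails for arbitrary compact, even compact connected, source sets — and so is the interplay between convexity of closed cells and the finite-shapes hypothesis, which is what forbids the perturbed geodesics from dipping into unboundedly many cofaces of $c$ near $z$. The reduction and the compactness/continuity inputs are routine by comparison.
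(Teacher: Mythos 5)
The paper does not prove this statement; it cites it directly as \cite[Lemma~3.5]{Martin}, so there is no in-paper proof to compare against. Your reduction to a single pair of closed simplices, the compactness of $\Geod(\overline{\sigma},\overline{\sigma'})$ via the $\CAT(0)$ convexity estimate, the length bound $N(D)$ from Bridson's Theorem~1.11, and the passage to a convergent subsequence $\gamma_n\to\gamma$, $z_n\to z\in c$ with $c$ a face of $\tau_n$ (using that $\st(c)$ is a metric neighborhood of $z$ by the finite-shapes hypothesis) are all correct and match the natural line of attack.

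However, the step you flag as ``the main obstacle'' is a genuine, unclosed gap, and it is the entire content of the lemma. You need to rule out that the segments $\gamma_n|_{[t-\delta,t+\delta]}$ fan out into infinitely many distinct cofaces $\tau_n$ of $c$, and ``Bridson's description of how a geodesic interacts with a fixed cell'' does not by itself do this. The difficulty is precisely that $\mathrm{Lk}(c)$ may be non-compact: near $z$ the space is isometric to a ball in the $\kappa$-cone over $\Sigma_z X=\mathbb{S}^{\dim c-1}*\mathrm{Lk}(c)$, and a geodesic in that cone missing the apex projects to a geodesic in $\Sigma_z X$ of length $<\pi$. Uniform convergence of $\gamma_n$ to $\gamma$ forces these projected geodesics to have length tending to $\pi$ with endpoints tending to the two antipodal directions $\gamma'(t^-),\gamma'(t^+)$ of $\gamma$, but a sequence of length-$<\pi$ geodesics with those boundary conditions in a non-compact $\CAT(1)$ link can wander through infinitely many arcs of the link, because the radial distance from the apex is simultaneously shrinking to zero and the smearing near the apex washes out the uniform-convergence control. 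So the convergence $\gamma_n\to\gamma$ alone is not enough; one must genuinely use that all $\gamma_n$ have endpoints in the fixed closed simplices $\overline{\sigma},\overline{\sigma'}$ (which you correctly identify as essential), together with the finite-shapes hypothesis, and this takes a real argument (Martin's Lemma~3.5) rather than a citation to the quoted facts. Note also that the paper's Containment Lemma (\thref{Containment}) would not close the gap either: it bounds the \emph{length} of a gallery lying in $N(K_0)$ for a fixed finite $K_0$, not the total \emph{set} of simplices that can occur as one varies the geodesic in a family, and in a non-locally-finite complex the open neighborhood $N(K_0)$ may itself contain infinitely many simplices.
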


\begin{lemma}\cite[Theorem 1.11]{Bridson1991}\thlabel{Bounded Length To Bounded Number Of Simplices}
    For every $n$, there exists a constant $k$ so that every geodesic segment of length at most $n$ meets at most $k$ open simplices. 
\end{lemma}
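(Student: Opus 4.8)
This is a theorem of Bridson; here is the route I would take. The plan is to induct on the dimension $d$ of $X$, the case $d\le 1$ being elementary — in a graph whose (finitely many) edge lengths are bounded below by some $\varepsilon_0>0$, a geodesic of length $\le n$ runs along at most $n/\varepsilon_0+1$ edges and meets at most $n/\varepsilon_0+2$ vertices, so $k=2n/\varepsilon_0+3$ works.

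The quantitative heart of the inductive step, supplied by the hypothesis that $X$ has only finitely many isometry types of simplices, is a constant $\varepsilon_0>0$ depending only on the model simplices such that for every $p\in X$ there is a vertex $v$ of the carrier of $p$ with $B(p,\varepsilon_0)\subseteq\overline N(v)$. I would prove this by a dichotomy: if $p$ lies within $\varepsilon_0$ of a vertex, take that vertex; otherwise the faces of the carrier of $p$ that come within $\varepsilon_0$ of $p$ must all contain a common vertex — two faces of a model simplex with disjoint vertex sets are a definite distance apart, and a point within $\varepsilon_0$ of two faces sharing a vertex is within $O(\varepsilon_0)$ of that vertex — and one checks that $B(p,\varepsilon_0)$ cannot leave the closed star of such a common vertex. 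Granting this, given a geodesic $\gamma\colon[0,L]\to X$ with $L\le n$, cut $[0,L]$ into at most $2n/\varepsilon_0+1$ subintervals of length $<\varepsilon_0$; each restriction of $\gamma$ is then a geodesic contained in some $\overline N(v)$, and since the open simplices met by $\gamma$ are the union over subintervals of those met by the restrictions, it suffices to bound, by a constant depending only on the shapes, the number of open simplices met by a geodesic contained in a single closed star $\overline N(v)$.

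For that I would use that the metric of $X$ near $v$ is modelled on the $\kappa$--cone $C_\kappa(Lk(v))$, where the link $Lk(v)$ is an $M_1$--complex of dimension $\le d-1$ whose simplices arise as links of vertices in the finitely many model simplices of $X$, hence which again has finitely many isometry types of simplices. Each open simplex of $\overline N(v)$ other than $\{v\}$ lies over an open simplex of $Lk(v)$, at most two over each, so it is enough to control the open simplices of $Lk(v)$ encountered. A geodesic $\eta$ contained in $\overline N(v)$, taken with the intrinsic metric of that complex, is still a geodesic; if it passes through $v$ split it there into two geodesics issuing from $v$, each radial and meeting at most two open simplices, and otherwise radial projection carries $\eta$, up to reparametrization, to a geodesic of $Lk(v)$ of length $<\pi$ (the standard behaviour of geodesics in a $\kappa$--cone). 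By the inductive hypothesis applied to $Lk(v)$ with $n$ replaced by $\pi$, that geodesic meets at most a constant $K_{d-1}$ open simplices of $Lk(v)$, so $\eta$ meets at most $2K_{d-1}+1$ open simplices of $X$; altogether $\gamma$ meets at most $(2n/\varepsilon_0+1)(2K_{d-1}+1)$ of them.

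I expect the main obstacle to be the two structural inputs: the uniform constant $\varepsilon_0$ coming out of the finitely-many-shapes hypothesis, and the identification of a neighbourhood of a vertex with the $\kappa$--cone on its link together with the ensuing description of geodesics there (those through the apex are radial; those avoiding it project to geodesics of the link of length $<\pi$). Everything else, including bookkeeping how the constant degrades with dimension, is routine, and one sees along the way that only completeness and geodesicity of $X$ are used (\cite[Theorem 1.1]{Bridson1991}), not the global $\CAT(0)$ hypothesis.
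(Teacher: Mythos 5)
The paper does not prove this statement; it cites Bridson's thesis \cite[Theorem 1.11]{Bridson1991}, so there is no in-paper argument to compare against. Your outline is, as far as I can tell, a faithful reconstruction of Bridson's strategy: induct on dimension, cut the geodesic into uniformly short subarcs each lying in a closed star, and reduce the star to the $\kappa$--cone on the link. The counting of at most two star simplices over each link simplex, and the treatment of the radial case, are fine.

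The place where you are waving your hands is the assertion that a geodesic $\eta$ of $X$ contained in $\overline N(v)$, ``taken with the intrinsic metric of that complex,'' is either radial or has radial projection equal to a geodesic of $Lk(v)$ of length $<\pi$. That dichotomy is the correct description of geodesics \emph{of the full cone} $C_\kappa(Lk(v))$, but $\overline N(v)$ embeds in that cone as the proper subset $\{(t,y): t\le h(y)\}$, which is in general \emph{not} convex (at a boundary vertex $w$ of the star, the total angle of $\overline N(v)$ at $w$ can be $<\pi$ even when $X$ is $\CAT(0)$). Consequently a geodesic of $\overline N(v)$ need not be a geodesic of the cone. Your $\eta$ is not an arbitrary geodesic of $\overline N(v)$ but a geodesic of $X$; that is a stronger condition, because at a ``cap'' point $q$ in a face opposite $v$ the geodesic condition in $Lk_X(q)$ rules out many wall-hugging behaviours. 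But $Lk_X(q)$ and $Lk_{C_\kappa(Lk(v))}(q)$ are \emph{different} extensions of $Lk_{\overline N(v)}(q)$ --- one contains the directions into simplices of $X$ beyond the cap, the other contains the outward radial directions of the cone --- and an angle of at least $\pi$ in the former does not formally imply an angle of at least $\pi$ in the latter. So the passage from ``$X$--geodesic in the closed star'' to ``cone geodesic, hence link geodesic of length $<\pi$'' needs a genuine argument, and in particular your remark that the global $\CAT(0)$ hypothesis is never used is not justified by what you have written: the $\CAT(0)$ hypothesis is precisely what makes $Lk(v)$ $\CAT(1)$ and $C_\kappa(Lk(v))$ $\CAT(\kappa)$, which is the natural route to promoting a local cone geodesic to a cone geodesic. (Bridson's theorem is indeed stated without a curvature hypothesis, so it is plausible the gap can be closed; but closing it is exactly the ``main obstacle'' you flagged, and your sketch does not do it.) A secondary, related issue: the projection of $\eta$ is naturally a \emph{local} geodesic of $Lk(v)$, so the inductive hypothesis should really be formulated for local geodesics of $M_\kappa$--complexes (with $\kappa=1$ for the links), not just for geodesics, unless you separately argue that the projection is globally minimizing.
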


\begin{lemma}\cite[Theorem 1.11]{Bridson1991}[Containment] \thlabel{Containment} 
For every $n$ there exists a constant $k$ such that for every finite subcomplex $K$ of $X$ containing at most $n$ simplices, any geodesic path in the open simplicial neighborhood of $K$ meets at most $k$ simplices. 
\end{lemma}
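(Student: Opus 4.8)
The plan is to deduce this from Lemma~\thref{Bounded Length To Bounded Number Of Simplices} by first bounding the \emph{length} of any geodesic lying in $N(K)$ in terms of $n$ alone. One cannot argue as in Lemma~\thref{geod meets finitely many} and simply count the simplices of $\overline{N}(K)$: we have not assumed $X$ is locally finite, so $N(K)$ itself may meet infinitely many simplices of $X$, and the content of the statement is that a single \emph{geodesic} inside $N(K)$ cannot. Finiteness of the isometry types of simplices must therefore be used to bound diameters rather than to count.

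Here are the steps. (1) Since $X$ has finitely many isometry types of simplices and $\kappa\le 0$, fix $D>0$ bounding the diameter of every simplex; then $d_X(x,y)\le D$ whenever $x,y$ lie in a common simplex $\sigma$, the straight segment inside $\sigma$ being a path from $x$ to $y$ of length at most $D$. (2) List the simplices of $K$ (of all dimensions) as $\tau_1,\dots,\tau_m$ with $m\le n$, and for each $i$ let $S_i$ be the union of the open simplices $\sigma$ with $\overline{\sigma}\cap\overline{\tau_i}\ne\varnothing$. Each $S_i$ is open in $X$, contains $\overline{\tau_i}$, and has all of its points within $D$ of $\overline{\tau_i}$, so $\diam_X S_i\le 3D$; moreover $N(K)=\bigcup_{i=1}^m S_i$, since $\overline{\sigma}$ meets $K$ precisely when it meets some $\overline{\tau_i}$. (3) Let $\gamma$ be a geodesic segment of $X$ contained in $N(K)$, with endpoints $p$ and $q$, and let $S_{i_1},\dots,S_{i_r}$ ($r\le m$) be the members of this cover that $\gamma$ meets. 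As $\gamma$ is connected and is covered by the nonempty, relatively open subsets $\gamma\cap S_{i_j}$, the graph on $\{1,\dots,r\}$ with an edge $a\sim b$ whenever $(\gamma\cap S_{i_a})\cap(\gamma\cap S_{i_b})\ne\varnothing$ is connected. Choosing a simple path in this graph from the index whose set contains $p$ to the one whose set contains $q$, together with one point of $\gamma$ in each successive overlap, the triangle inequality and the bounds $\diam_X S_i\le 3D$ yield
\[
\operatorname{length}(\gamma)=d_X(p,q)\le 3rD\le 3nD .
\]
(4) Apply Lemma~\thref{Bounded Length To Bounded Number Of Simplices} with the length bound $3nD$: there is a constant $k$, depending only on $X$ and on $n$, such that every geodesic segment of length at most $3nD$ meets at most $k$ open simplices. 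That $k$ is the required constant.

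I do not expect a serious obstacle; the work is entirely in the organisation above. The one point that needs a little care is the bookkeeping in step (3)---that a connected set covered by finitely many relatively open pieces has connected intersection graph, so the uniformly bounded diameters of the $S_i$ add up along a chain of length at most $r\le n$. A secondary issue is the meaning of ``geodesic path in $N(K)$'': should it refer to a geodesic for the length metric induced on $N(K)$ rather than a geodesic of $X$ lying in $N(K)$, the same argument applies with $d_X$ replaced throughout by the induced metric, since each $S_i$ also has induced-metric diameter at most $3D$---a point of an open simplex $\sigma\subseteq S_i$ is joined to a vertex of $\overline{\sigma}\cap\overline{\tau_i}$ by a segment of length $\le D$ lying in $S_i$, and $\overline{\tau_i}\subseteq S_i$ has diameter $\le D$. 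Either way the essential inputs are the uniform diameter bound $D$ from finiteness of isometry types and Lemma~\thref{Bounded Length To Bounded Number Of Simplices}.
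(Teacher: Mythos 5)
The paper gives no proof of this lemma---it is cited directly to Bridson's thesis---so your task is to supply an argument rather than to match one, and the argument you give is correct and complete. The skeleton is sound: the uniform bound $D$ on simplex diameters (from finitely many isometry types and $\kappa\le 0$) gives $\diam\,S_i\le 3D$ for each $S_i=N(\overline{\tau_i})$; the sets $\gamma\cap S_{i_j}$ are nonempty, relatively open, and cover the connected set $\gamma$, so their nerve is connected; a simple chain in the nerve from a piece containing $p$ to one containing $q$ uses at most $m\le n$ of the $S_i$, whence $d(p,q)\le 3nD$ by the triangle inequality along a sequence of overlap points; and then \thref{Bounded Length To Bounded Number Of Simplices} applies with the length bound $3nD$. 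This is a clean reduction of Containment to the bounded-length lemma, both of which the paper attributes to Bridson's Theorem 1.11.

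One correction to the closing parenthetical. If ``geodesic path in $N(K)$'' were read as a geodesic for the induced length metric on $N(K)$, your diameter estimate would still bound its length by $3nD$, but \thref{Bounded Length To Bounded Number Of Simplices} is a statement about geodesic segments \emph{of $X$}, not about arbitrary rectifiable paths of bounded length, so the final step would not close as written. Fortunately that reading is not the one the paper uses: every invocation of Containment---in the proof of \thref{Finiteness}, and in the path-of-simplices bounds such as the one in \thref{ContainmentOfV_UU}---is applied to a subsegment of an $X$-geodesic that happens to lie inside $N(D(\xi))$. So your primary argument is the relevant one, and the parenthetical can simply be dropped.
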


See Figure \ref{fig:ShortPaths} for an illustration of the set up for the following lemma.

\begin{lemma}\cite[Lemma 3.7]{Martin}[Short Paths of Simplices]\thlabel{Short Paths of Simplices}
    There is a function $F:\mathbb{N} \longrightarrow \mathbb{N}$ so that the following holds: Let $K$ be a convex subcomplex of $X$ and $K'$ a connected subcomplex of $X$ both containing at most $n$ simplices. Let $x,y \in K$ and $x',y' \in K'$, and assume there exists a path between $x'$ and $y'$ in $K'$ that does not meet $K$. Let $\tau,\tau'$ be two simplices of $Lk(K)$ so that $[x,x']$ (resp. $[y,y'])$ meets $\tau$ (resp. $\tau'$). Then there exists a path of simplices in $Lk(K)$ of length at most $F(n)$ between $\tau$ and $\tau'$. 
\end{lemma}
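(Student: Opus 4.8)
The plan is to join $\tau$ to $\tau'$ by transporting across the path in $K'$ from $x'$ to $y'$ that misses $K$, keeping control — via the convexity of $K$ and the uniform combinatorial bounds \thref{Bounded Length To Bounded Number Of Simplices} and \thref{Containment} — of where geodesics issuing from $K$ first cross into $Lk(K)$. Since $K$ is convex and closed, any geodesic from a point of $K$ to a point off $K$ meets $K$ in an initial sub-segment, after which it enters a well-defined open simplex of $Lk(K)$; call this the \emph{exit simplex} of the geodesic. A short argument using the convexity of $t \mapsto d\big(K, [x,x'](t)\big)$ shows that every simplex of $Lk(K)$ met by $[x,x']$ lies on a sub-segment of $[x,x']$ contained in $N(K)$, which by \thref{Containment} crosses at most $k(n)$ simplices; hence any such $\tau$ is within $O(k(n))$ steps in $Lk(K)$ of the exit simplex of $[x,x']$, and likewise for $\tau'$ and $[y,y']$. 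It therefore suffices to connect exit simplices.

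The heart of the argument is a local transport estimate: \emph{there is $F_0 = F_0(n)$ so that if $g, g'$ are geodesics, each issuing from a point of $K$ and ending at a point off $K$, whose two initial points lie within the maximal simplex diameter $D$ of one another and whose two terminal points likewise lie within $D$, then the exit simplices of $g$ and $g'$ are joined by a path of simplices in $Lk(K)$ of length at most $F_0$.} The idea: by convexity of the $\mathrm{CAT}(0)$ metric, $g$ and $g'$ fellow-travel within $D$ at matched parameters, so comparing them at the parameter of one exit point shows that this exit point lies within $D$ of a point of the other geodesic which still belongs to $K$; since the portions of $g$ and $g'$ inside $K$ are geodesic segments contained in $K$, \thref{Containment} (applied with the subcomplex $K$, which has at most $n$ simplices) turns this into a path of simplices of length $\le F_1(n)$ in $K$ joining the simplex of $K$ carrying one exit point to that carrying the other. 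Finally, the directions in which $g$ and $g'$ leave $K$ at these combinatorially nearby points span a short arc of the link of $K$, which is a piecewise-spherical complex with finitely many isometry types, so Bridson's estimate applied in that link produces a bounded path of simplices in $Lk(K)$ between the two exit simplices.

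Granting this, one assembles the bound. After deleting repeated crossings, the given path in $K'$ from $x'$ to $y'$ avoiding $K$ may be taken to cross at most $n$ open simplices of $K'$, none of which lies in $K$; choose breakpoints $x' = z_0, z_1, \dots, z_m = y'$ on it with $z_{i-1}, z_i$ in a common closed simplex of $K'$ and $m \le n$. Since $z_{i-1}, z_i \notin K$, the local transport estimate applied to $g = [x, z_{i-1}]$, $g' = [x, z_i]$ connects the exit simplex of $[x, x'] = [x, z_0]$ to that of $[x, y'] = [x, z_m]$ by a path of simplices in $Lk(K)$ of length $\le n F_0(n)$. Next, convexity of $K$ gives $[x,y] \subseteq K$, so $[x,y]$ is a geodesic in $K$ and by \thref{Containment} crosses at most $k(n)$ simplices of $K$; picking $x = o_0, o_1, \dots, o_M = y$ along $[x,y]$ with $o_{j-1}, o_j$ in a common closed simplex of $K$ and $M \le k(n)$, the local transport estimate applied to $g = [o_{j-1}, y']$, $g' = [o_j, y']$ (now with common terminal point $y' \notin K$) connects the exit simplex of $[x, y']$ to that of $[y, y']$ by a path of length $\le k(n) F_0(n)$. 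Concatenating these paths with the two $O(k(n))$ corrections from the first paragraph exhibits a path of simplices in $Lk(K)$ from $\tau$ to $\tau'$ of length at most an explicit $F(n)$, for example $F(n) = (n + k(n)) F_0(n) + C\, k(n)$.

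The step I expect to be the main obstacle is the last part of the local transport estimate: upgrading ``the two geodesics leave $K$ at combinatorially nearby points'' to ``they enter combinatorially nearby simplices of $Lk(K)$.'' Nearness of the exit points alone is not enough — a single face of $K$ can be a face of unboundedly many simplices of $Lk(K)$ — so one must genuinely use the $\mathrm{CAT}(0)$ comparison of \emph{angles} to see that the outgoing directions are close in the link of $K$, and then invoke the finitely-many-isometry-types hypothesis to run Bridson's bounded-combinatorics estimate inside those links. Everything else — the convexity bookkeeping, the fellow-traveling, and the chaining along the path in $K'$ and along $[x,y]$ — should be routine once this local estimate is in hand.
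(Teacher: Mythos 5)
Your outer strategy (reduce to exit simplices, then chain the terminal point across the path in $K'$ and the initial point along $[x,y]\subset K$) is reasonable and close in spirit to what is needed, but the ``local transport estimate'' on which everything rests is false as you have stated it. Let $X$ be a simplicial tree, $K=\{v\}$ a single vertex, and let $g,g'$ be short geodesic segments inside two distinct edges at $v$, each of length less than $D/2$. Then $g(0)=g'(0)=v$ and $d(g(1),g'(1))<D$, so your hypotheses hold; but $Lk(v)$ consists of the pairwise non-nested open edges at $v$, so the two exit simplices lie in distinct components of $Lk(K)$ and are joined by no path of simplices in $Lk(K)$ at all.

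The missing ingredient is precisely the hypothesis the lemma carries but your intermediate claim drops: a path from $g(1)$ to $g'(1)$ avoiding $K$, of controlled combinatorics. This is what forces the two exit simplices into a common piece of $Lk(K)$, and any correct local step has to use it; yet your sketch --- fellow-traveling, Containment inside $K$, angle comparison in links --- never invokes such a path and, if filled in, would purportedly prove the false unconditional statement. I would also push back on the obstacle you single out. In a locally finite complex with cocompact action each closed simplex has a link of uniformly bounded size, so ``the two exit points lie in combinatorially nearby faces of $K$'' already bounds the number of \emph{candidate} exit simplices; the real difficulty is not counting them but producing a path among them that stays in $Lk(K)$ rather than cutting through $K$, and only the avoidance hypothesis can supply that. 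Until the local estimate is reformulated with the avoidance path as a hypothesis and the argument genuinely exploits it, the proposal has a gap at its central step.
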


\begin{figure}
        \centering
        \begin{tikzpicture}
        \draw[very thick] (2,5) -- (9,5);
        \node at (4,5.5) {$x'$};
        \node at (7,5.5) {$y'$};
        \draw[very thick] (0,0) -- (8,0);
        \node at (1,-0.5) {$x$};
        \node at (6,-0.5) {$y$};

        \node at (-.5,0) {$K$};
        \node at (1.5,5) {$K'$};
        \node at (3.5,0.5) {$\cdots$};
        \node at (1,1.25) {$\tau$};
        \node at (7.25,.75) {$\tau'$};

        \filldraw[fill=lightgray, draw=black] (.5,0) -- (.25,1) -- (1.75,0.75) -- cycle;
        \filldraw[fill=lightgray, draw=black] (.5,0) -- (1.75,0.75) -- (2.5,0) -- cycle;

        \filldraw[fill=lightgray, draw=black] (7,0) -- (5.75,1.25) -- (5.5,0) -- cycle;
        \filldraw[fill=lightgray, draw=black] (5.75,1.25) -- (5.5,0) -- (5,.75) -- cycle;

        \draw (1,0) -- (4,5);
        \draw (6,0) -- (7,5);
        \end{tikzpicture}
        
        \caption{The situation of \thref{Short Paths of Simplices}.}
        \label{fig:ShortPaths}
\end{figure}

\begin{definition}[Acylindrical]
    Let $G$ be a group acting on an $M_\kappa$--complex $X$. The action is called \emph{acylindrical} if there is a constant $A$ so that for any set $K \subset X$ with $\diam(K) \geq A$, $K$ has finite pointwise stabilizer. 
\end{definition}

\subsection{Complexes of Groups}

Given a group acting cocompactly on a tree, Bass--Serre theory explains how to use vertex and edge stabilizers with the quotient graph to get a graph of groups structure for the original group. Conversely, given a graph of groups, we can build a tree on which the fundamental group acts with quotient our original graph of groups. In \cite{BH}, Bridson and Haefliger extend this correspondence to groups acting on any simply connected category. Here we introduce the definitions and notation  we need from their theory. 

\begin{definition}[Scwol]
    Let $\mathcal{Y}$ be a category with objects $V(\mathcal{Y})$ and morphisms (or arrows) $E(\mathcal{Y})$. For $a \in E(\mathcal{Y})$ we write $i(a), t(a)$ for the source and target of $a$. Then $\mathcal{Y}$ is a \emph{small category without loops}, or a \emph{scwol}, if $V(\mathcal{Y}), E(\mathcal{Y})$ are both sets and for any $a \in E(\mathcal{Y})$, we have 
    \[i(a) = t(a) \Longrightarrow a \text{ is the identity morphism of } i(a) = t(a)\]
    Further, $\mathcal{Y}$ is \emph{simple} if there is at most one morphism between any two objects of $\mathcal{Y}$.
\end{definition}

\begin{definition}
    Let $Y$ be a cell complex. The \emph{scwolification} of $Y$, denoted $\YY$, is the scwol with objects corresponding to cell of $Y$ and arrows corresponding to reverse inclusion.
\end{definition}

\begin{definition}
    Let $\YY$ be a scwol. The \emph{geometric realization of} $\YY$, denoted $|\YY|$ is a the flag simplicial complex with vertices $V(\YY)$ and edges $E(\YY)$. An $n$--simplex of $|\YY|$ corresponds to an $n$-tuple of composable edges in $\YY$. 
\end{definition}

The only scwols we will be interested in are the scwolification of cell complexes. If $Y$ is a cell complex and $\YY$ is its scwolification, then $|\YY|$ is the barycentric subdivision of $Y$. See \ref{fig:scwolifcation} for an example where $Y$ is a single triangle. 

Note that if a group $G$ acts on \emph{simplicial} complex $X$ and fixes some simplex setwise but not pointwise, then the quotient will not be simplicial, for example $\mathbb{Z}/3\mathbb{Z}$ acting on a triangle by rotation. To remedy this, one can consider the action on the barycentric subdivision of $X$, say $X_b$. If $\sigma$ is an $n$--cell of $X_b$, then $\sigma$ corresponds to a chain $\sigma_1 \subset \sigma_2 \subset \cdots \subset \sigma_n$ where each $\sigma_i$ is a cell of $X$. If an element of $G$ fixes $\sigma$, it must fix each $\sigma_i$, so $\mathrm{Stab}_G(\sigma)$ fixes $\sigma$ pointwise. 

\begin{convention}\thlabel{stabilizer convention}
    Whenever is $G$ is a group acting on a simplicial complex $X$, we assume the quotient is simplicial and stabilizers of simplices fix simplices pointwise. As above, this can always be achieved by barycentrically subdividing if necessary.  
\end{convention}

\begin{figure}
        \centering
        \begin{tikzpicture}

        \filldraw [fill=lightgray,thick] (-7,0) -- (-5,1.732*2) -- (-3,0) -- (-7,0);

        \node at (-2,0)[circle,fill,inner sep=1pt]{};
        \node at (2,0)[circle,fill,inner sep=1pt]{};
        \node at (0,1.732 * 2)[circle,fill,inner sep=1pt]{};
        \node at (-1,1.732)[circle,fill,inner sep = 1pt]{};
        \node at (1,1.732)[circle,fill,inner sep = 1pt]{};
        \node at (0,0)[circle,fill,inner sep = 1pt]{};
        \node at (0,2*1.732*.3333)[circle,fill,inner sep = 1pt]{};

        \draw[shorten >=7pt,shorten <=4pt, ->](0,2*1.732*.3333) -- (-2,0);
        \draw[shorten >=7pt,shorten <=4pt, ->](0,2*1.732*.3333) -- (0,2*1.732);
        \draw[shorten >=7pt,shorten <=4pt, ->](0,2*1.732*.3333) -- (2,0);
        \draw[shorten >=2.5pt,shorten <=4pt, ->](0,2*1.732*.3333) -- (-1,1.732);
        \draw[shorten >=2.5pt,shorten <=4pt, ->](0,2*1.732*.3333) -- (1,1.732);
        \draw[shorten >=2.5pt,shorten <=4pt, ->](0,2*1.732*.3333) -- (0,0);

        \draw[shorten >=7pt,shorten <=4pt, ->](-1,1.732) -- (-2,0);
        \draw[shorten >=7pt,shorten <=4pt, ->](-1,1.732) -- (0,2*1.732);

        \draw[shorten >=7pt,shorten <=4pt, ->](1,1.732) -- (2,0);
        \draw[shorten >=7pt,shorten <=4pt, ->](1,1.732) -- (0,2*1.732);

        \draw[shorten >=7pt,shorten <=4pt, ->](0,0) -- (-2,0);
        \draw[shorten >=7pt,shorten <=4pt, ->](0,0) -- (2,0);
        
        \filldraw [fill=lightgray,thick] (3,0) -- (5,1.732*2) -- (7,0) -- (3,0);
        \draw[thick] (4,1.732)-- (5,2*1.732*.3333);
        \draw[thick] (6,1.732)-- (5,2*1.732*.3333);
        \draw[thick] (5,0)-- (5,2*1.732*.3333);
        \draw[thick] (3,0)-- (5,2*1.732*.3333);
        \draw[thick] (7,0)-- (5,2*1.732*.3333);
        \draw[thick] (5,2*1.732)-- (5,2*1.732*.3333);
        \end{tikzpicture}
        
        \caption{From left to right, a simplex, the scwol constructed by reverse inclusion, and the geometric realization of this scwol.}
        \label{fig:scwolifcation}
\end{figure}
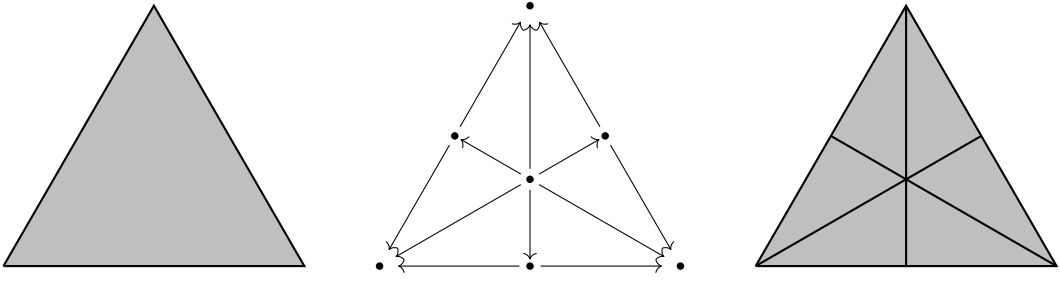

\begin{definition}[\cite{BH}](Complex of Groups)
    A \emph{complex of groups} over a scwol $\mathcal{Y}$, denoted $G(\mathcal{Y})= (G_\sigma, \psi,g_{a,b})$, consists of the following data. 
    \begin{enumerate}
        \item For each $\sigma \in V(\mathcal{Y})$, a \emph{local group} $G_\sigma$,
        \item For each $a \in E(\mathcal{Y})$, an injective homomorphism $\psi_a:G_{i(a)}\longrightarrow G_{t(a)}$, 
        \item For each pair of composable arrows $a,b$, a \emph{twisting element} $g_{a,b} \in G_{t(a)}$ satisfying
        \begin{enumerate}
            \item $Ad(g_{a,b})\psi_{ab} = \psi_a\psi_b$, where $Ad(g_{a,b})$ is conjugation by $g_{a,b}$,
            \item for any composable edges $a,b,c$, we have $\psi_a(g_{b,c})g_{a,bc} = g_{a,b}g_{ab,c}$. 
        \end{enumerate}
    \end{enumerate}
\end{definition}

\begin{definition}\cite[III.$\mathcal{C}$.3.7]{BH}[Fundamental Group of a Complex of Groups]\thlabel{Universal Cover of Complex of Groups}
Let $G(\mathcal{Y}) = (G_\sigma,\varphi_a,g_{a,b})$ be a complex of groups with $T$ a maximal tree in $\mathcal{Y}$, and let $E^\pm(\mathcal{Y})$ be the symbols $\{a^+, \, a^- \, | \, a \in E(\mathcal{Y})\}$. The \emph{fundamental group} $\pi_1(G(\mathcal{Y},T)$ has a presentation with the following generators 
\[\Big(\bigsqcup_{\sigma \in V(\mathcal{Y})} G_\sigma\Big) \bigsqcup E^\pm(\mathcal{Y})\]
and the following relations 
\[\begin{Bmatrix}\text{the relations in the groups } G_\sigma,\\
(a^+)^{-1} = a^- \text{ and } (a^-)^{-1} = a^+, \\
a^+b^+ = g_{a,b}(ab)^+ \text{ for all composable edges } a,b,\\
\varphi_a(g) = a^+ g a^- \text{ for all } a \in E(\mathcal{Y)}, g \in G_{i(a)}\\
a^+ = 1 \text{ for all } a \in T
\end{Bmatrix}\]
\end{definition}
\begin{definition}[Universal Cover of a Complex of Groups]
    Given a complex of groups $G(\mathcal{Y})$ and a maximal tree $T$ in $\YY$, let $G = \pi_1(G(\YY),T)$. The \emph{universal cover} is a scwol $\mathcal{X}$ defined by
    \[V(\mathcal{X}) = \Big\{ (gG_\sigma,\sigma) \; \Big| \; \sigma \in V(\mathcal{Y}), \, gG_\sigma \in G/G_\sigma \Big\}\]
\[E(\mathcal{X}) = \Big\{(gG_{i(a)},a) \; \Big| \; a \in E(\mathcal{Y}), \, gG_{i(a)} \in G/G_{i(a)} \Big\}\]
\[i(gG_{i(a)},a) = (gG_{i(a)},i(a)) \quad \quad \quad t(gG_{i(a)},a) = (ga^-G_{t(a)},t(a))\]
    The group $G$ acts on $\XX$ by left multiplication with quotient $\YY$.
\end{definition}

For the remainder of this section we fix a group $G$ acting on a simply connected simplicial complex $X$ with quotient $Y$, following \thref{stabilizer convention}. Let $\XX,\YY$ be their scwolifications and let $X_b = |\XX|, Y_b = |\YY|$ be their geometric realizations.

We briefly recall how to induce a complex of groups structure on $G$. The interested reader can refer to \cite[III.$\mathcal{C}$]{BH} for more details. Begin by choosing for each object $\tau \in V(\YY)$ a lift $\fitTilde{\tau} \in V(\XX)$. For each $a \in E(\YY)$, this induces a unique choice of lift $\fitTilde{a} \in E(\XX)$ so that $i(\fitTilde{a}) = \fitTilde{i(a)}$, and then we choose a (not unique) $h_a \in G$ so that $h_at(\fitTilde{a}) = \fitTilde{t(a)}$. For $\tau \in V(\YY)$, we set $G_\tau = \mathrm{Stab}_G(\fitTilde{\tau})$ and for each $a \in E(\YY)$, the homomorphism $\psi_a:G_{i(a)} \longrightarrow G_{t(a)}$ is simply conjugation by $h_a$ inside $G$. For composable edges $a,b$, we set $g_{a,b} := h_ah_bh_{ab}^{-1}$. If $X$ is a tree, there are no twisting elements and this is simply the graph of groups decomposition of $G$ as in Bass--Serre theory, only with more categorical language. 

\begin{definition}\thlabel{Complex of spaces compatible with G action}
    A \emph{complex of spaces compatible with the $G$ action on $X$} consists of the following.
    \begin{enumerate}
        \item For each simplex $\sigma \subset X$, a space $X_\sigma$. For each face of $\sigma' \subset \sigma$, a map $\varphi_{\sigma',\sigma}:X_\sigma \longrightarrow X_{\sigma'}$ such that if $\sigma'' \subset \sigma' \subset \sigma$, we have $\varphi_{\sigma'',\sigma'}\varphi_{\sigma',\sigma} = \varphi_{\sigma'',\sigma}$. 
        \item For each $g \in G$ and simplex $\sigma \subset X$, a homeomorphism $g:X_\sigma \longrightarrow X_{g\sigma}$, so that whenever $\sigma' \subset \sigma$ or $h \in G$, the following diagrams commute.
        \begin{center}

        \begin{tikzcd}
X_{\sigma} \arrow[r, "g"] \arrow[d, "{\varphi_{\sigma',\sigma}}"'] & X_{g\sigma} \arrow[d, "{\varphi_{g\sigma',g\sigma}}"] &  & X_\sigma \arrow[rd, "h"'] \arrow[rr, "gh"] &                              & X_{gh\sigma} \\
X_{\sigma'} \arrow[r, "g"]                                         & X_{g\sigma'}                                          &  &                                            & X_{h\sigma} \arrow[ru, "g"'] &             
\end{tikzcd}
        \end{center}
    \end{enumerate}
\end{definition}

The following two definitions are the language needed to state Haefliger's \thref{Haefliger}. We use this theorem to prove \thref{Cor of Haefliger}, which we need to combine cusped spaces in our main theorem.

\begin{definition}\cite[Section 1]{Haefliger1992}
    For $\tau \in V(\YY)$, let $CD_\tau$ be the scwol with $V(CD_\tau) = \{a \in E(\YY) \;|\; t(a) = \tau\}$ and $E(CD_\tau) = \{(a,b) \;| \;a,b \text{ composable edges of } \YY \text{ with } t(b) = i(a), t(a) = \tau\}$. For $(a,b) \in E(CD_\tau),$ we set $i(a,b) = ab$, $t(a,b)= a$. For composition, we set $(a,b)(a',b') = (a,bb')$. There is a functor $j_\tau:CD_\tau \longrightarrow \YY$ which sends $(a,b) \in E(CD_\tau)$ to $b$, hence $j_\tau(a) = i(a)$. We write $D_\tau$ for the geometric realization of $CD_\tau$. The functor $j_\tau$ induces a cellular map $D_\tau \longrightarrow Y_b$, which we also denote $j_\tau$. Given $a \in E(\YY)$, there is a functor $j_a:CD_{i(a)} \longrightarrow CD_{t(a)}$ which sends $(b,c) \in E(CD_{i(a)})$ to $(ab,c)$. This $j_a$ induces a cellular map $D_{i(a)} \longrightarrow D_{t(a)}$, which we also denote $j_a$. 
\end{definition}

\begin{definition}\cite[Section 3.3]{Haefliger1992}
    Let $KY$ be a topological space with a continuous projection $\pi:KY \longrightarrow Y_b$. Each object $\tau \in V(\YY)$ corresponds to a vertex of $Y_b$, and we set $Y_\tau = \pi^{-1}(\tau)$. Let $Y(D_\tau)$ be the subset of $D_\tau \times KY$ of pairs $(x,y)$ with $j_\tau(x) = \pi(y)$. Let $Y(j_\tau),\pi_\tau$ be the projections onto the first and second coordinates. We identify $Y_\tau$ with the fiber $\pi_\tau^{-1}(\tau) \subset Y(D_\tau)$. Any $a \in E(\YY)$ induces a map $Y(j_a):Y(D_{i(a)}) \longrightarrow Y(D_{t(a)})$ sending $(x,y)$ to $(j_a(x),y)$. If $s$ is a section of $\pi$ over the 1--skeleton of $Y_b$, then each fiber $Y_\tau$ has a basepoint $s(\tau)$. This induces a section $s_\sigma$ of $\pi_\sigma$ over the $1$--skeleton of $D_\tau$. With this notation, $KY$ is a \emph{complex of spaces associated to $G(\YY)$} if the following hold.
    \begin{enumerate}
        \item For each $\tau \in V(\YY)$, $Y_\tau$ is connected and there is a retraction $r_\tau:Y(D_\tau) \longrightarrow Y_\tau$ which is homotopic to the identity relative to $Y(\tau)$ so that $r_\tau s_\tau(x) = s_\tau(\tau)$ for $x \in D_\tau^{(1)}$. 
        \item $\pi_1(Y_\tau,s(\tau)) = G_\tau$ and for any $a \in E(\YY)$, restricting $Y(j_a)$ to $Y_{i(a)} \subset Y(D_{i(a)})$ gives a basepoint preserving map
        \[r_{t(a)}Y(j_a):Y_{i(a)} \longrightarrow Y_{t(a)}\]
        which induces $\psi_a$ on fundamental groups.
        \item For two composable edges $a,b \in E(\YY)$ with $\tau = t(a)$, the edge $(a,b)$ of $D_{\tau}$ maps to a loop in $Y_{\tau}$ under $ r_\tau s_\tau$ representing the homotopy class of $g_{a,b}^{-1} \in \pi_1(Y_\tau,s(\tau)) = G_\tau$. 
    \end{enumerate}
    Further, if $KY$ is a cell complex, $\pi$ is a cellular map, and each $Y_\tau$ is a $K(G_\tau,1)$, then we call $KY$ an \emph{aspherical cellular realization of $G(\YY)$}.
\end{definition}

\begin{theorem}\cite[Theorem 3.4.1]{Haefliger1992}\thlabel{Haefliger}
    For each $\tau \in V(\YY)$, let $Y_\tau$ be a fixed choice of $K(G_\tau,1)$ with a basepoint $s(\tau)$. For each $a \in E(\YY)$, let $\varphi_a: Y_{i(a)} \longrightarrow Y_{t(a)}$ be any map realizing $\psi_a$ on fundamental groups. Then there is an aspherical cellular realization $\pi:KY \longrightarrow Y$ where $\pi^{-1}(\tau)$ is the given complex $Y_\tau$ for each $\tau \in V(\YY)$. 
\end{theorem}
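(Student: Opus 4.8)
The plan is to construct $KY$ over the geometric realization $Y_b = |\YY|$ by induction on its skeleta, gluing a ``higher mapping cylinder'' over each simplex and using the algebraic data $(\psi_a, g_{a,b})$ to organize the given maps $\varphi_a$ into a homotopy-coherent diagram indexed by the simplices of $|\YY|$, of which $KY$ is essentially the homotopy colimit.

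Over the $0$-skeleton we declare the fiber over the object $\tau$ to be the given $Y_\tau$. Over the edge of $Y_b$ corresponding to an arrow $a \in E(\YY)$ we glue the mapping cylinder $M_a = \big(Y_{i(a)}\times[0,1]\big) \sqcup Y_{t(a)} \big/ \big((y,1)\sim\varphi_a(y)\big)$, with the $Y_{i(a)}$-end over $i(a)$ and the $Y_{t(a)}$-end over $t(a)$. The canonical deformation retraction $M_a \to Y_{t(a)}$ gives the retractions required by condition (1) in the definition of a complex of spaces associated to $G(\YY)$, and since it restricts to $\varphi_a$ on $Y_{i(a)}$, which induces $\psi_a$, condition (2) holds at this stage. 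Over the $2$-simplex of $Y_b$ determined by a composable pair $(a,b)$ (with $t(a)=\tau$, $i(a)=t(b)$), the part of $KY$ already built over the boundary consists of $M_a, M_b, M_{ab}$ together with the three fibers; filling the simplex amounts to choosing a basepoint-preserving homotopy $H_{a,b}$ from $\varphi_a\circ\varphi_b$ to $\varphi_{ab}$ and forming the corresponding prism. Such a homotopy exists because $\operatorname{Ad}(g_{a,b})\psi_{ab} = \psi_a\psi_b$ forces $\varphi_a\varphi_b$ and $\varphi_{ab}$ to induce conjugate homomorphisms into $\pi_1(Y_\tau)=G_\tau$ and hence, $Y_\tau$ being a $K(G_\tau,1)$, to be homotopic; the homotopy classes rel basepoint of such homotopies form a torsor over $G_\tau$, so we choose the one whose basepoint loop represents $g_{a,b}^{-1}$, which is exactly condition (3).

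Over a $3$-simplex $(a,b,c)$ one must fill a $2$-sphere of coherence data: the two homotopies $\varphi_a\varphi_b\varphi_c \rightsquigarrow \varphi_{abc}$ assembled from $\{H_{a,b},H_{ab,c}\}$ and from $\{\varphi_a\cdot H_{b,c},\,H_{a,bc}\}$ differ, in the $G_\tau$-torsor of homotopies between these two maps, precisely by the elements $g_{a,b}g_{ab,c}$ and $\psi_a(g_{b,c})g_{a,bc}$; the cocycle identity $\psi_a(g_{b,c})g_{a,bc}=g_{a,b}g_{ab,c}$ says these agree, so the two homotopies are homotopic rel endpoints --- the only obstruction to a homotopy between them lying in $\pi_2(Y_\tau)=0$ --- and we glue in the $3$-simplex piece accordingly. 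For $n\geq 4$ the boundary data over an $n$-simplex are fillings of $(n-1)$-spheres of homotopies into the aspherical $Y_\tau$, which exist with no obstruction since $\pi_k(K(\pi,1))=0$ for $k\geq 2$, so the induction continues freely. Choosing compatible CW structures on the glued pieces makes $KY$ a cell complex with $\pi$ cellular.

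The real content is the $3$-simplex step: identifying the discrepancy between the two natural homotopies $\varphi_a\varphi_b\varphi_c\rightsquigarrow\varphi_{abc}$ with $\psi_a(g_{b,c})g_{a,bc}\,(g_{a,b}g_{ab,c})^{-1}$ and recognizing the cocycle condition as exactly what makes the gluing possible; equivalently, the heart of the proof is verifying that $(\varphi_a,g_{a,b})$ genuinely defines a homotopy-coherent diagram over $|\YY|$, after which everything in dimensions $\geq 3$ is formal because the fibers are aspherical. One can also read off $\pi_1(KY)\cong\pi_1(G(\YY))$ by van Kampen along this skeletal filtration, recovering the presentation of \thref{Universal Cover of Complex of Groups}, though this is not needed for the statement as ``aspherical cellular realization'' only requires the fibers, not $KY$ itself, to be aspherical.
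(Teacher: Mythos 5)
Your argument is essentially Haefliger's original construction, which this paper does not reprove but sketches inside the proof of \thref{retraction is a qisom}: skeletal induction over $|\YY|$, mapping cylinders over the edges, higher cubes $Y_{i(a_1)}\times[0,1]^k$ filled by asphericity of the fibers, with the twisting data $g_{a,b}$ pinning down the choice of homotopy over each $2$-simplex and the cocycle identity closing the $3$-simplices. The one small inaccuracy is that the homotopy classes of basepoint-preserving homotopies $\varphi_a\varphi_b\simeq\varphi_{ab}$ form a torsor over the centralizer of $\psi_{ab}(G_{i(b)})$ in $G_\tau$ rather than over all of $G_\tau$, but since you only need existence of one such homotopy with the prescribed basepoint loop class $g_{a,b}^{-1}$, this does not affect the argument.
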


We collect some results from Haefliger's proof of the above that we will need. 

\begin{corollary}\thlabel{retraction is a qisom}
    Suppose that for each $\tau \in V(\YY)$, $Y_\tau$ is a cell-complex and for each $a \in E(\YY)$, $\varphi_a$ is a cellular map. Let $\pi:KY \longrightarrow Y$ be an aspherical cellular realization obtained from \thref{Haefliger} using these spaces and maps as input.  
    \begin{enumerate}
        \item Each $Y(D_\tau)$ can be given a cell structure using the cell structure of the $Y_{\tau'}$ for $\tau' \in V(\YY)$, together with some cubes of dimension at most the dimension of $Y$. 
        \item With this cell structure, the retraction $r_\tau$ is a cellular map for each $\tau \in V(\YY)$. Further, if $\fitTilde{r_\tau}$ is a lift of $r_\tau$ to universal covers, then $\fitTilde{r_\tau}$ restricts to a quasi-isometry of $k$--skeletons for all $k \geq 0$.
    \end{enumerate}
\end{corollary}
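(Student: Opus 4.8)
\emph{Plan.} I would re-read Haefliger's construction in \cite[\S 3]{Haefliger1992} and extract both assertions from the explicit model; the only genuine subtlety is that his framework is homotopy-theoretic while the conclusion in (2) is metric. Recall that $KY$ is assembled over $Y_b=|\YY|$ as a bundle of mapping telescopes: over a vertex $\tau$ of $Y_b$ one glues in the prescribed complex $Y_\tau$, and over a simplex of $Y_b$ corresponding to a chain of composable arrows $a_1,\dots,a_n$ of $\YY$ one glues in a telescope built from the cellular maps $\varphi_{a_1},\dots,\varphi_{a_n}$ together with an $n$--dimensional cube recording the simplex coordinate. Hence every open cell of $KY$ is a product of an open cell of some $Y_{\tau'}$ with a face of such a cube (so the cube has dimension $n\le\dim\YY=\dim Y$), the attaching maps being assembled from the $\varphi_{a_i}$ and the cube face maps, and each fibre $Y_\tau=\pi^{-1}(\tau)$ is a subcomplex.

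\emph{Part (1).} Since $Y=X/G$ is compact, $\YY$ is a finite scwol, so $D_\tau=|CD_\tau|$ is a finite simplicial complex and $j_\tau\colon D_\tau\to Y_b$ is simplicial. Over a closed simplex $\Delta$ of $D_\tau$, $j_\tau$ is affine onto a simplex of $Y_b$, so the part of $Y(D_\tau)=D_\tau\times_{Y_b}KY$ lying over $\Delta$ is the fibre product of $\Delta$ with the telescope over $j_\tau(\Delta)$; unwinding, this is a finite union of products (open cell of some $Y_{\tau'}$)$\times$(cube of dimension $\le\dim Y$). These glue over the finitely many simplices of $D_\tau$ into the asserted cell structure, in which $Y_\tau$ is a subcomplex.

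\emph{Part (2).} $D_\tau$ is a cone on its cone point $\tau$ (\cite{Haefliger1992}), and Haefliger's $r_\tau$ carries this cone contraction up the telescope; on each local model it is therefore a composite of the cellular maps $\varphi_{a_i}$ with cellular collapses of the telescope cubes, hence is cellular, and $r_\tau|_{Y_\tau}=\mathrm{id}$. Consequently $r_\tau$ induces the identity on $\pi_1=G_\tau$ and lifts to a cellular map $\fitTilde{r_\tau}\colon\fitTilde{Y(D_\tau)}\to\fitTilde{Y_\tau}$ having the combinatorial inclusion $\fitTilde{\iota}\colon\fitTilde{Y_\tau}\hookrightarrow\fitTilde{Y(D_\tau)}$ as a section, so $\fitTilde{r_\tau}\fitTilde{\iota}=\mathrm{id}$. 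The contracting homotopy underlying $r_\tau$ moves each point a uniformly bounded distance: it crosses at most $\diam(D_\tau)$ telescope cells in the $D_\tau$--direction, each crossing applying one of the maps $\varphi_{a_i}$ to the fibre coordinate, so the total displacement is bounded in terms of $\dim Y$, $D_\tau$, and the Lipschitz constants of the $\varphi_a$. Hence $d(x,\fitTilde{\iota}\,\fitTilde{r_\tau}(x))$ is uniformly bounded, and since $\fitTilde{\iota}$ is $1$--Lipschitz, $\fitTilde{r_\tau}$ is coarsely Lipschitz on each $k$--skeleton and is a quasi-isometry of $k$--skeletons for all $k$, with coarse inverse $\fitTilde{\iota}$.

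\emph{Expected main obstacle.} Part (1) and the cellularity of $r_\tau$ are pure bookkeeping; the real content is the quasi-isometry statement, since Haefliger metrizes nothing. The delicate point is that the fibres $\fitTilde{Y_\tau}$ need not admit a cocompact $G_\tau$--action (e.g.\ when $Y_\tau$ is a $K(G_\tau,1)$ produced by \thref{kg1 remark}), so there is no \v{S}varc--Milnor shortcut; the uniform bound on the homotopy tracks, hence the Lipschitz bound on $\fitTilde{r_\tau}$, must instead be read off from the structure maps $\varphi_a$ having combinatorially Lipschitz lifts, a property one arranges when feeding \thref{Haefliger} its input, as done in \thref{kg1 remark}, so that every bounded composition of the $\varphi_a$'s occurring in $r_\tau$ is Lipschitz on skeletons.
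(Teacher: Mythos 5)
Your structural description---the iterative mapping-telescope model of $KY$, the dimension bound on the cubes, and the reduction of Part~(1) to the finiteness of $D_\tau$---matches the paper's construction, and your Part~(2) agrees with the paper on the core idea that $r_\tau$ has uniformly bounded displacement so that $\fitTilde{Y_\tau}$ is cobounded in $\fitTilde{Y(D_\tau)}$.

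However, the Lipschitz constants of the $\varphi_a$ are doing the wrong job in your Part~(2). The displacement $d\bigl(x,\fitTilde{\iota}\,\fitTilde{r_\tau}(x)\bigr)$ is controlled purely by the telescope depth: collapsing the cube coordinate carries $(y,s)\in Y_{\tau'}\times I^k$ to $\varphi_\bullet(y)$ across at most $k\le\dim Y$ prism cells, no matter how much $\varphi_a$ stretches the fibre, so no Lipschitz constant enters there. What the Lipschitz constants actually govern is whether $r_\tau$ is \emph{coarsely Lipschitz at all}: a cellular map can send a single edge of $Y_{\tau'}$ to an arbitrarily long edge-path in $Y_\tau$, and $r_\tau$ restricted to each $Y_{\tau'}$ is a bounded composition of the $\varphi_a$'s, so one needs uniform bounds on the combinatorial size of $\varphi_a$-images of cells. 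Your final inference---``$d(x,\fitTilde{\iota}\,\fitTilde{r_\tau}(x))$ is uniformly bounded and $\fitTilde{\iota}$ is $1$--Lipschitz, hence $\fitTilde{r_\tau}$ is coarsely Lipschitz''---is a non-sequitur: bounded displacement together with the $1$--Lipschitz inclusion gives only the non-collapsing half of the quasi-isometric-embedding inequality, namely $d_{Y(D_\tau)}(x,x')\le d_{Y_\tau}(r_\tau(x),r_\tau(x'))+2B$; it gives no upper bound. You identify the right missing ingredient in your ``expected obstacle'' paragraph (bounded compositions of $\varphi_a$ being Lipschitz on skeletons), but it needs to be wired into the Lipschitz claim for $r_\tau$, not into the displacement estimate. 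The paper's own proof is just as terse at this step (it passes directly from coboundedness to ``quasi-isometry''); what saves it in practice is that the $\varphi_a$'s fed into \thref{Haefliger} in \thref{our spaces exist} are constructed via \thref{lipschitz} to be Lipschitz on $1$--skeletons, and a ``Lipschitz on skeletons'' rather than merely ``cellular'' hypothesis on the $\varphi_a$ is what the quasi-isometry conclusion really requires.
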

\begin{proof}
    We briefly sketch Haefliger's construction of $KY$. It proceeds inductively by building a space $KY^k$ for $k = 0, 1, \ldots$. For $k=0$, $KY^0$ is the disjoint union of the chosen spaces $Y_\tau$ for each $\tau \in V(\YY)$. For each $a \in E(\YY)$, choose a map $\varphi_a:Y_{i(a)} \longrightarrow Y_{t(a)}$ realizing $\psi_a$ on fundamental groups. To construct $KY^1$, take $KY^0$ together with a mapping cylinder for each $\varphi_a$, and identify the end of each mapping cylinder with the corresponding space in $KY^0$. For a general $k$, $KY^k$ is a quotient of $KY^{k-1}$ and spaces $Y_{i(a_1)} \times [0,1]^k$ for each $k$--tuple of composable arrows $(a_k,\ldots, a_1)$ in $E(\YY)$. The construction of $KY^{k-1}$ tells us where to glue the faces of $Y_{i(a)} \times \partial [0,1]^k$, and the extension properties of $K(G,1)$ spaces allows us to extend across the interior of $Y_{i(a)} \times [0,1]^k$. For $\tau \in V(\YY)$, the retraction $r_\tau:Y(D_\tau) \longrightarrow Y_\tau$ is a contraction of these added cubes. For example if $Y$ is a graph and $\tau$ is a vertex, then $Y(D_\tau)$ is $Y_\tau$ with a mapping cylinder attached for each edge incident to $\tau$, and the retraction is just the standard deformation retraction onto $Y_\tau$.

    If $Y$ has dimension $n$, then $\YY$ does not have any $n+k$--tuples of composable edges for any $k >0$, meaning this iterated mapping cylinder construction ends after $n$ steps. Further, if each $\varphi_a$ for $a \in E(\YY)$ is cellular, then each mapping cylinder can be given a cell structure so that each $r_\tau$ is the contraction of some cells, meaning $r_\tau$ is a cellular map. This also allows us to understand the skeletons of $KY$; the $k$--skeleton $(KY)_k$ is the $k$--skeleton of each $Y_\tau$ for $\tau \in V(\YY)$ together with cubes of dimension at most $k$. This explains $1$.

    Fix some $\tau \in V(\YY)$ and consider the space $Y(D_\tau)$. Because $r_\tau$ is a homotopy equivalence and a retraction onto $Y_\tau$, it follows that there is a unique lift of the inclusion $\fitTilde{Y_\tau} \longrightarrow \fitTilde{Y(D_\tau)}$ to universal covers, so we can identify $\fitTilde{Y_\tau}$ with a subset of $\fitTilde{Y(D_\tau)}$. Further, $r_\tau$ is homotopic to the identity, so if $h_t:Y(D_\tau) \longrightarrow Y(D_\tau)$ is the homotopy from the identity to $r_\tau$, then $h_\tau$ lifts to a deformation retraction of $\fitTilde{Y(D_\tau)}$ onto this unique lift of $\fitTilde{Y_\tau}$, say $\fitTilde{h_\tau}$. From Haefliger's construction, $h_\tau$ is the contraction of the finitely many cubes added between the $Y_\tau$. If $a:\tau'\longrightarrow \tau$, we may view $Y_{\tau'}$ as a subset of $Y(D_\tau)$, and the restriction of $r_\tau$ is simply $\varphi_a$. Thus $r_\tau$ inherits the cellular properties of the $\varphi_a$, and in particular it can be restricted to any $k$--skeleton. Further, $h_\tau$ translates points a finite distance, so the lift of $r_\tau$ also translates points a finite distance. Thus $\fitTilde{Y_\tau}$ is cobounded in $\fitTilde{Y(D_\tau)}$ and it follows that $r_\tau$ is a quasi-isometry, and it can be restricted for a quasi-isometry between any $k$-skeletons. 
\end{proof}

\begin{proposition}\thlabel{Cor of Haefliger}
    Let $\pi:KY \longrightarrow Y$ be a complex of spaces associated to $G(\YY)$. Then the universal cover $\fitTilde{KY}$ induces a complex of spaces compatible with the $G$ action on $X$. For $\sigma \in V(\XX)$, the space $X_\sigma$ is the image of a lift $\pi_\tau:Y(D_\tau)\longrightarrow KY$ to universal covers. For simplices $\sigma' \subset \sigma$ in $X$, the map $\varphi_{\sigma',\sigma}:X_\sigma \longrightarrow X_{\sigma'}$ is a lift of the map $Y(j_a):Y(D_{i(a)})\longrightarrow Y(D_{t(a)})$ to universal covers, where $a \in E(\YY)$ is the image of the morphism $\sigma\longrightarrow \sigma'$ of $\XX$. If $KY$ is an aspherical cellular realization, the $\varphi_{\sigma',\sigma}$ are cellular maps.  
\end{proposition}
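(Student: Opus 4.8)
The plan is to exhibit the required complex of spaces as a collection of subspaces of the universal cover $\fitTilde{KY}$, using covering space theory. Two facts about Haefliger's construction are the starting point. First, $\pi_1(KY)\cong \pi_1(G(\YY)) = G$ (from the iterated--mapping--cylinder description of $KY$ and van Kampen), and under this identification the inclusion $Y_\tau\hookrightarrow KY$ induces, for each $\tau\in V(\YY)$, the inclusion of the local group $G_\tau=\operatorname{Stab}_G(\fitTilde{\tau})$ into $G$ --- this is condition (2) in the definition of a complex of spaces associated to $G(\YY)$. Since $r_\tau$ deformation retracts $Y(D_\tau)$ onto $Y_\tau$ (condition (1); see also \thref{retraction is a qisom}), the map $\pi_\tau\colon Y(D_\tau)\to KY$ is $\pi_1$--injective with image $G_\tau\leq G$. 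Second, $j_\tau\colon D_\tau\to Y_b$ is injective: $\YY$ is the scwolification of a simplicial complex, hence simple, so the functor $CD_\tau\to\YY$, $a\mapsto i(a)$, is injective on objects and $j_\tau$ is an embedding. Thus $\pi_\tau$ is a homeomorphism of $Y(D_\tau)$ onto $\pi^{-1}(j_\tau(D_\tau))\subseteq KY$, the preimage of the closed star of $\tau$ in $Y_b$.

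Let $p\colon\fitTilde{KY}\to KY$ be the universal covering, with its deck action of $G$. For $\tau\in V(\YY)$, the subspace $p^{-1}(\pi_\tau(Y(D_\tau)))$ is a disjoint union of copies of $\fitTilde{Y(D_\tau)}$; because $\pi_\tau$ is $\pi_1$--injective with image $G_\tau$, its components are canonically indexed by $G/G_\tau$, which by \thref{Universal Cover of Complex of Groups} is precisely the set of vertices of $\XX$ over $\tau$. For such a vertex $\sigma$ I would let $X_\sigma$ be the corresponding component, so $X_\sigma\cong\fitTilde{Y(D_\tau)}$ is the image of a lift of $\pi_\tau$ to universal covers, as claimed. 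The deck action of $G$ permutes these components as $G$ permutes $G/G_\tau$, i.e.\ as it permutes vertices of $\XX$ over $\tau$, so each $g\in G$ restricts to a homeomorphism $g\colon X_\sigma\to X_{g\sigma}$. For a face $\sigma'\subset\sigma$ with image arrow $a$ (so $i(a)$ is the image $\tau$ of $\sigma$ and $t(a)$ is the image $\tau'$ of $\sigma'$), the relations $\pi_{\tau'}\circ Y(j_a)=\pi_\tau$ and $j_{\tau'}\circ j_a=j_\tau$ (both immediate from the definitions, using $Y(j_a)(x,y)=(j_a(x),y)$) show that, under the embeddings $\pi_\tau,\pi_{\tau'}$, the map $Y(j_a)$ is simply the inclusion $\pi^{-1}(j_\tau(D_\tau))\hookrightarrow\pi^{-1}(j_{\tau'}(D_{\tau'}))$ of subspaces of $KY$. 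Passing to $\fitTilde{KY}$, the component $X_\sigma$ lies in a unique component $X_{\sigma'}$ of $p^{-1}(\pi_{\tau'}(Y(D_{\tau'})))$, and tracking cosets identifies this $\sigma'$ with the target of $\sigma\to\sigma'$ in $\XX$; I would set $\varphi_{\sigma',\sigma}\colon X_\sigma\hookrightarrow X_{\sigma'}$ to be this inclusion, which under the identifications above lifts $Y(j_a)$.

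With these definitions the axioms of \thref{Complex of spaces compatible with G action} become immediate: $\varphi_{\sigma'',\sigma'}\circ\varphi_{\sigma',\sigma}=\varphi_{\sigma'',\sigma}$ since composites of inclusions are inclusions, and the two commuting diagrams hold since the $\varphi$'s are inclusions of the subspaces $p^{-1}(\cdot)$ and the maps $g$ are restrictions of deck transformations, which preserve these subspaces and compose correctly. When $KY$ is an aspherical cellular realization, \thref{retraction is a qisom}(1) gives each $Y(D_\tau)\cong\pi^{-1}(j_\tau(D_\tau))$ a cell structure compatible with that of $KY$ and making $Y(j_a)$ a cellular inclusion, and lifting to $\fitTilde{KY}$ makes every $X_\sigma$ a subcomplex and every $\varphi_{\sigma',\sigma}$ cellular. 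The one step that really needs care --- the main obstacle --- is the combinatorial bookkeeping: checking that the indexing of the components of $p^{-1}(\pi_\tau(Y(D_\tau)))$ by $G/G_\tau$, the $G$--action on them, and their inclusion relations all agree with the scwol $\XX$ produced in \thref{Universal Cover of Complex of Groups}. This amounts to unwinding that $\XX$, built via the twisting elements $h_a$, is by design the ``index scwol'' of exactly this covering--space picture; similarly the compatible identifications $\pi_1(Y_\tau)=G_\tau$ across all $\tau$ are part of the definition of a complex of spaces associated to $G(\YY)$ and of \thref{Haefliger}, so they may be cited rather than re-derived.
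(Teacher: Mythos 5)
Your plan is correct and lands on essentially the same structure as the paper's proof: you identify each $X_\sigma$ with a component of the preimage in $\fitTilde{KY}$ of $\pi_\tau(Y(D_\tau))$, take the $\varphi_{\sigma',\sigma}$ to be the resulting subspace inclusions (lifts of $Y(j_a)$), and observe that the axioms of \thref{Complex of spaces compatible with G action} hold more or less automatically for a $G$-equivariant diagram of nested subspaces of $\fitTilde{KY}$. Your explicit observation that $j_\tau$ is an embedding (via simplicity of $\YY$) is a clean way to justify treating $Y(D_\tau)$ as a literal subspace of $KY$, and your use of $\pi_1$-injectivity of $\pi_\tau$ to index components by $G/G_\tau$ is sound.

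The one place where the paper's argument is cleaner, and which would resolve the step you flag as ``the main obstacle,'' is the following: rather than tracking the bijection between components of $p^{-1}(\pi_\tau(Y(D_\tau)))$ and cosets $G/G_\tau$ by hand, the paper first lifts $\pi\colon KY\to Y_b$ to a $G$-equivariant map $p\colon\fitTilde{KY}\to X_b$ between universal/development covers. Once you have $p$, the component you call $X_\sigma$ can be characterized intrinsically as $p^{-1}(D_\sigma)$, where $D_\sigma$ is the closed star of $\sigma$ in $X_b$. The containment $X_\sigma\subset X_{\sigma'}$ for $\sigma'\subset\sigma$ then falls out of the inclusion $D_\sigma\subset D_{\sigma'}$ in $X_b$, and $G$-equivariance of the whole diagram is just $G$-equivariance of $p$. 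This eliminates the coset bookkeeping entirely, since you never need to match up a component-to-coset labeling with the coset labeling appearing in \thref{Universal Cover of Complex of Groups}; the map $p$ carries all of that information. The paper also makes one verification explicit that you fold into ``composites of inclusions are inclusions'': it checks $j_a j_b = j_{ab}$ (using simplicity of $\YY$), hence $Y(j_a)Y(j_b)=Y(j_{ab})$, hence composing lifts gives a lift of the composite. In your setup this is implicit in the claim that the subspace containments nest consistently; spelling out $j_{ab}=j_a j_b$ is what actually guarantees that. Finally, a small attribution point: the statement ``$Y_\tau\hookrightarrow KY$ induces $G_\tau\hookrightarrow G$'' is not literally condition (2) of the definition (which only says $\pi_1(Y_\tau)=G_\tau$ and that $Y(j_a)$ induces $\psi_a$); it is a consequence of \thref{Haefliger} and van Kampen applied to the iterated mapping-cylinder construction, so it is fine to cite but should be cited to the theorem rather than to the definition.
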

\begin{proof}
    Lift the map $\pi:KY \longrightarrow Y_b$ to a map $p$ between universal covers which is $G = \pi_1(KY)$--equivariant and makes the following diagram commute. 
    
    \begin{center}
        \begin{tikzcd}
        \fitTilde{KY} \arrow[r, "p"] \arrow[d] & X_b \arrow[d] \\
        KY \arrow[r, "\pi"]                    & Y_b          
    \end{tikzcd}
    \end{center}

    Given a vertex $\sigma$ of $X_b$, $\sigma$ projects to a vertex $\tau$ of $Y_b$. The preimage of $Y_\tau$ in $\fitTilde{KY}$ is the disjoint union of lifts of $Y_\tau \hookrightarrow KY$ to universal covers, hence $p^{-1}(\sigma)$ is exactly the image of some lift. This image is contained in the image of some lift of $\pi_\tau:Y(D_\tau) \longrightarrow KY$ to universal covers, and we set $X_\sigma$ to be the image of this lift. This is exactly $p^{-1}(D_\sigma)$, as illustrated in this diagram. 
    
    \begin{center}
        \begin{tikzcd}
    \fitTilde{Y(D_\tau)} \arrow[d] \arrow[r] & X_\sigma \arrow[r, "p"] \arrow[d] & D_\sigma \arrow[d] \\
    Y(D_\tau) \arrow[r, "\pi_\tau"]          & KY \arrow[r, "\pi"]               & D_\tau        
    \end{tikzcd}
    \end{center}

    Since $G$ acts on $\fitTilde{KY}$ and permutes these lifts, we have maps $g:X_\sigma \longrightarrow X_{g \sigma}$ for each $g \in G$, $\sigma \in V(\XX)$ and the triangular diagram in \thref{Complex of spaces compatible with G action} commutes. 
    
    Given $a \in E(\YY)$, we can lift $Y(j_a)$ to a map $\fitTilde{Y(D_\tau) }\longrightarrow \fitTilde{Y(D_{\tau'})}$. There are many choices for this lift, but each identifies $\fitTilde{Y(D_\tau)}$ with a subspace of $\fitTilde{Y(D_{\tau'})}$. If $\sigma \longrightarrow \sigma'$ is a morphism of $\XX$ covering $a \in E(\YY)$, then we can use some such lift to get an inclusion $\varphi_{\sigma',\sigma}: X_\sigma \longrightarrow X_{\sigma'}$. Again, since $G$ permutes these lifts, the square diagram in \thref{Complex of spaces compatible with G action} commutes. 
    
    To check the first condition of \thref{Complex of spaces compatible with G action}, suppose $a,b$ are composable edges of $\YY$ with $\tau = i(b), \tau' = t(b) =i(a), \tau'' = t(a)$. Then $ab$ is the unique morphism of $\YY$ with source $\tau$ and target $\tau''$ because $\YY$ is the scwolification of $Y$ and hence is a simple scwol. Therefore $j_a j_b = j_{ab}$, which implies $Y(j_a)Y(j_b) = Y(j_{ab})$. It follows that composing lifts of $Y(j_b)$, $Y(j_a)$ gives a lift of $Y(j_{ab})$. If $\sigma '' \subset \sigma' \subset \sigma$ are simplices of $X$, the corresponding morphisms in $\XX$ cover some arrows $a,b \in E(\YY)$ as in the previous paragraph. The maps $\varphi_{\sigma'',\sigma'}, \, \varphi_{\sigma',\sigma}, \, \varphi_{\sigma'',\sigma}$ are lifts of $Y(j_a,),\,Y(j_b), Y(j_{ab})$, hence $\varphi_{\sigma'',\sigma} = \varphi_{\sigma'',\sigma'}\varphi_{\sigma',\sigma}$ as needed.  

    Finally, if $\pi:KY \longrightarrow Y$ is an aspherical cellular realization, then all these maps can be taken to be cellular. 
\end{proof}

The next definition is analogous to Definition 2.2 of \cite{Martin}. 

\begin{definition}\thlabel{martins def}
    A \emph{complex of spaces compatible with $G(\YY)$} consists of the following. 
    \begin{enumerate}
        \item For each simplex $\tau$ of $Y$, a space $Y_\tau$ with a $G_\tau$ action. 
        \item For each arrow $a \in E(\YY)$, an embedding $\varphi_{a}:Y_{i(a)} \longrightarrow Y_{t(a)}$ which is $\psi_a$--equivariant, that is, for each $g \in G_{i(a)}$ and $x \in Y_{i(a)}$, we have 
        \[\varphi_a(g\cdot x) = \psi_a(g) \cdot \varphi_a(x),\]
        and such that for every pair of composable edges $a,b \in E(\YY)$, we have
        \[g_{a,b} \circ \varphi_{ab} = \varphi_a\circ\varphi_b.\]
    \end{enumerate}
\end{definition}

Martin does the following in section 9 of \cite{Martin}. Suppose $G(\YY)$ is a complex of hyperbolic groups over a finite simplicial complex $Y$. Beginning with a finite generating set for local groups of cells with maximal dimension, we inductively define a generating set $S_\tau$ for each $\tau \in V(\YY)$ so that if $a:\tau' \longrightarrow \tau$ is a morphism of $\YY$, then $\psi_a(S_{\tau'}) \subset S_\tau$. Let $Y_\tau$ be the Rips complex $P_n(\Gamma_\tau)$ where $\Gamma_\tau$ is the Cayley graph of $G_\tau$ with respect to $S_\tau$. Because there are finitely many hyperbolic groups here, we can choose $n$ large enough so that each $Y_\tau$ is contractible. Whenever $\sigma \subset \sigma'$ in $Y$, we let $\varphi_{\sigma,\sigma'}$ be the induced map induced on these Rips complexes by $\psi_{\sigma,\sigma'}$. 

With this notation, the $Y_\tau$ do \emph{not} form a complex of spaces compatible with $G(\YY)$ because the last condition fails. Explicitly, if $a,b$ are composable edges of $\YY$, then $\varphi_a \circ \varphi_b$ will map $\Gamma_{i(b)}$ to the image of $\psi_a\psi_b(G_{i(b)})$, which is $Ad(g_{a,b})\psi_{ab}$ by the definition of a complex of groups. On the other hand, $g_{a,b}\varphi_{ab}(\Gamma_{i(b)})$ will be a translation of $\psi_{ab}(\Gamma_{i(b)})$, not a conjugation. In other words, there is a missing $g_{a,b}^{-1}$ on the right of $g_{a,b}\circ \varphi_{ab}$ which makes it different from $\varphi_a\circ \varphi_b$. Thus \cite[Proposition 9.4]{Martin} is subtly flawed. This issue is why we use \thref{Haefliger}. We will not use it, but the following proposition shows how to construct structures as in \thref{martins def}, thereby recovering some of Martin's formalism. 

\begin{proposition}\thlabel{diagram chase}
    A complex of spaces compatible with the $G$ action on $X$ induces a complex of spaces compatible with $G(\YY)$. 
\end{proposition}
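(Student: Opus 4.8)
The plan is to transport the given structure along the very choices $\{\widetilde{\tau}\}_{\tau\in V(\YY)}$ and $\{h_a\}_{a\in E(\YY)}$ that were fixed when inducing the complex of groups $G(\YY)=(G_\tau,\psi_a,g_{a,b})$ on $G$. For a simplex $\tau$ of $Y$ I set $Y_\tau:=X_{\widetilde{\tau}}$, where $\widetilde{\tau}\in V(\XX)$ is the chosen lift. Since $G_\tau=\stabg{\widetilde{\tau}}$ and every $g\in G_\tau$ gives a homeomorphism $g\colon X_{\widetilde{\tau}}\to X_{g\widetilde{\tau}}=X_{\widetilde{\tau}}$, the triangular diagram in \thref{Complex of spaces compatible with G action} makes this assignment an action of $G_\tau$ on $Y_\tau$. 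For $a\in E(\YY)$, let $\widetilde{a}\in E(\XX)$ be the induced lift with $i(\widetilde{a})=\widetilde{i(a)}$, write $\tau_a:=t(\widetilde{a})$, a face of $\widetilde{i(a)}$ with $h_a\tau_a=\widetilde{t(a)}$, and define
\[
\varphi_a\ :=\ h_a\circ\varphi_{\tau_a,\,\widetilde{i(a)}}\ \colon\ Y_{i(a)}=X_{\widetilde{i(a)}}\ \longrightarrow\ X_{\tau_a}\ \longrightarrow\ X_{\widetilde{t(a)}}=Y_{t(a)}.
\]
Since $h_a$ is a homeomorphism, $\varphi_a$ is an embedding as soon as the face maps $\varphi_{\sigma',\sigma}$ of the input are embeddings (as they will be in our applications), and more generally $\varphi_a$ inherits whatever extra regularity, such as cellularity, the $\varphi_{\sigma',\sigma}$ enjoy.

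Next I would check the $\psi_a$--equivariance of $\varphi_a$. Recall $\psi_a$ is conjugation by $h_a$, and for $g\in G_{i(a)}=\stabg{\widetilde{i(a)}}$, \thref{stabilizer convention} forces $g$ to fix the face $\tau_a$ pointwise. Applying the square of \thref{Complex of spaces compatible with G action} with $g\widetilde{i(a)}=\widetilde{i(a)}$ and $g\tau_a=\tau_a$ gives $\varphi_{\tau_a,\widetilde{i(a)}}\circ g=g\circ\varphi_{\tau_a,\widetilde{i(a)}}$; expanding group elements as compositions of homeomorphisms via the triangular diagram, one then reads off $\varphi_a(g\cdot x)=h_a\circ g\circ\varphi_{\tau_a,\widetilde{i(a)}}(x)=\psi_a(g)\cdot\varphi_a(x)$, as required.

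The substance of the proof is the cocycle identity $g_{a,b}\circ\varphi_{ab}=\varphi_a\circ\varphi_b$ for composable $a,b$, and it is a diagram chase. Sliding $h_b$ past the face map $\varphi_{\tau_a,\widetilde{i(a)}}$ with the square of \thref{Complex of spaces compatible with G action}, then collapsing the resulting composite of face maps by their transitivity $\varphi_{\sigma'',\sigma'}\circ\varphi_{\sigma',\sigma}=\varphi_{\sigma'',\sigma}$, one gets $\varphi_a\circ\varphi_b=(h_ah_b)\circ\varphi_{h_b^{-1}\tau_a,\,\widetilde{i(b)}}$; while $g_{a,b}=h_ah_bh_{ab}^{-1}$ gives directly $g_{a,b}\circ\varphi_{ab}=(h_ah_b)\circ\varphi_{\tau_{ab},\,\widetilde{i(b)}}$, where $\tau_{ab}:=t(\widetilde{ab})$ and $\widetilde{ab}$ is the induced lift of $ab$. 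So everything comes down to identifying the two faces $h_b^{-1}\tau_a$ and $\tau_{ab}$ of $\widetilde{i(b)}$. This is the crux: $h_b^{-1}\widetilde{a}$ is a lift of $a$ with source $h_b^{-1}\widetilde{i(a)}=h_b^{-1}h_b\,t(\widetilde{b})=t(\widetilde{b})$, so the composite arrow $(h_b^{-1}\widetilde{a})\,\widetilde{b}$ is a lift of $ab$ with source $i(\widetilde{b})=\widetilde{i(b)}=\widetilde{i(ab)}$; because $\XX$ is a simple scwol this lift is unique, so $(h_b^{-1}\widetilde{a})\,\widetilde{b}=\widetilde{ab}$ and hence $h_b^{-1}\tau_a=t\big((h_b^{-1}\widetilde{a})\,\widetilde{b}\big)=t(\widetilde{ab})=\tau_{ab}$. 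A final check that both composites genuinely land in $X_{\widetilde{t(a)}}$, using $g_{a,b}\in G_{t(a)}=\stabg{\widetilde{t(a)}}$, completes the argument. I expect the only real friction to be the bookkeeping of the various lifts; once the uniqueness-of-lifts observation above is isolated, the identity drops out formally.
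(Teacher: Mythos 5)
Your proof is correct and proceeds along the same lines as the paper's diagram chase, but is arguably sharper: where the paper draws a large commutative diagram and says each square and triangle commutes, you reduce everything to the explicit identity $h_b^{-1}\tau_a=\tau_{ab}$ and derive it from uniqueness of lifts of arrows in $\XX$. One small correction: simplicity of $\XX$ by itself does not give uniqueness of the lift of $ab$ with prescribed source $\widetilde{i(b)}$ (simplicity only rules out parallel arrows, and a priori two distinct faces of $\widetilde{i(b)}$ could cover $t(ab)$); that uniqueness instead follows from Convention \thref{stabilizer convention}, and in any case is asserted outright in the paper's setup when the lifts $\widetilde{a}$ are introduced, so you may simply cite it there.
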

\begin{proof}
    This is essentially a diagram chase. To construct the complex of spaces compatible with $G(\YY)$, we set $Y_\tau = X_{\fitTilde{\tau}}$ and $\varphi_a$ to be the composition $\varphi_{\fitTilde{t(a)},h_a\fitTilde{i(a)}} h_a:X_{\fitTilde{i(a)}} \longrightarrow X_{\fitTilde{t(a)}}$. It is immediate from (2) in \thref{Complex of spaces compatible with G action} that $G_\tau$ acts on $Y_\tau$ for each $\tau \in V(\YY)$ and that the maps $\varphi_a$ are $\psi_a$--equivariant. To check that $g_{a,b} \circ \varphi_{ab} = \varphi_a\circ \varphi_b$ for composable edges $a,b \in E(\YY)$, consider the following diagram. 
    \begin{center}
    \begin{tikzcd}
X_{\fitTilde{i(b)}} \arrow[rr, "h_b"] \arrow[rrrd] &  & X_{h_b\fitTilde{i(b)}} \arrow[rr, "h_a"] \arrow[dd]   &                                                                & X_{h_ah_b\fitTilde{i(b)}} \arrow[dd] \\
                                                   &  &                                                       & X_{h_{ab}\fitTilde{i(b)}} \arrow[dddd] \arrow[ru, "{g_{a,b}}"] &                                      \\
                                                   &  & X_{\fitTilde{t(b)}} = X_{\fitTilde{i(a)}}  \arrow[rr] &                                                                & X_{h_a\fitTilde{i(a)}} \arrow[dd]    \\
                                                   &  &                                                       &                                                                &                                      \\
                                                   &  &                                                       &                                                                & X_{\fitTilde{t(a)}}                  \\
                                                   &  &                                                       & X_{t(\fitTilde{ab})} \arrow[ru, "g_{a.b}"]                     &                                     
\end{tikzcd}
    \end{center}

    The vertical arrows are the embeddings from the assumed complex of spaces compatible with the $G$ action on $X$ and the horizontal arrows are the homeomorphisms from elements of $G$. The composition $\varphi_a \circ \varphi_b$ is the staircase path, and the composition $g_{a,b} \circ \varphi_{ab}$ goes along the long diagonal arrow, straight down, then along the short diagonal. The commutativity of each square and triangle comes from \thref{Complex of spaces compatible with G action}, so the two paths are equivalent.
\end{proof}

\section{Constructing $\overline{Z}$}\label{section:Constructing overlineZ}

For clarity, we recall the main theorem.

\MainTheorem*

Fix once and for all a complex $X$, a group $G$, an acylindricity constant $A$, a hyperbolicity constant $\delta_0$, and let $Y = X/G$. Recall from \thref{stabilizer convention} that stabilizers are considered pointwise.

\begin{assumption}\label{scaling assumption}
    After rescaling the metric, we may assume that for every simplex $\sigma \subset X$, the distance from $\sigma$ to the boundary of its closed simplicial neighborhood is at least $1$. Equivalently, $d(\sigma, \overline{N}(\sigma) \setminus N(\sigma)) \geq 1$. As $X$ is an $M_\kappa$--complex, this will scale $\kappa, \delta_0$, but it will not change that $\kappa \leq 0$, or hyperbolicity and $\textrm{CAT}(0)$ properties.
\end{assumption}

Let $\XX, \, \YY$ be the scwolifications of $X$ and $Y$. By definition, the simplices of $X$ correspond to objects in $V(\XX)$. For simplices $\tau' \subset \tau \subset Y$, we will write $[\tau'\tau]$ for the corresponding morphism in $E(\YY)$. Choosing a maximal tree $T$ in $\YY$, a lift of $T$ to $\XX$, and lifts for each element of $V(\YY) \setminus T$ induces a choice of twisting elements, hence a complex of groups $G(\YY) = (G_\sigma, \psi_a,g_{a,b})$. This identifies $G$ with $\pi_1(G(\YY),T)$ and $\XX$ with the universal cover of $G(\YY)$ as described in \thref{Universal Cover of Complex of Groups}. We can recover $X$ from this construction as 
\[X = \bigsqcup_{g \in G,\tau \in V(\YY)} (gG_\tau,\tau)\times \tau \Big/\equiv\]
where $\equiv$ is defined as follows: if $\tau'\subset \tau$ are simplices of $Y$ and $x \in \tau'$, then
\[(gG_\tau,\tau,x) \equiv (g[\tau'\tau]^-G_{\tau'},\tau',x).\]
In words, the disjoint union above is one simplex for each object of $\XX$, and $\equiv$ glues these simplices along faces via the corresponding morphisms in $E(\XX)$. We will say a simplex $\sigma = (gG_\tau,\tau)\times \tau$ is \emph{labeled by} $(gG_\tau,\tau)$.

\begin{lemma}\thlabel{our spaces exist}
    There is a complex of spaces compatible with the $G$ action on $X$ satisfying the following.
    \begin{enumerate}
        \item For each $\sigma \in V(\XX)$, there is a cellular retraction $r_\sigma:X_\sigma \longrightarrow C_\sigma$, where $C_\sigma$ is a cusped space for $G_\sigma$, possibly with duplicated edges. This retraction is a quasi-isometry, making $X_\sigma$ into a $\delta$--hyperbolic space and identifying $\partial X_\sigma$ with the Bowditch boundary $\partial G_\sigma$. We write $\overline{X}_\sigma := X_\sigma \sqcup\partial G_\tau$.
        
        \item For each pair of simplices $\sigma' \subset \sigma$ of $X$, the map $\varphi_{\sigma',\sigma}:X_\sigma \longrightarrow X_{\sigma'}$ is an inclusion. It is also a quasi-isometric embedding and extends to an embedding $\varphi_{\sigma',\sigma}:\overline{X}_\sigma \longrightarrow \overline{X}_{\sigma'}$. This extension identifies $\partial G_\sigma$ with $\Lambda G_\sigma \subset \partial G_{\sigma'}$ and has closed image. Further, $r_\sigma(X_{\sigma'})$ is in a bounded neighborhood of some translate of a Lipschitz map between cusped spaces as in \thref{lipschitz}. 

        \item (Dichotomy Property) If two closed simplices $\sigma_1,\sigma_2$ of $X$ intersect in a simplex $\sigma$, then exactly one of the following holds. 
        \begin{enumerate}
            \item Some simplex $\sigma'$ contains both $\sigma_1\cup \sigma_2$, and $\varphi_{\sigma,\sigma_1}(X_{\sigma_1}) \cap \varphi_{\sigma,\sigma_2}(X_{\sigma_2}) = \varphi_{\sigma,\sigma'}(X_{\sigma'})$.
            \item No simplex contains both $\sigma_1,\sigma_2$, and $\varphi_{\sigma,\sigma_1}(\total{\sigma_1}) \cap \varphi_{\sigma,\sigma_2}(\total{\sigma_2}) \subset \partial X_{\sigma}$. In particular, $\varphi_{\sigma,\sigma_1}(X_{\sigma_1}) \cap \varphi_{\sigma,\sigma_2}(X_{\sigma_2}) = \varnothing$. 
        \end{enumerate}
    \end{enumerate}
\end{lemma}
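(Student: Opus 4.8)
The plan is to build the complex of spaces by feeding the classifying spaces supplied by \thref{kg1 remark} into Haefliger's \thref{Haefliger}, and then to read the three properties off the explicit description of the resulting $X_\sigma$ from \thref{Cor of Haefliger} together with the coarse geometry of cusped spaces. For each $\tau\in V(\YY)$ let $K(G_\tau)$ be a $K(G_\tau,1)$ as in \thref{kg1 remark}, so $\fitTilde{K(G_\tau)}$ has $1$–skeleton a cusped space $C_\tau$ for $G_\tau$; by hypothesis each $\psi_a$ exhibits $G_{i(a)}$ as a full RQC subgroup of $G_{t(a)}$, so \thref{kg1 remark}(3) gives cellular maps $\varphi_a\colon K(G_{i(a)})\to K(G_{t(a)})$ realizing $\psi_a$ on $\pi_1$ and lifting to maps of universal covers which, up to a translation, restrict on $1$–skeleta to Lipschitz maps $\psi\colon C_{i(a)}\to C_{t(a)}$ as in \thref{lipschitz}. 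Applying \thref{Haefliger} to the $K(G_\tau)$ and $\varphi_a$ gives an aspherical cellular realization $\pi\colon KY\to Y$, and by \thref{Cor of Haefliger} (and its proof) the universal cover becomes a complex of spaces compatible with the $G$–action on $X$ in which $X_\sigma=p^{-1}(D_\sigma)$, where $p\colon\fitTilde{KY}\to X_b$ is the $G$–equivariant lift of $\pi$ and $D_\sigma\subset X_b$ is the subcomplex spanned by the barycentres of the simplices of $X$ containing $\sigma$. For $\sigma'\subset\sigma$ one has $D_\sigma\subseteq D_{\sigma'}$, so $\varphi_{\sigma',\sigma}$ is the inclusion $p^{-1}(D_\sigma)\hookrightarrow p^{-1}(D_{\sigma'})$ — in particular an embedding — and these inclusions are compatible under composition. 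For property (1), metrize $X_\sigma$ and $\fitTilde{K(G_\tau)}$ by the path metric on their $1$–skeleta; by \thref{retraction is a qisom} the retraction $r_\tau\colon Y(D_\tau)\to K(G_\tau)$ is cellular and lifts to a cellular retraction $\fitTilde{r_\tau}\colon X_\sigma\to\fitTilde{K(G_\tau)}$ which on $1$–skeleta is a quasi-isometry onto the cusped space $C_\sigma:=C_\tau$; taking $r_\sigma:=\fitTilde{r_\tau}$, $X_\sigma$ is $\delta$–hyperbolic and $\partial X_\sigma=\partial C_\sigma=\partial G_\tau$, and we write $\overline X_\sigma=X_\sigma\sqcup\partial G_\tau$.

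For property (2), the content is that $\varphi_{\sigma',\sigma}$ is coarsely the map $\psi$ of \thref{lipschitz}. Regard $\fitTilde{K(G_\sigma)}$ as the central fibre of $X_\sigma=\fitTilde{Y(D_{p(\sigma)})}$; the proof of \thref{retraction is a qisom} shows that $r_{\sigma'}\circ\varphi_{\sigma',\sigma}$ restricted to this fibre equals (a translate of) $\fitTilde{\varphi_a}$, hence on $1$–skeleta equals $\psi$, while $r_\sigma$ restricted to the same fibre is the identity. Since $r_\sigma$ is homotopic to the identity by contracting finitely many bounded cubes it moves points a bounded amount, so the central fibre is cobounded in $X_\sigma$; two coarsely Lipschitz maps agreeing coarsely on a cobounded set agree coarsely, whence $r_{\sigma'}\circ\varphi_{\sigma',\sigma}$ lies within bounded distance of $\psi\circ r_\sigma$. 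As $\psi$ is a quasi-isometric embedding (the inclusion of the cusped space of a relatively quasiconvex subgroup, by \thref{lipschitz} and \thref{RQC means induced structure is the same}) and $r_\sigma,r_{\sigma'}$ are quasi-isometries, $\varphi_{\sigma',\sigma}$ is a quasi-isometric embedding; $r_{\sigma'}(X_\sigma)$ lies in a bounded neighbourhood of a translate of $\mathrm{im}\,\psi$; and the induced embedding $\overline X_\sigma\to\overline X_{\sigma'}$ carries $\partial X_\sigma$ onto $\Lambda(\mathrm{im}\,\psi)=\Lambda G_\sigma\subset\partial G_{\sigma'}$, which is the induced Bowditch boundary of $G_\sigma$ by \thref{RQC means induced structure is the same}. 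The image $X_\sigma\cup\Lambda G_\sigma$ is closed in $\overline X_{\sigma'}$ since $D_\sigma$ is a closed subcomplex of $D_{\sigma'}$ and limit sets of quasiconvex subspaces are closed.

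For property (3), let $\sigma=\sigma_1\cap\sigma_2$. Then $\varphi_{\sigma,\sigma_i}(X_{\sigma_i})=p^{-1}(D_{\sigma_i})\subseteq p^{-1}(D_\sigma)=X_\sigma$, so $\varphi_{\sigma,\sigma_1}(X_{\sigma_1})\cap\varphi_{\sigma,\sigma_2}(X_{\sigma_2})=p^{-1}(D_{\sigma_1}\cap D_{\sigma_2})$, and $D_{\sigma_1}\cap D_{\sigma_2}$ is the subcomplex of $X_b$ spanned by the barycentres of the simplices $\mu$ whose vertex set contains $V(\sigma_1)\cup V(\sigma_2)$. If some simplex contains $\sigma_1\cup\sigma_2$, let $\sigma'$ be the face with $V(\sigma')=V(\sigma_1)\cup V(\sigma_2)$ (this exists, as any vertex-subset of a simplex spans a face); the above $\mu$ are exactly the simplices containing $\sigma'$, so the intersection is $p^{-1}(D_{\sigma'})=\varphi_{\sigma,\sigma'}(X_{\sigma'})$, which is case (a). Otherwise no such $\mu$ exists, this subcomplex is empty, $\varphi_{\sigma,\sigma_1}(X_{\sigma_1})\cap\varphi_{\sigma,\sigma_2}(X_{\sigma_2})=\varnothing$, and using the closed boundary extensions of property (2) we have $\varphi_{\sigma,\sigma_i}(\overline X_{\sigma_i})=X_{\sigma_i}\cup\Lambda G_{\sigma_i}$, so the intersection of the closed images is $\Lambda G_{\sigma_1}\cap\Lambda G_{\sigma_2}\subseteq\partial X_\sigma$ because $X_{\sigma_1}\cap X_{\sigma_2}=\varnothing$ and each $X_{\sigma_i}$ is disjoint from $\partial X_\sigma$; this is case (b), and the two cases are mutually exclusive.

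The main obstacle is property (2): extracting from the iterated mapping-cylinder construction behind \thref{Haefliger} and \thref{retraction is a qisom} that the structural map $\varphi_{\sigma',\sigma}$ really is, coarsely, the cusped-space map of \thref{lipschitz}, so that its boundary behaviour is controlled by \thref{RQC means induced structure is the same}; properties (1) and (3) are then mostly bookkeeping with the identity $X_\sigma=p^{-1}(D_\sigma)$. A persistent minor nuisance, dealt with by always passing to $1$–skeleta for metric assertions, is that the $K(G_\tau)$ need not be cocompact, so $X_\sigma$ must be treated metrically as its $1$–skeleton $C_\sigma$.
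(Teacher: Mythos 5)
Your proposal is correct and follows the same route as the paper: build the $K(G_\tau,1)$'s from \thref{kg1 remark}, feed them through Haefliger's \thref{Haefliger} and \thref{Cor of Haefliger}, identify $X_\sigma=p^{-1}(D_\sigma)$, and read the three properties off that identification and the quasi-isometric retractions $r_\tau$. The one presentational point the paper makes explicit and you only gesture at is replacing each $X_\sigma$ by its $1$-skeleton before taking $r_\sigma$ — without that replacement, $\fitTilde{r_\tau}$ retracts onto all of $\fitTilde{K(G_\tau)}$ rather than onto the cusped space $C_\sigma$, so the stated codomain of $r_\sigma$ would be wrong; since you do restrict to $1$-skeleta for the metric statements, this is a wording issue rather than a gap, and your argument for (2) is in fact slightly more detailed than the paper's.
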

\begin{proof}
    For each $\tau \in V(\YY)$, $G_\tau$ is relatively hyperbolic, so we may apply \thref{kg1 remark} to construct a suitable $K(G_\tau,1)$, say $Y_\tau$. The universal cover of $Y_\tau$ has $1$-skeleton equal to a cusped space for $G_\tau$ with possibly duplicated edges, so it is $\delta$--hyperbolic for some $\delta$. For the basepoint $s(\tau) \in Y_\tau$, we choose the image of $1$ from the cusped space inside $\fitTilde{Y_\tau}$.
    
    Using $3$ of \thref{kg1 remark} for each $a:\tau' \longrightarrow \tau$ in $E(\YY)$, we can choose a basepoint preserving map $Y_\tau' \longrightarrow Y_{\tau}$ which induces $\psi_a$ on fundamental groups and lifts to map between universal covers which restricts to a Lipschitz map on $1$--skeletons as described in \thref{lipschitz}.

    Applying \thref{Haefliger} to $Y_\tau,s(\tau)$, and these specified maps, we receive an aspherical cellular realization of $G(\YY)$, say $\pi:KY \longrightarrow Y$. Applying \thref{Cor of Haefliger} we receive a complex of spaces compatible with the $G$ action on $X$ where each $X_\sigma$ is the image of a lift of $\pi_\tau:Y(D_\tau) \longrightarrow KY$ to universal covers for some $\tau \in V(\YY)$. The maps $\varphi_{\sigma,\sigma'}$ for $\sigma \subset \sigma'$ simplices of $X$ are cellular, so they can be restricted to $k$--skeletons. Using this, we replace each $X_\sigma$ with the $1$--skeleton of $X_\sigma$ and replace each $\varphi_{\sigma,\sigma'}$ with its restriction to $1$--skeletons. 

    If $\sigma$ is a simplex of $X$ lying over a simplex $\tau$ of $Y$, then the retraction $r_\tau$ lifts to a retraction $\fitTilde{r_\tau}$ of $X_\sigma$ onto a copy of the 1--skeleton of $\fitTilde{Y_\tau}$. By our application of \thref{kg1 remark}, $\fitTilde{Y_\tau}$ is a cusped space for $G_\tau$ with possibly duplicated edges, so it is $\delta$--hyperbolic for some $\delta$, and by \thref{retraction is a qisom}, $\fitTilde{r_\tau}$ is a quasi-isometry from $X_\sigma$ onto this cusped space. This proves $(1)$. 
    
    If $\sigma \subset \sigma'$ are simplices of $X$ lying over simplices $\tau \subset \tau'$ in $Y$, then the map $\varphi_{\sigma,\sigma'}$ coming from our application of \thref{Cor of Haefliger} is a lift of the inclusion $Y(D_{\tau'}) \longrightarrow Y(D_\tau)$ to universal covers. Inside of $\fitTilde{Y(D_\tau)}$, there is a copy $\fitTilde{Y_\tau}$ which is a cusped space for $G_{\sigma'}$, and $\fitTilde{r_{\tau}}$ identifies this copy with some translate of the image of a Lipschitz map as in \thref{lipschitz} by our choices of the map $Y_{\tau'} \longrightarrow Y_{\tau}$. Because $\fitTilde{r_\tau}$ is a quasi-isometry, this establishes $(2)$. 

    For (3), suppose $\sigma_1,\sigma_2, \sigma$ are simplices of $X$ with $\sigma_1 \cap \sigma_2 = \sigma$. The objects in the categories $D_{\sigma_1}, D_{\sigma_2}$ are $\sigma_1,\sigma_2$ together with all higher dimensional simplices which contain $\sigma_1, \sigma_2$ respectively. These categories embed into $D_\sigma$, and the only objects in the intersection are simplices which contain both $\sigma_1$ and $\sigma_2$. For example, $\sigma_1,\sigma_2$ might be edges meeting in a vertex. If $\sigma_1, \sigma_2$ are sides of a square $\sigma'$, then $\sigma'$ is an object in $D_{\sigma_1} \cap D_{\sigma_2}$. On the other hand, if there is no higher dimensional cell containing both $\sigma_1,\sigma_2$, then $D_{\sigma_1} \cap D_{\sigma_2} = \varnothing$.

    In the case where there is a higher dimensional cell containing $\sigma_1 \cup \sigma_2$, then because $X$ is a $\textrm{CAT}(0)$ $M_\kappa$--complex, there is a unique cell of minimal dimension containing $\sigma_1 \cup \sigma_2$, which we call $\sigma'$. The uniqueness and minimal dimension implies that $D_{\sigma_1} \cap D_{\sigma_2} = D_{\sigma'}$ as subcategories of $D_{\sigma}$. Down in $Y$, there are corresponding simplices $\tau = \tau_1\cap \tau_2$ and $\tau_1 \cup \tau_2 \subset \tau'$. The spaces $X_{\sigma_1}, X_{\sigma_2}$ are lifts to the universal cover of the spaces $Y(D_{\tau_1}), Y(D_{\tau_2})$. These two lifts intersect in a lift of $Y(D_{\tau'})$, and $3a$ follows.

    If there is no simplex containing both $\sigma_1$ and $\sigma_2$, then the spaces $X_{\sigma_1},X_{\sigma_2}$ do not intersect. This restricts any intersection of the images of $\overline{X_{\sigma_1}}$ and $\overline{X_{\sigma_2}}$ to $\partial X_{\sigma}$, which is exactly the statement of $3b$. 
\end{proof}

The previous lemma gives us a suitable space for each object of $\XX$ and maps between them. We add the geometry of the simplex itself back in and add a helpful label.

\begin{definition}
    For $\sigma \in V(\XX)$, let $\hatt{\sigma} = \{\sigma\} \times \sigma \times \total{\sigma}/\sim$, where $(\sigma, x,\xi) \sim (\sigma, x', \xi)$ for each $x,x' \in \sigma$ and $\xi \in \partial G_\sigma$.
    Write $\hatto{\sigma}: = \{\sigma\} \times \sigma \times X_{\sigma}$ for the points of $\hatt{\sigma}$ not in the boundary.
    For simplices $\sigma \subset \sigma'$ of $X$, let $\hatt{\sigma'}|_\sigma$ be the points of $\hatt{\sigma'}$ with a second coordinate in $\sigma$, as in $\hatt{\sigma'}|_\sigma = \{(\sigma,x,y) \, | \, x \in \sigma\}$.
    We extend $\varphi_{\sigma,\sigma'}$ to a map $\hatt{\sigma'}|_\sigma \longrightarrow \hatt{\sigma}$ by $\varphi_{\sigma,\sigma'}(\sigma',x,z) = (\sigma,x,\varphi_{\sigma,\sigma'}(z))$. 
\end{definition} 

Since $\sigma$ is compact, $\hatto{\sigma}$ is still a $\delta$--hyperbolic metric space on which $G_\sigma$ acts with $\partial \hatto{\sigma} = \partial X_\sigma = \partial G_\sigma$. 

\begin{definition}\thlabel{defn of Z}
    Let
\[Z \sqcup \partial_{Stab}G =\bigg( \bigsqcup_{\sigma \in V(\XX)} \hatt{\sigma}\bigg)\bigg/\simeq\]
where $Z$ is the image of the $\hatto{\sigma}$ and $\partial_{Stab}G$ is the image of the $\partial G_\sigma$. To define $\simeq$, let $\sigma \subset \sigma'$ be simplices of $X$ and $w \in \hatt{\sigma'}$, and set $w \simeq \varphi_{\sigma,\sigma'}(w)$. More explicitly, if $w = (\sigma',x,z)$, we set $(\sigma',x,z)\simeq (\sigma,x,\varphi_{\sigma,\sigma'}(z))$.
\end{definition}

 With this notation, $G$ acts on $Z \sqcup \partial_{Stab}G$ diagonally -- for $g \in G$, $w = (\sigma,x,z)$, we have $gw = (g\sigma, gx,gz)$. In particular, if $g \in G_\sigma$, then $gw = (\sigma,x,gz)$. 

\begin{definition}\thlabel{defn of pi}
    Let $\pi: \bigsqcup_{\sigma \in V(\XX)} \hatt{\sigma} \longrightarrow Z \sqcup \partial _{Stab}G$ be the quotient map. For each simplex $\sigma \subset X$, we write $\pi_\sigma:\hatt{\sigma}\longrightarrow Z \sqcup \partial _{Stab}G$ for the restriction to $\hatt{\sigma}$, and depending on context we may restrict the domain of $\pi_\sigma$ to $\hatto{\sigma}$ or $\partial \hatt{\sigma}$.
\end{definition}
\begin{lemma}\thlabel{internal spaces embed}
    For any $\sigma$, the projection $\pi_\sigma:\hatto{\sigma}\longrightarrow Z$ is injective.
\end{lemma}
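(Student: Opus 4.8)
The plan is to show that if two points of $\hatto{\sigma}$ become identified under $\simeq$, they were already equal. Recall that $\simeq$ is generated by the elementary identifications $w \simeq \varphi_{\sigma',\sigma}(w)$ for $\sigma' \subset \sigma$, which always push a point to a \emph{lower-dimensional} face. So a general $\simeq$-relation between two points is realized by a zig-zag of such moves. The key geometric input is that each $\varphi_{\sigma',\sigma}$ is an \emph{inclusion} (part (2) of \thref{our spaces exist}), hence injective, and that the $X_\sigma$ are glued along these inclusions in a way controlled by the Dichotomy Property (part (3)).

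First I would reduce to a normal form for zig-zags. Given $(\sigma_1, x_1, z_1) \in \hatto{\sigma_1}$ and $(\sigma_2, x_2, z_2) \in \hatto{\sigma_2}$ with $(\sigma_1,x_1,z_1) \simeq (\sigma_2,x_2,z_2)$, the first coordinate is determined by the second coordinate's location: in the recovery of $X$ from the complex of groups, the point $x \in X$ lies in the interior of a unique simplex, and the identification $\equiv$ forces $x_1$ and $x_2$ to name the same point of $X$, which therefore lies in $\sigma_1 \cap \sigma_2$. Actually, more care is needed: the second coordinate $x$ of a point of $\hatto{\sigma}$ ranges over the \emph{closed} simplex $\sigma$, and $(\sigma,x,z)$ with $x$ in a proper face need not have been pushed down yet. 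But the elementary moves only ever push down, and pushing $(\sigma,x,z)$ down along $\varphi_{\sigma',\sigma}$ requires $x \in \sigma'$; so a zig-zag connecting our two points passes through a common "bottom" simplex $\sigma_0 \subseteq \sigma_1 \cap \sigma_2$ with $x_1 = x_2 \in \sigma_0$ (as points of $X$), and both endpoints map into $\hatt{\sigma_0}$ via the appropriate $\varphi$'s. This is where I expect the main obstacle: carefully showing that an arbitrary zig-zag can be straightened to a single "down from $\sigma_1$ to $\sigma_0$, then up to $\sigma_2$" path, which uses that the $\varphi$'s satisfy $\varphi_{\sigma'',\sigma'}\varphi_{\sigma',\sigma} = \varphi_{\sigma'',\sigma}$ (the cocycle/functoriality condition from \thref{Complex of spaces compatible with G action}) together with the Dichotomy Property to handle the case where a zig-zag tries to pass through a simplex not containing the relevant face.

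Once the zig-zag is in normal form, the conclusion is immediate: we have $\sigma_0 \subseteq \sigma_1$ and $\sigma_0 \subseteq \sigma_2$, $x_1 = x_2 \in \sigma_0$, and $\varphi_{\sigma_0,\sigma_1}(z_1) = \varphi_{\sigma_0,\sigma_2}(z_2)$ in $X_{\sigma_0}$. Since both sides lie in $X_{\sigma_0}$ (not the boundary — the points are in $\hatto{\sigma_i}$, and the $\varphi$'s send $X_{\sigma_i}$ into $X_{\sigma_0}$), the Dichotomy Property tells us $\varphi_{\sigma_0,\sigma_1}(X_{\sigma_1}) \cap \varphi_{\sigma_0,\sigma_2}(X_{\sigma_2})$ is either empty (case (b), impossible since the point lies in the intersection) or equals $\varphi_{\sigma_0,\sigma'}(X_{\sigma'})$ for the minimal simplex $\sigma'$ containing $\sigma_1 \cup \sigma_2$ (case (a)). In case (a), injectivity of $\varphi_{\sigma_0,\sigma'}$ lets us lift the common value to a single $z' \in X_{\sigma'}$ with $\varphi_{\sigma_1,\sigma'}(z') = z_1$ and $\varphi_{\sigma_2,\sigma'}(z') = z_2$; then $(\sigma_1,x_1,z_1)$ and $(\sigma_2,x_2,z_2)$ are both the image of $(\sigma', x_1, z')$ under the respective $\varphi$'s, so as \emph{elements of $\hatto{\sigma}$ for a fixed $\sigma$} — wait, they are in different $\hatto{\sigma_i}$. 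The statement to prove is that $\pi_\sigma$ restricted to a \emph{single} $\hatto{\sigma}$ is injective, so in fact $\sigma_1 = \sigma_2 = \sigma$ from the start, $\sigma_0 = \sigma' = \sigma$, all the $\varphi$'s are identities, and the zig-zag analysis gives $z_1 = z_2$ directly. So the real content is just: no nontrivial zig-zag starts and ends at the same $\hatto{\sigma}$ without being the trivial relation — equivalently, each elementary move $\varphi_{\sigma',\sigma}$ is injective (given) and the moves can't "loop back," which again follows from functoriality of the $\varphi$'s and injectivity of each one. I would write this as: suppose $(\sigma,x,z_1) \simeq (\sigma,x',z_2)$; choose a minimal zig-zag; its bottom simplex $\sigma_0 \subseteq \sigma$; then $\varphi_{\sigma_0,\sigma}(z_1) = \varphi_{\sigma_0,\sigma}(z_2)$ by composing down, and injectivity of $\varphi_{\sigma_0,\sigma}$ gives $z_1 = z_2$ (and $x = x'$ is automatic since points of $\hatto{\sigma}$ with equal third coordinate and varying $x$ are not identified — $\sim$ only collapses the boundary, not $\hatto{\sigma}$). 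The one subtlety is justifying "composing down": a zig-zag may go up before coming down, but each up-move is the inverse of a down-move, so by the cocycle relation and injectivity one can cancel adjacent up-down pairs until only down-moves from $\sigma$ to $\sigma_0$ remain on each side.
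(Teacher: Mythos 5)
Your central idea---map every cusped-space coordinate into $X_{\sigma_0}$ for a simplex $\sigma_0$ that is a face of every simplex the chain visits, then use the cocycle relation plus injectivity of the $\varphi$'s---is right, and it is a cleaner route than the paper takes. The paper inducts on chain length and combines the first two elementary moves into one; this works when the three simplex labels involved are nested, but a chain can pass through a local minimum $\tau_0 \supset \tau_1 \subset \tau_2$ or local maximum $\tau_0 \subset \tau_1 \supset \tau_2$ with $\tau_0,\tau_2$ incomparable, and then no single $\simeq$-move replaces the two. The paper writes out only the nested cases and asserts ``all the other cases are similar,'' which glosses over this. Your approach needs no chain shortening at all, so it sidesteps the issue entirely.

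That said, your writeup needs two repairs. First, ``bottom simplex'' is ambiguous, and the lowest-dimensional simplex the chain actually visits need not be a face of $\sigma$: for a chain $e_1 \to f \to e_2 \to f \to e_1$ with $e_1, e_2$ edges of a $2$-cell $f$ meeting only at the vertex $x$, the chain minimum is attained at $e_2$, which is not a face of $\sigma = e_1$. The right choice is $\sigma_0 = \sigma_x$, the unique open simplex of $X$ containing $x$. Since the elementary relations fix the second coordinate $x$, $x$ lies in the closure of every $\tau_i$ the chain visits, so $\sigma_x$ is a face of every $\tau_i$ and in particular of $\sigma$. Second, the ``cancel adjacent up-down pairs until only down-moves remain'' mechanism is unnecessary and does not work in general (a local maximum with incomparable feet does not cancel to a shorter word). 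Instead argue directly that $u_i := \varphi_{\sigma_x, \tau_i}(z_i) \in X_{\sigma_x}$ is constant along the chain: if $\tau_i \subset \tau_{i+1}$ then $z_i = \varphi_{\tau_i,\tau_{i+1}}(z_{i+1})$, so the cocycle gives $u_i = \varphi_{\sigma_x,\tau_i}\varphi_{\tau_i,\tau_{i+1}}(z_{i+1}) = \varphi_{\sigma_x,\tau_{i+1}}(z_{i+1}) = u_{i+1}$, and the case $\tau_{i+1} \subset \tau_i$ is symmetric. Then $u_0 = u_n$ reads $\varphi_{\sigma_x,\sigma}(z_1) = \varphi_{\sigma_x,\sigma}(z_2)$, and injectivity of $\varphi_{\sigma_x,\sigma}$ finishes. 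No normal form for the chain is required, and (as you noticed, though the reason you gave is off) the Dichotomy Property plays no role in this lemma.
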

\begin{proof}
    If two points in the disjoint union defining $Z$ are identified by $\simeq$, there must be a chain of relations, say 
    \[w_0 \simeq w_1 \simeq \cdots \simeq w_{n-1} \simeq w_n\]
    where $w_0,w_n$ are the points identified and each $w_i \simeq w_{i+1}$ is an equation of the form defining $\simeq$. Fix a simplex $\sigma$. Inducting on $n$, we show that for any such chain with $w_0,w_n \in \hatto{\sigma}$, we have $w_0 = w_n$.

    The defining equation for $\simeq$ doesn't change the coordinate in the simplex, so for any chain, the second coordinate of each $w_i$ is the same, say $x$. There is no chain of length $n=1$ because then the simplex labels could not match. If $n=2$, the chain goes from $\sigma$ to another simplex $\sigma'$, then back to $\sigma$. If $\sigma \subset \sigma'$, this is
    \[w_0 = (\sigma,x,\varphi_{\sigma,\sigma'}(z)) \simeq (\sigma',x,z) \simeq (\sigma,x,\varphi_{\sigma,\sigma'}(z)) = w_2,\]
    and if $\sigma' \subset \sigma$, this is 
    \[w_0 = (\sigma,x,z) \simeq (\sigma',x,\varphi_{\sigma',\sigma}(z)) \simeq (\sigma,x,z) = w_2.\]
    In the above, we use that $\varphi_{\sigma',\sigma}$ is an embedding, which guarantees that $(\sigma',x,\varphi_{\sigma',\sigma}(z)) \neq (\sigma',x,\varphi_{\sigma',\sigma}(y))$ for $y \neq z$. This proves the base case. 

    Now suppose that $w_0 = w_n$ for every chain of length $n$ and we have a chain of length $n+1$ from $w_0$ to $w_{n+1}$. We shorten the chain by doing the first two steps in one step. Let $\sigma', \sigma''$ be the simplex labels for $w_1,w_2$. If $\sigma'' \subset \sigma'\subset \sigma$, then the chain is
    \[w_0 = (\sigma,x,z) \simeq (\sigma',x,\varphi_{\sigma',\sigma}(z)) \simeq (\sigma'',x,\varphi_{\sigma'',\sigma'}\varphi_{\sigma',\sigma}(z)) \simeq \cdots\]
    But $\varphi_{\sigma'',\sigma'}\varphi_{\sigma',\sigma}(z) = \varphi_{\sigma'',\sigma}(z)$, so we can shorten this chain, using this instead
    \[w_0 = (\sigma,x,z) \simeq  (\sigma'',x,\varphi_{\sigma'',\sigma}(z)) \simeq \cdots\]
    The case $\sigma \subset \sigma' \subset \sigma''$ is similar. If $\sigma \subset \sigma '' \subset \sigma'$, then the chain must start with 
    \[w_0 = (\sigma,x,z) \simeq (\sigma',x,z') \simeq (\sigma'',x,\varphi_{\sigma'',\sigma'}(z')) \simeq \cdots\]
    
    where $z = \varphi_{\sigma, \sigma'}(z')$. But then $\varphi_{\sigma,\sigma''}\varphi_{\sigma'',\sigma'}(z') = \varphi_{\sigma,\sigma'}(z') = z$, so again we can shorten this chain in exactly the same way as before. All the other cases are similar. 
\end{proof}

\begin{proposition}\thlabel{defn of p}
    There is a $G$--equivariant projection $p:Z \longrightarrow X$ which projects to the simplex coordinate, $p(\sigma,x,z) = x$. This map can be extended to $p:Z \sqcup \partial X \longrightarrow \overline{X}$ by declaring $p(\eta) = \eta$ for all $\eta \in \partial X$. With the quotient topology on $Z$ and the disjoint union topology on $Z \sqcup \partial X$, $p$ is continuous. 
\end{proposition}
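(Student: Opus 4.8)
The statement is essentially a bookkeeping proposition, so the plan is to verify in order that $p$ is well defined on the quotient, that it is $G$--equivariant, that the extension to $\partial X$ is unproblematic, and that the resulting map is continuous; the only point that is not completely formal is the same one that had to be checked in \thref{internal spaces embed}.

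First I would establish well-definedness. By \thref{defn of Z}, $Z$ is obtained from $\bigsqcup_{\sigma\in V(\XX)}\hatto{\sigma}$ by passing to the quotient under $\simeq$, and $\simeq$ is generated by the elementary relations $(\sigma',x,z)\simeq(\sigma,x,\varphi_{\sigma,\sigma'}(z))$ for $\sigma\subset\sigma'$. Exactly as observed in the proof of \thref{internal spaces embed}, each such elementary relation leaves the middle coordinate $x$ fixed, so the middle coordinate is constant along any finite chain of elementary relations; hence $(\sigma,x,z)\mapsto x$ is constant on $\simeq$--classes and descends to a map $p\colon Z\to X$ with $p(\sigma,x,z)=x$, which lands in $X$ because $x\in\sigma\subset X$. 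Setting $p(\eta)=\eta$ for $\eta\in\partial X$ extends $p$ to $Z\sqcup\partial X\to\overline{X}$, with nothing to check since the two summands are disjoint. Equivariance is then read off from the diagonal $G$--action $g\cdot(\sigma,x,z)=(g\sigma,gx,gz)$ on $Z$ recorded after \thref{defn of Z}, together with the fact that $G$ acts on $\overline{X}$ by extending its action on $X$: indeed $p(g\cdot(\sigma,x,z))=gx=g\cdot p(\sigma,x,z)$, and $p$ is the identity on $\partial X$.

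For continuity of $p\colon Z\to X$, I would invoke the universal property of the quotient topology defining $Z$: it suffices to check that $p\circ\pi_\sigma$ is continuous on $\hatto{\sigma}$ for each $\sigma\in V(\XX)$. Since $\hatto{\sigma}=\{\sigma\}\times\sigma\times X_\sigma$ and $p\circ\pi_\sigma$ is the projection to the $\sigma$--coordinate followed by the inclusion $\sigma\hookrightarrow X$, this is clear. For continuity of $p\colon Z\sqcup\partial X\to\overline{X}$ with the disjoint-union topology on the source, continuity may be verified on each summand separately: on $Z$ it is the map just built composed with the continuous inclusion $X\hookrightarrow\overline{X}$, and on $\partial X$ it is the inclusion $\partial X\hookrightarrow\overline{X}$, which is continuous by the description of $\overline{X}$ recalled from \cite[III.H]{BH}.

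I do not anticipate any real obstacle here. The argument is formal once one notes, as in \thref{internal spaces embed}, that $\simeq$ is the equivalence relation generated by the elementary relations and so must be unwound as a finite chain, each link of which preserves the simplex coordinate; everything else follows from the universal properties of the quotient and coproduct topologies together with the standard compactification $\overline{X}$ of the $\mathrm{CAT}(0)$ hyperbolic space $X$.
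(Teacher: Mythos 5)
Your proof is correct and follows essentially the same approach as the paper's: the paper likewise dismisses well-definedness and equivariance as immediate from the construction of $Z$ and proves continuity by checking that for each simplex $\sigma$ the set $\pi_\sigma^{-1}p^{-1}(U)=\{\sigma\}\times(\sigma\cap U)\times\internal{\sigma}$ is open in $\hatto{\sigma}$, which is just the preimage-of-open-sets phrasing of your universal-property argument. The only difference is cosmetic: you spell out the chain argument for well-definedness (borrowed from \thref{internal spaces embed}) a bit more explicitly than the paper does, which is harmless.
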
 

That $p$ is well defined, $G$--equivariant, and extends to $\partial X$ is immediate from the definition of $Z$, so the only thing to prove is that $p$ is continuous. We introduce some helpful notation which we will use throughout.

\begin{notation}\thlabel{defn of c}
    For any points $x,y \in \overline{X}$, let $[x,y]$ denote the (possibly infinite) geodesic from $x$ to $y$.
    For $x \in X$, let $\sigma_x$ be the unique simplex of $X$ containing $x$ in its interior. By \thref{internal spaces embed}, each $z \in Z$, has a unique representative of the form $(\sigma_{p(z)},p(z),y)$. We call $y$ the \emph{cusped space coordinate of $z$}. For an arbitrary $z$, we write $c(z)$ for the cusped space coordinate of $z$. 
\end{notation}

\begin{proof}[Proof of \thref{defn of p}]
    Suppose $U$ is open in $\overline{X}$. Then $p^{-1}(U) \cap \partial X = U \cap \partial X$, which is clearly open in $\partial X$. To see $p^{-1}(U)\cap Z$ is open in $Z$, we must show that for each simplex $\sigma$ of $X$, $\pi_\sigma^{-1}p^{-1}(U)$ is open in $\hatto{\sigma}$. For any such $\sigma$, $\pi_\sigma^{-1} p^{-1}(U) = \{\sigma\} \times (\sigma \cap U) \times \internal{\sigma}$. This is a product of open sets in $\hatto{\sigma}$, hence open. 
\end{proof}

The following lets us understand a neighborhood of $z \in Z$ as a ball around $p(z)$ and a neighborhood of $c(z) \in \total{\sigma_{p(z)}}$.

\begin{lemma}\thlabel{nbhd of z in Z}
    Let $z \in Z$ and let $x = p(z)$. Suppose $U$ is an open neighborhood of $c(z) \in X_{\sigma_x}$ and $0<\delta$ is so that $B(x,\delta) \subset st(\sigma_x)$. For each simplex $\sigma$ containing $\sigma_x$, let $W_{\sigma} = \varphi_{\sigma_x,\sigma}^{-1}(U) \subset \total{\sigma}$. Then
    \[W = W_z(U,\delta) = \{z' \in Z \; | \; p(z') \in B(x,\delta), \, c(z) \in W_{\sigma_{p(z')}} \}\]
    is an open neighborhood of $z$ in $Z$.
\end{lemma}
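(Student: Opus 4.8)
The plan is to verify that $W$ is open in the quotient topology carried by $Z$ (as in \thref{defn of p}); by definition of that topology it suffices to show that $\pi_\sigma^{-1}(W)$ is open in $\hatto{\sigma}$ for every simplex $\sigma$ of $X$, and then to note $z\in W$. For the latter, $p(z)=x\in B(x,\delta)$, $\sigma_{p(z)}=\sigma_x$, and $c(z)\in U=\varphi_{\sigma_x,\sigma_x}^{-1}(U)=W_{\sigma_x}$, so $z$ meets the two defining conditions; here I use $\varphi_{\sigma_x,\sigma_x}=\mathrm{id}$, which follows from the cocycle relation in \thref{Complex of spaces compatible with G action} together with injectivity of the face maps. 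I also observe that $W$ is well posed: if $z'\in W$ then $p(z')\in B(x,\delta)\subset st(\sigma_x)$, so the open simplex $\sigma_{p(z')}$ has closure meeting the open face $\sigma_x$, which forces $\sigma_x$ to be a face of $\sigma_{p(z')}$, so $W_{\sigma_{p(z')}}$ is defined.

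The core computation is the following description of $\pi_\sigma^{-1}(W)$. If it is nonempty, some $(\sigma,a,y)\in\hatto{\sigma}$ maps into $W$, so $a\in B(x,\delta)\subset st(\sigma_x)$, and by the same argument the open simplex $\sigma_a$ containing $a$ satisfies $\sigma_x\subset\sigma_a\subset\sigma$; in particular $\sigma_x$ is a face of $\sigma$ and $W_\sigma=\varphi_{\sigma_x,\sigma}^{-1}(U)$ is defined. Now fix any $(\sigma,a,y)\in\hatto{\sigma}$ with $a\in B(x,\delta)$ and put $z'=\pi_\sigma(\sigma,a,y)$. Then $p(z')=a$, hence $\sigma_{p(z')}=\sigma_a$, and by \thref{internal spaces embed} the canonical representative of $z'$ is $(\sigma_a,a,\varphi_{\sigma_a,\sigma}(y))$, so $c(z')=\varphi_{\sigma_a,\sigma}(y)$. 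Thus $z'\in W$ exactly when $\varphi_{\sigma_a,\sigma}(y)\in W_{\sigma_a}=\varphi_{\sigma_x,\sigma_a}^{-1}(U)$; and by the cocycle identity $\varphi_{\sigma_x,\sigma}=\varphi_{\sigma_x,\sigma_a}\circ\varphi_{\sigma_a,\sigma}$ (valid since $\sigma_x\subset\sigma_a\subset\sigma$) this is equivalent to $y\in\varphi_{\sigma_x,\sigma}^{-1}(U)=W_\sigma$, \emph{with no dependence on which face $\sigma_a$ of $\sigma$ contains $a$}. Consequently
\[\pi_\sigma^{-1}(W)=\{\sigma\}\times\bigl(B(x,\delta)\cap\sigma\bigr)\times\bigl(W_\sigma\cap X_\sigma\bigr).\]

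To conclude, this set is open in $\hatto{\sigma}=\{\sigma\}\times\sigma\times X_\sigma$: the first factor is relatively open in the simplex $\sigma$ since $B(x,\delta)$ is open in $X$, and the second is open in $X_\sigma$ because $\varphi_{\sigma_x,\sigma}\colon\total{\sigma}\to\total{\sigma_x}$ is continuous (indeed an embedding, by \thref{our spaces exist}(2)), so $W_\sigma$ is open in $\total{\sigma}$ and hence $W_\sigma\cap X_\sigma$ is open in $X_\sigma$. When $\pi_\sigma^{-1}(W)=\varnothing$ there is nothing to prove. Hence $W$ is open in $Z$ and contains $z$, proving the lemma. I expect no deep obstacle; the one delicate point is keeping track of the face identifications defining $Z$, specifically that $c\bigl(\pi_\sigma(\sigma,a,y)\bigr)$ equals $\varphi_{\sigma_a,\sigma}(y)$ rather than $y$ when $a$ sits on a proper face of $\sigma$, and then recognizing that the cocycle identity collapses the resulting family of conditions on $y$ into the single, face-independent condition $y\in W_\sigma$.
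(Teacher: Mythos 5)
Your proof is correct and follows essentially the same approach as the paper: both reduce to the definition of the quotient topology, compute that $\pi_\sigma^{-1}(W)$ is either empty or the product $\{\sigma\}\times(B(x,\delta)\cap\sigma)\times W_\sigma$, and conclude openness from openness of each factor. Your write-up is somewhat more explicit than the paper's (verifying $z\in W$, checking well-posedness of $W_{\sigma_{p(z')}}$, and carefully tracking the cocycle identity $\varphi_{\sigma_x,\sigma}=\varphi_{\sigma_x,\sigma_a}\circ\varphi_{\sigma_a,\sigma}$ to see that the condition on the cusped coordinate is face-independent), but these are elaborations of the same computation the paper performs more tersely.
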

\begin{proof}
    For any simplex $\sigma$ of $X$, either $\sigma$ is not a simplex of $st(\sigma_x)$ and $\pi_\sigma^{-1}(W) = \varnothing$, or $\sigma_x \subseteq \sigma$ and $\pi_\sigma^{-1}(W) = \{\sigma\} \times (B(x,\delta)\cap \sigma) \times W_\sigma$, which is a product of open sets from $\sigma, \total{\sigma}$, hence is open in $\hatto{\sigma}$. Thus $\pi^{-1}_\sigma(W)$ is open in $\hatt{\sigma}$ for all $\sigma \subset X$, which by the definition of the quotient topology, shows $W$ is open in $Z$.
\end{proof}

Recall that $X$ is $\delta_0$--hyperbolic so it has a boundary $\partial X$ and the action of $G$ on $X$ extends to an action on this boundary. 

\begin{definition}\thlabel{defn of overlineZ}
    Let
\[\overline{Z} = Z \sqcup \partial_{Stab}G \partial X  \quad \quad \quad \partial G = \partial_{Stab}G \sqcup \partial X.\]
\end{definition} 
Our goal is to endow $\overline{Z}$ with a topology so that it is compact and metrizable, show $G$ acts on it as a convergence group with limit set $\partial G$, and then use Yaman's \thref{Geometrically Finite Convergence implies RelHyp} to conclude that $G$ is relatively hyperbolic. See Figure \ref{fig:gluings} for a diagram of how the Bowditch boundaries are glued in $\partial_{Stab}G$. Note the inverse relationship between dimension of a cell and the 'size' of its limit set. 

\begin{figure}
        \centering
        \scalebox{.8}{
        \begin{tikzpicture}

        \filldraw [fill=lightgray,thick] (-3,0) node[anchor=east]{$v_1$} -- 
        (-1.5,1.732*1.5) node[anchor=east]{$e_1$} --
        (0,1.732*3)node[anchor=south]{$v_2$} -- 
        (1.5,1.732*1.5) node[anchor=west]{$e_2$} -- 
        (3,0) node[anchor = west]{$v_3$} -- 
        (0,0) node[anchor = north]{$e_3$} -- 
        (-3,0) ;
        \node at (0,1.732){$\sigma$};

        \node[circle,draw, label =right:$\partial G_\sigma$, minimum size = 10pt] at (8,1.732) {};
        \draw[shorten >=7pt,shorten <=7pt, ->](8,1.732) -- (5,0);
        \draw[shorten >=7pt,shorten <=7pt, ->](8,1.732) -- (11,0);
        \draw[shorten >=7pt,shorten <=7pt, ->](8,1.732) -- (8,3*1.732);
        \draw[shorten >=7pt,shorten <=7pt, ->](8,1.732) -- (6.5,1.5*1.732);
        \draw[shorten >=7pt,shorten <=7pt, ->](8,1.732) -- (9.5,1.5*1.732);
        \draw[shorten >=7pt,shorten <=7pt, ->](8,1.732) -- (8,0);

        \node[circle,draw,label=below:$\partial G_{v_1}$,minimum size = 50pt] at (5,0){};
        \node[circle,draw,minimum size = 10pt] at (5,0){};
        \node[circle,draw,minimum size = 30pt] at (5+.15,0-.15){};
        \node[circle,draw,minimum size = 30pt] at (5-.15,0+.15){};

        \node[circle,draw,minimum size = 30pt] at (8+.2,3*1.732){};
        \node[circle,draw,minimum size = 30pt] at (8-.2,3*1.732){};
        \node[circle,draw,minimum size = 30pt] at (11+.15,0+.15){};
        \node[circle,draw,minimum size = 30pt] at (11-.15,0-.15){};
        
        \node[circle,draw,label=above:$\partial G_{v_2}$,minimum size = 50pt] at (8,3*1.732){};
        \node[circle,draw,minimum size = 10pt] at (8,3*1.732){};
        \node[circle,draw,label=below:$\partial G_{v_3}$,minimum size = 50pt] at (11,0){};
        \node[circle,draw,minimum size = 10pt] at (11,0){};

        \node[circle,draw,label=left:$\partial G_{e_1}$,minimum size = 30pt] at (6.5,1.732*1.5){};
        \node[circle,draw,minimum size = 10pt] at (6.5,1.732*1.5){};
        \node[circle,draw,label=right:$\partial G_{e_2}$,minimum size = 30pt] at (9.5,1.5*1.732){};
        \node[circle,draw,minimum size = 10pt] at (9.5,1.732*1.5){};
        \node[circle,draw,label=below:$\partial G_{e_3}$,minimum size = 30pt] at (8,0){};
        \node[circle,draw,minimum size = 10pt] at (8,0){};

        \draw[shorten >=18pt,shorten <=18pt, ->](6.5,1.5*1.732) -- (5,0);
        \draw[shorten >=18pt,shorten <=18pt, ->](6.5,1.5*1.732) -- (8,3*1.732);

        \draw[shorten >=18pt,shorten <=18pt, ->](9.5,1.5*1.732) -- (11,0);
        \draw[shorten >=18pt,shorten <=18pt, ->](9.5,1.5*1.732) -- (8,3*1.732);

        \draw[shorten >=20pt,shorten <=18pt, ->](8,0) -- (5,0);
        \draw[shorten >=20pt,shorten <=18pt, ->](8,0) -- (11,0);
        
        \end{tikzpicture}
        }
        \caption{On the left is a geometric simplex of $X$. On the right, each circle represents the Bowditch boundary of the corresponding cell. The arrows represent the maps $\varphi_{\tau,\tau'}$ between boundaries, and $\partial_{Stab}G$ is constructed by identifying points along these arrows. construct $Z$.}
        \label{fig:gluings}
\end{figure}
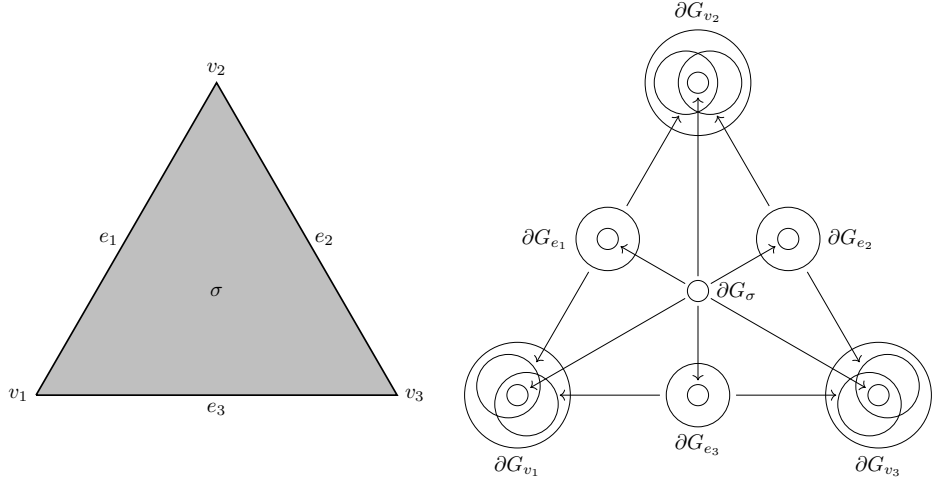

\subsection{Domains and Their Geometry}

The map $p:Z\longrightarrow X$ in \thref{defn of p} projects a point in $Z$ to $X$. The next definition is the analogous concept for $\xi \in \partial_{Stab}G$. 

\begin{definition}\thlabel{domain}
    Let $\xi \in \partial_{Stab}G$. The \emph{domain} of $\xi$, denoted $D(\xi)$, is the subcomplex of $X$ spanned by simplices $\sigma \subset X$ so that $\xi \in \pi_\sigma(\partial G_\sigma)$. We use $V(\xi)$ to denote the vertices of $D(\xi)$. If $\eta \in \partial X$, we set $D(\eta) = \{\eta\}$.
\end{definition} 

\begin{proposition}\thlabel{boundaries embed}
    If $\sigma$ is a simplex of $X$, then the projection $\pi_\sigma: \partial G_\sigma \longrightarrow \partial_{Stab}G$ is injective. 
\end{proposition}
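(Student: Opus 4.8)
The plan is to imitate the proof of \thref{internal spaces embed}. Suppose $\xi_1,\xi_2\in\partial G_\sigma$ with $\pi_\sigma(\xi_1)=\pi_\sigma(\xi_2)$; then in the disjoint union defining $Z\sqcup\partial_{Stab}G$ there is a finite chain of defining relations $u_0\simeq u_1\simeq\cdots\simeq u_n$, where $u_0,u_n$ are the boundary points of $\hatt{\sigma}$ with coordinates $\xi_1,\xi_2$, each $u_k$ is a boundary point of some $\hatt{\sigma_k}$ (so $\sigma_0=\sigma_n=\sigma$), and each step $u_k\simeq u_{k+1}$ is an instance $w\simeq\varphi_{\tau,\tau'}(w)$ of a face relation $\tau\subsetneq\tau'$ among $\{\sigma_k,\sigma_{k+1}\}$. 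Writing $\xi^{(k)}\in\partial G_{\sigma_k}$ for the boundary coordinate of $u_k$, I induct on $n$ to prove $\xi^{(0)}=\xi^{(n)}$. The cases $n\le 2$ are handled exactly as in \thref{internal spaces embed}, using that each $\varphi_{\tau,\tau'}$ extends to an \emph{injective} map on boundaries by part (2) of \thref{our spaces exist}.

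For the inductive step I examine the first three labels $\sigma_0,\sigma_1,\sigma_2$. Whenever the two relations $u_0\simeq u_1$ and $u_1\simeq u_2$ can be composed into a single relation — which, by the cocycle identity $\varphi_{\tau,\tau''}=\varphi_{\tau,\tau'}\varphi_{\tau',\tau''}$ built into a complex of spaces together with injectivity of the $\varphi$'s, happens in every configuration \emph{except} a ``valley'' $\sigma_0\supsetneq\sigma_1\subsetneq\sigma_2$ with $\sigma_0,\sigma_2$ incomparable — the chain shortens and the inductive hypothesis finishes the case; this is verbatim the argument of \thref{internal spaces embed}. The genuinely new situation is the valley with $\sigma_0,\sigma_2$ incomparable: this cannot arise in \thref{internal spaces embed}, since internal spaces of incomparable simplices are disjoint, but the limit sets $\Lambda G_{\sigma_0},\Lambda G_{\sigma_2}\subset\partial G_{\sigma_1}$ can overlap.

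To handle the valley, observe $\xi^{(1)}=\varphi_{\sigma_1,\sigma_0}(\xi^{(0)})=\varphi_{\sigma_1,\sigma_2}(\xi^{(2)})\in\Lambda G_{\sigma_0}\cap\Lambda G_{\sigma_2}$ inside $\partial G_{\sigma_1}$. Since each $\psi_a$ is an inclusion of a full RQC subgroup, $G_{\sigma_0}$ and $G_{\sigma_2}$ are full RQC in $G_{\sigma_1}$, so the Limit Set Property (\thref{Limit Set Property}) gives $\Lambda G_{\sigma_0}\cap\Lambda G_{\sigma_2}=\Lambda(G_{\sigma_0}\cap G_{\sigma_2})$. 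Now apply the Dichotomy Property, part (3) of \thref{our spaces exist}, to the closed simplices $\sigma_0,\sigma_2$. If some simplex contains $\sigma_0\cup\sigma_2$, then, $X$ being a $\mathrm{CAT}(0)$ $M_\kappa$-complex, there is a unique minimal such simplex $\mu$, and by \thref{stabilizer convention} $G_{\sigma_0}\cap G_{\sigma_2}$ (the pointwise stabilizer of $\sigma_0\cup\sigma_2$) equals $G_\mu$; hence $\xi^{(1)}\in\Lambda G_\mu=\varphi_{\sigma_1,\mu}(\partial G_\mu)$, so $\xi^{(1)}=\varphi_{\sigma_1,\mu}(\zeta)$ for a unique $\zeta\in\partial G_\mu$, and injectivity together with the cocycle identity force $\xi^{(0)}=\varphi_{\sigma_0,\mu}(\zeta)$ and $\xi^{(2)}=\varphi_{\sigma_2,\mu}(\zeta)$. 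Thus the segment $u_0\simeq u_1\simeq u_2$ may be replaced by $u_0\simeq v\simeq u_2$, where $v$ is the boundary point of $\hatt{\mu}$ with coordinate $\zeta$ and $\sigma_0,\sigma_2\subsetneq\mu$: the valley becomes a peak, producing two consecutive face relations of the same direction — using that a shortest chain contains no two consecutive face relations pointing the same way — which can be composed to shorten the chain.

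The main obstacle is the remaining Dichotomy alternative: $\sigma_0$ and $\sigma_2$ lie in no common simplex, yet $\Lambda G_{\sigma_0}\cap\Lambda G_{\sigma_2}\neq\varnothing$. I would address this by fixing a chain of minimal length at the outset and arguing that such a valley cannot occur in it. Here the shared point lies in $\Lambda(G_{\sigma_0}\cap G_{\sigma_2})$, which is nonempty, so $G_{\sigma_0}\cap G_{\sigma_2}$ is infinite and fixes the connected subcomplex $\sigma_0\cup\sigma_2$ pointwise — forcing, by acylindricity, $\sigma_0$ and $\sigma_2$ to be uniformly close — and propagating this along the minimal chain (repeatedly intersecting the stabilizers of the ``outer'' simplices and reapplying the Limit Set Property) forces an infinite subgroup to fix an ever-larger subcomplex, eventually contradicting acylindricity or yielding a rerouting that contradicts minimality. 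I expect this bookkeeping — weaving together minimality of the chain, the Limit Set Property, and acylindricity — to be the technical heart of the proof; the rest is a faithful adaptation of the argument for \thref{internal spaces embed}.
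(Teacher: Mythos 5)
Your reduction to an alternating chain, and your use of the Dichotomy (\thref{our spaces exist}(3)) together with the Limit Set Property (\thref{Limit Set Property}) to reroute a valley $\sigma_0\supsetneq\sigma_1\subsetneq\sigma_2$ through a simplex $\mu\supset\sigma_0\cup\sigma_2$ when one exists, are correct. But the case you flag as ``the technical heart''---$\sigma_0$ and $\sigma_2$ sharing no common coface yet $\Lambda G_{\sigma_0}\cap\Lambda G_{\sigma_2}\ne\varnothing$---is a genuine gap, and the repair you sketch cannot succeed, because such valleys can occur in chains of minimal length. Picture a $\mathrm{CAT}(0)$ square complex in which $D(\zeta)$ contains a $2\times 2$ block of squares and the $\xi$--loop runs around its boundary: at the midpoint of each side, two consecutive boundary edges meet at a vertex but lie in no common square. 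There the chain length is already minimal, and acylindricity gives no contradiction---it only bounds the diameter of $D(\zeta)$, which it already does in the proof that domains are finite, and everything in sight stays inside $D(\zeta)$. The conclusion $\xi^{(0)}=\xi^{(n)}$ is still \emph{true} in such a configuration; it simply does not follow by decreasing $n$.

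The paper circumvents this by changing the induction variable. It first reduces to the case where $\sigma$ is a vertex $v$, via $\pi_\sigma=\pi_v\varphi_{v,\sigma}$ and injectivity of $\varphi_{v,\sigma}$, so that the identification is witnessed by a $\xi$--loop $P$ supported on vertices and edges of $D(\zeta)$. Since $D(\zeta)$ is a convex, hence contractible, subcomplex of the $\mathrm{CAT}(0)$ complex $X$, the loop $P$ bounds a finite contractible $2$--subcomplex $H\subset D(\zeta)$, which the paper calls a hull. The induction is then on the \emph{number of $2$--cells of $H$}, not on the length of the chain. At each step one selects a $2$--cell $\sigma$ of $H$ having an edge on $P$ and replaces that edge by the remaining sides of $\sigma$; the cocycle consistency of the $\varphi$--maps---the same identity you already invoke---shows the rerouted loop realizes the same identification even though it may be \emph{longer}. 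Deleting $\sigma$ strictly decreases the number of $2$--cells of the hull, and when one $2$--cell remains the loop is its boundary and $\xi=\xi'$ follows by a direct chase around the triangle. This filling-diagram argument is the global ingredient that a purely local chain-shortening induction cannot supply.
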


The content of \thref{boundaries embed} is that for a given $\partial G_\sigma$, no points are identified by the relation $\simeq$ in \thref{defn of Z}. If $\sigma$ is a simplex of $X$ with vertex $v$, then $\pi_\sigma = \pi_v \varphi_{v,\sigma}$. Because $\varphi_{v,\sigma}$ is injective, to show $\pi_\sigma$ is injective, it is enough to show $\pi_v$ is injective. As in the proof of \thref{internal spaces embed}, identifications happen along chains and it suffices to consider identifications made along edges of $X$, motivating the following definition.

\begin{definition}\thlabel{xi_path}
    Let $\xi \in \partial_{Stab}G$. A \emph{$\xi$--path} is the data $\{(v_i)_{0 \leq i \leq n},\, (\xi_i)_{0 \leq i \leq n}, \,(x_i)_{1 \leq i \leq n}\}$ where
    \begin{itemize}
        \item the $v_0,\ldots ,\,v_n$ are vertices of $X$ so that $v_i$ is adjacent to $v_{i+1}$, and $e_i = [v_{i-1},v_i]$ is the edge between $v_{i-1},v_i$,
        \item each $\xi_i \in \partial G_{v_i}$ and $\xi_i$ represents the equivalence class of $\xi$ in $\partial_{Stab}G$, so that $\xi = \pi_{v_i}(\xi_i)$,
        \item each $x_i \in \partial G_{[v_{i-1},v_i]}$ with $\varphi_{v_{i-1},e_i}(x_i) = \xi_{i-1}$ and $\varphi_{v_i,e_i}(x_i) = \xi_i$ 
    \end{itemize}
    This data will be denoted $[v_0,\ldots, v_n]_\xi$, and the choices of $x_i,\xi_i$ will be implicit. The \emph{support} of a $\xi$--path is the path in the $1$--skeleton of $X$ given by the $v_i$. If $v_0 = v_n$, we call the $\xi$--path a \emph{$\xi$--loop}.
\end{definition}

\begin{notation}
    If a group $H < G$ stabilizes a simplex $\sigma \subset X$ pointwise, then $H< G_\sigma$ and we can consider the action of $H$ on $\hatt{\sigma}$. Since $G_\sigma$ acts on $\hatt{\sigma}$ as a convergence group, we can consider the limit set of $H$ in this action, denoted $\Lambda_\sigma H \subset \partial G_\sigma$, which is nonempty if $H$ is infinite. If $\sigma'$ is a face of $\sigma$, then $H$ also stabilizes $\sigma'$ and it follows from the definition of the extended $\varphi_{\sigma',\sigma}$ in \thref{our spaces exist} that $\varphi_{\sigma',\sigma}(\Lambda_\sigma H) = \Lambda_{\sigma'}H$.
\end{notation}

\begin{lemma}\thlabel{Stabilizer of a subcomplex limit set}
    Suppose $H < G$ is infinite and stabilizes a connected subcomplex $K \subset X$ pointwise. If $v_0 \in K$ and $\xi \in \Lambda_{v_0}H \subset \partial G_{v_0}$, then $\pi_{v_0}(\xi) \in \pi_\sigma(\partial G_{\sigma})$ for all $\sigma \subset K$, hence $K \subset D(\pi_{v_0}(\xi))$.
\end{lemma}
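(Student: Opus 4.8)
The plan is to reduce the statement to a fact about edge-paths in the $1$--skeleton of $K$, and then build a $\xi'$--path in the sense of \thref{xi_path} (where $\xi' := \pi_{v_0}(\xi)$) all of whose boundary-data lies in limit sets of $H$. First I would observe that since $H$ fixes $K$ pointwise, $H$ is a subgroup of $G_\sigma$ for \emph{every} simplex $\sigma \subset K$ --- in particular for every vertex and every edge of $K$ --- so $\Lambda_\sigma H \subset \partial G_\sigma$ is defined and nonempty for all such $\sigma$, and for faces $\sigma' \subset \sigma$ of simplices in $K$ we have the equality $\varphi_{\sigma',\sigma}(\Lambda_\sigma H) = \Lambda_{\sigma'}H$ recorded in the Notation preceding the lemma. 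I would also recall that $\pi_\sigma = \pi_v \circ \varphi_{v,\sigma}$ for any vertex $v$ of $\sigma$, since $w \simeq \varphi_{v,\sigma}(w)$ by the definition of $\simeq$ in \thref{defn of Z}.

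The core step is: for every vertex $v$ of $K$ there is a point $\xi_v \in \Lambda_v H$ with $\pi_v(\xi_v) = \xi'$. Since $K$ is a connected subcomplex its $1$--skeleton is connected, so fix an edge-path $v_0 = u_0, u_1, \ldots, u_n = v$ in the $1$--skeleton of $K$, with $e_i = [u_{i-1},u_i]$. Set $\xi_0 = \xi \in \Lambda_{v_0}H$, which satisfies $\pi_{v_0}(\xi_0) = \xi'$. Inductively, given $\xi_{i-1} \in \Lambda_{u_{i-1}}H$ with $\pi_{u_{i-1}}(\xi_{i-1}) = \xi'$, use the surjectivity $\varphi_{u_{i-1},e_i}(\Lambda_{e_i}H) = \Lambda_{u_{i-1}}H$ to choose $x_i \in \Lambda_{e_i}H$ with $\varphi_{u_{i-1},e_i}(x_i) = \xi_{i-1}$, and put $\xi_i := \varphi_{u_i,e_i}(x_i) \in \Lambda_{u_i}H$. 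Since $x_i \simeq \varphi_{u_{i-1},e_i}(x_i)$ and $x_i \simeq \varphi_{u_i,e_i}(x_i)$, we get $\pi_{u_i}(\xi_i) = \pi_{e_i}(x_i) = \pi_{u_{i-1}}(\xi_{i-1}) = \xi'$, completing the induction; the data $\{(u_i),(\xi_i),(x_i)\}$ is literally a $\xi'$--path from $v_0$ to $v$. Take $\xi_v := \xi_n$.

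Finally, to handle an arbitrary simplex $\sigma \subset K$ (not just a vertex), pick a vertex $v$ of $\sigma$ together with the point $\xi_v \in \Lambda_v H$ just produced. Because $H < G_\sigma$ and $v$ is a face of $\sigma$, the Notation remark gives $\varphi_{v,\sigma}(\Lambda_\sigma H) = \Lambda_v H$, so we may write $\xi_v = \varphi_{v,\sigma}(\zeta)$ for some $\zeta \in \Lambda_\sigma H \subset \partial G_\sigma$. Then
\[
\pi_{v_0}(\xi) = \xi' = \pi_v(\xi_v) = \pi_v(\varphi_{v,\sigma}(\zeta)) = \pi_\sigma(\zeta) \in \pi_\sigma(\partial G_\sigma).
\]
Hence every simplex of $K$ lies in $D(\pi_{v_0}(\xi))$, i.e.\ $K \subset D(\pi_{v_0}(\xi))$. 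The only ingredient that is not purely formal is the surjectivity $\varphi_{\sigma',\sigma}(\Lambda_\sigma H) = \Lambda_{\sigma'}H$ used to lift boundary points across an edge; but this is exactly what is recorded before the lemma as a consequence of the construction of the extended maps in \thref{our spaces exist}, so there is no genuine obstacle here --- the argument is a straightforward induction along a path, and the main thing to be careful about is distinguishing when the maps $\varphi_{\sigma',\sigma}$ are being used for their injectivity versus their surjectivity onto the relevant limit set.
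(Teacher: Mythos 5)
Your proposal is correct and uses the same essential mechanism as the paper: inducting along a connecting sequence inside $K$ and using the identity $\varphi_{\sigma',\sigma}(\Lambda_\sigma H) = \Lambda_{\sigma'}H$ to push forward or lift boundary points at each step. The only cosmetic difference is that you organize the induction along an edge-path in the $1$--skeleton (first handling all vertices, then arbitrary simplices by one more lift), whereas the paper inducts directly on a path of simplices $v_0 = \sigma_0, \ldots, \sigma_n = \sigma$, handling the two containment cases $\sigma_k \subset \sigma_{k+1}$ and $\sigma_{k+1} \subset \sigma_k$ at each step; this is a bookkeeping difference, not a mathematical one, and your observation that the resulting data literally constitutes a $\xi$--path in the sense of Definition \ref{xi_path} is a nice bonus.
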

\begin{proof}
    Because $K$ is connected, $v_0$ can be connected to any $\sigma \subset K$ by a path of simplices $v_0 = \sigma_0,\sigma_1, \ldots , \sigma_n = \sigma$, and we prove the claim by induction on $n$. If $n = 1$, then $v_0$ is a simplex of $\sigma$. By assumption, $\xi \in \Lambda_{v_0}H = \varphi_{v,\sigma}(\Lambda_{\sigma}H)$, so we can choose $\xi' \in \Lambda _{\sigma}H \subset \partial G_\sigma$ mapping to $\xi$. It follows that $\pi_\sigma(\xi') = \pi_{v_0}(\xi)$. This proves the base case.

    Now suppose the claim holds for a path of simplices of length $n$, and $v_0 = \sigma_0, \sigma_1,\ldots ,\sigma_{n+1}$ is a path of simplices in $K$ of length $n+1$. By induction, there is some $\xi' \in \Lambda_{\sigma_n}H$ so that $\pi_{\sigma_n}(\xi') = \pi_{v_0}(\xi)$. If $\sigma_{n+1} \subset \sigma$, then 
    \[\pi_{v_0}(\xi) = \pi_{\sigma_n}(\xi') = \pi_{\sigma_{n+1}}\varphi_{\sigma_{n+1},\sigma_n}(\xi')\]
    and we are finished. If $\sigma_n \subset \sigma_{n+1}$, then as in the base case, $\xi' \in \Lambda_{\sigma_n}H = \varphi_{\sigma_n,\sigma_{n+1}}(\Lambda_{\sigma_{n+1}}H)$, so we can choose some $\xi'' \in \partial G_{\sigma_{n+1}}$ mapping to $\xi'$, and $\pi_{\sigma_{n+1}}(\xi'') = \pi_{\sigma_n}(\xi') = \pi_{v_0}(\xi)$. 
\end{proof}

\begin{lemma}\thlabel{H along xi path is infinite}
    Let $\xi \in \partial_{Stab}G$ and $[v_0,\ldots ,v_n]_\xi$ be a $\xi$--path. Then $H:= \bigcap_i G_{v_i}$ is an infinite, full RQC subgroup of $G_{v_i}$ and the $\xi_i \in \partial G_{v_i}$ from the definition of a $\xi$--path have $\xi_i\in \Lambda_{v_i}H$ for each $i$. 
\end{lemma}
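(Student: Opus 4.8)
The plan is to prove, by induction on the number of edges $k$ of the path, the following uniform statement: for every $\xi$--path $[w_0,\dots,w_k]_\xi$ the group $H^{(k)}:=\bigcap_{i=0}^k G_{w_i}$ is infinite, is full RQC in $G_{w_i}$ for \emph{every} $0\le i\le k$, and has $\xi_i\in\Lambda_{w_i}H^{(k)}$ for every $i$, where the $\xi_i\in\partial G_{w_i}$ are the boundary points recorded by the $\xi$--path in \thref{xi_path}. Taking $k=n$ is exactly the lemma. Two routine facts about full relative quasiconvexity will be used repeatedly, and I would record them first: \emph{(Transitivity)} if $A<B<C$ with $C$ relatively hyperbolic, $B$ full RQC in $C$ and $A$ full RQC in $B$ (with the peripheral structure on $B$ from \thref{RQC means induced structure is the same}), then $A$ is full RQC in $C$; and \emph{(Inheritance)} if $A<B<C$ with $C$ relatively hyperbolic and $A,B$ both full RQC in $C$, then $A$ is full RQC in $B$. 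Both reduce, via \thref{RQC means induced structure is the same}, to the observation that the maximal parabolic subgroups of a full RQC subgroup are precisely the infinite intersections of the subgroup with maximal parabolics of the ambient group, so that ``finite or finite index in a parabolic'' transports correctly up and down the chain; the relative-quasiconvexity parts are the second item of the Remark following \thref{RQC means induced structure is the same} together with the fact that limit sets are unchanged when the ambient convergence group is replaced by a subgroup still containing them.

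For the base of the induction, $k=0$ is immediate ($H^{(0)}=G_{w_0}$ is infinite because $\partial G_{w_0}$ contains $\xi_0$, is trivially full RQC in itself, and $\Lambda_{w_0}G_{w_0}=\partial G_{w_0}\ni\xi_0$), and $k=1$ is the key local computation: by \thref{stabilizer convention} the pointwise stabilizer $G_e$ of the edge $e=[w_0,w_1]$ equals $G_{w_0}\cap G_{w_1}=H^{(1)}$; by our standing hypothesis that each $\psi_a$ is the inclusion of a full relatively quasiconvex subgroup, $G_e$ is full RQC in both $G_{w_0}$ and $G_{w_1}$; since $x_1\in\partial G_e\neq\varnothing$ the group $G_e$ is infinite; and since part (2) of \thref{our spaces exist} makes $\varphi_{w_0,e}$ (resp.\ $\varphi_{w_1,e}$) an embedding identifying $\partial G_e$ with $\Lambda_{w_0}G_e$ (resp.\ $\Lambda_{w_1}G_e$), we get $\xi_0=\varphi_{w_0,e}(x_1)\in\Lambda_{w_0}H^{(1)}$ and $\xi_1=\varphi_{w_1,e}(x_1)\in\Lambda_{w_1}H^{(1)}$.

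For the inductive step, assume the statement for all $\xi$--paths with at most $k\ge 1$ edges and take $[w_0,\dots,w_{k+1}]_\xi$. Deleting the last vertex yields a $\xi$--path on $w_0,\dots,w_k$ and deleting the first yields one on $w_1,\dots,w_{k+1}$; set $L=\bigcap_{i=0}^k G_{w_i}$ and $M=\bigcap_{i=1}^{k+1}G_{w_i}$, so $H:=H^{(k+1)}=L\cap M$. By the inductive hypothesis $L$ is full RQC in each $G_{w_i}$, $0\le i\le k$, with $\xi_i\in\Lambda_{w_i}L$, and $M$ is full RQC in each $G_{w_i}$, $1\le i\le k+1$, with $\xi_i\in\Lambda_{w_i}M$. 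For a middle vertex $w_i$ ($1\le i\le k$) both $L$ and $M$ are full RQC in the relatively hyperbolic group $G_{w_i}$, so \thref{Limit Set Property} gives that $H$ is full RQC in $G_{w_i}$ and $\Lambda_{w_i}H=\Lambda_{w_i}L\cap\Lambda_{w_i}M\ni\xi_i$; in particular this set is nonempty, so $H$ is infinite. For the endpoint $w_0$: since $M\subset G_{w_1}$ we have $H=G_{w_0}\cap M=G_{e_1}\cap M$ with $e_1=[w_0,w_1]$, and $G_{e_1},M$ are both full RQC in $G_{w_1}$, so \thref{Limit Set Property} gives $H$ full RQC in $G_{w_1}$; Inheritance then makes $H$ full RQC in $G_{e_1}$, and Transitivity (using $G_{e_1}$ full RQC in $G_{w_0}$) makes $H$ full RQC in $G_{w_0}$. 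Finally $H\subset G_{e_1}$, and the embeddings $\varphi_{w_0,e_1},\varphi_{w_1,e_1}$ carry $\Lambda_{e_1}H$ bijectively onto $\Lambda_{w_0}H$, resp.\ $\Lambda_{w_1}H$; since $\xi_1\in\Lambda_{w_1}H$ (middle--vertex case, as $1\le k$) and $\xi_1=\varphi_{w_1,e_1}(x_1)$, we get $x_1\in\Lambda_{e_1}H$, hence $\xi_0=\varphi_{w_0,e_1}(x_1)\in\Lambda_{w_0}H$. The endpoint $w_{k+1}$ is symmetric, using $H=L\cap G_{e_{k+1}}$ with $e_{k+1}=[w_k,w_{k+1}]$ and $\xi_k\in\Lambda_{w_k}H$. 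This proves the statement for $k+1$, and $k=n$ gives the lemma.

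The main obstacle is precisely the passage of full RQC--ness \emph{between} the distinct vertex groups of the path: \thref{Limit Set Property} only lets us intersect subgroups inside one fixed relatively hyperbolic group, so to move $H$ from ``full RQC in $G_{w_1}$'' to ``full RQC in $G_{w_0}$'' one is forced to establish the Transitivity and Inheritance facts above. These are elementary, but they genuinely use the structural description of \thref{RQC means induced structure is the same} --- that maximal parabolics of a full RQC subgroup are the infinite traces of maximal parabolics of the ambient group, together with the fact that stabilizers of conical limit points are finite or virtually cyclic --- rather than being merely formal manipulations.
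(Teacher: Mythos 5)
Your proof is correct, but it takes a genuinely different route from the paper's, and it proves the statement in a more uniform form. The paper also inducts on the path length, but in the inductive step it simply appends the last vertex: writing $H_n=\bigcap_{i=0}^{n}G_{v_i}$, it sets $H_{n+1}=H_n\cap G_{e_{n+1}}$ and applies the Limit Set Property \emph{once}, inside $G_{v_n}$, using $\xi_n\in\Lambda_{v_n}H_n$ to see the intersection has nonempty limit set; it then carries the limit-set statement across the edge $e_{n+1}$ to land $\xi_{n+1}\in\Lambda_{v_{n+1}}H_{n+1}$. As written this establishes full relative quasiconvexity only in $G_{v_n}$ and limit-set membership only at $v_n$ and $v_{n+1}$; the remaining indices require propagating $\Lambda H_{n+1}$ back along the edges of the path (using $\varphi_{\sigma',\sigma}(\Lambda_\sigma H)=\Lambda_{\sigma'}H$), which the paper leaves implicit. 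Your decomposition $H=L\cap M$, with $L$ and $M$ the two one-shorter truncations, lets you apply the Limit Set Property at every middle vertex simultaneously and then reach the two endpoints with your Transitivity and Inheritance facts for full RQC; this closes the induction cleanly, giving full RQC in every $G_{w_i}$ and $\xi_i\in\Lambda_{w_i}H$ for every $i$ without a separate propagation step. The price is the two auxiliary facts, which you correctly sketch via the induced peripheral structure of \thref{RQC means induced structure is the same} but do not fully prove; your parenthetical appeal to stabilizers of conical limit points being ``finite or virtually cyclic'' is not actually needed for either and is a small distraction. Both approaches are sound; yours buys a uniform conclusion at the cost of two extra (true, elementary) lemmas, while the paper's is leaner per step but leaves the full strength of the claimed statement to be read between the lines.
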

\begin{proof}
    It is clear from the definition of $\xi$--path that the support is contained in $D(\xi)$. Induct on $n$. If $n = 1$, then the $\xi$--path is a single edge $e_1 = [v_0,v_1]$. By definition, $G_{v_0} \cap G_{v_1} = G_{e_1}$, and $G_{e_1}$ is infinite because $x_1 \in \partial G_{e_1} \neq \varnothing$. Further, $G_{e_1}$ is a full RQC subgroup of the $G_{v_i}$ by assumption. By definition of a $\xi$--path, $\xi_i= \varphi_{v_i,e_1}(x_1) \in \Lambda _{v_i}G_{e_1}$. This proves the base case.

    Suppose the lemma holds for a $\xi$--path of length $n$, and $[v_0,\ldots, v_{n+1}]_\xi$ is a $\xi$--path of length $n+1$. By induction, $H = \bigcap_{i=1}^nG_{v_i}$ is an infinite, full RQC subgroup of $G_{v_i}$ for each $i$. Then $H \cap G_{[v_n,v_{n+1}]}$ is the intersection of full RQC subgroups in $G_{v_n}$. By the Limit Set Property \ref{Limit Set Property}, this is also a full RQC subgroup in $G_{v_n}$, and by induction, we have
    
    \[\varphi_{v_n,e_{n+1}}(x_{n+1}) = \xi_n \in \Lambda_{v_n} H \cap \Lambda_{v_n}G_{e_{n+1}} = \Lambda_{v_n}(H \cap G_{e_{n+1}})\]

    Because this limit set is nonempty, $H' = H \cap G_{e_n} = \bigcap_{i=1}^{n+1} G_{v_i}$ is infinite. This also implies $x_{n+1} \in \Lambda_{e_{n+1}}H'$, hence $\xi_{n+1} \in \Lambda_{v_{n+1}}H'$. 
\end{proof}

\begin{proposition}
    For any $\xi \in \partial_{Stab}G$, $D(\xi)$ is convex and $\diam(D(\xi)) \leq A$ where $A$ is the acylindricity constant of $G$ acting on $X$. Further, there is a uniform bound on the number of simplices in $D(\xi)$. 
\end{proposition}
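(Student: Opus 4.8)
The plan is to establish the four assertions in turn: $D(\xi)$ is connected, has diameter $\le A$, is convex, and contains uniformly boundedly many simplices. Since $\partial_{Stab}G$ is the quotient of $\bigsqcup_{\sigma}\hatt{\sigma}$ by $\simeq$, any two representatives of $\xi$ are joined by a finite chain of elementary identifications, each passing between a simplex and a face of it through one of the maps $\varphi$. Recording the simplices visited, one obtains, for any two simplices $\sigma,\sigma'\subset D(\xi)$, a path of simplices $\mu_0=\sigma,\mu_1,\dots,\mu_M=\sigma'$ with every $\mu_i\in D(\xi)$, consecutive $\mu_i$ in a face relation, each $\mu_i$ carrying a point $\zeta_i\in\partial G_{\mu_i}$ with $\pi_{\mu_i}(\zeta_i)=\xi$, and consecutive $\zeta_i$ compatible under the relevant $\varphi$; call this a \emph{$\xi$--path of simplices}. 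Then $\bigcup_i\overline{\mu_i}$ is a connected subcomplex of $D(\xi)$ joining $\sigma$ to $\sigma'$, so $D(\xi)$ is connected. The same induction as in the proof of \thref{H along xi path is infinite} --- at each step intersecting with a stabilizer which is full RQC inside the relevant local group (every inclusion $G_\sigma\hookrightarrow G_{\sigma'}$ coming from a face relation $\sigma'\subset\sigma$ is full RQC, by hypothesis) and invoking the Limit Set Property \ref{Limit Set Property} --- then shows that for any $\xi$--path of simplices $\mu_0,\dots,\mu_M$ in $D(\xi)$ the group $H:=\bigcap_i G_{\mu_i}$ is infinite and $\zeta_0\in\Lambda_{\mu_0}H$; transporting $\zeta_0$ to a vertex $v_0$ of $\mu_0$ shows $\Lambda_{v_0}H$ contains a $\pi_{v_0}$--preimage of $\xi$.

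For diameter and convexity, let $x,y\in D(\xi)$, carried by simplices $\sigma_x,\sigma_y\subset D(\xi)$; join $\sigma_x$ to $\sigma_y$ by a $\xi$--path of simplices in $D(\xi)$, and let $H=\bigcap_i G_{\mu_i}$ be the resulting infinite group. By \thref{stabilizer convention} each $G_{\mu_i}$ fixes $\mu_i$ pointwise, so $H$ fixes the connected set $K:=\bigcup_i\overline{\mu_i}$ pointwise; since $x,y\in K$ we get $\diam(K)\ge d(x,y)$, and since $H$ is infinite, acylindricity forbids $\diam(K)\ge A$. Hence $d(x,y)<A$ for all $x,y\in D(\xi)$, i.e.\ $\diam(D(\xi))\le A$. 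Moreover $H$, acting by isometries and fixing $x$ and $y$, fixes the unique geodesic $[x,y]$ pointwise; that segment meets only finitely many open simplices (\thref{geod meets finitely many}, or \thref{Bounded Length To Bounded Number Of Simplices} since $d(x,y)<A$), and $H$ fixes an interior point of each, hence stabilizes and so pointwise fixes each such simplex (\thref{stabilizer convention}). Letting $K'$ be the union of $K$ with the closures of these finitely many simplices, $K'$ is a connected subcomplex fixed pointwise by the infinite $H$ and containing a vertex $v_0$ with $\Lambda_{v_0}H$ meeting a $\pi_{v_0}$--preimage of $\xi$. Then \thref{Stabilizer of a subcomplex limit set}, applied to $H,K',v_0$ and this limit point, gives $K'\subset D(\xi)$; in particular every simplex met by $[x,y]$ lies in $D(\xi)$, so $[x,y]\subset D(\xi)$ and $D(\xi)$ is convex.

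It remains to bound the number of simplices, which I expect to be the main obstacle. Fixing $v_0\in V(\xi)$, convexity and the diameter bound imply every simplex of $D(\xi)$ is met by a geodesic from $v_0$ of length $\le A$ lying in $D(\xi)$, and such a geodesic meets at most $k(A)$ open simplices (\thref{Bounded Length To Bounded Number Of Simplices}); together with cocompactness, this reduces the problem to bounding, uniformly in $\xi$, the number of simplices of $D(\xi)$ through a fixed vertex $v$ lying in a fixed $G_v$--orbit $\{g\sigma_0 : gG_{\sigma_0}\in G_v/G_{\sigma_0}\}$. Using injectivity of $\pi_v$ on $\partial G_v$ (\thref{boundaries embed}), $g\sigma_0\in D(\xi)$ exactly when the preimage $\xi_v\in\partial G_v$ of $\xi$ satisfies $g^{-1}\xi_v\in\Lambda_v G_{\sigma_0}$. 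Since $G_{\sigma_0}$ is full RQC in $G_v$ it has finite height there (\thref{Finite Height}), so: (i) by the Limit Set Property \ref{Limit Set Property}, if $\xi_v$ lay in the limit sets of more than $\mathrm{height}(G_{\sigma_0})$ many distinct conjugates of $G_{\sigma_0}$, then an intersection over distinct cosets would be infinite, contradicting the height; and (ii) since conjugation by elements normalizing $G_{\sigma_0}$ fixes $G_{\sigma_0}$, finite height likewise bounds $[N_{G_v}(G_{\sigma_0}):G_{\sigma_0}]$, which controls the over-counting incurred when one passes from cosets $gG_{\sigma_0}$ to conjugate subgroups $gG_{\sigma_0}g^{-1}$. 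Combining (i) and (ii), and taking the maximum over the finitely many $G_v$--orbit types, yields the desired uniform bound. Making this last counting fully rigorous --- in particular pinning down the comparison between cosets and conjugates --- is the step I expect to require the most care.
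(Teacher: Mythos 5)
Your argument for diameter and convexity is essentially identical to the paper's: build a $\xi$--path, extract an infinite $H$ via \thref{H along xi path is infinite}, apply acylindricity for the diameter, then observe $H$ fixes $[x,y]$ and pass to a finite-index subgroup fixing each simplex along the geodesic pointwise before invoking \thref{Stabilizer of a subcomplex limit set}. That part matches the paper.

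For the bound on the number of simplices, your skeleton is right (height plus the Limit Set Property, then a convexity argument to bound the ``simplicial depth'' of $D(\xi)$ from $v_0$), but point (ii) is a detour caused by a misreading of the definition of height. Height is defined directly in terms of \emph{distinct cosets} $g_iH$, not distinct conjugates $H^{g_i}$: if $g_1\sigma_0,\dots,g_m\sigma_0$ are distinct simplices of $st(v)\cap D(\xi)$ in one $G_v$--orbit, then the $g_iG_{\sigma_0}$ are already $m$ distinct cosets (since $g_i\sigma_0=g_j\sigma_0$ iff $g_iG_{\sigma_0}=g_jG_{\sigma_0}$), and the Limit Set Property gives $\Lambda\bigl(\bigcap_i G_{\sigma_0}^{g_i}\bigr)\ni\xi_v\neq\varnothing$, hence $\bigcap_i G_{\sigma_0}^{g_i}$ is infinite, hence $m\le\mathrm{height}(G_{\sigma_0})$ by definition. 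There is no need to convert cosets into distinct conjugates, bound $[N_{G_v}(G_{\sigma_0}):G_{\sigma_0}]$, or control any over-counting; the step you flagged as requiring the most care is in fact a non-issue once the definition is read correctly. The paper also makes your ``reduction via $k(A)$ simplices along a geodesic'' precise with the inductive filtration $K_1=\{v_0\}$, $K_{i}=\overline{N}(K_{i-1})\cap D(\xi)$, showing $K_M=D(\xi)$ with $M$ bounded by \thref{Bounded Length To Bounded Number Of Simplices}; your appeal to ``cocompactness'' to finish gestures at the same thing but should be spelled out in this form to close the argument.
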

\begin{proof}
    Let $x,x'$ be two points of $D(\xi)$, let $\sigma_x,\sigma_{x'}$ be the unique simplices of $X$ with $x,x'$ in their interior, and let $v,v'$ be vertices of $\sigma_x,\sigma_{x'}$. Since $v,v' \in V(\xi)$, there must be a $\xi$--path joining them, say $[v_0,\ldots, v_n]_\xi$ with $v = v_0,v' = v_n$. After making the $\xi$--path longer, we can assume every vertex of $\sigma_x,\sigma_{x'}$ is in the support of this $\xi$--path. By \thref{H along xi path is infinite}, $H:=\bigcap_i G_{v_i}$ is infinite, fixes $\sigma_x \cup \sigma_{x'}$ pointwise, and $\xi_i \in \Lambda_{v_i}H$ for each $i$. Since $X$ is $\mathrm{CAT}(0)$, $H$ also stabilizes the geodesic $[x,x']$, so $d(x,x') \leq A$ by the definition of acylindricity. Since $x,x'$ were arbitrary, this shows $\diam(D(\xi)) \leq A$.

    Because $H$ fixes $[x,x']$ pointwise, if $[x,x']$ meets the interior of a simplex $\sigma$, then $H\sigma = \sigma$ as a set, but $H$ may not fix $\sigma$ point wise (imagine the plane tiled by unit squares and geodesic between $x =(0,0)$ and $x' =(1,1)$ -- reflecting across the line $x= y$ will fix the endpoints but will not fix the square that the endpoints are corners of). However because $\sigma$ has finitely many vertices, a finite index subgroup of $H$ fixes $\sigma$ pointwise. Since $[x,x']$ meets finitely many simplices by \thref{geod meets finitely many}, this implies a finite index subgroup of $H$ fixes the closed subcomplex spanned by $[x,x']$ pointwise. Let $K$ be this closed subcomplex spanned by $[x,x']$ and apply \thref{Stabilizer of a subcomplex limit set} with this $K$, $v_0$ as the start of our $\xi$--path, $\xi_0 \in \partial G_{v_0}$, and the finite index subgroup of $H$, we have that $K \subset D(\xi)$, hence $D(\xi)$ is convex. 

    Next we prove there is a uniform constant bounding the number of simplices in every domain.  Recall that for every pair of simplices $\sigma \subset \sigma'$ of $X$, $G_{\sigma'}$ is a fully RQC subgroup of $G_\sigma$, and therefore has finite height by \thref{Finite Height}. Because $Y$ is finite, we can choose $N$ so that each such $G_{\sigma'}$ has height at most $N$ in $G_\sigma$. Increasing $N$ if necessary, we can also assume that any simplex $\tau$ of $Y$ has at most $N$ simplices in $st(\tau)$. 
    
    Let $\sigma_0$ be simplex of $D(\xi)$ and choose $\xi_0 \in \partial G_{\sigma_0}$ so that $\pi_{\sigma_0}(\xi_0) = \xi$. There are at most $N$ orbits of simplices in $st(\sigma_0)$, and for any given orbit, at most $N$ simplices can be in $D(\xi)$. Indeed, suppose $g_1 \sigma,g_2 \sigma, \ldots g_m\sigma$ are all simplices of $st(\sigma_0) \cap D(\xi)$ with $g_i \in G_{\sigma_0}$. Then by the Limit Set Property \ref{Limit Set Property}, we have the following in $\partial G_{\sigma_0}$: 
    \[\xi_0 \in \bigcap_{i}\Lambda_{\sigma_0} G_{g_i\sigma} = \bigcap_i \Lambda_{\sigma_0} G^{g_i}_{\sigma} = \Lambda_{\sigma_0} \Big(\bigcap_iG_{\sigma}^{g_i}\Big) \neq \varnothing.\]
    Since the limit set of a finite group is empty, this implies $\bigcap_iG_{\sigma}^{g_i}$ is infinite. Because $G_\sigma$ has height at most $N$ in $G_{\sigma_0}$, this implies $m \leq N$. Thus each $G$ orbit of simplices in $st(\sigma_0)$ contributes at most $N$ simplices to $st(\sigma_0)\cap D(\xi)$, and there are at most $N$ $G$-- orbits, so $st(\sigma_0) \cap D(\xi)$ contains at most $N^2$ simplices. 

    Fix a vertex $v \in V(\xi)$ and inductively define a sequence of finite subcomplexes by $K_1 = v$ and $K_i:=\overline{N}(K_{i-1}) \cap D(\xi)$. That is, $K_i$ is $K_{i-1}$ together with where its closed simplicial neighborhood meets $D(\xi)$. For example $K_2 = \overline{st}(v) \cap D(\xi)$. From the previous paragraph, it is clear that each $K_i$ has finitely many simplices. 

    By \thref{Bounded Length To Bounded Number Of Simplices}, there is some constant $M$ so that any geodesic in $X$ of length at most $A$ meets at most $M$ simplices. Because $D(\xi)$ has diameter at most $A$, for any $x \in D(\xi)$, $[v,x]$ meets at most $M$ simplices of $D(\xi)$. It follows that $[v,x] \subset K_M$, hence $K_M = D(\xi)$ and $D(\xi)$ has finitely many simplices. This bound depends only on $N$ and $M$, which depend only on $G(\YY), \,X$ and $A$, so it is uniform for all domains. 
\end{proof}

\begin{corollary}\thlabel{Finiteness}[Finiteness]
    There exists an integer $d_{\max}$ satisfying the following.
    \begin{enumerate}
        \item For every $\xi \in \partial_{Stab}G$, $D(\xi)$ contains at most $d_{max}$ simplices and any geodesic in the open simplician neighborhood of $D(\xi)$ meets at most $d_{max}$ simplices.
        \item Any closed simplex of $X$ is the union of at most $d_{max}$ open simplices.
    \end{enumerate}
\end{corollary}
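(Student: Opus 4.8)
The plan is to obtain $d_{\max}$ by taking the maximum of three bounds, each of which is essentially already available. First, the proposition immediately preceding this corollary gives a uniform constant, call it $n_0$, bounding the number of simplices in $D(\xi)$ for every $\xi \in \partial_{Stab}G$; this directly yields the first clause of (1). I would record that $n_0$ depends only on $G(\YY)$, $X$, and $A$, as established there.

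For the geodesic clause of (1), I would apply the Containment lemma \thref{Containment} with $n = n_0$. That produces a constant $k_0$, depending only on $n_0$, such that every geodesic path lying in the open simplicial neighborhood of a finite subcomplex with at most $n_0$ simplices meets at most $k_0$ simplices. Since $D(\xi)$ is precisely such a subcomplex, any geodesic in $N(D(\xi))$ meets at most $k_0$ simplices. The point to note is that \thref{Containment} is stated for arbitrary finite subcomplexes of bounded size and does not require convexity, so it applies verbatim to $D(\xi)$; the convexity established earlier is only what was needed to prove the bound $n_0$ in the first place. For (2), since $Y = X/G$ is finite, $X$ has finite dimension $n := \dim X$, and any closed $k$--simplex is the union of its open faces, of which there are $\sum_{j=1}^{k+1}\binom{k+1}{j} = 2^{k+1}-1 \leq 2^{n+1}-1$. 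Hence every closed simplex of $X$ is a union of at most $2^{n+1}-1$ open simplices.

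Setting $d_{\max} = \max\{\,n_0,\; k_0,\; 2^{n+1}-1\,\}$ then satisfies all conclusions, and every quantity involved depends only on $G(\YY)$, $X$, and $A$. I do not anticipate a genuine obstacle here: the corollary is a packaging step that collects the uniform bound from the preceding proposition together with Bridson's Containment estimate and the elementary dimensional fact about closed simplices in a finite-dimensional $M_\kappa$--complex. The only mild care needed is in matching hypotheses — confirming that $D(\xi)$ is a finite subcomplex of size at most $n_0$ so that \thref{Containment} can be invoked — which is immediate from the previous proposition.
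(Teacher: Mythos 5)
Your proof is correct and follows the same route the paper takes: the bound on $\lvert D(\xi)\rvert$ from the preceding proposition combined with the Containment Lemma \thref{Containment} gives clause (1), and finiteness of $Y=X/G$ (bounding the dimension, hence the face count $2^{k+1}-1$) gives clause (2). The paper states this in one line; your write-up simply makes the two invocations explicit.
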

\begin{proof}
    The first point follows immediately from the Containment \thref{Containment} and the previous lemma. The second point follows from the finiteness of $Y = X/G$.  
\end{proof}

The integer $d_{max}$ will be an essential tool going forward. The convexity of domains also allows us to upgrade the Dichotomy Property in \thref{our spaces exist}.

\begin{corollary}\thlabel{upgrade to dichotomy}
    If two simplices $\sigma_1,\sigma_2$ of $X$ intersect in a face $\sigma$, then exactly one of the following holds.
    \begin{enumerate}
        \item There is a simplex $\sigma'$ containing $\sigma_1\cup \sigma_2$ so that
            \[\varphi_{\sigma,\sigma_1}(\overline{X_{\sigma_1}}) \cap \varphi_{\sigma,\sigma_2}(\overline{X_{\sigma_2}}) = \varphi_{\sigma,\sigma'}(\overline{X_{\sigma'}}).\]
        \item There is no simplex containing both $\sigma_1,\sigma_2$, and
            \[\varphi_{\sigma,\sigma_1}(\total{\sigma_1}) \cap \varphi_{\sigma,\sigma_2}(\total{\sigma_2}) \subset \partial X_{\sigma}.\]
    \end{enumerate}
\end{corollary}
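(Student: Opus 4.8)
The plan is to reduce everything to the Dichotomy Property already in hand from \thref{our spaces exist}(3): the two alternatives of the corollary are literally the same dichotomy (a simplex of $X$ contains $\sigma_1\cup\sigma_2$, or none does), so they are mutually exclusive and exhaustive, and statement (2) coincides verbatim with the second case of \thref{our spaces exist}(3). Thus all the work is to upgrade the first case from the open parts $X_{\sigma_i}$ to the full compactified spaces $\overline{X_{\sigma_i}} = X_{\sigma_i}\sqcup\partial G_{\sigma_i}$.

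So I would work under the assumption that some simplex contains $\sigma_1\cup\sigma_2$; since $X$ is a $\mathrm{CAT}(0)$ $M_\kappa$--complex there is a unique minimal such simplex $\sigma'$ (as already used in the proof of \thref{our spaces exist}). The easy inclusion $\varphi_{\sigma,\sigma'}(\overline{X_{\sigma'}})\subseteq \varphi_{\sigma,\sigma_1}(\overline{X_{\sigma_1}})\cap\varphi_{\sigma,\sigma_2}(\overline{X_{\sigma_2}})$ follows from $\sigma\subset\sigma_i\subset\sigma'$ together with functoriality of the extended maps, i.e.\ $\varphi_{\sigma,\sigma'} = \varphi_{\sigma,\sigma_i}\circ\varphi_{\sigma_i,\sigma'}$ and $\varphi_{\sigma_i,\sigma'}(\overline{X_{\sigma'}})\subseteq\overline{X_{\sigma_i}}$, both from \thref{our spaces exist}(2). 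For the reverse inclusion I would take a point $q$ in the intersection and split on whether it is interior or boundary: by \thref{our spaces exist}(1)--(2) each $\varphi_{\sigma,\sigma_i}$ sends $X_{\sigma_i}$ into $X_\sigma$ and sends $\partial G_{\sigma_i}$ onto $\Lambda_\sigma G_{\sigma_i}\subseteq\partial G_\sigma$, and $\overline{X_\sigma} = X_\sigma\sqcup\partial G_\sigma$, so either $q\in X_\sigma$ or $q\in\partial G_\sigma$. If $q\in X_\sigma$ then $q\in\varphi_{\sigma,\sigma_1}(X_{\sigma_1})\cap\varphi_{\sigma,\sigma_2}(X_{\sigma_2}) = \varphi_{\sigma,\sigma'}(X_{\sigma'})$ by \thref{our spaces exist}(3a). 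If $q\in\partial G_\sigma$ then $q\in\Lambda_\sigma G_{\sigma_1}\cap\Lambda_\sigma G_{\sigma_2}$, and since $G_{\sigma_1},G_{\sigma_2}$ are full RQC in $G_\sigma$ the Limit Set Property \thref{Limit Set Property} gives $\Lambda_\sigma G_{\sigma_1}\cap\Lambda_\sigma G_{\sigma_2} = \Lambda_\sigma(G_{\sigma_1}\cap G_{\sigma_2})$; since an element of $G$ fixes both $\sigma_1$ and $\sigma_2$ pointwise iff it fixes every vertex of $\sigma'$, i.e.\ iff it fixes $\sigma'$ pointwise, we get $G_{\sigma_1}\cap G_{\sigma_2} = G_{\sigma'}$, hence $q\in\Lambda_\sigma G_{\sigma'} = \varphi_{\sigma,\sigma'}(\partial G_{\sigma'})$. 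Either way $q\in\varphi_{\sigma,\sigma'}(\overline{X_{\sigma'}})$, which finishes the equality.

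I do not expect a real obstacle here; the only thing to be careful about is tracking which piece (interior versus boundary) of each $\overline{X_{\sigma_i}}$ maps where, so that \thref{our spaces exist}(3a) and the Limit Set Property can be applied to the two pieces separately, plus the small group-theoretic identity $G_{\sigma_1}\cap G_{\sigma_2} = G_{\sigma'}$. This last point is where one can alternatively invoke convexity of domains: a boundary point $q\in\Lambda_\sigma G_{\sigma_1}\cap\Lambda_\sigma G_{\sigma_2}$ satisfies $\sigma_1,\sigma_2\subseteq D(\pi_\sigma(q))$, so $\sigma'\subseteq D(\pi_\sigma(q))$ by the convexity established above, forcing $q$ to be the image of a point of $\partial G_{\sigma'}$.
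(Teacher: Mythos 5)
Your proof is correct, but in the boundary case it takes a genuinely different route from the paper. You reduce $q\in\Lambda_\sigma G_{\sigma_1}\cap\Lambda_\sigma G_{\sigma_2}$ to $q\in\Lambda_\sigma G_{\sigma'}$ via the Limit Set Property (\thref{Limit Set Property}) together with the algebraic identity $G_{\sigma_1}\cap G_{\sigma_2}=G_{\sigma'}$, whereas the paper invokes the (just-established) convexity of domains directly: for $\xi$ in the intersection one has $\sigma_1,\sigma_2\subset D(\xi)$, and a geodesic from a point of $\sigma_1$ to a point of $\sigma_2$ lies in $\sigma'$ by convexity of the simplex and in $D(\xi)$ by convexity of the domain, forcing $\sigma'\subset D(\xi)$ --- precisely the alternative you sketch in your final sentence. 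Both routes work. One small caution on your main route: the justification ``fixes every vertex of $\sigma'$, hence fixes $\sigma'$ pointwise'' tacitly assumes the vertex set of $\sigma'$ is exactly the union of the vertex sets of $\sigma_1$ and $\sigma_2$, which holds for genuinely simplicial $X$ but not for the more general cell structures the paper informally entertains (e.g.\ squares). A more robust justification for $G_{\sigma_1}\cap G_{\sigma_2}\subseteq G_{\sigma'}$ is that any $g$ fixing $\sigma_1\cup\sigma_2$ pointwise preserves $\sigma'$ setwise, by uniqueness of the minimal-dimensional cell over $\sigma_1\cup\sigma_2$, and then fixes $\sigma'$ pointwise by \thref{stabilizer convention}. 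The paper's convexity argument is shorter and sidesteps this bookkeeping entirely; yours is more explicitly algebraic and reuses a lemma already needed elsewhere.
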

\begin{proof}
    The only difference between this and the dichotomy property in \thref{our spaces exist} is in the first option, where the property is extended to the intersection in the boundary. Suppose we are in the first case and $\xi \in \varphi_{\sigma,\sigma_1}(\partial G_{\sigma_1}) \cap \varphi_{\sigma,\sigma_2}(\partial G_{\sigma_2})$. Simplices are convex, so if we choose points $x_1 \in \sigma_1$ and $x_2 \in \sigma_2$, then the geodesic $[x_1,x_2]$ is contained in $\sigma'$ and connects points of $D(\xi)$, so $\sigma' \subset D(\xi)$.
\end{proof}

\begin{proof}[Proof of \thref{boundaries embed}]
    Fix a vertex $v$ of $X$ and suppose $\pi_v(\xi) = \pi_v(\xi') = \zeta \in \partial_{Stab}G$. Then there must be a $\xi$--loop $\{(v_i)_{0 \leq i \leq n},\, (\xi_i)_{0 \leq i \leq n}, \,(x_i)_{1 \leq i \leq n}\}$ with $\xi = \xi_0$ and $\xi' = \xi_n$. Without loss of generality, we can assume the support of the $\xi$--path is injective and let $P$ be the $1$--complex given by this loop.

    It is clear that $P  \subset D(\zeta)$. Because $X$ is $\textrm{CAT}(0)$ and domains are convex, we can contract the loop $P$ in $D(\zeta)$ by pulling along geodesics to $v_0$. This gives a map of the disc into $D(\zeta)$, and after a homotopy, we can assume the image lies in the $2$--skeleton of $D(\zeta)$. This yields a finite, contractible $2$--complex with boundary $P$, say $H$. We will call such a subcomplex a \emph{hull} of $P$. The result will follow immediately from the following claim.

    \begin{claim}
        For any such hull, we have $\xi = \xi'$ in $\partial G_v$.
    \end{claim}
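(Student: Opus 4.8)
The plan is to prove, by induction on the number of $2$--cells of $H$, the slightly more flexible statement that for \emph{any} $\zeta$--loop $L = [v_0,\ldots,v_n]_\zeta$ (not requiring the support to be injective) and any hull $H$ of its support --- i.e.\ any finite contractible subcomplex of the $2$--skeleton of $D(\zeta)$ in which the support loop is null--homotopic --- one has $\xi_0 = \xi_n$ in $\partial G_{v_0}$. The Claim is the special case where the support is injective.

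The engine of the argument is a lifting statement, which is where the geometry of $D(\zeta)$ enters. Because $D(\zeta)$ is connected and any identification in $\partial_{Stab}G$ is witnessed by a chain of $\simeq$--relations along edges of $X$, any two vertices of $V(\zeta)$ are joined by a $\zeta$--path realizing any prescribed representatives of $\zeta$ as its endpoint decorations. Concatenating such $\zeta$--paths in a star around a fixed vertex $u$ produces a $\zeta$--loop whose support visits every vertex of $V(\zeta)$ and whose decoration at $u$ is any prescribed representative of $\zeta$ in $\partial G_u$. Feeding this loop into \thref{H along xi path is infinite} shows that $H_\zeta := \bigcap_{w \in V(\zeta)} G_w$ is infinite and full RQC in each $G_w$, that it fixes all of $D(\zeta)$ pointwise, and that every representative of $\zeta$ lying in some $\partial G_w$ with $w \in V(\zeta)$ lies in $\Lambda_w H_\zeta$. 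Consequently, for every simplex $\sigma \subseteq D(\zeta)$ with a vertex $w$ we have $H_\zeta \le G_\sigma$, hence $\Lambda_w H_\zeta \subseteq \Lambda_w G_\sigma = \varphi_{w,\sigma}(\partial G_\sigma)$, so every representative of $\zeta$ in $\partial G_w$ lifts --- uniquely, since $\varphi_{w,\sigma}$ is an embedding --- to a point $\omega \in \partial G_\sigma$ with $\pi_\sigma(\omega) = \zeta$, and by the composition identities for the maps $\varphi$ this $\omega$ then restricts correctly along every face of $\sigma$.

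The base case is $H$ with no $2$--cells: then $H$ is a tree, the support loop is null--homotopic in a tree hence backtracks, say $e_i = e_{i+1}$, and injectivity of $\varphi_{v_{i-1},e_i}$ and $\varphi_{v_i,e_i}$ forces first $x_i = x_{i+1}$ and then $\xi_{i-1} = \xi_{i+1}$; deleting $v_i$ yields a strictly shorter $\zeta$--loop with the same endpoint decorations supported in the same tree, and iterating reaches the trivial loop. For the inductive step, first reduce the loop in this way until it is backtrack--free; if it is then trivial we are done, and otherwise the support loop bounds a reduced singular disc diagram in $H$ with at least one $2$--cell, so it has an outermost $2$--cell $\sigma$ meeting the diagram boundary along an edge $e_j = [v_{j-1},v_j]$ of the support. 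The lifting statement applied with $w = v_{j-1}$ and the representative $\xi_{j-1}$ produces $\omega \in \partial G_\sigma$, and the composition identities give that $\omega$ restricts to $\xi_{j-1}$ at $v_{j-1}$, to $x_j$ on $e_j$, and to $\xi_j$ at $v_j$. Using $\omega$ to decorate the detour that replaces $e_j$ in $L$ by the remaining edges of $\sigma$ (through the other vertices of $\sigma$) gives a new $\zeta$--loop $L'$ with the same endpoint decorations $\xi_0,\xi_n$ that is null--homotopic in $H' = H \setminus \mathrm{int}(\sigma)$, again a finite contractible subcomplex of the $2$--skeleton of $D(\zeta)$ with one fewer $2$--cell; the inductive hypothesis applied to $(L',H')$ finishes the step.

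I expect the main obstacle to be the combinatorial bookkeeping in the inductive step rather than anything conceptual: choosing the outermost $2$--cell $\sigma$ so that $H \setminus \mathrm{int}(\sigma)$ stays contractible, and handling the cases where $\sigma$ meets the diagram boundary in more than one edge (which can pinch or disconnect $H'$; these are dealt with by splitting $L'$ at the resulting cut vertices and applying the inductive hypothesis to each piece, each with strictly fewer $2$--cells), together with the routine verification that the detour decorations assembled from $\omega$ satisfy all the compatibility conditions of \thref{xi_path}. The conceptual heart --- that $\zeta$ is a limit point of the stabilizer of every cell inside its own domain, so that boundary points can always be pushed up along faces --- is packaged in the lifting statement above, which is a formal consequence of \thref{H along xi path is infinite}, the Limit Set Property, and the connectedness of $D(\zeta)$.
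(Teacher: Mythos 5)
Your proposal follows essentially the same strategy as the paper's own proof (induction on the number of $2$--cells of the hull, rerouting the $\zeta$--loop around a boundary $2$--cell using the commutativity of the $\varphi$ maps, and discarding that cell), but you make two things explicit that the paper leaves in the shade. First, the paper's argument hinges on the assertion that, because $\sigma \subset D(\zeta)$, one can choose $\xi_\sigma \in \partial G_\sigma$ with $\varphi_{e_1,\sigma}(\xi_\sigma) = x_1$; as stated this is only that $\zeta$ has \emph{some} representative in $\partial G_\sigma$, not that the specific decoration $x_1$ is in the image of $\varphi_{e_1,\sigma}$, and when $\sigma$ has a vertex outside the support of the loop (the paper's second subcase) one cannot get this from \thref{H along xi path is infinite} applied only to the loop itself. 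Your lifting statement --- that any representative of $\zeta$ at a vertex of $\sigma$ lies in $\Lambda(H_\zeta) \subseteq \Lambda(G_\sigma) = \varphi_{\cdot,\sigma}(\partial G_\sigma)$, with $H_\zeta = \bigcap_{w \in V(\zeta)} G_w$ obtained by running \thref{H along xi path is infinite} on a $\zeta$--loop through all of $V(\zeta)$ --- supplies exactly this missing justification, and it applies uniformly in both the base and inductive cases. Second, the paper quietly assumes the support of the $\zeta$--loop is injective ``without loss of generality,'' but discarding repeated vertices already presupposes the claim for sub-loops; your tree-reduction base case ($d = 0$, backtracks cancel by injectivity of the $\varphi$'s) dispenses with that assumption cleanly. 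The bookkeeping you flag around outermost $2$--cells, cut vertices, and keeping $H \setminus \mathrm{int}(\sigma)$ contractible is real and is also glossed over in the paper (removing a $2$--cell can disconnect or de-contract the hull), but your plan of splitting at cut vertices and inducting on each piece is the standard fix and presents no conceptual difficulty. So: same route, but you have correctly identified and repaired the two soft spots in the paper's version.
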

    \begin{proof}[Proof of Claim]

    Let $d$ be the number of $2$--cells in $H$ and we prove the claim by induction on $d$. For simplicity, we assume all $2$--cells are triangles, and explain at the end how to modify this to suit squares and more. 

    If $d = 1$, then $P$ is the boundary of a single triangle, say $\sigma$, and $n = 3$. Because the hull is contained in $D(\xi)$, there is some $\xi_\sigma$ so that $\varphi_{v_0,\sigma}(\xi_\sigma) = \xi \in \partial G_{v_0}$. The commutativity of the maps $\varphi$ implies $\xi_0$ completely determines all the data of the $\xi$--path. Explicitly, because $\varphi_{v_0,\sigma} = \varphi_{v_0,e_1}\varphi_{e_1,\sigma}$, it follows that $\varphi_{e_1,\sigma}(\xi_\sigma) = x_1 \in \partial G_{e_1}$, and hence $\varphi_{v_1,\sigma}(\xi_\sigma) = \xi_1$. Continuing around the triangle, we have 
    \[\xi = \varphi_{v_0,\sigma}(\xi_\sigma) = \varphi_{v_0,e_3}\varphi_{e_3,\sigma}(\xi_\sigma) = \varphi_{v_0,e_3}(x_3) = \xi'.\]

    This completes the base case. Now assume any hull with $d$ $2$--cells has $\xi = \xi'$ and let $H$ be a hull with $d+1$ $2$--cells. Suppose $\sigma$ is a $2$--cell of $H$ containing the edge $e_1 = [v_0,v_1]$. We show how to remove $\sigma$ and get a smaller hull. Because $\sigma \subset D(\zeta)$, we can choose $\xi_\sigma \in \partial G_{\sigma}$ so that $\varphi_{e_1,\sigma}(\xi_\sigma) = x_1 \in \partial G_{e_1}$. There are two cases.

    If another side of $\sigma$ is contained in $P$, say $e_2 = [v_1,v_2]$, then the third edge of $\sigma$ is $e :=[v_0,v_2]$. Set $x= \varphi_{e,\sigma}(\xi_\sigma)$. The commutativity of the inclusions yields the following, new $\xi$--loop:
    \[(v_0,v_2,v_3,\ldots,v_n), \, (\xi_0,\xi_2,\xi_3,\ldots,\xi_n), \, (x_1,x,x_3,\ldots, x_n).\]

    The closure of $H \setminus \sigma$ gives a hull of this new loop containing $d$ $2$--cells.

    If no other side of $\sigma$ is contained in $P$, then let $w$ be the remaining vertex of $\sigma$ which is not in $P$. Let $e_1' = [v_0,w]$ and $e_2' = [w,v_1]$, and let $x_1',x_2'$ be the image of $\xi_\sigma$ in $\partial G_{e_1'}, \, \partial G_{e_2'}$ and $\xi_w = \varphi_{w,\sigma}(\xi_{\sigma})$. Again the commmutativity of the diagram yields the following new $\xi$--loop:
    \[(v_0,w,v_1, \ldots ,v_n),\, (\xi_1,\xi_w,\xi_1,\xi_2,\ldots, \xi_n), \, (x_1',x_2',x_3,\ldots ,x_n) .\]
    Again, the closure of $H \setminus \sigma$ gives a hull for the new loop containing at most $d$ triangles, and we are done. 
    
    For cells which are not triangles, the base case $d=1$ is the same, except that $n$ will equal the number of sides of the single cell. For the inductive step, instead of cutting around a triangle, we can cut around an $n$-gon from the hull in a similar way.
\end{proof}
    With the claim proven, we have proven the lemma.
\end{proof}

\section{Geometric Tools}\label{section:Geometric Tools}

\subsection{Nesting and Families}

The following is key to defining open neighborhoods of points in $\partial_{Stab}G$.

\begin{definition} \thlabel{xi family} ($\xi$--family)
    Let $\xi \in \partial_{Stab}G$ and for each vertex $v \in V(\xi)$, let $\xi_v \in \partial G_{v}$ be the representative of $\xi$. A \emph{$\xi$--family} is choice of open neighborhoods $\{U_v \subset \total{v} \, | \,\xi_v \in U_v, \, v \in V(\xi)\}$ so that for every edge $[v,v']$ of $D(\xi)$ and $x \in \total{[v,v']}$, we have 
    \[\varphi_{v,[v,v']}(x) \in U_v \Longleftrightarrow \varphi_{v',[v,v']}(x) \in U_{v'}.\]    
    Equivalently, $\varphi^{-1}_{v,[v,v']}(U_v) = \varphi^{-1}_{v'[v,v']}(U_{v'})$ for each edge $[v,v']$ of $D(\xi)$.
\end{definition}

The following is a basic topological fact which we leave to the reader.

\begin{lemma}\thlabel{final equals subspace}
    Let $B$ be a topological space and let $C_1, \ldots, C_n$ be a finite collection of closed sets in $B$. Let $\sim$ be the relation on the abstract disjoint union $\bigsqcup_i C_i$ with $x \sim y$ whenever $x = y$ in $B$, and let $C = \bigsqcup_i C_i /\sim$. The natural map $p:C \longrightarrow B$ is a homeomorphism onto $\bigcup_i C_i$. In particular, the quotient topology on $C$, the final topology induced by the projections $C_i \longrightarrow C$, and the subspace topology on $\bigcup_i C_i$ are all equal. 
\end{lemma}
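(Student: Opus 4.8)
The plan is to prove the one substantive claim — that $p\colon C\to B$ is a homeomorphism onto $\bigcup_i C_i$ equipped with the subspace topology — and then obtain the ``in particular'' sentence by unwinding the definitions of the topologies involved. Concretely, I would verify that $p$ is a continuous closed bijection onto $\bigcup_i C_i$; a continuous closed bijection is automatically a homeomorphism, so this suffices.

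First I would dispense with the easy parts. The map $p$ is well defined and surjects onto $\bigcup_i C_i$ because every point of $\bigcup_i C_i$ lies in some $C_j$ and is the image of the class of that point; it is injective because two classes with the same image in $B$ are identified by $\sim$ by definition. For continuity, write $q_i\colon C_i\to C$ for the natural map; the composite $C_i \to \bigsqcup_j C_j \to C \to B$ is just the inclusion $C_i\hookrightarrow B$, which is continuous, so by the universal property of the disjoint-union-then-quotient topology on $C$, the map $p$ is continuous.

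The heart of the argument is that $p$ is closed (as a map onto $\bigcup_i C_i$), and this is where the finiteness hypothesis is used. Let $F\subseteq C$ be closed. By definition of the quotient topology, $q_i^{-1}(F)$ is closed in $C_i$ for every $i$; since $C_i$ is closed in $B$, the set $q_i^{-1}(F)$ is then also closed in $B$. As the maps $q_i$ are jointly surjective, $F=\bigcup_i q_i\big(q_i^{-1}(F)\big)$, and since $p\circ q_i$ is the inclusion $C_i\hookrightarrow B$ we get $p(F)=\bigcup_i q_i^{-1}(F)$ as subsets of $B$. This is a \emph{finite} union of closed subsets of $B$, hence closed in $B$, and therefore closed in $\bigcup_i C_i$ with the subspace topology. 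Thus $p$ is a continuous closed bijection onto $\bigcup_i C_i$, hence a homeomorphism.

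For the final sentence, I would observe that the quotient topology on $C$ is by construction the final topology with respect to the single map $\bigsqcup_i C_i\to C$; since $\bigsqcup_i C_i$ itself carries the final topology with respect to the inclusions $C_i\hookrightarrow\bigsqcup_i C_i$, transitivity of final topologies shows this coincides with the final topology induced by the family $\{C_i\to C\}_i$. Transporting along the homeomorphism $p$ then identifies both of these with the subspace topology on $\bigcup_i C_i$. I do not anticipate a genuine obstacle; the only point requiring care is the use of finiteness in the closedness step (an arbitrary union of closed sets need not be closed), and otherwise the argument is a routine chase through universal properties.
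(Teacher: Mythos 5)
Your proof is correct and complete. The paper explicitly leaves this lemma to the reader, so there is no author proof to compare against; your argument — showing $p$ is a continuous closed bijection onto $\bigcup_i C_i$, with the finiteness of the collection used exactly once to guarantee that $\bigcup_i q_i^{-1}(F)$ is closed — is the standard and intended route, and the concluding chase through universal properties and transitivity of final topologies cleanly yields the ``in particular'' statement.
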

\begin{proposition}\thlabel{balloon prop}
    Let $\xi \in \partial_{Stab}G, v_0 \in V(\xi)$, and let $U_{v_0}$ be an open neighborhood of $\xi$ in $\overline{X_{v_0}}$. Then $U_{v_0}$ can be extended to a $\xi$--family.  
\end{proposition}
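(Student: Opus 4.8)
The plan is to build all the neighbourhoods $U_v$, $v\in V(\xi)$, at once, by propagating $U_{v_0}$ outward through the finite complex $D(\xi)$. By \thref{Finiteness} the domain $D(\xi)$ has only finitely many simplices, so $V(\xi)$ is finite, and $D(\xi)$, being a convex subcomplex of the \CATz complex $X$, is itself $\mathrm{CAT}(0)$, hence connected and simply connected. Recall that each $\total\sigma$ is compact and that each extended map $\varphi_{\sigma',\sigma}\colon\total\sigma\to\total{\sigma'}$ for $\sigma'\subseteq\sigma$ is an embedding onto a closed subset (\thref{our spaces exist}).

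I would first replace the goal by a stronger, cleaner one: produce open sets $U_\sigma\subseteq\total\sigma$, one for every simplex $\sigma$ of $D(\xi)$, with $\xi_\sigma\in U_\sigma$ (where $\xi_\sigma\in\partial G_\sigma$ is the unique representative of $\xi$, cf.\ \thref{boundaries embed}), extending the given $U_{v_0}$ and satisfying the coherence relation $\varphi_{\sigma',\sigma}^{-1}(U_{\sigma'})=U_\sigma$ for all $\sigma'\subseteq\sigma$ in $D(\xi)$. The restriction of such a family to $V(\xi)$ is then a $\xi$--family: for an edge $e=[v,v']$ of $D(\xi)$ both $\varphi_{v,e}^{-1}(U_v)$ and $\varphi_{v',e}^{-1}(U_{v'})$ equal $U_e$.

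To build the family I would enumerate the simplices of $D(\xi)$ in order of non-decreasing distance from $v_0$ in the first barycentric subdivision (equivalently, by the least length of a path of simplices in $D(\xi)$ from $v_0$). For $v_0$ and for each simplex $\sigma\supseteq v_0$ put $U_\sigma:=\varphi_{v_0,\sigma}^{-1}(U_{v_0})$. Every later simplex $\rho$ has an already-enumerated incident simplex $\pi$ strictly closer to $v_0$ — call it a \emph{parent}; fix one and set $U_\rho:=\varphi_{\pi,\rho}^{-1}(U_\pi)$ when $\pi\subsetneq\rho$, and let $U_\rho$ be the largest open subset of $\total\rho$ meeting $\varphi_{\rho,\pi}(\total\pi)$ in $\varphi_{\rho,\pi}(U_\pi)$, namely $\total\rho\setminus\varphi_{\rho,\pi}(\total\pi\setminus U_\pi)$, when $\rho\subsetneq\pi$. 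In each case $U_\rho$ is open, contains $\xi_\rho$, and is coherent with its parent.

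The heart of the argument — and the step I expect to be the main obstacle — is to verify, by induction along the enumeration, that $U_\rho$ is in fact coherent with \emph{every} already-enumerated simplex incident to $\rho$, not just with its chosen parent; granting this, the induction produces the desired coherent family. When an already-enumerated incident simplex $\alpha$ is joined to the parent $\pi$ through a chain of inclusions inside $D(\xi)$ containing $\rho$, coherence of $U_\rho$ with $U_\alpha$ reduces, using the composition identities $\varphi_{\sigma'',\sigma'}\varphi_{\sigma',\sigma}=\varphi_{\sigma'',\sigma}$, to coherence between $\pi$ and $\alpha$ already in hand. The remaining situations — where, say, $\alpha$ and $\pi$ are distinct cofaces of $\rho$ — are where the ``largest extension'' choice is essential and where one must analyse the intersection $\varphi_{\rho,\alpha}(\total\alpha)\cap\varphi_{\rho,\pi}(\total\pi)$ inside $\total\rho$: by the Dichotomy Property \thref{upgrade to dichotomy} it is either $\varphi_{\rho,\sigma'}(\total{\sigma'})$ for the minimal simplex $\sigma'\supseteq\alpha\cup\pi$ — which lies in $D(\xi)$ by convexity of domains and has been enumerated earlier, so coherence among $\sigma',\alpha,\pi$ is available — or it is contained in $\partial X_{\alpha\cap\pi}$, where, the stabilisers $G_\alpha,G_\pi$ being full RQC in $G_\rho$, the Limit Set Property \thref{Limit Set Property} identifies it with $\Lambda_\rho(G_\alpha\cap G_\pi)$ and relates it to the earlier-handled face $\alpha\cap\pi$. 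In either case one checks that $\varphi_{\rho,\alpha}^{-1}(U_\rho)$ agrees with $U_\alpha$ on the image of $\sigma'$ (resp.\ on the boundary overlap), while off that image the complement of $\varphi_{\rho,\alpha}^{-1}(U_\rho)$ is empty by the maximality of $U_\rho$ — forcing $\varphi_{\rho,\alpha}^{-1}(U_\rho)=U_\alpha$. Executing this case analysis carefully (the two cases of \thref{upgrade to dichotomy}, convexity of domains, and the functoriality of the $\varphi_{\bullet,\bullet}$, exactly the ingredients used in the proof of \thref{boundaries embed}) is the bulk of the work; with the coherent family in hand, restricting to $V(\xi)$ completes the proof.
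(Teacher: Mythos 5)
Your ``largest extension'' rule for a face $\rho$ with coface-parent $\pi$ --- setting $U_\rho := \total\rho \setminus \varphi_{\rho,\pi}(\total\pi\setminus U_\pi)$ --- is too permissive, and the verification you flag as the main obstacle does not go through. If $\alpha$ is another already-treated coface of $\rho$, then
\[
\varphi_{\rho,\alpha}^{-1}(U_\rho) = \total\alpha \setminus \varphi_{\rho,\alpha}^{-1}\bigl(\varphi_{\rho,\pi}(\total\pi\setminus U_\pi)\bigr),
\]
and the removed set lives inside the overlap $\varphi_{\rho,\alpha}^{-1}\bigl(\varphi_{\rho,\pi}(\total\pi)\bigr)$. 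In case (b) of \thref{upgrade to dichotomy} that overlap sits inside $\partial X_\alpha$, so the whole interior $X_\alpha$ falls into $\varphi_{\rho,\alpha}^{-1}(U_\rho)$; in case (a), with $\sigma'\supseteq\alpha\cup\pi$ minimal, the overlap is the proper compact subset $\varphi_{\alpha,\sigma'}(\total{\sigma'})$, so again $\varphi_{\rho,\alpha}^{-1}(U_\rho)$ contains everything outside a small closed set. Since $U_\alpha$ is a proper open neighbourhood of $\xi_\alpha$ whose complement typically sweeps out most of $\total\alpha$, coherence $\varphi_{\rho,\alpha}^{-1}(U_\rho)=U_\alpha$ fails. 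Your sentence ``off that image the complement of $\varphi_{\rho,\alpha}^{-1}(U_\rho)$ is empty --- forcing $\varphi_{\rho,\alpha}^{-1}(U_\rho)=U_\alpha$'' has the inference reversed: the premise is true, but what it actually yields is $\varphi_{\rho,\alpha}^{-1}(U_\rho)\supsetneq U_\alpha$, which is a failure of coherence with $\alpha$, not a proof of it.

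The paper's construction dodges this precisely by never privileging a single parent. When it reaches a new simplex $\tau$ (your $\rho$), it collects \emph{all} the already-treated cofaces $\sigma_1,\dots,\sigma_\ell$, forms $C=\bigcup_i\varphi_{\tau,\sigma_i}(\total{\sigma_i})$ and $U=\bigcup_i\varphi_{\tau,\sigma_i}(U_{\sigma_i})$, proves the Claim that $U\cap\varphi_{\tau,\sigma_i}(\total{\sigma_i})=\varphi_{\tau,\sigma_i}(U_{\sigma_i})$ for every $i$ --- this is the real content, established by chaining a path of simplices between any $\sigma_i,\sigma_j$ through the already-aligned layer together with both branches of the Dichotomy, exactly the toolkit you name --- and only then extends $U$ to an open $U_\tau\subset\total\tau$ with $U_\tau\cap C=U$ via \thref{final equals subspace}. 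That $U_\tau$ is coherent with every $\sigma_i$ \emph{by construction}, with no after-the-fact verification. Not-yet-treated cofaces of $\tau$ are given by pullback, and the metric-distance filtration $\Sigma(s)$ (rather than your barycentric path-length ordering) ensures each such simplex has a unique $\tau$ to pull back from, so the pullbacks never conflict. The structural fix, then, is: constrain $U_\tau$ against \emph{all} already-treated incident simplices simultaneously, prove those constraints are mutually compatible, and then take any open extension respecting them --- not maximize against one parent and hope the others follow.
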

\begin{proof}
    Suppose $\Sigma$ is some set of closed simplices in $D(\xi)$ and $\{U_\sigma \subset \total{\sigma}, \sigma \in \Sigma\}$ is a collection of open neighborhoods of $\xi$. We will call this collection of neighborhoods \emph{aligned} if for each pair of simplices $\sigma \subset \sigma'$ in $\Sigma$, we have 
    
    \[U_{\sigma} \cap \varphi_{\sigma,\sigma'}(\overline{X_{\sigma'}}) = \varphi_{\sigma,\sigma'}(U_{\sigma'}).\]
    Because $\varphi_{\sigma,\sigma'}$ is an embedding, the equation above is equivalent to $\varphi_{\sigma,\sigma'}^{-1}(U_\sigma) = U_{\sigma'}$. If $\Sigma$ contains all of $D(\xi)$ and the collection of open neighborhoods is aligned, then the sets $\{U_v, v \in V(\xi)\}$ will be a $\xi$--family. 
    
    Consider the set $\{d(\sigma,v_0), \, \sigma \subset D(\xi)\}$. It is finite because $D(\xi)$ has finitely many simplices, and we enumerate it as $\{0= t_0 < t_1 < \cdots < t_m\}$. Let $\Sigma(s) = \{ \sigma \subset D(\xi), d(\sigma,v_0) \leq t_s\}$ where elements of $\Sigma(s)$ are open simplices. Because the distance between sets is an infimum, we have $d(v_0,\sigma) = d(v_0,\overline{\sigma})$ for any simplex $\sigma \subset X$, but this distance may be realized on the boundary of a simplex. If $d(\sigma, v_0) = t_i$  so that $\sigma \in \Sigma(i)$ and $\sigma \subset \sigma'$, then $d(\sigma',v_0) \leq t_i$, hence $st(\sigma) \subset \Sigma(i)$. In particular, $\Sigma(0) = st(v_0)$.  

    For each $\sigma \subset D(\xi)$, we can choose a $y_\sigma \in \overline{\sigma}$ so that $d(y_\sigma,v_0) = d(\sigma,v_0)$. This choice is unique; if there were two choices for $y_\sigma$, say $y$ and $y'$, then the geodesic $[y,y']$ is contained in $\overline{\sigma}$ because simplices are convex, but the CAT(0) inequality implies $[y,y']$ comes strictly closer to $v_0$ than $y$ or $y'$. This would imply $y,y'$ are not actually the closest points in $\sigma$ to $v_0$, a contradiction.
     
     There are finitely many simplices $\omega \in \Sigma(s) \setminus \Sigma(s-1)$ so that $y_\omega$ is in the interior of $\omega$. For such an $\omega$, any simplex $\sigma \subset st(\omega) \cap D(\xi) \setminus \Sigma(s-1)$ must have $y_\sigma = y_\omega$. For such a $\sigma$, $\omega$ must be the unique face of $\overline{\sigma}$ in $\Sigma(s)$, for if $\omega' \subset \sigma$ was another such face, we must have $y_{\omega'} = y_\sigma = y_{\omega}$, hence $\omega = \omega'$. We can enumerate these simplices where $y_\omega$ is in the interior of $\omega$ as $\omega_1,\ldots , \omega_r$, and from this discussion we have
    \[\Sigma(s) \setminus \Sigma(s-1) =D(\xi) \cap  \bigsqcup_j (st(\omega_j) \setminus \Sigma(s-1)).\]

    We imagine inflating a balloon centered at $v_0$ and recording the time and place where the balloon touches each closed simplex for the first time. These are the $t_0,\ldots ,t_m$ and $y_\sigma$. At time $t_s$, the balloon touches some \emph{open} simplices for the first time -- these are like the $\omega$ in the previous paragraph -- and we add all of these open simplices and their disjoint stars to $\Sigma(s-1)$ to form $\Sigma(s)$. We inductively build an aligned collection of open neighborhoods for each $\Sigma(i)$.
    
    For $\sigma \in \Sigma(0) = st(v_0)$, we must set $U_\sigma = \varphi_{v_0,\sigma}^{-1}(U_{v_0})$. To check these choices are aligned, suppose $v_0 \subset \sigma \subset \sigma'$ and $\varphi_{\sigma,\sigma'}(z)\in U_\sigma \cap \varphi_{\sigma,\sigma'}(\overline{X_{\sigma'}})$. Then $\varphi_{v_0,\sigma}\varphi_{\sigma,\sigma'}(z) = \varphi_{v_0,\sigma'}(z) \in U_{v_0}$ by definition of $U_\sigma = \varphi_{v_0,\sigma}^{-1}(U_{v_0})$, hence $z \in \varphi_{v_0,\sigma'}^{-1}(U_{v_0}) = U_{\sigma'}$. This shows the $\subseteq$ containment in the definition of aligned, and the $\supseteq$ is similar.
    
    Suppose we have an aligned collection of sets $\{U_\sigma, \sigma \in \Sigma(s-1)\}$ and we extend this to an aligned collection for $\Sigma(s)$. Let $\omega_1,\ldots \omega_r$ be the simplices of $\Sigma(s) \setminus \Sigma(s-1)$ so that $y_{\omega_i}$ is in the interior of $\omega_{i}$, let $\tau$ be any of the $\omega_i$, and we focus on finding $U_\tau$. Let $st(\tau) \cap \Sigma(i-1) = \{\sigma_1,\ldots ,\sigma_\ell\}$ be the simplices we have already made choices for, and set 
    \[C = \bigcup_i\varphi_{\tau,\sigma_i}(\overline{X_{\sigma_i}}) \quad \quad \quad \quad U  = \bigcup_i \varphi_{\tau,\sigma_i}(U_{\sigma_i}).\]

    \begin{claim}
        For each $i$, $U\cap \varphi_{\tau,\sigma_i}(\overline{X_{\sigma_i}}) = \varphi_{\tau,\sigma_i}(U_{\sigma_i})$.
    \end{claim}

    Given the claim, we show how to finish the proof. Because each $\varphi_{\tau,\sigma_i}$ is injective, the claim implies $\varphi_{\tau,\sigma_i}^{-1}(U) = U_{\sigma_i}$ which is open in $\overline{X_{\sigma_i}}$, hence $U$ is open in the final topology on $C$ induced by the $\varphi_{\tau,\sigma_i}$. By \thref{final equals subspace}, this final topology on $C$ coincides with the subspace topology on $C \subset \overline{X_\tau}$, so this claim allows us to choose $U_\tau$ open in $\overline{X_\tau}$ so that $U_\tau \cap C = U$, hence $U_\tau \cap \varphi_{\tau,\sigma_i}(\overline{X_{\sigma_i}}) = \varphi_{\tau,\sigma_i}(U_{\sigma_i})$ and $U_\tau$ is compatible with our choices for $\Sigma(s-1)$. For the simplices $\sigma \subset st(\tau) \cap D(\xi) \setminus \Sigma(s-1)$ that we have not made choices for yet, we must set $U_\sigma = \varphi_{\tau,\sigma}^{-1}(U_\tau)$. Because the $\Sigma(s) \setminus \Sigma(s-1)$ is a disjoint union of parts of the $st(\omega_i)$, we can repeat this process for each $\omega_i$ without making contradicting choices. To check this collection of sets is aligned, suppose $\sigma \subset \sigma'$ are simplices of $\Sigma(s)$. If $\sigma \in \Sigma(s-1)$, then so is $\sigma'$ and by assumption our choices for $\Sigma(s-1)$ are aligned. If $\sigma \notin \Sigma(s-1)$, then $\sigma \subset st(\omega_i)$ for some $i$ and $U_\sigma = \varphi_{\omega_i,\sigma}^{-1}(U_{\omega_i})$. If $\sigma' \in \Sigma(s-1)$, then $U_{\sigma'} = \varphi_{\omega_i,\sigma'}^{-1}(U_{\omega_i})$ by the claim and the choice of $U_{\omega_i}$, and if $\sigma' \in \Sigma(s) \setminus \Sigma(s-1)$, then $U_{\sigma'} = \varphi_{\omega_i,\sigma'}^{-1}(U_{\omega_i})$ again, this time by definition. From here, we can repeat our reasoning for $\Sigma(0)$ and see that our choices are aligned. It remains to prove the claim.  
    
    \begin{proof}[Proof of Claim]
        Fix $\sigma_i$ and suppose $z \in U \cap \varphi_{\tau,\sigma_i}(\overline{X_{\sigma_i}})$. Since $U = \bigcup_j \varphi_{\tau,\sigma_j}(U_{\sigma_j})$, we can fix $j$ so that $z \in \varphi_{\tau,\sigma_j}(U_{\sigma_j})$. Because $\tau \subset \sigma_i\cap\sigma_j$, we can apply the Dichotomy \thref{upgrade to dichotomy}.

        If there is a simplex $\sigma'$ containing $\sigma_i \cup \sigma_j$, then we must have $\sigma' =\sigma_k$ for some $k$ and $\varphi_{\tau,\sigma_i}(\overline{X_{\sigma_i}}) \cap\varphi_{\tau,\sigma_j}(\overline{X_{\sigma_j}}) = \varphi_{\tau,\sigma_k}( \overline{X_{\sigma_k}})$, hence $z \in \varphi_{\tau,\sigma_k}(\overline{X_{\sigma_k}})$. We can pull $z$ back along $\varphi_{\tau,\sigma_j}$ to some point in $U_{\sigma_j}\cap \varphi_{\sigma_j,\sigma_k}(\overline{X_{\sigma_k}})$ which equals $\varphi_{\sigma_j,\sigma_k}(U_{\sigma_k})$ because the choices for $\Sigma(s-1)$ are aligned. This means 
        \[ z \in \varphi_{\tau,\sigma_k}(U_{\sigma_k}) = \varphi_{\tau,\sigma_i}\varphi_{\sigma_i,\sigma_k}(U_{\sigma_k}) \subset \varphi_{\tau,\sigma_i}(U_{\sigma_i}).\]
        This last containment again uses that our choices were aligned, and is exactly what we need.
        
        If we are in the second case of the dichotomy, there is no simplex containing $\sigma_i,\sigma_j$ but their images only intersect in $\partial G_\tau$, hence $z \in \varphi_{\tau,\sigma_i}(\partial G_{\sigma_i}) \cap \varphi_{\tau,\sigma_j}(U_{\sigma_j})$. 
        Choose points $x_i,x_j$ in the interior of $\sigma_i,\sigma_j$ so that $d(x_i,y_\tau), d(x_i,y_\tau) < 1$ but also $d(x_i,v_0),d(x_i,v_0) < t_{s-1}$. 
        The scaling assumption \ref{scaling assumption} and the definition of $\Sigma(s-1)$ implies $[x_j,x_i] \subset st(\tau) \cap \Sigma(s-1)$. Let $\sigma_j = \tau_0,\tau_1,\ldots ,\tau_q = \sigma_i$ be the path of simplices along $[x_j,x_i]$. Because $[x_j,x_i] \subset \Sigma(i-1)$, $U_{\tau_k}$ is already defined for each $k$, and becuase $[x_j,x_i] \subset st(\tau)$, we can compare the images of these sets in $\overline{X_\tau}$. Further, $z$ represents a point of $\partial_{Stab}G$, $D(z)$ is convex, and $x_j,x_i \in D(z)$, so this path of simplices is contained in $D(z)$. 
        
        We show that $z \in \varphi_{\tau,\tau_k}(U_{\tau_k})$ implies $z \in \varphi_{\tau,\tau_{k+1}}(U_{\tau_{k+1}})$. Since $z \in \varphi_{\tau,\tau_0}(U_{\tau_0})$ by assumption, this will imply $z \in \varphi_{\tau,\tau_{q}}(U_{\tau_q}) = \varphi_{\tau,\sigma_i}(U_{\sigma_i})$, completing the proof of the claim. There are two cases. If $\tau_{k+1} \subset \tau_{k}$, then this is immediate since $z \in \varphi_{\tau,\tau_k}(U_{\tau_k}) \subset \varphi_{\tau,\tau_{k+1}}(U_{\tau_{k+1}})$. If $\tau_k \subset \tau_{k+1}$, then $z \in \varphi_{\tau,\tau_k}(U_{\tau_k})$ by assumption, and $z \in \varphi_{\tau,\tau_{k+1}}(\partial G_{\tau_{k+1}})$ because $\tau_{k+1} \subset D(z)$, so $z$ is the image of a point $z' \in U_{\tau_k} \cap \varphi_{\tau_k,\tau_{k+1}}( \total{\tau_{k+1}}) \subset \total{\tau_k}$. Because our choices for $\Sigma(s-1)$ are aligned, this implies $z' \in \varphi_{\tau_k,\tau_{k+1}}(U_{\tau_{k+1}})$, so there is some $z'' \in U_{\tau_{k+1}}$ so that
        \[z = \varphi_{\tau,\tau_k}(z') = \varphi_{\tau,\tau_k}\varphi_{\tau_k,\tau_{k+1}}(z'') = \varphi_{\tau,\tau_{k+1}}(z'').\]
        Thus $z \in \varphi_{\tau,\tau_{k+1}}(U_{\tau_{k+1}})$, as desired. Now by assumption, $z \in \varphi_{\tau,\tau_0}(U_{\tau_0}) = \varphi_{\tau,\sigma_j}(U_{\sigma_j})$, so repeating this for $1,2,\ldots q$, we have $z \in \varphi_{\tau,\tau_q}(U_{\tau_q}) = \varphi_{\tau,\sigma_i}(U_{\sigma_i})$, completing the proof of the claim.
    \end{proof}
    With the claim proven, the lemma is complete.
\end{proof}

\begin{corollary}\thlabel{xi families exist}
    Let $\xi \in \partial_{Stab}G$ and for each $v \in V(\xi)$, suppose $U_v$ is a neighborhood of $\xi$ in $\total{v}$. Then there exists a $\xi$--family $\VV$ so that $V_v \subset U_v$ for each $v \in V(\xi)$.    
\end{corollary}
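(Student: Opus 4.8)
The plan is to reduce everything to \thref{balloon prop} by applying it once for each vertex of $V(\xi)$ and then intersecting the resulting families. The one auxiliary fact I would record first is that a finite intersection of $\xi$--families is again a $\xi$--family: if $\{U_v^{(j)}\}_{v\in V(\xi)}$ is a $\xi$--family for $j=1,\dots,N$, put $V_v := \bigcap_{j=1}^N U_v^{(j)}$; these sets are open, contain $\xi_v$, and for every edge $[v,v']$ of $D(\xi)$ we have $\varphi_{v,[v,v']}^{-1}(V_v) = \bigcap_j \varphi_{v,[v,v']}^{-1}(U_v^{(j)}) = \bigcap_j \varphi_{v',[v,v']}^{-1}(U_{v'}^{(j)}) = \varphi_{v',[v,v']}^{-1}(V_{v'})$, since taking preimages commutes with intersections and each $\{U_v^{(j)}\}$ satisfies the compatibility condition of \thref{xi family}. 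Finiteness is what makes this legitimate, and it is available: by \thref{Finiteness} the domain $D(\xi)$, hence its vertex set $V(\xi)$, is finite.

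Given this, the argument is short. After replacing each $U_v$ by its interior I may assume each $U_v$ is open. For each $v_0\in V(\xi)$ I apply \thref{balloon prop} to the open neighborhood $U_{v_0}$ of $\xi$ in $\overline{X_{v_0}}$; this produces a $\xi$--family $\{U_v^{(v_0)}\}_{v\in V(\xi)}$ whose value at $v_0$ is exactly $U_{v_0}$ (this is how the extension in \thref{balloon prop} is built, starting from $st(v_0)$, where $U_{v_0}^{(v_0)} = \varphi_{v_0,v_0}^{-1}(U_{v_0}) = U_{v_0}$). Since $V(\xi)$ is finite this is a finite collection of $\xi$--families, so I may set $\VV = \{V_v\}_{v\in V(\xi)}$ with $V_v := \bigcap_{v_0\in V(\xi)} U_v^{(v_0)}$. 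By the observation above $\VV$ is a $\xi$--family, and taking the index $v_0=v$ in the intersection gives $V_v \subseteq U_v^{(v)} = U_v$ for each $v$, which is exactly the claim.

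There is essentially no obstacle here: all the real work is done inside \thref{balloon prop}, which propagates a single neighborhood across the convex finite subcomplex $D(\xi)$, and the present statement merely adds the observation that $\xi$--families are closed under finite intersection. The one point requiring a little care is that \thref{balloon prop} takes an \emph{open} neighborhood and returns a $\xi$--family restricting to it, so one should pass to interiors at the outset; nothing else is needed.
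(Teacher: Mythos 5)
Your proof is correct and follows essentially the same route as the paper: for each vertex $v_0 \in V(\xi)$, apply \thref{balloon prop} to extend $U_{v_0}$ to a $\xi$--family, then intersect the finitely many resulting families, noting that $\xi$--families are closed under finite intersection because preimages commute with intersections. The only cosmetic difference is that you explicitly pass to interiors to reconcile ``neighborhood'' in the hypothesis with ``open'' in \thref{xi family}, a point the paper leaves implicit.
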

\begin{proof}
    If $\VV, \WW$ are $\xi$--families, then the sets $\{U_v \cap W_v, v \in V(\xi)\}$ are also a $\xi$--family. Indeed, if $e = [v,v']$ is an edge of $D(\xi)$, then 
    \[ \varphi^{-1}_{v,e}(U_v \cap W_v) = \varphi^{-1}_{v,e}(U_v) \cap \varphi^{-1}_{v,e}(W_v) = \varphi^{-1}_{v',e}(U_{v'}) \cap \varphi^{-1}_{v',e}(W_{v'}) = \varphi^{-1}_{v',e}(U_{v'}\cap W_{v'}),\]
    where the middle equality follows from $\VV,\WW$ being $\xi$--families. For each $v \in V(\xi)$, we can use the previous+ proposition to get a $\xi$--family extending $U_v$, then take a finite intersection of these $\xi$--families as above to get the desired $\xi$--family.
\end{proof}

\begin{notation}
To lighten notation, we will drop the maps $\varphi_{\sigma, \sigma'}$. Whenever $\sigma \subset \sigma'$, we will view $\total{\sigma'}$ as a subset of $\total{\sigma}$. Because $\varphi_{\sigma,\sigma''} = \varphi_{\sigma,\sigma'}\varphi_{\sigma',\sigma''}$ whenever $\sigma \subset \sigma' \subset \sigma''$, this is unambiguous. We will also identify an element of $\partial_{Stab}G$ with its representatives in each $\total{\sigma}$ for $\sigma \subset D(\xi)$. With this convention, \thref{xi family} can be restated as the following: If $\xi \in \partial_{Stab}G$, then a $\xi$--family is a choice of open neighborhoods $U_v \subset \total{v}$ of $\xi$ for each vertex $v \in V(\xi)$ so that for every edge $[v,v']$ of $D(\xi)$ and $x \in \total{[v,v']}$, we have 
\[x \in U_v \Longleftrightarrow x \in U_{v'}.\]
Equivalently, $U_v \cap \total{[v,v']} = U_{v'} \cap \total{[v,v']}$. 
\end{notation}

\begin{definition}\thlabel{defn:nesting of sets}
    Let $\xi \in \partial_{Stab}G$, $v$ a vertex of $D(\xi)$, and $U$ a neighborhood of $\xi$ in $\total{v}$. We say that a subneighborhood $V\subset U$ is \emph{nested} in $U$ if its closure is contained in $U$ and for every simplex $\sigma$ of $st(v)$ not contained in $D(\xi)$ we have

    \[\total{\sigma} \cap V \neq \varnothing \Longrightarrow \total{\sigma} \subset U.\]
\end{definition}

\begin{lemma}\label{Convergence Property remark}
    Let $\sigma, (\sigma_n)_n$ be simplices of $X$ so that $\sigma \subset \sigma_n$ for each $n$ and each $\sigma_n$ lies over the same simplex of $Y$. Then there is a subsequence, still denoted $(\sigma_n)_n$, and a point $\xi \in \partial G_\sigma$ so that $\total{\sigma_n}$ converges uniformly to $\xi$ in $\total{\sigma}$. 
\end{lemma}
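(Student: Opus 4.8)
The plan is to reduce the statement to the Convergence Property \ref{Convergence Property}, applied inside the cusped space $C_\sigma$ of $G_\sigma$ after pushing everything down along the retraction $r_\sigma$. First I would record how the $\sigma_n$ sit relative to $\sigma$. Since the $\sigma_n$ all lie over a common simplex of $Y$, fix one of them, say $\sigma_0$, and write $\sigma_n = g_n\sigma_0$ with $g_n\in G$. Both $\sigma$ and $g_n^{-1}\sigma$ are faces of $\sigma_0$ lying over the same simplex of $Y$; since the quotient $X\to Y$ is simplicial (\thref{stabilizer convention}), a simplex of $X$ has a unique face over any given simplex of $Y$, so $g_n^{-1}\sigma = \sigma$, i.e.\ $g_n\in G_\sigma$. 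Passing to a subsequence we may assume the left cosets $g_nG_{\sigma_0}$ of $G_\sigma$, equivalently the simplices $\sigma_n=g_n\sigma_0$, are pairwise distinct. Using the square diagram of \thref{Complex of spaces compatible with G action} together with $g_n\sigma=\sigma$, one gets $\varphi_{\sigma,\sigma_n}=g_n\circ\varphi_{\sigma,\sigma_0}\circ g_n^{-1}$; viewing $\total{\sigma_n}$ and $\total{\sigma_0}$ as subsets of $\total{\sigma}$ via the $\varphi$'s, this says $\total{\sigma_n}=g_n\total{\sigma_0}$, and the extended maps of \thref{our spaces exist}(2) identify $\partial G_{\sigma_n}$ with $\Lambda_\sigma G_{\sigma_n}=g_n\Lambda_\sigma G_{\sigma_0}$.

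Next I would pass to cusped spaces. By \thref{our spaces exist}(1), $r_\sigma\colon X_\sigma\to C_\sigma$ is a $G_\sigma$--equivariant quasi-isometric retraction inducing a homeomorphism $\partial X_\sigma\cong\partial C_\sigma=\partial G_\sigma$; by \thref{our spaces exist}(2), $r_\sigma(X_{\sigma_0})$ lies in a $D$--neighbourhood (for some fixed $D$) of a translate $Q$ of the image of a Lipschitz map as in \thref{lipschitz}, and $Q$ is coarsely stabilised by a conjugate $H$ of $G_{\sigma_0}$, which is again full RQC in $G_\sigma$. Applying $g_n\in G_\sigma$ and using equivariance of $r_\sigma$, the set $r_\sigma(X_{\sigma_n})=g_nr_\sigma(X_{\sigma_0})$ lies in the $D$--neighbourhood of $g_nQ$ with the \emph{same} constant $D$, while $r_\sigma$ carries $\Lambda_\sigma G_{\sigma_n}$ onto $g_n\Lambda_\sigma G_{\sigma_0}$. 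The cosets $g_n\stab{Q}$ are pairwise distinct because the $\sigma_n$ are, so the Convergence Property \ref{Convergence Property} (with $G_\sigma$, $H$, and the cusped space $C_\sigma$) gives, after a further subsequence, a point $\xi'\in\partial C_\sigma$ with $g_n\big(Q\sqcup\Lambda_\sigma G_{\sigma_0}\big)\longrightarrow\xi'$ uniformly in $\overline{C_\sigma}$.

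Finally I would transport this back through $r_\sigma$. Uniform convergence of a sequence of subsets of a proper hyperbolic space to a boundary point is unaffected by replacing the sets by sets at bounded Hausdorff distance, and is preserved by a quasi-isometry composed with the induced boundary homeomorphism. Hence $r_\sigma(\total{\sigma_n})$ — which is the $D$--neighbourhood datum above on the $C_\sigma$--part and exactly $g_n\Lambda_\sigma G_{\sigma_0}$ on the boundary part — converges uniformly to $\xi'$, and therefore $\total{\sigma_n}$ converges uniformly in $\total{\sigma}$ to the point $\xi := (\partial r_\sigma)^{-1}(\xi')\in\partial G_\sigma$, as desired.

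The step I expect to be the main obstacle is not conceptual but bookkeeping: making the identification $\varphi_{\sigma,\sigma_n}(\total{\sigma_n})=g_n\total{\sigma_0}$ precise and, especially, justifying the ``same constant $D$'' assertion — this is exactly where $G_\sigma$--equivariance of $r_\sigma$ is used, so that the neighbourhood constant from \thref{our spaces exist}(2) does not drift with $n$ — together with keeping honest track of the conjugate $H$ of $G_{\sigma_0}$ and which cosets are being translated when invoking \thref{Convergence Property}, and checking that the uniform-convergence notion used there pulls back across the quasi-isometry $r_\sigma$ to uniform convergence in the compact space $\total{\sigma}$.
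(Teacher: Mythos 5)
Your proof is correct and takes the same route as the paper: reduce to the cusped space $C_\sigma$ via the retraction $r_\sigma$ of \thref{our spaces exist}, recognize $r_\sigma(\total{\sigma_n})$ as a sequence of coset translates of a fixed full RQC image, invoke the Convergence Property \ref{Convergence Property}, and pull the resulting uniform convergence back through the bounded-distance retraction $r_\sigma$. The paper's version of this argument is terse (it writes ``$r_\sigma(\total{\sigma'})$ \emph{is} the image of a Lipschitz map'' where \thref{our spaces exist}(2) only gives a bounded neighborhood of a translate, and it silently uses $G_\sigma$--equivariance of $r_\sigma$ and passes to an injective subsequence of $(\sigma_n)_n$); you make these points explicit, including the identity $\varphi_{\sigma,\sigma_n}=g_n\circ\varphi_{\sigma,\sigma_0}\circ g_n^{-1}$ and the observation that $g_n\in G_\sigma$ because the simplicial quotient $X\to Y$ forces $g_n^{-1}\sigma=\sigma$. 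One small bookkeeping note: when you invoke Convergence Property with the conjugate $H=\mathrm{Stab}(Q)$, it is cleaner to apply it with $H=G_{\sigma_0}$ itself and the shifted cosets $(g_nh)G_{\sigma_0}$ where $Q=hY$, since these are distinct exactly when the $\sigma_n$ are; the translate $h$ necessarily stabilizes $\Lambda_\sigma G_{\sigma_0}$ (the boundary of a bounded neighborhood is unchanged, and \thref{our spaces exist}(2) pins down the boundary of $r_\sigma(\total{\sigma_0})$), so $g_nh\Lambda G_{\sigma_0}=g_n\Lambda_\sigma G_{\sigma_0}$ and the conclusion is the one you want.
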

\begin{proof}
    This is the Convergence Property \ref{Convergence Property} combined with the $r_\sigma$ of \thref{our spaces exist}. The $\sigma_n$ all correspond to cosets of the same subgroup of $G_\sigma$ because they all lie over the same simplex of $Y$, so we can write $\sigma_n = g_n\sigma'$ for some simplex $\sigma'\subset st(\sigma)$. Then $r_\sigma(\total{\sigma'})$ is the image of Lipschitz map between cusped spaces as in \thref{lipschitz}, and $r_\sigma(\total{\sigma_n}) = g_nr_\sigma(\total{\sigma'})$. Applying \thref{Convergence Property remark}, we receive a subsequence and a point $\xi \in \partial G_\sigma$ so that $g_nr_\sigma(\total{\sigma'}) \longrightarrow \xi$ uniformly. Since $r_\sigma$ moves points a finite distance in $\internal{\sigma}$, it follows that $g_nX_{\sigma_n} \longrightarrow \xi$ in $\total{\sigma}$.
\end{proof}

The following lemmas will not be used until Section \ref{section:Dynamics}, but we include them here because of their similarity to the previous lemma. 

\begin{lemma}\thlabel{nosubsequence3 remark}
    Suppose $\sigma \subset \sigma_1 \cap \sigma_2$ are simplices of $X$ and $(a_n)_n$ is a sequence in $G_\sigma$. Then there is a sequence $(k_n)_n$ in $G_{\sigma_1}$ and a subsequence of $(a_n)_n$ so that the sets $k_na_n\total{\sigma_2}$ are either constant or converge to a point in $\partial G_\sigma \setminus \partial G_{\sigma_1}$.
\end{lemma}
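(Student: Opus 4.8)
The plan is to deduce this from its cusped-space analogue \thref{NoSubsequence3}, transported through the quasi-isometry $r_\sigma$ of \thref{our spaces exist}, just as \thref{Convergence Property remark} was deduced from the Convergence Property \thref{Convergence Property}. First I would record the inputs. Since $\sigma$ is a common face of $\sigma_1$ and $\sigma_2$, each inclusion $G_{\sigma_i}\hookrightarrow G_\sigma$ factors as a composition of inclusions $G_{\tau'}\hookrightarrow G_\tau$ with $\tau\subset\tau'$, every one of which is full RQC by hypothesis; since the full RQC condition is closed under composition (this is the content of the inductive step of \thref{H along xi path is infinite}, and follows from the composition of RQC subgroups in the remark after \thref{RQC means induced structure is the same} together with the definition of fullness), both $G_{\sigma_1}$ and $G_{\sigma_2}$ are full RQC in $G_\sigma$. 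Let $\mathbb{O}_\sigma$ be the induced peripheral structure on $G_\sigma$ and $C_\sigma$ its cusped space as in \thref{our spaces exist}, so that the retraction $r_\sigma\colon\total{\sigma}\to\overline{C_\sigma}$ is a $G_\sigma$--equivariant quasi-isometry restricting to the identity on $\partial X_\sigma=\partial G_\sigma=\partial C_\sigma$ and moving points a bounded distance. By part (2) of \thref{our spaces exist}, $r_\sigma$ carries $\total{\sigma_i}$ to within bounded Hausdorff distance of $\overline{Q_i}=Q_i\sqcup\Lambda Q_i$, where $Q_i\subset C_\sigma$ is the image of a Lipschitz map between cusped spaces for $G_{\sigma_i}$ and $G_\sigma$ as in \thref{lipschitz} — by the choices of part (3) of \thref{kg1 remark} used to build these spaces the translation allowed there may be taken trivial — and under $r_\sigma$ the set $\Lambda Q_i$ corresponds to $\Lambda_\sigma G_{\sigma_i}=\varphi_{\sigma,\sigma_i}(\partial G_{\sigma_i})$, which we identify with $\partial G_{\sigma_i}\subset\partial G_\sigma$.

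Then I would apply \thref{NoSubsequence3} to the full RQC subgroups $G_{\sigma_1},G_{\sigma_2}$ of $(G_\sigma,\mathbb{O}_\sigma)$, with cusped space $C_\sigma$, images $Q_1,Q_2$, and the sequence $(a_n)_n$; this returns a sequence $(k_n)_n$ in $G_{\sigma_1}$ and a subsequence of $(a_n)_n$, still denoted $(a_n)_n$, such that the sets $k_na_nQ_2$ are either constant or converge uniformly to a point $\eta\in\partial C_\sigma\setminus\Lambda Q_1$. Since $r_\sigma$ is equivariant and moves points a bounded distance, and each $k_na_n$ acts as an isometry of $C_\sigma$ and of $X_\sigma$, the set $r_\sigma(k_na_n\total{\sigma_2})=k_na_n\,r_\sigma(\total{\sigma_2})$ stays at bounded Hausdorff distance from $k_na_n\overline{Q_2}$, the bound being independent of $n$. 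Hence, in $\overline{C_\sigma}$ (equivalently, via $r_\sigma$, in $\total{\sigma}$), the sets $k_na_n\total{\sigma_2}$ converge uniformly to a point precisely when the $k_na_n\overline{Q_2}$ do, and to the corresponding point. In the convergent case this point is the one corresponding to $\eta$, which lies in $\partial G_\sigma\setminus\partial G_{\sigma_1}$ because $\eta\notin\Lambda Q_1$ and $\Lambda Q_1$ corresponds to $\partial G_{\sigma_1}$. In the constant case, distinct $G_{\sigma_2}$--cosets give disjoint, hence distinct, translates of $Q_2$, so the cosets $k_na_nG_{\sigma_2}$ are eventually constant; since simplex stabilizers fix pointwise (\thref{stabilizer convention}), $k_na_n\sigma_2$ is then eventually constant, whence $k_na_n\total{\sigma_2}=\total{k_na_n\sigma_2}$ is eventually constant, which is the first alternative.

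The one step that needs care — and which I expect to be the only real friction — is matching the ``bounded neighborhood of some translate'' in part (2) of \thref{our spaces exist} with the precise hypotheses of \thref{NoSubsequence3}, which are stated for the \thref{lipschitz} images themselves. If one does not want to argue that the translation is trivial, one instead feeds \thref{NoSubsequence3} a suitably right-translated sequence and recovers the constant alternative by an escaping-geodesic argument: after a further subsequence the simplices $k_na_n\sigma_2$ are either constant or pairwise distinct, and pairwise distinctness, via properness of $X_\sigma$, would force $r_\sigma(k_na_n\total{\sigma_2})$ out of every compact subset of $C_\sigma$, contradicting its staying at bounded distance from a constant set. The limit-set conclusion $\eta\notin\Lambda Q_1$ transfers verbatim because $r_\sigma$ restricts to the identity on the common boundary $\partial G_\sigma$. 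With these adjustments the argument is a direct translation of \thref{NoSubsequence3} through $r_\sigma$, entirely parallel to the proof of \thref{Convergence Property remark}.
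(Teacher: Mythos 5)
Your proposal is correct and follows exactly the paper's route: the paper's own proof is a single sentence saying the lemma is \thref{NoSubsequence3} rephrased via the retractions $r_\sigma$ of \thref{our spaces exist}, in direct analogy with \thref{Convergence Property remark}, and your write-up just makes that explicit. The translation/bounded-neighborhood subtlety you flag at the end is the only genuine friction in the transfer, and you resolve it correctly (and note, as an aside, that both the full RQC hypothesis on $G_{\sigma_i} < G_\sigma$ and the equivariance of $r_\sigma$ are already directly available, so the composition argument in your first paragraph is not really needed).
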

\begin{proof}
    As in the proof the Convergence Property \ref{Convergence Property remark}, this is simply \thref{NoSubsequence3} rephrased in our context using the maps $r_\sigma$ of \thref{our spaces exist}.
\end{proof}

\begin{lemma}\thlabel{Parabolics are almost cocompact remark}
    Let $v$ be a vertex of $X$, and let $P$ be a finite index subgroup of a maximal parabolic subgroup of $G_v$ fixing a point $\xi \in \partial G_v$. Then there exists a compact subset $K \subset \total{\sigma} \setminus \{\xi\} $ so that
    \begin{enumerate}
        \item $P\big(\partial G_v \cap K) = \partial G_v \setminus \{\xi\}$, that is, $\partial G_{v} \cap K$ is a course fundamental domain for $P$ acting on $\partial G_v \setminus \{\xi\}$, and 
        \item for any simplex $\sigma \subset st(v)$ with $G_\sigma$ finite, there is some $p \in P$ with $p\total{\sigma} \cap K = \varnothing$. 
    \end{enumerate}
\end{lemma}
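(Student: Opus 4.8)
The plan is to take $K$ to be essentially a compact coarse fundamental domain for $P$ acting on $\partial G_v\setminus\{\xi\}$, verify (1) directly, and then observe that (2) is automatic — trivially if $K$ is chosen inside $\partial G_v$, and by the parabolic dynamics of $P$ on $\total v:=\overline{X_v}$ for any larger compact choice of $K$. (I read $\total v$ for the evident typo $\total\sigma$ in the statement, in line with the role of $\overline C$ in \thref{Parabolics are almost cocompact}.)

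\emph{Construction of $K$ and condition (1).} Because $G_v$ is relatively hyperbolic and $\xi$ is fixed by a maximal parabolic subgroup $Q\le G_v$ with $P\le Q$ of finite index, $\xi$ is a bounded parabolic point and $Q=\mathrm{Stab}_{G_v}(\xi)$ acts properly discontinuously and cocompactly on $\partial G_v\setminus\{\xi\}$; since $[Q:P]<\infty$, so does $P$. Hence there is a compact $K_1\subseteq\partial G_v\setminus\{\xi\}$ with $PK_1=\partial G_v\setminus\{\xi\}$. Set $K:=K_1$, a compact subset of $\total v\setminus\{\xi\}$; then $\partial G_v\cap K=K_1$ and (1) holds. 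One may freely enlarge $K_1$ by any compact subset of $X_v$ without affecting the argument below.

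\emph{Condition (2).} Fix $\sigma\subseteq st(v)$ with $G_\sigma$ finite. The cusped space of a finite group is bounded and $\partial G_\sigma=\varnothing$, so $\total\sigma=\overline{X_\sigma}=X_\sigma$, and by \thref{our spaces exist}(2) the inclusion $\varphi_{v,\sigma}$ is a quasi-isometric embedding with closed image carrying $X_\sigma$ into $X_v$; hence, after the standard identification, $\total\sigma$ is a bounded closed subset of $X_v\subseteq\total v$, in particular a compact set with $\xi\notin\total\sigma$. If $K\subseteq\partial G_v$ then already $\total\sigma\cap K=\varnothing$ and $p=1$ works. For the general case, note $P$ is infinite (finite index in the infinite group $Q$), pick distinct $p_n\in P$, and use that $G_v$ acts properly discontinuously by isometries on the proper $\delta$-hyperbolic space $X_v$, so it acts on the compact metrizable $\total v$ as a convergence group: after a subsequence $(p_n,\xi_+,\xi_-)$ is an attracting--repelling triple with $\xi_\pm\in\partial G_v$. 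As $\xi_+$ is the attracting point of a convergence sequence in $P$, it lies in $\Lambda_v P$, which is a single point (parabolic subgroups have one-point limit sets, as noted after \thref{RQC means induced structure is the same}), namely the fixed point $\xi$; the same applied to $(p_n^{-1})_n$ gives $\xi_-=\xi$. So $p_n\total\sigma\to\xi$ uniformly, and since $\total v\setminus K$ is an open neighbourhood of $\xi$ (here $K$ is compact with $\xi\notin K$), $p_n\total\sigma\cap K=\varnothing$ for all large $n$; take $p:=p_n$ for such an $n$.

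\emph{Main obstacle.} The whole argument is short; the two points that actually require the hypotheses are (a) that $\total\sigma$ is a bounded, and hence compact and $\xi$-avoiding, subset of $X_v$ — precisely where finiteness of $G_\sigma$ and the quasi-isometric-embedding clause of \thref{our spaces exist}(2) enter — and (b) that an escaping sequence in the parabolic group $P$ has attracting and repelling points both equal to $\xi$ on all of $\total v$, which combines the extension of the convergence dynamics of $G_v$ over the cusped-space compactification with the one-point limit set of a parabolic. Unlike \thref{Parabolics are almost cocompact}(2), which needs the auxiliary compact set $\overline B$ to \emph{catch} finite cosets inside $K$, here we only \emph{expel} the finite pieces from $K$, and this is exactly the direction the parabolic dynamics handles for free.
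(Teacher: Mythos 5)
You fixed the right typo ($\total{\sigma}$ should indeed be $\total{v}$ in the first line) but resolved the wrong one in condition (2): the ``$=\varnothing$'' there is a misprint for ``$\neq\varnothing$'', matching \thref{Parabolics are almost cocompact}(2), not a deliberate reversal of direction. You can see this from how the lemma is used: in the proof of \thref{Parabolic Points}, specifically \thref{K is almost fund domain}, when the exit simplex $\sigma_y$ has finite stabilizer one needs some $p\in P$ with $p\total{\sigma_y}\cap K_v\neq\varnothing$, so that (since $K_v\cap U_v=\varnothing$) $p\total{\sigma_y}\nsubseteq U_v$ and hence $px\notin\tCone_{\UU,\frac{1}{2}}(\xi)$. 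The ``expel'' version you proved gives no leverage there. Your own observation that, for the literal statement, one may take $K\subset\partial G_v$ and $p=1$ should have been a warning sign: a clause that is satisfied trivially would not be worth stating, whereas the intended clause carries exactly the content of the auxiliary set $\overline{B}$ in \thref{Parabolics are almost cocompact} --- the content your argument explicitly discards in the final paragraph.

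The paper's proof is the transfer you anticipated but in the opposite direction: apply \thref{Parabolics are almost cocompact} in the cusped space $C_v$, taking $\mathbb{F}$ to be a set of representatives of the finitely many $G_v$--orbits of simplices of $st(v)$ with finite stabilizer (so each such $\total{\sigma}$ lies within bounded distance of a coset $gF_i$, via the retraction $r_v$ of \thref{our spaces exist}), and then pull the resulting compact set, suitably thickened, back to $\total{v}$; compactness and $\xi\notin\overline{B}$ are exactly what make a single $P$--translate of every such $\total{\sigma}$ land in a fixed compact set avoiding $\xi$. The correct ingredients in your write-up (cocompactness of $P$ on $\partial G_v\setminus\{\xi\}$ for (1); boundedness of $\total{\sigma}$ when $G_\sigma$ is finite; the convergence dynamics showing $p_n\total{\sigma}\to\xi$ for escaping $(p_n)_n$ in $P$) do not substitute for this: knowing translates accumulate only at $\xi$ says nothing about whether some translate can be made to \emph{meet} a prescribed compact set, which is the statement actually needed.
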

\begin{proof}
    As with \thref{nosubsequence3 remark}, this follows from \thref{Parabolics are almost cocompact} and the maps $r_\sigma$ of \thref{our spaces exist}. For $\mathbb{F}$ we choose a set of representatives of the $G_v$ orbits of simplices in $st(v)$ with finite stabilizers. 
\end{proof}

\begin{lemma}(Nesting)\thlabel{Nesting}
    Let $\xi \in \partial_{Stab}G$, $v \in V(\xi)$, and $U$ a neighborhood of $\xi$ in $\total{v}$. Then there exists a subneighborhood $\xi \in V \subset U$ so that $V$ is nested in $U$.
\end{lemma}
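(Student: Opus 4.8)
The plan is to obtain $V$ by deleting from $U$ a closed subset of $\total v$ that keeps a positive distance from $\xi$. First I would translate the hypothesis into the geometry of $\total v$: since $v\in V(\xi)$, one checks using \thref{domain} together with the injectivity of $\pi_\sigma$ on $\partial G_\sigma$ from \thref{boundaries embed} that, for a simplex $\sigma\in st(v)$, the condition $\sigma\subset D(\xi)$ is equivalent to $\xi\in\total\sigma$ once $\total\sigma$ is viewed inside $\total v$ via $\varphi_{v,\sigma}$. Thus ``$\sigma\not\subset D(\xi)$'' means precisely ``$\xi\notin\total\sigma$''. Now set $\mathcal B=\{\sigma\in st(v):\sigma\not\subset D(\xi)\text{ and }\total\sigma\not\subset U\}$ and let $B$ be the closure in $\total v$ of $\bigcup_{\sigma\in\mathcal B}\total\sigma$. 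The whole lemma reduces to the claim that $\xi\notin B$.

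Granting the claim, the finish is routine. The set $B\cup(\total v\setminus U)$ is closed (a union of two closed sets; here $\total v$ is compact metrizable by \thref{our spaces exist}) and avoids $\xi$, since $\xi\notin B$ and $\xi\in U$. By normality of $\total v$ pick an open $V$ with $\xi\in V$ and $\overline V\subset\total v\setminus\bigl(B\cup(\total v\setminus U)\bigr)=U\setminus B$. Then $\overline V\subset U$, giving the first requirement of nestedness. For the second, let $\sigma\in st(v)$ with $\sigma\not\subset D(\xi)$ and $\total\sigma\cap V\neq\varnothing$; if $\sigma\in\mathcal B$ then $\total\sigma\subset B$, contradicting $V\cap B=\varnothing$, so $\sigma\notin\mathcal B$, and as $\sigma\not\subset D(\xi)$ this forces $\total\sigma\subset U$. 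Hence $V$ is nested in $U$.

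It remains to prove the claim, which is the one genuinely non-formal step. Suppose $\xi\in B$, so there are $z_n\to\xi$ with $z_n\in\total{\sigma_n}$ and $\sigma_n\in\mathcal B$. If the $\sigma_n$ take only finitely many values, pass to a subsequence with $\sigma_n\equiv\sigma$ constant; then $\total\sigma$ is closed in $\total v$, so $z_n\to\xi$ gives $\xi\in\total\sigma$, contradicting $\sigma\not\subset D(\xi)$. Otherwise the $\sigma_n$ take infinitely many values, and since $Y=X/G$ is finite we may pass to a subsequence in which the $\sigma_n$ are distinct, all lie over one fixed simplex of $Y$, and all contain $v$. Applying Lemma~\ref{Convergence Property remark} with $v$ playing the role of the common face, a further subsequence satisfies $\total{\sigma_n}\to\eta$ uniformly in $\total v$ for some $\eta\in\partial G_v$. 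Since $z_n\in\total{\sigma_n}$ and $z_n\to\xi$, uniform convergence forces $\eta=\xi$; but then $\total{\sigma_n}$ is eventually contained in every neighborhood of $\xi$, in particular in $U$, contradicting $\total{\sigma_n}\not\subset U$. This proves the claim, hence the lemma. The point to watch is that $st(v)$ may contain infinitely many simplices — which is exactly why the convergence input, rather than a naive finite intersection of neighborhoods, is needed — and that all the $\sigma_n$ entering Lemma~\ref{Convergence Property remark} really do share the face $v$ and lie over a common simplex of $Y$.
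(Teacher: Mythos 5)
Your proof is correct and takes essentially the same approach as the paper: the key step in both is identical — apply the Convergence Property (Lemma~\ref{Convergence Property remark}) to an infinite family of bad simplices in $st(v)$ sharing the face $v$ and a common $G$-orbit, deduce uniform convergence to a boundary point, and argue that this point must equal $\xi$, contradicting $\total{\sigma_n}\not\subset U$. The only cosmetic difference is that you package the argument as ``$\xi$ is not in the closure of the union of bad simplices, then apply normality,'' whereas the paper runs it as a direct contradiction against a countable neighborhood basis; the geometric content is the same.
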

\begin{proof}
    Because $\total{v}$ is metrizable, we can choose a countable basis $(V_n)_n$ of neighborhoods for $\xi \in \total{v}$. For a contradiction, suppose that no $V_n$ is nested in $U$, that is, for every $n$, there is a simplex $\sigma_n \subset st(v)\setminus D(\xi)$ so that $\total{\sigma} \cap V_n \neq \varnothing$ but also $\total{\sigma} \nsubseteq U$. If there were finitely many $\sigma_n$, after a subsequence we could assume the $\sigma_n = \sigma$ are constant. But then $\xi \notin \total{\sigma}$ and $\total{\sigma}$ is a closed subset of $\total{v}$, so we could choose $V_n$ small enough so that $\total{\sigma} \cap V_n = \varnothing$ which is a contradiction. Thus there must be infinitely many $\sigma_n$. After a subsequence, we can assume the sequence $(\sigma_n)_n$ is injective.  

    Because there are finitely many $G$ orbits of simplices, we can pass to a subsequence so all the $\sigma_n$ lie in the same $G_v$ orbit. The Convergence Property \ref{Convergence Property} then implies the $\total{{\sigma_n}}$ converge uniformly to a point of $\partial G_v$, say $\xi'$. If $\xi' \neq \xi$, then we could choose disjoint neighborhoods $V',V_n$ separating $\xi'$ and $\xi$ and take $n$ large enough so that $\total{{\sigma_n}} \subset V'$. But this contradicts that $\total{{\sigma_n}} \cap V_n \neq \varnothing$, so $\xi' = \xi$. But then we could choose $n$ large enough so that $\total{{\sigma_n}} \subset U$, contradicting the choice of $\sigma_n$. Thus no such sequence can exist.
\end{proof}

\begin{definition}\thlabel{defn:nested}
    Let $\xi \in \partial_{Stab}G$ and let $\UU,\UU'$ be two $\xi$--families. We say $\UU'$ is \emph{nested} in $\UU$ if for every vertex $v \in V(\xi)$, $U'_v$ is nested in $U_v$. Further, we say $\UU$ is \emph{$n$--nested} in $\UU$ if there exist $\xi$--families
    \[\UU' = \UU^0 \subset \UU^1 \subset \cdots \subset \UU^n = \UU\]
    so that $\UU^i$ is nested in $\UU^{i+1}$ for each $i = 0, \ldots n-1$.    
\end{definition}

\begin{lemma}\thlabel{nest as much as you want}
    Given $\xi \in \partial_{Stab}G$, a $\xi$-family $\UU$, and an integer $n > 0$, there exists a $\xi$--family $\UU'$ which is $n$--nested in $\UU$.
\end{lemma}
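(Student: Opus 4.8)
The plan is to induct on $n$. The case $n=1$ is the statement that every $\xi$--family admits a nested $\xi$--family inside it, so I will prove that first; the general case then follows by iterating. So fix $\xi \in \partial_{Stab}G$ and a $\xi$--family $\UU = \{U_v \mid v \in V(\xi)\}$, and the goal for the base case is to produce a $\xi$--family $\UU'$ nested in $\UU$ in the sense of \thref{defn:nested}, i.e.\ with each $U'_v$ nested in $U_v$ in the sense of \thref{defn:nesting of sets}.

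First I would apply the Nesting \thref{Nesting} at each vertex $v \in V(\xi)$ separately: for each $v$ choose a neighborhood $\xi \in V_v \subset U_v$ which is nested in $U_v$. The collection $\{V_v\}_{v \in V(\xi)}$ need not be a $\xi$--family, so the next step is to repair this. Using \thref{xi families exist}, there is a $\xi$--family $\UU'= \{U'_v\}$ with $U'_v \subset V_v$ for each $v \in V(\xi)$. The only thing left to check is that passing from $V_v$ to the smaller set $U'_v$ preserves nestedness in $U_v$. This is where I'd spend a sentence: nestedness in $U_v$ asks two things of $U'_v$ --- that its closure lie in $U_v$, and that for every simplex $\sigma$ of $st(v)$ not in $D(\xi)$, $\overline{X_\sigma} \cap U'_v \neq \varnothing$ implies $\overline{X_\sigma} \subset U_v$. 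Both are monotone in the downward direction: $\overline{U'_v} \subset \overline{V_v} \subset U_v$, and if $\overline{X_\sigma} \cap U'_v \neq \varnothing$ then a fortiori $\overline{X_\sigma} \cap V_v \neq \varnothing$, so $\overline{X_\sigma} \subset U_v$ because $V_v$ was nested in $U_v$. Hence $U'_v$ is nested in $U_v$ for every $v$, and $\UU'$ is nested in $\UU$. That completes the base case.

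For the inductive step, suppose the statement holds for $n$; given $\UU$, first produce (base case) a $\xi$--family $\UU^{n-1}$ nested in $\UU^n := \UU$, then apply the inductive hypothesis to $\UU^{n-1}$ to get a $\xi$--family $\UU^0$ which is $(n-1)$--nested in $\UU^{n-1}$, witnessed by a chain $\UU^0 \subset \UU^1 \subset \cdots \subset \UU^{n-1}$ with $\UU^i$ nested in $\UU^{i+1}$. Appending $\UU^{n-1} \subset \UU^n$ to this chain exhibits $\UU^0$ as $n$--nested in $\UU$, which is what we want.

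I don't anticipate a serious obstacle here: the two workhorses (Nesting at a single vertex, and the existence of $\xi$--families refining a given choice of vertex neighborhoods) are already in hand, and the only genuinely new observation is the monotonicity of the nestedness condition under shrinking, which is immediate from its definition. If anything needs care it is keeping the quantifiers straight --- nestedness of a $\xi$--family is a vertex-by-vertex condition, so it suffices to verify everything at each $v \in V(\xi)$ independently, and $D(\xi)$ has only finitely many simplices (\thref{Finiteness}) so no issue with infinite intersections arises.
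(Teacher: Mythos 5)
Your argument is correct and follows the same approach as the paper's: apply the Nesting Lemma vertex-by-vertex, repair with \thref{xi families exist}, and iterate. The paper states ``it follows that $\mathcal{W}$ is nested in $\UU$'' without spelling out the downward monotonicity of nestedness in \thref{defn:nesting of sets}, which you make explicit --- a useful clarification but not a different method.
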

\begin{proof}
    To get a $\xi$--family which is nested in a given $\UU$, apply the Nesting \thref{Nesting} to each $v \in V(\xi), U_v \subset \total{v}$ to get a family of sets $\{W_v', v \in V(\xi)\}$ so that each $W_v'$ is nested in $U_v$. Apply \thref{xi families exist} to $\{W_v', v \in V(\xi)\}$ to get a $\xi$--family $\mathcal{W} = \{W_v, v \in V(\xi)\}$, and it follows that $\mathcal{W}$ is nested in $\UU$. To get a $\xi$--family which is $n$--nested in $\UU$, repeat this process $n$ times. 
\end{proof}

\subsection{The Crossing Lemma and Refined Families}

\begin{notation}
    For a fixed basepoint $v_0 \in X$ and any point $x \in \overline{X}$, let $\gamma_x$ denote the unit speed parameterization of $[v_0,x]$. Recall that for any $x \in X$, $\sigma_x$ is the unique simplex containing $x$ in its interior. 
\end{notation}

\begin{definition}
    Let $\xi \in \partial_{Stab}G$ and $\varepsilon \in (0,1)$. We use $D^\varepsilon(\xi)$ for the open $\varepsilon$--neighborhood of $D(\xi)$ in $X$, $N(\xi) = N(D(\xi))$ for the open simplicial neighborhood of $D(\xi)$, and 
    \[Lk(\xi) := Lk(D(\xi)) = N(D(\xi)) \setminus D(\xi)\]
    for the simplicial link of $D(\xi)$. By Assumption \ref{scaling assumption}, $D^\e(\xi)\subset N(\xi)$. See Figure \ref{fig:link example} for an example.
\end{definition}

\begin{figure}\label{fig:link example}
    \centering
    \begin{tikzpicture}[
                dot/.style = {circle, fill, minimum size=#1,
              inner sep=0pt, outer sep=0pt}]  
            \draw[ultra thick] (-1,0) node[dot = 7pt] {} -- (1,0) node[dot = 7pt] {};

            \begin{scope}[shift={(-1,0)}]
            \foreach \a in {120,140,...,250}{
            \draw(0,0) -- (\a:2) node[dot = 4pt]{};}
            \end{scope}

            \begin{scope}[shift={(1,0)}]
            \foreach \a in {290,310,...,430}{
            \draw(0,0) -- (\a:2) node[dot = 4pt]{};}
            \end{scope}

            \draw[dashed] (1,-1) arc (-90:90:1);
            \draw[dashed] (-1,1) arc (90:270:1);
            \draw[dashed] (-1,-1) -- (1,-1);
            \draw[dashed] (-1,1) -- (1,1);

            \node[label = above:$D(\xi)$] at (0,0){};
            \node[label = above:$D^{\varepsilon}(\xi)$] at (0,1){};
    \end{tikzpicture}

    \caption{In this example, $D(\xi)$ is the single bold edge, $N(\xi)$ is the bold edge together with the interior of each of the thinner edges, and $Lk(\xi)$ is only the interior of these thinner edges. The neighborhood $D^\e(\xi)$ is everything contained in the dotted region.}
    \label{fig:enter-label}
\end{figure}

\begin{definition}
    We say a geodesic $\gamma$ in $X$ \emph{enters} $D^\varepsilon(\xi)$ if $\gamma(t_0) \in D^\varepsilon(\xi)$ for some $t_0$. We say $\gamma$ \emph{goes through} $D^\varepsilon(\xi)$ if $\gamma$ enters $D^\e(\xi)$ and there is some $t_1 > t_0$ so that for all $t \geq t_1$, $\gamma(t) \notin D^\varepsilon(\xi)$.
\end{definition}

\begin{definition}[Exit Simplex]
    Fix a basepoint $v_0 \in X$ and let $x \in \overline{X}$, $\xi \in \partial_{Stab}G$, and $\varepsilon \in (0,1)$. If $\gamma_x$ enters $D^\e(\xi)$, then the \emph{exit simplex} of $x$, denoted $\sigma_{\xi,\e}(x)$ is the last simplex met by $\gamma_x$ in $D^\e(\xi)$. More explicitly, if $x \in D^\e(\xi)$, then $\sigma_{\xi,\e}(x) = \sigma_x$, and if $x \notin D^\e(\xi)$, then $\sigma_{\xi,\e}(x)$ is the last simplex of $Lk(\xi)$ met by $\gamma_x$.
\end{definition}

\begin{definition}[Cones]\thlabel{defn:cones}
    Let $\xi \in \partial_{Stab}G$, let $\mathcal{U}$ be a $\xi$--family, and let $\varepsilon\in (0,1)$. Define $\tCone_{\mathcal{U},\varepsilon}(\xi)$ as the set of points $x \in \overline{X}\setminus D(\xi)$ so that 
    \begin{enumerate}
        \item the geodesic $\gamma_x$ from the base point $v_0$ to $x$ enters $D^\e(\xi)$,
        \item for every vertex $v \in \sigma_{\xi,\e}(x) \cap V(\xi)$, $\total{\sigma_{\xi,\e}(x)} \subset U_v$ in $\total{v}$.
    \end{enumerate}
    Further, let $\Cone_{\UU,\e}(\xi)$ be the points of $\tCone_{\xi,\e}(\xi)$ so that $\gamma_x$ \emph{goes through} $D^\e(\xi)$. Equivalently, $\Cone_{\UU,\e}(\xi) = \{x \in \tCone_{\UU,\e}(\xi)  \; | \; d(x,D(\xi)) \geq \e \}$.
\end{definition}

\begin{definition}\thlabel{defn:N_UU}
    For $\xi \in \partial_{Stab}G$ and a $\xi$--family $\UU$, let $N_\UU(\xi)$ be the union of simplices $\sigma \subset Lk(\xi)$ such that for some vertex $v$ of $\sigma \cap D(\xi)$, we have $\total{\sigma} \cap U_v \neq \varnothing$ in $\total{v}$.
\end{definition}

\begin{figure}
        \centering
        \begin{tikzpicture}[
                dot/.style = {circle, fill, minimum size=#1,
              inner sep=0pt, outer sep=0pt}]  
        \filldraw[fill=lightgray, draw=black] (1,1) -- (-1,1) -- (-1,-1) -- (1,-1) -- cycle;
        \draw[dashed] (2,2) -- (-2,2) -- (-2,-2) -- (-2,-2) -- (2,-2) -- cycle;
        \draw (3,1) -- (1,1) -- (1,-1) -- (3,-1) -- cycle;
        
        \node[dot = 3pt, label =below:$v_0$] at (-4,0) {};
        \node[dot = 3pt, label =below:$x$] at (4,0) {};
        \node[dot = 3pt, label =south east:$v_1$] at (1,-1) {};
        \node[dot = 3pt, label =south east:$v_2$] at (1,1) {};
        \node[dot = 3pt] at (2,0) {};
        \node[dot = 3pt] at (0,1.75) {};
        \node[dot = 3pt] at (2.5,1.6) {};
        \node[dot = 3pt] at (-2.75,1.75) {};
        
        \node at (0,-.5) {$D(\xi)$};
        \node at (2,-.5) {$\sigma_{\xi,\e}(x)$};

        \draw (-4,0) -- (-2.75,1.75);
        \draw (-4,0) -- (0,1.75);
        \draw (-4,0) -- (2.5,1.6);
        \draw (-4,0) -- (4,0);
        
        \end{tikzpicture}
        \caption{This figure shows various ways geodesics from a basepoint $v_0$ can interact with $D^\e(\xi)$ for some $\xi \in \partial_{Stab}G$. For example they can not meet it at all, they can enter $D^\e(\xi)$ but avoid $D(\xi)$, meet $D(\xi)$ and go through, etc. For the point $x$, we mark where $\gamma_x$ exits $D^\e(\xi)$, and the simplex containing this point is $\sigma_{\xi,\e}(x)$.}
        \label{fig:Cones}
\end{figure}

\begin{proposition}[Consistency]\thlabel{Consistency}
    Suppose $\xi \in \partial_{Stab}G$, $\UU$ is a $\xi$--family, $\sigma \subset Lk(\xi)$ is a simplex. If there is some vertex $v \in \sigma \cap V(\xi)$ so that $\total{\sigma} \subset U_v$, then for every vertex $w \in \sigma \cap V(\xi)$, we have $\total{\sigma} \subset U_w$.  
\end{proposition}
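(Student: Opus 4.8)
The plan is to reduce the statement to the defining property of a $\xi$--family applied to a single edge. The crucial geometric input is the following: \emph{if $v$ and $w$ are two distinct vertices of $\sigma$ both lying in $V(\xi)$, then the edge $e=[v,w]$ of $\sigma$ is an edge of $D(\xi)$.} To see this, note that $e$ is a face of the simplex $\sigma$, hence it is the geodesic in $X$ joining $v$ to $w$ (closed simplices of a $\mathrm{CAT}(0)$ $M_\kappa$--complex are convex, and geodesics in $\mathrm{CAT}(0)$ spaces are unique). Since $v,w\in D(\xi)$ and $D(\xi)$ is convex by the proposition above, the segment $e$ lies in $D(\xi)$; as $D(\xi)$ is a subcomplex, containing the open edge forces it to contain the closed edge, so $e$ is an edge of $D(\xi)$. (Alternatively, one can avoid convexity of simplices: since $v,w\in V(\xi)$ there is a $\xi$--path from $v$ to $w$, so by \thref{H along xi path is infinite} the subgroup $H$ obtained by intersecting the vertex groups along it is infinite; $H$ fixes $v$ and $w$, hence fixes $e$ pointwise because $X$ is simplicial and an isometry of a segment fixing both endpoints is trivial; then \thref{Stabilizer of a subcomplex limit set} applied with $K=e$ gives $e\subseteq D(\xi)$.)

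Granting this, one fixes a vertex $w\in\sigma\cap V(\xi)$; we may assume $w\neq v$, the case $w=v$ being the hypothesis. Let $e=[v,w]$, an edge of $D(\xi)$ by the previous step. Because $e$ is a face of $\sigma$ and both $v,w$ are vertices of $e$, the identification convention gives $\total{\sigma}\subseteq\total{e}\subseteq\total{v}$ and $\total{\sigma}\subseteq\total{e}\subseteq\total{w}$. Since $\UU$ is a $\xi$--family and $e$ is an edge of $D(\xi)$, its defining condition reads $U_v\cap\total{e}=U_w\cap\total{e}$. Combining this with the hypothesis $\total{\sigma}\subseteq U_v$ and with $\total{\sigma}\subseteq\total{e}$ yields
\[
  \total{\sigma}=\total{\sigma}\cap\total{e}\subseteq U_v\cap\total{e}=U_w\cap\total{e}\subseteq U_w,
\]
which is the claim for $w$. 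As $w$ was an arbitrary vertex of $\sigma\cap V(\xi)$, this completes the argument.

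The only step that is not purely formal is the passage from ``$v,w$ are vertices of a common simplex lying in $D(\xi)$'' to ``the edge between them lies in $D(\xi)$''; everything afterward is just unwinding the definition of a $\xi$--family. Given the convexity of domains established earlier (or the alternative $\xi$--path/limit-set argument), I expect this to be routine, so I do not anticipate a genuine obstacle.
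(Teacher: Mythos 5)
Your proof is correct and relies on the same two ingredients as the paper's: convexity of $D(\xi)$, to put the edge $e=[v,w]$ inside $D(\xi)$, and the defining condition of a $\xi$--family applied to that edge, to transfer $\total{\sigma}\subset U_v$ to $\total{\sigma}\subset U_w$. The paper packages this as an induction over a path $v=v_0,v_1,\ldots,v_n=w$ in the $1$--skeleton of $\sigma\cap D(\xi)$, moving one edge at a time; you observe that since any two vertices of a simplex already span an edge, one can always take $n=1$, so the induction collapses to a single application of the $\xi$--family condition. This is a modest but genuine streamlining of the same argument, and both versions are correct.
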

\begin{proof}
    Suppose $v \in \sigma \cap V(\xi)$ is as in the assumption and $w \in \sigma \cap V(\xi)$ is some other vertex. Because closed simplices are convex, the geodesic $[v,w]$ is contained in $\overline{\sigma}$, hence there is some path in the 1--skeleton of $\sigma \cap D(\xi)$ from $v$ to $w$, say $v= v_0, v_1 ,\ldots ,v_n = w$. We induct on $n$.
    
    If $n =1$, $v$ and $w$ are connected by an edge $e = [v,w]$. In $\total{v}$, the assumption is that $\total{\sigma} \subset \total{e} \cap U_v$, and the definition of $\xi$--family implies $\total{v} \subset \total{e} \cap U_w$ in $\total{w}$. If $v,w$ are joined by a path of length $n+1$, say $v = v_0, \ldots v_n, v_{n+1} = w$. Letting $e = [v_n,v_{n+1}]$ be the final edge of this path, we repeat the argument of the base case --- $\total{\sigma} \subset \total{e} \cap U_{v_n}$ in $\total{v_n}$ by induction, so the definition of a $\xi$--family implies $\total{\sigma}\subset \total{e}\cap U_{v_{n+1}}$ as desired.  
\end{proof}

\begin{corollary}
    Let $\xi \in \partial_{Stab}G$, $\UU$ be a $\xi$--family, and $\e \in (0,1)$. If $x\in \overline{X}$, $\gamma_x$ enters $D^\e(\xi)$, and for some vertex $v \in \sigma_{\xi,\e}(x)$, $\total{\sigma_{\xi,\e}(x)} \subset U_v$ in $\total{v}$, then $x \in \tCone_{\UU,\e}(\xi)$. In other words, the 'for all' in \thref{defn:cones} (2) can be replaced with `for some' without changing the definition. 
\end{corollary}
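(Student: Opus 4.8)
The plan is to read this off from the Consistency proposition \thref{Consistency} with essentially no extra work: condition (2) in \thref{defn:cones} is a ``for every vertex'' statement, Consistency says exactly that such statements are equivalent to the corresponding ``for some vertex'' statements for simplices of $Lk(\xi)$, and the exit simplex $\sigma_{\xi,\e}(x)$ is always such a simplex. So the whole proof amounts to (i) observing that one direction of the claimed equivalence is trivial, (ii) checking $\sigma_{\xi,\e}(x)\subseteq Lk(\xi)$ so that \thref{Consistency} is applicable, and (iii) quoting \thref{Consistency}.

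In more detail, here are the steps I would carry out in order. First, note that ``$\total{\sigma_{\xi,\e}(x)}\subseteq U_w$ for every vertex $w$'' implies ``for some vertex $w$'', so only the asserted implication needs proof. Second, since $\tCone_{\UU,\e}(\xi)\subseteq\overline{X}\setminus D(\xi)$ by \thref{defn:cones}, the conclusion $x\in\tCone_{\UU,\e}(\xi)$ forces $x\notin D(\xi)$, so I take this as a standing hypothesis. Third, set $\sigma=\sigma_{\xi,\e}(x)$ and check $\sigma\subseteq Lk(\xi)$: if $x\notin D^\e(\xi)$ this is immediate, since by definition $\sigma$ is the last simplex of $Lk(\xi)$ met by $\gamma_x$; if $x\in D^\e(\xi)$ then $\sigma=\sigma_x$, which lies in $N(\xi)$ by Assumption \ref{scaling assumption} (which gives $D^\e(\xi)\subseteq N(\xi)$), and since $x\notin D(\xi)$ and $D(\xi)$ is a subcomplex, the open simplex $\sigma_x$ is disjoint from $D(\xi)$, hence $\sigma_x\subseteq N(\xi)\setminus D(\xi)=Lk(\xi)$. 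Fourth, apply \thref{Consistency} to $\sigma$: the hypothesis supplies a vertex $v\in\sigma\cap V(\xi)$ with $\total{\sigma}\subseteq U_v$, and Consistency upgrades this to $\total{\sigma}\subseteq U_w$ for every $w\in\sigma\cap V(\xi)$. This is precisely condition (2) of \thref{defn:cones}; combined with the hypothesis that $\gamma_x$ enters $D^\e(\xi)$ (condition (1)) and with $x\notin D(\xi)$, it says $x\in\tCone_{\UU,\e}(\xi)$.

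I do not expect a genuine obstacle here; the statement is a bookkeeping corollary. The only point requiring a moment's care is the verification that the exit simplex genuinely lies in $Lk(\xi)$ in the case $x\in D^\e(\xi)$, which is where Assumption \ref{scaling assumption} and the definition $Lk(\xi)=N(D(\xi))\setminus D(\xi)$ get used; and the mild annoyance that the stated hypothesis allows $x\in\overline{X}$ with $x\in D(\xi)$, for which the conclusion cannot hold, so one should read the corollary (as is standard) with the implicit restriction $x\notin D(\xi)$ coming from the target set $\tCone_{\UU,\e}(\xi)$.
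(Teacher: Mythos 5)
Your proof is correct and takes essentially the same approach as the paper, which simply applies \thref{Consistency} to the exit simplex $\sigma_{\xi,\e}(x)$. The extra care you take — verifying that $\sigma_{\xi,\e}(x)\subseteq Lk(\xi)$ in both the $x\notin D^\e(\xi)$ and $x\in D^\e(\xi)$ cases, and flagging the implicit hypothesis $x\notin D(\xi)$ forced by $\tCone_{\UU,\e}(\xi)\subseteq\overline{X}\setminus D(\xi)$ — is detail the paper leaves unstated but does not constitute a different argument.
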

\begin{proof}
    If $x$ is as in the assumption, then \thref{Consistency} implies $\total{\sigma_{\xi,\e}(x)} \subset U_w$ for \emph{every} vertex $w \in \sigma_{\xi,\e}(x) \cap V(\xi)$, which is exactly the definition of $x \in \tCone_{\UU,\e}(\xi)$. 
\end{proof}

Fix a vertex $v_0$ of $X$ as a base point for the remainder of this section.
\begin{lemma}[Crossing Lemma]
\thlabel{Crossing}
Let $\xi \in \partial_{Stab}G,$ and let $\mathcal{U},\mathcal{U}'$ be two $\xi$--families, with $\mathcal{U}'$ $n$--nested in $\mathcal{U}$. Let $\sigma_1,\ldots,\sigma_n$ be a path of simplices in $Lk(\xi)$. Suppose $\sigma_1 \subset  N_{\mathcal{U}'}(\xi)$. Then for any $k$ and any vertex $v \in \sigma_k \cap D(\xi)$, we have $\total{\sigma_k} \subset U_v$ in $\total{v}$.
\end{lemma}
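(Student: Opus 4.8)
The plan is to induct on $k$, proving the quantitatively sharper statement: if $\UU' = \UU^0 \subset \UU^1 \subset \cdots \subset \UU^n = \UU$ is a chain of $\xi$--families witnessing that $\UU'$ is $n$--nested in $\UU$ (so $\UU^i$ is nested in $\UU^{i+1}$ in the sense of \thref{defn:nested} and \thref{defn:nesting of sets}), then for every $k \in \{1,\ldots,n\}$ and every vertex $v \in \sigma_k \cap D(\xi)$ one has $\total{\sigma_k} \subset U^k_v$ in $\total{v}$. Since nesting forces $U^k_v \subseteq U^n_v = U_v$, this immediately gives the lemma. The point of the bookkeeping is that each step along the path of simplices ``spends'' exactly one level of nesting, so a path of $n$ simplices exhausts the $n$ available levels and lands inside $\UU$.

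For the base case $k=1$, I would note that $\sigma_1$ is an open simplex contained in the union of open simplices $N_{\UU'}(\xi)$ (\thref{defn:N_UU}), hence is itself one of the defining simplices; thus there is a vertex $v_1 \in \sigma_1 \cap D(\xi)$ with $\total{\sigma_1} \cap U^0_{v_1} \neq \emptyset$. Because $\sigma_1 \subset Lk(\xi)$ is a simplex of $st(v_1)$ not contained in $D(\xi)$ and $\UU^0$ is nested in $\UU^1$, the implication in \thref{defn:nesting of sets} upgrades the nonempty intersection to $\total{\sigma_1} \subset U^1_{v_1}$; then the Consistency \thref{Consistency} propagates this to $\total{\sigma_1} \subset U^1_w$ for every vertex $w \in \sigma_1 \cap V(\xi)$.

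For the inductive step I would assume $\total{\sigma_k} \subset U^k_v$ for all $v \in \sigma_k \cap D(\xi)$ and split on the two possibilities in the definition of a path of simplices, remembering that $\sigma \subset \sigma'$ induces the reverse containment $\total{\sigma'} \subseteq \total{\sigma}$ under the paper's conventions. If $\overline{\sigma_{k+1}} \subseteq \overline{\sigma_k}$, then $V(\sigma_{k+1}) \subseteq V(\sigma_k)$ and $\total{\sigma_k} \subseteq \total{\sigma_{k+1}}$, so for each $v \in \sigma_{k+1} \cap D(\xi) \subseteq \sigma_k \cap D(\xi)$ the nonempty set $\total{\sigma_k}$ lies in $\total{\sigma_{k+1}} \cap U^k_v$, and nesting of $\UU^k$ in $\UU^{k+1}$ (applied to the simplex $\sigma_{k+1}$ of $st(v)$, which is in $Lk(\xi)$ hence not in $D(\xi)$) yields $\total{\sigma_{k+1}} \subset U^{k+1}_v$. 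If instead $\overline{\sigma_k} \subseteq \overline{\sigma_{k+1}}$, then $V(\sigma_k) \subseteq V(\sigma_{k+1})$ and $\total{\sigma_{k+1}} \subseteq \total{\sigma_k}$; picking any $v \in \sigma_k \cap D(\xi)$ (nonempty since $\sigma_k \subset Lk(\xi)$) gives $\total{\sigma_{k+1}} \subseteq \total{\sigma_k} \subset U^k_v \subseteq U^{k+1}_v$, and since $v \in \sigma_{k+1} \cap V(\xi)$, Consistency \thref{Consistency} again spreads this to all vertices of $\sigma_{k+1}$ in $V(\xi)$. Either way the induction advances, and $\total{\sigma_n} \subset U^n_v = U_v$ falls out.

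The only points I expect to require care are (a) verifying in each invocation of nesting that the relevant simplex really is a simplex of $st(v)$ not contained in $D(\xi)$ — which is automatic from $\sigma_k \subset Lk(\xi)$ and $v \in \sigma_k$ — and (b) the elementary but easy-to-reverse fact that a face inclusion of simplices corresponds to the opposite inclusion of the associated boundary spaces $\total{\cdot}$; the ``face'' direction of the path is where nesting is genuinely consumed, and matching the path length to the nesting depth is exactly what makes the argument close.
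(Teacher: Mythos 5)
Your proof is correct and takes essentially the same route as the paper's: an induction over the path of simplices, splitting on the face-inclusion direction, with the base case extracting a nonempty intersection from $N_{\UU'}(\xi)$, nesting promoting that to a full containment, and Consistency (\thref{Consistency}) propagating to all $D(\xi)$--vertices. The only cosmetic difference is that you induct on the position $k$ along a fixed nesting chain, carrying the sharper invariant $\total{\sigma_k}\subset U^k_v$, whereas the paper phrases it as induction on $n$ and invokes the $n$-step case as the inductive hypothesis; the bookkeeping is the same and each step ``spends'' one nesting level exactly as you describe.
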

\begin{proof}
    
We induct on $n$. For $n=1$, this is a combination of \thref{defn:nesting of sets} and \thref{defn:N_UU}. Explicitly, if $\sigma_1 \subset N_{\UU'}(\xi)$, then by \thref{defn:N_UU}, $\total{\sigma_1} \cap U'_v \neq \varnothing$ for some vertex $v$ of $\sigma_1 \cap V(\xi)$. Because $U'_v$ is nested in $U_v$ in the sense of \thref{defn:nesting of sets}, this implies $\total{\sigma_1} \subset U_v$. By \thref{Consistency}, we have $\total{\sigma} \subset U_w$ for any other vertex $w$ of $\sigma \cap D(\xi)$.

Suppose the result is true for $k$. Let $\sigma_1,\ldots, \sigma_{k+1}$ be a path of simplices in $Lk(\xi)$ and $\UU' = \UU^0\subset \cdots \subset \UU^{k+1} = \UU$. By induction, the result is true for the path $\sigma_1,\ldots ,\sigma_k$ and the nesting families $\UU^0 \subset \cdots \subset \UU^k$, so the only inclusion to prove is the last one. 

If $\sigma_k \subset \sigma_{k+1}$, then every vertex $v$ of $\sigma_k \cap D(\xi)$ is also a vertex of $\sigma_{n+1}$, and in $\total{v}$ we have
\[\total{\sigma_{k+1}} \subset \total{\sigma_k} \subset U^k_v \subset U^{k+1}_v.\]
Since this holds for the vertices of $\sigma_k \cap D(\xi)$, it holds for all vertices of $\sigma_{k+1}\cap D(\xi)$ by \thref{Consistency}.

Now suppose $\sigma_{k+1} \subset \sigma_k$. If $v$ is a vertex of $\sigma_{k+1}\cap D(\xi)$ then it is also a vertex of $\sigma_k$, and by induction we have $\total{\sigma_k} \subset U_v^k$, so
\[\total{\sigma_k}\subset \total{\sigma_{k+1}} \cap U_v^k \neq \varnothing.\]
The definition of nesting implies $\total{\sigma_{k+1}} \subset U^{k+1}_v$.
\end{proof}

Recall the definition of $d_{max}$ from \thref{Finiteness}. 

\begin{definition}[Refined Families]
    Fix $n \geq 0$. Given two $\xi$--families $\UU, \, \UU'$, we say $\UU'$ is \emph{$n$--refined} in $\UU$ if it is $F(\max(n,d_{max}))$--nested in $\UU$, where $F$ is the function described in \thref{Short Paths of Simplices}. 
\end{definition}

Given any $\xi \in \partial_{Stab}G$, $\xi$--family $\UU$, and $n \in \mathbb{N}$, there always exists a $\xi$-- family $\UU'$ which is $n$--refined in $\UU$ by \thref{nest as much as you want}. When applying \thref{Short Paths of Simplices}, we count open simplices. For example, a single triangle is the union of $3$ vertices, $3$ edges, and one $2$--cell, for a total of $7$ simplices. With this convention and point (2) of \thref{Finiteness}, when we apply \thref{Short Paths of Simplices} with $K = D(\xi)$ for some $\xi \in \partial_{Stab}G$ and $K'$ a single closed simplex, we get a path of simplices of length at most $F(d_{max})$.

We chose a vertex as a basepoint for convenience, but all the proofs in this subsection would work with any basepoint $x_0 \in X$ if we replace $v_0$ with $\sigma_{x_0}$ when applying \thref{Short Paths of Simplices}.

\begin{definition}
    For $\xi \in \partial_{Stab}G$, $\UU$ a $\xi$--family, define the \emph{pseudocone} $\pCone_{\UU}(\xi)$ to be the set of points $x \in \overline{X} \setminus D(\xi)$ so that there is some simplex $\sigma \subset N_\UU(\xi) \subset Lk(\xi)$ which meets $\Geod(x,D(\xi))$. We call any such $\sigma$ a \emph{witness} for $x$.
\end{definition}

\begin{lemma}\thlabel{Shadow1}
    For any $\xi \in \partial_{Stab}G$ and basepoint $v_0$, there is a $\xi$--family $\VV$ so that $v_0 \notin \pCone_\VV(\xi)$. Further, if $\xi \notin \partial G_{v_0}$, then $st(v_0) \cap \pCone_\VV(\xi) = \varnothing$.
\end{lemma}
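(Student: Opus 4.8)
The plan is to produce $\VV$ by choosing, for each vertex $v\in V(\xi)$, a sufficiently small open neighbourhood $W_v\subseteq\total v$ of $\xi$ that kills every simplex which could serve as a witness, and then invoking \thref{xi families exist} to pass to a genuine $\xi$--family $\VV=\{V_v\}_{v\in V(\xi)}$ with $V_v\subseteq W_v$. First note that when $v_0\in D(\xi)$ the first assertion is vacuous, since $\pCone_\VV(\xi)\subseteq\overline X\setminus D(\xi)$ for any $\xi$--family; and when $v_0\notin D(\xi)$ the first assertion follows from the second because $v_0\in st(v_0)$. So it suffices to assume $v_0\notin D(\xi)$ and arrange $st(v_0)\cap\pCone_\VV(\xi)=\varnothing$.

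Next I would catalogue the simplices of $Lk(\xi)$ that can witness a point of $st(v_0)$. Any such simplex $\sigma$ lies in $Lk(\xi)=N(D(\xi))\setminus D(\xi)$, so $\overline\sigma\cap D(\xi)$ is a non-empty subcomplex of $\overline\sigma$ and hence contains a vertex $v\in V(\xi)$; moreover $\sigma\notin D(\xi)$ forces $\xi\notin\partial G_\sigma$, so by \thref{our spaces exist}(2) the set $\total\sigma$ is a closed subset of $\total v$ missing $\xi$. There are two cases. If $v_0\in\overline\sigma$, then $\sigma$ has the edge $[v_0,v]$ as a face (with $v_0\neq v$, since $v_0\notin D(\xi)\ni v$), so $\total\sigma\subseteq\total{[v_0,v]}$ inside $\total v$, and $\total{[v_0,v]}$ is a closed subset of $\total v$ missing $\xi$ because $[v_0,v]\notin D(\xi)$. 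If instead $v_0\notin\overline\sigma$, then $\sigma$ meets a geodesic from a point of $st(v_0)$ to $D(\xi)$; every such geodesic has length at most $R+d(v_0,D(\xi))+A$, where $R$ bounds the diameter of a closed simplex and $A$ bounds $\diam D(\xi)$, so that --- applying \thref{geod meets finitely many} to each closed simplex of $st(v_0)$ together with $D(\xi)$, and using the Convergence Property \ref{Convergence Property remark} to rule out an infinite sequence of such simplices whose sets $\total\cdot$ accumulate at $\xi$ --- only finitely many simplices $\sigma_1,\dots,\sigma_m$ of $Lk(\xi)$ not containing $v_0$ can arise.

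Given this catalogue, for each $v\in V(\xi)$ I would choose $W_v\subseteq\total v$ to be an open neighbourhood of $\xi$ disjoint from $\total{[v_0,v]}$ (when $v$ is adjacent to $v_0$) and from each $\total{\sigma_i}$ having $v$ as a vertex; this is a finite intersection of neighbourhoods of $\xi$, hence a neighbourhood. Apply \thref{xi families exist} to obtain a $\xi$--family $\VV$ with $V_v\subseteq W_v$ for all $v$. To conclude, suppose $x\in st(v_0)\cap\pCone_\VV(\xi)$ with witness $\sigma\in N_\VV(\xi)$, so $\total\sigma\cap V_v\neq\varnothing$ for some vertex $v\in\sigma\cap V(\xi)$. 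If $v_0\in\overline\sigma$ then $\total\sigma\subseteq\total{[v_0,v]}$, which is disjoint from $W_v\supseteq V_v$; if $v_0\notin\overline\sigma$ then $\sigma=\sigma_i$ for some $i$ and $\total{\sigma_i}$ is disjoint from $W_v\supseteq V_v$. Either way we contradict $\total\sigma\cap V_v\neq\varnothing$, so no such $x$ exists, giving $st(v_0)\cap\pCone_\VV(\xi)=\varnothing$ as desired. The step I expect to be most delicate is the finiteness claim in the second case: because $X$ need not be locally finite, one cannot simply say that a bounded region meets finitely many simplices, so one must instead rule out an infinite family of witnesses whose limit sets crowd in on $\xi$ by a Convergence-Property argument parallel to the one used in \thref{Nesting}.
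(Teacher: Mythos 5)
Your approach is genuinely different from the paper's, and it has a real gap. You attempt to catalogue, up front, every simplex of $Lk(\xi)$ that could serve as a witness for some point of $st(v_0)$, and then choose the $\xi$--family small enough to exclude them all from $N_\VV(\xi)$. For witnesses containing $v_0$ as a vertex this works as you describe. For the remaining witnesses, the finiteness claim is the gap. Since vertex stabilizers are infinite relatively hyperbolic groups, $X$ need not be locally finite, so $\overline{st}(v_0)$ can contain infinitely many simplices; applying \thref{geod meets finitely many} to one closed simplex of $\overline{st}(v_0)$ at a time gives finitely many witnesses per simplex but not a globally finite catalogue. You appeal to the Convergence Property to ``rule out an infinite sequence of such simplices whose sets $\total{\sigma_i}$ accumulate at $\xi$,'' but you give no mechanism: Lemma \ref{Convergence Property remark} only says that such a sequence, after passing to a subsequence with a common face $\sigma$ of $D(\xi)$ and a common $Y$--label, converges to \emph{some} $\xi_0 \in \partial G_\sigma$, and nothing in your setup rules out $\xi_0 = \xi$. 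In \thref{Nesting} the analogous argument works because the $\sigma_n$ there are paired with a shrinking neighbourhood basis $V_n$ and the standing assumption $\total{\sigma_n} \not\subseteq U$, which is exactly the extra structure that produces the contradiction in the $\xi_0 = \xi$ branch; your catalogue carries no such structure. You also do not address that different subsequences may accumulate at different boundary points.

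The paper's proof sidesteps all of this. It fixes a \emph{single} simplex $\sigma \subset Lk(\xi)$ meeting $\Geod(v_0,D(\xi))$ (which exists and lies in a finite set by \thref{geod meets finitely many} applied to the finite complexes $\{v_0\}$ and $D(\xi)$), chooses $U_v$ missing $\total{\sigma}$ for a single vertex $v$ of $\sigma \cap D(\xi)$, extends to a $\xi$--family, and then takes a $d_{max}$--refinement $\VV$. For any hypothetical witness $\sigma'$ of a point $y$ lying in a closed simplex $\tau \subset \overline{st}(v_0)$, \thref{Short Paths of Simplices} applied to the finite complex $\tau$ and the convex $D(\xi)$ yields a path of simplices of length at most $F(d_{max})$ in $Lk(\xi)$ from $\sigma'$ to $\sigma$, and the Crossing Lemma \ref{Crossing} then forces $\total{\sigma} \subset U_v$, a contradiction. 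This argument never needs a bound on the number of witnesses: every witness pushes the same fixed $\total{\sigma}$ into $U_v$. To salvage your approach you would need either to prove finiteness of the catalogue outright or to rule out accumulation at $\xi$, and I do not see how to do either without essentially reproducing the paper's Short Paths argument.
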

\begin{proof}
    If $v_0 \in D(\xi)$, then $v_0$ cannot be in $\pCone_{\UU}(\xi)$ for any $\xi$--family $\UU$ because $\pCone_{\UU}(\xi) \subset X \setminus D(\xi)$ by definition and $\xi \in \partial G_{v_0}$, so we don't need to consider the further statement. 
    Therefore we can assume $v_0 \notin D(\xi)$ so that $\Geod(v_0,D(\xi)) \cap Lk(\xi) \neq \varnothing$. Choose some $\sigma \subset Lk(\xi)$ which meets $\Geod(v_0,D(\xi))$ and fix a vertex $v \in \sigma \cap V(\xi)$. Because $\total{\sigma}$ is a closed subset of $\total{v}$ missing $\xi$, we can choose a neighborhood $U_v \subset \total{v}$ of $\xi$ so that $U_v \cap \total{\sigma} = \varnothing$. For every other vertex $w \in V(\xi)$, let $U_w = \total{w}$. Apply \thref{xi families exist} to the sets $\{U_w, w \in V(\xi)\}$ to get a $\xi$--family $\VV'$, and let $\VV$ be a $\xi$--family $d_{max}$--refined in $\VV'$. 

    For a contradiction, suppose $\sigma'$ is a witness for $v_0 \in \pCone_{\VV}(\xi)$. Applying \thref{Short Paths of Simplices} to the connected complex $v_0$, the convex complex $D(\xi)$, and the simplices $\sigma', \sigma$, we receive a path of simplices of length at most $F(d_{max})$ from $\sigma'$ to $\sigma$. Because $\sigma' \subset N_\VV(\xi)$ and $\VV$ is $d_{max}$--refined, hence $F(d_{max})$--nested, in the family of sets $\{U_w,w \in V(\xi)\}$, the Crossing \thref{Crossing} applied to this path implies $\total{\sigma} \subset U_v$. But $U_v$ was chosen to avoid $\total{\sigma}$, so this is impossible.

    For the further statement, notice that $\xi \notin \partial G_{v_0}$ implies $D(\xi) \cap st(v_0) = \varnothing$ (here it matters that we are using the \emph{open} star). Suppose $\tau \subset st(v_0)$ has $y \in \tau \cap \pCone_{\VV}(\xi)$ with $\sigma'$ as a witness. We apply \thref{Short Paths of Simplices} to the connected complex $\tau$ with $v_0,y$ as the points, the convex complex $D(\xi)$, and the simplices $\sigma', \sigma$. Since $\tau$ has at most $d_{max}$ simplices too, the path is still of length at most $F(d_{max})$, and the Crossing \thref{Crossing} gives a contradiction as before.
\end{proof}

\begin{lemma}\thlabel{Shadow2}
    Let $\xi \in \partial_{Stab}G$ with $\VV$ a $\xi$--family satisfying the conclusions of \thref{Shadow1}. Let $\UU \subset \UU' \subset \VV$ be $\xi$--families, with $\UU$ $d_{max}$-refined in $\UU'$ and $\UU'$ $[d_{max} + F(d_{max})]$--nested in $\VV$. If $x \in \pCone_{\UU}(\xi)$, then $[v_0,x]$ meets $D(\xi)$ and $x \in \tCone_{\VV,\e}(\xi)$ for all $\e \in (0,1)$. If $\xi \notin \partial G_{v_0}$, then $[y,x]$ meets $D(\xi)$ for each $y \in st(v_0)$. 
\end{lemma}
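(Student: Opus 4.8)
The plan is to extract everything from the single input fact that $v_0\notin\pCone_\VV(\xi)$ (\thref{Shadow1}), using the Crossing Lemma \thref{Crossing} to transport nesting information along paths of simplices in $Lk(\xi)$ and \thref{Short Paths of Simplices} to keep those paths short. Fix a witness $\sigma_0\subset N_\UU(\xi)$ for $x\in\pCone_\UU(\xi)$, so $\sigma_0$ meets a geodesic $[x,q]$ with $q\in D(\xi)$; when $x\in\partial X$ the segment $[v_0,x]$ is a ray and any use of "the point $x$" is replaced by a point far out along it, which changes nothing. Recall that composing the $F(d_{\max})$-nesting of $\UU$ in $\UU'$ with the $[d_{\max}+F(d_{\max})]$-nesting of $\UU'$ in $\VV$ makes $\UU$ be $[d_{\max}+2F(d_{\max})]$-nested in $\VV$; this is the ``budget'' we have to play with.

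\textbf{Step 1 ($[v_0,x]$ meets $D(\xi)$), which I expect to be the hard step.} Suppose not. Let $b$ be the branch point of $[v_0,x]$ and $[v_0,q]$; since the common part $[v_0,b]$ lies on $[v_0,x]$ it misses $D(\xi)$, so $b\notin D(\xi)$, and the subsegment $[b,q]$ runs from $b\notin D(\xi)$ into $D(\xi)$, hence meets $Lk(\xi)$. Let $\sigma_1$ be the last simplex of $Lk(\xi)$ met by $[b,q]$ before it enters $D(\xi)$. Since $\sigma_1$ lies on $[v_0,q]\in\Geod(v_0,D(\xi))$, it suffices to prove $\sigma_1\subset N_\VV(\xi)$, for that says $v_0\in\pCone_\VV(\xi)$, contradicting \thref{Shadow1}. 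To get $\sigma_1\subset N_\VV(\xi)$ I would build a path of simplices in $Lk(\xi)$ from $\sigma_0$ to $\sigma_1$ of length at most $d_{\max}+2F(d_{\max})$ and then apply \thref{Crossing} with $\sigma_0\subset N_\UU(\xi)$ as starting simplex to conclude $\total{\sigma_1}\subset V_v$ for every vertex $v$ of $\sigma_1\cap V(\xi)$; since $\total{\sigma_1}\neq\varnothing$ this is exactly $\sigma_1\subset N_\VV(\xi)$ by \thref{defn:N_UU}. Building that path is the delicate part: both $[x,q]$ and $[b,q]$ terminate inside $D(\xi)$, which has diameter at most $A$ and at most $d_{\max}$ simplices (\thref{Finiteness}), so — using \thref{Bounded Length To Bounded Number Of Simplices} and \thref{Containment} to count simplices near $D(\xi)$ — I would run $\sigma_0$ along the bounded final portion of $[x,q]$ to the simplex where that geodesic last leaves $Lk(\xi)$, invoke \thref{Short Paths of Simplices} with $K=D(\xi)$ and $K'$ a single closed simplex sitting just outside $D(\xi)$ near the common endpoint $q$ (so both geodesics meet a geodesic running from $D(\xi)$ into $K'$), and then run along the bounded final portion of $[b,q]$ to $\sigma_1$; concatenating stays within the stated budget.

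\textbf{Step 2 ($x\in\tCone_{\VV,\e}(\xi)$ for all $\e\in(0,1)$) and Step 3 (the further statement).} By Step 1, $[v_0,x]$ meets $D(\xi)\subset D^\e(\xi)$, so condition (1) of \thref{defn:cones} holds; let $q'$ be the last point of $[v_0,x]$ in $D(\xi)$, so $\sigma_{\xi,\e}(x)$ — the last simplex of $Lk(\xi)$ on $[v_0,x]$ — lies on the subsegment $[q',x]$. Applying \thref{Short Paths of Simplices} with $K=D(\xi)$, $K'=\{x\}$, the points $q,q'\in D(\xi)$, the constant path at $x$, and the geodesics $[q,x]\ni\sigma_0$, $[q',x]\ni\sigma_{\xi,\e}(x)$, gives a path of simplices in $Lk(\xi)$ from $\sigma_0$ to $\sigma_{\xi,\e}(x)$ of length at most $F(d_{\max})$. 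Since $\sigma_0\subset N_\UU(\xi)$ and $\UU$ is $F(d_{\max})$-nested in $\UU'$, \thref{Crossing} yields $\total{\sigma_{\xi,\e}(x)}\subset U'_v\subset V_v$ for every vertex $v$ of $\sigma_{\xi,\e}(x)\cap V(\xi)$, which is condition (2); hence $x\in\tCone_{\VV,\e}(\xi)$. For Step 3, assume $\xi\notin\partial G_{v_0}$, so $D(\xi)\cap st(v_0)=\varnothing$, fix $y\in st(v_0)$ and the simplex $\tau\ni y$, and repeat Step 1 with $v_0$ replaced by $y$ and the branch point of $[y,x]$ and $[y,q]$: the only change is to use $K'=\overline{\tau}\subset\overline{st}(v_0)$, a connected subcomplex with at most $d_{\max}$ simplices disjoint from $D(\xi)$, in the \thref{Short Paths of Simplices} step — and the extra nesting budget $d_{\max}+F(d_{\max})$ of $\UU'$ in $\VV$ is exactly what absorbs replacing the point $\{v_0\}$ by the closed simplex $\overline{\tau}$. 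One concludes $y\in\pCone_\VV(\xi)$, contradicting the second conclusion of \thref{Shadow1}; hence $[y,x]$ meets $D(\xi)$.

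\textbf{Expected main obstacle.} Everything rests on Step 1's construction of a short path of simplices in $Lk(\xi)$ joining the witness $\sigma_0$, which lives on a geodesic emanating from $x$, to a simplex on a geodesic emanating from $v_0$. The statement \thref{Short Paths of Simplices} only compares pairs of geodesics running from a \emph{common} bounded complex $K'$ to $K=D(\xi)$, so the work is to exploit that both relevant geodesics land inside the bounded-diameter complex $D(\xi)$ and to reorganize the picture so that a single small $K'$ near their common endpoint $q$ does the job, with the simplex counts tracked carefully via \thref{Finiteness}, \thref{Bounded Length To Bounded Number Of Simplices}, and \thref{Containment}. It is precisely this bookkeeping that forces the hypotheses to be stated with $\UU$ $d_{\max}$-refined in $\UU'$ and $\UU'$ being $[d_{\max}+F(d_{\max})]$-nested in $\VV$.
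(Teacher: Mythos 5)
Steps 2 and 3 of your outline match the paper's argument (the final paragraph of the paper's proof and the companion clause of \thref{Shadow1}), but they rest on Step 1, which has a genuine gap. In Step 1 you assume $[v_0,x]$ misses $D(\xi)$, pick the branch point $b$ of $[v_0,x]$ and $[v_0,q]$, and want a short path of simplices in $Lk(\xi)$ from the witness $\sigma_0$ (on $[x,q]$) to $\sigma_1$ (the entry simplex of $[b,q]$). But $[x,q]$ and $[b,q]$ meet only at $q\in D(\xi)$; outside $D(\xi)$ they share no point except, trivially, their far endpoints $x$ and $b$ (which lie on $[v_0,x]$ and can be arbitrarily far apart). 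To invoke \thref{Short Paths of Simplices} with $K=D(\xi)$ you need both geodesics to end in a single small connected $K'$ disjoint from $K$, and no such $K'$ exists here: near $q$ both geodesics lie inside $D(\xi)$ by convexity, so your suggested ``$K'$ a single closed simplex just outside $D(\xi)$ near $q$'' cannot contain the relevant endpoints, and a $K'$ containing both $x$ and $b$ is unbounded. The preliminary move ``run $\sigma_0$ along the bounded final portion of $[x,q]$ to the simplex where that geodesic last leaves $Lk(\xi)$'' is also unsupported: between $\sigma_0$ and the entry simplex the geodesic $[x,q]$ can leave $N(\xi)$ and cross unboundedly many simplices, so this is not a bounded path of simplices in $Lk(\xi)$. (That particular link can be repaired with one Short Paths application with $K'=\sigma_x$, but the link from $\tau_x$ across $D(\xi)$ to $\sigma_1$ cannot.)

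The missing idea is the paper's $\mathrm{CAT}(0)$ local-geodesic argument, which is the step where the ambient nonpositive curvature genuinely matters. The paper first proves $\Geod(v_0,D(\xi))\cap\Geod(x,D(\xi))=D(\xi)$ by assuming an intersection point $y$ outside $D(\xi)$ and applying \thref{Short Paths of Simplices} \emph{twice}, with $K'=\sigma_x$ and then $K'=\sigma_y$ --- the hypothesized $y$ is exactly the small pivot complex your construction lacks. It then takes $[v_0,x]_K$, the shortest path from $v_0$ to $x$ inside $K=\Geod(v_0,D(\xi))\cup\Geod(x,D(\xi))$, observes it must pass through $D(\xi)$ by the previous claim, and shows $[v_0,x]_K$ is a local geodesic: a failure of local geodesity at the entry point to $D(\xi)$ would produce a shortcut $[a,b]$ staying in $D^\varepsilon(\xi)$ by convexity, yielding a path of simplices of length at most $d_{\max}$ which, concatenated with one Short Paths application from the witness, manufactures a witness for $v_0\in\pCone_\VV(\xi)$, contradicting \thref{Shadow1}. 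Since local geodesics are geodesics in $\mathrm{CAT}(0)$ spaces, $[v_0,x]_K=[v_0,x]$ and hence $[v_0,x]$ meets $D(\xi)$. Your outline replaces this two-stage argument by a single contradiction that the combinatorial machinery alone cannot deliver.
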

\begin{proof}
    Suppose $x \in \pCone_{\UU}(\xi)$ and let $\sigma$ be a witness. 
\begin{claim}

    $\Geod(v_0,D(\xi)) \,\cap\, \Geod(x,D(\xi)\big) = D(\xi) $, and if $\xi \notin \partial G_{v_0}$, then $\Geod(v_0,D(\xi)) \cap \Geod(st(v_0),D(\xi)) = D(\xi)$.
\end{claim}

\begin{proof}[Proof of Claim]
Clearly $D(\xi)$ is contained in the intersection, so the claim is that $D(\xi)$ is the entire intersection. For a contradiction, suppose $y \in \Geod(x,D(\xi)) \cap \Geod(v_0,D(\xi)) \setminus D(\xi)$. Then there are points $z', z'' \in D(\xi)$ so that 
    \[y \in [x,z'] \cap [v_0,z''] \quad \quad (\star)\]
and simplices $\sigma', \sigma''$ of $Lk(\xi)$ so that $[x,z']$ meets $\sigma'$ and $[v_0,z'']$ meets $\sigma''$. We apply \thref{Short Paths of Simplices} twice, first to $\sigma_x, D(\xi)$ to get a path of simplices from $\sigma$ to $\sigma'$, then to $\sigma_y, D(\xi)$ to get a path of simplices from $\sigma'$ to $\sigma''$. Each of these paths has length at most $F(d_{max})$. By assumption, $\sigma \subset N_{\UU}(\xi)$, so the Crossing \thref{Crossing} applied to each of these paths implies $\total{\sigma''} \subset V_w$ for any vertex $w$ of $\sigma'' \cap V(\xi)$. But then $\sigma''$ would be a witness for $v_0 \in \pCone_{\VV}(\xi)$, contradicting the choice of $\VV$. Thus no such $y$ can exist.

If $\xi \notin \partial G_{v_0}$ and $\tau \subset st(v_0)$, then we can replace $v_0$ with some $z \in \tau$ in $(\star)$, and find $\sigma', \sigma''$ as before. Again we get a path of simplices of length at most $2F(d_{max})$ from $\sigma$ to $\sigma''$, and again combining this with the Crossing \thref{Crossing} contradicts the definition of $\VV$. This proves the claim.
\end{proof}

Let $K := \Geod(x,D(\xi)) \cup \Geod(v_0,D(\xi))$ and let $[v_0,x]_K$ be the shortest path from $v_0$ to $x$ in $K$. 

\begin{claim}
    $[v_0,x]_K =[v_0,x]$ and $[v_0,x]$ meets $D(\xi)$. If $\xi \notin \partial_{Stab}G$, then $[y,x]_K = [y,x]$ for all $y \in st(v_0)$ and $[y,x]$ meets $D(\xi)$.
\end{claim}
\begin{proof}[Proof of Claim]
    The path $[v_0,x]_K$ connects a point of $\Geod(v_0,D(\xi))$ to a point of $\Geod(x,D(\xi))$, so it must go through the intersection of these two sets. By the previous claim, that intersection is $D(\xi)$, so $[v_0,x]_K$ meets $D(\xi)$. Because $D(\xi)$ is convex, $[v_0,x]_K$ meets $D(\xi)$ in a connected segment, so we can write $[v_0,x]_K$ as a concatenation $[v_0,v_1] \cup [v_1,v_2] \cup [v_2,x]$ for some points $v_1,v_2 \in D(\xi)$. Since $X$ is $\textrm{CAT}(0)$, local geodesics are geodesics (see \cite{BH}[II.1.4]), so if we can show $[v_0,x]_K$ is a local geodesic, we will be done. Since $[v_0,v_2]$ and $[v_1,x]$ are both geodesics in $X$, we only need to deal with the case $v_1 = v_2$.

    If $[v_0,x]_K$ is not a local geodesic at $v_1 = v_2$, then for all $\varepsilon > 0$, there are points $a \in [v_0,v_1]$ and $b \in [v_1,x]$ so that $[a,b]$ is strictly shorter than the concatenation $[a,v_1]\cup [v_1,b]$. Then $[a,b]$ cannot meet $D(\xi)$ -- if it did we could combine paths from $v_0,x$ to this meeting point and get a path shorter than $[v_0,x]_K$. On the other hand, $D(\xi)$ is a convex subset of a $\textrm{CAT}(0)$ space, so $[a,b]$ stays within $\varepsilon$ of $D(\xi)$. The definition of $d_{max}$ then gives us a path of simplices of length at most $d_{max}$ from $\sigma_b$ to $\sigma_a$ along $[a,b]$. 

    Using \thref{Short Paths of Simplices} with $\sigma_x$, $D(\xi)$ and the simplices $\sigma_b$ and the witness $\sigma$, we get a path of simplices of length at most $F(d_{max})$ from $\sigma$ to $\sigma_b$. Combining this with the path from $\sigma_b$ to $\sigma_a$, the Crossing \thref{Crossing} implies that $\total{\sigma_a} \subset U_w \subset V_w$ for any vertex $w \in \sigma_a \cap D(\xi)$. Then $\sigma_a$ is a witness for $v_0 \in \pCone_{\VV}(\xi)$, which again contradicts the choice of $\VV$. This contradiction implies $[v_0,x]_K$ is a local geodesic at $v_1=v_2$, so $[v_0,x]_K = [v_0,x]$.

    To prove the second portion, we can replace $v_0$ with $y$ and argue in exactly the same way, completing the proof of the claim.
\end{proof}

For any $\e \in (0,1)$, we apply \thref{Short Paths of Simplices} to $\sigma_x,D(\xi)$ with the witness $\sigma$ and the exit simplex $\sigma_{\xi,\e}(x)$, which exists since $[v_0,x]$ goes through $D(\xi)$ by the previous claim. We get a path of simplices of length at most $F(d_{max})$ from $\sigma$ to $\sigma_{\xi,\e}(x)$. Since $\sigma$ is a witness and $\UU$ is $d_{max}$--refined in $\UU'$, the Crossing \thref{Crossing} implies $\total{\sigma_{\xi,\e}(x) }\subset U'_v$ for any $v \in \sigma_{\xi,\e}(x) \cap D(\xi)$, which is the definition of $x \in \tCone_{\UU',\e}(\xi) \subset \tCone_{\VV,\e}(\xi)$. This completes the proof of \thref{Shadow2}.
\end{proof}

\begin{assumption}\thlabel{shadow assumption}
    When a basepoint $v_0$ is specified, we will assume that all $\xi$--families are contained a $\xi$--family $\UU_\xi$ satisfying the assumptions of $\UU$ in \thref{Shadow2}. Hence the conclusion of \thref{Shadow2} applies to every $x \in \pCone_{\UU}(\xi)$. We will call such $\xi$--families \emph{admissible with} $v_0$ \emph{as basepoint}.  
\end{assumption}

For a given $\xi \in \partial_{Stab}G$, $\xi$--family $\UU$, and $\e \in (0,1)$, we imagine shining a cone of light from $v_0$ onto $D(\xi)$. Roughly speaking, $\tCone_{\UU,\e}(\xi)$ is the shadow cast by $D(\xi)$ and $\UU$ from this light. The previous assumption allows the following proposition to work, so that this shadow is a genuine shadow in the sense that everything in $\tCone_{\UU,\e}(\xi)$ truly lies behind $D(\xi)$ from the point of view of $v_0$.

\begin{proposition}[Genuine Shadow]\thlabel{Genuine Shadows} Let $\xi \in \partial_{Stab}G$, $\UU$ a $\xi$--family admissible with $v_0$ as a basepoint, and $\e \in (0,1)$. If $x \in \tCone_{\UU,\e}(\xi)$, then $[v_0,x]$ meets $D(\xi)$, and if $\xi \notin \partial_{Stab}G$, then $[y,x]$ meets $D(\xi)$ for every $y \in st(v_0)$.
\end{proposition}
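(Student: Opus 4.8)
The plan is to argue by contradiction, following the template of the proof of \thref{Shadow2}. By Assumption \thref{shadow assumption} the admissible family $\UU$ is contained in a $\xi$--family coming from \thref{Shadow1}, so there is a $\xi$--family $\VV$ with $v_0 \notin \pCone_{\VV}(\xi)$ (and $st(v_0) \cap \pCone_{\VV}(\xi) = \varnothing$ when $\xi \notin \partial G_{v_0}$) such that $\UU$ is nested inside $\VV$ deeply enough for the Crossing Lemma \thref{Crossing} to propagate any path of simplices in $Lk(\xi)$ of length $2d_{\max}+F(d_{\max})$; this nesting budget is exactly what admissibility provides. So I would suppose $x \in \tCone_{\UU,\e}(\xi)$ but $[v_0,x] = \gamma_x$ does not meet $D(\xi)$, and manufacture a witness for $v_0 \in \pCone_{\VV}(\xi)$, contradicting \thref{Shadow1}. (If $v_0 \in D(\xi)$ there is nothing to prove; the statement for $y \in st(v_0)$ follows by the identical argument with $v_0$ replaced by $y$, invoking the further conclusion of \thref{Shadow1}.)

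First I would record the two easy facts. Since $x \notin D(\xi)$ and $D^\e(\xi) \subseteq N(\xi)$ by Assumption~\ref{scaling assumption}, the exit simplex $\sigma := \sigma_{\xi,\e}(x)$ lies in $Lk(\xi)$, and the defining condition of $\tCone_{\UU,\e}(\xi)$ gives $\total{\sigma} \subseteq U_v$ for every vertex $v \in \sigma \cap V(\xi)$, so $\sigma \subseteq N_\UU(\xi)$ by Definition \thref{defn:N_UU}. Let $y_0$ be the point of $\gamma_x$ nearest $D(\xi)$; then $\rho := d(y_0,D(\xi)) \le \e < 1$ and $\rho > 0$ (else $y_0 \in D(\xi)$). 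The portion of $\gamma_x$ lying in $D^\e(\xi)$ is a geodesic segment that (as $\gamma_x$ misses $D(\xi)$) meets only simplices of $Lk(\xi)$ and at most $d_{\max}$ of them by \thref{Finiteness}, so the simplex $\sigma_{y_0}$ containing $y_0$ is joined to $\sigma$ by a path of simplices in $Lk(\xi)$ of length $\le d_{\max}$.

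The heart of the argument is then to produce an honest geodesic from $v_0$ into $D(\xi)$ that tracks $\gamma_x$ near $y_0$. Let $z_0 \in D(\xi)$ realize $d(y_0,z_0) = \rho$ and consider $[v_0,z_0] \in \Geod(v_0,D(\xi))$. Since $y_0$ is the point of $\gamma_x$ nearest $z_0$, the segment $[y_0,z_0]$ leaves $\gamma_x$ at $y_0$ at angle $\ge \pi/2$, and together with $d(v_0,z_0) \le d(v_0,y_0)+\rho$ a comparison--triangle estimate in the $\CAT(0)$ complex $X$ shows that the point $w_0$ of $[v_0,z_0]$ at distance $d(v_0,y_0)$ from $v_0$ satisfies $d(w_0,D(\xi)) \le \rho < 1$, so $w_0 \in N(\xi)$. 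As $v_0 \notin D(\xi)$ and $[v_0,z_0]$ ends in $D(\xi)$, the geodesic $[v_0,z_0]$ passes through some simplex $\sigma'' \subseteq Lk(\xi)$; its subsegment from $w_0$ to $z_0$ has length $< 1$, lies in $N(\xi)$, and meets $\le d_{\max}$ simplices, so $\sigma_{w_0}$ is joined to $\sigma''$ by a path of simplices in $Lk(\xi)$ of length $\le d_{\max}$. Finally the short geodesic $[y_0,w_0]$ has length $\le \rho < 1$ and stays within $\rho$ of $D(\xi)$, hence in $N(\xi)$; applying \thref{Short Paths of Simplices} with $K = D(\xi)$ and with the closed simplices of $y_0$ and $w_0$ (together with this short segment) as the connected data joins $\sigma_{y_0}$ to $\sigma_{w_0}$ by a path of simplices in $Lk(\xi)$ of length $\le F(d_{\max})$. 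Concatenating the three paths joins $\sigma$ to $\sigma''$ inside $Lk(\xi)$ by a path of length $\le 2d_{\max}+F(d_{\max})$; since $\sigma \subseteq N_\UU(\xi)$ and $\UU$ is nested deeply enough in $\VV$, the Crossing Lemma \thref{Crossing} gives $\total{\sigma''} \subseteq V_w$ for every vertex $w \in \sigma'' \cap V(\xi)$, i.e.\ $\sigma'' \subseteq N_{\VV}(\xi)$. Then $\sigma''$ is a witness for $v_0 \in \pCone_{\VV}(\xi)$, the desired contradiction.

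The main obstacle is precisely the geometry of the third step: the exit simplex $\sigma_{\xi,\e}(x)$ sits on $\gamma_x = [v_0,x]$, which is \emph{not} a geodesic from $v_0$ to $D(\xi)$, so it is not itself a witness, and one must transfer it across a short geodesic onto a simplex that does lie on a genuine geodesic from $v_0$ to $D(\xi)$ while keeping the total combinatorial distance inside the nesting budget of Assumption \thref{shadow assumption}. This is the same difficulty overcome in the proof of \thref{Shadow2}, and \thref{Short Paths of Simplices}, \thref{Crossing} and \thref{Finiteness} are tailored to it; some care is still needed with the degenerate cases (e.g.\ when $w_0 \in D(\xi)$, or when an auxiliary segment meets $D(\xi)$) so that the hypotheses of \thref{Short Paths of Simplices} genuinely hold, but in each such case the witness for $v_0 \in \pCone_{\VV}(\xi)$ is only easier to produce.
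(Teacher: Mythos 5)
Your strategy is in the right spirit (reduce to \thref{Shadow1} via the Crossing Lemma), but you missed the one observation that makes the paper's proof essentially two lines, and as a consequence you end up re-proving most of the technical content of \thref{Shadow2} from scratch with budgets and degeneracies you do not fully control.

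The paper's proof does not argue by contradiction and does not touch $\Geod(v_0,D(\xi))$ at all. It picks a point $y\in [v_0,x]\cap\sigma_{\xi,\e}(x)$. Then $\sigma_y=\sigma_{\xi,\e}(x)\subset N_{\UU}(\xi)\subset N_{\UU_\xi}(\xi)$, and $\sigma_y$ trivially meets $\Geod(y,D(\xi))$ because $y$ itself lies in $\sigma_y$. So $y\in\pCone_{\UU_\xi}(\xi)$ with witness $\sigma_y$, and Assumption \ref{shadow assumption} puts $\UU_\xi$ in exactly the position needed to apply the conclusion of \thref{Shadow2} to $y$: $[v_0,y]$ meets $D(\xi)$, hence so does $[v_0,x]\supset[v_0,y]$. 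The whole point of your proof plan --- transporting the exit simplex across to an honest geodesic $[v_0,z_0]\in\Geod(v_0,D(\xi))$ so that it becomes a witness --- is unnecessary, because a witness for $y$ (not $x$, not $v_0$) is already lying on $\gamma_x$. This is the idea you were missing.

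Beyond the missed shortcut, two things in your write-up would need real work to close. First, the nesting budget: your concatenated path has length $\le 2d_{\max}+F(d_{\max})$, while the budget Assumption \ref{shadow assumption} actually certifies is $d_{\max}+2F(d_{\max})$ (one $F(d_{\max})$ from $\UU_\xi$ being $d_{\max}$--refined in $\UU'$, plus $d_{\max}+F(d_{\max})$ from $\UU'$ nested in $\VV$). These coincide only if $F(n)\ge n$, which the paper never asserts about the function of \thref{Short Paths of Simplices}; you would need to verify or enforce this. Second, your application of \thref{Short Paths of Simplices} requires the connecting path $[y_0,w_0]$ to avoid $K=D(\xi)$ and the complex $K'$ to be controlled in size; both are genuinely at risk ($w_0$ can land inside $D(\xi)$ when $d(v_0,z_0)-d(v_0,y_0)$ is not positive enough, and $K'$ built from $\overline{\sigma_{y_0}}\cup\overline{\sigma_{w_0}}$ plus the span of $[y_0,w_0]$ need not have $\le d_{\max}$ simplices). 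You flag these as ``degenerate cases that are only easier,'' but each one changes what $F(n)$ you may invoke, so they cannot be waved away without changing the budget. None of this is fatal to your strategy, but the paper's route sidesteps all of it by choosing the right point to apply \thref{Shadow2} to.
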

\begin{proof}
    By the definition of cones \ref{defn:cones}, $[v_0,x]$ meets $D^\e(\xi)$, and for any vertex $v \in \sigma_{\xi,\e}(x) \cap V(\xi)$, we have $\total{\sigma_{\xi,\e}}(x) \subset U_v$. This exit simplex may not be a witness for $x$ because it may not meet $\Geod(x,D(\xi))$. However, we can choose $y \in [v_0,x] \cap \sigma_{\xi,\e}(x)$, so that $ \sigma_y = \sigma_{\xi,\e}(x)$. Clearly $\sigma_y$ is a simplex of $Lk(\xi)$ which meets $\Geod(y,D(\xi))$, so $\sigma_y$ is a witness for $y \in \pCone_{\UU}(\xi)$. By Assumption \ref{shadow assumption}, $\UU$ is contained in a $\xi$--family $U_\xi$ satisfying the assumptions of $\UU$ in \thref{Shadow2}. Since $\UU$ is contained in $\UU_\xi$, $\sigma_y$ is also a witness for $y \in \pCone_{\UU_\xi}(\xi)$, so the conclusion of \thref{Shadow2} states that $[v_0,y]$, hence $[v_0,x]$, meets $D(\xi)$. The case where $\xi\notin \partial_{Stab}G$ is identical. 
\end{proof}

\subsection{The Refinement and Star Lemmas}

The next lemma tells us that if we have $\xi$ families $\UU' \subset \UU$ with $\UU'$ appropriately refined in $\UU$, we can move away from a point in the refined cone while staying in the larger cone. 

\begin{lemma}[Refinement Lemma] \thlabel{Refinement Lemma} 
Let $\xi \in \partial_{Stab}G$ and let $\UU' \subset \UU$ be $\xi$--families with $\UU'$ $n$--refined in $\UU$. Let $\sigma_1,\ldots \sigma_n$ be a path of simplices in $X \setminus D(\xi)$. For any $\varepsilon \in (0,1)$, if there is some $x \in \sigma_1$ so that $[v_0,x_1]$ enters $D^\varepsilon(\xi)$ and $\sigma_{\xi,\varepsilon}(x_1) \subset N_{\UU'}(\xi)$, then 
\[\bigcup_{i=1}^n \sigma_i \subset \fitTilde{\Cone}_{\UU,\varepsilon}(\xi).\]
\end{lemma}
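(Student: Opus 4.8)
The strategy is to prove the pointwise statement: every $x\in\bigcup_{i=1}^{n}\sigma_i$ lies in $\tCone_{\UU,\varepsilon}(\xi)$. Unwinding \thref{defn:cones} (and using \thref{Consistency} together with its corollary to replace ``for every vertex'' by ``for some vertex''), this amounts to: $x\notin D(\xi)$, which is immediate since each $\sigma_i\subset X\setminus D(\xi)$; the geodesic $[v_0,x]$ enters $D^{\varepsilon}(\xi)$; and $\total{\sigma_{\xi,\varepsilon}(x)}\subset U_v$ for some vertex $v\in\sigma_{\xi,\varepsilon}(x)\cap V(\xi)$. Fix a nesting chain $\UU'=\UU^{0}\subset\UU^{1}\subset\cdots\subset\UU^{F(\max(n,d_{max}))}=\UU$ witnessing that $\UU'$ is $F(\max(n,d_{max}))$--nested in $\UU$, and let $K'=\overline{\sigma_1}\cup\cdots\cup\overline{\sigma_n}$, a connected subcomplex of $X$. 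Since consecutive $\sigma_i$ are comparable and each $\sigma_i$ is an open simplex disjoint from $D(\xi)$, there is a path inside $K'$ between any point of $\sigma_1$ and any point of $\sigma_k$ which avoids $D(\xi)$.

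First I would analyze the given point $x_1\in\sigma_1$. By hypothesis $\sigma_{\xi,\varepsilon}(x_1)\subset N_{\UU^{0}}(\xi)$ and $[v_0,x_1]$ enters $D^{\varepsilon}(\xi)$; applying the $n=1$ case of \thref{Crossing} with the nesting $\UU^{0}\subset\UU^{1}$ gives $\total{\sigma_{\xi,\varepsilon}(x_1)}\subset U^{1}_v$ for the relevant vertices, so $x_1\in\tCone_{\UU^{1},\varepsilon}(\xi)$. Since every $\xi$--family is admissible with $v_0$ as basepoint (\thref{shadow assumption}), \thref{Genuine Shadows} shows $[v_0,x_1]$ actually meets $D(\xi)$. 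Let $z$ be its last point in $D(\xi)$; then $[z,x_1]$ is a geodesic from $D(\xi)$ to $x_1$ meeting $\sigma_{\xi,\varepsilon}(x_1)$ (every simplex of $Lk(\xi)$ met by $[v_0,x_1]$ lies past $z$, hence on $[z,x_1]$).

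Now fix an arbitrary $x\in\bigcup_i\sigma_i$; the plan has two stages. Stage one forces $[v_0,x]$ to reach $D(\xi)$: let $\rho$ be the first simplex of $Lk(\xi)$ met by the geodesic $[z,x]$ (which lies in $\Geod(x,D(\xi))$). Apply \thref{Short Paths of Simplices} with $K=D(\xi)$ (convex, and by \thref{Finiteness} containing at most $d_{max}$ simplices), $K'$ as above, the point $z\in K$ playing the role of both points of $K$, the points $x_1,x\in K'$, a $D(\xi)$--avoiding path between them, and the simplices $\sigma_{\xi,\varepsilon}(x_1),\rho$ of $Lk(\xi)$; this produces a path of simplices in $Lk(\xi)$ of length at most $F(\max(n,d_{max}))$ between $\sigma_{\xi,\varepsilon}(x_1)$ and $\rho$. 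Feeding this path together with $\sigma_{\xi,\varepsilon}(x_1)\subset N_{\UU^{0}}(\xi)$ into \thref{Crossing} yields $\total{\rho}\subset U_v$, so $\rho\subset N_{\UU}(\xi)$ and $\rho$ witnesses $x\in\pCone_{\UU}(\xi)$; since $\UU$ is admissible, \thref{Shadow2} gives that $[v_0,x]$ meets $D(\xi)$. Let $w$ be its last point in $D(\xi)$; as before $\sigma_{\xi,\varepsilon}(x)$ exists and is met by $[w,x]$. Stage two repeats \thref{Short Paths of Simplices}, now with $K=D(\xi)$, $K'$ as above, the points $z,w\in K$, $x_1,x\in K'$, and the simplices $\sigma_{\xi,\varepsilon}(x_1)$ (met by $[z,x_1]$) and $\sigma_{\xi,\varepsilon}(x)$ (met by $[w,x]$), giving a path of simplices in $Lk(\xi)$ of length at most $F(\max(n,d_{max}))$; \thref{Crossing} then gives $\total{\sigma_{\xi,\varepsilon}(x)}\subset U_v$. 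Since also $[v_0,x]$ meets $D(\xi)\subset D^{\varepsilon}(\xi)$ and $x\notin D(\xi)$, we conclude $x\in\tCone_{\UU,\varepsilon}(\xi)$.

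The main obstacle is the constant bookkeeping: one must check that the paths of simplices returned by \thref{Short Paths of Simplices} stay within the $F(\max(n,d_{max}))$ budget supplied by the $n$--refinement of $\UU'$, which is what the counting conventions fixed after the definition of refined families and point (2) of \thref{Finiteness} are arranged to guarantee (possibly after applying \thref{Short Paths of Simplices} one closed simplex of the path at a time and concatenating). The second delicate point is the two--stage use of the Shadow lemmas: one cannot even speak of the exit simplex $\sigma_{\xi,\varepsilon}(x)$ being met by a geodesic issuing from $D(\xi)$ until one knows $[v_0,x]$ reaches $D(\xi)$, and that fact is obtained only after a first round of \thref{Short Paths of Simplices}, \thref{Crossing}, and \thref{Shadow2} routed through the already-understood point $x_1$. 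Everything else is a diagram chase through the definitions of $\tCone$, $\pCone$, $N_{\UU}(\xi)$, and the nesting chain.
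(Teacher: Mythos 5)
Your proposal is correct and tracks the paper's own proof step for step: show $x_1\in\tCone_{\cdot,\varepsilon}(\xi)$ so that the Genuine Shadow Proposition~\ref{Genuine Shadows} forces $[v_0,x_1]$ to meet $D(\xi)$; route a first application of Lemma~\ref{Short Paths of Simplices} and the Crossing Lemma~\ref{Crossing} through a $D(\xi)$--avoiding path in $\bigcup\overline{\sigma_i}$ to manufacture a witness for $x\in\pCone_{\UU}(\xi)$; invoke Lemma~\ref{Shadow2} to force $[v_0,x]$ to meet $D(\xi)$ so that $\sigma_{\xi,\varepsilon}(x)$ exists; then run Lemma~\ref{Short Paths of Simplices} and the Crossing Lemma a second time between the two exit simplices. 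The only cosmetic difference is your choice of base points in $D(\xi)$ (you consistently use the last point of $[v_0,\cdot]$ in $D(\xi)$, while the paper picks vertices of the exit simplices), and you phrase the first step via a $1$--nesting $\UU^0\subset\UU^1$ where the paper jumps straight to $\UU$; neither change affects the argument. Your closing caveat about verifying the $F(\max(n,d_{max}))$ budget when $K'=\bigcup\overline{\sigma_i}$ is a legitimate worry and worth spelling out if you flesh this into a full write-up, but the paper faces the same accounting and your proposed fix (segmenting the application of Lemma~\ref{Short Paths of Simplices} along the path of simplices) is reasonable.
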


\begin{proof}
    Let $y \in \bigcup_{i=1}^n \sigma_i$. Because $\sigma_{\xi,\varepsilon}(x_1) \subset N_{\UU'}(\xi)$ and $\UU'$ is refined in $\UU$, we have $x \in \fitTilde{\Cone}_{\UU,\varepsilon}(\xi)$, thus $[v_0,x_1]$ meets $D(\xi)$ by the Genuine Shadow \thref{Genuine Shadows}. Let $v$ be a vertex of $\sigma_{\xi,\varepsilon}(x_1) \cap D(\xi)$, let $\sigma$ be a simplex of $Lk(\xi)$ met by $[v,y]$, and let $w$ be a vertex of $\sigma \cap D(\xi)$. 
    
    Note that $x_1$ can be connected to $y$ outside of $D(\xi)$ precisely because the path of simplices avoids $D(\xi)$. Therefore we can apply Lemma \ref{Short Paths of Simplices} to the connected subcomplex $\bigcup_{i=1}^n \sigma_i$ and the convex subcomplex $D(\xi)$, using the portion of $[v_0,x_1]$ between $D(\xi)$ and $x_1$ and $[w,y]$. We receive a path of simplices from $\sigma_{\xi,\varepsilon}(x_1)$ to $\sigma$ of length at most $F(\max(n,d_{max}))$. Because $\UU'$ is $n$--refined in $\UU$ and $\sigma_{\xi,\varepsilon}(x_1)\subset N_{\UU'}(\xi)$, the Crossing Lemma \ref{Crossing} implies $\total{\sigma} \subset U_w$. Thus $\sigma$ is a witness for $y \in \pCone_{\UU}(\xi)$. By Assumption \ref{shadow assumption}, $\UU$ is contained in $\xi$--family $\UU_\xi$ satisfying the assumptions of $\UU$ in \thref{Shadow2}. Because $\UU$ is contained in $\UU_{\xi}$, we have $\pCone_{\UU}(\xi) \subset \pCone_{\UU_{\xi}}(\xi)$, and in particular $y \in \pCone_{\UU_{\xi}}(\xi)$. The conclusion of \thref{Shadow2} then applies to $y$, implying $[v_0,y]$ meets $D(\xi)$. This means $\sigma_{\xi,\e}(y)$ exists.
    
    Apply \thref{Short Paths of Simplices} again, this time to the portions of $[v_0,x]$, $[v_0,y]$ in between $D(\xi)$ and $x,y$, and the simplices $\sigma_{\xi,\e}(x),\sigma_{\xi,\e}(y)$. We receive a path of simplices of length at most $F(\max(d_{max},n))$, and the Crossing \thref{Crossing} applied to this path implies the result.
\end{proof}

If $x,y,z$ are points of a $\mathrm{CAT}(0)$ space and $y,z$ are very close, then the geodesics $[x,y]$ and $[x,z]$ are very close. The following lemma uses this property to make the exit simplex of $y$ close to the exit simplex of $z$, where closeness is measured by being in the open star. 

\begin{lemma}[Star Lemma]\thlabel{Star}  Let $\xi \in \partial_{Stab}G$, $\varepsilon \in (0,1)$, and let $x \in X \setminus D^{\varepsilon}(\xi)$ be such that $[v_0,x]$ goes through $D^{\varepsilon}(\xi)$. Then there exists a $\delta$ such that for every $y \in B(x,\delta) \setminus D^\varepsilon(\xi)$, $[v_0,y]$ goes through $D^\varepsilon(\xi)$ and
\[\sigma_{\xi,\varepsilon}(y) \subset st(\sigma_{\xi,\varepsilon}(x)).\]
\end{lemma}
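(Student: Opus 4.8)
The plan is to exploit the fact that in a $\mathrm{CAT}(0)$ space the geodesic $[v_0,\cdot]$ depends continuously (in fact $1$--Lipschitz after reparametrization, by convexity of the metric) on its endpoint, and then transfer this continuity statement about points of $X$ into the combinatorial statement about exit simplices using \thref{Bounded Length To Bounded Number Of Simplices} and \thref{Containment}. First I would fix the point $z := \gamma_x(t_1)$ where $t_1$ is the last time $\gamma_x$ lies in $\overline{D^\varepsilon(\xi)}$; by hypothesis $[v_0,x]$ goes through $D^\varepsilon(\xi)$, so such $t_1$ exists and $d(z, D(\xi)) = \varepsilon$ (the exit happens on the boundary sphere of the $\varepsilon$--neighborhood, since $D(\xi)$ is convex). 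The exit simplex $\sigma_{\xi,\varepsilon}(x)$ is the simplex whose interior contains $z$, or more precisely the last simplex of $Lk(\xi)$ met by $\gamma_x$; note $z \in \overline{\sigma_{\xi,\varepsilon}(x)}$. I would also record the positive gap $g := \operatorname{length}(\gamma_x) - t_1 > 0$, the distance $\gamma_x$ travels after leaving $D^\varepsilon(\xi)$, and the fact that $\gamma_x$ stays at distance $> \varepsilon$ from $D(\xi)$ on the terminal segment $[z,x]$; by compactness of that segment there is in fact a uniform $\varepsilon' > \varepsilon$ bounding its distance from $D(\xi)$ away from $\varepsilon$ on a sub-subsegment, but more simply I just need the strict inequality plus continuity.

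Next, given $\delta > 0$ and $y \in B(x,\delta)$, convexity of the $\mathrm{CAT}(0)$ metric gives $d(\gamma_x(t), \gamma_y(t)) \leq \delta$ for all $t$ (after the standard affine reparametrization of the shorter geodesic; equivalently $d(\gamma_x(t),[v_0,y]) \leq \delta$ for all $t \le d(v_0,x)$, and symmetrically). I would choose $\delta$ small enough that: (i) $\delta < g/2$, which forces $\gamma_y$ to travel past $D^\varepsilon(\xi)$ — the terminal portion of $\gamma_y$ fellow-travels the terminal portion of $\gamma_x$, which stays outside $D^\varepsilon(\xi)$, so $[v_0,y]$ also goes through $D^\varepsilon(\xi)$; and (ii) $\delta$ is smaller than the distance from $z$ to the boundary of the closed simplicial neighborhood $\overline{N}(\sigma_{\xi,\varepsilon}(x))$ minus a controlled amount, using Assumption \ref{scaling assumption}. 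The heart of the argument is to locate the exit point $z_y := \gamma_y(t_1^y)$ of $\gamma_y$ and show $z_y \in \overline{N}(\sigma_{\xi,\varepsilon}(x))$, equivalently $\sigma_{z_y}$ meets $\overline{\sigma_{\xi,\varepsilon}(x)}$, so that $\sigma_{\xi,\varepsilon}(y) = \sigma_{z_y} \subset st(\sigma_{\xi,\varepsilon}(x))$. For this I would argue that the exit points $z$ and $z_y$ both lie on the sphere $\{d(\cdot,D(\xi)) = \varepsilon\}$ (a convex hypersurface), that $\gamma_x$ and $\gamma_y$ fellow-travel within $\delta$, and that the exit times $t_1, t_1^y$ are within a controlled distance of each other — because the terminal segments of both geodesics leave $D^\varepsilon(\xi)$ and never return, a $\delta$--perturbation of the geodesic can only move the last crossing of the $\varepsilon$--sphere by $O(\delta)$ (here one uses that near $z$ the geodesic $\gamma_x$ crosses the sphere transversally-ish; more carefully, $\gamma_x$ is eventually at distance $\geq \varepsilon + c$ from $D(\xi)$ for the terminal part, so a $\delta$--close geodesic with $\delta$ small exits the $\varepsilon$--ball at a point within $O(\delta)$ of $z$). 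Hence $d(z_y, z) \leq K\delta$ for a constant $K$ depending only on the local geometry, and choosing $\delta$ small enough that $K\delta$ is less than $d(z, \overline{N}(\sigma_{\xi,\varepsilon}(x))^c)$ — which is positive and bounded below by Assumption \ref{scaling assumption} applied to $\sigma_{\xi,\varepsilon}(x)$ — forces $z_y \in \overline{N}(\sigma_{\xi,\varepsilon}(x))$, giving the conclusion.

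The main obstacle I anticipate is making rigorous the claim that the last exit point $z_y$ of the perturbed geodesic is within $O(\delta)$ of the last exit point $z$ of the original — a priori a nearby geodesic could oscillate in and out of $D^\varepsilon(\xi)$ many times, or exit much earlier. The clean way to handle this is to not track the exit point directly but instead use the uniform fellow-traveling on the whole geodesic: since $d(\gamma_x(t),\gamma_y(t)) \le \delta$ for all $t$, and since for all $t \ge t_1$ we have $d(\gamma_x(t), D(\xi)) \ge \varepsilon$, we get $d(\gamma_y(t), D(\xi)) \ge \varepsilon - \delta$ for $t \ge t_1$; picking $\delta$ tiny handles the "goes through" part if we work with $D^{\varepsilon - \delta}(\xi)$, but to get exactly $D^\varepsilon(\xi)$ one should instead start from the observation that $\gamma_x$ is at distance $\ge \varepsilon + c$ from $D(\xi)$ for $t$ in some interval $[t_1 + s_0, d(v_0,x)]$ (by continuity and strictness, for small $s_0, c > 0$), so $\gamma_y$ is at distance $\ge \varepsilon + c - \delta > \varepsilon$ there, hence the last exit of $\gamma_y$ from $D^\varepsilon(\xi)$ occurs before time $t_1 + s_0$; combined with the fact that $\gamma_y$ is within $\delta$ of $\gamma_x(t)$ which is within $\varepsilon$ of $D(\xi)$ only for $t \le t_1$, one pins $t_1^y \in [t_1 - O(\delta), t_1 + s_0]$ and hence $z_y$ within $O(\delta + s_0)$ of $z$; letting $\delta, s_0 \to 0$ gives the needed proximity. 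I would phrase the final estimate using \thref{Bounded Length To Bounded Number Of Simplices} (a segment of length $O(\delta + s_0)$ from $z$ meets a bounded number of simplices all within $st$--distance controlled by Assumption \ref{scaling assumption}) to conclude $\sigma_{\xi,\varepsilon}(y) \subset st(\sigma_{\xi,\varepsilon}(x))$.
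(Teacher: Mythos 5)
Your overall strategy is the same as the paper's: use CAT$(0)$ convexity to get $\delta$--fellow-traveling of $\gamma_y$ with $\gamma_x$, then push this into a statement about exit simplices. You correctly identify the main obstacle (the exit point could a priori move a lot under perturbation), you correctly propose the fix for the \emph{upper} bound on the perturbed exit time (there is a $c>0$ with $d(\gamma_x(t),D(\xi)) \geq \varepsilon + c$ on $[t_1+s_0, T]$, so $\gamma_y$ is strictly outside there), and your final step (the exit point $z_y$ is within $<1$ of $z \in \sigma_{\xi,\varepsilon}(x)$, hence in $st(\sigma_{\xi,\varepsilon}(x))$ by Assumption~\ref{scaling assumption}) is a valid alternative to the paper's convex $\tau$--neighborhood of $\gamma_x([t_0-r,t_0])$.

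The gap is the \emph{lower} bound — equivalently, that $\gamma_y$ enters $D^\varepsilon(\xi)$ at all, and does so not too early. You assert ``one pins $t_1^y \in [t_1 - O(\delta), t_1+s_0]$'' but never justify $t_1^y \geq t_1 - O(\delta)$; the sentence ``$\gamma_x(t)$ is within $\varepsilon$ of $D(\xi)$ only for $t \le t_1$'' gives no information about whether $\gamma_y$ ever gets within $\varepsilon$. Indeed, $\gamma_x(t_1)$ is on the boundary sphere, so $\gamma_y(t_1)$ could be at distance $\varepsilon+\delta$; if you only know $\gamma_y$ eventually stays outside, you have not shown ``goes through.'' The paper's fix, which you need: convexity of $t \mapsto d(\gamma_x(t),D(\xi))$ together with $\gamma_x$ \emph{entering} (the open set) $D^\varepsilon(\xi)$ gives a time $t_0 - r$ with $d(\gamma_x(t_0-r),D(\xi)) = \varepsilon - k$ for some $k>0$; then taking $\delta < k$ forces $\gamma_y(t_0-r) \in D^\varepsilon(\xi)$. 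Note the resulting lower bound is $t_1^y > t_1 - r$ for a \emph{fixed} small $r$ chosen before $\delta$, not $t_1 - O(\delta)$ — one then lets $r, s_0, \delta$ all shrink. Two small side remarks: the level set $\{d(\cdot,D(\xi))=\varepsilon\}$ is not convex (only the sublevel set is), though you don't actually use this; and \thref{Bounded Length To Bounded Number Of Simplices} is not needed in your last step — openness of the star and Assumption~\ref{scaling assumption} already give $z_y \in st(\sigma_{\xi,\varepsilon}(x))$ directly.
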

\begin{proof}
    Let $T = d(v_0,x)$ and let $\gamma_x:[0,T]\longrightarrow X$ parameterize the geodesic segment $[v_0,x]$. By assumption, $\gamma_x$ goes through $D^\varepsilon(\xi)$, so there is some $t_0$ with $\gamma_x(t_0) \in \partial D^\varepsilon(\xi)$, as in $d(\gamma_x(t_0),D(\xi)) = \varepsilon$ and $\sigma_{\xi,\varepsilon}(x) = \sigma_{\gamma_x(t_0)}$.
    
    Since $X$ is $\mathrm{CAT}(0)$ and $D(\xi)$ is convex, the function $t \longrightarrow d(\gamma_x(t),D(\xi))$ is convex. Because $\gamma_x$ goes through $D^\varepsilon(\xi)$, this convexity implies there is some interval ending in $t_0$ which $\gamma_x$ takes into $D^\varepsilon(\xi)$. Combining this with the fact that $st(\sigma_{\xi,\varepsilon}(x))$ is an open set, there exists some $r >0$ so that 
    
    \[\gamma_x([t_0-r,t_0)) \subset D^\varepsilon(\xi) \cap st(\sigma_{\xi,\varepsilon}(x)).\]

    Again because stars are open, we can choose $\tau$ so that the $\tau$ neighborhood of $\gamma_x([t_0-r,t_0])$ is contained in $\sigma_{\xi,\varepsilon}(x)$. Since $\gamma_x([t_0-r,t_0])$ is also convex, the $\tau$ neighborhood of it is also convex. To see this, consider two points $y_1,y_2$ in this $\tau$ neighborhood and without loss of generality assume $y_1$ is further from $\gamma_x([t_0-r,t_0])$ than $y_2$ is. Then convexity implies all the points along $[y_1,y_2]$ are closer to $\gamma_x([t_0-r,t_0])$ than $y_1$, hence closer than $\tau$. Let 
    \[k = \varepsilon- d(\gamma_x(t_0-r),D(\xi)) > 0.\]

    We separate into $2$ cases for $d(x,D(\xi)) =\e$ and $d(x,D(\xi)) >\e$.

    First, if $d(x,D(\xi)) = \e$, then using the notation above we have $\gamma(t_0) = x$ and  $T = t_0$. Set $\delta = \frac{1}{10}\min(k,\tau,r)$. To see this $\delta$ works, choose $y \in B(x,\delta)\setminus D^\varepsilon(\xi)$ and let $\gamma_y$ parameterize $[v_0,y]$. By the triangle inequality and $d(y,x) < \delta < r$, we see that $d(v_0,y) \geq t_0-r$, so we can consider $\gamma_y(t_0-r)$. By the $\mathrm{CAT}(0)$ inequality, we know $d(\gamma_x(t_0-r),\gamma_y([t_0-r,t_0])) <\delta$, thus

    \begin{align*}
        d(D(\xi),\gamma_y(t_0-r)) & \leq d(D(\xi),\gamma_x(t_0-r)) + d(\gamma_x(t_0-r),\gamma_y(t_0-r)) \quad \quad (\star)\\
        & \leq \varepsilon \underbrace{-k + \delta}_{<0} \leq \varepsilon
    \end{align*}

    So $\gamma_y(t_0-r) \in D^\varepsilon(\xi)$, and $[v_0,y]$ goes through $D^\varepsilon(\xi)$ since $y \notin D^\varepsilon(\xi)$. Further, $d(y,\gamma_x(t_0)) = d(y,x) < \delta < \tau$, so $\gamma_y(t_0-r)$ and $y$ are in the $\tau$ neighborhood of $\gamma_x([t_0-r,t_0])$. The definition of $\tau$ then implies this terminal portion of $[v_0,y]$ is contained in $st(\sigma_{\xi,\varepsilon}(x))$, and hence so is the simplex $\sigma_{\xi,\varepsilon}(y)$. This completes the proof in the first case.

    Now suppose $d(x,D(\xi)) > \e$. See Figure \ref{fig:Star} for a diagram of what we will set up. Since $T>t_0$, we can choose some $0 < s < \min(\frac{1}{10}(T-t_0),\frac{1}{2}\tau)$ and set 
    
    \[\delta = \frac{1}{10}\min\Big(k,r,s, d(\gamma_x(t_0+s),D^\varepsilon(\xi))\Big) > 0.\] 
    
    To see $\delta$ works, choose $y \in B(x,\delta) \setminus D^\varepsilon(\xi)$. By the triangle inequality and the definition of $s$, we have
    
    \[d(v_0,y) \geq d(v_0,x) - d(x,y) > T - s > t_0 + \frac{9}{10}(T-t_0).\]
    
    In particular $d(v_0,y) > t_0 + s > t_0-r$, so the exact argument in $(\star)$ above implies $[v_0,y]$ goes through $D^\varepsilon(\xi)$ and $d(\gamma_y(t_0-r),\gamma_x(t_0-r)) < \delta < \tau$. By the $\textrm{CAT}(0)$ inequality, $d(\gamma_y(t_0+s),\gamma_y(t_0+s)) < \delta$, but because $\delta < d(\gamma_x(t_0+s),D^\varepsilon(\xi))$, the ball $B(\gamma_x(t_0+s),\delta)$ is disjoint from $D^\varepsilon(\xi)$, so $\gamma_y(t_0+s) \notin D^\varepsilon(\xi)$. Therefore $[v_0,y]$ leaves $D^\e(\xi)$ between times $t_0-r$ and $t_0+s$. We also know
    \[B(\gamma_x(t_0+s),\delta) \subset B(\gamma_x(t_0),\underbrace{s+\delta}_{< \tau}) \subset B(\gamma_x(t_0),\tau). \]
    So the geodesic segment $\gamma_y([t_0-r,t_0+s])$ connects two points in the $\tau$ neighborhood of $\gamma_x([t_0-r,t_0])$. The definition of $\tau$ implies this segment is contained in $st(\sigma_{\xi,\varepsilon}(x))$, and we have $\sigma_{\xi,\varepsilon}(y) \subset st(\sigma_{\xi,\varepsilon}(x))$. This completes the second case.
    \end{proof}

  \begin{figure}
        \centering
\tikzset{every picture/.style={line width=0.75pt}} 

\begin{tikzpicture}[x=0.75pt,y=0.75pt,yscale=-1,xscale=1]

\draw    (60,160) -- (280,160) ;
\draw [shift={(60,160)}, rotate = 0] [color={rgb, 255:red, 0; green, 0; blue, 0 }  ][fill={rgb, 255:red, 0; green, 0; blue, 0 }  ][line width=0.75]      (0, 0) circle [x radius= 3.35, y radius= 3.35]   ;
\draw    (280,160) -- (360,160) ;
\draw [shift={(280,160)}, rotate = 0] [color={rgb, 255:red, 0; green, 0; blue, 0 }  ][fill={rgb, 255:red, 0; green, 0; blue, 0 }  ][line width=0.75]      (0, 0) circle [x radius= 3.35, y radius= 3.35]   ;
\draw    (350,160) -- (430,160) ;
\draw [shift={(350,160)}, rotate = 0] [color={rgb, 255:red, 0; green, 0; blue, 0 }  ][fill={rgb, 255:red, 0; green, 0; blue, 0 }  ][line width=0.75]      (0, 0) circle [x radius= 3.35, y radius= 3.35]   ;
\draw    (410,160) -- (540,160) ;
\draw [shift={(410,160)}, rotate = 0] [color={rgb, 255:red, 0; green, 0; blue, 0 }  ][fill={rgb, 255:red, 0; green, 0; blue, 0 }  ][line width=0.75]      (0, 0) circle [x radius= 3.35, y radius= 3.35]   ;
\draw    (610,160) -- (490,160) ;
\draw [shift={(610,160)}, rotate = 180] [color={rgb, 255:red, 0; green, 0; blue, 0 }  ][fill={rgb, 255:red, 0; green, 0; blue, 0 }  ][line width=0.75]      (0, 0) circle [x radius= 3.35, y radius= 3.35]   ;
\draw  [dash pattern={on 4.5pt off 4.5pt}] (200,118) .. controls (200,102.54) and (212.54,90) .. (228,90) -- (432,90) .. controls (447.46,90) and (460,102.54) .. (460,118) -- (460,202) .. controls (460,217.46) and (447.46,230) .. (432,230) -- (228,230) .. controls (212.54,230) and (200,217.46) .. (200,202) -- cycle ;
\draw   (566.25,160) .. controls (566.25,135.84) and (585.84,116.25) .. (610,116.25) .. controls (634.16,116.25) and (653.75,135.84) .. (653.75,160) .. controls (653.75,184.16) and (634.16,203.75) .. (610,203.75) .. controls (585.84,203.75) and (566.25,184.16) .. (566.25,160) -- cycle ;
\draw   (366.25,160) .. controls (366.25,135.84) and (385.84,116.25) .. (410,116.25) .. controls (434.16,116.25) and (453.75,135.84) .. (453.75,160) .. controls (453.75,184.16) and (434.16,203.75) .. (410,203.75) .. controls (385.84,203.75) and (366.25,184.16) .. (366.25,160) -- cycle ;
\draw  [draw opacity=0][dash pattern={on 4.5pt off 4.5pt}] (234.02,24.33) .. controls (301.21,42.17) and (350,95.71) .. (350,159) .. controls (350,219.98) and (304.7,271.92) .. (241.28,291.59) -- (185,159) -- cycle ; \draw  [dash pattern={on 4.5pt off 4.5pt}] (234.02,24.33) .. controls (301.21,42.17) and (350,95.71) .. (350,159) .. controls (350,219.98) and (304.7,271.92) .. (241.28,291.59) ;  
\draw    (60,160) -- (610,130) ;
\draw [shift={(610,130)}, rotate = 356.88] [color={rgb, 255:red, 0; green, 0; blue, 0 }  ][fill={rgb, 255:red, 0; green, 0; blue, 0 }  ][line width=0.75]      (0, 0) circle [x radius= 3.35, y radius= 3.35]   ;

\draw (41,162) node [anchor=north west][inner sep=0.75pt]   [align=left] {$\displaystyle v_{0}$};
\draw (616,162) node [anchor=north west][inner sep=0.75pt]   [align=left] {$x$};
\draw (350,172) node [anchor=north west][inner sep=0.75pt]   [align=left] {$t_0$};
\draw (391,172) node [anchor=north west][inner sep=0.75pt]   [align=left] {$t_0+s$};
\draw (261,172) node [anchor=north west][inner sep=0.75pt]   [align=left] {$t_{0} -r$};
\draw (276,22) node [anchor=north west][inner sep=0.75pt]   [align=left] {$\displaystyle D^{\varepsilon }( \xi )$};
\draw (180,240) node [anchor=north west][inner sep=0.75pt]   [align=left] {$\tau$ neighborhood of $\gamma _{x}([ t_{0} -r,t_{0}]) \subset st( \sigma_{\xi,\varepsilon}(x))$};
\draw (619,130) node [anchor=north west][inner sep=0.75pt]   [align=left] {$\displaystyle y$};
\draw (591,212) node [anchor=north west][inner sep=0.75pt]   [align=left] {$\displaystyle B( x,\delta )$};

\end{tikzpicture}

        \caption{The second case of the Star \thref{Star}.}
        \label{fig:Star}
\end{figure}
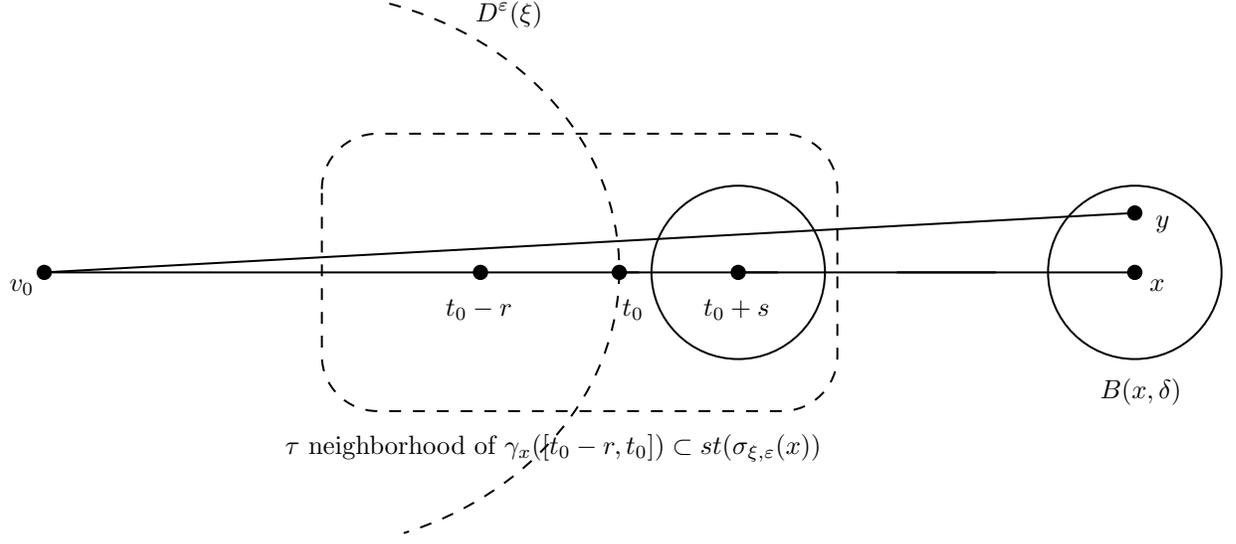

\begin{corollary}\thlabel{ConesAreOpen}
    Let $\xi \in \partial_{Stab}G$, $\UU$ a $\xi$--family, and $\varepsilon \in (0,1)$. Then $\fitTilde{\Cone}_{\UU,\varepsilon}(\xi)$ is open in $\overline{X}$.
\end{corollary}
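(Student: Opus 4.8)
The plan is to show that every $x \in \tCone_{\UU,\varepsilon}(\xi)$ has an open neighborhood in $\overline{X}$ contained in $\tCone_{\UU,\varepsilon}(\xi)$. The mechanism is the same in every case. Since the exit simplex $\sigma_{\xi,\varepsilon}(x)$ always lies in $Lk(\xi)$, its closure meets the subcomplex $D(\xi)$ in a nonempty face, so $\sigma_{\xi,\varepsilon}(x)$ has a vertex $v\in V(\xi)$. Suppose now $y$ is any point with $y\notin D(\xi)$, with $\gamma_y$ entering $D^\varepsilon(\xi)$, and with $\sigma_{\xi,\varepsilon}(x)$ a face of $\sigma_{\xi,\varepsilon}(y)$ (so $\sigma_{\xi,\varepsilon}(y)\subset Lk(\xi)$ too). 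Then $v$ is a vertex of $\sigma_{\xi,\varepsilon}(y)\cap V(\xi)$, and the identification of total spaces along faces gives $\total{\sigma_{\xi,\varepsilon}(y)}\subset\total{\sigma_{\xi,\varepsilon}(x)}\subset U_v$ in $\total{v}$, the last inclusion because $x\in\tCone_{\UU,\varepsilon}(\xi)$; the Consistency \thref{Consistency} then propagates this to every $w\in\sigma_{\xi,\varepsilon}(y)\cap V(\xi)$, so $y\in\tCone_{\UU,\varepsilon}(\xi)$. The first case is $x\in D^\varepsilon(\xi)\setminus D(\xi)$, where $\sigma_{\xi,\varepsilon}(x)=\sigma_x$; here the open set $st(\sigma_x)\cap D^\varepsilon(\xi)\setminus D(\xi)$ contains $x$, and each of its points $y$ lies in $Lk(\xi)$ with $\sigma_{\xi,\varepsilon}(y)=\sigma_y$ having $\sigma_x$ as a face and with $\gamma_y$ entering $D^\varepsilon(\xi)$ trivially, so the mechanism applies.

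The second case is $x\in X\setminus D^\varepsilon(\xi)$. Then $\gamma_x$ enters $D^\varepsilon(\xi)$ and ends outside it, so $[v_0,x]$ goes through $D^\varepsilon(\xi)$, and the Star \thref{Star} furnishes $\delta_0>0$ with $\sigma_{\xi,\varepsilon}(y)\subset st(\sigma_{\xi,\varepsilon}(x))$ for all $y\in B(x,\delta_0)\setminus D^\varepsilon(\xi)$. Choose $0<\delta\le\delta_0$ small enough that $B(x,\delta)$ avoids $D(\xi)$, that $B(x,\delta)\subset st(\sigma_x)$ in case $d(x,D(\xi))=\varepsilon$, and that $B(x,\delta)$ avoids $D^\varepsilon(\xi)$ in case $d(x,D(\xi))>\varepsilon$. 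For $y\in B(x,\delta)$: if $y\notin D^\varepsilon(\xi)$ the Star Lemma makes $\sigma_{\xi,\varepsilon}(x)$ a face of $\sigma_{\xi,\varepsilon}(y)$ and $\gamma_y$ go through $D^\varepsilon(\xi)$; if $y\in D^\varepsilon(\xi)$ (which forces $d(x,D(\xi))=\varepsilon$, hence $\sigma_{\xi,\varepsilon}(x)=\sigma_x$) then $\sigma_{\xi,\varepsilon}(y)=\sigma_y$ has $\sigma_x$ as a face since $y\in st(\sigma_x)$, and $\gamma_y$ enters $D^\varepsilon(\xi)$ trivially. Either way the mechanism gives $B(x,\delta)\subset\tCone_{\UU,\varepsilon}(\xi)$.

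The third case is $x=\eta\in\partial X$. The subcomplex $N(\xi)$ is bounded since $D(\xi)$ has at most $d_{max}$ simplices (\thref{Finiteness}), so the ray $\gamma_\eta$ eventually leaves $N(\xi)$; pick $T$ larger than the time it does so and larger than $\sup_{D(\xi)}d(v_0,\cdot)$, and set $z:=\gamma_\eta(T)\in X\setminus D^\varepsilon(\xi)$, so that $[v_0,z]$ goes through $D^\varepsilon(\xi)$ with $\sigma_{\xi,\varepsilon}(z)=\sigma_{\xi,\varepsilon}(\eta)$. Apply the Star \thref{Star} to $z$ for a constant $\delta_0$, and take $0<\delta\le\delta_0$ below $d(z,D(\xi))-\varepsilon$. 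I claim the basic cone neighborhood $\mathcal{O}=\{\,y\in\overline X: d(v_0,y)>T,\ d(\gamma_\eta(T),\gamma_y(T))<\delta\,\}$ of $\eta$ lies in $\tCone_{\UU,\varepsilon}(\xi)$: for $y\in\mathcal O$ we have $\gamma_y(T)\in B(z,\delta)\setminus D^\varepsilon(\xi)$, so applying the Star Lemma to $\gamma_y(T)$ shows $\gamma_y|_{[0,T]}$ goes through $D^\varepsilon(\xi)$ with exit simplex in $st(\sigma_{\xi,\varepsilon}(\eta))$; convexity of $t\mapsto d(\gamma_y(t),D(\xi))$ prevents $\gamma_y$ from re-entering $D^\varepsilon(\xi)$ after time $T$, so $\sigma_{\xi,\varepsilon}(y)$ is that exit simplex and has $\sigma_{\xi,\varepsilon}(\eta)$ as a face, and $d(v_0,y)>T$ forces $y\notin D(\xi)$; so the mechanism applies again.

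I expect the main obstacle to be the bookkeeping in the last two cases: the borderline situation $d(x,D(\xi))=\varepsilon$, where every ball about $x$ meets $D^\varepsilon(\xi)$ and one must inspect those points directly, and the construction at infinity, where the neighborhood of $\eta$ must simultaneously force its geodesics through $D^\varepsilon(\xi)$ and keep them off the compact complex $D(\xi)$. Both are handled by the convexity of the distance-to-$D(\xi)$ function and by pushing the Star Lemma's basepoint far out along $\gamma_\eta$.
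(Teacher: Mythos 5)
Your proof is correct and follows essentially the same route as the paper's: the Star Lemma for points off $D^\varepsilon(\xi)$, direct inspection of $st(\sigma_x)\cap D^\varepsilon(\xi)$ when $x\in D^\varepsilon(\xi)$, splitting on whether $d(x,D(\xi))=\varepsilon$ or $>\varepsilon$, and pushing the argument out along $\gamma_\eta$ for boundary points. Your main organizational difference is isolating the face-containment implication $\sigma_{\xi,\varepsilon}(x)\subset\overline{\sigma_{\xi,\varepsilon}(y)}\Rightarrow y\in\tCone_{\UU,\varepsilon}(\xi)$ as a reusable mechanism, which is exactly the corollary of Consistency (\thref{Consistency}) that the paper applies inline in each case; this is a clarity gain, not a different argument.
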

\begin{proof}
    Take $x \in \fitTilde{\Cone}_{\UU,\varepsilon}(\xi)$. By the Genuine Shadow \thref{Genuine Shadows}, $[v_0,x]$ goes through $D(\xi)$. Suppose for now that $x \in X$.
    
    If $x \in D^\varepsilon(\xi)$, it must be that $\sigma_{\xi, \varepsilon}(\xi) = \sigma_x \subset N_\UU(\xi)$. We can take $\varepsilon'$ so that $B(x,\varepsilon') \subset st(\sigma_x) \cap D^\varepsilon(\xi)$. If $y \in B(x,\varepsilon')$, then $\sigma_x \subset \sigma_y = \sigma_{\xi,\varepsilon}(y)$, hence $\total{\sigma_y} \subseteq \total{\sigma_x} \subset U_v$
    for any vertex $v$ of $\sigma_x \cap D(\xi)$. This is the definition of $y \in \fitTilde{\Cone}_{\UU,\varepsilon}(\xi)$. 

    If $x \notin D^\varepsilon(\xi)$, the Star Lemma \ref{Star} provides us with a suitable $\delta$. If $x \notin \partial D^\varepsilon(\xi)$, equivalently $d(x,D(\xi)) > \varepsilon$, we can assume $\delta$ is small enough so that $D^\varepsilon(\xi) \cap B(x,\delta) = \varnothing$. As in the previous paragraph, the conclusion of the Star Lemma \ref{Star} implies $y \in \Cone_{\UU, \varepsilon}(\xi) \subset \tCone_{\UU,\e}(\xi)$ for all $y \in B(x,\delta)$. 
    
    If $d(x,D(\xi)) = \e$, then $\sigma_{\xi,\varepsilon}(x) = \sigma _x \subset N_\UU(\xi)$. Decreasing $\delta$ if necessary, we can assume $B(x,\delta) \subset st(\sigma_x)$. If $y \in B(x,\delta)$, then either $y \in D^\e(\xi)$ or not. If $y \in D^\e(\xi)$, then $\sigma_{\xi,\e}(y) = \sigma_y \subset st(\sigma_x)$, and as in the first paragraph this implies $y \in \tCone_{\UU,\e}(\xi)$. If $y \notin D^\e(\xi)$, then the conclusion of the Star \thref{Star} implies $\sigma_{\xi,\e}(y) \subset st(\sigma_x)$, and again this implies $y \in \Cone_{\UU,\e}(\xi)$.

    If $x \in \partial X$, then we can choose some $x'$ far along $[v_0,x)$ so that $\sigma_{\xi,\e}(x) = \sigma_{\xi,\e}(x')$. Using this $x'$ and the argument above, we can get a $\delta$ so that $B(x',\delta) \subset \tCone_{\UU,\e}(\xi)$. The set $\{y \in \overline{X} \, | \, \gamma_y \text{ goes through } B(x',\delta)\}$ defines an open neighborhood of $x$ in $\overline{X}$ and is contained in $\tCone_{\UU,\e}(\xi)$.
\end{proof}

Notice that by definition $\Cone_{\UU,\e}(\xi) \cap \overline{D^\e(\xi)} \subset \partial \overline{D^\e(\xi)}$ and the Star \thref{Star} implies that points in $\Cone_{\UU,\e}(\xi) \setminus \overline{D^\e(\xi)}$ have an open neighborhood contained in $\Cone_{\UU,\e}(\xi)$. It is exactly the points in this intersection which prevent $\Cone_{\UU,\e}(\xi)$ from being open, since any ball around them would have points $x \in D^\e(\xi)$ so that $\gamma_x$ does not \emph{leave} $D^\e(\xi)$.

\section{The Topology of $\overline{Z}$}\label{section:The Topology}

\subsection{Definition of the Topology}

A point in $\overline{Z}$ is in either $Z, \partial_{Stab}G$ or $\partial X$. We define a topology on $\overline{Z}$ by defining a neighborhood basis for each type of point. Consider the example in the introduction. We know $\partial \pi_1(\Sigma) = S^1$, and see $\partial_{\langle \gamma \rangle}\pi_1(\Sigma)$ as a tree of circles by contracting the endpoints of lifts of $c$. We encourage the reader to reflect on the what topology the tree of circles has as a quotient of $S^1$. This topology is what we are emulating in the definitions below.

Recall that each $\hatt{\sigma}$ is the product $\{\sigma\} \times \sigma \times \total{\sigma}$, that $\hatto{\sigma} = \{\sigma\} \times \sigma \times X_\sigma$, and that $Z$ is constructed by gluing the $\hatto{\sigma}$ to eachother along faces with the $\varphi_{\sigma,\sigma'}:X_{\sigma'} \longrightarrow X_\sigma$. We call the second coordinate the \emph{simplex coordinate} and the third coordinate the \emph{cusped coordinate}, see \thref{defn of c}. Also recall from \thref{defn of p} that $p:Z \sqcup \partial X \longrightarrow \overline{X}$ is the $G$--equivariant map which projects to the simplex coordinate and is the identity on $\partial X$.

\begin{definition}\label{defn:open sets in Z}
    For $z \in Z$, let $\mathcal{O}_{\overline{Z}}(z)$ be the collection of open sets of $Z$ containing $z$. 
\end{definition} 
 
As a CAT$(0)$ space, we have a natural topology on $\overline{X} = X \cup \partial X$, which we use to define neighborhoods of $\eta \in \partial X \subset \overline{Z}$.
\begin{definition}
    For a base point $v_0 \in X$ and $\eta \in \overline{X}$, let
\[V_{r,\delta}(\eta) = \{x \in \overline{X} \; | \; d(v_0,x) > r \text{ and } \gamma_x(r) \in B(\gamma_\eta(r),\delta)\}.\] 
The $V_{r,\delta}(\eta)$ form a basis of (not necessarily open) neighborhoods for $\eta$ in $\overline{X}$, denoted $\mathcal{O}_{\overline{X}}(\eta)$. For any open set $U \subset \overline{X}$ containing $\eta$, let 
\[V_U(\eta) =p^{-1}(U) \sqcup  \{ \xi \in \partial_{Stab}G \;|\;D(\xi) \subset U\}.\]
To define a neighborhood basis, let $\OO_{\overline{Z}}(\eta) = \{V_U(\eta) \; | \; U \subset \overline{X} \text{ the interior of some } V_{r,\delta}(\eta)\}$.
\end{definition}

\begin{lemma}\cite[6.3]{Martin}\thlabel{RegularityOfBoundary}
    Let $\eta \in \partial X$ with $U$ a neighborhood of $\eta$ in $\overline{X}$, and $k \geq 0$. Then there exists a neighborhood $U' \subset U$  of $\eta$ so that $d(X \setminus U, U') > k$.
\end{lemma}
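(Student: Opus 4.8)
The statement asserts that for $\eta \in \partial X$, a neighborhood $U$ of $\eta$ in $\overline X$, and any $k \geq 0$, one can find a smaller neighborhood $U'$ of $\eta$ whose distance to $X \setminus U$ exceeds $k$. The natural approach is to shrink the parameters in the basic neighborhood system $V_{r,\delta}(\eta)$ and use the thin-triangle/CAT(0) geometry of the cone topology on $\overline X$. First I would recall that the sets $V_{r,\delta}(\eta)$ form a neighborhood basis at $\eta$, so without loss of generality $U$ contains some $V_{r_0,\delta_0}(\eta)$ with $\delta_0$ small enough that $B(\gamma_\eta(r_0),\delta_0)$ lies in the interior of $st(\sigma)$ for the relevant simplices, or at least that $V_{r_0,\delta_0}(\eta)$ is an honest (open) neighborhood. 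Then I would set $U' = \mathrm{int}\, V_{r_1,\delta_1}(\eta)$ for a carefully chosen larger $r_1 \geq r_0 + k + C$ and smaller $\delta_1$, where $C$ absorbs the hyperbolicity/comparison constant, and show $d(U', X\setminus U) > k$.

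The key geometric input is the convexity of the metric in a CAT(0) space together with the fact that geodesics from $v_0$ to points of $V_{r_1,\delta_1}(\eta)$ fellow-travel the ray $\gamma_\eta$ up to time roughly $r_1$. Concretely: if $x \in V_{r_1,\delta_1}(\eta)$ and $y \in X$ with $d(x,y) \leq k$, I want to conclude $y \in U$, i.e. $d(v_0,y) > r_0$ and $\gamma_y(r_0) \in B(\gamma_\eta(r_0),\delta_0)$. The first condition is immediate from the triangle inequality once $r_1 > r_0 + k$. For the second, I would use convexity of $t \mapsto d(\gamma_x(t), \gamma_y(t))$ (both parameterized from $v_0$, noting $v_0$ is a common endpoint so this function is $0$ at $t=0$ and at most $k$ at $t = \min(d(v_0,x),d(v_0,y))$, hence linearly controlled) to get $d(\gamma_x(r_0), \gamma_y(r_0)) \leq k r_0 / r_1$, which is small when $r_1$ is large. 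Combined with $d(\gamma_x(r_0), \gamma_\eta(r_0))$ being small — which follows from $\gamma_x(r_1) \in B(\gamma_\eta(r_1), \delta_1)$ and again convexity of the distance function between the two geodesic rays $\gamma_x$ and $\gamma_\eta$ from $v_0$, giving $d(\gamma_x(r_0),\gamma_\eta(r_0)) \leq \delta_1 r_0/r_1$ — we get $d(\gamma_y(r_0), \gamma_\eta(r_0)) \leq (k+\delta_1) r_0 / r_1 < \delta_0$ provided $r_1$ is chosen large enough relative to $r_0, k, \delta_0$. This places $y \in V_{r_0,\delta_0}(\eta) \subset U$.

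Putting this together: given $U \supset V_{r_0,\delta_0}(\eta)$, pick any $\delta_1 \leq \delta_0$ and then pick $r_1$ with $r_1 > r_0 + k$ and $r_1 > (k+\delta_0) r_0 / \delta_0$, set $U' = \mathrm{int}\, V_{r_1,\delta_1}(\eta)$, which is a genuine neighborhood of $\eta$ contained in $V_{r_0,\delta_0}(\eta) \subset U$. The computation above shows every point within distance $k$ of $U'$ lies in $V_{r_0,\delta_0}(\eta) \subset U$, i.e. $X \setminus U$ is disjoint from the $k$-neighborhood of $U'$, which is exactly $d(X \setminus U, U') > k$ (strictness is harmless after shrinking slightly, or by taking the strict inequality $r_1 > r_0 + k$ and a little room in the $\delta$ estimate).

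\textbf{Main obstacle.} The only subtlety I anticipate is bookkeeping about whether $V_{r,\delta}(\eta)$ is literally open versus merely a neighborhood — the statement already refers to ``the interior of some $V_{r,\delta}(\eta)$'', so one should phrase everything in terms of interiors and be slightly careful that the interior still contains a smaller $V_{r',\delta'}(\eta)$; this is standard for the cone topology on a CAT(0) boundary. A secondary point is making sure the geodesic $\gamma_y$ is long enough to evaluate at time $r_0$, which is guaranteed by $d(v_0,y) \geq d(v_0,x) - k > r_1 - k > r_0$. None of this is hard; it is essentially the same lemma as \cite[6.3]{Martin}, and I would simply transcribe the CAT(0) comparison argument with explicit constants.
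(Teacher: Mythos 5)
The paper does not prove this lemma itself; it simply cites \cite[Lemma~6.3]{Martin}, so there is no in-paper argument to compare against. Your self-contained CAT(0) convexity argument is essentially correct and is a natural way to establish the statement.

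One small arithmetic slip is worth flagging. When you assert that $d(\gamma_x(t),\gamma_y(t))$ is ``at most $k$'' at $t=\min(d(v_0,x),d(v_0,y))$, the honest bound is $2k$: writing $T=d(v_0,x)\le T'=d(v_0,y)$, one has $d(\gamma_x(T),\gamma_y(T))=d(x,\gamma_y(T))\le d(x,y)+(T'-T)\le 2k$, since $|T'-T|\le d(x,y)\le k$. Relatedly, the convexity statement in \cite[II.2.2]{BH} is stated for geodesics parameterized proportionally to arc length on a common interval; converting to arc-length parameterization when $T\ne T'$ introduces a reparameterization term of size $r_0|T'-T|/T\le kr_0/r_1$, and the clean estimate that actually falls out is $d(\gamma_x(r_0),\gamma_y(r_0))\le 2kr_0/r_1$ rather than $kr_0/r_1$. (By contrast the bound $d(\gamma_x(r_0),\gamma_\eta(r_0))<\delta_1 r_0/r_1$ is fine as stated, since $\gamma_x$ and $\gamma_\eta$ are both defined on $[0,r_1]$ and one can reparameterize them on the same interval with no correction.) None of this affects your conclusion: choosing $r_1>r_0+k$ large enough that $(2k+\delta_1)r_0/r_1<\delta_0$ still places every $y$ within distance $k$ of $U'$ inside $V_{r_0,\delta_0}(\eta)\subset U$, so $d(X\setminus U,U')>k$ as required.
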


\begin{definition}
    Let $\xi \in \partial_{Stab}G$ and suppose $\UU$ is a $\xi$--family and $\e \in (0,1)$. Let $W_{\UU,\varepsilon}(\xi)$ be the set of points $z \in Z$ such that $p(z) \in D^\varepsilon(\xi)$ and for every vertex $v \in \sigma_{p(z)} \cap D(\xi)$, the cusped coordinate $c(z)$ is in $U_v$. Let $\partial W_{\UU,\e}(\xi)$ be the points $\xi' \in \partial_{Stab}G$ such that $D(\xi') \setminus D(\xi) \subset \fitTilde{\Cone}_{\mathcal{U},\varepsilon}(\xi)$ and for every vertex $v$ of $D(\xi) \cap D(\xi')$ we have $\xi' \in U_v$. Let
    \[V_{\mathcal{U},\varepsilon}(\xi):= W_{\UU,\varepsilon}(\xi) \cup p^{-1}(\Cone_{\UU,\e}(\xi)) \cup \partial W_{\UU,\e}(\xi) \]
    and let $\OO_{\overline{Z}}(\xi) = \{V_{\UU,\e}(\xi) \; |\; \UU \text{ a } \xi \text{--family}, \e \in(0,1)\}$. 
\end{definition}

\begin{remark}
    Suppose $\xi \in \partial_{Stab}G$ and $V_{\UU,\e}(\xi) \in \OO_{\overline{Z}}(\xi)$. Suppose $z \in Z$ has $p(z) = x\in \tCone_{\UU,\e}(\xi) \setminus \Cone_{\UU,\e}(\xi)$, then $x \in D^\e(\xi)$, $\sigma_{\xi,\e}(x) = \sigma_x$, and for any vertex $v \in \sigma_x \cap V(\xi)$, we have $\total{\sigma_x} \subset U_v$ in $\total{v}$. In particular, $c(z) \in U_v$, so $z \in W_{\UU,\e}(\xi)$. This shows 
    \[p^{-1}(\tCone_{\UU,\e}(\xi)) \subset W_{\UU,\e}(\xi) \cup p^{-1}(\Cone_{\UU,\e}(\xi)).\]
    This means the definition of $V_{\UU,\e}(\xi)$ does not change if we take the preimage of a $\tCone$ instead of a $\Cone$, since we are only adding points which would be in $W_{\UU,\e}(\xi)$ anyway.  
\end{remark}

Note that by Assumption \ref{shadow assumption}, the admissible $\xi$--families in this definition depend on the choice of base point. For unrelated $\xi$--families $\UU,\UU'$ and $\e,\e' \in (0,1)$, there is no obvious relationship between $V_{\UU,\e}(\xi)$ and $V_{\UU',\e'}(\xi)$, but we do have the following. 

\begin{lemma}\thlabel{ContainmentOfV_UU}
Let $\xi \in \partial_{Stab}G$ with $\xi$--families $\UU,\UU'$ and $0 < \e' \leq \e \in (0,1)$. If $\UU'$ is $d_{max}$--nested in $\UU$, then $V_{\UU',\varepsilon'}(\xi) \subset V_{\UU,\varepsilon}(\xi)$.    
\end{lemma}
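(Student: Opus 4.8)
The plan is to unwind the definitions of $V_{\UU',\varepsilon'}(\xi)$ and $V_{\UU,\varepsilon}(\xi)$ piece by piece, treating the three pieces $W$, $p^{-1}(\Cone)$, and $\partial W$ separately, and using that $d_{max}$--nested families are in particular $d_{max}$--refined (so the Crossing and Refinement Lemmas apply with constant $d_{max}$). First I would handle the interior piece $W_{\UU',\varepsilon'}(\xi) \subset V_{\UU,\varepsilon}(\xi)$: if $z \in W_{\UU',\varepsilon'}(\xi)$ then $p(z) \in D^{\varepsilon'}(\xi) \subset D^{\varepsilon}(\xi)$ and $c(z) \in U'_v$ for each vertex $v$ of $\sigma_{p(z)} \cap D(\xi)$; since $U'_v \subset U_v$ (nesting implies the closure, hence the set, is contained), we get $c(z) \in U_v$ as well, so $z \in W_{\UU,\varepsilon}(\xi) \subset V_{\UU,\varepsilon}(\xi)$. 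This step is essentially immediate.

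The substantive piece is showing $p^{-1}(\Cone_{\UU',\varepsilon'}(\xi)) \subset V_{\UU,\varepsilon}(\xi)$ and the analogous statement for $\partial W_{\UU',\varepsilon'}(\xi)$. For a point $x \in \Cone_{\UU',\varepsilon'}(\xi)$, the geodesic $\gamma_x$ goes through $D^{\varepsilon'}(\xi)$ and $\total{\sigma_{\xi,\varepsilon'}(x)} \subset U'_v$ for vertices $v$ of the exit simplex in $V(\xi)$. In particular $\sigma_{\xi,\varepsilon'}(x) \subset N_{\UU'}(\xi)$. I would use the Star Lemma / Refinement Lemma machinery: since $\UU'$ is $d_{max}$--nested (hence $d_{max}$--refined, as $F(\max(d_{max},d_{max})) = F(d_{max})$) in $\UU$, and the exit simplex of $x$ at level $\varepsilon'$ lies in $N_{\UU'}(\xi)$, the Crossing Lemma \ref{Crossing} applied along a short path of simplices (length at most $F(d_{max})$, via \thref{Short Paths of Simplices} with $K = D(\xi)$, $K' = \sigma_{\xi,\varepsilon}(x)$) gives $\total{\sigma_{\xi,\varepsilon}(x)} \subset U_v$. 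One must also check $\gamma_x$ goes through $D^{\varepsilon}(\xi)$ and not merely $D^{\varepsilon'}(\xi)$ — but $\varepsilon' \leq \varepsilon$ so $D^{\varepsilon'}(\xi) \subset D^{\varepsilon}(\xi)$, and since $x \notin D^{\varepsilon'}(\xi)$ need not imply $x \notin D^{\varepsilon}(\xi)$, the point $x$ might land in the collar $D^{\varepsilon}(\xi) \setminus D^{\varepsilon'}(\xi)$, in which case $p^{-1}(x) \subset W_{\UU,\varepsilon}(\xi)$ by the Remark following the definition of $V_{\UU,\e}(\xi)$ (the $\tCone$ vs $\Cone$ remark) — so either way $p^{-1}(x) \subset V_{\UU,\varepsilon}(\xi)$. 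I expect \emph{this} bookkeeping — tracking whether the exit simplex at level $\varepsilon'$ versus level $\varepsilon$ changes, and invoking the genuine-shadow/$\tCone$ remark to absorb the collar — to be the main obstacle; it is not deep but requires care with the definitions.

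For the boundary piece, if $\xi' \in \partial W_{\UU',\varepsilon'}(\xi)$ then $D(\xi') \setminus D(\xi) \subset \tCone_{\UU',\varepsilon'}(\xi)$ and $\xi' \in U'_v$ for vertices $v$ of $D(\xi) \cap D(\xi')$. The inclusion $\xi' \in U'_v \subset U_v$ is again immediate from nesting. For the cone condition, I would apply the Refinement Lemma \ref{Refinement Lemma}: any simplex of $D(\xi') \setminus D(\xi)$ can be reached by a short path of simplices in $X \setminus D(\xi)$ (of length at most $d_{max}$, since $D(\xi')$ has at most $d_{max}$ simplices by \thref{Finiteness}) starting from a simplex whose geodesic from $v_0$ enters $D^{\varepsilon'}(\xi)$ with exit simplex in $N_{\UU'}(\xi)$; the Refinement Lemma then places the whole path in $\tCone_{\UU,\varepsilon'}(\xi) \subset \tCone_{\UU,\varepsilon}(\xi)$ (using $\varepsilon' \leq \varepsilon$ and monotonicity of $\tCone$ in $\varepsilon$, which follows since larger $\varepsilon$ gives a larger $D^\varepsilon(\xi)$ and the exit-simplex condition is inherited via Crossing). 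Finally I would collect the three inclusions to conclude $V_{\UU',\varepsilon'}(\xi) \subset V_{\UU,\varepsilon}(\xi)$. A subtle point worth double-checking is the monotonicity $\tCone_{\UU,\varepsilon'}(\xi) \subset \tCone_{\UU,\varepsilon}(\xi)$ for $\varepsilon' \le \varepsilon$: if $\gamma_x$ enters $D^{\varepsilon'}(\xi)$ it enters $D^{\varepsilon}(\xi)$, but the exit simplex can change, so one again invokes \thref{Short Paths of Simplices} plus the Crossing Lemma to transport the containment condition from $\sigma_{\xi,\varepsilon'}(x)$ to $\sigma_{\xi,\varepsilon}(x)$ — which is exactly why $d_{max}$--nesting (not mere nesting) is assumed in the hypothesis.
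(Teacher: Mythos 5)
Your overall plan matches the paper's, but the substantive step contains a genuine error: you assert that ``$\UU'$ is $d_{max}$--nested (hence $d_{max}$--refined, as $F(\max(d_{max},d_{max})) = F(d_{max})$)'' --- this implication runs the wrong way. Unwinding the definition, ``$d_{max}$--refined'' means ``$F(d_{max})$--nested,'' which is a \emph{stronger} condition than ``$d_{max}$--nested'' unless $F(d_{max}) \leq d_{max}$, and nothing in \thref{Short Paths of Simplices} gives you that. Your parenthetical $F(\max(d_{max},d_{max}))=F(d_{max})$ is just simplifying the $\max$; it does not compare $F(d_{max})$ to $d_{max}$. Consequently the appeals to \thref{Short Paths of Simplices} plus the Crossing Lemma with paths of length up to $F(d_{max})$, and the appeal to the Refinement Lemma for the $\partial W$ piece and again for the ``monotonicity'' remark at the end, are all unjustified under the lemma's actual hypothesis of $d_{max}$--nesting.

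The correct route is simpler and avoids \thref{Short Paths of Simplices} entirely. Given $x \in \tCone_{\UU',\varepsilon'}(\xi)$, the Genuine Shadow \thref{Genuine Shadows} says $[v_0,x]$ meets $D(\xi)$, and the portion of $[v_0,x]$ inside $N(\xi)$ running from $\sigma_{\xi,\varepsilon'}(x)$ to $\sigma_{\xi,\varepsilon}(x)$ is a single geodesic segment, which by \thref{Finiteness}/\thref{Containment} meets at most $d_{max}$ simplices. That segment directly yields a path of simplices of length at most $d_{max}$ between the two exit simplices, which is exactly what the Crossing Lemma needs with $d_{max}$--nesting. This gives $\tCone_{\UU',\varepsilon'}(\xi) \subset \tCone_{\UU,\varepsilon}(\xi)$ at once, and then the $\partial W$ piece follows without the Refinement Lemma: $D(\xi')\setminus D(\xi) \subset \tCone_{\UU',\varepsilon'}(\xi) \subset \tCone_{\UU,\varepsilon}(\xi)$, and $\xi' \in U'_v \subset U_v$ handles the vertex condition. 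Your handling of $W_{\UU',\varepsilon'}(\xi)$ (nesting gives $U'_v \subset U_v$, and $\varepsilon' \leq \varepsilon$ gives $D^{\varepsilon'}(\xi)\subset D^{\varepsilon}(\xi)$) and your observation that points of $\tCone_{\UU',\varepsilon'}(\xi)$ landing in the collar $D^{\varepsilon}(\xi)\setminus D^{\varepsilon'}(\xi)$ are absorbed by $W_{\UU,\varepsilon}(\xi)$ via the $\tCone$/$\Cone$ remark are both correct.
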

\begin{proof}
    Let $x \in \fitTilde{\Cone}_{\UU',\varepsilon'}(\xi)$.
    Then $[v_0,x]$ meets $D(\xi)$ by the Genuine Shadow  \thref{Genuine Shadows}, and because a geodesic segment in $N(\xi)$ can meet at most $d_{max}$ simplices, we get a path of simplices of length at most $d_{max}$ from $\sigma_{\xi,\varepsilon'}(x)$ to $\sigma_{\xi,\varepsilon}(x)$. Because $x \in \fitTilde{\Cone}_{\UU',\varepsilon'}(\xi)$, we know $\sigma_{\xi,\varepsilon'}(x) \subset N_{\UU'}(\xi)$, so the Crossing Lemma \ref{Crossing} implies that $\total{\sigma_{\xi,\varepsilon}(x)}\subset U_v$ in $\total{v}$ for any vertex $v$ of $\sigma_{\xi,\varepsilon}(x) \cap V(\xi)$. Thus $x \in \fitTilde{\Cone}_{\UU,\varepsilon}(\xi)$ as well. This shows $\fitTilde{\Cone}_{\UU',\varepsilon'}(\xi) \subset \fitTilde{\Cone}_{\UU,\varepsilon}(\xi)$. If $\eta \in \partial X \cap V_{\UU',\varepsilon'}(\xi)$ or $z \in p^{-1}(\tCone_{\UU',\e'}(\xi)$, this immediately implies $\eta,z \in V_{\UU,\varepsilon}(\xi)$ as well. If $\xi' \in  \partial_{Stab}G \cap V_{\UU',\varepsilon'}(\xi)$, then this implies $D(\xi') \setminus D(\xi) \subset \fitTilde{\Cone}_{\UU,\varepsilon}(\xi)$. For a vertex $v$ of $D(\xi') \cap D(\xi)$, we have $\xi' \in U'_v \subset U_v$, so $\xi' \in V_{\UU,\varepsilon}(\xi)$ too.

    If $z \in W_{\UU',\e'}(\xi)$ then by definition $p(z) = x \in D^{\e'}(\xi)$ and $c(z) \in U'_v$ for every vertex $v \in \sigma_x \cap V(\xi)$. Since $\e' \leq \e$ and $U'_v \subset U_v$, we have $z \in W_{\UU,\e}(\xi)$.
\end{proof}

The following lemma provides conditions where one half of the definition of $\xi' \in \partial W_{\UU,\e}(\xi)$ implies the other half. 

\begin{lemma}\thlabel{Double Refinement}
    Let $\xi,\xi' \in \partial_{Stab}G$, $\varepsilon \in (0,1)$, and let $\UU'' \subset \UU' \subset \UU$ be $\xi$--families, each $d_{max}$--refined and $d_{max}$--nested in the next. 
    \begin{enumerate}[label = (\roman*)]
        \item If $\xi' \in U'_v$ for some vertex $v$ of $D(\xi)\cap D(\xi')$, then $\xi' \in V_{\UU,\varepsilon}(\xi).$
    
        \item If $D(\xi') \cap \fitTilde{\Cone}_{\UU'',\varepsilon}(\xi) \neq \varnothing$, then $\xi' \in V_{\UU,\varepsilon}(\xi)$.
    \end{enumerate}
\end{lemma}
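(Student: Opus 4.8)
\textbf{Proof plan for \thref{Double Refinement}.}

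The plan is to prove both parts by showing that the ``easy half'' of the membership criterion for $\partial W_{\UU,\varepsilon}(\xi)$ propagates to the ``hard half'' using the Refinement Lemma \ref{Refinement Lemma} and the Crossing Lemma \ref{Crossing}, exploiting the bound $d_{max}$ on the number of simplices in any domain (\thref{Finiteness}) and the fact that any geodesic segment in $N(\xi)$ meets at most $d_{max}$ open simplices. Recall that $\xi' \in V_{\UU,\varepsilon}(\xi)$ means $\xi' \in W_{\UU,\varepsilon}(\xi)$ (impossible, as $\xi' \in \partial_{Stab}G$) or $\xi' \in p^{-1}(\Cone_{\UU,\varepsilon}(\xi))$ (also impossible) or $\xi' \in \partial W_{\UU,\varepsilon}(\xi)$; so in both parts the goal is to verify the two conditions defining $\partial W_{\UU,\varepsilon}(\xi)$: namely $D(\xi') \setminus D(\xi) \subset \tCone_{\UU,\varepsilon}(\xi)$ and $\xi' \in U_v$ for every vertex $v$ of $D(\xi) \cap D(\xi')$. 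Note $D(\xi)$ and $D(\xi')$ are convex with at most $d_{max}$ simplices, and if $v,w$ are both vertices of the connected complex $D(\xi)\cap D(\xi')$ then, since $\xi'$ is represented compatibly across $D(\xi')$ and $\xi$ across $D(\xi)$, membership $\xi'\in U_v$ for one vertex $v$ forces $\xi'\in U_w$ for all such $w$ by the Consistency argument along the $1$--skeleton of $D(\xi)\cap D(\xi')$ combined with the $\xi$--family condition.

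For part (i): assume $\xi' \in U'_v$ for some vertex $v$ of $D(\xi)\cap D(\xi')$. First I would propagate this to $\xi' \in U_v$ for every vertex $v$ of $D(\xi)\cap D(\xi')$ — one vertex gives it for all by the $\xi$--family/Consistency argument just sketched, using $\UU' \subset \UU$. This establishes the second condition. For the first condition, take $y \in D(\xi') \setminus D(\xi)$; I want $y \in \tCone_{\UU,\varepsilon}(\xi)$. The point $v$ is a vertex of $D(\xi')$ and $\xi' \in U'_v$, so any simplex $\sigma$ of $Lk(\xi)$ that meets $[v,y]$ (which runs inside the convex complex $D(\xi')$, avoiding $D(\xi)$ except possibly at $v$) satisfies $\total{\sigma} \cap U'_v \ne \varnothing$, hence $\sigma \subset N_{\UU'}(\xi)$; actually it is cleaner to work with the exit simplex: since $v \in D(\xi)$ and $y\notin D(\xi)$, the geodesic $[v_0,y]$ can be analyzed together with $[v,y]$, and because $D(\xi')$ is a convex subcomplex with $\le d_{max}$ simplices disjoint from $D(\xi)$ away from $v$, a short path of simplices connects $\sigma_{\xi,\varepsilon}(y)$ to a simplex in $N_{\UU'}(\xi)$. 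Then the Refinement Lemma \ref{Refinement Lemma} (with $\UU'$ $d_{max}$--refined in $\UU$) upgrades this to $y \in \tCone_{\UU,\varepsilon}(\xi)$. Running this over all $y$ gives $D(\xi')\setminus D(\xi) \subset \tCone_{\UU,\varepsilon}(\xi)$, so $\xi' \in \partial W_{\UU,\varepsilon}(\xi) \subset V_{\UU,\varepsilon}(\xi)$.

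For part (ii): assume $D(\xi') \cap \tCone_{\UU'',\varepsilon}(\xi) \ne \varnothing$, say $x_1 \in D(\xi')$ lies in $\tCone_{\UU'',\varepsilon}(\xi)$. Then $\sigma_{\xi,\varepsilon}(x_1) \subset N_{\UU''}(\xi)$. Now let $y$ be any point of $D(\xi')$ not in $D(\xi)$; since $D(\xi')$ is convex with at most $d_{max}$ simplices and (on the portion relevant here) avoids $D(\xi)$, I can apply the Refinement Lemma \ref{Refinement Lemma} to a path of simplices of length $\le d_{max}$ inside $D(\xi') \setminus D(\xi)$ joining $\sigma_{x_1}$ to $\sigma_y$, using that $\UU''$ is $d_{max}$--refined in $\UU'$, to conclude $y \in \tCone_{\UU',\varepsilon}(\xi)$ — in particular every such $y$, hence $D(\xi')\setminus D(\xi) \subset \tCone_{\UU',\varepsilon}(\xi) \subset \tCone_{\UU,\varepsilon}(\xi)$. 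Some care is needed when the path from $x_1$ to $y$ must pass through $D(\xi)$: in that case $D(\xi)\cap D(\xi')$ is nonempty, one picks a vertex $v$ of it, handles the two sub-paths $[x_1,v]$ and $[v,y]$ separately (each inside $D(\xi')$, meeting $D(\xi)$ only at endpoints), and also deduces $\xi' \in U'_v$ from $\sigma_{\xi,\varepsilon}(x_1)\subset N_{\UU''}(\xi)$ via the Crossing Lemma along a short path of simplices from $\sigma_{\xi,\varepsilon}(x_1)$ to a simplex of $Lk(\xi)$ met by $[v_0,v]$, then invoke part (i). If instead $D(\xi)\cap D(\xi') = \varnothing$ there are no vertices to check and the first condition alone suffices. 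Either way $\xi' \in \partial W_{\UU,\varepsilon}(\xi) \subset V_{\UU,\varepsilon}(\xi)$.

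\textbf{Main obstacle.} The delicate point is the bookkeeping of exit simplices versus witnesses when the relevant geodesics interact with $D(\xi)$ in complicated ways — precisely why three nested-and-refined families $\UU'' \subset \UU' \subset \UU$ are needed rather than two: one refinement step converts ``a point of $D(\xi')$ is in a small cone'' into ``$D(\xi')\setminus D(\xi)$ lies in an intermediate cone'', and the second converts the intermediate-cone and $U'_v$--membership data into honest $V_{\UU,\varepsilon}(\xi)$--membership. Making sure every application of \thref{Short Paths of Simplices} is to a genuinely connected subcomplex disjoint from the convex complex $D(\xi)$ (possibly after splitting at a vertex of $D(\xi)\cap D(\xi')$) is the step I expect to require the most care.
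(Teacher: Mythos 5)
Your plan is essentially the same as the paper's. Both proofs propagate $\xi' \in U'_v$ across $D(\xi)\cap D(\xi')$ via the $\xi$--family condition, use the Refinement Lemma on short paths of simplices inside $D(\xi')$ (length bounded by $d_{max}$ via \thref{Finiteness}) to get $D(\xi')\setminus D(\xi)\subset\tCone_{\UU,\varepsilon}(\xi)$, and reduce (ii) to (i) precisely in the case $D(\xi)\cap D(\xi')\ne\varnothing$ by first establishing $\xi'\in U'_v$ for some vertex $v$.

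A few places where the paper's implementation is cleaner than your sketch and worth noting when you write this out. In (i), rather than tracking $[v,y]$ or the exit simplex $\sigma_{\xi,\varepsilon}(y)$ (whose definition depends on the basepoint $v_0$, not on $v$), the paper works purely combinatorially: for each connected component $K$ of $D(\xi')\setminus D(\xi)$ it picks a single simplex $\sigma\subset K\cap Lk(\xi)$, observes $\xi'\in\partial G_\sigma\cap U'_w$ directly gives $\sigma\subset N_{\UU'}(\xi)$, then applies the Refinement Lemma once (taking $x_1\in\sigma\cap D^\varepsilon(\xi)$ so that $\sigma_{\xi,\varepsilon}(x_1)=\sigma$ automatically). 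No Crossing Lemma, no geodesic from $v_0$, and no appeal to \thref{Short Paths of Simplices} is needed; your ``cleaner to work with the exit simplex'' pivot would actually have you reaching for the Crossing Lemma (which requires a path inside $Lk(\xi)$, not merely $X\setminus D(\xi)$), whereas the first approach you sketched is the one that matches the Refinement Lemma's hypotheses. Similarly in (ii), case $D(\xi)\cap D(\xi')\ne\varnothing$, the paper does not detour through the Crossing Lemma and a simplex ``met by $[v_0,v]$'': it connects $\sigma_x$ by a path inside $D(\xi')$ to some $\sigma\subset D(\xi')\cap Lk(\xi)$, applies the Refinement Lemma to get $\sigma\subset\tCone_{\UU',\varepsilon}(\xi)$, reads off $\xi'\in\partial G_\sigma\subset U'_v$ for any vertex $v$ of $\sigma\cap D(\xi)$, and then invokes (i). Your route could probably be made to work but would need additional constants to line up via \thref{Short Paths of Simplices}; the paper avoids that bookkeeping entirely.
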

\begin{proof}
    We begin with $(i)$. By the definition of a $\xi$--family and the assumption $\xi' \in U'_v \subset \total{v}$, we have $\xi' \in U'_w$ for any vertex $w$ of $D(\xi)\cap D(\xi')$. If $D(\xi') \subset D(\xi)$, this immediately implies $\xi' \in V_{\UU',\varepsilon}(\xi) \subset V_{\UU,\varepsilon}(\xi)$ since the other condition is vacuously true. If not, let $K$ be a connected component of $D(\xi')\setminus D(\xi)$ and choose a simplex $\sigma \subset K \cap Lk(\xi)$, which exists since $D(\xi')$ is connected. Any other simplex of $K$ can be reached by a path of simplices of length at most $d_{max}$ from $\sigma$ since $D(\xi')$ only has at most $d_{max}$ simplices. For any vertex $w$ of $\sigma \cap D(\xi)$, we have
    \[\xi' \in \partial G_\sigma \cap U'_w \Longrightarrow \sigma \subset N_{\UU'}(\xi)\]
    so applying the Refinement Lemma \ref{Refinement Lemma} implies $K \subset \fitTilde{\Cone}_{\UU,\varepsilon}(\xi)$. Repeating this for every such component tells us $D(\xi')\setminus D(\xi) \subset \fitTilde{\Cone}_{\UU,\varepsilon}(\xi)$, so $\xi' \in V_{\UU,\varepsilon}(\xi)$.

    For $(ii)$, choose $x \in D(\xi') \cap \fitTilde{\Cone}_{\UU'',\varepsilon}(\xi)$. Because cones are defined to be disjoint from domains, $x \notin D(\xi)$. If $D(\xi) \cap D(\xi') = \varnothing$, then $\sigma_x$ can be connected to any other simplex of $D(\xi')$ using a path of simplices of length at most $d_{max}$. The Refinement \thref{Refinement Lemma} then implies $D(\xi') \subset \fitTilde{\Cone}_{\UU',\varepsilon}(\xi)$, hence $\xi \in V_{\UU',\varepsilon}(\xi)$. 
    
    If $D(\xi') \cap D(\xi) \neq \varnothing$, by taking a geodesic from $x$ to a point of $D(\xi')\cap D(\xi)$, we can connect $\sigma_x$ to a simplex $\sigma \subset D(\xi') \cap Lk(\xi)$ with a path of simplices of length at most $d_{max}$. The Refinement \thref{Refinement Lemma} implies $\sigma \subset \fitTilde{\Cone}_{\UU',\varepsilon}(\xi)$, hence $\xi' \in \partial G_\sigma \subset U'_v$ in $\partial G_v$ for any vertex $v$ of $\sigma \cap D(\xi)$. Now we can apply $(i)$ and conclude that $\xi' \in V_{\UU,\varepsilon}(\xi)$.
\end{proof}

\subsection{A basis of neighborhoods}
Let $\OO_{\overline{Z}}$ be the collection of sets $\OO_{\overline{Z}}(z), \OO_{\overline{Z}}(\eta)$ as  $\OO_{\overline{Z}}(\xi)$ for all $z, \eta, \xi \in \overline{Z}$. In this subsection, we prove $\OO_{\overline{Z}}$ forms a basis of neighborhoods for a topology. We begin with the following lemma.

\begin{lemma}[Filtration]\thlabel{Filtration}
    Given $z,z' \in \overline{Z}$, if $U \in \mathcal{O}_{\overline{Z}}(z)$ and $z' \in U$, then there is some $U' \in  \mathcal{O}_{\overline{Z}}(z')$ so that $U' \subset U$. 
\end{lemma}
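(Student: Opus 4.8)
The plan is to verify the claim separately according to which of the three types of point $z$ is ($Z$, $\partial_{Stab}G$, or $\partial X$), and within each case according to which of the three types $z'$ is. In each case we will take the neighborhood of $z$ in $\OO_{\overline{Z}}(z)$, see that $z'$ lies in it, and then produce a small enough neighborhood of $z'$ that is swallowed by the given one. The only genuine content is in the cases where $z \in \partial_{Stab}G$, and there the work is exactly the sort of "nest/refine and push along a short path of simplices" argument that the Crossing, Refinement, and Star lemmas, together with \thref{ContainmentOfV_UU}, \thref{Double Refinement} and \thref{Genuine Shadows}, were built to handle.

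First I would dispense with the easy cases. If $z \in Z$, then $U$ is an ordinary open subset of $Z$, and if $z' \in U$ with $z' \in Z$ then $U$ itself works; no point of $\partial_{Stab}G$ or $\partial X$ can lie in $U$ since those are not points of $Z$, so these subcases are vacuous. If $z = \eta \in \partial X$, then $U = V_W(\eta)$ for some open $W \subseteq \overline X$. If $z' = z'' \in Z$ lies in $V_W(\eta)$, then $p(z'') \in W$, and since $W$ is open in $\overline X$ and $p$ is continuous (\thref{defn of p}), $p^{-1}(W)\cap Z$ is an open subset of $Z$ containing $z''$ and contained in $V_W(\eta)$. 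If $z' = \eta' \in \partial X$ lies in $V_W(\eta)$ then $\eta' \in W$; shrinking $W$ to a basic $V_{r,\delta}(\eta')$-interior inside $W$ and taking $V_{W'}(\eta')$ works, using that the $V_{r,\delta}$ form a neighborhood basis of $\overline X$. If $z' = \xi' \in \partial_{Stab}G$ lies in $V_W(\eta)$, then $D(\xi') \subseteq W$; since $D(\xi')$ is a compact subcomplex, by \thref{Finiteness} it has boundedly many simplices and we may use \thref{RegularityOfBoundary} to find $W' \subseteq W$ open with $d(X\setminus W, W') > 0$, then pick $\e$ small and a $\xi'$-family $\UU'$ so that $D^\e(\xi') \subseteq W'$ and all the cone/boundary data of $V_{\UU',\e}(\xi')$ stays inside $W$; concretely any $\tCone_{\UU',\e}(\xi')$-point $x$ has $[v_0,x]$ meeting $D(\xi')$ by \thref{Genuine Shadows}, and by taking $\UU'$ small its exit simplices (hence the relevant parts of $D^\e$ and of neighboring domains) are forced near $D(\xi')$, so $V_{\UU',\e}(\xi') \subseteq V_W(\eta) = U$.

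Now the main case: $z = \xi \in \partial_{Stab}G$ and $U = V_{\UU,\e}(\xi)$. If $z' = z'' \in Z \cap V_{\UU,\e}(\xi)$, then either $z'' \in W_{\UU,\e}(\xi)$ or $p(z'') \in \Cone_{\UU,\e}(\xi)$; in the first situation $p(z'') \in D^\e(\xi)$ with $c(z'') \in U_v$ for the relevant vertices, and since each $U_v$ is open in $\total v$ and $D^\e(\xi)$ is open in $X$, \thref{nbhd of z in Z} produces a basic neighborhood $W_{z''}(U',\delta) \subseteq W_{\UU,\e}(\xi) \subseteq U$; in the second situation $p(z'')$ has a ball inside the open set $\tCone_{\UU,\e}(\xi)$ (\thref{ConesAreOpen}, using the Remark that the $\tCone$-preimage is contained in $W \cup p^{-1}(\Cone)$), and $p^{-1}$ of that ball, intersected with $Z$, is the desired neighborhood. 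If $z' = \eta' \in \partial X \cap V_{\UU,\e}(\xi)$, then $\eta' \in \Cone_{\UU,\e}(\xi)$, which is contained in the open set $\tCone_{\UU,\e}(\xi)$ of $\overline X$ by \thref{ConesAreOpen}; choosing a basic $V_{r,\delta}(\eta')$ with interior $W$ inside $\tCone_{\UU,\e}(\xi)$ gives $V_W(\eta') \subseteq U$, since every $\xi''\in \partial_{Stab}G$ with $D(\xi'') \subseteq W$ satisfies $D(\xi'')\cap D(\xi)=\varnothing$ and $D(\xi'') \subseteq \tCone_{\UU,\e}(\xi)$, hence $\xi'' \in \partial W_{\UU,\e}(\xi)$ by the vacuous-vertex-condition plus the cone condition. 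Finally, if $z' = \xi' \in \partial_{Stab}G \cap V_{\UU,\e}(\xi)$, then $D(\xi')\setminus D(\xi) \subseteq \tCone_{\UU,\e}(\xi)$ and $\xi' \in U_v$ for every vertex $v$ of $D(\xi)\cap D(\xi')$. Here I would use \thref{nest as much as you want} to pick $\xi$-families $\UU'' \subseteq \UU' \subseteq \UU$ each $d_{max}$-refined and $d_{max}$-nested in the next, so that \thref{Double Refinement} applies: it suffices to find a $\xi'$-family $\VV$ and $\e'\le\e$ so that $V_{\VV,\e'}(\xi')$ lands inside $V_{\UU'',\e}(\xi)$ (which by \thref{ContainmentOfV_UU} lies in $V_{\UU,\e}(\xi)$). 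To build $\VV$: for vertices $v$ in $D(\xi)\cap D(\xi')$ start from the open set $U''_v \cap (\text{small nbhd of }\xi')$ in $\total v$; for vertices $v$ of $D(\xi')$ not in $D(\xi)$, which lie in $\tCone_{\UU'',\e}(\xi)$, use that $\tCone$ is open to pick small neighborhoods whose domains stay in the cone; then apply \thref{xi families exist} to get an honest $\xi'$-family $\VV$ refining these, and pick $\e'$ small enough that $D^{\e'}(\xi') \subseteq N(D(\xi'))$ and the exit-simplex data is controlled. Then every $z''\in W_{\VV,\e'}(\xi')$ has $p(z'')$ near $D(\xi')$ with cusped coordinate in $U''_v$, so $z'' \in V_{\UU'',\e}(\xi)$; every $x\in \tCone_{\VV,\e'}(\xi')$ has, by \thref{Short Paths of Simplices} plus the Crossing Lemma, its exit simplices relative to $D(\xi)$ inside $N_{\UU''}(\xi)$, hence $x\in \tCone_{\UU'',\e}(\xi)$; and boundary points $\xi''$ of $\partial W_{\VV,\e'}(\xi')$ fall into $V_{\UU'',\e}(\xi)$ by \thref{Double Refinement}(ii) applied to the fact that $D(\xi'')$ meets $\tCone_{\UU'',\e}(\xi)$.

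The main obstacle is this last subcase: making the choice of $\xi'$-family $\VV$ and $\e'$ simultaneously small enough, near $\xi$-vertices to sit inside the $U''_v$, near the "new" domain-vertices of $D(\xi')$ to sit inside the cone $\tCone_{\UU'',\e}(\xi)$, and uniformly enough (using that all domains have at most $d_{max}$ simplices, \thref{Finiteness}) that the short-paths-of-simplices bookkeeping pushes the exit-simplex control from a witness near $\xi'$ all the way to the exit simplices near $\xi$. Every individual step is an application of a named lemma, but threading them so that the three-part definition of $V_{\VV,\e'}(\xi')$ maps into the three-part definition of $V_{\UU'',\e}(\xi)$ is where the care is needed; this is also precisely the argument Martin carries out in \cite[7]{Martin} in the hyperbolic case, adapted here to track the domains and the refined families.
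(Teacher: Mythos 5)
Your casework matches the paper's and the easy cases are handled correctly: $z\in Z$ is trivial; $z=\eta\in\partial X$ with $z'\in Z$ or $z'\in\partial X$ is straightforward; $z=\eta\in\partial X$ with $z'\in\partial_{\mathrm{Stab}}G$ is handled more laboriously than necessary (the paper simply observes that once $D(\xi')\subset U$ where $U$ is the interior of a $V_{r,\delta}(\eta)$, the Genuine Shadow \thref{Genuine Shadows} places the entire cone past $D(\xi')$ inside $U$, because the sets $V_{r,\delta}$ are closed under extending geodesics past time $r$ -- no \thref{RegularityOfBoundary} or "forcing exit simplices near $D(\xi')$" is needed); and $z=\xi\in\partial_{\mathrm{Stab}}G$ with $z'\in Z$ or $z'\in\partial X$ matches the paper's \thref{Case6Filtration} and \thref{Case3Filtration}.

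The real content is the $(\partial_{\mathrm{Stab}}G,\partial_{\mathrm{Stab}}G)$ subcase, and there the proposal has a genuine gap. First, the stated target is wrong: $V_{\VV,\e'}(\xi')$ always contains $\xi'$ itself, but $V_{\UU'',\e}(\xi)$ need not, since the hypothesis $\xi'\in V_{\UU,\e}(\xi)$ does not give $\xi'\in V_{\UU'',\e}(\xi)$ (for instance $\xi'$ could lie in $U_v\setminus U''_v$), and \thref{Double Refinement} only deposits the point in $V_{\UU,\e}(\xi)$, not in $V_{\UU'',\e}(\xi)$. The correct target is the original $V_{\UU,\e}(\xi)$, with the intermediate families used only inside the argument. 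Second and more substantively, your construction of $\VV$ does not address the interface where $p(z'')$ lies in both $D^{\e'}(\xi')$ and $D^\e(\xi)$. For such $z''$, membership in $W_{\VV,\e'}(\xi')$ controls the cusped coordinate over vertices of $\sigma_{p(z'')}$ in $D(\xi')$, while membership in $W_{\UU,\e}(\xi)$ demands control over vertices in $D(\xi)$ -- and these vertex sets can differ. The paper resolves this with a pointed construction (its $\xi'$-family explicitly excludes $\total{\sigma_{v',v}}$ for each $v\in V(\xi)\cap\overline{st}(v')\setminus V(\xi')$) and proves in its Claim A2 that this forces $\sigma_y\cap D(\xi)\subset D(\xi')$, so the control transfers; a companion claim A3 tracks exit simplices through the interface, and a preliminary compactness argument produces a $\delta$ so that $\overline{D^\delta(\xi')}\setminus D^\e(\xi)\subset\tCone_{\UU,\e}(\xi)$. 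Your "pick small neighborhoods whose domains stay in the cone" does not perform the exclusion, does not establish the vertex transfer, and does not account for the fact that $p(z'')\in D^{\e'}(\xi')$ does not by itself place $p(z'')$ in $D^\e(\xi)\cup\Cone_{\UU,\e}(\xi)$. So the hardest subcase remains open as written.
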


There are $6$ different cases depending on which part of $\overline{Z}$ that $z,z'$ are in.

\begin{lemma}[Case 1: $z,z' \in \partial X$] \thlabel{Case1Filtration}
If $\eta,\eta' \in \partial X$ and $\eta' \in V_{U}(\eta) \in \OO_{\overline{Z}}(\eta)$, then there is a neighborhood $U'\subset \overline{X}$ of $\eta'$ so that $V_{U'}(\eta') \subset V_U(\eta)$.
\end{lemma}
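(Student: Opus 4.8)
This is the ``both points on the genuine boundary $\partial X$'' case of the Filtration Lemma, so only the $\overline{X}$ topology is involved and nothing about $\partial_{Stab}G$ or the $\xi$-families enters in an essential way. The plan is to use that the sets $V_U(\eta)$ depend monotonically on $U$: if $U_1 \subset U_2$ are open in $\overline{X}$, then $V_{U_1}(\eta') \subset V_{U_2}(\eta)$ provided $\eta, \eta'$ both lie in the relevant sets, simply by comparing the three pieces in the definition of $V_U(\eta) = p^{-1}(U) \sqcup \{\xi \in \partial_{Stab}G \mid D(\xi) \subset U\}$. So the whole content reduces to a statement purely about the $\mathrm{CAT}(0)$ boundary: if $\eta' \in \mathrm{int}(V_{r,\delta}(\eta))$, then there is a basic neighborhood $V_{r',\delta'}(\eta')$ whose interior is contained in $\mathrm{int}(V_{r,\delta}(\eta))$.

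First I would unwind the definition: $V_U(\eta)$ with $U$ the interior of some $V_{r,\delta}(\eta)$. Since $\eta' \in V_U(\eta)$ and $V_U(\eta) \cap \partial X = p^{-1}(U)\cap \partial X = U \cap \partial X$ (using $p|_{\partial X} = \mathrm{id}$), we get $\eta' \in U$, an open subset of $\overline{X}$ containing $\eta'$. Because the $V_{r',\delta'}(\eta')$ form a neighborhood basis for $\eta'$ in $\overline{X}$, there is an $r', \delta'$ with $V_{r',\delta'}(\eta') \subset U$, and hence $\mathrm{int}\,V_{r',\delta'}(\eta') \subset U =: U'$. Wait --- I need $U'$ itself to be the interior of some $V_{r',\delta'}(\eta')$, since that is exactly the form demanded of the neighborhoods defining $\OO_{\overline{Z}}(\eta')$; so I take $U' := \mathrm{int}\,V_{r',\delta'}(\eta')$ with $r',\delta'$ chosen as above, which is legitimate and satisfies $U' \subset U$.

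Then I would verify $V_{U'}(\eta') \subset V_U(\eta)$ by checking it on each of the three strata of $\overline{Z}$. On $Z$: $V_{U'}(\eta') \cap Z = p^{-1}(U')$ and $V_U(\eta)\cap Z = p^{-1}(U)$; since $U' \subset U$ and $p$ is a (set) map, $p^{-1}(U') \subset p^{-1}(U)$. On $\partial X$: same argument, $U' \cap \partial X \subset U \cap \partial X$. On $\partial_{Stab}G$: if $\xi \in V_{U'}(\eta')$ then $D(\xi) \subset U' \subset U$, so $\xi \in V_U(\eta)$. Hence $V_{U'}(\eta') \subset V_U(\eta)$, which is what the lemma asserts.

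\textbf{Main obstacle.} There is no serious obstacle here; the only point requiring a little care is matching the \emph{form} of the neighborhood: the definition requires $U'$ to be the interior of a basic set $V_{r',\delta'}(\eta')$, not merely an arbitrary open set, and one must confirm that the $V_{r,\delta}$ genuinely form a neighborhood basis for points of $\partial X$ in $\overline{X}$ (this is the standard cone topology on the $\mathrm{CAT}(0)$ compactification, cited from \cite[III.H]{BH}), so that shrinking inside the given $U$ is possible. Everything else is formal set-chasing through the definition of $V_U(\cdot)$, and I would present it in about the length above.

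\textbf{Proof.} Write $V_U(\eta) = p^{-1}(U) \sqcup \{\xi \in \partial_{Stab}G \mid D(\xi) \subset U\}$, where $U$ is the interior of some $V_{r,\delta}(\eta)$. Since $p$ restricts to the identity on $\partial X$, we have $V_U(\eta) \cap \partial X = U \cap \partial X$, so the hypothesis $\eta' \in V_U(\eta)$ gives $\eta' \in U$. As $U$ is open in $\overline{X}$ and the sets $V_{r',\delta'}(\eta')$ form a neighborhood basis of $\eta'$ in $\overline{X}$, we may choose $r', \delta'$ with $V_{r',\delta'}(\eta') \subset U$; set $U' = \mathrm{int}\, V_{r',\delta'}(\eta')$, so that $U'$ has the required form and $U' \subset U$.

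Now $V_{U'}(\eta') \in \OO_{\overline{Z}}(\eta')$, and we check $V_{U'}(\eta') \subset V_U(\eta)$ stratum by stratum. First, $V_{U'}(\eta') \cap Z = p^{-1}(U')$ and $V_U(\eta) \cap Z = p^{-1}(U)$, and $U' \subset U$ gives $p^{-1}(U') \subset p^{-1}(U)$. Second, $V_{U'}(\eta') \cap \partial X = U' \cap \partial X \subset U \cap \partial X = V_U(\eta) \cap \partial X$. Third, if $\xi \in V_{U'}(\eta') \cap \partial_{Stab}G$ then $D(\xi) \subset U' \subset U$, so $\xi \in V_U(\eta)$. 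Combining the three cases, $V_{U'}(\eta') \subset V_U(\eta)$, as required.
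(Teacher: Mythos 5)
Your proof is correct and follows essentially the same route as the paper's (which simply sets $U' := U$ and notes that $V_U(\eta') = V_U(\eta)$ since the set depends only on $U$, remarking that any subneighborhood of $\eta'$ inside $U$ also works). You are in fact a bit more careful: you explicitly shrink to $U' = \mathrm{int}\,V_{r',\delta'}(\eta')$ so that $U'$ has the exact form required in the definition of $\OO_{\overline{Z}}(\eta')$, which is the one technical point the paper glosses over with "any other neighborhood... would also work."
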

\begin{proof}
    If $\eta' \in V_U(\eta)$ then by definition $\eta' \in U \cap \partial X$, so $U' :=U$ is an open neighborhood of $\eta$ in $\overline{X}$, and it's clear that $V_{U'}(\eta') = V_U(\eta)$, so in particular we have containment. Any other neighborhood of $\eta'$ contained in $U$ would also work.
\end{proof}

\begin{lemma}[Case 2: $z \in \partial X, z' \in \partial_{Stab}G$] \thlabel{Case2Filtration} Let $\eta \in \partial X$, $U$ be a neighborhood of $\eta$ in $\overline{X}$, and $\xi \in \partial_{Stab}G$ with $\xi \in V_U(\eta)$. Then there is some $V_{\mathcal{U},\varepsilon}(\xi)$ so that $V_{\mathcal{U},\varepsilon}(\xi) \subset V_U(\eta)$.
\end{lemma}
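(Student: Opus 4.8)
### Proof Proposal for Case 2 of the Filtration Lemma

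The plan is to use the fact that $U$ is (the interior of) a basic neighborhood $V_{r,\delta}(\eta)$ of $\eta$ in $\overline{X}$, together with the regularity of $\partial X$ from \thref{RegularityOfBoundary}, to push a domain $D(\xi)$ deep inside $U$ and then build a $\xi$--family whose cone is also trapped inside $U$. Recall that $\xi \in V_U(\eta)$ means exactly that $D(\xi) \subset U$; since $D(\xi)$ has at most $d_{max}$ simplices and finite diameter, it sits inside a bounded region of $U$. First I would apply \thref{RegularityOfBoundary} with $k$ a sufficiently large constant (to be chosen as a function of $d_{max}$ and the diameter bound $A$ on domains, plus $1$ for the $\e$--neighborhood) to get a smaller neighborhood $U''$ of $\eta$ with $d(X \setminus U, U'') > k$. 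This ensures that anything we reach by a short geodesic path starting inside $D(\xi)$ stays in $U$.

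Next, I would produce a $\xi$--family $\UU$ with $V_{\UU,\e}(\xi) \subset V_U(\eta)$. Pick any $\e \in (0,1)$. The key observation is that every point of $V_{\UU,\e}(\xi)$ projects under $p$ to a point whose geodesic from $v_0$ passes through $D(\xi)$ (for $W$, $\Cone$, and $\partial W$ pieces, by definition of the cone and the Genuine Shadow \thref{Genuine Shadows}), so it suffices to control the cone $\tCone_{\UU,\e}(\xi)$. The exit simplex $\sigma_{\xi,\e}(x)$ of any $x \in \tCone_{\UU,\e}(\xi)$ lies in $Lk(\xi)$, hence within the closed simplicial neighborhood $\overline N(\xi)$ of $D(\xi)$, which by \thref{Finiteness} and \thref{Containment} is contained in a uniformly bounded region around $D(\xi) \subset U''$ — so $\sigma_{\xi,\e}(x) \subset U$ already for reasons independent of $\UU$. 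The subtlety is that $\tCone_{\UU,\e}(\xi)$ also contains points $x$ far beyond the exit simplex (the ``shadow'' of $D(\xi)$), and these need not lie in $U$. To handle this, I would shrink $\UU$: for each vertex $v \in V(\xi)$ and each simplex $\sigma \subset Lk(\xi)$ that is \emph{not} contained in $U$ (there may be such simplices if $Lk(\xi)$ pokes out of $U$), choose $U_v$ small enough that $\total{\sigma} \cap U_v = \varnothing$ — possible because $\total{\sigma}$ is closed in $\total{v}$ and misses $\xi$. Then by \thref{Consistency} and the definition of $N_\UU(\xi)$, no such $\sigma$ is in $N_\UU(\xi)$, so all exit simplices of cone points lie in $N_\UU(\xi) \subset U$. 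But I also need the far parts of the cone to stay in $U$: here I invoke that $x \in \tCone_{\UU,\e}(\xi)$ forces $\sigma_{\xi,\e}(x) \subset N_\UU(\xi)$, and then $[v_0,x]$ has left $D^\e(\xi)$ through a simplex inside $U''$; since $d(X \setminus U, U'') > k$ with $k$ large, the geodesic $[v_0,x]$ cannot re-enter... wait — this is not automatic, as the geodesic continues past the exit simplex. Instead, the correct move is: make $\UU$ small enough (using the Refinement/Crossing machinery, taking $\UU$ $d_{max}$--refined inside the family just described) so that $N_\UU(\xi)$ consists only of simplices very close to $D(\xi)$, and then use \thref{Shadow2}-type reasoning — but what we actually want is simpler, mirroring Martin's argument: the tail of any $x \in \tCone_{\UU,\e}(\xi)$ follows a geodesic from a point of $D(\xi) \subset U''$, and I claim by the Genuine Shadow that $[v_0,x]$ passes through $D(\xi)$, so $x$ lies on a geodesic ray emanating from within $U''$; choosing $U = \mathrm{int}(V_{r,\delta}(\eta))$ with $r$ small relative to $d(v_0, D(\xi))$ forces such rays, suitably truncated, to remain in $V_{r',\delta'}(\eta) \subset U$ for appropriate $r', \delta'$. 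Concretely, I would choose $\UU$ small enough that $N_\UU(\xi) \subset U$ and that every $x \in \tCone_{\UU,\e}(\xi)$ has $\gamma_x(r) \in B(\gamma_\eta(r), \delta)$ — the latter holds because $\gamma_x$ passes through the small set $D(\xi)$ which, for $r$ chosen with $D(\xi)$ beyond radius $r$, pins down $\gamma_x(r)$ near $\gamma_\eta(r)$.

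Finally I would check each of the three pieces of $V_{\UU,\e}(\xi)$ lands in $V_U(\eta) = p^{-1}(U) \sqcup \{\xi' : D(\xi') \subset U\}$: points of $W_{\UU,\e}(\xi)$ project into $D^\e(\xi) \subset \overline N(\xi)^{+\e} \subset U$; points of $p^{-1}(\Cone_{\UU,\e}(\xi))$ project into $\tCone_{\UU,\e}(\xi) \subset U$ by the cone control above; and for $\xi' \in \partial W_{\UU,\e}(\xi)$, we have $D(\xi') \subset D(\xi) \cup \tCone_{\UU,\e}(\xi) \subset U$, so $\xi' \in V_U(\eta)$. I expect the main obstacle to be making the cone-containment $\tCone_{\UU,\e}(\xi) \subset U$ precise: one must simultaneously shrink $\UU$ (to kill bad link simplices via \thref{Consistency}) and exploit that cone points lie on geodesics through the small set $D(\xi)$, invoking \thref{RegularityOfBoundary} to guarantee a genuine buffer zone between $D(\xi)$ and $X \setminus U$. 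This is the analogue of the corresponding step in Martin's Section 6, and the bookkeeping of constants ($k$ in terms of $d_{max}$, $A$, and $\e$) is where the care is needed.
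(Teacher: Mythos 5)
Your final sentence contains the right idea, but it is buried under a lot of machinery that does not belong here, and one confusion is worth flagging. You write ``for $r$ chosen with $D(\xi)$ beyond radius $r$,'' but $r$ is not yours to choose: $U = \mathrm{int}\big(V_{r,\delta}(\eta)\big)$ is given data. Fortunately the condition you want is automatic --- $\xi \in V_U(\eta)$ means $D(\xi) \subset U \subset V_{r,\delta}(\eta)$, and every point of $V_{r,\delta}(\eta)$ is by definition at distance $> r$ from $v_0$. The paper's argument is then essentially trivial: pick $\e$ with $D^\e(\xi) \subset U$ (compact inside open), take \emph{any} $\xi$--family $\UU$, and observe that for $x \in \tCone_{\UU,\e}(\xi)$ the Genuine Shadow \thref{Genuine Shadows} gives a point $y \in [v_0,x] \cap D(\xi)$; since $y \in V_{r,\delta}(\eta)$ forces $d(v_0,y) > r$ and $\gamma_x(r) = \gamma_y(r) \in B(\gamma_\eta(r),\delta)$, one gets $x \in U$. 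Thus $D^\e(\xi) \cup \tCone_{\UU,\e}(\xi) \subset U$, and the three pieces of $V_{\UU,\e}(\xi)$ follow by projecting under $p$ or taking domains.

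Everything else in your proposal is a detour: you do not need \thref{RegularityOfBoundary} or \thref{Containment}, you do not need to shrink $\UU$ to exclude link simplices that poke out of $U$, and the Crossing/Refinement machinery is irrelevant. The worry that the cone extends far beyond the exit simplex is correct, but the resolution is not to control where the exit simplex sits --- it is the purely metric fact that $V_{r,\delta}(\eta)$ is absorbed forward along geodesics from $v_0$, so once $[v_0,x]$ hits $D(\xi) \subset V_{r,\delta}(\eta)$ the rest of the ray is trapped. Your instinct to reach for refinements here is a symptom of not yet internalizing this shadow-closedness of the sets $V_{r,\delta}(\eta)$, which is also the engine behind Case 1 and Case 3 of this lemma.
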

\begin{proof}
    By assumption, $U$ is the interior of some $V_{r,\delta}(\eta)$ and $D(\xi) \subset U$. Domains are finite, hence compact, and $U$ is open, so we can choose $\varepsilon\in (0,1)$ so that $D^\varepsilon(\xi) \subset U$. Let $\mathcal{U}$ be any $\xi$--family. We claim that $V_{\mathcal{U},\varepsilon}(\xi) \subset V_U(\eta)$. 
    
    For any $x \in \fitTilde{\Cone}_{\mathcal{U},\varepsilon}(\xi)$, the geodesic from $v_0$ to $x$ meets $D(\xi)$ by the Genuine Shadow \thref{Genuine Shadows}, say at a point $y$. Because $D(\xi) \subset U \subseteq  V_{r,\delta}(\eta)$, we know $y \in U$ implies $x \in U$, so $D^\varepsilon(\xi) \cup \fitTilde{\Cone}_{\mathcal{U},\varepsilon}(\xi) \subset U$. For any $w \in (Z \cup \partial X) \cap V_{\mathcal{U},\varepsilon}(\xi)$, this implies $p(w) \in U$. For any $\xi' \in \partial_{Stab}G \cap V_{\mathcal{U},\varepsilon}(\xi)$, we have $D(\xi') \subset D^\varepsilon(\xi) \cup \fitTilde{\Cone}_{\mathcal{U},\varepsilon}(\xi) \subset U$. 
\end{proof}

\begin{lemma}[Case 3: $z \in \partial_{Stab}G, z' \in \partial X$]\thlabel{Case3Filtration} Let $\eta \in \partial X, \, \xi \in \partial_{Stab}G,$ and $V_{\mathcal{U},\varepsilon}(\xi) \in \mathcal{O}_{\overline{Z}}(\xi)$. If $\eta \in V_{\mathcal{U},\varepsilon}(\xi)$, then there is some neighborhood $U \subset \overline{X}$ of $\eta$ so that $V_U(\eta)\subset V_{\mathcal{U},\varepsilon}(\xi)$.
\end{lemma}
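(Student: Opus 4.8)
The setup is: $\eta \in \partial X$, $\xi \in \partial_{Stab}G$, and $\eta \in V_{\UU,\e}(\xi)$. By the definition of $V_{\UU,\e}(\xi)$, since $\eta \in \partial X$, the point $\eta$ lies in $p^{-1}(\Cone_{\UU,\e}(\xi))$ — equivalently $\eta \in \Cone_{\UU,\e}(\xi)$ as a point of $\overline{X}$. By \thref{ConesAreOpen}, $\tCone_{\UU,\e}(\xi)$ is open in $\overline{X}$, so there is an open neighborhood $U_0$ of $\eta$ in $\overline{X}$ with $U_0 \subset \tCone_{\UU,\e}(\xi)$; moreover, since $\eta \in \partial X$ is "far" from $D(\xi)$, we want $U_0$ to avoid $D^\e(\xi)$ entirely — this can be arranged because $\gamma_\eta$ goes through $D^\e(\xi)$, so some tail of $\gamma_\eta$ and a neighborhood of it stays outside $D^\e(\xi)$; formally use a basic neighborhood $V_{r,\delta}(\eta)$ with $r$ large enough that $\gamma_\eta(r) \notin \overline{D^\e(\xi)}$ and $\delta$ small, intersect with the open cone. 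The plan is then to shrink $U_0$ using \thref{RegularityOfBoundary} to a neighborhood $U$ of $\eta$ which is itself the interior of some $V_{r,\delta}(\eta)$ and satisfies $d(X \setminus U_0, U) > k$ for a suitably large $k$ (to be determined by $d_{max}$ and the function $F$), and to show $V_U(\eta) \subset V_{\UU,\e}(\xi)$.

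The verification splits along the three types of points of $V_U(\eta)$. First, for $w \in Z \sqcup \partial X$ with $p(w) \in U \subset U_0 \subset \tCone_{\UU,\e}(\xi)$: if $p(w) \notin D^\e(\xi)$ then $p(w) \in \Cone_{\UU,\e}(\xi)$ and $w \in p^{-1}(\Cone_{\UU,\e}(\xi)) \subset V_{\UU,\e}(\xi)$; if $p(w)$ happened to lie in $D^\e(\xi)$ we would instead land in $W_{\UU,\e}(\xi)$ by the Remark following the definition of $W_{\UU,\e}$, but since we arranged $U$ disjoint from $D^\e(\xi)$ this case does not occur. Second, for $\xi' \in \partial_{Stab}G$ with $\xi' \in V_U(\eta)$, by definition $D(\xi') \subset U$. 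Since $U \subset \tCone_{\UU,\e}(\xi)$ and $D(\xi')$ is disjoint from $D(\xi)$ (as $U \cap D^\e(\xi) = \varnothing$), we immediately get $D(\xi') \setminus D(\xi) = D(\xi') \subset \tCone_{\UU,\e}(\xi)$, and the condition "$\xi' \in U_v$ for vertices $v$ of $D(\xi) \cap D(\xi')$" is vacuous. Hence $\xi' \in \partial W_{\UU,\e}(\xi) \subset V_{\UU,\e}(\xi)$. The subtlety here is that $\partial W_{\UU,\e}(\xi)$ is defined using $\Cone$, not $\tCone$, so one must check $D(\xi') \subset \Cone_{\UU,\e}(\xi)$ rather than just $\tCone$; but every point of $D(\xi')$ is at distance $> \e$ from $D(\xi)$ (again because $U$ misses the closed $\e$-neighborhood), and a point of $\tCone$ at distance $\geq \e$ from $D(\xi)$ is in $\Cone$ by definition. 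This is exactly why the disjointness of $U$ from $\overline{D^\e(\xi)}$, not just $D^\e(\xi)$, is needed.

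The main obstacle is the bookkeeping in choosing $U$: one must simultaneously ensure $U$ is a legitimate basic neighborhood of $\eta$ (the interior of some $V_{r,\delta}(\eta)$, as required by the definition of $\OO_{\overline{Z}}(\eta)$), that $U \subset \tCone_{\UU,\e}(\xi)$, and that $U$ stays clear of $\overline{D^\e(\xi)}$. The cleanest route: pick $r$ with $\gamma_\eta(r)$ beyond $D^\e(\xi)$ along $[v_0,\eta)$ and $\sigma_{\xi,\e}(\eta) = \sigma_{\xi,\e}(\gamma_\eta(r))$; apply \thref{ConesAreOpen} (which handles the $x \in \partial X$ case by passing to such an $x' = \gamma_\eta(r)$) to get $\delta$ with $B(\gamma_\eta(r), \delta) \subset \tCone_{\UU,\e}(\xi) \setminus \overline{D^\e(\xi)}$; then take $U$ to be the interior of $V_{r',\delta'}(\eta)$ for $r' \geq r$, $\delta' \leq \delta$ small enough that $V_{r',\delta'}(\eta) \subset \{y : \gamma_y \text{ goes through } B(\gamma_\eta(r),\delta)\}$, which is contained in $\tCone_{\UU,\e}(\xi)$ and misses $\overline{D^\e(\xi)}$. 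With such a $U$ fixed, the three-case verification above goes through routinely. I do not expect to need the Refinement or Crossing lemmas here — unlike in \thref{Double Refinement}, the neighborhood $U$ already controls whole domains $D(\xi')$ directly, so no propagation across paths of simplices is required.
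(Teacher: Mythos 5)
Your proof is correct and follows essentially the same route as the paper's: take a point $x = \gamma_\eta(N+1)$ on the ray to $\eta$ beyond $D^\e(\xi)$, apply the openness of $\tCone_{\UU,\e}(\xi)$ to get a ball $B(x,\delta)$ inside the cone, and use that ball to build the basic neighborhood $U$ (the interior of $V_{N+1,\delta}(\eta)$). One small misreading: $\partial W_{\UU,\e}(\xi)$ is defined with $\tCone_{\UU,\e}(\xi)$, not $\Cone_{\UU,\e}(\xi)$, so the extra step upgrading to $\Cone$ for the $\partial_{Stab}G$ case is unnecessary; and in any case, disjointness from the \emph{open} set $D^\e(\xi)$ already forces $d(\cdot,D(\xi)) \ge \e$ — hence membership in $\Cone_{\UU,\e}(\xi)$ — so there is no need to avoid $\overline{D^\e(\xi)}$.
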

\begin{proof}
    Since domains are bounded, we can choose $N$ so that $D^\varepsilon(\xi) \subset B(v_0,N)$, and let $x = \gamma_\eta(N+1)$. Because $\eta \in V_{\mathcal{U},\varepsilon}(\xi)$ and $d(x,D(\xi)) > \e$, we have $x \in \Cone_{\mathcal{U},\varepsilon}(\xi)$. By \thref{ConesAreOpen}, there is some $\delta >0$ so that $B(x,\delta) \subset \tCone_{\UU,\varepsilon}(\xi)$. Letting $U = V_{N+1,\delta}(\eta)$, we have $V_{U}(\eta)\subset V_{\mathcal{U},\varepsilon}(\xi)$.
\end{proof}

\begin{lemma}[Case 4: $z,z' \in \partial_{Stab}G$] \thlabel{Case4Filtration} Let $\xi,\xi' \in \partial_{Stab}G$ and suppose $\xi' \in V_{\mathcal{U},\varepsilon}(\xi) \in \OO_{ \overline{Z}}(\xi)$. Then there exists a $\xi'$--family $\VV$ and $\delta \in (0,1)$ so that $V_{\VV,\delta}(\xi') \subset V_{\mathcal{U},\varepsilon}(\xi)$.
\end{lemma}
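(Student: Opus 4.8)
# Proof Proposal for Case 4 of the Filtration Lemma

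The plan is to produce a $\xi'$--family $\VV$ and a radius $\delta$ so that $V_{\VV,\delta}(\xi')$ lands inside $V_{\UU,\e}(\xi)$, by exploiting the hypothesis $\xi' \in V_{\UU,\e}(\xi)$ together with the geometric machinery of cones and refined families. The hypothesis $\xi' \in V_{\UU,\e}(\xi)$ unpacks (since $\xi' \in \partial_{Stab}G$) as: $D(\xi')\setminus D(\xi) \subset \tCone_{\UU,\e}(\xi)$ and $\xi' \in U_v$ for every vertex $v$ of $D(\xi)\cap D(\xi')$. First I would pass to $\xi$--families $\UU'' \subset \UU' \subset \UU$ with each $d_{max}$--refined and $d_{max}$--nested in the next, as in \thref{Double Refinement}; by \thref{nest as much as you want} and \thref{ContainmentOfV_UU} these exist and $V_{\UU'',\e}(\xi)\subset V_{\UU,\e}(\xi)$, so it suffices to land inside $V_{\UU'',\e}(\xi)$. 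The heuristic is: a neighborhood of $\xi'$ consists of points that, from $v_0$'s viewpoint, lie behind $D(\xi')$; and $D(\xi')$ either meets $D(\xi)$ (and the part outside lies deep in $\tCone_{\UU,\e}(\xi)$), or $D(\xi')$ lies entirely in $\tCone_{\UU,\e}(\xi)$. In either case, points behind $D(\xi')$ that are also close to it should be forced behind $D(\xi)$ with cusped coordinates trapped inside the $U_v$'s.

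The key steps, in order: (1) Choose $\delta' \in (0,\e)$ small enough that $D^{\delta'}(\xi')$ is contained in $D^\e(\xi) \cup \tCone_{\UU'',\e}(\xi)$ wherever $D(\xi')$ meets or exits into that cone — this uses that $D(\xi')$ is compact (finitely many simplices by \thref{Finiteness}), that $\tCone_{\UU'',\e}(\xi)$ is open (\thref{ConesAreOpen}), and that $D^\e(\xi)$ is a fixed neighborhood of $D(\xi)$; shrink further so $D^{\delta'}(\xi') \subset N(\xi')$ via Assumption~\ref{scaling assumption}. (2) Build the $\xi'$--family $\VV$: for each vertex $v'$ of $D(\xi')$, if $v'$ also lies in $D(\xi)$ choose $V_{v'} \subset U_{v'}$ (legitimate since $\xi' \in U_{v'}$ there), and otherwise just take a small neighborhood of $\xi'$ in $\overline{X_{v'}}$; then apply \thref{xi families exist} to promote these choices to a genuine $\xi'$--family $\VV$, possibly shrinking. (3) Shrink $\VV$ once more so it is admissible with $v_0$ as basepoint (Assumption~\ref{shadow assumption}) and so $V_{\VV,\delta'}(\xi')$ lies in the refined cone structure needed below. (4) Verify containment pointwise: take $w \in V_{\VV,\delta'}(\xi')$, split into $w \in W_{\VV,\delta'}(\xi')$, $w \in p^{-1}(\Cone_{\VV,\delta'}(\xi'))$, or $w \in \partial W_{\VV,\delta'}(\xi')$; in each case $p(w)$ (or $D(w)$ if $w$ is a stabilizer boundary point) lies in $D^{\delta'}(\xi') \cup \tCone_{\VV,\delta'}(\xi')$, which by the Genuine Shadow \thref{Genuine Shadows} means $[v_0,p(w)]$ passes through $D(\xi')$, and then by the choice of $\delta'$ and a path-of-simplices / Crossing Lemma argument (\thref{Crossing}, \thref{Refinement Lemma}) one forces $p(w)$ into $D^\e(\xi) \cup \tCone_{\UU'',\e}(\xi)$ with cusped coordinates in the appropriate $U_v$; finally invoke \thref{Double Refinement}(ii) to handle the case $w \in \partial_{Stab}G$, concluding $w \in V_{\UU,\e}(\xi)$.

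The main obstacle I anticipate is step (4) in the subcase where $w \in \partial W_{\VV,\delta'}(\xi')$ is itself a point of $\partial_{Stab}G$ with a large domain $D(w)$ that straddles $D(\xi')$, $D(\xi)$, and the region in between: one must chain together several applications of \thref{Short Paths of Simplices} (first relating an exit simplex for $D(\xi')$ to simplices of $D(w)$, then relating those to an exit simplex for $D(\xi)$) and control the total length of the path of simplices so that the Crossing Lemma still applies given the fixed nesting depth built into $\UU'' \subset \UU' \subset \UU$. This is exactly the kind of bookkeeping that \thref{Double Refinement} was designed to absorb, so the cleanest route is to arrange $\delta'$ and $\VV$ so that the hypotheses of \thref{Double Refinement}(i) and (ii) are literally satisfied for the pair $(\xi',w)$ replaced by $(\xi, w)$ — i.e., show $D(w) \cap \tCone_{\UU'',\e}(\xi) \neq \varnothing$ or $w \in U'_v$ for a shared vertex — and then quote that lemma rather than re-running the path-counting by hand. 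A secondary subtlety is ensuring the $W$-part (points of $Z$ with simplex coordinate in $D^{\delta'}(\xi')$) maps into $W_{\UU,\e}(\xi) \cup p^{-1}(\Cone_{\UU,\e}(\xi))$; here the Remark following the definition of $V_{\UU,\e}(\xi)$ (that $p^{-1}(\tCone) \subset W \cup p^{-1}(\Cone)$) does the work, provided $\delta'$ was chosen small relative to $\e$.
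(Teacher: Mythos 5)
Your outline tracks the paper's proof at a high level — choose $\delta$ by compactness plus the Star Lemma, build a $\xi'$--family whose neighborhoods at shared vertices sit inside the $U_v$'s, then verify containment pointwise by cone and Crossing Lemma arguments. But there is a genuine gap in step (2): the construction of $\VV$ is not strong enough, and the fallback you propose for the resulting trouble does not actually work.

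The paper's construction of the $\xi'$--family (Claim~\thref{Case4 xi' family}) does not merely shrink $V_{v'}$ inside $U_{v'}$ when $v' \in V(\xi) \cap V(\xi')$. For each $v' \in V(\xi')$ it also \emph{excludes}, from the candidate neighborhood at $v'$, the closed sets $\total{\sigma_{v',v}}$ for every vertex $v$ that lies in $D(\xi)$, is adjacent to $v'$, but is \emph{not} in $D(\xi')$ (the ``bridge simplices'' through which the geodesic $[v',v]$ leaves $st(v')$). This exclusion is what forces property A2: if $y \in D^\delta(\xi') \cap D^\e(\xi)$ and $\total{\sigma_y}$ lands inside the $\xi'$--family at some $v' \in \sigma_y \cap V(\xi')$, then $\sigma_y$ cannot have a vertex in $V(\xi) \setminus V(\xi')$ (else a face of $\sigma_y$ would be one of the excluded $\sigma_{v',v}$, contradicting $\total{\sigma_y} \subset U'_{v'}$), and hence $\sigma_y \cap D(\xi) \subset D(\xi')$ so the control on cusped coordinates transfers from $\VV$ to $\UU$. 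The same mechanism drives A3. Without it, the problematic scenario is exactly this: $z \in W_{\VV,\delta}(\xi')$ with $p(z)$ in $D^\delta(\xi') \cap D^\e(\xi)$ and $\sigma_{p(z)}$ having a vertex $v \in V(\xi) \setminus V(\xi')$. You know $c(z) \in V_{v'}$ for vertices $v' \in \sigma_{p(z)} \cap V(\xi')$, but you have no control over whether $c(z) \in U_v$, which is what $z \in W_{\UU,\e}(\xi)$ requires.

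Your proposed patch — that ``the Remark following the definition of $V_{\UU,\e}(\xi)$ does the work'' — does not close this. That remark shows $p^{-1}(\tCone_{\UU,\e}(\xi)) \subset W_{\UU,\e}(\xi) \cup p^{-1}(\Cone_{\UU,\e}(\xi))$, so it only helps when $p(z)$ is already known to lie in $\tCone_{\UU,\e}(\xi)$. But the bad scenario above is precisely one where $p(z) \in D^\e(\xi)$ and $\total{\sigma_{p(z)}} \not\subset U_v$ for the rogue vertex $v$, so $p(z)$ is \emph{not} in $\tCone_{\UU,\e}(\xi)$ and the remark is silent. Likewise, shrinking $\delta'$ does not help: the obstruction is the presence of a vertex of $D(\xi) \setminus D(\xi')$ in $st(v')$, which is a combinatorial fact about the two domains that no choice of $\delta'$ or simple shrinking of $V_{v'}$ removes. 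The bridge-simplex exclusion is the missing ingredient, and it is the real content of the paper's proof.
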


\begin{proof}
    
We define $\delta$ and a preliminary $\xi'$--family $\UU'$ in two claims. 

\begin{claim}
    There exists $\delta$ so that if $y \in \overline{D^\delta(\xi')} \setminus D^\e(\xi)$, then $y \in \tCone_{\UU,\e}(\xi)$. . 
\end{claim}
\begin{proof}[Proof of Claim]
    If $x \in D(\xi') \cap D^\e(\xi)$, we can choose $\delta_x$ so that $B(x,\delta(x)) \subset D^\e(\xi)$. If $x \in D(\xi') \setminus D^\e(\xi)$, then $x \in \tCone_{\UU,\e}(\xi)$ because $\xi' \in V_{\UU,\e}(\xi)$. Therefore $[v_0,x]$ goes through $D(\xi)$ by the Genuine Shadow \thref{Genuine Shadows} and we can apply the Star \thref{Star} to receive a $\delta_x$ so that if $y \in B(x,\delta_x) \setminus D^\e(\xi)$, then $[v_0,y]$ goes through $D^\e(\xi)$ and $\sigma_{\xi,\e}(y) \subset st(\sigma_{\xi,\e}(x))$. In this case, for any vertex $v \in \sigma_x \cap V(\xi)$, we have $\total{\sigma_{\xi,\e}(y)} \subset \total{\sigma_{\xi,\e}(x)} \subset U_v,$
    hence $y \in \tCone_{\UU,\e}(\xi)$. Now $D(\xi') \setminus D^{\e/2}(\xi)$ is compact, so we can choose finitely many $x_1,\ldots, x_m$ so that \[D(\xi')\setminus D^{\e/2}(\xi) \subset \bigcup_i B(x_i,\tfrac{\delta_{x_i}}{2}).\]
    
    Let $\delta' < \min(\frac{\delta_{x_i}}{2}, \frac{\e}{2})$ and suppose $y \in D^{\delta'}(\xi') \setminus D^\e(\xi)$. There is some $x \in D(\xi')$ so that $d(y,x) < \delta'$, and because $d(y,D(\xi)) \geq \e$, the triangle inequality implies $d(x,D(\xi)) \geq \e - \delta' \geq \frac{\e}{2}$. So $x \in D(\xi') \setminus D^{\e/2}(\xi)$, and our choice of open cover gives us some $x_i$ so that $d(x,x_i) <\frac{\delta_{x_i}}{2}$, hence $d(y,x_i) < \delta +  \delta_{x_i}/2 \leq \delta_{x_i}$. If $x_i \in D^\e(\xi)$, then our choice of $\delta_{x_i}$ would imply $y \in D^\e(\xi)$, which is a contradiction. Therefore $x_i \in D(\xi') \setminus D^\e(\xi)$, and as above this implies $y \in \tCone_{\UU,\e}(\xi)$. Thus $D^{\delta'}(\xi') \setminus D^\e(\xi) \subset \tCone_{\UU,\e}(\xi)$. 

    Letting $\delta = \frac{\delta'}{2}$, we have $\overline{D^\delta(\xi')} \subset D^{\delta'}(\xi')$. 
\end{proof}

\begin{claim}\thlabel{Case4 xi' family}
    There exists a $\xi'$--family $\UU'$ satisfying the following. 
    \begin{enumerate}[label = A\arabic*.]
        \item If $v \in V(\xi) \cap V(\xi')$, then $U'_v \subset U_v$. 

        \item If $y \in D^\delta(\xi') \cap D^\e(\xi)$ and there is some $v' \in \sigma_y \cap V(\xi')$ with $\total{\sigma_y} \subset U'_{v'}$, then $\sigma_y \cap D(\xi) \subset D(\xi')$ and for every $v \in \sigma_y \cap V(\xi)$, we have $\total{\sigma_y} \subset U'_{v} \subset U_{v}$.
        
        \item If $x \in \Cone_{\UU',\delta}(\xi')$ leaves $D^\delta(\xi')$ at a point inside $D^\e(\xi)$, then $x \in \tCone_{\UU,\e}(\xi)$. 
        
    \end{enumerate}
\end{claim}
\begin{proof}[Proof of Claim]
    For $v' \in V(\xi')$, if $v' \notin V(\xi)$, let $V_{v'} = \total{v'}$. Otherwise, $v' \in V(\xi)$ and we let $V_{v'} = U_{v'}$, where $U_{v'}$ comes from the $\xi$--family $\UU$. Note that $U_{v'}$ contains $\xi'$ precisely because $\xi' \in V_{\UU,\e}(\xi)$ and the definition of a $\xi$--family.
    
    For each $v' \in V(\xi')$, let $st_\xi(v') = \{v \in (V(\xi) \cap \overline{st}(v'))\setminus V(\xi') \}$. In words, $st_{\xi}(v')$ is the vertices of the closed star $\overline{st}(v')$ which are in $D(\xi)$ but are \emph{not} in $D(\xi')$. It's possible that $st_\xi(v')$ might be empty. For each $v \in st_\xi(v')$, the geodesic $[v',v]$ goes from $v'$, through a simplex we call $\sigma_{v',v} \subset Lk(v')$, then ends at $v$. For example if $X$ is a graph, $\sigma_{v'v}$ is simply the edge connecting $v,v'$, but if $X$ is a CAT$(0)$ cube--complex, then $v,v'$ might be opposite corners of a square, and $\sigma_{v',v}$ is that square. Note that $\sigma_{v',v}$ is not in $D(\xi')$ because $v \notin D(\xi')$. Therefore $\total{\sigma_{v',v}}$ is a closed subset of $\total{v'}$ not containing $\xi'$, and we can choose an open neighborhood $W_{v',v} = \total{v'} \setminus \total{\sigma_{v',v}}$ of $\xi'$. Set 
     \[W_{v'} = V_{v'} \cap \bigg(\bigcap_{v\in st_\xi(v')} W_{v',v} \bigg).\]
    Repeating this process for each vertex $v' \in V(\xi')$, we let $\UU'$ be a $\xi'$--family $d_{max}$--nested in the collection of sets $\{W_{v'}, v' \in V(\xi')\}$. Explicitly, for each vertex $v' \in V(\xi')$, we have open sets
     \[U'_{v'} = U^0_{v'} \subset U^1_{v'} \subset \cdots \subset U^{d_{max}-1}_{v'} \subset U^{d_{max}}_{v'} \subset W_{v'}\]
     where each containment is a nesting. This is our $\UU'$. Clearly A1 is immediate by the choice of $V_{v'}$. 

    Note that because a simplex $\sigma \subset X$ is isometric to the convex hull in some model space of some number of points in \emph{general} position and domains are convex, any simplex of $X$ which meets $D(\xi)$ does so in some maximum dimensional face. The dimension of that face is $| \sigma \cap V(\xi)|-1$. For example if $\sigma$ is an edge of $D(\xi)$, then $\sigma$ meets $D(\xi)$ in $\sigma$, which contains $2$ vertices, and an edge is a $2-1$--simplex.  

    Let $y,v'$ be as in the assumption of A2. Because $\e,\delta < 1$, we know $\sigma_y \subset N(\xi') \cap N(\xi)$, so $\sigma_y$ meets each of $D(\xi'),D(\xi)$. Let $v$ be any vertex of $\sigma_y \cap V(\xi)$. For a contradiction, suppose $v \notin V(\xi')$, then $v \in st_\xi(v')$ and $\sigma_y$ (or one of its faces) is $\sigma_{v',v}$, hence $\total{\sigma_y} \subseteq \total{\sigma_{v',v}}$ in $\total{v'}$. Putting this together with the assumption that $\total{\sigma_y}\subset U'_{v'}$, we have
    \[\total{\sigma_y} \subseteq U'_{v'} \cap \total{\sigma_{v',v}},\]
    but this is impossible because $U'_{v'}$ and $\total{\sigma_{v',v}}$ are disjoint by construction. This contradiction implies $v \in V(\xi')$. By the definition of a $\xi'$--family and the assumption $\total{\sigma_y}\subset U'_{v'}$, we know $\total{\sigma_y} ]\subset U'_v$. Since $v$ was an arbitrary vertex of $\sigma_y \cap D(\xi)$, this proves A2.

    Suppose $x \in \Cone_{\UU',\delta}(\xi')$ leaves $D^\delta(\xi')$ at a point in $D^\e(\xi)$, and choose $y \in [v_0,x] \cap D^\e(\xi)$ just before $[v_0,x]$ leaves $D^\delta(\xi')$ so that $\sigma_y = \sigma_{\xi',\delta}(\xi)$. Then $[y,x]$ gives a path of simplices $\sigma_{\xi',\delta}(x) = \sigma_1, \sigma_2 \ldots, \sigma_n = \sigma_{\xi,\e}(x)$ in $N(\xi)$ with $n \leq d_{max}$. For $1 \leq k \leq n$, we prove the following.
    
    \begin{enumerate}
        \item $\sigma_k \subset Lk(\xi')$ and further, $\sigma_k \cap D(\xi) \subset D(\xi')$.
        \item For any vertex $v' \in \sigma_k \cap V(\xi')$, $\total{\sigma_k} \subset U^k_{v'}$.
    \end{enumerate}
   
   Induct on $k$. For $k=1$, $y \in D^\delta(\xi')$ by assumption so $\sigma_1 = \sigma_y = \sigma_{\xi',\delta}(x) \subset Lk(\xi')$ already. Because $x \in \Cone_{\UU',\delta}(\xi')$, we know A2 applies to $y$, which is exactly 1 and 2 above. Assuming the result for $k$, we prove 1 and 2 for $k+1$ by breaking into two cases. 

    Suppose $\sigma_k \subset \sigma_{k+1}$. By induction, $\sigma_k \subset Lk(\xi')$, so $\sigma_{k+1} \subset Lk(\xi')$ too. Also by induction, $\total{\sigma_{k+1}} \subset \total{\sigma_k} \subset U^k_{v'} \subset U^{k+1}_{v'}$ for any vertex $v' \in \sigma_k \cap V(\xi')$, so the definition of $\xi'$--family implies 2 for $k+1$. Fix a vertex $v' \in \sigma_{k+1} \cap V(\xi')$ and let $v$ be any vertex of $\sigma_{k+1} \cap V(\xi)$. As in the proof of A2, if $v \notin V(\xi')$, then $\sigma_{k+1}$ (or one of its faces) must be $\sigma_{v',v}$, and this contradicts $\total{v'} \subset U^{k+1}_{v}$. This means $v \in V(\xi')$, and again because $v$ was arbitrary, we have $\sigma_{k+1} \cap D(\xi) \subset D(\xi)$. This completes the inductive step for this case. 

    If $\sigma_{k+1} \subset \sigma_k$, then by induction $\sigma_k \subset Lk(\xi')$ so we can fix a vertex $v' \in \sigma_k \cap V(\xi')$. Because this path of simplices is in $N(\xi)$, we know there is some vertex $v \in \sigma_{k+1} \cap V(\xi)$. Because $\sigma_{k+1} \subset \sigma_k$, this $v$ is also a vertex of $\sigma_k$, so by induction with $1$, we know $v \in V(\xi')$. Because $v$ was arbitrary, this implies $1$ for $\sigma_{k+1}$. For 2, if $v' \in \sigma_{k+1} \cap V(\xi')$, then by induction we have $\total{\sigma_k} \subset \total{\sigma_{k+1}} \cap U^k_{v'}33 \neq \varnothing$, so the definition of nesting implies $\total{\sigma_{k+1}} \subset U^{k+1}_{v'}$. This completes the induction. 

    Consider $k = n$ and recall that $\sigma_n = \sigma_{\xi,\e}(x)$. By 1, $\sigma_{\xi,\e}(x) \cap D(\xi) \subset D(\xi')$, so for any vertex $v \in \sigma_{\xi,\e}(x)\cap V(\xi)$, we have $v \in V(\xi')$ as well. For any such vertex $v$, 2 and the definition of our $\xi'$--family implies 
    \[\total{\sigma_{\xi,\e}(x)} \subset U^n_v \subset V_v = U_v,\]
    hence $x \in \tCone_{\UU,\e}(\xi)$. This completes the proof of A3 and the claim.      
\end{proof}

\begin{claim}
    $\tCone_{\UU',\delta}(\xi') \subset \tCone_{\UU,\e}(\xi)$. 
\end{claim}
\begin{proof}[Proof of Claim]
    Suppose $x \in \tCone_{\UU',\delta}(\xi')$ and choose $y \in [v_0,y] \in D^\delta(\xi')$ just before $[v_0,x]$ leaves $D^\delta(\xi')$ so that $\sigma_y = \sigma_{\xi',\delta}(x)$. If $y \in D^\delta(\xi') \setminus D^\e(\xi)$, then the claim defining $\delta$ implies $y$ and hence $x \in \tCone_{\UU,\e}(\xi)$. If $y \in D^\delta(\xi') \cap D^\e(\xi)$, then $[v_0,x]$ either ends in $\sigma_y$ or leaves $D^\delta(\xi)$ at a point inside $D^\e(\xi)$. Then A2 or A3 imply that $\total{\sigma_y}$ interacts with the vertices of $D(\xi)$ in the right way so that $x \in \tCone_{\UU,\e}(\xi)$. 
\end{proof}

Let $\VV$ be a $\xi'$--family nested in $\UU'$ and note that A1, A2, A3, and the previous claim all apply to $\VV$. We show $V_{\VV,\delta}(\xi') \subset V_{\UU,\e}(\xi)$ by considering the three kinds of points.

If $z \in V_{\VV,\delta}(\xi') \cap Z$, then either $p(z) \in \tCone_{\VV,\delta}(\xi') \subset \tCone_{\UU,\e}(\xi)$ and $z \in V_{\UU,\e}(\xi)$ immediately, or $z \in W_{\VV,\delta}(\xi')$. In this second case, let $y= p(z) \in D^\delta(\xi')$. 

If $y \in D^\delta(\xi') \setminus D^\e(\xi)$, then again the definition of $\delta$ implies $y \in \tCone_{\UU,\e}(\xi)$, hence $z \in V_{\UU,\e}(\xi)$. If $y \in D^\delta(\xi') \cap D^\e(\xi)$, then by definition of $W_{\VV,\delta}(\xi')$, for every vertex $v' \in \sigma_{y} \cap V(\xi')$ we have $c(z) \in V_{v'} \subset U'_v$. But then $c(z) \in \total{\sigma_{y}} \cap V_{v'}$, so the definition of nesting implies $\total{\sigma_{y}} \subset U'_{v'}$ (this is why we had to nest one more time after all the claims). Thus A2 applies to $y$ and tells us that $\sigma_y \cap V(\xi) \subset D(\xi')$, so every vertex of $\sigma_y \cap V(\xi)$ is also a vertex of $V(\xi')$. Using the rest of the conclusion of A2 and A1, we have $\total{\sigma_y} \subset V_{v} \subset U'_{v} \subset U_{v}$, hence $z \in W_{\UU,\e}(\xi)$. 

If $\eta \in V_{\VV,\delta}(\xi') \cap \partial X$, then $\eta \in \tCone_{\VV,\delta}(\xi') \subset \tCone_{\UU,\e}(\xi)$ immediately from the last claim, so $\eta \in V_{\UU,\e}(\xi)$.

If $\zeta \in V_{\VV,\delta}(\xi') \cap \partial_{Stab}G$, we need to show that $D(\zeta) \setminus D(\xi) \subset \tCone_{\UU,\e}(\xi)$ and for all vertices $w \in V(\zeta) \cap V(\xi)$, we have $\zeta \in U_w$. 

For the first condition, notice that a point in $D(\zeta) \setminus D(\xi)$ is either inside $D(\xi')$ or outside $D(\xi')$. The points inside $D(\xi')$ are controlled by $\xi' \in V_{\UU,\e}(\xi)$ and the points outside $D(\xi')$ are controlled by $\zeta \in V_{\VV,\delta}(\xi')$. Explicitly, if $x \in D(\zeta) \setminus D(\xi)$, then either $x \in D(\xi') \setminus D(\xi) \subset \tCone_{\UU,\e}(\xi)$, or $x \in D(\zeta) \setminus D(\xi') \subset \tCone_{\VV,\delta}(\xi') \subset \tCone_{\UU,\e}(\xi)$.

For the second condition, suppose $w \in V(\zeta) \cap V(\xi)$. If $w$ is also in $V(\xi')$, then because $\zeta \in V_{\VV,\delta}(\xi')$, we have $\zeta \in V_w \subset U_w$. If $w \notin V(\xi')$, then again using the definition of $\zeta \in V_{\VV,\delta}(\xi')$ and the previous claim, we have

\[w \in D(\zeta) \setminus D(\xi') \subset \tCone_{\VV,\delta}(\xi') \subset \tCone_{\UU,\e}(\xi),\] 
but cones are disjoint from domains by definition, so this would imply $w \notin V(\xi)$, which is a contradiction. This completes the check on vertices of $V(\zeta) \cap V(\xi')$ finishes the lemma. 

\end{proof}

\begin{lemma}[Case 5: $z \in Z,\,z' \in \partial X$]\thlabel{Case5Filtration} Let $z \in Z,\, \eta \in \partial X$ and let $U$ be an open neighborhood of $\eta$ in $\partial X$. If $z \in V_U(\eta)$, then there is some neighborhood $U' \in \mathcal{O}_{\overline{Z}}(z)$ so that $U' \subset V_U(\eta)$. 
\end{lemma}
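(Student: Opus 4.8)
The statement is: given $z \in Z$ and $\eta \in \partial X$ with $z \in V_U(\eta)$ for some basic neighborhood $V_U(\eta)$ (where $U$ is the interior of some $V_{r,\delta}(\eta)$), find a basic neighborhood $U' \in \OO_{\overline Z}(z)$ with $U' \subset V_U(\eta)$. The key observation is that $z \in V_U(\eta)$ means, by definition of $V_U(\eta) = p^{-1}(U) \sqcup \{\xi \in \partial_{Stab}G \mid D(\xi) \subset U\}$, that $p(z) = x \in U$. Since $U$ is open in $\overline X$ and $p\colon Z \longrightarrow X$ is continuous by \thref{defn of p}, $p^{-1}(U) \cap Z$ is an open subset of $Z$ containing $z$. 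So the natural candidate for $U'$ is simply an element of $\OO_{\overline Z}(z)$, i.e.\ an open set of $Z$ containing $z$, that is contained in $p^{-1}(U) \cap Z$.

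First I would recall that $\OO_{\overline Z}(z)$ is by \thref{defn:open sets in Z} just the collection of \emph{all} open sets of $Z$ containing $z$; there is no restriction to some special basic form, so the filtration property for points of $Z$ is almost tautological once we know the target set is open when intersected with $Z$. Concretely: set $W := p^{-1}(U) \cap Z$. By continuity of $p$ and openness of $U$ in $\overline X$, $W$ is open in $Z$, and $z \in W$ since $p(z) = x \in U$. Thus $W \in \OO_{\overline Z}(z)$. Now observe $W = p^{-1}(U) \cap Z \subset p^{-1}(U) \sqcup \{\xi \mid D(\xi)\subset U\} = V_U(\eta)$, so $U' := W$ does the job. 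Alternatively, to produce a genuinely \emph{basic} neighborhood in the sense of \thref{nbhd of z in Z}, I would choose $\delta' > 0$ small enough that $B(x,\delta') \subset st(\sigma_x)$ and also $B(x,\delta') \cap X \subset U$ (possible since $U$ is open and $x \in U \cap X$), pick any open neighborhood $\mathcal{V}$ of $c(z)$ in $X_{\sigma_x}$, and take $W_z(\mathcal{V},\delta')$ from \thref{nbhd of z in Z}. Every point $z'$ of this set has $p(z') \in B(x,\delta') \cap X \subset U$, hence $z' \in p^{-1}(U) \subset V_U(\eta)$.

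There is essentially no obstacle here — this is one of the easy cases of \thref{Filtration}. The only mild care needed is to make sure that ``$U$ is the interior of some $V_{r,\delta}(\eta)$'' is genuinely an \emph{open} subset of $\overline X$ (it is, by construction, since we take the interior), so that continuity of $p$ applies; and to note that all points of $V_U(\eta)$ other than those of $p^{-1}(U)$ lie in $\partial_{Stab}G$, which $W$ does not meet, so the containment $W \subset V_U(\eta)$ is immediate. I would write the proof in two or three sentences: name $x = p(z)$, observe $x \in U$, invoke continuity of $p$ (or \thref{nbhd of z in Z} for an explicit basic set), and conclude.
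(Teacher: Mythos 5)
Your proof is correct and essentially identical to the paper's: both use the continuity of $p$ (from \thref{defn of p}) and the openness of $U$ in $\overline{X}$ to produce an open subset of $Z$ inside $p^{-1}(U) \subset V_U(\eta)$. The paper shrinks to a ball $p^{-1}(B(p(z),\e))$ while you take $p^{-1}(U)\cap Z$ directly, but this is an inessential variation.
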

\begin{proof}
    By definition, $U$ is an open set containing $p(z)$ in $\overline{X}$, so it contains some ball say $B(p(z),\e)$. Because $p$ is continuous by \thref{defn of p}, $U' =p^{-1}(B(p(z),\e))$ is an open neighborhood of $z$ in $Z$ clearly contained in $V_U(\eta)$.    
\end{proof}

\begin{lemma}[Case 6: $z \in Z,\,z' \in \partial_{Stab}G$]\thlabel{Case6Filtration} Let $z \in Z,\, \xi \in \partial_{Stab}G$ and $V_{\UU,\varepsilon}(\xi) \in \mathcal{O}_{\overline{Z}}(\xi)$. If $z \in V_{\UU,\varepsilon}(\xi)$, then there is some neighborhood $U' \in \mathcal{O}_{\overline{Z}}(z)$ so that $U' \subset V_U(\eta)$. 
\end{lemma}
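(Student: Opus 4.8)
## Proof Proposal for Case 6 (Lemma \ref{Case6Filtration})

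\textbf{Setup and strategy.} We are given $z \in Z$ with $z \in V_{\UU,\e}(\xi)$, and we must produce a basic open neighborhood $U' \in \OO_{\overline Z}(z)$ — that is, an honest open subset of $Z$ containing $z$ — with $U' \subset V_{\UU,\e}(\xi)$. Since $z \in V_{\UU,\e}(\xi) \cap Z = W_{\UU,\e}(\xi) \cup p^{-1}(\Cone_{\UU,\e}(\xi))$, there are two cases for where $z$ lies, and I would split the argument accordingly. In both cases the plan is to use \thref{nbhd of z in Z}, which gives a neighborhood basis for $z$ of the form $W_z(V,\delta)$ built from a ball $B(x,\delta) \subset st(\sigma_x)$ around $x = p(z)$ together with an open neighborhood $V$ of the cusped coordinate $c(z)$ in $\total{\sigma_x}$. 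So the task reduces to choosing $\delta$ small and $V$ small enough that the resulting basic neighborhood is swallowed by $W_{\UU,\e}(\xi)$ (respectively $p^{-1}(\Cone_{\UU,\e}(\xi))$).

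\textbf{Case A: $p(z) \in \Cone_{\UU,\e}(\xi)$.} Here $d(p(z),D(\xi)) \geq \e$ and $\gamma_{p(z)}$ goes through $D^\e(\xi)$. Since $\Cone_{\UU,\e}(\xi)$ is contained in the open set $\tCone_{\UU,\e}(\xi)$ (\thref{ConesAreOpen}), and moreover by the Star Lemma \thref{Star} the points of $\Cone_{\UU,\e}(\xi) \setminus \overline{D^\e(\xi)}$ have an entire $\overline X$--neighborhood inside $\Cone_{\UU,\e}(\xi)$ (as noted in the remark following \thref{ConesAreOpen}), we may pick $\delta > 0$ with $B(p(z),\delta) \subset \Cone_{\UU,\e}(\xi)$ and $B(p(z),\delta) \subset st(\sigma_{p(z)})$. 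Then $W_z(\total{\sigma_{p(z)}}, \delta) = p^{-1}(B(p(z),\delta) \cap st(\sigma_{p(z)}))$-style neighborhood from \thref{nbhd of z in Z} — more precisely the neighborhood $W_z(V,\delta)$ with $V = \total{\sigma_{p(z)}}$ — has $p$--image inside $\Cone_{\UU,\e}(\xi)$, hence $W_z(V,\delta) \subset p^{-1}(\Cone_{\UU,\e}(\xi)) \subset V_{\UU,\e}(\xi)$. (If $d(p(z),D(\xi)) = \e$ exactly, then $\sigma_{p(z)} = \sigma_{\xi,\e}(p(z)) \subset N_\UU(\xi)$, and one instead notes that $p^{-1}(B(p(z),\delta) \cap st(\sigma_{p(z)}))$ lands in $W_{\UU,\e}(\xi) \cup p^{-1}(\Cone_{\UU,\e}(\xi))$ using the same $st(\sigma_{p(z)})$--containment as in the proof of \thref{ConesAreOpen}.)

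\textbf{Case B: $z \in W_{\UU,\e}(\xi)$.} Now $x := p(z) \in D^\e(\xi)$ and for every vertex $v \in \sigma_x \cap V(\xi)$ the cusped coordinate $c(z)$ lies in $U_v \subset \total{v}$. Choose $\delta > 0$ small enough that $B(x,\delta) \subset st(\sigma_x) \cap D^\e(\xi)$; this is possible since $D^\e(\xi)$ is open and $st(\sigma_x)$ is open. For the cusped coordinate, for each vertex $v \in \sigma_x \cap V(\xi)$ pick the open set $U_v \ni c(z)$, and set $V = \bigcap_{v \in \sigma_x \cap V(\xi)} \varphi_{\sigma_x, v}^{-1}(U_v)$ if $\sigma_x \not\subset D(\xi)$, or simply an appropriate intersection of the $U_v$'s viewed in $\total{\sigma_x}$ under the identifications; in any case $V$ is an open neighborhood of $c(z)$ in $\total{\sigma_x}$. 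Now apply \thref{nbhd of z in Z} to get the open neighborhood $W = W_z(V,\delta)$ of $z$ in $Z$. I claim $W \subset W_{\UU,\e}(\xi)$: any $z' \in W$ has $p(z') \in B(x,\delta) \subset D^\e(\xi)$, and $\sigma_{p(z')}$ is a simplex containing $\sigma_x$ (since $p(z') \in st(\sigma_x)$), so every vertex $v$ of $\sigma_{p(z')} \cap V(\xi)$ is also a vertex of $\sigma_x$ — here I use that $D(\xi)$ is convex, so $\sigma_{p(z')} \cap V(\xi) \subset \sigma_x \cap V(\xi)$ — whence by the alignment of the $W_\sigma$'s in \thref{nbhd of z in Z} the cusped coordinate $c(z')$ lies in $U_v$ for each such $v$; thus $z' \in W_{\UU,\e}(\xi) \subset V_{\UU,\e}(\xi)$.

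\textbf{Main obstacle.} The routine part is choosing $\delta$; the one point requiring care is the claim in Case B that $\sigma_{p(z')} \cap V(\xi) \subset \sigma_x \cap V(\xi)$, i.e. that moving $p(z)$ slightly within $st(\sigma_x)$ into a larger simplex $\sigma_{p(z')}$ does not acquire new vertices of $D(\xi)$. This is where convexity of $D(\xi)$ (the Finiteness / domain-convexity results) is essential: a larger simplex $\sigma_{p(z')} \supset \sigma_x$ meets the convex subcomplex $D(\xi)$ in a single face, and that face must already be contained in $\sigma_x$ once $B(x,\delta) \subset D^\e(\xi)$ and the scaling assumption \ref{scaling assumption} force the geometry to be local. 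I would make sure to invoke \thref{nbhd of z in Z} for the alignment of the $W_\sigma$ and the earlier domain-convexity statement cleanly, and otherwise the proof is a short unwinding of definitions.
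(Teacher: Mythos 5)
You identify the right decomposition ($z \in W_{\UU,\e}(\xi)$ versus $p(z) \in \Cone_{\UU,\e}(\xi)$), and Case A is essentially the paper's argument. The gap is in Case B, and it is precisely at the step you flag as the ``main obstacle.''

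Your claim that $D(\xi)$ being convex forces $\sigma_{p(z')} \cap V(\xi) \subset \sigma_x \cap V(\xi)$ is false. When you push $p(z)$ off the face $\sigma_x$ into a higher-dimensional simplex $\sigma_{p(z')} \supset \sigma_x$, that bigger simplex can meet $D(\xi)$ in a \emph{larger} face. For a concrete picture: take $X$ a single $2$--simplex with vertices $a,b,c$, $D(\xi) = [a,b]$, and $\sigma_x$ the open edge $(a,c)$ with $x$ near $a$. Then $\sigma_x \cap V(\xi) = \{a\}$, but a nearby interior point $p(z')$ has $\sigma_{p(z')}$ equal to the open $2$--simplex, with $\sigma_{p(z')} \cap V(\xi) = \{a,b\}$. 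Convexity of $D(\xi)$ controls $\overline{\sigma} \cap D(\xi)$ as a face of $\overline\sigma$; it says nothing about that face being contained in the smaller simplex $\sigma_x$. So intersecting the $\varphi_{v,\sigma_x}^{-1}(U_v)$ over $v \in \sigma_x \cap V(\xi)$ alone does not guarantee $c(z') \in U_w$ for the new vertices $w \in \sigma_{p(z')} \cap V(\xi) \setminus \sigma_x$.

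The fix is exactly what the paper's one-vertex version relies on implicitly: once you know $c(z') \in U_v$ for a single $v \in \sigma_x \cap V(\xi)$ (which your $V$ does give you, since $\sigma_x \cap V(\xi) \neq \varnothing$ because $D^\e(\xi) \subset N(\xi)$), the compatibility condition defining a $\xi$--family propagates this membership along edges. The geodesic $[v,w]$ from $v$ to any other $w \in \sigma_{p(z')} \cap V(\xi)$ lies in $\overline{\sigma_{p(z')}} \cap D(\xi)$ by convexity, giving an edge-path in $\sigma_{p(z')} \cap D(\xi)$, and $c(z') \in \total{\sigma_{p(z')}}$ lies in $\total{e}$ for each edge $e$ along that path, so the $\xi$--family biconditional carries $c(z') \in U_v$ to $c(z') \in U_w$. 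This is the pointwise version of the Consistency argument (\thref{Consistency}). With that substitution for your false containment, the proof goes through; indeed the paper only bothers to control $c(z')$ relative to a single vertex $v$, which makes it clearer that Consistency is doing the work.
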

\begin{proof}
    There are two cases. Let $x = p(z)$.

    If $z \in W_{\UU,\varepsilon}(\xi)$, then by definition $z \in D^\e(\xi)$ and $c(z) \in U_v$ for every vertex $v$ of $\sigma_{x}\cap D(\xi)$. Fixing some choice of $v$, $U_v$ contains a neighborhood $U$ of $c(z)$ in $X_v$, which we can interpret as an open neighborhood of $c(z)$ in $X_{\sigma_x}$ by taking its preimage under $\varphi_{v,\sigma_{x}}$. We can also choose $\delta$ so that $B(x,\delta) \subset st(\sigma_x) \cap D^\e(\xi)$, and applying \thref{nbhd of z in Z}, we find an open neighborhood of $z \in Z$ contained in $W_{\UU,\e}(\xi)$. 

    If $x \in \Cone_{\UU,\e}(\xi) \subset \tCone_{\UU,\e}(\xi)$, then there is some $\delta$ so that $B(x,\delta) \subset \tCone_{\UU,\e}(\xi)$ by \thref{ConesAreOpen}. Then $p^{-1}(B(x,\delta))$ is an open neighborhood of in $Z$ by \thref{defn of p} and is contained in $W_{\UU,\e}(\xi) \sqcup \Cone_{\UU,\e}(\xi)$, hence contained in $V_{\UU,\e}(\xi)$. 
\end{proof}

 \begin{proof}[Proof of Filtration \thref{Filtration}]
    The previous lemmas cover all the cases except $z \in Z, z' \in \partial_{Stab}G$ and $z, z' \in Z$. This first case never happens because neighborhoods of $z \in Z$ are contained in $Z$. In the second case, we are given $z' \in U \in \mathcal{O}_{\overline{Z}}(z)$. By definition, $U$ is open in $Z$, so we can take $U \in \mathcal{O}_{\overline{Z}}(z')$. 
 \end{proof}

\begin{theorem}\thlabel{Oz is a topology} $\mathcal{O}_{\overline{Z}}$ is the basis for a topology on $\overline{Z}$ making it into a second countable space. Further, $Z$ embeds into $\overline{Z}$ as a dense subset. 
\end{theorem}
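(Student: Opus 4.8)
The plan is to verify the standard axioms for a collection of sets to be a neighborhood basis for a topology, and then separately to check second countability and the density/embedding statement. Recall (e.g. from the characterization of topologies via neighborhood bases) that $\OO_{\overline Z}$ determines a topology provided: (a) every point $z$ lies in at least one member of $\OO_{\overline Z}(z)$; (b) for $U_1, U_2 \in \OO_{\overline Z}(z)$ there is $U_3 \in \OO_{\overline Z}(z)$ with $U_3 \subset U_1 \cap U_2$; and (c) the Filtration property — if $z' \in U \in \OO_{\overline Z}(z)$ then there is $U' \in \OO_{\overline Z}(z')$ with $U' \subset U$. Property (c) is exactly \thref{Filtration}, which has already been established. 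Property (a) is immediate in all three cases: $z \in Z$ lies in $Z$ itself, which is open; $\eta \in \partial X$ lies in $V_{\overline X}(\eta)$; and $\xi \in \partial_{Stab}G$ lies in $V_{\UU,\e}(\xi)$ for any admissible $\xi$--family $\UU$ and any $\e$, since $\xi \in W_{\UU,\e}(\xi)$ as $\xi_v \in U_v$ by definition of a $\xi$--family.

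The main work is property (b), and I would again split into the three types of point. For $z \in Z$ this is trivial since $\OO_{\overline Z}(z)$ consists of all open subsets of $Z$ containing $z$, and $Z$ already carries an honest (quotient) topology closed under finite intersection. For $\eta \in \partial X$: given $V_{U_1}(\eta), V_{U_2}(\eta)$ with $U_1, U_2$ interiors of sets $V_{r_i,\delta_i}(\eta)$, I would pick $U_3 \subset U_1 \cap U_2$ the interior of some $V_{r,\delta}(\eta)$ (possible since the $V_{r,\delta}(\eta)$ form a neighborhood basis of $\eta$ in $\overline X$), and observe directly from the definition $V_U(\eta) = p^{-1}(U) \sqcup \{\xi : D(\xi) \subset U\}$ that $U_3 \subset U_1 \cap U_2$ forces $V_{U_3}(\eta) \subset V_{U_1}(\eta) \cap V_{U_2}(\eta)$. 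For $\xi \in \partial_{Stab}G$: given $V_{\UU_1,\e_1}(\xi)$ and $V_{\UU_2,\e_2}(\xi)$, I would take $\e = \min(\e_1,\e_2)$ and, using \thref{xi families exist} together with \thref{nest as much as you want}, produce a single $\xi$--family $\UU$ contained in both $\UU_1$ and $\UU_2$ and $d_{max}$--nested in each; then \thref{ContainmentOfV_UU} gives $V_{\UU,\e}(\xi) \subset V_{\UU_i,\e_i}(\xi)$ for $i = 1,2$. This is the step I expect to be the main obstacle, because one must be careful that the $\xi$--family produced is still admissible with respect to the chosen basepoint (Assumption \ref{shadow assumption}) — but since admissibility only requires containment in the fixed family $\UU_\xi$, and we may intersect with $\UU_\xi$ as well, this causes no real trouble.

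For second countability: the topology on $Z$ is second countable since each $\hatto{\sigma}$ is, there are countably many simplices $\sigma$, and $Z$ is a quotient of a countable disjoint union of second countable spaces, hence itself second countable (one can also see $Z$ is separable metrizable via its embedding into the separable metrizable $X \times$ countably-many boundaries, but the direct argument suffices). For the boundary points, $\partial X$ is second countable and admits a countable neighborhood basis at each point, so countably many of the $V_{r,\delta}(\eta)$ (rational $r$, rational $\delta$, $\eta$ ranging over a countable dense set) suffice; similarly, by \thref{nest as much as you want} and the fact that each $\total v$ is second countable, one can select a countable cofinal family of $\xi$--families, yielding a countable basis overall. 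Finally, $Z$ embeds into $\overline Z$: the subspace topology on $Z \subset \overline Z$ agrees with its intrinsic topology because a basic open set $V_{\UU,\e}(\xi)$ or $V_U(\eta)$ meets $Z$ in the open set $W_{\UU,\e}(\xi) \cup p^{-1}(\Cone_{\UU,\e}(\xi))$ (open in $Z$ by \thref{nbhd of z in Z}, \thref{ConesAreOpen}, and continuity of $p$) respectively $p^{-1}(U)$ (open by \thref{defn of p}), and conversely every open subset of $Z$ is declared open in $\overline Z$. Density of $Z$ follows by showing every basic neighborhood of a boundary point meets $Z$: for $\eta \in \partial X$, $V_U(\eta) \supset p^{-1}(U)$ and $U \cap X \neq \emptyset$; for $\xi \in \partial_{Stab}G$, $W_{\UU,\e}(\xi)$ contains points of $Z$ projecting into $D^\e(\xi)$ with cusped coordinate near $\xi$, e.g. any $(\sigma_v, v, y)$ with $v \in V(\xi)$ and $y \in U_v \cap X_v$, which is nonempty since $U_v$ is a neighborhood of the boundary point $\xi_v$ in $\total v$ and $X_v$ is dense in $\total v$.
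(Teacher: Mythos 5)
Your proposal follows essentially the same route as the paper: reduce the basis criterion to the Filtration \thref{Filtration}, intersect neighborhoods centered at the same point case-by-case (using \thref{ContainmentOfV_UU} for points of $\partial_{Stab}G$), then count and observe density. You also explicitly verify that the subspace topology on $Z$ agrees with its intrinsic topology — a point the paper's own proof of this theorem elides and only fully establishes later in \thref{induced topologies} — which is a sensible addition but not a different argument.
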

\begin{proof}
    We must prove that if $U_1,U_2 \in \mathcal{O}_{\overline{Z}}$ and $z \in U_1\cap U_2$, then there is some $W \in \mathcal{O}_{\overline{Z}}(z)$ so that $z \in W \subset U_1 \cap U_2$. There are $3$ cases.

    \begin{enumerate}
        \item If $z \in Z$, then by the Filtration \thref{Filtration} there are open neighborhoods $W_1,W_2 \subset Z$ so that $W_i\subset U_i$. Then $W = W_1\cap W_2$ is open in $Z$ and contained in $U_1 \cap U_2$ as desired.

        \item If $z = \eta \in \partial X$, then by the Filtration \thref{Filtration} there are open neighborhoods $W_1,W_2$ of $\eta$ in $\overline{X}$ so that $V_{W_i}(\eta) \subset U_i$. Then $W = W_1 \cap W_2$ is an open neighborhood of $\eta \in \overline{X}$ and it's clear that 
        \[V_{W}(\eta) \subset V_{W_1}(\eta) \cap V_{W_2}(\eta) \subset U_1 \cap U_2.\]

        \item If $z = \xi \in \partial_{Stab}G$, then by the Filtration \thref{Filtration} there are $V_{\UU_1,\e_1}(\xi),V_{\UU_2,\e_2}(\xi) \in \mathcal{O}_{\overline{Z}}(\xi)$ so that $V_{\UU_i,\e_i}(\xi) \subset U_i$. Let $\UU$ be a $\xi$--family which is $d_{max}$--nested in the family of sets $\{(U_1)_v \cap (U_2)_v, \; v \in D(\xi)\}$ and let $\varepsilon = \min(\varepsilon_1,\varepsilon_2)$. 
        Then Lemma \ref{ContainmentOfV_UU} implies
    
    \[V_{\UU,\varepsilon}(\xi) \subset V_{\UU_1,\varepsilon_1}(\xi) \cap V_{\UU_2,\varepsilon_2}(\xi) \subset U_1 \cap U_2.\]
        
    \end{enumerate}
    
    To see $\partial G$ is second countable, we need to provide a countable basis for the topology. Since $Z$ is the quotient of countably many spaces $\hatt{\sigma}$ which are themselves second countable metric spaces, $Z$ is second countable. We enumerate a basis $\{U_n, n\geq 0\}$.
    
    Since $X$ is a simplicial complex with countably many cells, it is a separable space, hence so is the set $\Lambda$ of points on a geodesic from $v_0$ to some $\eta \in \partial X$ (here $\Lambda$ may not equal $X$ since a given geodesic segment may not extend to a ray). Let $\Lambda'$ be a dense countable subset of $\Lambda$. Then the family of open sets $V_{N,\varepsilon}(\eta)$ with $\gamma_{\eta}(N) \in \Lambda', \varepsilon \in \mathbb{Q}$ forms a countable basis for the topology on $\partial X$. Enumerate them as $\{V_n, n\geq 0\}$. 

    A neighborhood of $\xi \in \partial_{Stab}G$ is defined by choosing a constant $\varepsilon \in (0,1)$, a finite subcomplex of $X$, namely $D(\xi)$, and for every vertex $v$ of that finite subcomplex, a neighborhood $U_v$ of $\xi$ in $\total{v}$. There are countably many choices of domains since each domain is finite, and each $\total{v}$ has a countable basis since it is a compact metric space. By only allowing open subsets from these countable bases, we receive countably many possible $\xi$--families $\UU$ as $\xi$ ranges over $\partial_{Stab}G$. By further only allowing $\varepsilon \in \mathbb{Q}$, we receive a countable collection of sets $V_{\UU,\varepsilon}(\xi)$. Enumerate these as $\{W_n, n\geq 0\}$. It's clear that these form a neighborhood basis for every point $\xi \in \partial_{Stab}G$, and thus the collection $\{U_n,V_n,W_n, \, n \geq 0 \}$ forms a countable basis of neighborhoods for $\overline{Z}$.

    Finally, every element of our basis meets $Z$ by construction, so $Z$ is dense in $\overline{Z}$.
 \end{proof}

\section{Properties of the Topology}\label{section: Properties of the Topology}

\subsection{Independence of basepoint and induced topologies}
In the previous section we endowed $\overline{Z}$ with a topology. We begin by showing this topology is independent of our chosen basepoint and induces the topology we expect on some of the spaces we already understand. The following is another basic topological fact that we leave to the reader.

\begin{lemma}
    Suppose $X$ is a set with two topologies $\tau_1,\tau_2$ and for each $x \in X$, let $N_1(x),N_2(x)$ be a neighborhood basis for $x$. If for each $x \in X$ and $U \in N_1(x)$, there is some $U' \in N_2(x)$ so that $U' \subset U$, then any open set in $\tau_1$ is also open in $\tau_2$. 
\end{lemma}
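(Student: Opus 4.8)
This is a routine point-set topology lemma comparing two topologies via neighborhood bases, and the statement as given is left to the reader precisely because the argument is short. The plan is to unwind the definition of "open" in terms of the neighborhood bases $N_1$ and $N_2$ and then invoke the hypothesis pointwise.

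First I would recall the basic fact that a set $U$ is open in a topology $\tau$ if and only if for every $x \in U$ there is a basic neighborhood $B \in N(x)$ (from any chosen neighborhood basis at $x$) with $B \subseteq U$. So let $U$ be open in $\tau_1$ and fix $x \in U$. By this characterization applied to $\tau_1$, there is some $V \in N_1(x)$ with $V \subseteq U$. Now apply the hypothesis of the lemma to this $x$ and this $V \in N_1(x)$: there is some $V' \in N_2(x)$ with $V' \subseteq V$. Chaining the inclusions gives $V' \subseteq V \subseteq U$, so $x$ has a $\tau_2$-basic neighborhood contained in $U$. Since $x \in U$ was arbitrary, $U$ is open in $\tau_2$, which is exactly the conclusion.

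The only subtlety — and it is not really an obstacle — is being careful about what "neighborhood basis" means: one should work with the convention that $N_i(x)$ is a collection of (not necessarily open) neighborhoods of $x$ such that every $\tau_i$-neighborhood of $x$ contains a member of $N_i(x)$, and that $U \in \tau_i$ iff $U$ contains a member of $N_i(x)$ for each $x \in U$. With that convention fixed at the start of the proof, the rest is the two-line chase above. I would write it as: "Let $U$ be open in $\tau_1$ and let $x \in U$. Choose $V \in N_1(x)$ with $V \subseteq U$; by hypothesis choose $V' \in N_2(x)$ with $V' \subseteq V \subseteq U$. Since $x$ was arbitrary, $U$ is $\tau_2$-open." No environments, displays, or special macros are needed.
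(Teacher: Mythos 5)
Your proof is correct and is exactly the standard two-line argument; the paper leaves this lemma to the reader precisely because it is routine, so there is no proof in the text to compare against. Your remark about fixing the convention for ``neighborhood basis'' (members need not be open, but every open set containing $x$ contains a member of $N_i(x)$, and openness is characterized pointwise via the basis) is the right thing to pin down, and once that is fixed the chase $V' \subseteq V \subseteq U$ closes the argument.
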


\begin{lemma}\label{Independent of Basepoint} The topology of $\overline{Z}$ does not depend on the choice of basepoint.
\end{lemma}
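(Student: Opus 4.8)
The plan is to show that the identity map on the set $\overline{Z}$ is a homeomorphism between the topology defined using basepoint $v_0$ and the topology defined using any other basepoint $v_0'$. By symmetry (swapping the roles of $v_0$ and $v_0'$), it suffices to show that every basic open neighborhood with respect to $v_0$ contains a basic open neighborhood with respect to $v_0'$, by the elementary lemma stated just above. The points of $Z$ are handled trivially: the neighborhood basis $\mathcal{O}_{\overline{Z}}(z)$ for $z \in Z$ is just the collection of open sets of $Z$ in the quotient topology, which makes no reference to a basepoint, so nothing needs checking there. So the work is in comparing neighborhoods of points of $\partial X$ and of $\partial_{Stab}G$.

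For $\eta \in \partial X$, the neighborhoods $V_U(\eta)$ are built from neighborhoods $U \subset \overline{X}$ of $\eta$, and the fact that the topology on $\overline{X} = X \cup \partial X$ for a proper CAT$(0)$ (indeed $\delta$--hyperbolic) space is independent of basepoint is standard (see \cite{BH}[III.H.3.7]): given a basic neighborhood $V_{r,\delta}(\eta)$ with respect to $v_0$, there is a basic neighborhood $V_{r',\delta'}(\eta)$ with respect to $v_0'$ contained in it. Since $V_U(\eta) = p^{-1}(U) \sqcup \{\xi : D(\xi) \subset U\}$ depends on the basepoint only through $U$, a smaller $U'$ (in the $v_0'$-topology on $\overline X$) yields $V_{U'}(\eta) \subset V_U(\eta)$. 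This disposes of the $\partial X$ case.

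The main obstacle is the case $\xi \in \partial_{Stab}G$, where the neighborhoods $V_{\UU,\varepsilon}(\xi)$ depend on the basepoint through the cone sets $\fitTilde{\Cone}_{\UU,\varepsilon}(\xi)$ and $\Cone_{\UU,\varepsilon}(\xi)$, which are defined in terms of geodesics $\gamma_x$ from the basepoint and exit simplices $\sigma_{\xi,\varepsilon}(x)$. Here the key geometric input is already available: the Crossing Lemma \ref{Crossing}, the Refinement Lemma \ref{Refinement Lemma}, and especially the remark at the end of \S\ref{section:Geometric Tools} that ``all the proofs in this subsection would work with any basepoint $x_0 \in X$ if we replace $v_0$ with $\sigma_{x_0}$ when applying \thref{Short Paths of Simplices}.'' Concretely, I would argue as follows. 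Given a basic $v_0$-neighborhood $V_{\UU,\varepsilon}(\xi)$, use \thref{nest as much as you want} to pick a $\xi$-family $\UU'$ that is heavily refined in $\UU$ (say $[d_{max} + F(d_{max})]$-nested, or more, to absorb the discrepancy between the two basepoints), and keep $\varepsilon$. I claim $V_{\UU',\varepsilon}^{v_0'}(\xi) \subset V_{\UU,\varepsilon}^{v_0}(\xi)$. The point is that whether a point $x$ lies in a cone is governed by the combinatorial data of which simplices of $Lk(\xi)$ the geodesic from the basepoint passes through when it enters and exits $D^\varepsilon(\xi)$, together with the $\xi$-family; and by \thref{Short Paths of Simplices} applied with $K = D(\xi)$ and $K'$ a single closed simplex (using that $D(\xi)$ and simplices have at most $d_{max}$ simplices, via \thref{Finiteness}), the exit simplex with respect to $v_0'$ is connected to the exit simplex with respect to $v_0$ by a path of simplices of length at most $F(d_{max})$ — both geodesics $[v_0, x]$ and $[v_0', x]$ pass through the convex set $D(\xi)$, and one applies \thref{Short Paths of Simplices} to the configuration $v_0, v_0', x$ relative to $D(\xi)$. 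Then the Crossing Lemma \ref{Crossing}, together with the extra nesting in $\UU'$, upgrades ``$\sigma_{\xi,\varepsilon}^{v_0'}(x) \subset N_{\UU'}(\xi)$'' to ``$\overline{X_{\sigma_{\xi,\varepsilon}^{v_0}(x)}} \subset U_v$ for all relevant $v$,'' i.e.\ $x \in \fitTilde{\Cone}_{\UU,\varepsilon}^{v_0}(\xi)$. One also needs the Genuine Shadow property (\thref{Genuine Shadows}) — valid for either basepoint under Assumption \ref{shadow assumption} — to know the geodesics actually meet $D(\xi)$, so that exit simplices exist. Finally, $W_{\UU',\varepsilon}(\xi)$ does not involve the basepoint at all (it only references $D^\varepsilon(\xi)$ and the cusped coordinate), so $W_{\UU',\varepsilon}(\xi) \subset W_{\UU,\varepsilon}(\xi)$ follows from $\UU' \subset \UU$; and the $\partial_{Stab}G$-part $\partial W_{\UU',\varepsilon}(\xi)$ is controlled by the cone containment just established together with $U'_v \subset U_v$. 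Running the same argument with $v_0$ and $v_0'$ swapped gives the reverse containment, completing the proof. I would streamline the exposition by noting that essentially all of \thref{Shadow1}, \thref{Shadow2}, \thref{Genuine Shadows}, \thref{Refinement Lemma}, \thref{Crossing}, and \thref{ContainmentOfV_UU} are robust under changing the basepoint (replacing $v_0$ by $\sigma_{v_0'}$ where they invoke \thref{Short Paths of Simplices}), and the basepoint-independence is then a formal consequence of comparing the two neighborhood bases via the double-refinement trick as in \thref{ContainmentOfV_UU}.
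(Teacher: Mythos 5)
Your proposal follows essentially the same strategy as the paper: $Z$ is trivial, $\partial X$ is the standard basepoint-independence of $\overline{X}$, and for $\xi \in \partial_{Stab}G$ you prove an inclusion $V^{v_0'}_{\UU',\e}(\xi) \subset V^{v_0}_{\UU,\e}(\xi)$ via a heavily refined $\xi$--family and a Crossing-Lemma comparison of the two exit simplices — the paper packages that comparison inside the (basepoint-free) pseudocone and \thref{Shadow2} rather than applying \thref{Short Paths of Simplices} directly to the configuration $(v_0, v_0', x, D(\xi))$, but the geometric content is the same. The one detail you gloss over, and that the paper handles explicitly by nesting inside the intersection $\{(U_1)_v \cap (U_2'')_v\}$ before refining, is that the refined family $\UU'$ must also be admissible with respect to the new basepoint $v_0'$ (so the Genuine Shadow property applies there), not merely contained in a $v_0$-admissible family.
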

\begin{proof}
    Choose two points $x_1,x_2 \in X$, not necessarily vertices. For $z \in \overline{Z}$ and $i =1,2$, let $\mathcal{O}_{\overline{Z}}^i(z)$ be the neighborhood basis of $z$ defined above using $x_i$ as the basepoint.  

    \begin{claim}
        For any $z \in \overline{Z}$, if $U \in \mathcal{O}_{\overline{Z}}^1(z)$, then there is some $U' \in \mathcal{O}_{\overline{Z}}^2(z)$ so that $U' \subset U$. Similarly, we indicate basic open sets and cones with superscripts to indicate which basepoint they are using. 
        
    \end{claim}
    \begin{proof}[Proof of Claim]
        There are $3$ cases depending on if $z \in Z,\partial X, $ or $\partial_{Stab}G$. 

        If $z \in Z$, then $\mathcal{O}_{\overline{Z}}^1(z) = \mathcal{O}_{\overline{Z}}^2(z)$ because these neighborhoods do not reference the basepoint, so we can take $U' = U$. 

        If $z \in \partial X$, then $U = V_{W}(\eta)$ where $W = V_{r,\delta}^1(\eta)$. From \cite{BH}, the topology on $\overline{X}$ does not depend on the choice of basepoint and the $V_{r,\delta}(\eta)$ form a neighborhood basis with any basepoint. Therefore there must be $r',\delta'$ so that $V_{r',\delta'}^2(\eta) \subset V_{r,\delta}^1(\eta)$, where $V_{r',\delta'}^2(\eta)$ uses $x_2$ as a basepoint. Letting $W'$ be the interior of $V_{r',\delta'}^2(\eta)$, it follows that $V^2_{W'}(\eta) \subset V_W^1(\eta)$, so $U' = V_{W'}^2(\eta)$ is our desired set. 

        If $\xi \in \partial_{Stab}G$, then $U = V_{\UU_1,\e}^1(\xi)$ for some $\xi$--family $\UU_1$ which is admissible using $x_1$ as a basepoint. Let $\UU_2''$ be any $\xi$--family which is admissible with $x_2$ as a basepoint, let $\UU_2'$ be a $\xi$--family $d_{max} + F(d_{max})$--nested in the family of sets $\{(U_1)_v \cap (U''_2)_v \, | \, v \in V(\xi)\}$, and let $\UU_2$ be $d_{max}$--refined in $\UU_2'$. We claim $V^2_{\UU_2,\e}(\xi) \subset V_{\UU_1,\e}^1(\xi)$. 

        By definition, $\UU_2$ is contained in the $\xi$--family $\UU_1$ which is admissible with $x_1$ as basepoint, so $\UU_2$ is also admissible with $x_1$ as basepoint. If $z \in W_{\UU_2,\e}(\xi)$, then for every vertex $v \in \sigma_{p(z)} \cap V(\xi)$, we know $c(z) \in (U_2)_v \subset (U_1)_v$, so $z \in W_{\UU_1,\e}(\xi)$. 
        
        If $ y \in \tCone_{\UU_2,\e}^2(\xi)$, then $\sigma_{\xi,\e}^2(y)$ is a witness for $y \in \pCone_{\UU_2}(\xi) \subset \pCone_{\UU_1}(\xi)$. We can apply \thref{Shadow2} with $\UU = \UU_2$ and $\VV = \UU_1$ to conclude that $y \in \tCone_{\UU_1,\e}(\xi)$ for any $e$. Since $y$ was arbitrary, this implies $\tCone^2_{\UU_2,\e}(\xi) \subset \tCone_{\UU_1,\e}^1(\xi)$.
        
        Finally, if $\xi' \in V^2_{\UU_2,\e}(\xi)$, then we must check conditions on $v \in D(\xi') \cap D(\xi)$ and $D(\xi') \setminus D(\xi)$. If $v \in D(\xi') \cap D(\xi)$, then $\xi' \in (U_2)_v \subset (U_1)_2$, and by the previous paragraph, we have
        \[D(\xi') \setminus D(\xi) \subset \tCone_{\UU_2,\e}^2(\xi) \subset \tCone_{\UU_1,\e}^1(\xi),\]
        which checks all the necessary conditions. Hence $U' = V^2_{\UU_2,\e}(\xi)$ satisfies the claim.
    \end{proof}

    With the claim proven, we can apply the previous lemma to see that any set which is open with $x_1$ as a basepoint is also open with $x_2$ as a basepoint. Reversing the roles of $x_1$ and $x_2$, we see that a subset of $\overline{Z}$ is open with respect to $x_1$ if and only if it is open with respect to $x_2$, so the topologies are the same.  
\end{proof}

\begin{proposition}\thlabel{induced topologies}
    The topology of $\overline{Z}$ induces the natural topology on $Z,\partial X$ and $\hatt{v}$ for every vertex $v$ of $X$. 
\end{proposition}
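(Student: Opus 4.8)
The plan is to verify the statement for each of the three kinds of subspace separately, in each case showing that the subspace topology inherited from $\overline{Z}$ agrees with the topology we already have; since the relevant topologies are all first countable (indeed second countable), it suffices to compare neighborhood bases at each point. Concretely, fix a basepoint $v_0$ once and for all (this is legitimate by \thref{Independent of Basepoint}), and for each subspace $S \in \{Z, \partial X, \hatt{v}\}$ and each point $z \in S$, I want to show that $\{U \cap S : U \in \OO_{\overline{Z}}(z)\}$ is a neighborhood basis for $z$ in the natural topology on $S$, and conversely that every natural-topology neighborhood of $z$ in $S$ contains a set of the form $U \cap S$.

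First I would handle $Z$. The key input here is \thref{defn of Z}/\thref{internal spaces embed}: $Z$ carries the quotient topology from $\bigsqcup_\sigma \hatto{\sigma}$, and the collection $\OO_{\overline{Z}}(z)$ for $z \in Z$ is by \thref{defn:open sets in Z} literally the collection of open sets of $Z$ containing $z$. So the only thing to check is that for $z \in Z$ and any $U \in \OO_{\overline{Z}}(w)$ with $w \in \{z'\in \overline Z\}$ arbitrary and $z \in U$, the set $U \cap Z$ is open in the quotient topology of $Z$ and contains a quotient-open neighborhood of $z$ — but this is precisely the content of the Filtration \thref{Filtration} (Cases 1, 3, 5 are vacuous here; the relevant observation is the one made in the proof of \thref{Filtration}, that neighborhoods of points of $Z$ are contained in $Z$, together with Cases 5, 6 showing that traces of boundary neighborhoods on $Z$ are quotient-open). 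Hence the subspace topology on $Z \subset \overline{Z}$ is exactly the quotient topology, which is the natural one.

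Next, $\partial X$. By construction $\OO_{\overline{Z}}(\eta) = \{V_U(\eta)\}$ where $U$ ranges over interiors of the $V_{r,\delta}(\eta)$, and $V_U(\eta) \cap \partial X = (U \cap \partial X) \sqcup \{\xi \in \partial_{Stab}G : D(\xi) \subset U\} \cap \partial X = U \cap \partial X$ since $\partial_{Stab}G$ is disjoint from $\partial X$ inside $\overline{Z}$. Because $\{V_U(\eta)\}$ already runs over a neighborhood basis of $\eta$ in $\overline X$ as $U$ does, and the $V_{r,\delta}(\eta)$ form a basis for the boundary topology on $\partial X$ (as in \cite{BH}[III.H]), we get that $\{V_U(\eta) \cap \partial X\}$ is a neighborhood basis for $\eta$ in $\partial X$. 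For the converse one must also check that a trace on $\partial X$ of a \emph{basic} neighborhood $V_{\UU,\e}(\xi)$ or an open set of $Z$ of an \emph{arbitrary} point containing $\eta$ contains some $V_U(\eta) \cap \partial X$ — but open sets of $Z$ don't meet $\partial X$, and the case of $V_{\UU,\e}(\xi)$ is exactly Case 3 of the Filtration \thref{Case3Filtration}. So the subspace topology on $\partial X$ is the boundary topology.

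Finally, $\hatt{v}$ for a vertex $v$. Recall $\hatt{v} = \{v\}\times v \times \total{v}$, so as a space its natural topology is that of $\total{v} = \overline{X_v}$ (the simplex factor $v$ being a point). The inclusion $\hatt{v} \hookrightarrow \overline Z$ sends $\hatto{v}$ into $Z$ via $\pi_v$ and $\partial \hatt{v} = \partial G_v$ into $\partial_{Stab}G$ via $\pi_v$ (injective by \thref{boundaries embed}). For a point of $\hatto{v}$, a neighborhood basis in $\overline Z$ restricted to $\hatt v$ is, using \thref{nbhd of z in Z} with the simplex $\sigma_x = v$, exactly $\{v\}\times\{v\}\times U$ for $U$ ranging over neighborhoods of the cusped coordinate in $\total v$, which is the right basis. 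For a point $\xi \in \partial G_v$, I claim $V_{\UU,\e}(\xi) \cap \hatt v$ runs over a neighborhood basis of $\xi$ in $\total v$: indeed $W_{\UU,\e}(\xi) \cap \hatto v = \{z \in \hatto v : c(z) \in U_v\}$ (since $p(z) = v \in D^\e(\xi)$ automatically and $\sigma_{p(z)} = v$), and $\partial W_{\UU,\e}(\xi) \cap \partial G_v = \{\xi' \in \partial G_v : \xi' \in U_v\} = U_v \cap \partial G_v$ (the domain condition $D(\xi')\setminus D(\xi) \subset \tCone$ being automatic once $\xi' \in \partial G_v$ forces $v \in D(\xi)\cap D(\xi')$, by a $\xi$-family argument, or more carefully: if $\xi' \in U_v$ we can invoke \thref{Double Refinement}(i) to land in a larger $V_{\UU,\e}(\xi)$ — so after passing to a refinement we indeed recover the trace $U_v$). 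Thus $V_{\UU,\e}(\xi)\cap \hatt v \supseteq$ something of the form $\{v\}\times\{v\}\times U_v$ and is contained in one for a coarser family, giving both inclusions; conversely any $\total v$-neighborhood $U_v$ of $\xi$ extends to a $\xi$-family by \thref{balloon prop}, so its trace is basic. Combined with the Filtration to handle traces of neighborhoods of points of $\partial X$ or of $Z$ containing $\xi$ (again via \thref{Case3Filtration}, \thref{Case6Filtration}), this shows the subspace topology on $\hatt v$ is the topology of $\total v$.

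The main obstacle I anticipate is the $\partial G_v \subset \partial_{Stab}G$ part of the $\hatt v$ case: one must be careful that the domain condition $D(\xi') \setminus D(\xi) \subset \tCone_{\UU,\e}(\xi)$ in the definition of $\partial W_{\UU,\e}(\xi)$ does not further shrink the trace below $U_v \cap \partial G_v$, i.e. that for $\xi' \in \partial G_v$ sufficiently close to $\xi$ in $\total v$ this geometric condition is automatically satisfied. This is exactly where one needs \thref{Double Refinement}(i) — passing to a chain of refined $\xi$-families so that membership $\xi' \in U'_v$ already forces $\xi' \in V_{\UU,\e}(\xi)$ — and then a cofinality argument between the two directions of refinement closes the loop. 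Everything else is bookkeeping with the Filtration lemma, which was proved precisely to make these comparisons routine.
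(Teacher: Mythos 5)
Your proposal is correct and follows essentially the same route as the paper's proof: the same three-part decomposition, with \thref{nbhd of z in Z} and \thref{balloon prop} producing the needed neighborhoods, \thref{Double Refinement} controlling the domain condition in $\partial W_{\UU,\e}(\xi)$, and the Filtration \thref{Filtration} reducing arbitrary basic open sets to ones centered at the point in question. Two small corrections: the parenthetical claim that the domain condition $D(\xi')\setminus D(\xi) \subset \fitTilde{\Cone}_{\UU,\e}(\xi)$ is ``automatic'' for $\xi' \in \partial G_v$ is false as stated --- only the \thref{Double Refinement} argument you then give is correct --- and the Filtration cases relevant for $\hatt{v}$ are \thref{Case2Filtration}, \thref{Case4Filtration}, \thref{Case5Filtration}, \thref{Case6Filtration} rather than Cases 3 and 6, since Case 3 produces a neighborhood of a point of $\partial X$, which is not the direction needed here.
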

\begin{proof}
    We must show that if $U$ is an open subset of $Z,\partial X,$ or any $\hatt{v}$, then $U$ can be extended to an open set in $\overline{Z}$, and that if $U \in \OO_{\overline{Z}}$, then $U \cap B$ is open in the topology on $B$. 
    
    Consider $Z$ first. Open sets of $Z$ are part of $\mathcal{O}_{\overline{Z}}$ already, so no extension is necessary. On the other hand, if $U \in \OO_{\overline{Z}}$, then $U$ is a basic open set around some point $z \in \overline{Z}$. If $z \in Z$, then $U$ is already open in $\overline{Z}$, and if $z \in \partial G = \partial X \sqcup \partial_{Stab}G$, it follows from \thref{Case5Filtration} and \thref{Case6Filtration} $U \cap Z$ is open in $Z$. 

    Now consider $\partial X$. If $U \subset \partial X$ is open, then $U$ can be extended to an open set $U' \subset \overline{X}$. For any $\eta \in U$, $V_{U'}(\eta)$ is open in $\overline{Z}$ and $V_{U'}(\eta) \cap \partial X = U$, so $V_{U'}(\eta)$ extends $U$ to an open set in $\overline{Z}$. On the other hand, if $U \in \OO_{\overline{Z}}$, then again there are three cases to consider. If $U \subset Z$ then $U \cap \partial X = \varnothing$ and there is nothing to prove. If $U = V_{U'}(\eta)$ for some $\eta \in \partial X$, then $U \cap \partial X = U' \cap \partial X$, which is open in $\partial X$. If $U = V_{\UU,\e}(\xi)$ for some $\xi \in \partial_{Stab}G$, then it follows from \thref{Case3Filtration} that $U \cap \partial X$ is open in $\partial X$. 

    Now suppose $v$ is a vertex of $X$ and consider an open subset $U \subset \hatt{v}$. Choose $\delta>0$ so that $B(v,\delta) \subset st(v)$, then 
    \[U' = \{ z \in Z, \, p(z) \in B(v,\delta), c(z) \in U\}\]
    is open in $Z$, and clearly $U' \cap \hatt{v} = U \cap Z$. For each $\xi \in U \cap \partial G_v$, we can apply \thref{balloon prop} to $U,\xi$ to receive a $\xi$--family $\UU_\xi$, so that $(U_\xi)_v \subset U$. Then
    \[\bigg(U' \cup \bigcup_{\xi \in U \cap \partial G_v} V_{\UU_\xi,\frac{1}{2}}(\xi)\bigg) \cap \hatt{v} = U.\]
    The large set on the left is a union of open sets in $\overline{Z}$ so it is open, and is the desired extension of $U$ to $\overline{Z}$.

    We fix some $U \in \OO_{\overline{Z}}$ and we show $U \cap \hatt{v}$ is open. There are 3 cases, depending on what kind of basic open set $U$ is. 
    
    If $U$ is an open subset of $Z$, then $U \cap \hatt{\sigma}$ is open for every simplex $\sigma$ of $X$ because that's the definition of the quotient topology on $Z$. In particular $U \cap \hatt{v}$ is open in $\hatt{v}$. 
    
    Suppose $U = V_{\UU,\e}(\xi)$ and fix some $z \in V_{\UU,\e}(\xi) \cap \hatt{v}$. We need to find an open neighborhood $U'$ of $z$ in $\hatt{v}$ so that $U' \subset U \cap \hatt{v}$, and consider the 3 possibilities for $z$.
    
    If $z \in Z$, then $z \in U \cap \hatto{v}$ and we need find a neighborhood $U'$ of $z$ in $\hatt{v}$ so that $U' \subset U \cap \hatt{v}$. If $v \in V(\xi)$, then $c(z) \in U_v$ by definition of $W_{\UU,\e}(\xi)$, so $U'=U_v \cap \hatto{v}$ is the desired neighborhood of $z$. If $ v\notin V(\xi)$ then we must have $v \in \Cone_{\UU,\e}(\xi)$. In this case, $p(\hatto{v}) = v \subset \Cone_{\UU,\e}(\xi)$, so $U' = \hatto{v}$ is the desired neighborhood of $z$. 
    
    Suppose $z = \xi' \in V_{\UU,\e}(\xi) \cap \partial G_v$. We know $v \in V(\xi')$, but $v$ may or may not be in $V(\xi)$. If $v \in V(\xi)$, then set $U' = U_v$, and if $v \notin V(\xi)$, set $U' = \total{v}$. Either way, $U'$ is an open neighborhood of $\xi'$ in $\total{v}$. Using \thref{balloon prop}, extend $U'$ to a $\xi'$--family $\UU'$. Using the Filtration \thref{Filtration}, there is a $\xi'$--family $\VV$ and $\e' \in (0,1)$ so that $V_{\VV,\e'}(\xi') \subset V_{\UU,\e}(\xi)$. Let $\VV'$ be a $\xi'$--family $d_{max}$--refined and nested in $\VV$. Recall that the intersection of $\xi'$--families is again $\xi'$--family (see \thref{xi families exist}), so we can combine $\UU'$ and $\VV'$ into a $\xi'$--family $\mathcal{W =}\{U'_v \cap V'_v \, | \, v \in V(\xi')\}$. Then $W_v$ is an open neighborhood of $\xi' \in \hatt{v}$, and we claim $W_v \subset U \cap \hatt{v}$ so that $W_v$ is the desired neighborhood of $\xi'$. Indeed, if $x \in W_v$, then either $x \in \hatto{v}$ or $x \in \partial G_v$. In the first case, we know $c(x) \in W_v \subset U_v$, so $x \in W_{\mathcal{
    W}, \e'}(\xi') \subset V_{\UU,\e}(\xi)$. In the second case, $x \in W_v \subset V'_v$, and because $\VV'$ is $d_{max}$--refined and nested in $\VV$, \thref{Double Refinement} applied to $x$ tells us $x \in V_{\VV,\e'}(\xi') \subset V_{\UU,\e}(\xi)$. This completes the proof that if $U = V_{\UU,\e}(\xi)$, then $U \cap \hatt{v}$ is open.

    Suppose $U = V_{U'}(\eta)$ for some $\eta \in \partial X, U' \subset \overline{X}$. Either $v \notin U'$ so $U \cap \hatt{v} = \varnothing$ and there is nothing to prove, or $v \in U'$ and 
    \[U \cap \hatt{v} = \hatto{v} \sqcup \{ \xi \in \partial_{Stab}G, \, v \in D(\xi) \subset U'\}. \]
    The first set on the right is clearly open in $\hatt{v}$. If $\xi$ is in the second set, then by the Filtration \thref{Filtration}, there is some $V_{\UU,\e}(\xi) \subset U$. Let $\UU'$ be a $\xi$--family $d_{max}$--refined and nested in $\UU$. By the previous case, $W =  V_{\UU',\e}(\xi) \cap \hatt{v}$ is open in $\hatt{v}$, and we claim $W \subset V_{U'}(\eta)$. For $x \in W \cap \hatto{v}$, there is nothing to check since $\hatto{v} \subset V_{U'}(\eta)$. For $\xi' \in W \cap \partial G_v$, we know that $\xi' \in U'_v$, so by the choice of $\UU'$ and \thref{Double Refinement}, $\xi' \in V_{\UU,\e}(\xi) \subset V_{U'}(\eta)$. This shows the second set on the right is open, hence $U \cap \hatt{v}$ is open. 
\end{proof}

The reader might wonder why we limit ourselves to vertices in the previous proposition and don't consider the induced topology on $\hatt{\sigma}$ for an arbitrary simplex $\sigma$ of $X$. The answer is because the result would be false. For example, if $\sigma$ was a higher dimensional simplex, we could choose an open neighborhood $U = \{\sigma\} \times U_1 \times U_2 \subset \hatt{\sigma}$, where $U_1 \subsetneq \sigma$ and $U_2 \subset \total{\sigma}$ are open. If $\xi \in U_2 \cap \partial G_{\sigma}$, then any neighborhood of $\xi$ in our topology on $\overline{Z}$ would contain some $W_{\UU,\e}(\xi)$, which would contain points $z$ with $p(z) \in \sigma \setminus  U_1$. Thus it will be impossible to find an open neighborhood of $\xi$ which meets $\hatt{\sigma}$ in exactly $U$. 

\subsection{$\overline{Z}$ is $T_0$}
By Urysohn's metrization theorem \cite[4.4]{Munkres}, to show $\overline{Z}$ is metrizable it is enough to show $\overline{Z}$ is Hausdorff, separable, and regular. We have already seen it is separable in \thref{Oz is a topology}, and we will prove it is regular and $T_0$, a combination which implies Hausdorff.

\begin{proposition}
    The space $\overline{Z}$ satisfies the $T_0$ separation condition, that is, for every pair of distinct points $z,z' \in \overline{Z}$ there is a neighborhood of $z$ not containing $z'$.
\end{proposition}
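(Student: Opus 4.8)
The plan is to produce, for each pair of distinct points $z,z' \in \overline{Z}$, an open neighborhood of $z$ excluding $z'$. There are three types of points ($Z$, $\partial X$, $\partial_{Stab}G$), so in principle there are six ordered cases, but many collapse using earlier machinery. The key observation is that the map $p:\overline{Z} \longrightarrow \overline{X}$ of \thref{defn of p} is continuous, so whenever $p(z) \neq p(z')$ we can pull back disjoint neighborhoods in $\overline{X}$ (which is Hausdorff as a $\mathrm{CAT}(0)$ boundary compactification) and win immediately. Thus the only real work is when $p(z) = p(z')$.

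First I would dispatch the easy configurations. If $z \in Z$, then $p$ restricted to $Z$ is essentially injective on each $\hatto{\sigma}$ by \thref{internal spaces embed}: two points of $Z$ with the same $p$-image and the same cusped coordinate coincide, and if they differ in the cusped coordinate we separate inside the relevant $\hatt{\sigma}$ using \thref{induced topologies}. A point of $Z$ and a point of $\partial G = \partial X \sqcup \partial_{Stab}G$ are separated because $Z$ is open in $\overline{Z}$ (neighborhoods of points of $Z$ lie in $Z$), so $Z$ itself is a neighborhood of $z$ missing $z'$. For $z,z' \in \partial X$ with $z \neq z'$, we have $p(z) \neq p(z')$ in $\partial X$, so pull back disjoint sets in $\overline{X}$ via $V_U(\cdot)$. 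For $z = \eta \in \partial X$ and $z' = \xi \in \partial_{Stab}G$: if $\eta \notin D(\xi)$ (which is automatic since $\eta \in \partial X$ and $D(\xi) \subset X$ is a finite subcomplex, and conversely $\xi \notin \{\eta\} = D(\eta)$), choose $U \subset \overline{X}$ a neighborhood of $\eta$ disjoint from the compact set $D(\xi)$; then $V_U(\eta)$ cannot contain $\xi$ because $D(\xi) \not\subset U$. Symmetrically, given $\xi \in \partial_{Stab}G$ and $\eta \in \partial X$, take $N$ with $D^\e(\xi) \subset B(v_0,N)$ and apply \thref{Shadow1} to get a $\xi$--family $\VV$ with $st(v_0) \cap \pCone_\VV(\xi) = \varnothing$ when $\eta \notin \partial G_{v_0}$ — more robustly, use that $\eta \notin D(\xi)$ and that $V_{\VV,\e}(\xi)$ contains $\eta$ only if $D(\eta) = \{\eta\} \setminus D(\xi) \subset \tCone_{\VV,\e}(\xi)$; choosing $\VV$ so this fails (again via \thref{Shadow1}, pushing the cone away from a fixed geodesic toward $\eta$) excludes $\eta$.

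The substantive case is $z = \xi$, $z' = \xi'$, both in $\partial_{Stab}G$ and distinct. If $D(\xi) \neq D(\xi')$, the cheapest route is to note that some vertex distinguishes them or that the domains are not equal as subcomplexes, and then use the same trick as the $\partial X$ case: if there is a vertex $v \in V(\xi') \setminus V(\xi)$ (or a point of $D(\xi')$ outside $D(\xi)$ not forced into a cone), choose the $\xi$--family $\UU$ small enough that this point fails to lie in $\tCone_{\UU,\e}(\xi)$ — possible by \thref{Shadow1} applied with basepoint considerations, or directly because $\total{\sigma}$ for $\sigma$ a simplex of $Lk(\xi)$ near that point is a closed set missing $\xi$ in $\total{v}$, so shrink $U_v$ to avoid it and refine. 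When $D(\xi) = D(\xi')$ but $\xi \neq \xi'$, there is a vertex $v \in V(\xi)$ with $\xi \neq \xi'$ as points of $\total{v}$; since $\total{v}$ is Hausdorff choose disjoint neighborhoods $U_v \ni \xi$, $U'_v \ni \xi'$, extend $U_v$ to a $\xi$--family $\UU$ by \thref{balloon prop} (refining as needed to be admissible), and observe $\xi' \notin V_{\UU,\e}(\xi)$ because membership would force $\xi' \in U_v$ (as $v \in V(\xi)\cap V(\xi')$), contradicting disjointness. The main obstacle is bookkeeping: in the $D(\xi) \neq D(\xi')$ subcase one must rule out that $\xi'$ sneaks into $V_{\UU,\e}(\xi)$ via the cone condition $D(\xi')\setminus D(\xi) \subset \tCone_{\UU,\e}(\xi)$, and the clean way to prevent this is to pick a simplex $\sigma$ of $D(\xi')$ (or of $Lk(\xi)$ between $D(\xi)$ and $D(\xi')$) witnessing the asymmetry, then use that $\total{\sigma}$ is closed and misses $\xi$ in some $\total{v}$ to shrink $U_v$, invoking the Crossing Lemma \thref{Crossing} exactly as in the proof of \thref{Shadow1} to guarantee no short path of simplices can drag $\total{\sigma}$ into the refined $\xi$--family. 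I would organize the whole proof as a short case analysis with the $p$-image observation up front to kill the generic cases, leaving only the two "same $p$-image" or "equal domain" situations to handle by hand.
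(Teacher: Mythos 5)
Your outline tracks the paper's case analysis reasonably well for most configurations, but there is a genuine gap: you never handle the ordered pair $z = \xi \in \partial_{Stab}G$, $z' \in Z$. Your blanket remark that ``a point of $Z$ and a point of $\partial G$ are separated because $Z$ is open'' only produces a neighborhood of the $Z$-point missing the $\partial G$-point; it gives nothing for the reverse direction, which is required since the statement asserts, for every ordered pair, a neighborhood of $z$ avoiding $z'$. The paper's Case 6 handles $z = \xi$, $z' \in Z$ by splitting on whether $[v_0,p(z')]$ misses $D(\xi)$, goes through it, or $p(z') \in D(\xi)$; the last subcase is the one that your sketch cannot reach by any of the tools you invoke, because there the cone and the $\e$-neighborhood of $D(\xi)$ both contain $p(z')$, and one must instead shrink $U_v$ in $\total{v}$ to exclude the \emph{cusped coordinate} $c(z')$. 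Nothing in your proposal touches the cusped coordinate when $z' \in Z$ and $z \in \partial_{Stab}G$, so this subcase is simply open.

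A smaller but real imprecision: your ``key observation'' declares $p:\overline{Z} \longrightarrow \overline{X}$ continuous, but $p$ is only defined on $Z \sqcup \partial X$, not on $\partial_{Stab}G$, and the continuity established in \thref{defn of p} is with respect to the disjoint-union topology on $Z \sqcup \partial X$, which is not the subspace topology inherited from $\overline{Z}$. In practice this does not sink any of the cases you attempt (you end up reasoning directly with $V_U(\eta)$ and domains), but the framing that ``whenever $p(z) \neq p(z')$ we can pull back disjoint neighborhoods and win immediately'' overpromises, especially once one of the two points lives in $\partial_{Stab}G$. Your treatment of Case 4 (both points in $\partial_{Stab}G$) is organized by $D(\xi) = D(\xi')$ vs.\ $\neq$, whereas the paper splits on $D(\xi) \cap D(\xi') \neq \varnothing$ vs.\ $= \varnothing$; the paper's split is cleaner because a nonempty intersection immediately yields a common vertex $v$ where $\xi \neq \xi'$ in $\total{v}$ (by \thref{boundaries embed}) and an empty intersection lets you apply the cone-avoiding lemma to any $x \in D(\xi')$, while your ``$D(\xi) \neq D(\xi')$'' case forces a more delicate argument to find the right vertex or simplex witnessing the asymmetry.
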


The proof is broken into many cases. We continue to use $v_0$ as our notation for the basepoint of the topology.

\begin{lemma}[Case 1: $z,z' \in \partial X$]
    If $\eta,\eta' \in \partial X$, then there is an open neighborhood $U$ of $\eta$ in $\overline{X}$ so that $\eta' \notin V_U(\eta)$.
\end{lemma}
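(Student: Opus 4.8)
This is the easiest of the six $T_0$ cases: two distinct boundary-of-$X$ points can already be separated inside $\overline{X}$, and the neighborhood basis $\OO_{\overline{Z}}(\eta)$ is built directly from open sets of $\overline{X}$ via $V_U(\eta) = p^{-1}(U) \sqcup \{\xi \in \partial_{Stab}G \mid D(\xi) \subset U\}$. So the whole argument is: pull back a separating neighborhood from $\overline{X}$ and check it still excludes $\eta'$ after the $\partial_{Stab}G$-points are thrown in. Since $p$ is the identity on $\partial X$ and $\eta' \in \partial X$, the point $\eta'$ lands in $V_U(\eta)$ if and only if $\eta' \in U$, so excluding $\eta'$ from $U$ in $\overline{X}$ suffices — the extra parabolic points never matter.

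\textbf{Steps.} First, recall from \cite[III.H]{BH} that $\overline{X} = X \cup \partial X$ is a compact metrizable (in particular Hausdorff) space, so there is an open set $W$ of $\overline{X}$ with $\eta \in W$ and $\eta' \notin W$. Second, shrink $W$ to a basic neighborhood of the required form: by definition of $\OO_{\overline{Z}}(\eta)$ we need $U$ to be the interior of some $V_{r,\delta}(\eta)$. Since the $V_{r,\delta}(\eta)$ form a neighborhood basis for $\eta$ in $\overline{X}$, pick $r,\delta$ with $V_{r,\delta}(\eta) \subset W$ and let $U$ be its interior; then $\eta \in U \subset W$, so $\eta' \notin U$. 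Third, form $V_U(\eta) = p^{-1}(U) \sqcup \{\xi \in \partial_{Stab}G \mid D(\xi) \subset U\}$, which is by construction an element of $\OO_{\overline{Z}}(\eta)$. Finally, observe $\eta' \notin V_U(\eta)$: the only part of $V_U(\eta)$ meeting $\partial X$ is $p^{-1}(U) \cap \partial X$, and since $p$ restricts to the identity on $\partial X$ this is exactly $U \cap \partial X$, which does not contain $\eta'$.

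\textbf{Main obstacle.} There is essentially none — the only thing to be slightly careful about is the bookkeeping that a basic neighborhood of $\eta$ must be of the specific form $V_U(\eta)$ with $U$ the interior of some $V_{r,\delta}(\eta)$, rather than an arbitrary open $U$; this is handled by using that the $V_{r,\delta}(\eta)$ form a basis in $\overline{X}$. Everything else is immediate from the definitions and the Hausdorffness of $\overline{X}$.

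\begin{proof}
    Since $\overline{X} = X \cup \partial X$ is metrizable, hence Hausdorff, there is an open subset of $\overline{X}$ containing $\eta$ but not $\eta'$. The sets $V_{r,\delta}(\eta)$ form a neighborhood basis for $\eta$ in $\overline{X}$, so we may choose $r,\delta$ so that $\eta' \notin V_{r,\delta}(\eta)$; let $U$ be the interior of $V_{r,\delta}(\eta)$, so that $U$ is an open neighborhood of $\eta$ in $\overline{X}$ with $\eta' \notin U$. Then
    \[V_U(\eta) = p^{-1}(U) \sqcup \{\xi \in \partial_{Stab}G \;|\; D(\xi) \subset U\} \in \OO_{\overline{Z}}(\eta).\]
    Because $p$ restricts to the identity on $\partial X$, we have $V_U(\eta) \cap \partial X = p^{-1}(U) \cap \partial X = U \cap \partial X$, which does not contain $\eta'$. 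Hence $\eta' \notin V_U(\eta)$.
\end{proof}
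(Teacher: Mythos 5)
Your proof is correct and takes essentially the same approach as the paper: separate $\eta$ from $\eta'$ in $\overline{X}$ using its Hausdorffness and the $V_{r,\delta}$ neighborhood basis, then pull back to $V_U(\eta)$. The paper's version is marginally stronger (it produces \emph{disjoint} neighborhoods $V_U(\eta)$ and $V_{U'}(\eta')$ rather than just one neighborhood of $\eta$ missing $\eta'$), but your argument establishes exactly what the $T_0$ statement asks for.
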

\begin{proof}
    The space $\overline{X}$ is metrizable, hence Hausdorff and we have a nice neighborhood basis for points in $\partial X$. We can choose $r,\delta,r',\delta'$ so that $V_{r,\delta}(\eta)$ and $V_{r',\delta'}(\eta')$ are disjoint and let $U,U'$ be their respective interiors. Then $V_U(\eta)$ and $V_{U'}(\eta')$ are disjoint. 
\end{proof}

\begin{lemma}[Case 2: $z \in \partial X,z' \in \partial_{Stab}G$]
    If $\eta \in \partial X$ and $\xi \in \partial_{Stab}G$, then there is an open neighborhood $U$ of $\eta$ in $\overline{X}$ so that $\xi \notin V_U(\eta)$.
\end{lemma}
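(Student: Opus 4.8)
The goal is to separate $\eta \in \partial X$ from $\xi \in \partial_{Stab}G$ by a basic neighborhood of $\eta$. A neighborhood $V_U(\eta)$ contains $\xi$ precisely when $D(\xi) \subset U$, so the plan is to find a neighborhood $U$ of $\eta$ in $\overline{X}$ that misses $D(\xi)$ entirely. Since $D(\xi)$ is a finite subcomplex of $X$ (by \thref{Finiteness}) it is a compact subset of $X$, and $\eta \in \partial X$, so $\eta \notin D(\xi)$. In the metrizable space $\overline{X}$ a compact set sitting inside $X$ and a boundary point $\eta$ are distinct points, and more to the point $D(\xi)$ is a closed compact subset of $\overline{X}$ not containing $\eta$; since $\overline{X}$ is Hausdorff (indeed metrizable, by \cite{BH}), we can separate $\eta$ from the compact set $D(\xi)$ by disjoint open sets.

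\textbf{Key steps.} First I would recall from \thref{Finiteness} that $D(\xi)$ contains at most $d_{max}$ simplices, hence is a finite, in particular compact, subcomplex of $X$, and that compact subsets of $X$ are closed in $\overline{X}$ and disjoint from $\partial X$ (a geodesic ray cannot stay in a bounded set). Thus $\eta \notin D(\xi)$ and $D(\xi)$ is a compact subset of the metrizable space $\overline{X}$. Second, using that $\overline{X}$ is metrizable (hence normal, or simply using compactness of $D(\xi)$ together with the Hausdorff property), choose an open set $U \subset \overline{X}$ with $\eta \in U$ and $U \cap D(\xi) = \varnothing$; shrinking if necessary I may take $U$ to be the interior of some $V_{r,\delta}(\eta)$, so that $V_U(\eta) \in \OO_{\overline{Z}}(\eta)$ is an admissible basic neighborhood. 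Third, I would check $\xi \notin V_U(\eta)$: by definition $V_U(\eta) = p^{-1}(U) \sqcup \{\xi' \in \partial_{Stab}G \mid D(\xi') \subset U\}$, and since $D(\xi) \not\subset U$ (indeed $D(\xi) \cap U = \varnothing$ and $D(\xi) \neq \varnothing$), we conclude $\xi \notin V_U(\eta)$. This gives the required neighborhood of $\eta$ avoiding $\xi$, which is exactly the $T_0$ condition in this case (one does not even need a neighborhood of $\xi$ avoiding $\eta$, though that follows from Case 3 of \thref{Filtration}-type reasoning if full Hausdorffness is wanted later).

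\textbf{Main obstacle.} There is essentially no obstacle here; the only thing to be careful about is the easy topological fact that a bounded subcomplex of $X$ is disjoint from $\partial X$ and closed in $\overline{X}$, and that one can find a basic neighborhood $V_{r,\delta}(\eta)$ small enough to miss it — this is immediate from the definition of the topology on $\overline{X}$ in \cite{BH}, since for $r$ large enough the set $V_{r,\delta}(\eta)$ consists only of points $x$ with $d(v_0,x) > r$, and $D(\xi)$ lies in a bounded ball $B(v_0,N)$ once $N$ exceeds $d(v_0,D(\xi))$. So taking any $r > N$ and any $\delta$ yields $V_{r,\delta}(\eta) \cap D(\xi) = \varnothing$, and $U$ its interior works.
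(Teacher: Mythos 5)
Your proof is correct and takes essentially the same approach as the paper: observe that $D(\xi)$ is a bounded subcomplex of $X$, so some $B(v_0,R)$ contains it, and then the interior of $V_{R+1,\delta}(\eta)$ misses $D(\xi)$ entirely, hence $D(\xi)\not\subset U$ and $\xi\notin V_U(\eta)$. The preliminary appeal to Hausdorff/normal separation in $\overline{X}$ is unnecessary scaffolding — your final paragraph already gives the direct argument, which is what the paper does.
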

\begin{proof}
    Pick $R$ so that $D(\xi) \subset B(v_0,R)$ and let $U$ be the interior of $V_{R+1, 1}(\eta)$. Then  $\xi \notin V_U(\eta)$.
\end{proof}

\begin{lemma}\thlabel{cone can avoid any point}
    Suppose $\xi \in \partial_{Stab}G$ and $x \in \overline{X} \setminus D(\xi)$. Then there is some $\xi$--family $\UU$ so that $x \notin \tCone_{\UU,\e}(\xi)$ for any $\e$.
\end{lemma}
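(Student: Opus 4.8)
The statement we want is: for $\xi \in \partial_{Stab}G$ and $x \in \overline{X} \setminus D(\xi)$, there is a $\xi$--family $\UU$ with $x \notin \tCone_{\UU,\e}(\xi)$ for every $\e \in (0,1)$. The natural strategy is to kill membership at the exit simplex. Since $x \notin D(\xi)$, if $\gamma_x$ never enters $D^\e(\xi)$ then $x \notin \tCone_{\UU,\e}(\xi)$ for trivial reasons and any $\xi$--family works; so we may assume $\gamma_x$ does enter $D^\e(\xi)$ for (some, hence all small enough) $\e$. The issue is that the exit simplex $\sigma_{\xi,\e}(x)$ depends on $\e$. But as $\e$ ranges over $(0,1)$ the geodesic $\gamma_x$ only meets finitely many simplices of $N(\xi)$ — this is \thref{Finiteness}, or directly \thref{Containment}/\thref{geod meets finitely many} applied to the finite complex $D(\xi)$ — so $\{\sigma_{\xi,\e}(x) : \e \in (0,1)\}$ is a finite set of simplices of $Lk(\xi)$, none of which is in $D(\xi)$ (the exit simplex is always a simplex of $Lk(\xi)$ when $x \notin D^\e(\xi)$, and when $x \in D^\e(\xi)$ it is $\sigma_x \not\subset D(\xi)$).

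\textbf{Key steps.} First I would enumerate this finite collection $\sigma^1, \ldots, \sigma^m$ of possible exit simplices of $x$. Each $\sigma^j$ is a simplex of $X$ not contained in $D(\xi)$, so for each vertex $v \in \sigma^j \cap V(\xi)$ the set $\total{\sigma^j}$ is a closed subset of $\total{v}$ that does not contain $\xi$ (here I use that $\varphi_{v,\sigma^j}$ has closed image, \thref{our spaces exist}(2), and that $\sigma^j \not\subset D(\xi)$ means $\xi \notin \varphi_{v,\sigma^j}(\partial G_{\sigma^j})$, i.e.\ $\xi \notin \total{\sigma^j} \subset \total{v}$). Since $\total{v}$ is metrizable I can pick an open neighborhood $U_v^0 \subset \total{v}$ of $\xi$ disjoint from $\bigcup_j \total{\sigma^j}$, where the union ranges over those $j$ with $v \in \sigma^j$; for vertices $v \in V(\xi)$ not meeting any $\sigma^j$ I just take $U_v^0 = \total{v}$. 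Then apply \thref{xi families exist} to the collection $\{U_v^0 : v \in V(\xi)\}$ to obtain an honest $\xi$--family $\UU = \{U_v\}$ with $U_v \subset U_v^0$. Finally, suppose for contradiction $x \in \tCone_{\UU,\e}(\xi)$ for some $\e$: then $\gamma_x$ enters $D^\e(\xi)$ and for every vertex $v \in \sigma_{\xi,\e}(x) \cap V(\xi)$ we have $\total{\sigma_{\xi,\e}(x)} \subset U_v$. But $\sigma_{\xi,\e}(x) = \sigma^j$ for some $j$, and picking any vertex $v \in \sigma^j \cap V(\xi)$ (which is nonempty because $\sigma^j \subset Lk(\xi) = N(D(\xi)) \setminus D(\xi)$ meets $D(\xi)$) gives $\total{\sigma^j} \subset U_v \subset U_v^0$, contradicting $U_v^0 \cap \total{\sigma^j} = \varnothing$.

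\textbf{Main obstacle.} The only subtlety is verifying the finiteness claim — that as $\e$ varies, only finitely many simplices arise as $\sigma_{\xi,\e}(x)$. When $x \in X$ this is immediate since $[v_0,x]$ is a compact geodesic meeting finitely many simplices. When $x \in \partial X$, $\gamma_x$ is an infinite ray, but the exit simplex is always contained in $N(\xi)$, and $\gamma_x \cap N(\xi)$ is contained in $\Geod$ of the bounded set $D^{1}(\xi)$ together with a bounded initial segment, so again only finitely many simplices of $N(\xi)$ are involved — this follows from \thref{Finiteness}(1), which says any geodesic in the open simplicial neighborhood of $D(\xi)$ meets at most $d_{max}$ simplices, since once $\gamma_x$ permanently leaves $N(\xi)$ it contributes no further exit simplices. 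Beyond this, everything is routine: the argument is essentially ``the exit simplex lives in a fixed finite set, shrink the neighborhoods to avoid all of them.''
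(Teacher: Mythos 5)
Your proof is correct, but it takes a genuinely different route from the paper's. The paper isolates a \emph{single} simplex — the first simplex $\sigma$ met by $[v_0,x]$ after $[v_0,x]$ leaves $D(\xi)$ (preceded by a quick reduction to the case where $[v_0,x]$ meets $D(\xi)$, via the Genuine Shadow \thref{Genuine Shadows}) — and chooses one $U_v$ avoiding $\total{\sigma}$. It then passes to a $\xi$--family $d_{max}+1$--nested inside and uses the Crossing Lemma \thref{Crossing} to propagate the constraint from the (unknown, $\e$--dependent) exit simplex $\sigma_{\xi,\e}(x)$ backward along the geodesic to $\sigma$, deriving a contradiction. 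Your approach is ``upstream'' of this machinery: you observe that as $\e$ varies, only finitely many simplices can be the exit simplex, and you directly forbid all of them. This buys you a proof that needs neither the Crossing Lemma nor the Genuine Shadow, at the cost of justifying the finiteness claim. On that justification: your appeal to \thref{Finiteness}(1) is not quite right, because $\gamma_x \cap N(\xi)$ need not be a single geodesic segment lying in $N(\xi)$ (so the lemma doesn't apply as stated). The clean argument is: every exit point lies in $\bigcup_{\e<1} D^\e(\xi) \subset N(\xi)$, a bounded set; since $\gamma_x$ is proper, $\gamma_x^{-1}(N(\xi))$ is a bounded subset of the parameter domain, hence the exit points all lie in a compact subsegment $\gamma_x([0,T])$, which meets finitely many open simplices by \thref{Bounded Length To Bounded Number Of Simplices}. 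With that correction, the rest of your argument is airtight: each $\total{\sigma^j}$ is a closed subset of $\total{v}$ not containing $\xi$ (exactly as you explain, using \thref{boundaries embed} implicitly to see $\sigma^j \not\subset D(\xi) \Rightarrow \xi \notin \total{\sigma^j}$), so the desired $U_v^0$ exists, \thref{xi families exist} gives an actual $\xi$--family inside those, and the $\tCone$ condition fails directly. One small remark: your parenthetical ``for some, hence all small enough $\e$'' is backwards — $\gamma_x$ entering $D^\e(\xi)$ is monotone increasing in $\e$, so it should read ``all large enough $\e$'' — but this has no effect on the argument.
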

\begin{proof}
    If $[v_0,x]$ does not meet $D(\xi)$, then $x \notin \tCone_{\UU,\e}(\xi)$ for \emph{any} $\xi$--family $\UU$ by the contrapositive of the Genuine Shadow \thref{Genuine Shadows}.

    If $[v_0,x]$ does meet $D(\xi)$, let $\sigma$ be the first simplex met by $[v_0,x]$ after leaving $D(\xi)$. Choose a vertex $v$ of $\sigma \cap D(\xi)$ and a neighborhood $U_v$ of $\xi$ in $\total{v}$ avoiding $\total{\sigma}$. Extend $U_v$ to a $\xi$--family $\UU$ using \thref{balloon prop}, and let $\UU'$ be a $\xi$--family $d_{max}+1$--nested in $\UU$. Observe that the portion of $[v_0,x]$ in $D^\e(\xi)$ gives a path of simplices of length at most $d_{max}$ from $\sigma_{\xi,\varepsilon}(x)$ to $\sigma$. 
    
    For a contradiction, suppose $\e \in (0,1)$ and $x \in V_{\UU',\e}(\xi)$. Then $\sigma_{\xi,\varepsilon}(x) \subset N_{\UU'}(\xi)$, and the Crossing \thref{Crossing} applied to the path of simplices from $\sigma_{\xi,\e}(x)$ to $\sigma$ implies $\total{\sigma} \subset U_v$ in $\total{v}$. But $U_v$ was chosen to avoid $\total{\sigma}$, so this is impossible.
\end{proof}

\begin{lemma}[Case 3: $z \in \partial_{Stab}G, z' \in \partial X$]
    If $\xi \in \partial_{Stab}G$ and $\eta \in \partial X$, then there is an open neighborhood $V_{\UU,\varepsilon}(\xi) \in \mathcal{O}_{\overline{Z}}(\xi)$ so that $\eta \notin V_{\UU,\varepsilon}(\xi)$.
\end{lemma}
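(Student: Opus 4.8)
The plan is to separate the domain $D(\xi)$ from the boundary point $\eta$ using the geometry of $X$, then translate that separation into the language of cones. First I would observe that $D(\xi)$ is a bounded subcomplex of $X$ (in fact $\diam D(\xi) \leq A$ by the earlier proposition), so it is a compact subset of $X$ and lies in some ball $B(v_0, R)$. Since $\eta \in \partial X$ and the natural topology on $\overline{X}$ separates $\eta$ from the compact set $\overline{B(v_0,R)} \supset D(\xi)$, I can pick $N > R$ and $\delta > 0$ so that the neighborhood $V_{N,\delta}(\eta) \subset \overline{X}$ is disjoint from $\overline{B(v_0,R+1)}$, and in particular disjoint from $D^\e(\xi)$ for every $\e \in (0,1)$.

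Next I would use \thref{cone can avoid any point}: pick a point $x \in V_{N,\delta}(\eta) \cap X$, say $x = \gamma_\eta(N)$. Since $x \notin D(\xi)$ (it lies outside $B(v_0,R)$), that lemma provides a $\xi$--family $\UU_0$ with $x \notin \tCone_{\UU_0,\e}(\xi)$ for any $\e$. The subtlety is that $\tCone_{\UU_0,\e}(\xi)$ might still contain $\eta$ itself or other points of $V_{N,\delta}(\eta)$; to fix this I want a $\xi$--family that excludes a whole neighborhood of $\eta$, not just the single point $x$. Here I would lean on \thref{cone can avoid any point} applied more carefully, or re-examine its proof: the proof chooses a simplex $\sigma$ — the first simplex met by $[v_0,x]$ after leaving $D(\xi)$ — and a neighborhood $U_v$ of $\xi$ in $\total{v}$ avoiding $\total{\sigma}$, then refines. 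The resulting $\xi$--family $\UU$ has the property that any point $y$ with $\sigma_{\xi,\e}(y) \subset N_\UU(\xi)$ would force $\total{\sigma} \subset U_v$, a contradiction; so $\tCone_{\UU,\e}(\xi)$ avoids \emph{every} point whose geodesic from $v_0$ exits $D^\e(\xi)$ through a simplex connected to $\sigma$ by a short path of simplices. Since geodesics from $v_0$ to points near $\eta$ all leave $D(\xi)$ through simplices close to $\sigma$ (because $D(\xi)$ is compact, far from $\eta$, and exit simplices vary continuously in the sense of the Star Lemma), this same $\UU$ will exclude a neighborhood of $\eta$.

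Then I would assemble the conclusion: choosing $\e \in (0,1)$ arbitrarily and $\UU$ as above, I claim $\eta \notin V_{\UU,\e}(\xi)$. By definition $\eta \in V_{\UU,\e}(\xi)$ would mean $\eta \in \Cone_{\UU,\e}(\xi) \subset \tCone_{\UU,\e}(\xi)$, which we have arranged to fail. To be safe I should also verify $\eta$ is not sneaking in through $W_{\UU,\e}(\xi)$ or $\partial W_{\UU,\e}(\xi)$ — but $W_{\UU,\e}(\xi) \subset Z$ and $\partial W_{\UU,\e}(\xi) \subset \partial_{Stab}G$, so $\eta \in \partial X$ cannot lie in either. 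Hence $V_{\UU,\e}(\xi)$ is the desired neighborhood of $\xi$ missing $\eta$.

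The main obstacle I expect is the step of upgrading ``$x \notin \tCone$'' to ``a neighborhood of $\eta$ misses $\tCone$'': \thref{cone can avoid any point} as stated only handles a single point. I would resolve this either by showing directly that $\tCone_{\UU,\e}(\xi)$ is disjoint from $V_{N,\delta'}(\eta)$ for a possibly smaller $\delta'$ — using that all geodesics $[v_0,y]$ for $y$ in a small cone around $\eta$ exit $D^\e(\xi)$ through simplices in a bounded path-of-simplices distance of the exit simplex $\sigma$ of $x$ (by compactness of $D(\xi)$ together with the Star Lemma \thref{Star} and Containment \thref{Containment}), so that the Crossing Lemma argument in the proof of \thref{cone can avoid any point} applies uniformly — or, more cheaply, by noting that since $\tCone_{\UU,\e}(\xi)$ is open in $\overline{X}$ after removing $\partial \overline{D^\e(\xi)}$ points and its complement is closed, and $\eta$ is in that complement, some $V_{N',\delta'}(\eta)$ lies in the complement. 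In fact the cleanest route is probably: $\tCone_{\UU,\e}(\xi)$ consists of points whose geodesic from $v_0$ passes through $D(\xi)$ (Genuine Shadow \thref{Genuine Shadows}), so $\tCone_{\UU,\e}(\xi) \subset \{y : [v_0,y] \cap D(\xi) \neq \emptyset\}$, and since $D(\xi)$ is compact and does not lie on the ray $\gamma_\eta$, the set of such $y$ avoids a neighborhood of $\eta$ — this observation alone suffices and sidesteps the refinement entirely.
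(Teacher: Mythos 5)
The paper's proof of this lemma is a one-line application of \thref{cone can avoid any point} with $x = \eta$. That lemma is stated for $x \in \overline{X} \setminus D(\xi)$, and $\overline{X} = X \sqcup \partial X$ contains $\eta$; since $D(\xi)$ is a finite subcomplex of $X$ it cannot contain $\eta$, so the hypothesis is met and the conclusion $\eta \notin \tCone_{\UU,\e}(\xi)$ follows immediately. Your proposal, by contrast, applies the lemma only to $x = \gamma_\eta(N)$ and then agonizes over whether the resulting family excludes $\eta$ itself, a complication that simply evaporates once you notice the lemma already covers boundary points. Your middle argument in the second paragraph does, in fact, recover the right conclusion --- once $N$ is chosen so that $D^\e(\xi) \subset B(v_0,N)$, the exit simplices $\sigma_{\xi,\e}(x)$ and $\sigma_{\xi,\e}(\eta)$ coincide (because $D^\e(\xi)$ is convex, the ray meets it in one interval contained in $B(v_0,N)$), so excluding $x$ is the same as excluding $\eta$. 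But this is doing by hand exactly what reading the lemma's hypothesis more carefully would have given you for free.

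More seriously, the ``cleanest route'' you propose at the end is incorrect. You assert that $D(\xi)$ ``does not lie on the ray $\gamma_\eta$'' and conclude that $\{y : [v_0,y] \cap D(\xi) \neq \emptyset\}$ avoids a neighborhood of $\eta$. But there is no reason for $[v_0,\eta)$ to miss $D(\xi)$: the basepoint $v_0$ can be a vertex of $D(\xi)$, or $D(\xi)$ can sit squarely across the ray. In that case $\eta$ itself belongs to the set of $y$ with $[v_0,y] \cap D(\xi) \neq \emptyset$, and no neighborhood of $\eta$ can avoid it, so this shortcut does not ``sidestep the refinement entirely.'' Indeed, \thref{cone can avoid any point} has a two-case structure precisely to handle the possibility that $[v_0,x]$ goes through $D(\xi)$; when it does, the Crossing Lemma argument is genuinely needed and the Genuine Shadow observation alone does not suffice.
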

\begin{proof}
    Apply the previous lemma with $x = \eta$ to receive a $\xi$--family $\UU$ with $\eta \notin \tCone_{\UU,\e}(\xi)$ for any $\e$. Then $\eta \notin V_{\UU,\e}(\xi)$ for any $\e$.
\end{proof}

\begin{lemma}[Case 4: $z,z' \in \partial_{Stab}G$]
    If $\xi,\xi' \in \partial_{Stab}G$, then there is an open neighborhood $V_{\UU,\varepsilon}(\xi) \in \mathcal{O}_{\overline{Z}}(\xi)$ so that $\xi' \notin V_{\UU,\varepsilon}(\xi)$.
\end{lemma}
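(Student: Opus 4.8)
The plan is to split into two cases according to whether the domains $D(\xi)$ and $D(\xi')$ are disjoint, and in each case to choose the $\xi$--family $\UU$ so that one of the two conditions defining membership in $\partial W_{\UU,\varepsilon}(\xi)$ fails. Recall that $\xi'\in V_{\UU,\varepsilon}(\xi)$ holds precisely when $D(\xi')\setminus D(\xi)\subset\tCone_{\UU,\varepsilon}(\xi)$ \emph{and}, for every vertex $v$ of $D(\xi)\cap D(\xi')$, we have $\xi'\in U_v$; negating either conjunct suffices, and the $\UU$ produced below will do so uniformly for every $\varepsilon\in(0,1)$.

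If $D(\xi)\cap D(\xi')=\varnothing$, I would pick any point $x\in D(\xi')$ (this subcomplex is nonempty, since $\xi'$ has a representative in the boundary of some simplex stabilizer). Because $D(\xi)$ and $D(\xi')$ are disjoint subcomplexes, $x\notin D(\xi)$, so \thref{cone can avoid any point} produces a $\xi$--family $\UU$ with $x\notin\tCone_{\UU,\varepsilon}(\xi)$ for every $\varepsilon$. Since $D(\xi')\setminus D(\xi)=D(\xi')\ni x$, the first condition fails, hence $\xi'\notin V_{\UU,\varepsilon}(\xi)$ for every $\varepsilon\in(0,1)$.

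If $D(\xi)\cap D(\xi')\neq\varnothing$, then this intersection is a nonempty subcomplex, so it contains a vertex $v$, and $v\in V(\xi)\cap V(\xi')$. Both $\xi$ and $\xi'$ therefore have representatives $\xi_v,\xi'_v\in\partial G_v\subset\total{v}$ with $\pi_v(\xi_v)=\xi$ and $\pi_v(\xi'_v)=\xi'$, and $\xi_v\neq\xi'_v$ because $\pi_v$ is a well-defined map and $\xi\neq\xi'$ (no appeal to \thref{boundaries embed} is needed here). Using that $\total{v}$ is compact metrizable, hence Hausdorff, I would choose an open neighborhood $U_v\subset\total{v}$ of $\xi_v$ with $\xi'_v\notin U_v$, and extend it to a $\xi$--family $\UU$ via \thref{balloon prop} (or \thref{xi families exist}). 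Then $v$ is a vertex of $D(\xi)\cap D(\xi')$ with $\xi'\notin U_v$, so the second condition fails and $\xi'\notin V_{\UU,\varepsilon}(\xi)$ for every $\varepsilon\in(0,1)$.

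There is essentially no obstacle here: the disjoint case is exactly \thref{cone can avoid any point} applied to a point of $D(\xi')$, and the intersecting case reduces to Hausdorffness of a single Bowditch boundary $\partial G_v$. The only bookkeeping is to shrink the $\xi$--family produced in either case (by intersecting it with the fixed family $\UU_\xi$) so that it is admissible with the basepoint $v_0$ in the sense of Assumption \ref{shadow assumption}; this only shrinks $N_\UU(\xi)$ and the associated cones, so it preserves the failure we have engineered, and it leaves $U_v$ a neighborhood of $\xi_v$ still missing $\xi'_v$.
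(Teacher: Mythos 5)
Your proof is correct and follows exactly the same two-case argument as the paper: when the domains intersect, separate $\xi$ from $\xi'$ in $\partial G_v$ by Hausdorffness and extend to a $\xi$--family; when they are disjoint, apply \thref{cone can avoid any point} to a point of $D(\xi')$. (Your labeling of which condition in the definition of $\partial W_{\UU,\e}(\xi)$ fails is in fact more consistent with the stated definition than the paper's own wording, which appears to have swapped ``first'' and ``second.'')
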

\begin{proof}
    If $D(\xi) \cap D(\xi') \neq \varnothing$, we can choose a vertex $v \in V(\xi) \cap V(\xi')$. Let $U_v$ be a neighborhood of $\xi$ in $\total{v}$ not containing $\xi'$ and extend $U$ to a $\xi$--family $\UU$. Then $\xi' \notin V_{\UU,\varepsilon}(\xi)$ for any $\varepsilon$ since it fails the first condition for $\xi' \in \partial W_{\UU,\e}(\xi)$ in $\total{v}$.

    Otherwise, we can suppose $D(\xi) \cap D(\xi') = \varnothing$. Apply \thref{cone can avoid any point} to any $x \in D(\xi')$ to find a $\xi$--family $\UU$ so that $x \notin \fitTilde{\Cone}_{\UU,\varepsilon}(\xi)$ for any $\e$. Then $\xi'$ fails the second condition for $\xi' \in V_{\UU,\e}(\xi)$.
\end{proof}

\begin{lemma}[Case 5: $z \in Z, \, z' \in \partial G$] If $z \in Z$ and $z'\in \partial G$, there is a neighborhood of $z$ not containing $z'$.
\end{lemma}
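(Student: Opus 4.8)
The plan is to exhibit, for any $z \in Z$ and any $z' \in \partial G = \partial X \sqcup \partial_{Stab}G$, an open neighborhood of $z$ contained in $Z$ (hence automatically missing $z'$). The key observation is that every basic neighborhood of a point of $Z$ lies entirely inside $Z$ by construction — the sets in $\mathcal{O}_{\overline{Z}}(z)$ for $z\in Z$ are just open subsets of $Z$, and $Z$ is disjoint from $\partial G$ as a set. So the whole content is simply to produce \emph{some} open neighborhood of $z$, which we already have: any $U \in \mathcal{O}_{\overline{Z}}(z)$ works. Concretely, pick $x = p(z)$, choose $\delta > 0$ with $B(x,\delta) \subset st(\sigma_x)$, and use \thref{nbhd of z in Z} (or just \thref{internal spaces embed} together with the fact that $\hatto{\sigma_x}$ is an open subset of $Z$ via $\pi_{\sigma_x}$) to get an open $U \ni z$ with $U \subset Z$. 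Then $z' \notin U$ since $z' \in \partial G$ and $U \cap \partial G = \varnothing$.

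The only subtlety worth spelling out is that we must check $U$ is genuinely open in the topology of $\overline{Z}$ and not merely in the subspace topology on $Z$; but this is immediate because the collection $\mathcal{O}_{\overline{Z}}(z)$ for $z \in Z$ was \emph{defined} in \thref{defn:open sets in Z} to be the open sets of $Z$, and these were shown to form part of the basis $\mathcal{O}_{\overline{Z}}$ in \thref{Oz is a topology}. Thus no "extension to $\overline{Z}$" step is needed. I would write this lemma in one or two sentences:

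\begin{proof}
    Let $U$ be any open neighborhood of $z$ in $Z$; for instance, with $x = p(z)$ choose $\delta>0$ so that $B(x,\delta)\subset st(\sigma_x)$ and take $U = \pi_{\sigma_x}\big(\{\sigma_x\}\times (B(x,\delta)\cap \sigma_x)\times X_{\sigma_x}\big)$, which is open in $Z$ by \thref{internal spaces embed}. By \thref{defn:open sets in Z} and \thref{Oz is a topology}, $U \in \mathcal{O}_{\overline{Z}}$ is an open neighborhood of $z$ in $\overline{Z}$. Since $U \subset Z$ and $z' \in \partial G$, which is disjoint from $Z$, we have $z' \notin U$.
\end{proof}

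I do not expect any real obstacle here: the case is essentially bookkeeping, made trivial by the asymmetry of the definition (neighborhoods of interior points never reach the boundary). The remaining genuine separation work — and the only place where one must actually separate points using cones and $\xi$--families — is in Cases 1--4 above and in the forthcoming proof that $\overline{Z}$ is regular; this case contributes nothing beyond recording that $Z$ sits inside $\overline{Z}$ as an open (indeed, by \thref{Oz is a topology}, dense) subset.
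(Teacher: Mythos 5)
Your proof is correct and is exactly the paper's argument: a basic neighborhood of $z\in Z$ is by definition an open subset of $Z$, hence disjoint from $\partial G$. The paper states this in one sentence; your extra bookkeeping (constructing an explicit $U$ via $\pi_{\sigma_x}$) is harmless but not needed.
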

\begin{proof}
    This is immediate since by definition sets from $\mathcal{O}_{\overline{Z}}(z)$ do not meet $\partial G$. 
\end{proof}

\begin{lemma}[Case 6: $z \in \partial G, z' \in Z$] 
If $z \in \partial G$ and $z' \in Z$, then there is a set $V \in \mathcal{O}_{\overline{Z}}(z)$ so that $z \notin Z$.     
\end{lemma}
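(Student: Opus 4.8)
The statement as printed has an evident slip: the conclusion should read that there is a set $V \in \mathcal{O}_{\overline{Z}}(z)$ with $z' \notin V$. The plan is to split on whether $z \in \partial X$ or $z \in \partial_{Stab}G$, and in both cases to exploit that $x := p(z')$ is an honest point of $X$ (so $x \ne z$, $d(v_0,x) < \infty$, and $d(x, D(\xi)) > 0$ whenever $x \notin D(\xi)$), producing a basic neighbourhood of $z$ whose $p$--image misses $x$; since $z' \in Z$, missing $x$ under $p$ will be enough.

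If $z = \eta \in \partial X$, I would take $r_0 = d(v_0, x)$, let $U$ be the interior of $V_{r_0+1,1}(\eta)$, and observe that $U \subset V_{r_0+1,1}(\eta) \subset \{w \in \overline{X} : d(v_0,w) > r_0 + 1\}$, so $x \notin U$. Because $V_U(\eta) = p^{-1}(U) \sqcup \{\xi' \in \partial_{Stab}G : D(\xi') \subset U\}$ and $z' \in Z$, membership $z' \in V_U(\eta)$ is equivalent to $p(z') = x \in U$, which fails; hence $V = V_U(\eta)$ works.

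If $z = \xi \in \partial_{Stab}G$, I would further distinguish $x \notin D(\xi)$ from $x \in D(\xi)$. When $x \notin D(\xi)$, apply \thref{cone can avoid any point} to obtain a $\xi$--family $\UU$ with $x \notin \tCone_{\UU,\e'}(\xi)$ for every $\e' \in (0,1)$, and then fix $\e = \min\!\big(\tfrac12 d(x,D(\xi)),\tfrac12\big) \in (0,1)$, which is legitimate since $D(\xi)$ is compact and $x \notin D(\xi)$. Now $x \notin D^\e(\xi)$ rules out $z' \in W_{\UU,\e}(\xi)$ directly from the definition of $W_{\UU,\e}(\xi)$; $x \notin \Cone_{\UU,\e}(\xi) \subset \tCone_{\UU,\e}(\xi)$ rules out $z' \in p^{-1}(\Cone_{\UU,\e}(\xi))$; and $z' \in Z$ is not in $\partial W_{\UU,\e}(\xi) \subset \partial_{Stab}G$. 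Thus $z' \notin V_{\UU,\e}(\xi)$. When $x \in D(\xi)$, the simplex $\sigma_x$ lies in $D(\xi)$; pick a vertex $v$ of $\sigma_x$, so $v \in V(\xi)$. By \thref{internal spaces embed} the point $z'$ has a unique representative $(\sigma_x, x, c(z'))$ with $c(z') \in X_{\sigma_x}$, and, dropping the maps $\varphi$, I regard $c(z')$ as a point of the interior $X_v$ of $\total{v}$, hence distinct from $\xi \in \partial G_v$. Since $\total{v}$ is metrizable, hence Hausdorff, I choose a neighbourhood $U_v \ni \xi$ in $\total{v}$ with $c(z') \notin U_v$ and extend it to a $\xi$--family $\UU$ via \thref{balloon prop}, intersecting with the admissible family of \thref{shadow assumption} and using \thref{xi families exist} so that $\UU$ is admissible with $v_0$ as basepoint and its $v$--component still avoids $c(z')$. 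Then for every $\e \in (0,1)$: $z' \notin W_{\UU,\e}(\xi)$ because $v$ is a vertex of $\sigma_x \cap D(\xi)$ yet $c(z') \notin U_v$; $z' \notin p^{-1}(\Cone_{\UU,\e}(\xi))$ because cones are disjoint from domains while $x \in D(\xi)$; and $z' \notin \partial W_{\UU,\e}(\xi)$ since $z' \in Z$. So $V = V_{\UU,\e}(\xi)$ works.

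The only step that needs a genuine idea, rather than unwinding definitions, is the sub-case $x \in D(\xi)$: one must notice that the cusped coordinate of a point of $Z$ is never a boundary point of any cusped space, so it can be separated from $\xi$ simply by shrinking the $\xi$--family at one vertex. Everything else follows immediately from \thref{cone can avoid any point}, the definition of $V_{\UU,\e}(\xi)$, and elementary properties of $\overline{X}$.
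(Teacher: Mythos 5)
Your proposal is correct, and the approach matches the paper's. You rightly read the misprinted conclusion as ``$z' \notin V$,'' and all three cases proceed as the paper intends: for $\eta \in \partial X$ you pick a neighborhood missing $x = p(z')$; for $\xi \in \partial_{Stab}G$ with $x \notin D(\xi)$ you invoke \thref{cone can avoid any point}; for $x \in D(\xi)$ you shrink $U_v$ at a vertex $v \in \sigma_x$ so that $c(z') \notin U_v$. Two small differences are worth noting. First, the paper's treatment further splits the $x \notin D(\xi)$ case on whether $[v_0,x]$ meets $D(\xi)$, invoking the Genuine Shadow in one branch and \thref{cone can avoid any point} in the other; but the proof of \thref{cone can avoid any point} already internalizes exactly that dichotomy, so calling it once, as you do, is a legitimate and slightly cleaner streamlining. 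Second, you added the step of taking $\varepsilon \le \tfrac12 d(x,D(\xi))$ so that $x \notin D^\varepsilon(\xi)$, which is needed to rule out $z' \in W_{\UU,\varepsilon}(\xi)$: the paper's proof asserts only $x \notin \tCone_{\UU,\varepsilon}(\xi) \cup D(\xi)$ and is silent about the possibility $x \in D^\varepsilon(\xi) \setminus D(\xi)$, so your extra sentence fills a genuine (if small) gap in the written argument. The remarks about re-intersecting with the admissible family are harmless but redundant, since Assumption \ref{shadow assumption} is a blanket convention that every $\xi$--family already sits inside $\UU_\xi$.
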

\begin{proof}
    Let $x = p(z')$. If $z = \eta \in \partial X$, then any neighborhood $U$ of $\eta$ in $\overline{X}$ which doesn't contain $x$ will have $z' \notin V_U(\eta)$. If $z = \xi \in \partial_{Stab}G$, then $[v_0,x]$ either meets $D(\xi)$ or doesn't. If it doesn't, then $x \notin \tCone_{\UU,\e}(\xi) \cup D(\xi)$ for any $\xi$--family $\UU$ or $\e \in (0,1)$ by the contrapositive of the Genuine Shadow \thref{Genuine Shadows}. If $[v_0,x]$ does meet $D(\xi)$, then it either goes through or $x \in D(\xi)$. If it goes through, we can apply \thref{cone can avoid any point} to $x$ to find a suitable $\xi$--family. If $x \in D(\xi)$, then $\sigma_x \subset D(\xi)$. Choose a vertex $v$ of $\sigma_x$ and a neighborhood $U_v$ of $\xi$ in $\total{v}$ so that $c(z) \notin U_v$. Extend $U_v$ to a $\xi$--family $\UU$ using \thref{balloon prop}. Then $z \notin W_{\UU,\e}(\xi)$ and $z \notin \tCone_{\UU,\e}(\xi)$, so $z \notin V_{\UU,\e}(\xi)$.
\end{proof}

\subsection{$\overline{Z}$ is regular}

Recall that a topological space $B$ is \emph{regular} if for every open set $U\subset B$ and $x \in U$, there exists another open neighborhood $U'$ of $x$ so that $\overline{U'} \subset U$, or equivalently, every point of $B\setminus U$ admits a neighborhood which doesn't meet $U'$. If $B$ is regular and has the $T_0$ seperation property, it's easy to see $B$ is Hausdorff; given distinct points $x,y \in B$, the $T_0$ property implies we can find an open set $U$ containing $x$ but not $y$, and regularity implies we can find another open set $x \in U' \subset U$ so that $\overline{U'} \subset U$. Then $U'$ and $B \setminus \overline{U'}$ are disjoint neighborhoods of $x,y$. 

The goal of this subsection is the following proposition.

\begin{proposition}
    The space $\overline{Z}$ is regular. 
\end{proposition}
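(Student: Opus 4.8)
The strategy is the same as the $T_0$ proof: break into cases according to which stratum ($Z$, $\partial X$, or $\partial_{Stab}G$) the point $x$ and the basic open set $U$ come from, and in each case produce a smaller basic open set $U'$ whose closure is contained in $U$. The essential input is the ``nesting'' machinery from Section \ref{section:Geometric Tools}: whenever we have $\xi$--families $\UU'' \subset \UU' \subset \UU$ with each appropriately refined/nested in the next, the Crossing Lemma \ref{Crossing}, the Refinement Lemma \ref{Refinement Lemma}, and \thref{Double Refinement} let us conclude that a point whose defining data is controlled by $\UU''$ already lies in $V_{\UU,\e}(\xi)$. The heuristic is that ``nesting creates buffer room,'' exactly as in the proof that $\overline{Z}$ is $T_0$ and in \thref{ContainmentOfV_UU} and \thref{Case4Filtration}; regularity is the assertion that there is \emph{enough} buffer room to fit a closure inside.

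\textbf{The easy strata.} For $x = z \in Z$: basic neighborhoods of $z$ are just open subsets of $Z$, and by \thref{induced topologies} the subspace topology on $Z$ agrees with the original (metrizable, hence regular) topology on $Z$; but one must check that the $\overline{Z}$--closure of a small $Z$--open set stays inside $U$. Since $Z$ is dense (\thref{Oz is a topology}), its closure can pick up boundary points; here the Filtration \thref{Filtration} shows that any boundary point not in $U$ has a neighborhood missing a sufficiently small $Z$--neighborhood of $z$, so one shrinks using \thref{nbhd of z in Z} together with the local finiteness of domains. For $x = \eta \in \partial X$: basic neighborhoods are the $V_U(\eta)$ with $U$ the interior of some $V_{r,\delta}(\eta)$. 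Here one uses \thref{RegularityOfBoundary} to choose $U' \subset U$ with $d(X\setminus U, U') > k$ for a large constant $k$ (chosen bigger than $2\diam$ of any domain, using \thref{Finiteness}), so that the closure of $V_{U'}(\eta)$ cannot reach a point $\xi' \in \partial_{Stab}G$ with $D(\xi') \not\subset U$, nor a geodesic point outside $U$; a point $\xi \in \partial_{Stab}G$ in the closure would have $D(\xi)$ within distance $k$ of $U'$, hence $D(\xi) \subset U$, so $\xi \in V_U(\eta)$ as needed.

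\textbf{The main case: $x = \xi \in \partial_{Stab}G$.} Given $V_{\UU,\e}(\xi)$, take a $\xi$--family $\UU'''$ which is $d_{max}$--refined/nested several times inside $\UU$, and a further $\UU''''$ inside that, choosing $\e' < \e$, and set $U' = V_{\UU'''',\e'}(\xi)$. We must show every point of $\overline{Z}\setminus V_{\UU,\e}(\xi)$ has a neighborhood disjoint from $U'$; equivalently, that the closure of $U'$ lies in $V_{\UU,\e}(\xi)$. The three sub-cases (closure points in $Z$, in $\partial X$, in $\partial_{Stab}G$) are handled by taking a basic neighborhood of the candidate closure point $w$, using the Filtration \thref{Filtration} to find a basic neighborhood of $w$, and deriving a contradiction with ``buffer room'': if $w \notin V_{\UU,\e}(\xi)$ then either $D(w)$ (or $p(w)$, or the relevant exit simplex) fails to lie in $\tCone_{\UU,\e}(\xi)$, or the cusped coordinate fails to lie in some $U_v$. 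In the first situation one uses \thref{cone can avoid any point} — which gives a $\xi$--family whose cone avoids a fixed point — intersected with $\UU''''$, to get a neighborhood of $\xi$ whose cone misses a neighborhood of $w$; the Refinement Lemma then propagates this along a short path of simplices (length $\le F(d_{max})$, by \thref{Finiteness} and \thref{Short Paths of Simplices}) to show a small neighborhood of $w$ is disjoint from $U'$. In the second situation one uses that $\total{v}$ is metrizable to separate $\xi$ from $c(w)$ or from $w$'s representative in $\total v$, extends via \thref{balloon prop} and intersects into the family. The key quantitative point is that the number of nesting/refinement steps between $\UU''''$ and $\UU$ must exceed the length of any path of simplices the Crossing Lemma could be applied along when comparing $U'$ with a competing neighborhood of a boundary point — this is bounded in terms of $d_{max}$ and $F(d_{max})$ alone, so finitely many nestings suffice.

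\textbf{Expected main obstacle.} The delicate part is the sub-case where $w = \xi' \in \partial_{Stab}G$ with $D(\xi') \cap D(\xi) \ne \varnothing$ but $D(\xi') \not\subset D(\xi)$: one must simultaneously control the cusped-coordinate condition on the overlap vertices $V(\xi)\cap V(\xi')$ \emph{and} the cone condition on $D(\xi')\setminus D(\xi)$, and a neighborhood of $\xi'$ separating it from $U'$ must be built from a $\xi'$--family that is compatible with both. This is structurally the same bookkeeping that made \thref{Case4Filtration} long — it will require an auxiliary $\xi'$--family adapted to $\xi$ (as in Claim \thref{Case4 xi' family} there), chosen so that simplices linking $D(\xi')$ to vertices of $D(\xi)$ outside $D(\xi')$ are excluded, followed by one extra nesting to absorb the gap between ``cusped coordinate in a set'' and ``total simplex in that set.'' I expect the write-up of this sub-case to consume most of the proof, with the remaining cases being short arguments of the kind already rehearsed in \thref{Filtration} and the $T_0$ proposition.
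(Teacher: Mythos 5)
Your plan captures the paper's structure: a three-case decomposition by stratum, chains of $\xi$--families each $d_{max}$--refined/nested in the next, and the Crossing, Refinement, and Double Refinement lemmas as the engine. You also correctly anticipate that the $\xi' \in \partial_{Stab}G$ sub-case with $D(\xi')\cap D(\xi)\ne\varnothing$ requires an auxiliary $\xi'$--family built with an eye on $\xi$, in the style of \thref{Case4 xi' family}, plus a final extra nesting — this is indeed where the paper spends most of its effort in Case 2.

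There is one genuine gap: you file the $\eta\in\partial X$ case under ``easy strata'' and assert that a closure point $\xi'\in\partial_{Stab}G$ of $V_{U'}(\eta)$ must have $D(\xi')$ within distance $k$ of $U'$. That does not follow. Membership of $\xi'$ in the closure only says every $V_{\VV,\e}(\xi')$ meets $V_{U'}(\eta)$; but for a poorly chosen $\xi'$--family $\VV$ the cone $\fitTilde{\Cone}_{\VV,\e}(\xi')$ can reach far from $D(\xi')$ — in particular into $U'$ — even while $D(\xi')$ stays at distance more than $k$ from $U'$. To rule this out you must exhibit a specific $\xi'$--family whose cone avoids $U'$, which is precisely what the paper's Claim \thref{xi family which avoids a point} does: it takes the finite subcomplex $K$ spanned by $\Geod(x,D(\xi'))$ (finite by \thref{geod meets finitely many}), chooses for each $v\in V(\xi')$ a neighborhood of $\xi'$ disjoint from $\total{\sigma}$ for every $\sigma\subset st(v)\cap K\setminus D(\xi')$, and then refines; the Crossing and Refinement Lemmas then give a contradiction if the cone reached $U'$. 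Relatedly, the lemma \thref{cone can avoid any point} you cite only avoids a single point of $\overline{X}$, not a region, so it cannot substitute for this construction by itself. Your instinct (``nesting creates buffer room'') is correct, but the $\partial X$ case is not short — it needs the same cone-control machinery as the main case.
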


Since open sets are unions of basic open sets, it is enough to prove the regularity condition for basic open sets, leaving us with three cases to check.

\begin{lemma}[Case 1: $\eta \in \partial X$]
    Let $\eta \in \partial X$ and $V_U(\eta) \in \mathcal{O}_{\overline{Z}}(\eta)$. Then there exists an open neighborhood $U'$ of $\eta$ so that every point of $\overline{Z} \setminus V_U(\eta)$ admits a neighborhood missing $V_{U'}(\eta)$.
\end{lemma}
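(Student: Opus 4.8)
The plan is to use the regularity of $\overline{X}$ (it is metrizable) together with \thref{RegularityOfBoundary} to find a smaller neighborhood $U'$ of $\eta$ that is "buffered" away from $\overline{X}\setminus U$ by a definite distance, and then to check each of the three types of points in $\overline{Z}\setminus V_U(\eta)$ separately. So first, by shrinking $U$ if necessary, I may assume $U$ is the interior of some $V_{r,\delta}(\eta)$. Applying \thref{RegularityOfBoundary} with a constant $k$ to be chosen (roughly $k$ should exceed the diameter of all domains, i.e.\ $k = A+1$ or larger, plus a little room), I obtain an open neighborhood $U'\subset U$ of $\eta$ in $\overline{X}$ with $d(X\setminus U, U') > k$. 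I will also arrange $U'$ to be the interior of some $V_{r',\delta'}(\eta)$.

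Now take $w \in \overline{Z}\setminus V_U(\eta)$; there are three cases. If $w = z \in Z$, then $p(z)\notin U$ (since $V_U(\eta)\supset p^{-1}(U\cap X)$, and if $p(z)\in U$ then $z\in V_U(\eta)$). Since $p$ is continuous (\thref{defn of p}) and $X\setminus U$ is a neighborhood-separated set from $U'$, the preimage $p^{-1}(X\setminus \overline{U'})$ — or more simply $p^{-1}$ of an open set around $p(z)$ disjoint from $U'$ — is an open neighborhood of $z$ in $Z$ missing $V_{U'}(\eta)$. If $w=\eta'\in\partial X$ with $\eta'\notin V_U(\eta)$, then $\eta'\notin U$, so since $\overline{X}$ is Hausdorff (metrizable) we can pick a basic neighborhood $V_{r'',\delta''}(\eta')$ disjoint from $U'$; its interior $U''$ gives $V_{U''}(\eta')$ disjoint from $V_{U'}(\eta)$, because $V_{U'}(\eta)\cap\partial X = U'\cap\partial X$ and $V_{U'}(\eta)\cap Z = p^{-1}(U'\cap X)$, while $V_{U''}(\eta')$ projects into $U''$ and consists of $\xi'$ with $D(\xi')\subset U''$.

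The interesting case is $w = \xi\in\partial_{Stab}G$ with $\xi\notin V_U(\eta)$, which means $D(\xi)\not\subset U$. I claim then that $D(\xi)$ is actually disjoint from $U'$: indeed $D(\xi)$ meets $X\setminus U$ at some point $x_0$, and $D(\xi)$ has diameter at most $A < k$ (the uniform bound from the Finiteness corollary), so every point of $D(\xi)$ is within $k$ of $x_0\in X\setminus U$, hence lies outside $U'$ by the choice $d(X\setminus U,U')>k$. Given this, by \thref{cone can avoid any point} applied to some point of $U'$... actually more directly: I want a $\xi$-family $\UU$ and $\e$ so that $V_{\UU,\e}(\xi)$ misses $V_{U'}(\eta)$. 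Since $D(\xi)$ is disjoint from $U'$ and is compact, choose $\e$ small with $D^\e(\xi)$ still disjoint from $U'$. For the $\Cone$ and $\partial W$ parts: a point $x\in\tCone_{\UU,\e}(\xi)$ has $[v_0,x]$ passing through $D(\xi)$ by the Genuine Shadow \thref{Genuine Shadows}, so $[v_0,x]$ crosses $X\setminus U$ (as $D(\xi)\subset X\setminus U$); but any point of $U'$ is at distance $>k$ from $X\setminus U$, and — here is the key geometric point — a geodesic from $v_0$ through a point of $X\setminus U$ cannot re-enter the "deep" part $U'$ once... no, that's not automatic. Instead I will use \thref{Shadow1}/\thref{Shadow2}-style reasoning: by the Genuine Shadow, $\tCone_{\UU,\e}(\xi)$ lies "behind" $D(\xi)$, and by choosing $\UU$ via \thref{cone can avoid any point} so that no point of a chosen compact piece of $U'$ lies in any $\tCone_{\UU,\e}(\xi)$, combined with \thref{RegularityOfBoundary} controlling boundary points of $U'$, I get $V_{\UU,\e}(\xi)\cap V_{U'}(\eta)=\emptyset$. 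The main obstacle is precisely this last sub-case: carefully choosing $\e$ and the $\xi$-family $\UU$ so that $\tCone_{\UU,\e}(\xi)$ together with $D^\e(\xi)$ avoids the shrunken neighborhood $V_{U'}(\eta)$ — both its $Z$-part (via $p$ and the distance bound) and its $\partial X$-part and $\partial_{Stab}G$-part (via domains being small and contained in cones). I expect to invoke \thref{cone can avoid any point} finitely many times over a finite cover of $\overline{D(\xi')}$-type sets, or more cleanly to use the fact that $U'$ is separated from $X\setminus U\supseteq D(\xi)$ by distance $>k$ to conclude directly that geodesics realizing membership in the cone cannot reach $U'$.
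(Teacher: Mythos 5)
Your proposal handles the two easy cases ($z\in Z$ and $\eta'\in\partial X$) in essentially the same way as the paper, and you correctly single out the case $\xi\in\partial_{Stab}G$ as the genuine difficulty. You also make the right preliminary observation that once $k$ exceeds the diameter bound on domains, $D(\xi)\not\subset U$ forces $D(\xi)\cap U'=\varnothing$. But your treatment of this case does not close. You explicitly acknowledge that the naive distance argument ("a geodesic from $v_0$ through $X\setminus U$ cannot re-enter $U'$") is not automatic, and then you gesture at two fallback strategies — applying \thref{cone can avoid any point} over a finite cover, or concluding "directly" from the separation — without carrying either out. The first fallback does not obviously work: $U'\cap X$ is unbounded and not compact, so there is no finite cover over which to apply \thref{cone can avoid any point} pointwise, and applying it only at $\eta$ controls $\eta$ but not the other points of $V_{U'}(\eta)$. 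The second fallback is exactly the statement you already flagged as unjustified.

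What you are missing is the specific geometric mechanism the paper uses to transfer cone membership from an arbitrary point of $U'$ to a single fixed point, so that only finitely many simplices need to be excluded from the $\xi$-family. Concretely: take $U'$ to be the interior of some $V_{r',\delta'}(\eta)$ and set $x=\gamma_\eta(r')$ with $\delta'$ small enough that $B(x,\delta')\subset st(\sigma_x)$. Then every $y\in U'$ has $[v_0,y]$ passing through $B(x,\delta')$ at some $y'$, and $\sigma_x\subseteq\sigma_{y'}$, which gives a length-$1$ path of simplices from $\sigma_{y'}$ to $\sigma_x$ avoiding $D(\xi)$. One now builds the $\xi$-family to avoid $\total{\sigma}$ for the finitely many simplices $\sigma$ met by $\Geod(x,D(\xi))$ (a finite subcomplex by \thref{geod meets finitely many}), refines once, and invokes the Refinement Lemma \ref{Refinement Lemma} to pull cone membership from $y$ back to $x$ and derive a contradiction. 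The other thing you overlooked is why the exit simplex of $y'$ agrees with that of $y$: the paper takes the initial buffer to be $A+D+1$ (not just $A+1$) precisely so that $N(\xi)$, whose diameter is bounded by $A+D$, lies entirely outside the larger intermediate neighborhood $W_1\supset U'$; this guarantees that $[v_0,y]$ has already left $D^{1/2}(\xi)$ before reaching $U'$, so $\sigma_{\xi,1/2}(y')=\sigma_{\xi,1/2}(y)$. Without this, the reduction to $x$ breaks down. Your choice of $k$ as roughly $A+1$ would not suffice; you need the extra $D$.
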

\begin{proof}
    Because there are finitely many orbits of simplices, there is some constant $D$ so that all cells of $X$ have diameter at most $D$. Recall that $A$ is the acylindricity constant which bounds the diameter of domains.

    By definition, $U$ is the interior of $V_{r,\delta}(\eta)$ for some $r,\delta$. Using \thref{RegularityOfBoundary}, choose an open neighborhood $W_1 \subset U \subset \overline{X}$ of $\eta$ so that $d(W_1, X \setminus U) \geq A+D+1$. Since $\overline{X}$ is metrizable, it is regular and we can choose a neighborhood $W_2 \subset W_1$ so that $\overline{W_2} \subset W_1$. Because the $V_{r,\delta}(\eta)$ form a neighborhood basis for $\eta$ in $\overline{X}$, we can choose $r',\delta'$ so that $V_{r',\delta'}(\eta) \subset W_2$, and let $U'$ be the interior of $V_{r',\delta'}(\eta)$. Let $x = \gamma_\eta(r') \in U'$, and decreasing $\delta'$ if necessary, we can assume $B(x,\delta') \subset st(x)$. We show $U'$ satisfies the conclusion.
    
    If $\eta' \in \partial X \setminus V_U(\eta)$ then $\eta' \notin \overline{U'}$, so we can choose an open neighborhood $W$ of $\eta'$ so that $W \cap U' = \varnothing$, for example $W :=\overline{X}\setminus \overline{ U'}$. Then $V_{W}(\eta')\cap V_{U'}(\eta)$ contains no points of $\partial X$, since those points would have to be in both $W$ and $U'$. It cannot contain any point $\xi \in \partial_{Stab}G$, since any such $\xi$ would have $D(\xi) \subset W\cap U' = \varnothing$. It cannot contain any point $z \in Z$, since such a $z$ would have $p(z) \in W \cap U' = \varnothing$. Thus $V_{W}(\eta')\cap V_{U'}(\eta) = \varnothing$ as needed.

    If $z \in Z \setminus V_U(\eta)$, then $p(z) \notin U$. Since $\overline{U'} \subset U$, there is an open neighborhood $W$ of $p(z)$ so that $W\cap U' = \varnothing$, for example $W := X \setminus \overline{U'}$. Then $p^{-1}(W)$ is an open neighborhood of $z$ which doesn't meet $V_{U'}(\eta)$. 

    If $\xi \in \partial_{Stab}G \setminus  V_{U}(\eta)$, then by definition $D(\xi) \nsubseteq U$, which means $D(\xi) \cap X \setminus U \neq \varnothing$. Because $\textrm{diam}(N(\xi)) \leq A+D$ and $W_1$ is at least $A+D+1$ from $\overline{X} \setminus W_1$, this implies $N(\xi) \subset X \setminus W_1$. 
    
    \begin{claim}\thlabel{xi family which avoids a point}
        There is a $\xi$--family $\VV$ so that 
        \[U' \cap \big(D(\xi) \cup \tCone_{\UU,\frac{1}{2}}(\xi)\big) = \varnothing.\] 
    \end{claim}
    \begin{proof}[Proof of Claim]
        By \thref{geod meets finitely many}, $\Geod(x,D(\xi))$ spans a finite subcomplex, say $K$. For every $v \in V(\xi)$ and simplex $\sigma$ of $st(v) \cap K \setminus D(\xi)$, let $U_{v,\sigma}$ be an open neighborhood of $\xi$ in $\total{v}$ avoiding $\total{\sigma}$. Then
        \[ V_v:= \bigcap_{\sigma \subset st(v) \cap K \setminus D(\xi)} U_{v,\sigma}\] 
        is a finite intersection of open sets in $\total{v}$, each containing $\xi$. If $v \in V(\xi)$ has $st(v) \cap K = \varnothing$, then this intersection above is empty and we set $V_v = \total{v}$. Let $\VV'$ be a $\xi$--family contained in the sets $\{V_v, v \in V(\xi)\}$, and let $\VV$ be a $\xi$--family $1$--refined in $\VV$. This is our desired $\VV$.

        For a contradiction, suppose $y \in U'\cap \tCone_{\VV,\frac{1}{2}}(\xi)$. Since $y \in U' \subset V_{r',\delta'}(\eta)$, there is some $y' \in [v_0,y] \cap B(x,\delta') \setminus B(v_0,r')$. Because $B(x,\delta') \subset st(x)$, we know $\sigma_x \subseteq \sigma_{y'}$, so there is a path of simplices of length at most $1$ from $\sigma_{y'}$ to $\sigma_x$ which doesn't meet $D(\xi)$. Because $y \in \tCone_{\VV,\frac{1}{2}}(\xi)$, $[v_0,y]$ meets $D^\frac{1}{2}(\xi)$, so $\sigma_{\xi,\frac{1}{2}}(y)$ exists. Because $y' \in U' \subset W_1$ and $N(\xi) \subset X \setminus W_1$, we know $[v_0,y]$ leaves $D^\frac{1}{2}(\xi)$ before reaching $y'$, hence $\sigma_{\xi,\frac{1}{2}}(y') = \sigma_{\xi,\frac{1}{2}}(y)$. Since $y \in \tCone_{\VV,\frac{1}{2}}(\xi)$ and being in the $\tCone$ is a statement about exit simplices, we have $y' \in \tCone_{\VV,\frac{1}{2}}(\xi)$. Combining this with the path of simplices from $\sigma_{y'}$ to $\sigma_x$ and the Refinement \thref{Refinement Lemma}, we see that $x \in \tCone_{\VV',\frac{1}{2}}(\xi)$, which means that $\total{\sigma_{\xi,\frac{1}{2}}(x)} \subset V'_v$ in $\total{v}$ for every vertex $v \in \sigma_{\xi,\frac{1}{2}}(x) \cap V(\xi)$. But $\sigma_{\xi,\frac{1}{2}}(x)$ is a simplex of $K$ and $V'_v$ was chosen specifically to avoid simplices of $K$, so this is a contradiction.
    \end{proof}

    With this claim, it's easy to see that $V_{\VV,\frac{1}{2}}(\xi) \cap V_{U'}(\eta) = \varnothing$. Every point in $V_{\VV,\frac{1}{2}}(\xi)$ has an associated point or set in $\overline{X}$, either $p(z), \eta,$ or $D(\xi')$. This associated point or set is contained in $D(\xi) \cup \tCone_{\VV,\frac{1}{2}}(\xi)$, hence it cannot meet $U'$, so the point cannot be in $V_{U'}(\eta)$.
\end{proof}

\begin{lemma}[Case 2: $\xi \in \partial_{Stab}G$] Let $\xi \in \partial_{Stab}G$ and $V_{\UU^0,\varepsilon}(\xi) \in \mathcal{
O}_{\overline{Z}}(\xi)$. Then there exists a $\xi$--family $\UU$ and $\e' \in (0,1)$ so that $V_{\UU,\e'}(\xi) \subset V_{\UU^0,\e}(\xi)$ and every point not in $V_{\UU^0,\varepsilon}(\xi)$ admits a neighborhood avoiding $V_{\UU,\e'}(\xi)$. Further, if $z' =\xi' \in \partial_{Stab}G$, then this neighborhood has the form $V_{\VV,\e}(\xi')$ where $\tCone_{\UU^0,\e}(\xi) \cap \tCone_{\VV,\e}(\xi') = \varnothing.$
\end{lemma}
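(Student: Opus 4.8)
The plan is to build the $\xi$--family $\UU$ in two rounds of refinement so that it sits far enough inside $\UU^0$ to absorb the ``slop'' introduced by the $d_{max}$ bounds on path-of-simplices arguments, and then to handle each kind of point $z' \notin V_{\UU^0,\e}(\xi)$ separately, as the preceding lemmas did. Concretely, I would first choose a $\xi$--family $\UU^1$ that is $[d_{max}+F(d_{max})]$--nested in $\UU^0$, then let $\UU$ be $d_{max}$--refined and $d_{max}$--nested in $\UU^1$, and set $\e' = \e$ (or any smaller value if convenient for the $\overline{U'}\subset U$ type containment). That $V_{\UU,\e'}(\xi) \subset V_{\UU^0,\e}(\xi)$ is then immediate from \thref{ContainmentOfV_UU}. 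The remaining work is to show every $z'$ outside $V_{\UU^0,\e}(\xi)$ has a neighborhood disjoint from $V_{\UU,\e'}(\xi)$, and for the case $z' = \xi' \in \partial_{Stab}G$ to arrange that the separating neighborhood is a cone-based $V_{\VV,\e}(\xi')$ with disjoint cones, which is precisely what the dynamics section will need.

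Next I would split into the usual cases. If $z' = z'' \in Z$ with $p(z'') = x$, then $z'' \notin V_{\UU^0,\e}(\xi)$ means either $x \notin D^\e(\xi) \cup \Cone_{\UU^0,\e}(\xi)$, or $x \in D^\e(\xi)$ but $c(z'')\notin U^0_v$ for some vertex $v \in \sigma_x \cap V(\xi)$. In the first subcase I would use \thref{cone can avoid any point} (applied to $x$, together with a closed-neighborhood estimate via \thref{geod meets finitely many} and the Refinement Lemma as in \thref{xi family which avoids a point}) to produce a neighborhood of $x$ in $\overline X$ whose $p$-preimage misses $\tCone_{\UU,\e'}(\xi)\cup W_{\UU,\e'}(\xi)$; in the second subcase I would use \thref{nbhd of z in Z} to find a neighborhood $W$ of $z''$ with $p(W)$ inside a small ball of $st(\sigma_x)$ and cusped coordinate inside $\total{\total v}\setminus \overline{U^0_v}$-style complement, then use the nesting $\UU \subset \UU^1 \subset \UU^0$ and the Crossing Lemma to show no point of $W$ can lie in $W_{\UU,\e'}(\xi)$ (any such point would force $c$ into $U^0_v$). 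The cases $z' = \eta \in \partial X$ and $z' = \xi' \in \partial_{Stab}G$ with $D(\xi')\cap D(\xi)\neq\emptyset$ are handled by the same device as Cases 3 and 4 of the $T_0$ argument: pick a vertex $v\in V(\xi)\cap V(\xi')$ (resp. a witness simplex on $[v_0,\eta)$ just after leaving $D(\xi)$), choose a neighborhood of $\xi$ in $\total v$ avoiding $\xi'$ (resp. avoiding that simplex), extend to a $\xi$--family via \thref{balloon prop}, intersect with $\UU$, and observe the resulting cone/family still avoids $\xi'$ because the Crossing Lemma forbids the exit simplices of $D(\xi')$ from entering those neighborhoods.

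The main case, and the one I expect to be the real obstacle, is $z' = \xi' \in \partial_{Stab}G$ with $D(\xi') \cap D(\xi) = \emptyset$ (or more precisely $D(\xi') \setminus D(\xi) \nsubseteq \tCone_{\UU^0,\e}(\xi)$), since there we must produce a cone-based neighborhood $V_{\VV,\e}(\xi')$ with the genuine disjointness $\tCone_{\UU^0,\e}(\xi) \cap \tCone_{\VV,\e}(\xi') = \emptyset$. The strategy here mirrors \thref{Case4Filtration}: first use \thref{cone can avoid any point} on a point $x \in D(\xi')\setminus \tCone_{\UU^0,\e}(\xi)$ to see that $D(\xi')$ is not swallowed by $\tCone_{\UU^0,\e}(\xi)$, and then build $\VV$ by a Star-Lemma-and-balloon argument analogous to the construction of $\UU'$ in Case 4 — for each vertex $v' \in V(\xi')$ and each simplex $\sigma_{v',v}$ leading from $v'$ toward a vertex $v\in V(\xi)\setminus V(\xi')$, take neighborhoods in $\total{v'}$ avoiding $\total{\sigma_{v',v}}$, then refine $d_{max}$ times. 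The delicate point is the symmetric bookkeeping: a point $y \in \tCone_{\VV,\e}(\xi')$ enters $D^\e(\xi')$, so its exit simplex from $D^\e(\xi')$ lies in $N_\VV(\xi')$, and one must trace a path of simplices (length $\leq F(d_{max})$) from there either out of $N(\xi)$ entirely (contradicting $y \in \tCone_{\UU^0,\e}(\xi)$ via the Genuine Shadow / $\UU_\xi$ admissibility) or into a witness for $\xi$ that $\VV$ was chosen to exclude. Getting the two sets of $d_{max}$-refinements to line up so that neither cone can contain a common point — without circular dependence between the choice of $\VV$ and the fixed $\UU^0$ — is where the care is needed; I would resolve it exactly as Case 4 does, by fixing $\UU^0$ (hence $\UU$) first, then constructing $\VV$ relative to both $\xi'$ and the already-chosen $\UU^0$, using that $\xi' \notin V_{\UU^0,\e}(\xi)$ to seed the initial avoiding neighborhoods.
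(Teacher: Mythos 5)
Your overall strategy matches the paper's: build a chain of refined $\xi$--families inside $\UU^0$ and split into cases by the type of $z'$. But there is a genuine gap in the critical case $z' = \xi' \in \partial_{Stab}G$.

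The construction of $\VV$ you sketch --- for each $v' \in V(\xi')$, avoid $\total{\sigma_{v',v}}$ for $v \in V(\xi) \setminus V(\xi')$ --- is borrowed from the proof of \thref{Case4Filtration}. That construction serves a different purpose (producing a \emph{nested} $V_{\VV,\delta}(\xi') \subset V_{\UU,\e}(\xi)$), and in particular the $\sigma_{v',v}$ only exist when $D(\xi)$ and $D(\xi')$ are adjacent. If $D(\xi') \cap D(\xi) = \varnothing$ --- the main case you flagged as the real obstacle --- your avoiding set is empty and you have imposed no condition on $\VV$ at all. The paper instead makes $\VV$ avoid $\total{\sigma}$ for every simplex $\sigma \subset (\mathrm{st}(v) \cap K)\setminus D(\xi')$ where $K$ is the finite subcomplex spanned by $\Geod(v_0,D(\xi))$. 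The geodesic hull $K$ is what controls the scenario where a witness $x$ has $[v_0,x]$ meet $D(\xi')$ \emph{before} $D(\xi)$: the first simplex of $Lk(\xi')$ after $D(\xi')$ along $[v_0,x]$ then lies in $K$, which $N_\VV(\xi')$ was built to exclude. Your sketch (``trace a path of simplices... into a witness for $\xi$ that $\VV$ was chosen to exclude'') gestures at this, but does not say what $\VV$ must exclude, and the avoidance you actually specify does not cover it.

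Relatedly, the paper's cone-disjointness claim is a three-way case split on the order in which $[v_0,x]$ exits $D(\xi)$ and $D(\xi')$ (same simplex, $\sigma$ first, $\sigma'$ first), each requiring a different tool (Crossing lemma plus $U^2_v \cap W_v = \varnothing$; Double Refinement \thref{Double Refinement}; the $K$--avoidance just described). Your proposal collapses all of this into a single vague tracing argument. Finally, two bookkeeping issues: the paper needs $\overline{U^i_v} \subset U^{i-1}_v$ at each step (you only say ``or any smaller value if convenient''), and it takes $\e' = \e/2$ so that in the $z \in Z$ subcase the dichotomy $d(x,D(\xi)) > \e/2$ versus $\leq \e/2$ has slack; with $\e' = \e$ the boundary points $d(x,D(\xi)) = \e$ would break the Star-lemma argument.
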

\begin{proof}
    For each $v \in V(\xi)$, $\total{v}$ is metrizable, hence regular, so we can choose an open neighborhood $V^1_v $ of $\xi$ so that $\overline{V^1_v} \subset U^0_v$, and then choose a $\xi$--family $\UU^1$ which is $d_{max}$--refined and nested in the sets $\{V^1_v, v \in V(\xi)\}$. Repeat this process to construct $\xi$--families $ \UU^4 \subset \UU^3 \subset \UU^2 \subset \UU^1 \subset \UU^0$, so that for $i = 1,2,3,4$, $\UU^i$ is $d_{max}$--refined and nested in $\UU^{i-1}$, and for every vertex $v \in V(\xi)$, we have $\overline{U^{i}_v} \subset U^{i-1}_v$. We show $V_{\UU^4,\e/2}(\xi)$ is the desired neighborhood of $\xi$.

    \begin{claim}
        If $\eta \in \partial X \setminus V_{\UU^0,\e}(\xi)$, then there is some open neighborhood $W \subset \overline{X}$ so that $V_W(\eta)$ avoids $V_{\UU^3,\e}(\xi) = \varnothing$.
    \end{claim}
    \begin{proof}[Proof of Claim]
        It is enough to choose $R,\delta$ so that
        \[V_{R,\delta}(\eta) \cap \bigg(D(\xi) \cup \tCone_{\UU^3,\e}(\xi)\bigg) = \varnothing.\] 
        Since $N(\xi)$ is bounded, we can choose $R$ so that $N(\xi) \subset B(v_0,R)$. Then $V_{R,\delta}(\eta) \cap D(\xi) = \varnothing$ for any $\delta$, since any point in $V_{R,\delta}(\eta)$ is at least $R$ from $v_0$. Let $x = \gamma_\eta(R)$. There are two cases.

        If $[v_0,\eta)$ does not meet $D(\xi)$, then we can choose $0 < \delta < d([v_0,\eta) , D(\xi))$. If $y \in V_{R,\delta}(\eta)$, then the CAT(0) inequality implies $[v_0,y]$ stays within $\delta$ of $[v_0,\eta]$ for time $R$, hence $[v_0,y]$ avoids $D(\xi)$. Therefore $y \notin \tCone_{\UU^3,\e}(\xi)$ by the contrapositive of the Genuine Shadow \thref{Genuine Shadows}.

        If $[v_0,\eta)$ does meet $D(\xi)$, then because $x \notin \tCone_{\UU^0,\e}(\xi)$ there is a vertex $v$ of $\sigma_{\xi,\varepsilon}(x)\cap D(\xi)$ so that $\total{\sigma_{\xi,\varepsilon}(x)} \nsubseteq U^0_v$ in $\total{v}$. The Star Lemma \ref{Star} gives a constant $\delta > 0$ so that for every $y \in B(x,\delta) \setminus D^\e(\xi)$, $[v_0,y]$ goes through $D^\varepsilon(\xi)$ and $\sigma_{\xi,\varepsilon}(y)\subset st(\sigma_{\xi,\varepsilon}(x))$. Because $x \notin N(\xi)$, we can decrease $\delta$ if necessary to assume $B(x,\delta) \cap D^\e(\xi) = \varnothing$. For a contradiction, suppose $y \in V_{R,\delta}(\eta) \cap \fitTilde{\Cone}_{\UU^3,\varepsilon}(\xi)$. The star condition and the definition of the cone imply that
    
        \[\total{\sigma_{\xi,\varepsilon}(y)} \subset (\total{\sigma_{\xi,\varepsilon}(\eta)} \cap U^3_v)\]
        for any vertex $v \in \sigma_{\xi,\e}(x) \cap V(\xi)$. But $\UU^3$ is nested in $\UU^0$, so this implies $\total{\sigma_{\xi,\e}(\eta)} \subset U^0_v$, hence $\eta \in V_{\UU^0,\e}(\xi)$. This is a contradiction, so no such $y$ can exist.
    \end{proof}

    \begin{claim}
        If $\xi' \in \partial_{Stab}G \setminus V_{\UU^0,\e}(\xi)$, then there is some $\xi'$--family $\VV$ so that $V_{\VV,\e}(\xi')$ avoids $V_{\UU^3,\e}(\xi)$, and $\tCone_{\UU^3,\e}(\xi) \cap \tCone_{\VV,\e}(\xi') = \varnothing.$
    \end{claim}
    \begin{proof}[Proof of Claim]
        We construct a $\xi'$--family very carefully. Consider a vertex $v \in V(\xi)$. If $v \notin V(\xi)$, let $V'_v = \total{v}$. Otherwise, if $v \in V(\xi) \cap V(\xi')$, then $ \xi' \notin U^1_v$ -- if it was, then $\xi'$ would be in $V_{\UU^0,\e}(\xi)$ by the choice of $\UU^1$ and \thref{Double Refinement}, but it's not. Further, we chose $U^2_v$ so that $\overline{U^2_v} \subset U^1_v$, so we can choose a neighborhood $V'_v \subset \total{v}$ of $\xi'$ so that $V'_v \cap U^2_v = \varnothing$. 
        
        By \thref{geod meets finitely many}, $\Geod(v_0,D(\xi))$ is contained in a finite subcomplex $K$. For every vertex $v$ of $D(\xi')$ and simplex $\sigma \subset (st(v) \cap K)\setminus D(\xi')$, let $V_{v,\sigma}'$ be a neighborhood of $\xi'$ in $\total{v}$ disjoint from $\total{\sigma}$. Let

        \[W_v = V'_v \cap \bigg(\bigcap_{\sigma \subset (st(v)\cap K)\setminus D(\xi')} V'_{v,\sigma}\bigg)\]
        and let $\VV$ be a $\xi'$--family $d_{max}$--nested in a $\xi'$--family built from the sets $\{W_v,v \in V(\xi')\}$. We show $V_{\UU^3,\varepsilon}(\xi) \cap V_{\VV,\varepsilon}(\xi') = \varnothing$, and we begin by showing the cones are disjoint. 
        
        For a contradiction, suppose $x \in \tCone_{\UU^3,\varepsilon}(\xi) \cap \tCone_{\VV,\varepsilon}(\xi')$. By the Genuine Shadow \thref{Genuine Shadows}, $[v_0,x]$ goes through both $D(\xi)$ and $D(\xi')$. Let $\sigma,\sigma'$ be the last simplex met by $[v_0,x]$ in $D(\xi),D(\xi')$. There are $3$ cases, depending on which of $\sigma, \sigma'$ is met first and if they are distinct.

        If $\sigma = \sigma'$, then $[v_0,x]$ leaves both domains at the same time. Let $\tau \subset Lk(\xi) \cap Lk(\xi')$ be the next simplex met by $[v_0,x]$ after $\sigma = \sigma'$. Going backwards along $[v_0,x]$ from the point it leaves $D^\e(\xi)$ to $\tau$ gives a path of simplices from $\sigma_{\xi,\e}(x)$ to $\tau$ of length at most $d_{max}$, and the Crossing \thref{Crossing} combined with the assumption that $x \in \tCone_{\UU^3,\e}(\xi)$ implies $\total{\tau}\subset U^2_v$ for any vertex $v$ of $\sigma = \sigma'$. Applying the same strategy to $\sigma_{\xi',\e}(x)$, we have $\total{\tau} \subset W_v$. But then  $\total{\tau}\subset U^2_v \cap W_v$, which is a contradiction because $U^2_v \cap W_v = \varnothing$. Hence $\sigma \neq \sigma'$.
        
        If $[v_0,x]$ meets $\sigma$ first and then $\sigma'$, let $x' \in \sigma' \cap [v_0,x]$. If $x,x'$ are both far from $D(\xi)$ then $\sigma_{\xi,\e}(x) = \sigma_{\xi,\e}(x')$, but if one of $x,x'$ is close to $D(\xi)$, then these exit simplices might be different. Either way, $[v_0,x]$ gives a path of simplices in $Lk(\xi)$ from $\sigma_{\xi,\varepsilon}(x)$ to $\sigma_{\xi,\varepsilon}(x')$ of length at most $d_{max}$. Applying the Crossing \thref{Crossing} to this path with $x \in \tCone_{\UU^3,\varepsilon}(\xi)$ and $\UU^3$ $d_{max}$--nested in $\UU^2$, we get $\total{\sigma_{\xi,\varepsilon}(x')} \subset U^2_v$ for any vertex $v$ of $\sigma_{\xi,\varepsilon}(x') \cap D(\xi)$. This means $x' \in D(\xi') \cap \fitTilde{\Cone}_{\UU^2,\varepsilon}(\xi)$, so Lemma \ref{Double Refinement} implies $\xi' \in V_{\UU^0,\varepsilon}(\xi)$, a contradiction.

        Thus $[v_0,x]$ must meet $\sigma'$ first and $\sigma$ second. Then the first simplex after $\sigma'$ along $[v_0,x]$, say $\tau$, is contained in $K$. There is a path of simplices along $[v_0,x]$ from $\sigma_{\xi',\varepsilon}(x)$ to $\tau$ of length at most $d_{max}$. Applying the Crossing \thref{Crossing} to this path with $x \in \tCone_{\VV,\varepsilon}(\xi')$ and $\VV$ $d_{max}$--nested in the sets $\{W_v,v \in V(\xi')\}$ implies $\total{\tau} \subset W_v$ for any vertex $v$ of $\tau \cap D(\xi')$. But $W_v$ was explicitly chosen to avoid $\total{\tau}$ because $\tau \subset K$ so this is again a contradiction. 
        
        This shows no such $x$ can exist and the cones are disjoint. This immediately implies that $V_{\UU^3,\e}(\xi)$ and $V_{\VV,\e}(\xi')$ cannot intersect in $\partial X$ and we show they cannot intersect in $Z$ or $\partial_{Stab}G$ either. 

        Suppose $z \in Z \cap  V_{\UU^3,\e}(\xi)\cap V_{\VV,\e}(\xi')$. Because the cones are disjoint, we must have $z \in W_{\UU^3,\e}(\xi) \cap W_{\VV,\e}(\xi')$, hence $x:= p(z) \in D^\e(\xi) \cap D^\e(\xi')$. If there is a vertex $v \in \sigma_x \cap V(\xi) \cap V(\xi')$, then in $\total{v}$, we have $c(z) \in V_v \cap U^3_v$, which is impossible because these sets are disjoint. If there is no such vertex, then $\sigma_x$ is not entirely contained in either domain and we can choose vertices $v \in \sigma_x \cap V(\xi)$ and $v' \in \sigma_x \cap V(\xi')$ so that $[v,v']$ is not contained in either domain. Thus $\sigma_x$ or one of its faces is contained in $K$. and this again leads to a contradiction.

        Suppose $\xi'' \in \partial_{Stab}G\cap  V_{\UU^3,\e}(\xi)\cap V_{\VV,\e}(\xi')$. If there is a vertex $v \in V(\xi') \cap V(\xi'') \setminus V(\xi)$, then $v \in \tCone_{\UU^3,\e}(\xi) \cap D(\xi')$, and \thref{Double Refinement} implies $\xi' \in V_{\UU^1,\e}(\xi)$, a contradiction. If there is a vertex $v \in V(\xi) \cap V(\xi'') \setminus V(\xi')$, then $v \in \tCone_{\VV,\e}(\xi')$, hence $[v_0,v]$ meets $D(\xi')$ by the Genuine Shadows \thref{Genuine Shadows}. Hence $\sigma_{\xi',\e}(v) \subset K$ and we find a contradiction because $\tCone_{\VV,\e}(\xi')$ avoids the simplices of $K$. Thus $D(\xi'') \subset D(\xi) \cap D(\xi'')$. But this is also impossible, for if $v \in V(\xi) \cap V(\xi) \cap V(\xi'')$, then we must have $\xi'' \in U^3_v \cap V_v$, but this intersection is empty. Thus no such $\xi''$ can exist. 
    \end{proof}

    \begin{claim}
        If $z \in Z \setminus V_{\UU^0,\e}(\xi)$, then there is an open neighborhood $W \subset Z$ of $z$ which avoids $V_{\UU^2,\e/2}(\xi)$. 
    \end{claim}
    \begin{proof}
        Let $x = p(z)$ and consider the two cases $d(x,D(\xi)) >\e/2$ and $d(x,D(\xi)) \leq \e/2$ separately. 
        
        If $d(x,D(\xi)) > \e/2$, then we can choose $\delta_1$ so that $B(x,\delta_1) \cap D^{\e/2}(\xi) = \varnothing$. If $[v_0,x]0$ does not meet $D(\xi)$, then using the CAT$(0)$ inequality and the convexity of $D(\xi)$, we can choose a $\delta_2$ so that $[v_0,y]$ does not meet $D(\xi)$ for any $y \in B(x,\delta_2)$, hence $B(x,\delta_2) \cap \tCone_{\mathcal{W},\e'}(\xi) = \varnothing$ for any $\xi$--family $\mathcal{W}$ and $\e'$ by the contrapositive of \thref{Genuine Shadows}. Taking $\delta= \min(\delta_1,\delta_2)$, $p^{-1}(B(x,\delta))$ is an open neighborhood of $z$ avoiding $V_{\UU^0,\e}(\xi)$. If $[v_0,x]$ \emph{does} meet $D(\xi)$, we can apply the Star \thref{Star} to find a $\delta_2$ so that for all $y \in B(x,\delta_2)$, $[v_0,y]$ goes through $D^{\e/2}(\xi)$ and $\sigma_{\xi,\e/2}(y) \subset st(\sigma_{\xi,\e/2}(x)$. Because $x \notin \tCone_{\UU^3,\e/2}(\xi)$, we can let $\delta = \min(\delta_1,\delta_2)$ again and see that 
        \[B(x,\delta) \cap \big(D^{\e/2}(\xi) \cup \tCone_{\UU^3,\e/2}(\xi)\big) = \varnothing.\]
        Thus $p^{-1}(B(x,\delta))$ is the desired neighborhood of $z$. 

        If $d(x,D(\xi)) \leq \e/2$, then we can choose $\delta$ so that $B(x,\delta) \subset D^\e(\xi) \cap st(\sigma_x)$. Because $z \notin V_{\UU^0,\e}(\xi)$, $c(z) \notin U^0_v$ for any vertex $v \in \sigma_x \cap V(\xi)$. By the choice of $U^1_v$, we can choose a neighborhood $U$ of $c(z)$ in $\total{v}$ so that $U \cap U^1_v = \varnothing$. Applying \thref{nbhd of z in Z} to $\delta, U$, we get the desired neighborhood of $z$. 
    \end{proof}

    By \thref{ContainmentOfV_UU}, $V_{\UU^4,\e/2}(\xi)$ is contained in each of the neighborhoods of $\xi$ used in the preceding claims. The preceding claims show that any point $z \in \overline{Z} \setminus V_{\UU^0,\e}(\xi)$ has a neighborhood avoiding $V_{\UU^4,\e/2}(\xi)$, proving the lemma.
\end{proof}

\begin{corollary}\thlabel{DisjointCones}
        Let $\xi,\xi' \in \partial_{Stab}G$ be distinct points and let $\varepsilon \in (0,1)$. There exists a $\xi$--family $\UU$ and $\xi'$--family $\UU'$ so that $V_{\UU,\varepsilon}(\xi) \cap V_{\UU',\varepsilon}(\xi') = \varnothing$ and $\fitTilde{\Cone}_{\UU,\varepsilon}(\xi) \cap \fitTilde{\Cone}_{\UU',\varepsilon}(\xi') = \varnothing$ for any $\e \in (0,1)$.
\end{corollary}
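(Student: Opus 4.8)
The statement to prove is \thref{DisjointCones}: for distinct $\xi, \xi' \in \partial_{Stab}G$ and $\varepsilon \in (0,1)$, there exist a $\xi$--family $\UU$ and a $\xi'$--family $\UU'$ so that $V_{\UU,\varepsilon}(\xi) \cap V_{\UU',\varepsilon}(\xi') = \varnothing$ and $\tCone_{\UU,\varepsilon}(\xi) \cap \tCone_{\UU',\varepsilon}(\xi') = \varnothing$. The plan is to derive this directly from Case 4 of the $T_0$ proof together with the ``Further'' clause of Case 2 of the regularity proof (the lemma labeled ``Case 2: $\xi \in \partial_{Stab}G$''), which already does almost all the work; the only new ingredient is to make the construction symmetric in $\xi$ and $\xi'$ and to observe that the conclusions persist for \emph{all} $\e \in (0,1)$ once one gets the cones disjoint for a single choice.

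\textbf{Step 1.} Apply the $T_0$ Case 4 lemma to produce a first $\xi$--family $\UU^0$ with $\xi' \notin V_{\UU^0,\varepsilon}(\xi)$. (One could equally start from any basic neighborhood of $\xi$.) Since $\xi' \notin V_{\UU^0,\varepsilon}(\xi)$, we may feed $V_{\UU^0,\varepsilon}(\xi)$ into the regularity lemma ``Case 2: $\xi \in \partial_{Stab}G$'': its conclusion gives a $\xi$--family $\UU$ (their $\UU^4$) with $V_{\UU,\varepsilon/2}(\xi) \subset V_{\UU^0,\varepsilon}(\xi)$, and — crucially — a $\xi'$--family $\VV$ with $V_{\VV,\e}(\xi') \cap V_{\UU,\varepsilon/2}(\xi) = \varnothing$ and $\tCone_{\UU^0,\e}(\xi) \cap \tCone_{\VV,\e}(\xi') = \varnothing$ for $\e$ as in that lemma. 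To match the exact form of \thref{DisjointCones} with a common constant $\varepsilon$, rename and shrink: replace $\varepsilon$ in the input by $\min(\varepsilon, 1)$ and use $\varepsilon/2$ throughout, or simply observe one is free to pass to a smaller positive constant and then apply \thref{ContainmentOfV_UU} to absorb the factor of $2$. The key point for the ``for any $\e \in (0,1)$'' clause on the cones is already present in their proof of the cone-disjointness claim, because the Crossing \thref{Crossing} argument there never uses a specific value of $\e$ — it only uses exit simplices and the nesting depth of the families — so $\tCone_{\UU,\e}(\xi) \cap \tCone_{\VV,\e}(\xi') = \varnothing$ holds simultaneously for all $\e \in (0,1)$.

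\textbf{Step 2.} To handle the basic-open-set disjointness $V_{\UU,\varepsilon}(\xi) \cap V_{\UU',\varepsilon}(\xi') = \varnothing$ for the \emph{fixed} common $\varepsilon$ in the statement, I would directly cite the three sub-claims in the regularity Case 2 proof: once the cones are disjoint, that proof already rules out an intersection point lying in $\partial X$ (it would be in both cones), in $Z$ (its projection would be in both $D^\e$-neighborhoods or in both cones, and the finite-subcomplex $K$ argument applies), and in $\partial_{Stab}G$ (via \thref{Double Refinement} and the disjointness $U^3_v \cap V_v = \varnothing$). So setting $\UU' := \VV$ (possibly after one more $d_{max}$--nesting so that \thref{Double Refinement} applies cleanly from both sides, and intersecting with a symmetric family obtained by running the same argument with the roles of $\xi$ and $\xi'$ swapped, using \thref{xi families exist} to keep the result a genuine family) yields $V_{\UU,\varepsilon}(\xi) \cap V_{\UU',\varepsilon}(\xi') = \varnothing$. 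Because intersecting families only shrinks the associated basic sets and cones, none of the disjointness conclusions are lost under this symmetrization.

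\textbf{Main obstacle.} The substantive content is entirely inside the already-proven regularity Case 2 lemma, so the real work is bookkeeping: making sure the ``for any $\e \in (0,1)$'' quantifier genuinely follows (it does, since the Crossing-Lemma arguments are $\e$--agnostic and the families were chosen with nesting depth $d_{max}$, not tuned to $\e$), and making sure the final families are symmetric without breaking the $\xi$--family and $\xi'$--family conditions — which is exactly what \thref{xi families exist} guarantees, since a finite intersection of $\xi$--families is again a $\xi$--family. I expect the only place requiring a moment's care is confirming that after taking the symmetric intersection, both \thref{Double Refinement} (needed to exclude $\partial_{Stab}G$--points from the intersection) and the Refinement \thref{Refinement Lemma} (implicit in propagating cone membership across $K$) still apply, which just means building in one extra layer of $d_{max}$--refinement at the end via \thref{nest as much as you want}.
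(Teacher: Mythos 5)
Your proposal is correct and follows essentially the same route as the paper, which proves the corollary in one sentence by invoking the ``Case 2'' regularity lemma for $\xi \in \partial_{Stab}G$. The symmetrization and family-intersection you propose in Step 2 is unnecessary: the ``Further'' clause of that lemma already hands you a $\xi'$--family $\VV$ disjoint from the chosen $\xi$--family at the same $\varepsilon$, with cone disjointness built in, so you can simply set $\UU' := \VV$.
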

\begin{proof}
    Beginning with any neighborhood of $\xi$, we can apply the previous lemma to receive suitable neighborhoods of $\xi,\xi'$ so that the cones are disjoint.
\end{proof}

\begin{lemma}[Case 3: $z \in Z$]
    If $z \in Z$ and $U \in \mathcal{O}_{\overline{Z}}(z)$, then there is some neighborhood of $W \subset U$ of $z$ so that if $z' \in \overline{Z}\setminus U$, $z'$ admits a neighborhood avoiding $W$. 
\end{lemma}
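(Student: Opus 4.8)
The plan is to reduce to a concrete picture: a point $z \in Z$ is an interior point, so it has a neighborhood entirely inside $Z$, and $Z$ is (locally) a nice metrizable space built from the $\hatto{\sigma}$. First I would use the Filtration \thref{Filtration} (or just the definition of $\OO_{\overline Z}(z)$) to shrink $U$ to a basic open set, which is simply an open subset of $Z$ containing $z$. Writing $x = p(z)$ and $\sigma_x$ for the simplex containing $x$ in its interior, I would pick $\delta > 0$ with $B(x,\delta) \subset st(\sigma_x)$ and an open neighborhood $U_0$ of $c(z)$ in $\internal{\sigma_x}$ so that, via \thref{nbhd of z in Z}, the set $W_z(U_0,\delta)$ is an open neighborhood of $z$ contained in $U$. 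Shrinking once more, choose $U_0' \subset U_0$ and $\delta' < \delta/2$ with $\overline{U_0'} \subset U_0$ (possible since $\internal{\sigma_x}$ is metrizable) and $\overline{B(x,\delta')} \subset B(x,\delta)$; set $W = W_z(U_0',\delta')$. This is the candidate.

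Next I would show every $z' \in \overline Z \setminus U$ has a neighborhood missing $W$. The key geometric fact is that every point of $W$ projects under $p$ into the \emph{closed} ball $\overline{B(x,\delta')}$, which is a compact subset of $st(\sigma_x)$, and moreover has cusped coordinate (when the simplex coordinate lands in a face containing $\sigma_x$) inside $\overline{U_0'}$. I would split on the type of $z'$. If $z' \in \partial X$, then $p(z') = z' \notin \overline{B(x,\delta')}$ since $\partial X$ is disjoint from $X$; pick an open $U' \subset \overline X$ around $z'$ with $U' \cap \overline{B(x,\delta')} = \varnothing$ (e.g. $U' = \overline X \setminus \overline{B(x,\delta')}$), and then $V_{U'}(z')$ meets $W$ in no point, since any such point would have to project into $U' \cap \overline{B(x,\delta')}$. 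If $z' \in \partial_{Stab}G$, I would first use \thref{cone can avoid any point} or the Genuine Shadow \thref{Genuine Shadows} to handle the case $x \notin D(z')$: choose a $z'$--family $\UU'$ so that $x \notin \tCone_{\UU',\e}(z')$ for any $\e$, and — since $\overline{B(x,\delta')}$ is compact and cones/domains are controlled — shrink further (using \thref{ConesAreOpen} applied to points of $\overline{B(x,\delta')}$ and a finite subcover, or \thref{cone can avoid any point} at finitely many points) to a $z'$--family $\VV$ with $\overline{B(x,\delta')} \cap \big(D(z') \cup \tCone_{\VV,\e}(z')\big) = \varnothing$; then $V_{\VV,\e}(z')$ misses $W$ because every point of $W$ projects into $\overline{B(x,\delta')}$. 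If instead $x \in D(z')$, then $\sigma_x \subset D(z')$, so $z'$ has a representative in $\partial G_{\sigma_x}$; since $z' \notin U \supset W$ forces $c(z) \neq z'$ in $\internal{\sigma_x}$'s compactification — more precisely, $z' \notin \overline{U_0'}$ because $\overline{U_0'} \subset U_0 \subset \internal{\sigma_x}$ contains only interior points while $z'$ is a boundary point — we may pick a neighborhood $N$ of $z'$ in $\total{\sigma_x}$ with $N \cap \overline{U_0'} = \varnothing$, restrict to a vertex $v$ of $\sigma_x$, and by \thref{balloon prop} extend to a $z'$--family $\UU'$ with $(U')_v \cap \overline{U_0'} = \varnothing$; then for $\e$ small, $V_{\UU',\e}(z')$ misses $W$, since a point of $W$ in $W_{\UU',\e}(z')$ would have to have cusped coordinate simultaneously in $\overline{U_0'}$ and in $(U')_v$, while a point of $W$ in $p^{-1}(\Cone_{\UU',\e}(z'))$ would project outside $D(z') \supset \overline{B(x,\delta')} \cap \sigma_x$ — here I need $\delta'$ small enough relative to $\e$, which is arrangeable. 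If $z' \in Z$, this is easiest: $z'$ and $z$ are distinct points of the metrizable space $Z$ (which is Hausdorff — or at least $T_0$ and, being a quotient of metric spaces glued along closed subsets, locally metrizable near $z$), so a small enough open ball around $z'$ in $Z$ avoids $W$; concretely $Z \setminus \overline W$ works once $\overline W \subset U \subsetneq $ a slightly larger open set still inside $Z$, which the nesting $\overline{U_0'} \subset U_0$, $\overline{B(x,\delta')} \subset B(x,\delta)$ guarantees.

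The main obstacle I anticipate is the $z' \in \partial_{Stab}G$ case with $x \in D(z')$: one must be careful that shrinking $\delta'$ does not accidentally let points of $W$ whose simplex coordinate lies in a \emph{proper face} of $\sigma_x$ (a face not containing all of $D(z') \cap \sigma_x$) slip into $W_{\UU',\e}(z')$ or into the pseudocone of $z'$. This is exactly the subtlety flagged in the remark after \thref{induced topologies} — that the induced topology on $\hatt{\sigma}$ for higher-dimensional $\sigma$ is not the product topology. The fix is to note that $W = W_z(U_0',\delta')$ only constrains the cusped coordinate relative to faces containing $\sigma_x$, but since $B(x,\delta') \subset st(\sigma_x)$, \emph{every} simplex met by $p(W)$ contains $\sigma_x$ as a face, so in particular contains every vertex of $\sigma_x$; choosing the vertex $v$ above to lie in $\sigma_x$ then means $v \in V(z')$ and the constraint ``$c(z) \in \overline{U_0'}$, hence the representative in $\total v$ lies in $\varphi_{v,\sigma_x}(\overline{U_0'})$'' does transfer correctly, and disjointness from $(U')_v$ genuinely kills those points. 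Once this bookkeeping is done, each sub-case is a short argument, and assembling them proves the lemma, completing the verification that $\overline Z$ is regular.
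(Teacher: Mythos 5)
Your high-level plan matches the paper's: shrink $U$ to a basic neighborhood via \thref{nbhd of z in Z}, nest once ($\overline{U_0'}\subset U_0$, $\overline{B(x,\delta')}\subset B(x,\delta)$), take $W$ to be the nested set, then argue case by case for $z'$. But the $z'=\xi\in\partial_{Stab}G$, $x\notin D(\xi)$ sub-case has a genuine gap. You propose producing a $\xi$-family $\VV$ with $\overline{B(x,\delta')}\cap\bigl(D(\xi)\cup\tCone_{\VV,\e}(\xi)\bigr)=\varnothing$ ``using \thref{ConesAreOpen} applied to points of $\overline{B(x,\delta')}$ and a finite subcover, or \thref{cone can avoid any point} at finitely many points.'' Neither works: \thref{ConesAreOpen} says cones are \emph{open}, so ``$y$ lies outside $\tCone_{\UU,\e}(\xi)$'' is a \emph{closed} condition and gives you no open neighborhood of $y$ to feed into a subcover, while applying \thref{cone can avoid any point} at finitely many points and intersecting $\xi$-families only protects those finitely many points, not the whole closed ball. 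The missing ingredient is the Refinement \thref{Refinement Lemma}: take $\UU$ from \thref{cone can avoid any point} so $x\notin\tCone_{\UU,\e}(\xi)$, pick $\e$ with $D^\e(\xi)\cap B(x,\delta)=\varnothing$, and let $\UU'$ be $1$-refined in $\UU$; any $x'\in B(x,\delta)\cap\tCone_{\UU',\e}(\xi)$ has $\sigma_x\subseteq\sigma_{x'}$ (since $B(x,\delta)\subset st(\sigma_x)$), so the length-$1$ path of simplices $\sigma_{x'},\sigma_x$ and the Refinement Lemma would force $x\in\tCone_{\UU,\e}(\xi)$, a contradiction. That step is what upgrades avoidance-at-a-point to avoidance-on-the-ball, and your proposal lacks it.

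The $z'\in Z$ sub-case is also too quick: you invoke $Z$ being Hausdorff (or ``locally metrizable'') and claim $Z\setminus\overline W$ works once $\overline W\subset U$. Hausdorffness of $\overline Z$ is precisely what regularity plus $T_0$ is being used to prove, so you cannot appeal to it here, and $\overline W\subset U$ (closure in the quotient topology on $Z$) is a nontrivial claim that itself needs an argument of the same flavor. The paper avoids this with an explicit dichotomy on $d(x,p(z'))$: if $>\delta$, take $p^{-1}$ of a disjoint ball; if $\le\delta$, then $\sigma_x\subseteq\sigma_{p(z')}$, the pulled-back cusped coordinate of $z'$ lies outside $W_2$, and \thref{nbhd of z in Z} with a neighborhood of $c(z')$ disjoint from $W_2'$ gives the required open set. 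Your $\partial X$ case and the $x\in D(\xi)$ sub-case are essentially correct (modulo taking $U'$ of the required $V_{r,\delta}$ form), and your closing observation about the $\hatt\sigma$ topology subtlety for higher-dimensional $\sigma$ is a good one that anticipates the remark following \thref{induced topologies}.
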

\begin{proof}
    Let $x = p(z)$. Because $U$ is open in $Z$, $U$ meets $\hatto{\sigma_x}$ in an open set, and because basic open sets are a basis for the product topology on $\hatto{\sigma_x}$, there are open sets $W_1 \subset \sigma_x$ and $W_2 \subset X_{\sigma_x}$ so that  
    \[\{\sigma_x\} \times W_1\times W_2 \subset U \cap \hatto{\sigma_x}\]
    and $x \in W_1 \subset \sigma_x$ and $c(z) \in W_2$. Choose $\delta >0$ so that $B(x,\delta) \subset st(\sigma_x)$ and $\overline{B(x,\delta)} \cap \sigma_x$ is contained in the interior of $\sigma_x$. Decreasing $\delta$ further if necessary, we can assume $\overline{B(x,\delta)} \cap \sigma_x\subset W_1$. Because $X_{\sigma_x}$ is metrizable and hence regular, we can choose a $W_2'$ so that $\overline{W_2'} \subset W_2$. Using \thref{nbhd of z in Z} with $\delta$ and $W_2'$, set 
    \[W = U \cap W_z(B(x,\delta),W_2').\]
    Note that for any $z' \in W$, $p(z') \in B(x,\delta)$ and $c(z')$ can be interpreted as a point of $W_2'$. To show this $W$ satisfies the lemma, fix some $z' \in \overline{Z} \setminus U$ and consider three cases.

    If $z' \in Z$, set $x' = p(z')$. If $d(x,x') > \delta$, then there is a $\delta'$ so that $B(x',\delta') \cap B(x,\delta) = \varnothing$. Then $p^{-1}(B(x',\delta'))$ is an open neighborhood of $z'$ which doesn't meet $W$. If $d(x,x') \leq \delta$, then because $\overline{B(x,\delta)} \cap \sigma_x \subset W_1$ we have $x' \in p(U)$. If $c(z')$ was in $W_2$, then we would have $z' \in U$, which is not true, so $c(z') \notin W_2$. Since $\overline{W_2'} \subset W_2$, we can choose a neighborhood $W_{z'} \subset X_{\sigma_x}$ of $c(z')$ which doesn't meet $W_2'$. We can also choose $\delta'$ so that $B(x',\delta') \subset st(\sigma_x)$. Applying \thref{nbhd of z in Z} to $\delta', W_{z'}$, we get a neighborhood of $z'$ disjoint from $W$. 

    If $z' = \eta\in \partial X$, then we can choose any $R,\delta$ so that $B(x,\delta) \cap V_{R,\delta}(\eta) = \varnothing$. If $U'$ is the interior of $V_{R,\delta}(\eta)$, then clearly $V_U(\eta) \cap W = \varnothing$. 

    If $z' = \xi \in \partial_{Stab}G$, then consider the cases $x \notin D(\xi)$ and $x \in D(\xi)$ separately. If $x \notin D(\xi)$, then apply \thref{cone can avoid any point} to $x$ and receive a $\xi$--family $\UU$ so that $x \notin \tCone_{\UU,\e}(\xi)$ for any $\e$. Choose $\e$ so that $D^\e(\xi) \cap B(x,\delta) = \varnothing$ and let $\UU'$ be a $\xi$--family $1$--refined in $\UU$. For a contradiction, suppose there is a point $x' \in B(x,\delta) \cap \tCone_{\UU',\e}(\xi)$. Then $\sigma_x \subseteq \sigma_{x'}$, so there is a path of simplices of length $1$ from $\sigma_{x'}$ to $\sigma_x$. The Refinement \thref{Refinement Lemma} then implies $x \in \tCone_{\UU,\e}(\xi)$, contradicting the choice of $\UU$. Thus 
    \[B(x,\delta) \cap \big(D^\e(\xi) \cup \tCone_{\UU',\e}(\xi)\big) = \varnothing.\]
    It follows that $W \cap V_{\UU,\e}(\xi) = \varnothing$. 

    If $x \in D(\xi)$, then because $\total{\sigma_x}$ is metrizable, we can choose a neighborhood $U_{\sigma_x} \subset \total{\sigma_x}$ of $\xi$ avoiding $W_2$. Using \thref{balloon prop}, we can extend $U_{\sigma_x}$ to a $\xi$--family $\UU$. For every $z' \in W$, because the cusped space coordinate $c(z') \in W_2'$ so $c(z') \notin U_{\sigma_x}$, $z'$ cannot be in either $W_{\UU,\delta}(\xi)$ or $\tCone_{\UU,\delta}(\xi)$, thus $W \cap V_{\UU,\delta}(\xi) = \varnothing$.
\end{proof}

\begin{corollary}
    $\overline{Z}$ is metrizable.
\end{corollary}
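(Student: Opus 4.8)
The plan is to invoke Urysohn's metrization theorem, which states that a topological space is metrizable provided it is regular, Hausdorff, and second countable. All three ingredients have now been assembled in the preceding development, so the proof is essentially a matter of citing the right earlier results and combining them.

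First I would recall that \thref{Oz is a topology} establishes that $\mathcal{O}_{\overline{Z}}$ is a basis for a topology on $\overline{Z}$ and that $\overline{Z}$ is second countable with this topology; this handles the countability hypothesis of Urysohn. Next, the immediately preceding lemmas (Case~1 through Case~3 for regularity, together with their corollaries) verify the regularity condition for each of the three types of basic open sets, namely those centered at a point of $\partial X$, at a point of $\partial_{Stab}G$, and at a point of $Z$; since an arbitrary open set is a union of basic open sets, regularity of $\overline{Z}$ follows. Finally, the three-case argument in the subsection ``$\overline{Z}$ is $T_0$'' shows that $\overline{Z}$ satisfies the $T_0$ separation axiom. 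As remarked at the start of that subsection, a space which is both regular and $T_0$ is automatically Hausdorff: given distinct $z,z'$, the $T_0$ property produces an open set $U$ containing $z$ but not $z'$, and regularity produces an open $U'\ni z$ with $\overline{U'}\subset U$, so that $U'$ and $\overline{Z}\setminus\overline{U'}$ separate $z$ from $z'$.

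Putting these together, $\overline{Z}$ is a second countable, regular, Hausdorff space, hence metrizable by Urysohn. There is no real obstacle at this stage: the genuine work was done in constructing the topology and proving regularity and $T_0$. The only thing to be careful about is to phrase the citation of Urysohn's theorem in the form in which it is used elsewhere in the paper (regular $+$ Hausdorff $+$ separable, as referenced via \cite[4.4]{Munkres}), and to note explicitly that second countability implies separability so that the hypotheses match. I would therefore write the proof as a short paragraph assembling \thref{Oz is a topology}, the regularity proposition, and the $T_0$ proposition, observing that $T_0$ plus regular gives Hausdorff, and concluding via Urysohn.

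\begin{proof}
    By \thref{Oz is a topology}, $\overline{Z}$ is second countable, hence separable. By the preceding propositions, $\overline{Z}$ is regular and satisfies the $T_0$ separation axiom; as noted above, a regular $T_0$ space is Hausdorff. Thus $\overline{Z}$ is a separable, regular, Hausdorff space, so by Urysohn's metrization theorem \cite[4.4]{Munkres} it is metrizable.
\end{proof}
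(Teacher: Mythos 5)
Your proof is correct and follows exactly the same route as the paper: combine second countability (Theorem~\ref{Oz is a topology}), the regularity proposition, and the $T_0$ proposition, note that regular plus $T_0$ gives Hausdorff, and invoke Urysohn's metrization theorem as cited in \cite[4.4]{Munkres}. The only cosmetic difference is that you explicitly pass from second countable to separable to match the phrasing of Urysohn you cite, which is a harmless and reasonable clarification.
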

\begin{proof}
    We have shown $\overline{Z}$ has the $T_0$ separation property and is regular, so $\overline{Z}$ is Hausdorff. Since $\overline{Z}$ is also second countable by \thref{Oz is a topology}, Urysohn's metrization theorem \cite[4.4]{Munkres} implies $\overline{Z}$ is metrizable. 
\end{proof}

\subsection{$\overline{Z}$ is compact}

In this section we prove the following.
\begin{theorem}[Compactness]\thlabel{Compactness}
    $\overline{Z}$ is compact.
\end{theorem}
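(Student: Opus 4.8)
The plan is to show $\overline Z$ is compact by verifying that it is sequentially compact; since $\overline Z$ is second countable (and, as just established, metrizable), sequential compactness is equivalent to compactness. So I would take an arbitrary sequence $(z_n)_n$ in $\overline Z$ and produce a convergent subsequence. Using that $Z$ is dense in $\overline Z$ (\thref{Oz is a topology}) and that convergence is witnessed by the explicit neighborhood bases $\OO_{\overline Z}(z)$, $\OO_{\overline Z}(\eta)$, $\OO_{\overline Z}(\xi)$, it suffices to handle sequences $(z_n)_n$ lying in $Z$: a diagonal argument reduces the general case to this one, because every basic neighborhood of any point of $\overline Z$ meets $Z$, so a limit of a sequence in $Z$ can serve as the limit of a nearby sequence in $\overline Z$.

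So suppose $z_n \in Z$ with $x_n = p(z_n) \in X$. Project to $X$ first: $\overline X = X \cup \partial X$ is compact (it is the bordification of a proper CAT$(0)$ space), so after passing to a subsequence $x_n \to x_\infty \in \overline X$. I would split into the two cases according to whether $x_\infty \in \partial X$ or $x_\infty \in X$.

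If $x_\infty = \eta \in \partial X$: I claim $z_n \to \eta$ in $\overline Z$. Fix a basic neighborhood $V_U(\eta)$ with $U$ the interior of some $V_{r,\delta}(\eta)$. By definition $V_U(\eta) \supseteq p^{-1}(U)$, and since $x_n \to \eta$ in $\overline X$ we have $x_n \in U$ for $n$ large, hence $z_n \in p^{-1}(U) \subseteq V_U(\eta)$. So $z_n \to \eta$, done.

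If $x_\infty = x \in X$: here $x$ lies in a unique open simplex $\sigma_x$, and since $\sigma_x$ has only finitely many cofaces in any $\overline N(\sigma_x)$ and the $x_n$ eventually lie in the compact set $\overline{\overline N(\sigma_x)}$ (actually I'd use that $x_n$ eventually lies in $\operatorname{st}(\sigma_x)$), after a subsequence the $x_n$ all lie in the interior of a single simplex $\sigma$ with $\sigma_x \subseteq \overline\sigma$; passing to the face containing $x$, I can arrange $\sigma_x \subseteq \overline{\sigma}$ where $\sigma$ is fixed and $x_n \in \operatorname{st}(\sigma_x)$. Now each $z_n$ has a representative $(\sigma, x_n, c_n)$ with $c_n$ in the cusped coordinate $\total{\sigma}$ (after moving $z_n$ into $\hatt\sigma$ via the embeddings $\varphi$, which is possible once $x_n \in \operatorname{st}(\sigma_x) \subseteq \operatorname{st}(\sigma)$... more carefully: $x_n$ lies in the interior of some simplex $\tau_n$ with $\sigma \subseteq \overline{\tau_n}$, and $z_n = \pi_{\tau_n}(\tau_n, x_n, c(z_n))$; since there are finitely many such $\tau_n$ in $\operatorname{st}(\sigma)$, pass to a subsequence so $\tau_n = \tau$ is constant). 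Then $c(z_n) \in \total\tau$, and $\total\tau = X_\tau \cup \partial G_\tau$ is compact (it is the bordified cusped space $\overline{X_\tau}$), so after a further subsequence $c(z_n) \to c_\infty \in \total\tau$. I split on whether $c_\infty \in X_\tau$ or $c_\infty \in \partial G_\tau$. If $c_\infty \in X_\tau$, then the point $z_\infty := \pi_\tau(\tau, x, c_\infty) \in Z$ is the limit: using \thref{nbhd of z in Z}, a neighborhood basis of $z_\infty$ is given by $W_{z_\infty}(W_2', \delta)$ with $W_2'$ a neighborhood of $c_\infty$ and $B(x,\delta) \subseteq \operatorname{st}(\sigma_x)$, and for $n$ large $x_n \in B(x,\delta)$ and $c(z_n) \in W_2'$ (projecting $c(z_n)$ appropriately), so $z_n \in W_{z_\infty}(W_2',\delta)$. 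If $c_\infty = \xi_\tau \in \partial G_\tau$, set $\xi = \pi_\tau(\xi_\tau) \in \partial_{\mathrm{Stab}}G$; I claim $z_n \to \xi$. Fix a basic neighborhood $V_{\UU,\e}(\xi)$, admissible with basepoint $v_0$. Since $\tau \subseteq D(\xi)$ (as $\xi \in \pi_\tau(\partial G_\tau)$) and $x \in \overline\tau \subseteq D(\xi)$, for $n$ large $x_n \in \operatorname{st}(\sigma_x) \cap B(x, \e)$ so $x_n \in D^\e(\xi)$ with $\sigma_{x_n} = \tau$ (or a coface, but $\sigma_{x_n}=\tau$ since $x_n\in$ int $\tau$), and for every vertex $v \in \tau \cap V(\xi)$, $c(z_n) \to \xi_\tau$ in $\total\tau \subseteq \total v$ forces $c(z_n) \in U_v$ eventually. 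Hence $z_n \in W_{\UU,\e}(\xi) \subseteq V_{\UU,\e}(\xi)$. This exhausts all cases, so every sequence in $Z$ subconverges in $\overline Z$, and by density the same holds for sequences in $\overline Z$.

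\textbf{The main obstacle.} The delicate point is the bookkeeping in the case $x_\infty \in X$: making sure the cusped coordinates $c(z_n)$ genuinely live in a single compact space $\total\tau$ after passing to a subsequence, which requires pinning down a fixed simplex $\tau$ in $\operatorname{st}(\sigma_x)$ whose interior contains all the $x_n$ (using local finiteness of $X$ and \thref{Finiteness}(2)), and then correctly pushing the representatives of $z_n$ into $\hatt\tau$ via the maps $\varphi_{\tau,\tau'}$. A secondary subtlety is verifying the convergence $z_n \to \xi$ genuinely lands inside $W_{\UU,\e}(\xi)$: one must check that $c(z_n) \to \xi$ in $\total\tau$ implies $c(z_n) \in U_v$ for \emph{every} vertex $v$ of $\tau \cap V(\xi)$, which is exactly guaranteed by the definition of a $\xi$--family together with $\tau \subseteq D(\xi)$ (\thref{xi family} and the consistency of the identifications $\varphi$). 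The reduction from $\overline Z$ to $Z$ via density is routine. I do not expect to need the dynamical or geometric lemmas of Section~\ref{section:Geometric Tools} here beyond \thref{nbhd of z in Z}, \thref{Finiteness}, and the definition of the neighborhood bases; the whole argument is a compactness-of-$\overline X$ plus compactness-of-$\overline{X_\tau}$ double extraction glued through the explicit topology.
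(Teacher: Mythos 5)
There is a genuine gap, and it sits exactly at the place where the paper does the most work. Your argument begins by passing to a subsequence of $x_n = p(z_n)$ converging in $\overline{X}$, asserting that $\overline{X}$ is compact because it is ``the bordification of a proper CAT$(0)$ space,'' and later, in the case $x_\infty \in X$, you pin the $x_n$ into a single simplex ``using local finiteness of $X$.'' But $X$ is not assumed proper or locally finite, and in the main examples it is not: already for a Bass--Serre tree of an amalgam $A *_C B$ with $C$ of infinite index in $A$, a vertex stabilized by $A$ has infinitely many incident edges, so $\overline{N}(v)$ is not compact, $\overline{X}$ is not compact, and a bounded sequence of $x_n$ living on pairwise distinct edges at $v$ has \emph{no} convergent subsequence in $\overline{X}$ at all. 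Your case split ``$x_\infty \in \partial X$ or $x_\infty \in X$'' therefore does not exhaust the possibilities, and even in the bounded case your reduction to a fixed simplex $\tau$ fails (\thref{Finiteness}(2) bounds the number of \emph{faces} of a closed simplex, not its cofaces, so it does not give what you invoke it for).

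This missing third case is the one carrying all the content of the construction, and the paper devotes \thref{Martin's} to it: when the sets $L(k)$ of $k$-th simplices crossed by $[v_0,x_n]$ are eventually infinite, one passes to a subsequence lying in a single $G$-orbit and applies the Convergence Property (\thref{Convergence Property remark}, resting on \thref{Convergence Property} and \thref{lipschitz}) to extract a point $\xi \in \partial G_\sigma$ to which the translated cusped spaces $\total{\sigma_k^n}$ converge uniformly; one then feeds the exit simplices into the Crossing Lemma~\ref{Crossing} to show $x_n \in \fitTilde{\Cone}_{\UU,\e}(\xi)$ for any admissible $\UU,\e$ and $n$ large, whence $z_n \to \xi$ in $\overline{Z}$. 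Note also that this case overlaps your ``$x_\infty \in X$'' case: $x_n$ can converge to a vertex $v$ in $X$ while $z_n$ converges in $\overline{Z}$ to a boundary point $\xi \in \partial G_v$, not to a point of $Z$ above $v$, so the conclusion you draw there is also wrong when infinitely many simplices at $v$ are involved. In short, the compactification is working precisely because $\partial_{Stab}G$ absorbs the failure of local finiteness of $X$; a proof that assumes local finiteness away is circular. Your reduction-by-density step and the two locally finite subcases are fine and match the paper's \thref{sequence in Z has convergent subseq}, but the argument needs \thref{Martin's}(3) and the Convergence Property to close the gap.
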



\begin{lemma}\thlabel{Martin's}
    Let $(x_n)_n$ be a sequence of points in $X$ and let $\sigma^n_k$ be the $k^{th}$ simplex met by the geodesic $[v_0,x_n]$. Let $L(k) = \{\sigma_k^n, \, n\geq 0\}$.
    \begin{enumerate}
        \item If some subsequence of $(x_n)_n$ is bounded and all $L(k)$ are finite, then a further subsequence converges to a point $x \in X$. 
        \item If $x_n \longrightarrow \infty$ and all $L(k)$ are finite, then some subsequence converges to a point $\eta \in \partial X$. 
        \item If some $L(k)$ is infinite, let $k$ be minimal so that $L(k)$ is infinite. Then there is some $\sigma \in L(k-1)$, $\xi \in \partial G_{\sigma}$, and subsequence of $(x_n)_n$ so that for any $\xi$--family $\UU$ and $\varepsilon \in (0,1)$, we have $x_n \in \tCone_{\UU,\varepsilon}(\xi)$ for $n$ large enough.
    \end{enumerate}
\end{lemma}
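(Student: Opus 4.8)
The plan is to prove the three cases in order, using a diagonal-argument to reduce to the situation where the sequence of geodesics $[v_0,x_n]$ "stabilizes" as long as it can. First I would set up notation: for each $k$, $L(k)$ is the set of $k$-th simplices met by the various $[v_0,x_n]$, and I observe that since a geodesic segment enters finitely many simplices (\thref{geod meets finitely many}, or more precisely each $[v_0,x_n]$ meets finitely many simplices), the simplices $\sigma^n_1, \sigma^n_2, \ldots$ are well-defined until the geodesic terminates. The key structural fact is that consecutive $\sigma^n_k, \sigma^n_{k+1}$ form a path of simplices (one contains the other), and $\sigma^n_1 = \sigma_{v_0}$ is constant. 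I would also fix a base point $v_0$ once and for all, as in the ambient section.

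For (1), suppose a subsequence of $(x_n)$ is bounded, say by $R$, and all $L(k)$ are finite. By \thref{Bounded Length To Bounded Number Of Simplices} there is a uniform bound $M$ on the number of simplices met by a geodesic of length $\le R$, so along this subsequence each $[v_0,x_n]$ meets at most $M$ simplices. Since $L(1), \ldots, L(M)$ are all finite, a diagonal/pigeonhole argument extracts a further subsequence along which $\sigma^n_k$ is constant (equal to some $\sigma_k$) for every $k \le M$; in particular the terminal simplex $\sigma_{x_n}$ is a constant simplex $\sigma$. Then $x_n$ lies in the compact set $\overline{\sigma}$ (or a fixed ball inside a fixed simplex), so a further subsequence converges to a point $x \in \overline{\sigma} \subset X$. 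For (2), if $x_n \to \infty$ and all $L(k)$ are finite, then for each fixed $k$ the geodesics $[v_0,x_n]$ pass through only finitely many $k$-th simplices, so by a diagonal argument I can pass to a subsequence along which $\sigma^n_k = \sigma_k$ is independent of $n$ for every $k$. This pins down an infinite path of simplices $\sigma_1 \subset \text{or} \supset \sigma_2 \subset \text{or} \supset \cdots$; equivalently, the geodesics $[v_0,x_n]$ agree on longer and longer initial segments (using that passing through the same sequence of simplices, together with $\mathrm{CAT}(0)$ geodesic uniqueness, forces the initial segments to coincide up to the last time they leave $\sigma_k$, whose union is unbounded since $x_n\to\infty$). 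Hence $[v_0,x_n]$ converges uniformly on compact sets to a geodesic ray, whose endpoint $\eta \in \partial X$ is the desired limit — this is exactly the definition of convergence in $\overline{X}$ and then, via $p$ and the definition of $\mathcal{O}_{\overline Z}(\eta)$, convergence in $\overline{Z}$.

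The main case, and the hard part, is (3). Let $k$ be minimal with $L(k)$ infinite; then $L(k-1)$ is finite, so by pigeonhole I pass to a subsequence along which $\sigma^n_{k-1} = \sigma$ is a fixed simplex, while the $\sigma^n_k$ are infinitely many distinct simplices, each containing $\sigma$ (they are the $k$-th simplices, coming right after $\sigma$, so $\sigma \subset \sigma^n_k$, since if instead $\sigma^n_k \subset \sigma$ then $\sigma^n_k$ would be among the simplices met before $\sigma$... more carefully: $\sigma^n_{k-1}=\sigma$ and $\sigma^n_k$ is the next simplex, so either $\sigma \subset \sigma^n_k$ or $\sigma^n_k \subset \sigma$; the latter can only happen for finitely many $\sigma^n_k$ since $\sigma$ has finitely many faces, so after a further subsequence $\sigma \subset \sigma^n_k$ for all $n$). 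Now all $\sigma^n_k$ lie in $st(\sigma)$, and since $Y = X/G$ is finite there are finitely many $G$-orbits, so after another subsequence all $\sigma^n_k$ lie over the same simplex of $Y$, i.e. they are $G_\sigma$-translates of a single simplex $\sigma' \subset st(\sigma)$. Apply \thref{Convergence Property remark}: there is a subsequence and a point $\xi \in \partial G_\sigma$ with $\total{\sigma^n_k} \to \xi$ uniformly in $\total{\sigma}$. I then want to conclude $x_n \in \tCone_{\UU,\varepsilon}(\xi)$ eventually, for every $\xi$-family $\UU$ and every $\varepsilon \in (0,1)$. The point is that $\sigma$ is a simplex of $\Geod(v_0, x_n)$ lying in $D(\xi)$ (since $\xi \in \partial G_\sigma$ means $\sigma \subset D(\xi)$), and the geodesic $[v_0,x_n]$ enters $D^\varepsilon(\xi)$ at $\sigma$; I need to control the exit simplex $\sigma_{\xi,\varepsilon}(x_n)$. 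Because $\sigma^n_k$ sits just after $\sigma$ along $[v_0,x_n]$ and $\total{\sigma^n_k}$ is eventually inside $U_v$ for each vertex $v$ of $\sigma \cap V(\xi)$ (this is the uniform convergence $\total{\sigma^n_k}\to\xi$, together with $\xi \in U_v$), the simplex $\sigma^n_k$ witnesses that $x_n$ is in $\pCone_\UU(\xi)$ — more precisely $\sigma^n_k \subset N_\UU(\xi)$ eventually. Then \thref{Shadow2} (applied with the admissible $\xi$-family containing $\UU$, and $\sigma^n_k$ the witness via a point of $[v_0,x_n]$ in $\sigma^n_k$) gives that $[v_0,x_n]$ meets $D(\xi)$ and $x_n \in \tCone_{\UU_\xi, \varepsilon}(\xi) \subset \tCone$ for the relevant larger family; combined with the Refinement Lemma \ref{Refinement Lemma} to push from the witness simplex $\sigma^n_k$ to the actual exit simplex $\sigma_{\xi,\varepsilon}(x_n)$, this yields $x_n \in \tCone_{\UU,\varepsilon}(\xi)$.

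The genuine obstacle is bookkeeping the $\xi$-families: the statement quantifies over \emph{all} $\xi$-families $\UU$, but the uniform convergence $\total{\sigma^n_k}\to\xi$ only tells me the $\sigma^n_k$ eventually land in a given neighborhood of $\xi$, so for each fixed $\UU$ I get a threshold $N(\UU)$ — which is exactly what "$x_n \in \tCone_{\UU,\varepsilon}(\xi)$ for $n$ large enough" allows. The care needed is: (a) making sure the witness simplex $\sigma^n_k$ really is in $N_\UU(\xi)$ (use Consistency \thref{Consistency} to pass between the vertices of $\sigma \cap V(\xi)$, and uniform convergence in each $\total{v}$); (b) invoking \thref{Shadow2}/Refinement with the correct refinement hypotheses, which is permitted since by Assumption \ref{shadow assumption} every $\xi$-family is contained in an admissible one, and since I may shrink to get a sub-$\xi$-family that is $d_{\max}$-refined; and (c) going from the witness simplex to the exit simplex, which is a bounded path of simplices (length $\le d_{\max}$, using point (1) of \thref{Finiteness}) so the Crossing Lemma \ref{Crossing} applies. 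I do not expect new geometric input beyond what is already assembled in Section \ref{section:Geometric Tools}; the proof is a diagonal extraction followed by an application of the cone machinery.
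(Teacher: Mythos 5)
Your treatment of (1) and (3) is essentially the paper's argument: diagonal extraction to pin down the initial sequence of simplices, then (for (3)) the Convergence Property to produce $\xi\in\partial G_\sigma$ with $\overline{X_{\sigma_k^n}}\to\xi$ uniformly, followed by the cone/Crossing machinery to push this to the exit simplex $\sigma_{\xi,\varepsilon}(x_n)$. For (1) you should also pass to a subsequence where the total number of simplices $m_n$ met by $[v_0,x_n]$ is constant, so that the terminal simplex really is fixed; and for (3) you should note, as the paper does, that after a further subsequence (or simply for $n$ large, since $D(\xi)$ is finite and $(\sigma_k^n)_n$ is injective) the $\sigma_k^n$ are \emph{not} in $D(\xi)$, which is what makes $\sigma_k^n$ a simplex of $Lk(\xi)$ so that the cone definitions and convexity-of-domains apply. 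These are small omissions, not errors.

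There is, however, a genuine gap in your argument for (2). You claim that once $\sigma^n_k=\sigma_k$ is independent of $n$ for every $k$, ``passing through the same sequence of simplices, together with $\mathrm{CAT}(0)$ geodesic uniqueness, forces the initial segments to coincide up to the last time they leave $\sigma_k$.'' This is false. Two geodesics emanating from $v_0$ can thread through the same ordered list of open simplices without coinciding on any nondegenerate initial segment: already in $\mathbb{R}^2$ with a triangulation, two rays from a vertex $v_0$ at slightly different angles pass through identical initial simplices but agree only at $v_0$. $\mathrm{CAT}(0)$ uniqueness gives a unique geodesic between two \emph{fixed endpoints}; it says nothing about geodesics constrained only by the simplices they visit. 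The paper avoids this: instead it projects the $x_n$ onto the balls $\overline{B(v_0,r)}$, uses the constancy of the $\sigma^n_k$ to conclude that $\pi_r(x_n)$ eventually lies in one of finitely many compact sets $\overline{\sigma_j}\cap\overline{B(v_0,r)}$, extracts a convergent subsequence for each $r$, and diagonalizes, using that $\overline{X}$ is the projective limit of these balls. Your conclusion is correct but the stated reason is not, and the repair is exactly the compactness/diagonalization argument, not any claim of pointwise agreement of the geodesics.
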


\begin{proof}
    Suppose all $L(k)$ are finite. Then we can pass to a subsequence so that $\sigma^n_k$ is eventually constant. For $r \in \mathbb{N}$, let $\pi_r$ be the projection of $X$ onto $\overline{B(v_0,r)}$. For $r=1$, our choice of subsequence implies $\pi_r(x_n)$ is eventually a sequence in a single closed simplex, which is compact, so we can pass to a subsequence so that $\pi_r(x_n)$ converges. Iterating this for $r=2,3,\ldots$ and taking a diagonal sequence, we get a subsequence so that $\pi_r(x_n)$ converges to a point in $\overline{B(v_0,r)}$ for each $r \in \mathbb{N}$. Because $\overline{X}$ can be topologized as a projective limit of these balls, this implies the $(x_n)_n$ converges to a point of $\overline{X}$. If this sequence is bounded, that point must be in $X$, and if the sequence is unbounded that point must be in $\partial X$. This proves $1$ and $2$. 

    Now suppose $L(k)$ is infinite for some minimal $k$. For any $k' > k$, $L(k')$ must also be infinite, for if $L(k')$ was finite, then $\Geod(v_0,L(k'))$ would meet finitely many simplices by \thref{geod meets finitely many}, but $L(k) \subset \Geod(v_0,L(k'))$, which is a contradiction. After a subsequence, we can assume that
    
    \begin{enumerate}
        \item the geodesics $[v_0,x_n]$ all cross the same first $k-1$ simplices, as in there are open simplices $\sigma_{1},\ldots \sigma_{k-1}$ so that $\sigma_1^n = \sigma_1,\ldots \sigma_{k-1}^n = \sigma_{k-1}$ for all $n \geq 0$,
        \item $\sigma_k^n$ contains $\sigma_{k-1}$ for all $n$, since $\sigma_{k-1}$ has finitely many faces,
        \item the sequence $(\sigma_k^n)_{n}$ is injective,
        \item no $\sigma_k^n$ is contained in $D(\xi)$, since $D(\xi)$ contains finitely many simplices,
        \item all $\sigma_k^n$ lie above the same simplex of $Y$, since there are finitely many $G$ orbits of simplices in $X$. 
    \end{enumerate}

    The properties $3,4$ allow us to apply the Convergence Property \ref{Convergence Property remark} and receive a $\xi \in \partial G_{\sigma_{k-1}}$ so that $\total{\sigma_k^n}$ converge to $\xi$ uniformly in $\total{\sigma_{k-1}}$. THis is our candidate $\xi$ and we fix a vertex $v \in \sigma_{k-1} \subset D(\xi)$. 
    
    Given some $V_{\UU,\e}(\xi) \in \OO_{\overline{Z}}(\xi)$ and let $\UU'$ be a $\xi$--family $d_{max}$--nested in $\UU$. Each $[v_0,x_n]$ meets $D(\xi)$, passes through $\sigma_{k-1}$, and then meets $\sigma_k^n \nsubseteq D(\xi)$. Because domains are convex, this means $x_n \notin D(\xi)$ for each $n$ and $\sigma_k^n$ is the first simplex of $Lk(\xi)$ met by $[v_0,x_n]$. Following along $[v_0,x_n]$, we get a path of simplices of length at most $d_{max}$ from $\sigma_k^n$ to $\sigma_{\xi,\e}(x_n)$. For large enough $n$, we have $\total{\sigma_k^n}\subset U'_v$ in $\total{v}$, and for such $n$ the Crossing \thref{Crossing} applied to this path of simplices implies $\total{\sigma_{\xi,\e}(x_n)}\subset U_{v'}$ for any vertex $v' \in \sigma_{\xi,\e}(x_n) \cap V(\xi)$. Hence $x_n \in \tCone_{\UU,\e}(\xi)$, and we're done. 
\end{proof}

\begin{corollary}\thlabel{OffToInfinity}
    Let $v$ be a vertex of $X$ and let $(g_n)_n$ be a sequence in $G$ such that $d(v_0,g_nv) \longrightarrow \infty$. Let $\sigma_k^n$ be the $k^{th}$ simplex met by $[v_0,g_nv]$ and let $L(k) = \{\sigma_k^n, \, n \geq 0\}$. After a subsequence, exactly one of the following holds. 
    \begin{enumerate}
        \item If all $L(k)$ are finite, then there is some $\eta \in \partial X$ so that $g_n \hatt{v} \longrightarrow \eta$ uniformly.

        \item If $k$ is minimal so that $L(k)$ is infinite, then there is some $\sigma \in L(k-1)$ and $\xi_0 \in \partial G_\sigma$ so that $g_n \hatt{v} \longrightarrow \xi_0$ uniformly. 
    \end{enumerate}
\end{corollary}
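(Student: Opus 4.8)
The plan is to apply \thref{Martin's} to the sequence $x_n = g_n v$ of points of $X$ and then promote its conclusions about the vertices $g_n v$ to statements about the whole sets $g_n\hatt{v} \subset \overline{Z}$. First I would record what $g_n\hatt{v}$ looks like: since $G$ acts diagonally on $Z \sqcup \partial_{Stab}G$, one has $g_n\hatt{v} = \pi_{g_n v}(\hatt{g_n v})$, which is the union of the points of $Z$ with $p$--coordinate the vertex $g_n v$ together with the points $\xi' \in \partial_{Stab}G$ lying in $\pi_{g_n v}(\partial G_{g_n v})$; every such $\xi'$ has $g_n v \in V(\xi')$, hence $D(\xi') \subset \overline{B(g_nv,A)}$ because domains are convex of diameter at most $A$. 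Because $d(v_0, g_n v) \to \infty$, \thref{Martin's} is in case (2) exactly when all $L(k)$ are finite and in case (3) exactly when some $L(k)$ is infinite; these are complementary, which yields the stated dichotomy, and in each case I pass to the subsequence furnished by \thref{Martin's}.

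In the first case \thref{Martin's}(2) gives a subsequence with $g_n v \to \eta \in \partial X$ in $\overline{X}$. To see $g_n\hatt{v} \to \eta$ uniformly, I would take a basic neighborhood $V_U(\eta)$ of $\eta$, with $U$ open in $\overline{X}$. Using \thref{RegularityOfBoundary} with $k = A+1$, shrink $U$ to a neighborhood $U'$ of $\eta$ with $d(X\setminus U, U') > A+1$; for $n$ large, $g_n v$ lies in the interior of $U'$. Then the $Z$--points of $g_n\hatt{v}$ have $p$--coordinate $g_n v \in U' \subset U$, so they lie in $p^{-1}(U) \subset V_U(\eta)$, and each $\partial_{Stab}G$--point $\xi'$ of $g_n\hatt{v}$ satisfies $D(\xi') \subset \overline{B(g_nv,A)} \subset U$, hence $\xi' \in V_U(\eta)$. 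Thus $g_n\hatt{v} \subset V_U(\eta)$ eventually.

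In the second case \thref{Martin's}(3) gives $\sigma \in L(k-1)$, a point $\xi_0 \in \partial G_\sigma$ — whose image in $\partial_{Stab}G$ I also denote $\xi$ — and a subsequence such that $g_n v \in \tCone_{\UU,\e}(\xi)$ for all large $n$, for every $\xi$--family $\UU$ and every $\e \in (0,1)$. Given a basic neighborhood $V_{\UU,\e}(\xi)$, I would choose $\xi$--families $\UU'' \subset \UU' \subset \UU$, each $d_{max}$--refined and $d_{max}$--nested in the next, as \thref{Double Refinement} requires, and take $n$ large enough that $g_n v \in \tCone_{\UU'',\e}(\xi)$. For a $Z$--point $w$ of $g_n\hatt{v}$ one has $p(w) = g_n v \in \tCone_{\UU'',\e}(\xi)$, so the remark following the definition of $V_{\UU'',\e}(\xi)$ puts $w$ into $W_{\UU'',\e}(\xi) \cup p^{-1}(\Cone_{\UU'',\e}(\xi)) \subset V_{\UU'',\e}(\xi)$, and $V_{\UU'',\e}(\xi) \subset V_{\UU,\e}(\xi)$ by \thref{ContainmentOfV_UU}. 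For a $\partial_{Stab}G$--point $\xi'$ of $g_n\hatt{v}$ one has $g_n v \in D(\xi') \cap \tCone_{\UU'',\e}(\xi) \neq \varnothing$, so \thref{Double Refinement}(ii) gives $\xi' \in V_{\UU,\e}(\xi)$. Hence $g_n\hatt{v} \subset V_{\UU,\e}(\xi)$ eventually.

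The main obstacle — really the only content beyond bookkeeping — is that a neighborhood of the limit point must absorb not merely the moving vertex $g_n v$ but the whole fibre over it, in particular the domains of every point of $\partial_{Stab}G$ sitting above $g_n v$; in the $\partial X$ case this absorption is precisely \thref{RegularityOfBoundary}, and in the $\partial_{Stab}G$ case it is exactly what the cone refinement machinery (\thref{Double Refinement}, the remark on $V_{\UU,\e}$, and \thref{ContainmentOfV_UU}) is designed to supply, so both cases reduce cleanly to results already established.
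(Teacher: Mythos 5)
Your proof is correct and takes essentially the same approach as the paper's: apply \thref{Martin's} to $x_n = g_nv$, note that the unboundedness rules out its first case, and in each remaining case absorb not just the moving vertex $g_nv$ but the whole fibre above it (the $Z$-points and the $A$-bounded domains of the $\partial_{\mathrm{Stab}}G$-points), using \thref{RegularityOfBoundary} with $k=A+1$ in the $\partial X$ case and the cone-refinement machinery in the $\partial_{\mathrm{Stab}}G$ case. The one small difference is that in case (2) you route the $\partial_{\mathrm{Stab}}G$-points through \thref{Double Refinement}(ii), whereas the paper first forces $D(g_n\xi)\cap D(\xi_0)=\varnothing$ via $d(g_nv,D(\xi_0))\geq A+1$ and then applies the Refinement Lemma directly; your route is marginally cleaner but relies on the same underlying mechanism, so this is a packaging choice rather than a different argument.
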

\begin{proof}
    If all $L(k)$ are finite, then \thref{Martin's} implies there is some subsequence and some $\eta \in \partial X$ so that $g_nv \longrightarrow \eta$. Given $V_U(\eta) \in \OO_{\overline{Z}}(\eta)$, then $U$ is an open neighborhood of $\eta$ in $\overline{X}$ and we can find a neighborhood $U' \subset U$ of $\eta$ so that $d(U', X \setminus U) > A+1$ by \thref{RegularityOfBoundary}, and then choose $R,\delta$ so that $V_{R,\delta}(\eta) \subset U'$. Let $W$ be the interior of $V_{R,\delta}(\eta)$. Since $g_n v \longrightarrow \eta$, $g_n v \in W$ for $n$ large enough. For such $n$, any $z \in \hatt{v}$ has $p(z) = v \in W$, hence $z \in V_{W}(\eta)$. Further, if $g_nv \in W$, then because domains have diameter at most $A$,
    \[ g_n v \in D(g_n\xi) \subset B(g_nv,A+1) \subset U\]
    for any $\xi \in \partial G_{v}$, so $g_n \xi \in V_U(\eta)$. This shows that for any $V_{U}(\eta)$, $g_n \hatt{v} \subset V_U(\eta)$ for $n$ large enough, proving case 1.

    If $k$ is minimal so that $L(k)$ is infinite, then \thref{Martin's} implies that after a subsequence, there is some $\sigma \in L(k-1)$ and $\xi_0 \in \partial G_{\sigma}$ so that for any $\xi_0$--family $\UU$ and $\e \in (0,1)$, $g_nv \in \tCone_{\UU,\e}(\xi)$ for $n$ large enough. Because $d(D(\xi_0),g_nv) \longrightarrow \infty$ and domains have bounded diameter, we can choose $N$ so that $d(D(\xi_0),g_nv) \geq A+1$ for all $n \geq N$. This implies that if $\xi \in \partial G_v$ and $n \geq N$, we have $D(g_n\xi) \cap D(\xi_0) = \varnothing$. 
    
    If $V_{\UU,\e}(\xi_0) \in \OO_{\overline{Z}}(\xi_0)$, let $\UU'$ be a $\xi_0$--family $d_{max}$--refined in $\UU$. Using the conclusion of \thref{Martin's} and increasing $N$ if necessary, we can assume $g_nv \in \tCone_{\UU',\e}(\xi_0)$ for all $n \geq N$, which immediately implies $g_nz \in V_{\UU,\e}(\xi_0)$ for all $z \in \hatt{v}$. If $\xi \in \partial G_v$ and $n \geq N$, then because $D(g_n\xi) \cap D(\xi_0) = \varnothing$, every point of $D(g_n\xi)$ can be connected to $g_nv$ by a path of simplices avoiding $D(\xi_0)$. This path of simplices has length at most $d_{max}$ because $D(\xi)$ has at most $d_{max}$ simplices. Because $g_nv \in \tCone_{\UU',\e}(\xi_0)$, the Refinement \thref{Refinement Lemma} implies $D(g_n\xi) \subset \tCone_{\UU,\e}(\xi_0)$, hence $g_n \xi \in V_{\UU,\e}(\xi_0)$ as needed. 
\end{proof}

\begin{lemma}\thlabel{sequence in Z has convergent subseq}
    If $(z_n)_n$ is a sequence in $Z$, then $(z_n)_n$ has a subsequence converging to a point in $\overline{Z}$.
\end{lemma}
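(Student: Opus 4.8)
The plan is to reduce the statement to the behavior of the projections $x_n := p(z_n) \in X$ together with the cusped coordinates $c(z_n)$, and then run the trichotomy of \thref{Martin's} applied to the sequence $(x_n)_n$. Writing $\sigma_k^n$ for the $k$-th simplex met by $[v_0,x_n]$ and $L(k) = \{\sigma_k^n : n \ge 0\}$, after passing to a subsequence I may assume we are in exactly one of the three cases of that lemma.

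Two of the three cases I expect to dispatch quickly. If all $L(k)$ are finite and $x_n \to \infty$, then by \thref{Martin's}(2) a subsequence has $x_n \to \eta \in \partial X$; since $p$ records the simplex coordinate, for any basic neighborhood $V_U(\eta)$ with $U$ open in $\overline{X}$ we get $x_n \in U$, hence $z_n \in p^{-1}(U) \subset V_U(\eta)$, for $n$ large, so $z_n \to \eta$. If some $L(k)$ is infinite, \thref{Martin's}(3) supplies a simplex $\sigma \in L(k-1)$, a point $\zeta \in \partial G_\sigma$, and a subsequence with $x_n \in \tCone_{\UU,\varepsilon}(\xi)$ for $n$ large, for every $\xi$-family $\UU$ and every $\varepsilon \in (0,1)$, where $\xi := \pi_\sigma(\zeta) \in \partial_{Stab}G$. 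By the Remark following the definition of $V_{\UU,\varepsilon}(\xi)$ one has $p^{-1}(\tCone_{\UU,\varepsilon}(\xi)) \subset V_{\UU,\varepsilon}(\xi)$, so $z_n \in V_{\UU,\varepsilon}(\xi)$ for $n$ large and $z_n \to \xi$.

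The substantial case is when all $L(k)$ are finite and $(x_n)_n$ has a bounded subsequence; by \thref{Martin's}(1) a further subsequence satisfies $x_n \to x \in X$. Since $st(\sigma_x)$ is an open neighborhood of $x$, for $n$ large $x_n \in st(\sigma_x)$, which forces $\sigma_x$ to be a face of $\sigma_{x_n}$; hence $c(z_n)$, pushed into $\total{\sigma_x}$ by $\varphi_{\sigma_x,\sigma_{x_n}}$, is a sequence in the compact space $\total{\sigma_x}$, and I pass to a subsequence with $c(z_n) \to \zeta \in \total{\sigma_x}$. If $\zeta \in X_{\sigma_x}$, the claim is that $z_n$ converges to the point $z \in Z$ with $p(z) = x$ and cusped coordinate $\zeta$: by \thref{induced topologies} it is enough to check convergence in $Z$, where the sets $W_z(U,\delta)$ of \thref{nbhd of z in Z} form a neighborhood basis, and $p(z_n) = x_n \to x$ together with $c(z_n) \to \zeta = c(z) \in U$ puts $z_n \in W_z(U,\delta)$ for $n$ large. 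If instead $\zeta \in \partial G_{\sigma_x}$, put $\xi := \pi_{\sigma_x}(\zeta) \in \partial_{Stab}G$, so $\sigma_x \subset D(\xi)$, and the claim is $z_n \to \xi$. Given a $\xi$-family $\UU$ and $\varepsilon \in (0,1)$, for $n$ large $x_n$ lies in $B(x,\delta) \subset st(\sigma_x) \cap D^\varepsilon(\xi)$ for a suitable $\delta$, so it remains to show $c(z_n) \in U_v$ for every vertex $v \in \sigma_{x_n} \cap V(\xi)$, which yields $z_n \in W_{\UU,\varepsilon}(\xi) \subset V_{\UU,\varepsilon}(\xi)$. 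For vertices $v$ of $\sigma_x$ this is immediate, since $c(z_n) \to \zeta$ in $\total{\sigma_x} \subset \total{v}$ and $\zeta$ represents $\xi \in U_v$; for a vertex $v \in \sigma_{x_n} \cap V(\xi)$ outside $\sigma_x$, I would fix a vertex $w$ of $\sigma_x$ and use that the edge $[w,v]$ is a face of $\sigma_{x_n}$ lying in $D(\xi)$ (by convexity of $D(\xi)$, which contains both $w$ and $v$, and convexity of $M_\kappa$-simplices), so $c(z_n) \in \total{[w,v]} \cap U_w$ and the defining property of a $\xi$-family promotes this to $c(z_n) \in U_v$. Since $D(\xi)$ contains at most $d_{\max}$ simplices, only finitely many such steps are needed, uniformly in $n$.

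The step I expect to be the main obstacle is precisely this last one: verifying that eventually $\sigma_x$ is a face of $\sigma_{x_n}$ so that the maps $\varphi_{\sigma_x,\sigma_{x_n}}$ compose coherently, and that the $\xi$-family condition can be chained across every vertex of $\sigma_{x_n} \cap V(\xi)$ — this is exactly where convexity of domains (and of $M_\kappa$-simplices) is used. Once \thref{Martin's}, the Remark on $\tCone$, and \thref{induced topologies} are in hand, the other two cases are routine.
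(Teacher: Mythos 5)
Your proof is correct and takes the same route as the paper: project to $X$ via $p$, apply \thref{Martin's} to $(x_n)_n$, and handle the three cases. The only difference is that you spell out the ``it's clear'' steps in the bounded case — using $W_z(U,\delta)$ from \thref{nbhd of z in Z} when $\zeta \in X_{\sigma_x}$, and, when $\zeta \in \partial G_{\sigma_x}$, chaining the $\xi$--family condition from a vertex $w$ of $\sigma_x$ to any other vertex $v \in \sigma_{x_n}\cap V(\xi)$ via the edge $[w,v]$, which lies in $D(\xi)$ by convexity — which is a useful elaboration rather than a different argument.
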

\begin{proof}
    Let $x_n = p(z_n)$ and apply \thref{Martin's}. 
    
    If there is a bounded subsequence of $x_n$ and all $L(k)$ are finite, then some subsequence converges to a point $x \in X$. For $n$ large enough so that $x_n \in st(\sigma_x)$, so we can interpret $c(z_n)$ as points in $\internal{\sigma_x}$. Because $\total{\sigma_x}$ is compact, we can pass to a subsequence so that $c(z_n)$ converges to some point $y\in \total{\sigma_x}$. If $y \in \internal{\sigma_x}$, then it's clear that $z_n \longrightarrow (\sigma_x,x,y) \in \hatto{\sigma_x}$, and if $y \in \partial G_{\sigma_x}$, it's clear that $z_n \longrightarrow y$.
    
    If there is an unbounded subsequence of $(x_n)_n$ with all $L(k)$ finite, then $x_n \longrightarrow \eta \in \partial X$ and it is clear that $z_n \longrightarrow \eta$ in $\overline{Z}$ as well. 
    
    Similarly, if some minimal $k$ has $L(k)$ infinite, then we pass to a subsequence and receive some $\xi \in \partial_{Stab}G$ so that for any $\xi$--family $\UU$ and $\e \in (0,1)$, we have $x_n \in \fitTilde{\Cone}_{\UU,\varepsilon}(\xi)$ for $n$ large enough. It follows that $z_n \longrightarrow \xi$. 
\end{proof}

\begin{proof}[Proof of \thref{Compactness}]
    Because $\overline{Z}$ is metrizable, it is enough to show $\overline{Z}$ is sequentially compact. By \thref{Oz is a topology}, $Z$ is dense in $\overline{Z}$ and by \thref{sequence in Z has convergent subseq}, every sequence in $Z$ has a convergent subsequence. It follows that $\overline{Z}$ is sequentially compact.
\end{proof}

\section{Dynamics of the Action}\label{section:Dynamics}

\subsection{$G$ is a Convergence Group}
In this section, we show $G$ acts as a convergence group on $\overline{Z}$ with limit set $\partial G$. Given an arbitrary sequence $(g_n)_n$, we must produce a $\xi_+,\xi_-$ so that $(g_n,\xi_+,\xi_-)$ is an ART (see \thref{defn:Convergence group}). In the course of these proofs we will translate $(g_n)_n$ on the left and right or use the sequence $(g_n^{-1})_n$ instead. The following lemma shows these changes can be undone at the end of the proof to make the original sequence a convergence sequence. 

\begin{lemma}
    Let $H$ be a group acting on a compact metrizable space $M$, let $(g_n)_n$ be an infinite sequence in $H$, and let $\xi_+,\xi_- \in M$. The following are equivalent.
    \begin{enumerate}
        \item $(g_n,\xi_+,\xi_-)$ is an ART.
        \item $(g_n^{-1},\xi_-,\xi_+)$ is an ART. 
        \item $(hg_nk,h\xi_+,k^{-1}\xi_-)$ is an ART, for any $h,k \in H$.
    \end{enumerate}
\end{lemma}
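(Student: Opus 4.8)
The statement is a purely formal fact about attracting--repelling triples: no geometry of $\overline{Z}$ is needed, just the definition of an ART (\thref{defn:Convergence group}) together with the fact that each $h \in H$ acts as a homeomorphism of $M$. The plan is to prove $(1)\Leftrightarrow(2)$ and $(1)\Leftrightarrow(3)$ separately; since $(3)$ with $h=k=1$ recovers the identity, and applying $(3)$ to $(hg_nk, h\xi_+, k^{-1}\xi_-)$ with the inverse group elements recovers $(1)$, the equivalence $(1)\Leftrightarrow(3)$ is symmetric, and likewise $(1)\Leftrightarrow(2)$ is symmetric because $(g_n^{-1})^{-1}=g_n$. So in each case it suffices to prove one implication.

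For $(1)\Rightarrow(3)$: assume $(g_n,\xi_+,\xi_-)$ is an ART and fix $h,k\in H$. Let $K\subset M\setminus\{k^{-1}\xi_-\}$ be compact. Then $kK$ is a compact subset of $M\setminus\{\xi_-\}$ (because $k$ is a homeomorphism, $kK$ is compact, and $k^{-1}\xi_-\notin K$ gives $\xi_-\notin kK$). By hypothesis $g_n(kK)\to \xi_+$ uniformly, meaning: for every open $U\ni\xi_+$ there is $N$ with $g_nkK\subset U$ for all $n\ge N$. Apply the homeomorphism $h$: given an open neighborhood $V$ of $h\xi_+$, the set $h^{-1}V$ is an open neighborhood of $\xi_+$, so $g_nkK\subset h^{-1}V$ eventually, hence $hg_nkK\subset V$ eventually. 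Thus $hg_nk(K)\to h\xi_+$ uniformly, which is exactly the statement that $(hg_nk, h\xi_+, k^{-1}\xi_-)$ is an ART.

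For $(1)\Rightarrow(2)$: this is the standard fact that the repelling point of $(g_n)$ is the attracting point of $(g_n^{-1})$ and vice versa. Assume $(g_n,\xi_+,\xi_-)$ is an ART. I would first recall (or cite \cite{Bowditch1999}, as the excerpt does right after \thref{defn:Convergence group}) that for an ART one also has the ``dual'' convergence $g_n^{-1}L\to\xi_-$ uniformly on compact $L\subset M\setminus\{\xi_+\}$; this is part of the basic theory of convergence sequences — a convergence sequence $(g_n)$ has well-defined attracting and repelling points $\xi_+,\xi_-$, and the sequence $(g_n^{-1})$ is then also a convergence sequence with attracting point $\xi_-$ and repelling point $\xi_+$. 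Indeed if $L\subset M\setminus\{\xi_+\}$ is compact, pick distinct $\zeta\ne\xi_-$ and a small compact neighborhood; passing to a further subsequence $(g_n^{-1})$ has some attracting/repelling pair, and comparing the actions on triples forces the attracting point of $(g_n^{-1})$ to be $\xi_-$ and the repelling point to be $\xi_+$. Once this is in hand, $(g_n^{-1},\xi_-,\xi_+)$ is an ART by definition.

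The only mildly delicate point — and hence the ``main obstacle,'' though it is genuinely routine — is $(1)\Rightarrow(2)$, because unwinding the definition of ART for $(g_n)$ does not \emph{literally} mention $g_n^{-1}$, so one must invoke the symmetry of the convergence-group setup (equivalently, the characterization via proper discontinuity on the space of distinct triples, noted in the excerpt). The direction $(1)\Rightarrow(3)$ is a direct chase through neighborhoods using continuity of $h$ and $k$ and needs no outside input. I would write the two implications, note each reverse implication follows by substituting $g_n^{-1}$ (resp. $h^{-1}(\cdot)k^{-1}$) into the forward one, and conclude.
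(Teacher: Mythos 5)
Your $(1)\Rightarrow(3)$ argument is correct and is essentially the same as the paper's, just phrased with compact sets instead of complements of neighborhoods.

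Your $(1)\Rightarrow(2)$ argument has a genuine gap. You say you would invoke ``the basic theory of convergence sequences'' and then sketch: pass to a further subsequence of $(g_n^{-1})$ to get \emph{some} attracting/repelling pair, and compare on triples to pin it down. But the lemma does not assume that $H$ acts as a convergence group on $M$ --- it is stated for an arbitrary group action on a compact metrizable space. So there is no license to pass to a subsequence and obtain an ART; the hypothesis you would need is exactly what the present paper is in the middle of establishing for its $G$. Even if it were available, the subsequence argument would only give the conclusion along a subsequence, and upgrading to the full sequence again leans on the same unavailable hypothesis. The fix is elementary and is what the paper does: $(1)$ says $g_n(M\setminus U_-)\subset U_+$ for $n$ large, for any open neighborhoods $U_\pm$ of $\xi_\pm$. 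Since $g_n$ is a bijection of $M$, this is equivalent to $M\setminus U_+\subset g_n U_-$, i.e.\ $g_n^{-1}(M\setminus U_+)\subset U_-$. As $M\setminus U_+$ ranges over complements of neighborhoods of $\xi_+$ (which are exactly the compact subsets of $M\setminus\{\xi_+\}$ up to enlarging), this says $(g_n^{-1},\xi_-,\xi_+)$ is an ART. No convergence-group input is needed, only bijectivity, and the argument stays within the generality the lemma is actually stated in. Replace your appeal to \cite{Bowditch1999} with this two-line set-theoretic flip.
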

\begin{proof}
    Notice that $(2) \Longrightarrow (1)$ follows from $(1) \Longrightarrow (2)$ using $(g_n^{-1},\xi_-,\xi_+)$. Similarly $(3) \Longrightarrow (1)$ follows from $(1) \Longrightarrow(3)$, so it suffices to show only two implications starting with $(1)$. Suppose $(g_n,\xi_+,\xi_-)$ is an ART.

    Let $U_+, U_-$ be arbitrary neighborhoods of $\xi_+,\xi_-$. For $n$ large enough, we have $g_n(M \setminus U_-) \subset U_+$. For any such $n$, $g_n$ can only use $U_-$ to cover all of $M \setminus U_+$, as in $M \setminus U_+ \subset g_nU_-$. Hence $g_n^{-1}(M\setminus U_+) \subset U_-$. Thus the $g_n^{-1}$ take complements of open neighborhoods of $\xi_+$ uniformly to $\xi_-$, and $(g_n^{-1},\xi_-,\xi_+)$ is an ART. This proves $(1) \Longrightarrow(2)$.

    Let $h,k \in H$ and let $U,V$ be arbitrary neighborhoods of $h\xi_+, k^{-1}\xi_-$. Then $h^{-1}U, kV$ are neighborhoods of $\xi_+,\xi_-$, and for large enough $n$ we have $g_n(M\setminus kV) \subset h^{-1}U$, hence $hg_nk(M \setminus V) \subset U$. This shows $(hg_nk,g\xi_+,k^{-1}\xi_-)$ is an ART and proves $(1) \Longrightarrow (3)$.
\end{proof}

\begin{lemma}\thlabel{Small translation finite}
    Let $(g_n)_n$ be an injective sequence in $G$. Suppose there are vertices $v_0,\,v_1$ of $X$ so that $g_nv_0 = v_1$ for all $n$. Then $(g_n)_n$ is a convergence sequence.
\end{lemma}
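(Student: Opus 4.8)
The hypothesis says every $g_n$ moves the fixed vertex $v_0$ to the fixed vertex $v_1$, so $g_n^{-1}g_m \in G_{v_0}$ for all $n,m$; fixing the index $0$, we may write $g_n = g_0 a_n$ with $a_n = g_0^{-1}g_n \in G_{v_0}$. Since multiplying a convergence sequence on the left by a fixed element preserves the ART property (by the preceding lemma), it suffices to show $(a_n)_n$ has a convergence subsequence, i.e.\ to treat the case where all $g_n$ lie in the vertex stabilizer $G_{v_0}$ and fix $v_0$. So the plan is: reduce to a sequence $(a_n)_n$ in $G_{v_0}$, and then exploit that $G_{v_0}$ is relatively hyperbolic — hence acts as a convergence group on its own Bowditch boundary $\partial G_{v_0} = \partial \hatt{v_0}$ — to extract attracting/repelling points $\xi_+^{0},\xi_-^{0} \in \partial G_{v_0}$, and finally promote the convergence on $\hatt{v_0}$ to convergence on all of $\overline{Z}$.

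First I would pass to a subsequence of $(a_n)_n$ which is a convergence sequence for the action of $G_{v_0}$ on $\partial\hatt{v_0} = \partial G_{v_0}$, obtaining an ART $(a_n, \xi_+^0,\xi_-^0)$ there. Set $\xi_+ = \pi_{v_0}(\xi_+^0)$ and $\xi_- = \pi_{v_0}(\xi_-^0)$, two points of $\partial_{Stab}G \subset \overline Z$. I claim $(a_n,\xi_+,\xi_-)$ is an ART for the action on $\overline Z$. To see this, take a compact set $K \subset \overline Z \setminus \{\xi_-\}$ and a basic neighborhood $V_{\UU,\e}(\xi_+)$ of $\xi_+$; I must show $a_n K \subset V_{\UU,\e}(\xi_+)$ for $n$ large. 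The key structural input is the map $p : Z \sqcup \partial X \to \overline X$ together with the domain picture: every point $w$ of $\overline Z$ other than boundary points of $\partial X$ has a well-defined "footprint" $D(w)$ or $p(w)$ in $X$, and $a_n$ acts on $\overline Z$ compatibly with its isometric action on $X$ fixing $v_0$. Because $a_n$ fixes $v_0$, the footprints $a_n \cdot(\text{footprint of } w)$ stay at bounded distance from $v_0$ (distance $= d(v_0,\text{footprint of }w)$, unchanged). This means the relevant limiting behavior happens entirely "near $v_0$", and is governed by the $G_{v_0}$-dynamics on $\partial G_{v_0}$ — the cone/refinement machinery of Sections \ref{section:Geometric Tools}--\ref{section:The Topology} then transfers uniform convergence in $\partial\hatt{v_0}$ to uniform convergence in $\overline Z$.

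Concretely, I would argue in the three strata. For $K \cap \partial X$: since $a_n$ fixes $v_0$ and $K \subset \overline Z \setminus\{\xi_-\}$ is compact, geodesics $[v_0,\eta]$ for $\eta \in K\cap \partial X$ have initial simplex-sequences in finitely many $G_{v_0}$-orbits; using \thref{Martin's}-type reasoning together with the convergence $a_n \to \xi_+^0$ away from $\xi_-^0$ in $\partial G_{v_0}$, for large $n$ these geodesics are forced to cross $D(\xi_+)$ with exit simplex deep inside $N_{\UU}(\xi_+)$, so $a_n\eta \in \Cone_{\UU,\e}(\xi_+) \subset V_{\UU,\e}(\xi_+)$. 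For $K \cap Z$: the cusped coordinate analysis via $r_{v_0}$ (\thref{our spaces exist}) plus the same $G_{v_0}$-convergence places $a_n z$ in $W_{\UU,\e}(\xi_+) \cup p^{-1}(\Cone_{\UU,\e}(\xi_+))$. For $K \cap \partial_{Stab}G$: each $\xi' \in K\cap\partial_{Stab}G$ has $D(\xi')$ a uniformly bounded subcomplex, and $a_n\xi'$ is pushed into $V_{\UU,\e}(\xi_+)$ using \thref{Double Refinement} and \thref{Refinement Lemma} once the representative of $a_n\xi'$ in $\partial G_{v_0}$ (or the nearby vertex boundaries) lands in $\UU$. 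Uniformity across $K$ follows from compactness of $K$ together with the finiteness statements of \thref{Finiteness}, which bound the combinatorial complexity of all domains and of all relevant paths of simplices independently of the point.

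The main obstacle, I expect, is precisely this last point — making the convergence \emph{uniform} over the compact set $K$ while juggling the three different types of points and the fact that $a_n$ need not fix any simplex other than $v_0$ (it may permute simplices in $st(v_0)$ and act nontrivially far from $v_0$). The cleanest route is probably to reduce everything to a statement about a single neighborhood of $\xi_+^0$ in $\hatt{v_0}$: show that there is a neighborhood $N$ of $\xi_+^0$ in $\hatt{v_0}$ and a neighborhood $N'$ of $\xi_-^0$ in $\hatt{v_0}$ such that $V_{\UU,\e}(\xi_+) \supseteq$ (the $\overline Z$-saturation of $N$ built from cones) and such that $K \subseteq \overline Z \setminus (\text{saturation of } N')$; then $a_n(\hatt{v_0}\setminus N') \subseteq N$ for large $n$ by the $G_{v_0}$-ART, and saturating both sides — which is exactly what the cone construction and \thref{balloon prop}, \thref{Double Refinement} are designed to do — gives $a_n K \subseteq V_{\UU,\e}(\xi_+)$. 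Tracking that the saturation operation is monotone and interacts correctly with the $G$-action is the technical heart, but all the needed lemmas (Genuine Shadows, Refinement, Double Refinement, Filtration) are already in place.
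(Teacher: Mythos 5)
Your overall strategy agrees with the paper up to the first two steps: reduce to a sequence in $G_{v_0}$, then pass to a subsequence with an ART $(a_n,\xi_+^0,\xi_-^0)$ for $G_{v_0}$ acting on $\total{v_0}$. But after that the argument has a real gap, and it is exactly in the place you flag as the ``technical heart.''

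The obstruction is that a $\xi_+$--family $\UU$ specifies open sets at \emph{every} vertex of $D(\xi_+)$, and the compatibility condition in \thref{xi family} only forces $U_{v_0}$ and $U_w$ to agree on $\total{[v_0,w]}$; $U_w$ can be made arbitrarily small away from that trace. So ``saturating a neighborhood $N\subset\hatt{v_0}$'' does not produce all basic neighborhoods of $\xi_+$: given an arbitrary $V_{\UU,\e}(\xi_+)$, there need not exist any $N\subset\hatt{v_0}$ whose saturation sits inside it. To control the dynamics against a general $\UU$ you must know the ART holds \emph{separately} at every vertex of $D(\xi_+)\cap D(\xi_-)$, and you have only established it at $v_0$. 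Your proposal contains no mechanism for propagating the attractor/repeller picture from $\total{v_0}$ to $\total{w}$ for the other vertices $w$, and this propagation is not automatic.

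The paper's proof supplies precisely this mechanism. It first identifies (after subsequence and a further translation by a fixed element) a finite subcomplex $F\subset D(\xi_+)\cap D(\xi_-)$ containing $v_0$ that all $g_n$ fix pointwise, such that the ART $(g_n,\xi_+,\xi_-)$ holds for $G_v$ acting on $\total{v}$ for \emph{every} vertex $v$ of $F$, and such that for every $\sigma\subset Lk(F)$ the translates $g_n\total{\sigma}$ converge uniformly to $\xi_+$. Building $F$ is itself nontrivial: you iterate on whether simplices in the star of the current $F$ have finite or infinite $g_n$--orbit, use a fixed interior point $z\in\internal{\sigma}$ as a transfer device to identify the $G_w$--ART data with the $G_{v_0}$--ART data when a simplex $\sigma$ is fixed, and use the Convergence Property \ref{Convergence Property} when orbits are infinite; termination comes from $F\subset D(\xi_+)\cap D(\xi_-)$ having at most $d_{\max}$ simplices. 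Only after establishing this subcomplex $F$ (and hence ART control at \emph{all} relevant vertices) does the Crossing/Refinement machinery give uniform convergence of $g_nK$. Also note that your single left--translation by $g_0^{-1}$ does not exhaust the translations needed: even with all $a_n\in G_{v_0}$, the $a_n$ may send some $\sigma\subset st(v_0)$ to a common translate $a_1\sigma\neq\sigma$, requiring another left--translation before $\sigma$ is genuinely fixed; this is why the paper's Claim on $F$ allows repeated retranslation inside the inductive construction.

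So the conclusion and the first two moves are right, but the proposal is missing the central construction (the fixed subcomplex $F$ and the vertex-by-vertex extension of the ART), and the substitute you suggest — saturating a neighborhood in $\hatt{v_0}$ — does not recover arbitrary neighborhoods of $\xi_+$ in $\overline Z$.
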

\begin{proof}
    By the previous lemma, it is enough to show $(g_1^{-1}g_n)_n$ is a convergence sequence, so we can assume $g_nv_0 = v_0$ for all $n$, hence $(g_n)_n$ is a sequence in $G_{v_0}$. Since $G_{v_0}$ acts on $\total{v_0}$ as a convergence group with limit set $\partial G_{v_0}$, we can pass to a subsequence and choose $\xi_+,\xi_- \in \partial G_{v_0}$ so that $(g_n,\xi_+,\xi_-)$ is an ART for $G_{v_0}$ acting on $\total{v_0}$. This means for all compact subsets $K \subset \total{v_0} \setminus \{\xi_-\}$, $g_nK$ converges to $\xi_+$ uniformly. Our goal is to upgrade this to compact subsets $K \subset \overline{Z} \setminus \{\xi_-\}$ so that $(g_n)_n$ is a convergence sequence for $G$ acting on $\overline{Z}$. We use $v_0$ as the base point for the topology.

    \begin{claim}\label{ClaimF}
        After translating $(g_n)_n$, passing to a subsequence, and relabeling $\xi_+,\xi_-$, there is a finite subcomplex $F \subset D(\xi_-) \cap D(\xi_+)$ so that 
    \begin{enumerate}[label = F\arabic*.]
        \item all $g_n$ fix $F$ pointwise,
        \item for all vertices $v$ of $F$, $(g_n,\xi_+,\xi_-)$ is an ART for $G_v$ acting on $\total{v}$, and
        \item for all $\sigma \subset Lk(F)$, we have $g_n\total{\sigma} \longrightarrow \xi_+$ uniformly in $\total{v}$ for any vertex $v$ of $F \cap \sigma$.
    \end{enumerate}
    \end{claim}
    \begin{proof}[Proof of Claim]

    First, assume that for every simplex $\sigma \subset st(v_0)$, the set $\{g_n \sigma, \, n \geq 0\}$ is infinite. We claim that after a subsequence, $F = v_0$ works. Indeed, F1 and F2 are immediate, and if $v_0 \in \sigma$ and $\xi_- \notin \total{\sigma}$, then $\total{\sigma}$ is a compact subset of $\total{v_0} \setminus \{\xi_-\}$, so $g_n \total{\sigma} \longrightarrow \xi_+$ uniformly already. This proves F3 for all but the simplices $\sigma \subset Lk(v_0) \cap D(\xi_-)$. Enumerate the simplices of $D(\xi_-) \cap Lk(v_0)$ as $\sigma_1, \sigma_2, \ldots, \sigma_m$, with $m \leq d_{max}$. By assumption, $\{g_n\sigma_1 , \, n \geq 0\}$ is infinite, so using the Convergence Property \ref{Convergence Property} we can pass to a subsequence so that $g_n \total{\sigma_1}$ converges uniformly to some $\xi_0 \in \partial G_{v_0}$. Picking a point $z \in \internal{\sigma_1} \subset \total{v_0}$, we have $z \neq \xi_-$, so $g_nz \longrightarrow\xi_+$ because $(g_n,\xi_+,\xi_-)$ is an ART, but also $g_nz \longrightarrow \xi_0$, so $\xi_+ = \xi_0$. Repeating this subsequence maneuver for $\sigma_2, \sigma_3, \ldots, \sigma_m$, we get that $g_n \total{\sigma_i} \longrightarrow \xi_+$ uniformly for each $i$. Since $g_nv_0 = v_0$ already, this shows $F = v_0$ would work.

    If the assumption doesn't hold, there is some simplex $\sigma \subset Lk(v_0)$ so that $\{g_n\sigma, \, n\geq 0\}$ is finite. After a subsequence we can assume $g_n\sigma$ is constant. Then $g_1^{-1}g_n \sigma = \sigma$ for all $n$ and we can replace the ART $(g_n,\xi_+,\xi_-)$ for $G_{v_0}$ acting on $\total{v_0}$ with $(g_1^{-1}g_n,g_1^{-1}\xi_+,\xi_-)$. After this replacement, all the $g_n$ fix $\sigma$, so they fix $\total{\sigma}$ as a closed subset of $\total{v_0}$. Fix some $z \in \internal{\sigma} \subset \total{v}$ and notice that $z \neq \xi_-, \xi_+$ because $\xi_-,\xi_+ \in \partial G_{v_0}$. For all $n$, $g_n z \in \total{\sigma}$ and $g_nz \longrightarrow \xi_+$, so $\xi_+ \in \partial G_\sigma$. Using $g_n^{-1}$ in the same way, we have $\xi_- \in \partial G_\sigma$, hence $\sigma \subset D(\xi_-) \cap D(\xi_+)$. 

    Let $v$ be another vertex of $\sigma$. Viewing $(g_n)_n$ as a sequence in $G_\sigma \subset G_v$ and applying the definition of a convergence group, we can pass to a subsequence and find points $\xi_+',\xi_-' \in \partial G_v$ so that $(g_n,\xi_+',\xi_-')$ is an ART for $G_v$ acting on $\total{v}$. Using the same $z \in \total{\sigma}$ as before, we observe that $(g_nz)_n$ is a sequence in $\internal{\sigma} \subset \total{v}$ which converges to both $\xi_+'$ and $\xi_+$, hence $\xi_+ ' = \xi_+$. Using $g_n^{-1}z$ in the same way, we have $\xi_-' = \xi_-$. Repeating this for all the vertices of $\sigma$ makes $(g_n)_n$ a convergence sequence for each vertex with attractive and repelling points $\xi_+$ and $\xi_-$. This extends the properties of $v_0$ to $\sigma$ and all its vertices.

    At this point, either $\{g_n\sigma' , \, n\geq 0\}$ is infinite for all simplices $\sigma' \subset Lk(\sigma)$ or there is a simplex $\sigma' \subset Lk(\sigma)$ so that $\{g_n\sigma' , \, n\geq 0\}$ is finite. In the first case, we can argue exactly as in the first paragraph, replacing $v_0$ with $\sigma$ and see that $F = \sigma$ satisfies the claim. In the second case, we can argue as above with $\sigma'$ in place of $\sigma$; After possibly translating the sequence by $g_1^{-1}$ and taking subsequences, each $g_n$ fixes $\sigma \cup \sigma' \subset D(\xi_-) \cap D(\xi_+)$ and for each vertex $v$ of $\sigma \cup \sigma'$, $(g_n)_n$ is a convergence sequence in $G_v$ with attractive and repelling points $\xi_+,\xi_- \in \partial G_{v}$. Notice that because $g_1$ fixes $\sigma$, translating the entire sequence by $g_1^{-1}$ will give a new sequence which also fixes $\sigma$, so after translating and relabeling, the new sequence does indeed fix $\sigma \cup \sigma'$.

    We iterate the previous paragraph with $\sigma \cup \sigma'$ instead of $\sigma$. Either $F= \sigma \cup \sigma'$ satisfies the claim, or we repeat the argument and add a third simplex. Because our growing subcomplex is contained in $D(\xi_-) \cap D(\xi_+)$, which has at most $d_{max}$ simplices, we must repeat this argument at most $d_{max}$ times before getting a suitable $F$. This proves the claim.
    \end{proof}
    To show $(g_n)$ is a convergence sequence for $G$ acting on $\overline{Z}$, it is enough to fix arbitrary neighborhoods $V_{\UU,\varepsilon}(\xi_-), V_{\VV,\varepsilon}(\xi_+)$ and find some $N$ so that $n \geq N$ implies
    \[g_n(\overline{Z} \setminus V_{\UU,\varepsilon}(\xi_-)) \subset V_{\VV,\varepsilon}(\xi_+).\]
    Let $\UU^3 \subset \UU^2 \subset \UU^1 \subset \UU$ be a sequence of $\xi_-$--families each $d_{max}$--refined and nested in the next. Let $\VV'$ be a $\xi_+$--family $d_{max}$--refined and nested in $\VV$. If $v \in \sigma \subset D(\xi_+)$, we make $V'_v$ smaller if necessary so that $\total{\sigma} \nsubseteq V'_v$. We will use the contrapositive of this later; if $\sigma \subset N(\xi)$ is a simplex and $v \in \sigma \cap  V(\xi_+)$ so that $\total{\sigma} \subset V'_v$, then $\sigma$ is \emph{not} a simplex of $D(\xi_+)$.

\begin{claim}\thlabel{Claim2ofCase1}
    With $(g_n)_n$ and $F$ satisfying the conclusion of Claim \ref{ClaimF}, there is some $N$ so that $n \geq N$ implies the following.
\begin{enumerate}
        \item For all vertices $v$ of $F$, $g_n (\total{v} \setminus U^3_v) \subset V'_v$.
        \item $g_n\big(\overline{X} \setminus (\fitTilde{\Cone}_{\UU^2,\varepsilon}(\xi_-) \cup D(\xi_-))\big) \subset \fitTilde{\Cone}_{\VV,\varepsilon}(\xi_+)$.
        \item $g_n(D(\xi_-) \setminus F) \subset \fitTilde{\Cone}_{\VV,\varepsilon}(\xi_+)$.
\end{enumerate}
\end{claim}
\begin{proof}[Proof of Claim]
    
    Applying F$2$ of Claim \ref{ClaimF} to each of the finitely many vertices of $F$, we find $N$ large enough to satisfy $(1)$. There are also finitely many simplices $\sigma \subset Lk(F) \cap D(\xi_-)$, and applying F3 to each of these and possibly increasing $N$, we have $n \geq N$ implies $g_n \total{\sigma} \subset V'_v$ for each vertex of $v$ of $\sigma \cap F$. This is our $N$.

    To prove $(2)$, consider a point $x \in \overline{X} \setminus (\fitTilde{\Cone}_{\UU^2,\varepsilon}(\xi_-) \cup D(\xi_-))$. Then $x \notin F$ because $F \subset D(\xi_-)$, so let $\sigma$ be the first simplex met by $[v_0,x]$ after leaving $F$ (recall $v_0$ is the basepoint of our topology). Because the $g_n$ fix $F$ pointwise, the geodesics $g_n[v_0,x] = [v_0,g_nx]$ share the initial segment in $F$ and only differ starting in $g_n\sigma$. For each $n$, the portion of $[v_0,g_nx]$ outside of $F$ gives a path of simplices of length at most $d_{max}$ through $N(\xi_+)$ from $g_n\sigma$ to $\sigma_{\xi_+,\varepsilon}(g_nx)$.

    If $\sigma \subset Lk(F) \cap D(\xi_-)$, then we have chosen $N$ large enough so that $n \geq N$ implies $g_n\total{\sigma} \subset V'_v$, and by the choice of $\VV'$ just before the claim, this implies $g_n\sigma$ is not a simplex of $D(\xi_+)$, hence $g_n\sigma$ is also the first simplex met by $[v_0,g_nx]$ outside of $D(\xi_+)$. The Crossing Lemma \ref{Crossing} applied to the path of simplices from $g_n\sigma$ to $\sigma_{\xi_+,\varepsilon}(g_nx)$ now implies $g_nx \in \fitTilde{\Cone}_{\VV,\varepsilon}(\xi_+)$.

    If $\sigma \nsubseteq D(\xi_-)$, then $[v_0,x]$ gives a path of simplices of length at most $d_{max}$ from $\sigma$ to $\sigma_{\xi_-,\varepsilon}(x)$. Let $v$ be a vertex of $\sigma \cap F$. If $\total{\sigma} \cap U^3_v \neq \varnothing$, then the Crossing Lemma \ref{Crossing} applied to this path of simplices would imply $\total{\sigma_{\xi_-,\varepsilon}}(x) \subset U^2_v$ for any vertex $v$ of $\sigma_{\xi_-,\varepsilon}(x) \cap D(\xi_-)$. But that is the definition of $x \in \fitTilde{\Cone}_{\UU^2,\varepsilon}(\xi_-)$, which is a contradiction. Thus $\total{\sigma} \subset (\total{v} \setminus U^3_v)$, and for $n \geq N$, we have
    \[g_n \total{\sigma} \subset g_n (\total{v} \setminus U^3_v) \subset V'_v.\]
    Exactly as in the previous paragraph, this implies $g_nx \in \fitTilde{\Cone}_{\VV,\varepsilon}(\xi_+)$. This proves $(2)$. 

    For $(3)$, let $\sigma' \subset D(\xi_-) \setminus F$. Then a geodesic from any point of $\sigma'$ to $v_0$ gives a path of simplices of length at most $d_{max}$ from $\sigma'$ to some $\sigma \subset Lk(F) \cap D(\xi_-)$, since $v_0 \in D(\xi_-)$ and domains are convex. For $n \geq N$, we have $g_n\total{\sigma} \subset V_v'$ for any vertex $v$ of $\sigma \cap F$. The Refinement Lemma \ref{Refinement Lemma} then implies $g_n\sigma' \subset \fitTilde{\Cone}_{\VV,\varepsilon}(\xi_+)$. This proves $(3)$.
\end{proof}

\begin{claim}
    With $N$ from the previous claim, $n \geq N$ implies 
    \[g_n(\overline{Z} \setminus V_{\UU,\varepsilon}(\xi_-)) \subset V_{\VV,\varepsilon}(\xi_+).\]
\end{claim}
\begin{proof}[Proof of Claim]
    Fix $z \in \overline{Z} \setminus V_{\UU,\e}(\xi_-)$ and consider three cases.
    
    If $z = \eta \in \partial X$, then by definition $\eta \notin \fitTilde{\Cone}_{\UU,\varepsilon}(\xi_-)$. In particular $\eta \notin \fitTilde{\Cone}_{\UU^2,\varepsilon}(\xi_-)$, so the claim implies that for $n\geq N$, we have $g_n \eta \in \fitTilde{\Cone}_{\VV,\varepsilon}(\xi_+)$, hence $g_n \eta \in V_{\VV,\varepsilon}(\xi_+)$.
    
    If $z = \xi \in \partial_{Stab}G$, then $D(\xi) \cap \fitTilde{\Cone}_{\UU^2,\varepsilon}(\xi_-) = \varnothing$ and for any vertex $v$ of $D(\xi) \cap D(\xi_-)$, we have $\xi \notin U^1_v$ in $\partial G_v$, since either of these conditions would imply $\xi \in V_{\UU,\varepsilon}(\xi_-)$ by Lemma \ref{Double Refinement}. To show $g_n\xi \in V_{\VV,\varepsilon}(\xi_+)$, we have to show various conditions on how $D(\xi)$ interacts with $D(\xi_+)$. To check those conditions, we break $D(\xi)$ into three parts:
    \[D(\xi) = \big(D(\xi) \cap F\big) \sqcup \big(D(\xi) \cap D(\xi_-) \setminus F\big) \sqcup \big(D(\xi) \setminus D(\xi_-)\big).\]

    If $x$ is a point of either the second or third parts, then either $(3)$ or $(2)$ of Claim \ref{Claim2ofCase1} implies $g_nx \in \fitTilde{\Cone}_{\VV,\varepsilon}(\xi_+)$. Note that either because Cones are disjoint from domains by definition, or from following the proof \thref{Claim2ofCase1}, we know $g_n x\notin D(\xi_+)$. This means that $n \geq N$ implies $D(g_n\xi) \setminus F \subset \tCone_{\VV,\e}(\xi)$, and $D(g_n\xi) \cap D(\xi_+) \subset F$, so only in $F$ do we need to check the condition on vertices of $D(g_n\xi) \cap D(\xi)$. If $v = g_nv$ is a vertex of $D(g_n\xi) \cap F$, $(1)$ of Claim \ref{Claim2ofCase1} implies that $g_n\xi \in g_n(\total{v} \setminus U^2_v) \subset V_v'$ as needed. 

    Finally, if $z \in Z$, then let $x = p(z)$. Suppose $x \in F$, so that $g_nx = x$ for all $n$. By definition of $z \notin V_{\UU,\varepsilon}(\xi_-)$, we know that for any vertex $v$ of $\sigma_x$, $c(z) \notin U_v$, so by $(1)$ of the \thref{Claim2ofCase1}, we have $g_nc(z) \in V'_v$ for all such $v$, hence $g_nz \in V_{\VV,\varepsilon}(\xi_+)$. If $x \notin F$, then either $x \in D(\xi_-)$ or $x \in (X \setminus \fitTilde{\Cone}_{\UU^2,\varepsilon}(\xi_-) \cup D(\xi_-))$, and these cases are covered by $(3)$ and $(2)$ of \thref{Claim2ofCase1}.
\end{proof}

As explained just before \thref{Claim2ofCase1}, because $V_{\UU,\e}(\xi_-), V_{\VV,\e}(\xi_+)$ were arbitrary neighborhoods of $\xi_+,\xi_-$, the previous claim shows $(g_n,\xi_+,\xi_-)$ is an ART, proving the lemma.
\end{proof}

\begin{lemma}\thlabel{Small translation infinite}
    Let $(g_n)_n$ be an injective sequence of elements. Suppose that for some (hence any) vertex $v$ the sequence $(g_nv)_n$ is bounded but there are no vertices $u,w$ so that $g_nu = w$ for infinitely many $n$. Then $(g_n)_n$ is a convergence sequence. 
\end{lemma}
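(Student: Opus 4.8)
The hypothesis is the ``bounded displacement but no infinitely-repeated vertex pair'' case: for a fixed vertex $v$, the orbit $(g_nv)_n$ stays in a ball $B(v_0,R)$, yet no vertex $u$ is sent by infinitely many $g_n$ to a common vertex $w$. The plan is to reduce to \thref{Small translation finite} by extracting a subsequence along which the displacement becomes \emph{large}, so that the previous lemma applies after first passing to a subsequence where $g_nv$ converges. The key tension is that $(g_nv)_n$ lies in a bounded set, so it does not run off to infinity in $X$; instead, the relevant sequence of \emph{points of $Z$} or \emph{simplices} will escape. Concretely, since $X$ is proper and $(g_nv)_n$ is bounded, after a subsequence $g_nv \to x \in X$; replacing $v_0$ by a vertex near $x$ and composing with a fixed isometry we may assume $g_nv$ lies in a single fixed star, hence (passing to a subsequence using Convention \thref{stabilizer convention} and finiteness of $X/G$) that $g_n v$ is a \emph{constant} vertex $v_1$ for all $n$. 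But then $g_nv = v_1$ for all $n$ with $v_0$ replaced by $v$, which is exactly the hypothesis of \thref{Small translation finite}, unless such a constant subsequence cannot be found --- and the hypothesis ``no $u,w$ with $g_nu=w$ infinitely often'' is precisely what rules this out. So the real content is to see that the hypothesis forces the displacement to be unbounded along a subsequence in a way the bounded orbit of $v$ does not immediately reveal.

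\textbf{Key steps, in order.} First, pass to a subsequence so that $g_nv$ converges to some $x\in X$ (properness of $X$); let $\sigma_x$ be the carrier simplex and $v_1$ a vertex of $\sigma_x$. For $n$ large, $g_nv \in st(\sigma_x)$, so $\sigma_{g_nv} \supseteq \sigma_x$, and since $X/G$ is finite there are finitely many $G$-orbits of such simplices; after a further subsequence all $g_n v$ lie in a single simplex $\tau_n$ that is $G$-translate of a fixed simplex, and refining again we may take the vertex-set of $\sigma_{g_nv}$ constant, equal to some finite vertex set $W$. Second, for each $w' \in W$ consider the sequence of vertices $g_n^{-1}w'$: if for some $w'$ this sequence took some value $u$ infinitely often, we would get $g_nu = w'$ for infinitely many $n$, contradicting the hypothesis. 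Hence $g_n^{-1}w' \to \infty$ for every $w' \in W$, i.e.\ $d(v_0, g_n^{-1}w') \to \infty$. Third, apply \thref{OffToInfinity} to the vertex $w'$ (for a fixed $w' \in W$) and the sequence $(g_n^{-1})_n$: after a subsequence, $g_n^{-1}\hatt{w'}$ converges uniformly either to some $\eta \in \partial X$ or to some $\xi_0 \in \partial G_\sigma$ for a simplex $\sigma$ of $X$. Fourth, this convergence provides the attracting point $\xi_-$ for $(g_n)_n$ (it is the attracting point of $(g_n^{-1})_n$ is $\xi_-$, so $\xi_+$ of $(g_n)_n$ is this limit; one then runs the symmetric argument with $g_n$ in place of $g_n^{-1}$, using that $g_nv$ is bounded does \emph{not} help, so instead one applies \thref{OffToInfinity} to some vertex $u$ in a fixed simplex whose $g_n$-images escape --- such a vertex exists by the same no-repeated-pair argument applied to $g_n$, since if every vertex had bounded $g_n$-orbit and the complex were locally finite we could diagonalize to a repeated pair). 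Finally, assemble: with $\xi_+$ the uniform limit of $g_n\hatt{u}$ and $\xi_-$ the uniform limit of $g_n^{-1}\hatt{w'}$, invoke the machinery of the previous lemmas' ``ART'' criterion --- for any compact $K \subset \overline Z \setminus \{\xi_-\}$, $K$ lies in the complement of a neighborhood of $\xi_-$, and uniform convergence of the $\hatt{}$'s over the finitely many cells meeting a compact set in $X$ forces $g_nK \to \xi_+$ uniformly. Because $\overline{Z}$ is compact (\thref{Compactness}) and $p:\overline{Z}\to\overline X$ is continuous with bounded fibers over simplices, a compact subset of $\overline Z$ avoiding $\xi_-$ has $p$-image controlled, and the sequence of exit simplices or boundary data converges, yielding the ART.

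\textbf{Main obstacle.} The delicate point is the passage from ``no vertex pair is repeated infinitely often'' to producing a vertex $u$ (or simplex) whose $g_n$-orbit actually escapes to infinity, so that \thref{OffToInfinity} can be applied to $(g_n)_n$ itself (not just to $(g_n^{-1})_n$). The hypothesis directly gives escape for $g_n^{-1}$ applied to vertices of $W = V(\sigma_{g_nv})$, but to get an ART one needs control of $g_n$ on a cobounded family, which requires escape of $g_n$ on \emph{some} fixed cell. Here one uses local finiteness of $X$ together with the no-repeated-pair hypothesis: if every vertex in, say, $st(v)$ had a $g_n$-orbit that returned to a bounded set infinitely often, a diagonal argument over the finitely many vertices of $st(v)$ and the local finiteness of $X$ would produce a vertex $u$ and target vertex $w$ with $g_nu = w$ for infinitely many $n$, contradicting the hypothesis --- so some such $u$ has $d(v_0,g_nu)\to\infty$. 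I expect this bookkeeping (choosing the right cell, juggling the two sequences $(g_n)_n$ and $(g_n^{-1})_n$, and matching up attracting/repelling points via the lemma on ARTs proved just above) to be the technical heart; the convergence-group conclusion then follows formally from \thref{OffToInfinity}, \thref{Compactness}, and continuity of $p$.
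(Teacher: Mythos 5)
Your proposal has a fundamental gap that makes the whole reduction strategy unworkable. In your second key step you assert that for each $w' \in W$, since no vertex repeats, $d(v_0, g_n^{-1}w')\to\infty$. This inference is false: the complex $X$ is \emph{not} locally finite in this setting (the point of the hypotheses being non-vacuous is precisely that vertex stabilizers are infinite, so stars contain infinitely many simplices), and ``an infinite sequence of distinct vertices with no repeats'' does not force escape to infinity. Worse, the conclusion is provably false: if $(g_nv)_n$ is bounded for one vertex $v$, the triangle inequality gives
\[d(v_0,g_n^{\pm 1}w) \;\le\; d(v_0,g_nv) + d(v,v_0) + d(v_0,w)\]
for every vertex $w$ (using that $g_n$ is an isometry), so $(g_nw)_n$ and $(g_n^{-1}w)_n$ are \emph{both} bounded for \emph{every} $w$. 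Thus neither \thref{Small translation finite} (which you already note is ruled out by the no-repeated-pair hypothesis) nor \thref{OffToInfinity} (which requires $d(v_0,g_nw)\to\infty$) can be applied, so the whole ``reduce to the neighboring lemmas'' plan collapses. Your ``main obstacle'' paragraph tries to manufacture a vertex $u$ with escaping $g_n$-orbit via a diagonal argument invoking local finiteness, but local finiteness is exactly what fails here, and the triangle-inequality computation above shows no such $u$ exists.

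The paper handles this case by an entirely different route. It first observes that this case is non-vacuous only when $\partial_{Stab}G \neq \varnothing$, fixes $\xi_0 \in \partial_{Stab}G$, and uses compactness of $\overline{Z}$ (rather than properness of $X$, which does not hold) to extract a subsequence with $g_n\xi_0 \to \xi_+$ and $g_n^{-1}\xi_0 \to \xi_-$ in $\overline{Z}$. Because the translates $g_nD(\xi_0)$ stay in a bounded region, it concludes that $\xi_\pm$ must lie in $\partial_{Stab}G$, not $\partial X$. The technical content is then to show $(g_n,\xi_+,\xi_-)$ is an ART by establishing (i) the ``no-repeat'' claims R1--R3 (which hold without local finiteness because the finite subcomplexes $K,K'$ are fixed), (ii) a ``suited-to'' claim using three applications of \thref{Short Paths of Simplices} and the Crossing Lemma to push cones around, (iii) that $g_n\xi \to \xi_+$ for every $\xi \in \partial_{Stab}G$, and (iv) that every $z \neq \xi_-$ has a neighborhood eventually carried into a prescribed cone around $\xi_+$; compactness then assembles these into uniform convergence on compacta. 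None of this passes through \thref{OffToInfinity}.
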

\begin{proof}

    We begin with $2$ claims which explore the assumption on how the $(g_n)_n$ act. 

    \begin{claim}\thlabel{ClaimR}
    \
    \begin{enumerate}[label = (R\arabic*)]
        \item If $K, K'$ are finite subcomplexes of $X$, then for all $n$ large enough, $g_nK' \cap K = \varnothing$.

        \item For any vertex $w$ of $X$ and $\xi, \xi' \in \partial _{Stab}G$, the complex spanned by $g_n\Geod(w,D(\xi))$ does not meet $D(\xi')$ for $n$ large enough. 

        \item For any vertex $w$ of $X$, $x \in \overline{X}$, and $\xi \in \partial_{Stab}G$, the geodesic $g_n[w,x]$ does not meet $D(\xi)$ for $n$ large enough.
    \end{enumerate}
    \end{claim}
    \begin{proof}[Proof of Claim]
        
    For a contradiction, suppose $g_n K' \cap K \neq \varnothing$ for infinitely many $n$. Because $K$ is finite, some vertex of $K$ appears infinitely often in the sets $g_nK' \cap K$, say $w$. There are finitely many possibilities for $g_n^{-1}w \in K'$, so some vertex of $K'$ appears infinitely many times, say $u$. But then $g_n u = w$ for infinitely many $n$, contradicting the assumption on $(g_n)_n$. This proves R1.
    
    Both R2 and R3 are special cases of R1. For R2, take $K = D(\xi')$ and $K'$ the subcomplex spanned by $\Geod(w,D(\xi))$. Then $K$ has at most $d_{max}$ simplices and $K'$ is finite by \thref{geod meets finitely many}. 

    For R3, the set $\{g_n w, \, n \geq 0\}$ is bounded by assumption and $D(\xi)$ has diameter at most $A$, so there is some constant $M$ so that $D(\xi) \subset B(g_nw,M)$ for all $n$, hence any intersection between $g_n[w,x]$ and $D(\xi)$ happens in $B(g_nw,M)$. By \thref{Bounded Length To Bounded Number Of Simplices}, the initial segment of $[w,x]$ of length $M$ is contained in a finite subcomplex, say $K'$. Then R3 follows from R1 with $K = D(\xi)$. 
    \end{proof}
    
    \begin{claim}\thlabel{ClaimSuited}

        Let $v_0$ be a vertex of $X$, $\varepsilon \in (0,1)$, $W \subset \overline{X}$, and $\xi_1 \in \partial_{Stab}G$. Fix a vertex $v_1$ of $D(\xi_1)$ as the basepoint to consider cones and let $\UU' \subset \UU$ be two $\xi_1$--families with $\UU'$ $3d_{max}+1$--nested in $\UU$. Suppose that for all $n \geq N$,
    \begin{enumerate}
        \item $g_nv_0 \in \fitTilde{\Cone}_{\UU',\varepsilon}(\xi_1)$ and 
        \item for all $x \in W$, $g_n[v_0,x]$ does not meet $D(\xi_1)$.
    \end{enumerate}
    Then $g_nW \subset \fitTilde{\Cone}_{\UU,\varepsilon}(\xi_1)$ for all $n \geq N$. 
    \end{claim}

    \begin{proof}[Proof of Claim]
        
    Fix $x \in W$ and assume for now that $x \in X$. We will use Lemma \ref{Short Paths of Simplices} three times to get a path of simplices from $\sigma_{\xi_1,\varepsilon}(g_nv_0)$ to $\sigma_{\xi_1,\varepsilon}(g_nx)$ in $N(\xi_1)$, then apply the Crossing Lemma \ref{Crossing} to transfer assumption $(1)$ from $g_nv_0$ to $g_nx$.

    Fix $n \geq N$. Because $D(\xi_1)$ is compact and disjoint from $g_n[v_0,x]$, we can choose $y \in D(\xi_1)$ achieving $d(g_n[v_0,x],D(\xi_1))>0$. Let $\tau$ be a simplex of $Lk(\xi_1)$ whose interior is met by $[y,g_nv_0]$ at a point $u$. Similarly, let $\tau'$ be a simplex of $Lk(\xi_1)$ whose interior is met by $[y,g_nx]$ at a point $u'$. By choosing $u,u'$ closer to $D(\xi_1)$, we may assume $g_nv_0 \neq u, g_nx \neq u$. See Figure \ref{fig:Claim2 diagram}.
    
    \begin{figure}
        \centering
        \begin{tikzpicture}[
                dot/.style = {circle, fill, minimum size=#1,
              inner sep=0pt, outer sep=0pt},
                    ]  

        \filldraw [fill=lightgray,thick] (-3,0) -- (0,1) -- (3,0) -- (0,-1) --  (-3,0);
        \filldraw [fill = lightgray, thick] (2,.333) -- (2.7,.5) node[label = right:$\sigma_{\xi_+,\varepsilon}(g_nx)$] {} -- (1.8,.8) -- (2,.333); 
        \filldraw [fill = lightgray, thick] (-2,.333) -- (-2.7,.5) node[label = left:$\sigma_{\xi_+,\varepsilon}(g_nv_0)$] {} -- (-1.8,.8) -- (-2,.333); 
        \filldraw [fill = lightgray, thick] (0,1) -- (-.5,1.75)  node[label = above:$\tau$] {} -- (-1,1) -- (0,1); 
        \filldraw [fill = lightgray, thick] (0,1) -- (.5,1.75)  node[label = above:$\tau'$] {} -- (1,1) -- (0,1); 

        \node[dot = 4pt,label =below right:$v_1$] at (0,-1) {};
        \node[dot = 4pt,label =below right:$y$] at (0,1) {};
        \node[label = below right:$D(\xi_1)$] at (2,-.333) {};
        \draw (0,-1) -- (-6,4);
        \draw (0,-1) -- (6,4);
        \draw (0,1) -- (-6,4);
        \draw (0,1) -- (6,4);

        \draw (-6,4) node[dot = 4pt, label = left:$g_nv_0$]{} -- (6,4) node[dot = 3pt, label = right:$g_nx$] {};
        \draw (-.5,1.25) node[dot = 4pt]{} -- (.5,1.25) node[dot = 4pt] {};

        \draw[blue,line width = 2pt] (-2.2, .5) -- (-.5,1.25) -- (.5,1.25) -- (2.2,.5);
    \end{tikzpicture}
        \caption{The situation of Claim \ref{ClaimSuited}. The blue line represents the path of simplices constructed. The left and right hand segments are the first two applications of Lemma \ref{Short Paths of Simplices} and the middle segment is the path of simplices along $[u,u']$.}
        \label{fig:Claim2 diagram}
    \end{figure}
    
    The first two applications of Lemma \ref{Short Paths of Simplices} are easy; Using $v_1,y \in D(\xi_1)$ as points in a convex subcomplex $D(\xi_1)$ and $g_nv_0$ as our finite subcomplex, we have a path of simplices of length at most $d_{max}$ from $\sigma_{\xi_1,\varepsilon}(g_nv_0)$ to $\tau$. Using $g_n\sigma_x$ instead of $g_nv_0$, we have a path of simplices of length at most $d_{max}$ from $\tau'$ to $\sigma_{\xi_1,\varepsilon}(g_nx)$.$^\dagger$ 
    
    Now we need one from $\tau$ to $\tau'$. Let $w$ be the closest point of $D(\xi_1)$ to $g_nv_0$ and consider the geodesic triangle with corners $g_nv_0,w,y$. The leg $[w,y]$ is contained in $D(\xi_1)$ by convexity and $u \in [g_nv_0,y]$ by definition. The CAT$(0)$ inequality implies $d(u,D(\xi_1)) < d(g_nv_0,D(\xi_1))$. Using $g_nx$ instead of $g_nv_0$, we get $d(u',D(\xi_1)) < d(g_nx,D(\xi))$. In any CAT$(0)$ metric space, the distance to a convex set along a geodesic is a convex function, see \cite[II.2]{BH}. Letting $\gamma$ parameterize the geodesic $[u,u']$ and using the geodesic $g_n[v_0,x]$ as our convex set, we get the first of the following inequalities:

    \begin{align*} 
    d(\gamma(t),g_n[v_0,x]) &\leq \max\big(d(u,g_n[v_0,x]),d(u',g_n[v_0,x])\big) \\ 
     & < d(y,g_n[v_0,x])\\
     & = d(D(\xi_1),g_n[v_0,x]).
     \end{align*}
    This shows $[u,u']$ stays strictly closer to $g_n[v_0,x]$ than any point of $D(\xi_1)$, so $[u,u']$ does not meet $D(\xi_1)$. By the convexity of $D(\xi_1)$, we also have that $[u,u'] \subset D^\varepsilon(\xi)$ since $u,u' \in D^\varepsilon(\xi)$. Therefore $[u,u']$ gives a path of simplices in $Lk(\xi_1)$ from $\tau$ to $\tau'$, which has length at most $d_{max}$ since $[u,u']$ is a geodesic in $N(\xi_1)$. Combining these, we have a path of simplices 
    \[\sigma_{\xi_1,\varepsilon}(g_nv_0) \longrightarrow \tau \longrightarrow \tau' \longrightarrow \sigma_{\xi_1,\varepsilon}(g_nx)\]
    Each subpath has length at most $d_{max}$, so the concatenation has length at most $3d_{max}$. Since $g_nv_0 \in \fitTilde{\Cone}_{\UU',\varepsilon}(\xi_1)$ and $\UU'$ is $3d_{max}+1$ nested in $\UU$, the Crossing Lemma \ref{Crossing} applied to this path implies $g_nx \in \fitTilde{\Cone}_{\UU,\varepsilon}(\xi_1)$ as claimed. 
    
    If $x = \eta \in \partial X$, the only change is in the sentence marked $^\dagger$. Since $\eta$ doesn't have a finite subcomplex to apply Lemma \ref{Short Paths of Simplices} to, we use the $\textrm{CAT}(0)$ inequality and choose $x'$ far along $g_n[v_0,\eta)$ so that $\sigma_{\xi_1,\varepsilon}(g_n\eta) \subset st(\sigma_{\xi_1,\varepsilon}(x'))$, and use $x'$ as we used $x$ above. We get a path of simplices of length $3d_{max}$ from $\sigma_{\xi_1,\varepsilon}(g_nv_0)$ to $\sigma_{\xi_1,\varepsilon}(x')$, and the containment $\sigma_{\xi_1,\varepsilon}(g_n\eta) \subset \sigma_{\xi_1,\varepsilon}(x')$ adds possibly one step to the path of simplices, hence the $+1$ in $3d_{max}+1$. This proves the claim.
    \end{proof}
    
    When assumptions $(1)$ and $(2)$ of Claim \ref{ClaimSuited} hold for some $N$, we say $N$ is \emph{suited to} $v_0, v_1, W, \varepsilon, \xi_1$ and $\UU'\subset \UU$, or suited to $W$ if the rest is clear.

    With these 2 claims, we can start on the real proof. If $\partial_{Stab}G = \varnothing$, $G$ is hyperbolic because it acts geometrically on the $\delta$--hyperbolic space $X$ and our main theorem is trivial. Otherwise, choose $\xi_0 \in \partial_{Stab}G$. Because $\overline{Z}$ is compact and metrizable, after a subsequence we can find points $\xi_+,\xi_- \in \partial G$ so that $g_n\xi_0 \longrightarrow \xi_+$ and $g_n^{-1}\xi_0 \longrightarrow \xi_-$. Because the $g_nD(\xi_0)$ lie in a bounded set, $\xi_+,\xi_- \notin \partial X$. Fix a vertex $v_+ \in D(\xi_+)$ as a basepoint.

    \begin{claim}\thlabel{Claim3ofCase2}
        For any $\xi \in \partial_{Stab}G$, $g_n\xi \longrightarrow \xi_+$ and $g_n^{-1}\xi \longrightarrow \xi_-$.
    \end{claim}

    \begin{proof}[Proof of Claim]
    
    With the terminology above, this is simple to argue. Fix an arbitrary open neighborhood of $\xi_+$, say $V_{\UU,\varepsilon}(\xi_+)$, and let $\UU' \subset \UU$ be a $\xi_+$--family $3d_{max}+1$--nested in $\UU$. Because $g_n\xi_0 \longrightarrow \xi_+$, we can choose $N$ so that $n \geq N$ implies $g_n\xi_0 \in V_{\UU',\varepsilon}(\xi_+)$. Using R1 with $K = D(\xi_0)$ and $K' =D(\xi_+)$ and increasing $N$ if necessary, we can assume $n \geq N$ implies $g_nD(\xi_0) \cap D(\xi_+) = \varnothing$, which implies $g_nv_0 \in \fitTilde{\Cone}_{\UU',\varepsilon}(\xi_+)$. This satisfies $(1)$ of \thref{ClaimSuited}. Using R2 and possibly increasing $N$, we have $g_n\Geod(v_0,D(\xi)) \cap D(\xi_+) = \varnothing$ for all $n \geq N$, so assumption $(2)$ holds for all $x \in D(\xi)$. Then $N$ is suited to $D(\xi)$, hence $g_nD(\xi) \subset \fitTilde{\Cone}_{\UU,\varepsilon}(\xi_+)$, hence $g_n\xi \in V_{\UU,\varepsilon}(\xi_+)$ for all $n \geq N$. Since $V_{\UU,\varepsilon}(\xi_+)$ was arbitrary, this proves $g_n\xi \longrightarrow\xi_+$.

    Using $g_n^{-1},\xi_-$ instead of $g_n,\xi_+$ in the paragraph above gives the second half of the claim.
    \end{proof}

    \begin{claim}\thlabel{Claim4ofCase2}
    For any $z \in \overline{Z}$ with $z \neq \xi_-$ and open neighborhood $V_{\UU,\varepsilon}(\xi_+)$ of $\xi_+$, there is a neighborhood $U$ of $z$ and $N$ so that $g_nU \subset V_{\UU,\varepsilon}(\xi_+)$ for all $n \geq N$.
    \end{claim}
    \begin{proof}[Proof of Claim]
        
    Fix $V_{\UU,\varepsilon}(\xi_+)$ and let $\UU'$ be $3d_{max}+1$--nested in $\UU$. Using \thref{Claim3ofCase2} with $\xi_+$, we have $g_n\xi_+ \longrightarrow \xi_+$ and using R1 with $K = K' =D(\xi_+)$, we can find $N_0$ large enough that $g_nv_+ \in \fitTilde{\Cone}_{\UU',\varepsilon}(\xi_+)$ for all $n \geq N_0$. We assume all $N$ below are larger than $N_0$ so that assumption $(1)$ of \thref{ClaimSuited} is always satisfied. There are $3$ cases.

    Suppose $z = \eta \in \partial X$. We find a neighborhood $W \subset \overline{X}$ of $\eta$ and $N$ is suited to $W$ so that $g_nW \subset \fitTilde{\Cone}_{\UU,\varepsilon}(\xi_+)$ for all $n \geq N$, hence $g_nV_W(\eta) \subset V_{\UU,\varepsilon}(\xi_+)$ as desired. 

    Because $(g_nv_+)_n$ and $N(\xi_+)$ are bounded, the number 
    \[M:= \sup_{y \in N(\xi_+), \, n \geq 0}d(g_nv_+,y)\]
    is finite. We claim there are finitely many $n$ so that $g_n[v_+,\eta)$ meets $N(\xi_+)$. If there were infinitely many such $n$, then we could pass to a subsequence and find $x_n \in [v_+,\eta)$ so that $g_nx_n \in N(\xi)$ for all $n$. Each $x_n$ has $d(v_+,x_n) \leq M$, and the initial segment of $[v_+,\eta)$ of length $M$ meets finitely many simplices by \thref{Bounded Length To Bounded Number Of Simplices}, so after a subsequence we can assume all $x_n$ are in the same simplex, say $\sigma = \sigma_{x_n}$ for all $n$. Then $g_n\sigma \cap D(\xi) \neq \varnothing$ for all $n$, and because $\sigma,D(\xi)$ each have finitely many vertices, this leads to a contradiction of the assumption on $(g_n)_n$. This shows that indeed there are only finitely many $n$ so that $g_n[v_+,\eta)$ meets $N(\xi_+)$. Because $D^\e(\xi) \subset N(\xi)$ for all $\e \in (0,1)$, we can fix some $\e \in (0,1)$ and $N$ so that $g_n[v_+,\eta)$ does not meet $D^\e(\xi)$ for all $n \geq N$. 
    
    We claim $N$ is suited to $W = V_{M,\varepsilon}(\eta)$. Let $x \in V_{M,\varepsilon}(\eta)$ and for a contradiction suppose $g_n[v_+,x]$ meets $D(\xi_+)$ at a point $y$. By definition of $M$, $d(g_nv_+,y) \leq M$, so $g_n[v_+,x]$ meets $D(\xi_+)$ in time at most $M$. But during this time, $g_n[v_+,x]$ stays within $\varepsilon$ of $g_n[v_+,\eta)$ because $x \in V_{M,\varepsilon}(\eta)$, hence avoids $D(\xi_+)$, so this contradicts the definition of $\varepsilon$ and no such $y$ can exist. Thus $V_W(\eta), N$ satisfy the claim.

    Suppose $z = \xi \in \partial_{Stab}G$. We construct a neighborhood $V_{\VV',\varepsilon'}(\xi)$ and $N$ suited to $D(\xi) \cup \fitTilde{\Cone}_{\VV,\varepsilon'}(\xi)$. It follows that $g_nV_{\VV,\varepsilon'}(\xi) \subset V_{\UU,\varepsilon}(\xi_+)$ for all $n \geq N$, satisfying the claim.

    Because $\xi \neq \xi_-$, we can choose neighborhoods $V_{\VV,\varepsilon'}(\xi)$ and $V_{\WW,\varepsilon''}(\xi_-)$ with disjoint cones by Lemma \ref{DisjointCones}. Let $\VV'$ be $d_{max}$--nested in $\VV$. Because $g_n^{-1}\xi_+ \longrightarrow \xi_-$ by Claim \ref{Claim3ofCase2} and using R1 with $K = D(\xi_-), \, K' = D(\xi_+)$, we can choose $N$ so that $g_n^{-1}D(\xi_+) \subset \fitTilde{\Cone}_{\WW,\varepsilon''}(\xi_-)$ for all $n \geq N$. Using R2 and possibly increasing $N$, we can assume $g_n\Geod(v_+,D(\xi)) \cap D(\xi_+) = \varnothing$ for all $n \geq N$. 

    We claim that this $N$ is suited to $D(\xi) \cup \fitTilde{\Cone}_{\VV',\varepsilon'}(\xi)$. It is clearly suited to $D(\xi)$ by the last sentence of the previous paragraph. Let $x \in \fitTilde{\Cone}_{\VV',\varepsilon}(\xi)$ and for a contradiction suppose $g_n[v_+,x]$ meets $D(\xi_+)$ for some $n \geq N$, say $y \in g_n[v_+,x] \cap D(\xi_+)$. See Figure \ref{fig:Claim4 diagram}. Now $[v_+,x]$ goes through $D(\xi)$ by the Genuine Shadow \thref{Genuine Shadows}, so let $x'$ be the last point of $D(\xi)$ along $[v_+,x]$ and decompose $g_n[v_+,x] = g_n[v_+,x'] \cup g_n[x',x]$. By the last sentence of the previous paragraph, $y$ cannot be in the first half of this decomposition, so it must be in the second. Translating by $g_n^{-1}$ and recalling that $x'$ is the \emph{last} point of $[v_+,x]$ in $D(\xi)$, we see that $[v_+,g_n^{-1}y]$ goes through $D(\xi)$.

    \begin{figure}
        \centering
        \begin{tikzpicture}[
                dot/.style = {circle, fill, minimum size=#1,
              inner sep=0pt, outer sep=0pt}
                    ]
            
            \filldraw[fill=lightgray, thick](0,0) circle (1.5);
            \filldraw[fill=lightgray, thick, label = below:$D(\xi_+)$](2.75,0) circle (.75);
            \filldraw[fill=lightgray, thick, label = below:$D(\xi_+)$](-2.5,-1.5) circle (.75);
            \draw (0,0) node[dot = 4pt, label = below:$v_+$]{} -- (3.5,0) node[dot = 4pt, label = above right:$x'$]{} --(5,0) node[dot = 4pt, label = below:$g_n^{-1}y$]{} -- (7,0) node[dot = 4pt, label = below:$x$]{};
            \draw (-4,-3) node[dot = 4pt, label = below:$g_nv_+$]{} -- (0,1) node[dot = 4pt, label = below right:$y$]{} -- (1,2) node[dot = 4pt, label = above:$g_nx$]{};
            \node[label = below:$D(\xi_+)$] at (0,-1.5){};
            \node[label = below:$D(\xi)$] at (2.75,-.75){};
            \node[label = below:$g_nD(\xi)$] at (-2.5,-2.25){};
            \node[dot = 4pt] at (-2.5 +.53,-1.5+.53){};

    \end{tikzpicture}
        \caption{A diagram of the situation in Claim \ref{Claim4ofCase2} with $z = \xi$.}
        \label{fig:Claim4 diagram}
    \end{figure}
    
    If $g_n^{-1}y$ is outside of $N(\xi)$, then $\sigma_{\xi,\varepsilon'}(g_n^{-1}y) = \sigma_{\xi,\varepsilon'}(x)$, but maybe $g_n^{-1}y$ is close to $D(\xi)$ and these exit simplices are different. Either way, the portion of $[x',x]$ in $D^{\varepsilon'}(\xi)$ gives a path of simplices of length at most $d_{max}$ in $Lk(\xi)$ between these exit simplices. Because $x \in \fitTilde{\Cone}_{\VV',\varepsilon}(\xi)$ and $\VV'$ is $d_{max}$--nested in $\VV$, the Crossing Lemma \ref{Crossing} implies $g_n^{-1}y \in \fitTilde{\Cone}_{\VV,\varepsilon}(\xi)$. On the other hand, $g_n^{-1}y \in g_n^{-1}D(\xi_+) \subset \fitTilde{\Cone}_{\WW,\varepsilon''}(\xi_-)$ by the choice of $N$. Therefore

    \[ g_n^{-1}y \in \fitTilde{\Cone}_{\WW,\varepsilon''}(\xi_-) \cap \fitTilde{\Cone}_{\VV,\varepsilon'}(\xi).\]
    But this set is empty because we chose the cones to be disjoint, so this is a contradiction. Hence $N$ is indeed suited to $D(\xi) \cup \tCone_{\VV,\e'}(\xi)$, and this case is done.

    Finally, suppose $z \in Z$ and let $x = p(z)$. Choose $\delta$ so that $B(x,\delta) \subset st(\sigma_x)$, and let $U = p^{-1}(B(x,\delta))$. By \thref{geod meets finitely many}, there is some constant $M$ so that $[v_+,x]$ meets $M$ simplices ($M$ may be very large if $x$ is far away, but it won't matter), and we let $\UU''$ be a $\xi_+$--family $M+1$--refined in $\UU$. Because $g_n\xi_+\longrightarrow \xi_+$, we can choose $N$ large enough so that $g_nv_+ \in \fitTilde{\Cone}_{\UU'',\varepsilon}(\xi_+)$ for all $n\geq N$. Using R3 and possibly increasing $N$, we can assume $g_n[v_+,x] \cap D(\xi_+) = \varnothing$ so that $[g_nv_+,x]$ gives a path of simplices in $X \setminus D(\xi_+)$ of length $M$ from $g_nv_+$ to $g_nx$.

    If $z' \in U$, let $x' = p(z')$. By the choice of $U$, $\sigma_x \subset \sigma_{x'}$, so we have a path of simplices of length at most $M+1$ from $g_nv_+$ to $g_n\sigma_{x'}$. The Refinement Lemma \ref{Refinement Lemma} applied to this path of simplices implies $g_n\sigma_{x'} \subset \fitTilde{\Cone}_{\UU,\varepsilon}(\xi_+)$, hence $g_nz' \in V_{\UU,\varepsilon}(\xi_+)$. Since $z'$ was arbitrary, this implies $g_nU \subset V_{\UU,\varepsilon}(\xi_+)$ and $U, N$ satisfy the claim.
    \end{proof}
    
    Now we show $(g_n,\xi_+,\xi_-)$ is an ART. Fix a compact set $K \subset \overline{Z} \setminus \{\xi_-\}$ and neighborhood $V_{\UU,\varepsilon}(\xi_+)$ of $\xi_+$. By \thref{Claim4ofCase2}, every $z \in K$ has a neighborhood $U_z$ and $N_z$ so that $g_nU_z \subset V_{\UU,\varepsilon}(\xi_+)$ for $n \geq N_z$. By taking a finite cover of $K$ and a maximum, we can find $N$ so that $g_nK \subset V_{\UU,\varepsilon}(\xi_+)$.
\end{proof}

\begin{lemma}\thlabel{Large translation}
    Suppose $(g_n)_n$ is an injective sequence of elements and suppose for some vertex $v_0$ so that $d(v_0,g_nv_0) \longrightarrow \infty$. Then $(g_n)_n$ is a convergence sequence. 
\end{lemma}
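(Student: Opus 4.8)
The plan is to reduce this case to the previous two lemmas (\thref{Small translation finite} and \thref{Small translation infinite}) by using the compactness of $\overline{Z}$ and the structure of the map $p: Z \sqcup \partial X \longrightarrow \overline{X}$. Since $d(v_0, g_n v_0) \longrightarrow \infty$, I would first apply \thref{OffToInfinity} to the vertex $v = v_0$ and the sequence $(g_n)_n$. Let $\sigma_k^n$ be the $k$-th simplex met by $[v_0, g_n v_0]$ and $L(k) = \{\sigma_k^n \mid n \geq 0\}$. After passing to a subsequence, \thref{OffToInfinity} gives us a point $\xi_+ \in \partial G$ (either in $\partial X$ if all $L(k)$ are finite, or in $\partial_{Stab}G$ if some $L(k)$ is infinite) so that $g_n \hatt{v_0} \longrightarrow \xi_+$ uniformly. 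This will serve as the attracting point.

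For the repelling point, I would apply the same argument to the sequence $(g_n^{-1})_n$: since $d(v_0, g_n^{-1} v_0) = d(g_n v_0, v_0) \longrightarrow \infty$ as well, \thref{OffToInfinity} (after a further subsequence) produces a point $\xi_- \in \partial G$ with $g_n^{-1} \hatt{v_0} \longrightarrow \xi_-$ uniformly. The key claim is then that $(g_n, \xi_+, \xi_-)$ is an ART: for any compact $K \subset \overline{Z} \setminus \{\xi_-\}$ and any neighborhood $V$ of $\xi_+$, we need $g_n K \subset V$ for all large $n$. The strategy here mirrors the proof of \thref{Small translation infinite}: fix $z \in K$ with $z \neq \xi_-$, and I would show that $z$ has a neighborhood $U_z$ and an index $N_z$ so that $g_n U_z \subset V$ for $n \geq N_z$, then cover $K$ by finitely many such $U_z$ and take the maximum of the $N_z$.

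To establish the local statement, I expect to split into cases according to whether $z \in Z$, $z \in \partial X$, or $z \in \partial_{Stab}G$, just as in \thref{Claim4ofCase2}. The main tool will be that $g_n^{-1}$ moves any fixed compact set of $\overline{X}$ off to $\xi_-$: more precisely, since $g_n^{-1} \hatt{v_0} \longrightarrow \xi_-$ and translates of $D(z)$ (for $z \in \partial_{Stab}G$) or geodesics $[v_0, p(z)]$ have controlled combinatorial size, the preimages $g_n^{-1}(\text{stuff near } z)$ eventually land deep inside a neighborhood of $\xi_-$ disjoint from the relevant structure near $v_0$. Symmetrically, $g_n v_0 \longrightarrow \xi_+$, and one transfers ``cone membership" from $g_n v_0$ to $g_n x$ for $x$ near $z$ using the Refinement Lemma \thref{Refinement Lemma} and the Crossing Lemma \thref{Crossing}, exactly as in Claim \thref{ClaimSuited} — the geodesics $[v_0, g_n x]$ and $[v_0, g_n v_0]$ share a long initial segment or can be connected by a short path of simplices once $x$ is close to $z$. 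The hard part will be bookkeeping the case $z \in Z$ where $p(z)$ may be far from $v_0$: there one must track that $[v_0, p(z)]$ meets a bounded (if large) number of simplices, refine the $\xi_+$-family enough times to absorb this, and use \thref{OffToInfinity}'s uniform convergence together with \thref{geod meets finitely many} and \thref{Refinement Lemma}. The subtlety, as in the earlier lemmas, is ensuring the disjointness of the cone around $\xi_+$ from the translated domains, which is where \thref{DisjointCones} and the hypothesis $\xi_+ \neq z$ get used. Once all three cases are handled, the ART property follows by the finite-cover argument, and an appeal to the first lemma of this section (translating the ART back) shows $(g_n)_n$ itself is a convergence sequence.
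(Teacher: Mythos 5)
Your high-level scaffolding (find $\xi_+,\xi_-$ by compactness/subsequences, then prove the ART) matches the paper's, and the choice to use \thref{OffToInfinity} rather than raw compactness is a harmless variant. The problem is that the technical core of your argument is lifted from the proof of \thref{Small translation infinite}, and that machinery breaks precisely because the hypothesis there --- bounded translation of $v_0$ --- is the negation of what you now assume. Claims R1, R2, R3 (and hence Claim \thref{ClaimSuited}'s ``suitedness'' mechanism) all depend on $g_n v_0$ remaining in a bounded set so that $g_n K' \cap K$ eventually empties and $g_n[v_0,x]$ eventually misses $D(\xi_1)$. With $d(v_0,g_nv_0)\to\infty$ none of these hold: the geodesic $[v_0,g_nx]$ has no reason to avoid $D(\xi_+)$, and no finite amount of refinement of the $\xi_+$-family will compensate, because the obstruction is not combinatorial.

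The paper's proof replaces that machinery with a Gromov-product argument that you never formulate. The crucial observation is that, because $g_n^{-1}v_0 \to \xi_-$ and $x$ is bounded away from $\xi_-$, the Gromov products $(g_n^{-1}v_0,x)_{v_0}$ stay uniformly bounded by some $C$; combined with the identity $d(v_0,g_nv_0)=(v_0,g_nx)_{g_nv_0}+(g_nv_0,g_nx)_{v_0}$ and $d(v_0,g_nv_0)\to\infty$, this forces $(g_nv_0,g_nx)_{v_0}\to\infty$, i.e. $g_nx$ lands in ever-larger shadows $W_k(g_nv_0)$. The hyperbolic four-point inequality and a tripod comparison then push $g_nx$ into the prescribed neighborhood of $\xi_+$ (either a $W_k(\xi_+)$ when $\xi_+\in\partial X$, or a cone when $\xi_+\in\partial_{Stab}G$, via a short path of simplices between tripod points of length controlled by $\delta_0$ and \thref{Bounded Length To Bounded Number Of Simplices}). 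Your sentence asserting that $[v_0,g_nx]$ and $[v_0,g_nv_0]$ ``share a long initial segment \ldots\ once $x$ is close to $z$'' is where the gap lives: proximity of $x$ to $z$ is irrelevant; what matters is that $z$ is bounded away from $\xi_-$, and the long fellow-traveling is exactly the conclusion of the Gromov-product computation, not an input. Without that computation the proof does not close.
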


\begin{proof}
    As in the previous lemma, we may assume $\partial_{Stab}G \neq \varnothing$, fix $\xi_0 \in \partial_{Stab}G$ and $v_0 \in D(\xi_0)$, then use the compactness of $\overline{Z}$ to find a subsequence and points $\xi_+, \xi_- \in \partial G$ so that $g_n\xi_0 \longrightarrow \xi_+$ and $g_n^{-1}\xi_0 \longrightarrow \xi_-$. To show $(g_n,\xi_+,\xi_-)$ is an ART it suffices to fix arbitrary neighborhoods $W_-$ of $\xi_-$ and $W_+$ of $\xi_+$ and show that for $n$ large enough, $g_n(\overline{Z} \setminus W_-) \subset W_+$. Without loss of generality, $W_-$ is a basic open neighborhood and we set $K = \overline{Z} \setminus W_-$. We will argue the cases of $\xi_- \in \partial X$ and $\xi_- \in \partial_{Stab}G$ in parallel. 

    If $\xi_- \in \partial X$, let $W_- = V_U(\xi_-)$ for some neighborhood $U \subset \overline{X}$ of $\xi_-$. Since $g_n^{-1}\xi_0 \longrightarrow \xi_-$ in $\overline{Z}$, we know $g_n^{-1}v_0 \longrightarrow \xi_-$ in $\overline{X}$. Because domains have diameter at most $A$ and there are finitely many isometry types of simplices, we can choose a constant $\alpha$ so that $\diam(N(\xi)) < \alpha$ for all $\xi \in \partial_{Stab}G$. By Lemma \ref{RegularityOfBoundary}, we can find a subneighborhood, say $\xi_- \in U' \subset U$ so that $d(X \setminus U, U') > \alpha$. With these choices, if $\xi$ is any point of $\partial_{Stab}G$, then either $D(\xi) \cap U' = \varnothing$, or $D(\xi) \cap U' \neq \varnothing$ hence $D(\xi) \subset U$. 
    
    If $\xi_- \in \partial_{Stab}G$, let $W_- = V_{\UU,\varepsilon}(\xi_-)$ for some $\xi_-$--family $\UU$ and $\varepsilon \in (0,1)$.  Recall that $\delta_0$ is the hyperbolicity constant of $X$, and apply \thref{Bounded Length To Bounded Number Of Simplices} to find a constant $d$ so that any geodesic of length $\delta_0$ meets at most $d$ simplices. Choose $\xi_-$--families $\UU^3 \subset \UU^2 \subset \UU^1 \subset \UU$, each $max(d_{max},d)$--refined in the next.
    
    For $x,y,z \in \overline{X}$, we use $(x,y)_z$ for the Gromov product of $x,y$ based at $z$, and we let $W_k(x) = \{y \in \overline{X} \, | \, (y,x)_{v_0} \geq k\}$. 

    \begin{claim}\thlabel{Claim1ofCase3}
        For any $k \geq 0$, 
    \begin{enumerate}
        \item if $\xi_- \in \partial X$, then $g_n(\overline{X} \setminus U') \subset W_k(g_nv_0)$ for all $n$ large enough, and
        \item if $\xi_- \in \partial_{Stab}G$, then $g_n(\overline{X} \setminus \fitTilde{\Cone}_{\UU^2,\varepsilon}(\xi_-)) \subset W_k(g_nv_0)$ for $n$ large enough.
    \end{enumerate}
    \end{claim}

    \begin{proof}[Proof of Claim]
        Fix $k$ and consider the case $\xi_- \in \partial X$ first. We claim there is a constant $C$ so that for any $x \in X\setminus U'$, $(g_n^{-1}v_0,x)_{v_0} \leq C$ for all $n$. If not, we could take a sequence of points $y_n \in X \setminus U'$ so that $(g_n^{-1}v_0, y_n)_{v_0} \longrightarrow \infty$. But then by the definition of $\partial X$, the sequence $(y_n)_n$ represents $\xi_-$, which contradicts that $y_n \in X\setminus U'$ and $\xi_- \in U'$, so $C$ exists. Two Gromov products add up to the side of a triangle, so for any $x \in X \setminus U'$ we have

    \[d(v_0,g_nv_0) = (v_0,g_nx)_{g_nv_0} + (g_nv_0, g_nx)_{v_0}\]
    The left side grows arbitrarily large by assumption and $(v_0,g_nx)_{g_nv_0} = (g_n^{-1}v_0,x)_{v_0} \leq C$ by the previous paragraph. Therefore we can choose $N$ large enough that $d(v_0,g_nv_0) \geq k+C$ for all $n \geq N$, hence $(g_nv_0,g_nx)_{v_0} \geq k$, proving $(1)$ for $x \in X \setminus U'$. 
    
    If $x \in \partial X \setminus U'$, $x$ can be represented by a sequence $(x_m)_m$ in $X \setminus U'$. Our choice of $N$ above implies $(g_nv_0,g_nx_m)_{v_0} \geq k$ for all $m$ and $n \geq N$. In particular $\liminf_m(g_nv_0,g_nx_m) \geq k$ for all such $n$. Thus
    
    \[(g_nv_0,g_nx)_{v_0}:= \sup_{y_m\rightarrow g_nx} \liminf_m (g_nv_0,y_m)_{v_0} \geq  \liminf_m(g_nv_0,g_nx_m) \geq k \]
    Thus the same choice of $N$ implies $g_n(\overline{X} \setminus U') \subset W_k(g_nv_0)$.

    For $(2)$, consider the case $\xi_- \in \partial_{Stab}G$. As above, we claim there is a constant $C$ so that if $x \notin \fitTilde{\Cone}_{\UU^2,\varepsilon}(\xi_-)$, then $(g_n^{-1}v_0,x)_{v_0} \leq C$ for all $n$.
    
    If not, we could take a sequence $y_n \notin \fitTilde{\Cone}_{\UU^2,\varepsilon}(\xi_-) $ so that $(g_n^{-1}v_0,y_n)_{v_0} \longrightarrow \infty$. The tripod points of the triangles $v_0, y_n, g_n^{-1}v_0$ give points $a_n \in [v_0,y_n]$ and $b_n \in [v_0,g_n^{-1}v_0]$ so that $d(a_n,b_n) \leq \delta_0$ and $a_n,b_n \longrightarrow \infty$. Because $g_n^{-1}\xi_0 \longrightarrow \xi_-$ , $v_0 \in V(\xi_0)$, and $g_n^{-1}v_0 \longrightarrow \infty$, we can choose some $N$ so that $n \geq N$ implies $g_n^{-1}v_0 \in \tCone_{\UU^3,\e}(\xi_-)$. Because $b_n \in [v_0,g_n^{-1}v_0]$ and $b_n \longrightarrow \infty$, we can increase $N$ and assume that $b_n$ occurs after $[v_0,g_n^{-1}v_0]$ leaves $D^\e(\xi_-)$ for all $n \geq N$. Then $\sigma_{\xi_-,\e}(g_n^{-1}v_0) = \sigma_{\xi_-,\e}(b_n)$, and since $g_n^{-1}v_0 \in \tCone_{\UU^3,\e}(\xi_-)$, we have that $b_n \in \tCone_{\UU^3,\e}(\xi_-)$ as well. Increasing $N$ even more, we can assume that $d(b_n,N(\xi_-)) \geq 1+\delta_0$ so that $[a_n,b_n]$ does not meet $D^\e(\xi_-)$. Since $[a_n,b_n]$ has length at most $\delta_0$, it gives a path of simplices of length at most $d$ outside $D(\xi_-)$ from $b_n$ to $a_n$. Because $b_n \in \fitTilde{\Cone}_{\UU^3,\varepsilon}(\xi_-)$ and $\UU^3$ is at least $d$--refined in $\UU^2$, the Refinement \thref{Refinement Lemma} implies $a_n \in \tCone_{\UU^2,\varepsilon}(\xi_-)$, hence $y_n \in \fitTilde{\Cone}_{\UU^2,\varepsilon}(\xi_-)$. But this contradicts the definition of the $y_n$, so $C$ must exist.

    With $C$ in hand, we can proceed exactly as in the previous case with $\tCone_{\UU^2,\e}(\xi_-)$ instead of $U'$ to prove $(2)$.
    \end{proof}

    Recall there is a projection $p:Z \sqcup \partial X \longrightarrow \overline{X}$ from \thref{defn of p}. It is convenient to extend the definition of domains to points $\eta \in \partial X$ by declaring $D(\eta) = \{\eta\}$. For a subset $B\subset \overline{Z}$, we extend the definition of $p$ by setting 
    \[p(B) = \{p(z), \, z \in Z \cap B\} \cup \bigcup_{z \in \partial G \cap B} D(z).\]

    \begin{claim}\thlabel{Claim2ofCase3}
        For any $k \geq 0$, we have $g_n p(K) \subset W_k(g_nv_0)$ for all $n$ large enough. 
    \end{claim}
    \begin{proof}[Proof of Claim]
    Consider $\xi_- \in \partial X$ first. We claim that $p(K) \subset \overline{X} \setminus U'$, so that \thref{Claim1ofCase3} implies \thref{Claim2ofCase3} immediately. Indeed, if $z \in K \cap Z$ or $\eta \in K\cap \partial X$, then $p(z),p(\eta) \notin U$ by definition of $K = \overline{Z} \setminus V_{U}(\xi_-)$. If $\xi \in K \cap \partial_{Stab}G$, then $p(\xi) = D(\xi)$ and if $D(\xi)$ met $U'$, then the choice of $U'$ would imply $D(\xi) \subset U$, contradicting $\xi \in K = \overline{Z} \setminus V_{U}(\xi_-)$. Hence $D(\xi) \subset X \setminus U'$, and we're done with this case.
    
    Consider $\xi_- \in \partial_{Stab}G$ and fix $k \geq 0$. Note that $p(K) \subset N(\xi_-) \cup (\overline{X}\setminus \tCone_{\UU^2,\e}(\xi_-))$. Indeed, if $z \in Z \setminus V_{\UU,\e}(\xi_-)$, then either $p(z) \notin \Cone_{\UU,\e}(\xi_-)$ or $p(z) \in D^\e(\xi_-)$ but $c(z)$ prevents $z$ from being in $V_{\UU,\e}(\xi_-)$. If $\eta \in \partial X \setminus  V_{\UU,\e}(\xi_-)$, then by definition $p(\eta) = \eta \notin \Cone_{\UU,\e}(\xi)$. If $\xi \notin V_{\UU,\e}(\xi_-)$, then $D(\xi) \cap \tCone_{\UU^2,\e}(\xi-) = \varnothing$ by \thref{Double Refinement}, but $D(\xi)$ may meet $D(\xi_-)$. 

    Because domains have bounded diameter, we can choose $M \geq 0$ so that $B(v_0,M)$ contains $N(\xi_-)$. Choose $N$ so that $n \geq N$ implies $d(v_0,g_nv_0) >2k + 2M$. If $y \in B(v_0,M)$ and $n \geq N$, then 
    \[(g_nv_0,g_ny)_{v_0} = \tfrac{1}{2}\big(\underbrace{d(g_nv_0,v_0)}_{\geq 2k+2M} + \underbrace{d(g_ny,v_0)}_{\geq 0} - \underbrace{d(g_nv_0,g_ny)}_{\leq 2M}\big)\geq \tfrac{1}{2}(2k+2M - 2M) = k.\]
    In other words, $n \geq N$ implies $g_nB(v_0,M) \subset W_k(g_nv_0)$. Using (2) of \thref{Claim1ofCase3} and possibly increasing $N$, we can assume that $n \geq N$ also implies $g_n\tCone_{\UU^2,\e}(\xi_-) \subset W_k(g_nv_0)$. Putting all this together, $n \geq N$ implies
     \[g_nK \subset g_n\bigg(N(\xi_-) \cup (\overline{X} \setminus \tCone_{\UU^2,\e}(\xi_-)\bigg) \subset W_k(g_nv_0).\]
    \end{proof}

    \begin{claim}
         For $n$ large enough, $g_nK \subset W_+$. 
    \end{claim}
    \begin{proof}[Proof of Claim]
    First suppose $\xi_+ \in \partial X$ and let $W_+ = V_{U_+}(\xi_+)$. The $W_k(\xi_+)$ form a neighborhood basis for $\xi_+$ in $\overline{X}$, so we can fix some large $k$ with $W_k(\xi_+) \subset U_+$. It is enough to show that for $n$ large enough, $g_np(K) \subset W_k(\xi_+) \subset U_+$.

    Using \thref{Claim2ofCase3}, choose $N$ so that $n \geq N$ implies $g_np(K) \subset W_{k+\delta_0}(g_nv_0)$. Since the $g_n\xi_0 \longrightarrow \xi_+$ in $\overline{Z}$, the $g_nv_0$ must converge to $\xi_+$ in $\overline{X}$, so the Gromov products $(g_nv_0,g_mv_0)_{v_0} \longrightarrow \infty$. Increasing $N$ if necessary, we can assume $n,m \geq N$ implies $(g_nv_0,g_mv_0)_{v_0} > k+\delta_0$. If $x \in p(K)$ and $n,m \geq N$, then the four point condition of hyperbolicity implies 

    \[ (g_nx,g_mv_0)_{v_0} \geq \min\{(g_nx,g_nv_0)_{v_0}, (g_nv_0,g_mv_0)_{v_0}\} - \delta_0 \geq k.\]
    In particular, for any $n \geq N$, we have $\liminf_{m\rightarrow \infty}(g_nx,g_mv_0)_{v_0} \geq k$. Applying this to $(g_nx, \xi_+)_{v_0}$, $n \geq N$ implies

    \[(g_nx,\xi_+) :=\sup_{y_m \longrightarrow \xi_+} \liminf_m(g_nx , y_m)_{v_0} \geq \liminf_m(g_nx,g_mv_0) \geq k, \]
    hence $g_nx \in W_k(\xi_+)$ as needed. 

    Now suppose $\xi_+ \in \partial_{Stab}G$ and let $W_+ = V_{\VV,\varepsilon'}(\xi_+)$. Recall any geodesic of length at most $\delta_0$ meets at most $d$ simplices, and let $\VV'$ be a $\xi_+$--family $d$--refined in $\VV$. Choose $k$ large enough so that the $\delta_0$--neighborhood of $N(\xi_+)$ is contained in $B(v_0,k)$. Applying \thref{Claim2ofCase3} with this $k$, we find $N$ so that $n \geq N$ implies $g_np(K) \subset W_k(g_nv_0)$. Since $g_n\xi_0 \longrightarrow \xi_+$ and $g_nv_0 \longrightarrow \infty$, we can increase $N$ if necessary so that $n \geq N$ also implies $g_nv_0 \in \tCone_{\VV',\e}(\xi_+)$. 

    With this $N$, we claim $g_np(K) \subset \tCone_{\VV,\e'}(\xi_+)$. Indeed, if $x \in p(K)$, then we can consider the tripod points of the triangle $g_nx,g_nv_0,v_0$ to get $a \in [v_0,g_nv_0], b \in [v_0,g_nx]$ so that $d(a,b) \leq \delta_0$ and, because $g_np(K) \subset W_k(g_nv_0)$, $d(a,v_0) = d(b,v_0) \geq k$. Because $N(\xi_+) \subset B(v_0,k)$, we know $\sigma_{\xi_+,\e}(g_nv_0) = \sigma_{\xi_+,\e}(a)$, hence $a \in \tCone_{\VV',\e'}(\xi_+)$. Further, the geodesic $[a,b]$ gives a path of simplices of length at most $d$ from $\sigma_a$ to $\sigma_b$, and the choice of $k$ implies this path of simplices doesn't meet $N(\xi)$. The Refinement \thref{Refinement Lemma} then implies $b \in \tCone_{\VV,\e'}(\xi_+)$. Just as with $a,g_nv_0$, we know that $\sigma_{\xi_+,\e'}(g_nx) = \sigma_{\xi_+,\e'}(b)$, hence $g_nx \in \tCone_{\VV,\e'}(\xi_+)$. This proves that $g_np(K) \subset \tCone_{\VV,\e'}(\xi_+)$, and it follows that $g_nK \subset V_{\VV,\e'}(\xi_+)$
    \end{proof}
    Since $K = \overline{Z}\setminus W_-$ and $W_+$ were arbitrary, the previous claim completes the proof of the lemma.
\end{proof}

\begin{theorem}\thlabel{Convergence Group}
    $G$ acts on $\overline{Z}$ as a convergence group. 
\end{theorem}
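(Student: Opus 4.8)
\textbf{Proof approach for \thref{Convergence Group}.}
The plan is to reduce to the three technical lemmas just proved: \thref{Small translation finite}, \thref{Small translation infinite}, and \thref{Large translation}. Given an arbitrary infinite sequence $(g_n)_n$ in $G$, I will first pass to a subsequence on which the behavior of $(g_nv_0)_n$ for a fixed vertex $v_0$ is as controlled as possible, and then observe that exactly one of the three lemmas applies. Concretely, fix a vertex $v_0$ of $X$. If $(g_n)_n$ is not injective, it has a constant subsequence, which is trivially a convergence sequence (take $\xi_+=\xi_-$ to be any point and note the constant element $g$ satisfies the ART condition vacuously after shrinking neighborhoods — or simply pass to the injective case, since a sequence with a bounded-to-one fiber reduces to finitely many cases). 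So assume $(g_n)_n$ is injective. Consider the sequence of distances $d(v_0,g_nv_0)$. Either it has a bounded subsequence or it tends to infinity along some subsequence.

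If $d(v_0,g_nv_0)\longrightarrow\infty$ along a subsequence, that subsequence is a convergence sequence by \thref{Large translation}. If instead $(g_nv_0)_n$ is bounded along a subsequence (still denoted $(g_n)_n$), then since $X$ is locally finite and the orbit $Gv_0$ is discrete, the vertices $g_nv_0$ take only finitely many values on this subsequence; passing to a further subsequence we may assume $g_nv_0=v_1$ is constant for some vertex $v_1$. Now there is a dichotomy according to whether some vertex $u$ of $X$ has $g_nu$ equal to a fixed vertex for infinitely many $n$: if so, pass to that subsequence and apply \thref{Small translation finite}; if not, then no vertices $u,w$ satisfy $g_nu=w$ for infinitely many $n$, and \thref{Small translation infinite} applies. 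In every case we have extracted a subsequence which is a convergence sequence, which is exactly the definition of a convergence group action on the compact metrizable space $\overline Z$ (compactness and metrizability being \thref{Compactness} and the metrizability corollary).

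The only genuine subtlety is making sure the trichotomy is exhaustive and that the subsequence extractions are compatible. The first split ($d(v_0,g_nv_0)$ bounded vs.\ unbounded) is clearly exhaustive after passing to a subsequence. Within the bounded case, the reduction to $g_nv_0$ constant uses properness of the action (a bounded set of vertices is finite), which holds because $G$ acts cocompactly on the locally finite complex $X$. The second split within the bounded case is a genuine dichotomy on $(g_n)_n$ itself, so no conflict arises. I expect this to be short: essentially a paragraph invoking the three lemmas in the right order, plus the remark that the hypotheses ``$(g_nv_0)_n$ bounded'' in Lemmas \thref{Small translation infinite} and \thref{Large translation} hold for one vertex iff they hold for all, since any two vertices are a bounded distance apart and $d(v_0,g_nv)\le d(v_0,g_nv_0)+d(v_0,v)$ translated appropriately. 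I do not anticipate a real obstacle here; the work has all been front-loaded into the preceding lemmas.

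\begin{proof}
    Fix a vertex $v_0$ of $X$ and let $(g_n)_n$ be an infinite sequence in $G$. If $(g_n)_n$ has a constant subsequence we are trivially done, so we may assume $(g_n)_n$ is injective. Since any two vertices of $X$ are a bounded distance apart, the boundedness of $(g_nv)_n$ for one vertex $v$ is equivalent to its boundedness for every vertex. If $d(v_0,g_nv_0) \longrightarrow \infty$ along some subsequence, that subsequence is a convergence sequence by \thref{Large translation}. Otherwise, after passing to a subsequence, $(g_nv_0)_n$ is bounded. Because $G$ acts cocompactly on the locally finite complex $X$, the vertex orbit $Gv_0$ is discrete, so the $g_nv_0$ take only finitely many values along this subsequence; after a further subsequence we may assume $g_nv_0 = v_1$ for a fixed vertex $v_1$. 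Now either there are vertices $u,w$ of $X$ with $g_nu = w$ for infinitely many $n$, in which case passing to that subsequence and applying \thref{Small translation finite} shows it is a convergence sequence, or no such vertices exist, in which case \thref{Small translation infinite} applies. In every case $(g_n)_n$ has a subsequence which is a convergence sequence, so $G$ acts on the compact metrizable space $\overline{Z}$ as a convergence group.
\end{proof}
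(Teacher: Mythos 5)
Your overall plan — reduce to the three preceding lemmas by an exhaustive case analysis on the behavior of $(g_nv_0)_n$ — is exactly what the paper does (the paper's proof is a single sentence invoking the three lemmas), and the dichotomy you state in your last sentence is the right one. However, the intermediate reduction in your formal proof is wrong and, if true, would make \thref{Small translation infinite} vacuous.

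Specifically, you assert that ``$G$ acts cocompactly on the locally finite complex $X$, [so]... the $g_nv_0$ take only finitely many values along this subsequence; after a further subsequence we may assume $g_nv_0 = v_1$.'' But $X$ is \emph{not} locally finite in this setting: simplex stabilizers $G_\sigma$ are relatively hyperbolic (in particular typically infinite), and whenever $G_\sigma$ is infinite and a face inclusion $G_\tau \leq G_\sigma$ has infinite index, the simplex $\sigma$ has infinitely many adjacent simplices in $X$. (This is already visible for the Bass--Serre tree of a splitting with infinite edge-to-vertex index.) Consequently a bounded set $\{g_nv_0\}$ can be infinite, and no subsequence need make $g_nv_0$ constant. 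If your reduction were valid, then in the bounded case one could always find a pair $u,w$ with $g_nu = w$ infinitely often (namely $u = v_0$, $w = v_1$), so \thref{Small translation infinite} would never be invoked. The paper proves \thref{Small translation infinite} precisely because its hypothesis is realized by sequences that stay in a bounded region of $X$ but translate every vertex off itself infinitely often, which happens in the locally infinite setting. The fix is to delete the false reduction entirely: after passing to an injective subsequence, either $d(v_0,g_nv_0)\to\infty$ along a further subsequence (apply \thref{Large translation}), or $(g_nv_0)_n$ is bounded, in which case either some pair $u,w$ has $g_nu=w$ infinitely often (pass to that subsequence and apply \thref{Small translation finite}) or not (apply \thref{Small translation infinite} directly). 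That trichotomy is exhaustive without any local finiteness claim.
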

\begin{proof}
    If $(g_n)_n$ is an infinite sequence in $G$, then one of \thref{Small translation finite}, \thref{Small translation infinite}, or \thref{Large translation} implies $(g_n)_n$ is a convergence sequence, hence $G$ acts as a convergence group on $\overline{Z}$.
\end{proof}

\subsection{$G$ is geometrically finite}
In the previous section, we saw that $G$ acts as a convergence group on the compact metrizable space $\overline{Z}$ with limit set $\partial G$. To apply Yaman's \thref{Geometrically Finite Convergence implies RelHyp}, we must show $G$ is geometrically finite, which means showing every point of $\partial G$ is either a conical limit point or a bounded parabolic point. 

\begin{lemma}\thlabel{boundaries are limit sets}
    For each simplex $\sigma \subset X$, the limit set $\Lambda G_{\sigma}$ is exactly $\partial G_{\sigma}$.
\end{lemma}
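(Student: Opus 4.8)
The plan is to show both inclusions, with the content being $\partial G_\sigma \subset \Lambda G_\sigma$. One inclusion is formal: $\Lambda G_\sigma$ is by definition (\thref{defn:LimitSet}) the unique minimal nonempty closed $G_\sigma$--invariant subset of $\overline{Z}$, and I first need to know it lives inside $\partial G_\sigma$. Note that $G_\sigma$ fixes the simplex $\sigma$ pointwise, hence acts on $\hatt{\sigma}$ and on its image $\pi_\sigma(\partial G_\sigma) \subset \partial_{Stab}G$, which by \thref{boundaries embed} is a homeomorphic copy of $\partial G_\sigma$. Since $\partial G_\sigma$ is compact (it's the Bowditch boundary of a relatively hyperbolic group), $\pi_\sigma(\partial G_\sigma)$ is a closed $G_\sigma$--invariant subset of $\overline{Z}$. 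By minimality, $\Lambda G_\sigma \subseteq \pi_\sigma(\partial G_\sigma)$, so we may identify $\Lambda G_\sigma$ with a subset of $\partial G_\sigma$.

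The reverse inclusion uses that $G_\sigma$ already acts as a geometrically finite convergence group on $\partial G_\sigma$ with \emph{this} boundary being minimal for that action. Concretely: it suffices to show $\Lambda G_\sigma$ is a nonempty closed $G_\sigma$--invariant subset of $\partial G_\sigma$, because then minimality of the $G_\sigma$--action on its own Bowditch boundary forces $\Lambda G_\sigma = \partial G_\sigma$. Nonempty and $G_\sigma$--invariant are automatic from \thref{defn:LimitSet} since $G_\sigma$ is infinite (each $G_\sigma$ is relatively hyperbolic, hence infinite — finite parabolics are disallowed). Closedness in $\partial G_\sigma$ follows since $\Lambda G_\sigma$ is closed in $\overline{Z}$ and $\partial G_\sigma \cong \pi_\sigma(\partial G_\sigma)$ carries the subspace topology by \thref{induced topologies} (at least for $\sigma$ a vertex; for general $\sigma$ factor through a vertex $v$ of $\sigma$ via $\pi_\sigma = \pi_v \varphi_{v,\sigma}$ and use that $\varphi_{v,\sigma}$ is a closed embedding by \thref{our spaces exist}(2)). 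Hence $\Lambda G_\sigma = \partial G_\sigma$.

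Alternatively, and perhaps more cleanly, I can argue via characterization (3) of \thref{defn:LimitSet}: $\Lambda G_\sigma$ is the set of attracting points of convergence sequences in $G_\sigma$. Given any $\xi \in \partial G_\sigma$, since $G_\sigma$ acts as a convergence group on $\partial G_\sigma$ and $\partial G_\sigma$ is perfect with every point a conical limit point or bounded parabolic point, $\xi$ is the attracting point of some convergence sequence $(g_n)_n \subset G_\sigma$ acting on $\hatt{\sigma}$. I then need that this same sequence, viewed in $G$ acting on $\overline{Z}$, has attracting point $\pi_\sigma(\xi)$. Because $g_n \in G_\sigma$ fixes $v_0$ (choosing the basepoint in $\sigma$), $(g_n)_n$ falls under \thref{Small translation finite}, and tracing its proof — in particular Claim \ref{ClaimF} and the fact that the ART for $G_{v_0}$ on $\total{v_0}$ is upgraded to an ART for $G$ on $\overline{Z}$ with the same attracting/repelling points in $\partial_{Stab}G$ — shows $\pi_\sigma(\xi)$ is the attracting point in $\overline{Z}$. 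Thus $\pi_\sigma(\xi) \in \Lambda G_\sigma$, giving $\partial G_\sigma \subseteq \Lambda G_\sigma$.

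The main obstacle is the bookkeeping in the second inclusion: making sure the convergence dynamics of $G_\sigma$ on its own boundary $\hatt{\sigma}$ transfer verbatim to dynamics on $\overline{Z}$ without picking up extra attracting points outside $\partial G_\sigma$ or losing any. This is exactly what \thref{Small translation finite} was built to handle, so the proof should mostly be a citation of that lemma plus the identification $\Lambda G_\sigma \subseteq \pi_\sigma(\partial G_\sigma)$ from minimality; the only subtlety is confirming that the attracting point produced there is genuinely $\pi_\sigma$ of the $G_\sigma$--attracting point and not, say, a point of $\partial X$ — but that cannot happen since the $g_n$ fix $v_0$, so the translated domains stay bounded and case (1) of \thref{Small translation finite}'s Claim \ref{ClaimF} does not arise.
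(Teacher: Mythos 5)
Your proof is correct and follows essentially the same strategy as the paper's: establish that $\pi_\sigma(\partial G_\sigma)$ is a nonempty, closed, $G_\sigma$--invariant subset of $\overline{Z}$ and invoke the characterizations of the limit set in \thref{defn:LimitSet}. The paper's own proof is terser and only explicitly records these three properties (yielding $\Lambda G_\sigma \subseteq \partial G_\sigma$), leaving the reverse inclusion — which requires minimality of the $G_\sigma$--action on its own Bowditch boundary, transferred via \thref{induced topologies} — implicit; you correctly surface and fill that step, and you also handle the non-vertex case with care by factoring $\pi_\sigma = \pi_v\varphi_{v,\sigma}$ so as not to run afoul of the post-\thref{induced topologies} caveat. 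Your alternative argument via characterization (3) and \thref{Small translation finite} is also sound and arguably more robust.

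One small inaccuracy to fix: you assert that $G_\sigma$ is infinite "since each $G_\sigma$ is relatively hyperbolic." That implication fails — a finite group is (trivially) relatively hyperbolic with empty peripheral structure and empty Bowditch boundary, and the paper explicitly handles this by noting $\partial G_\sigma = \Lambda G_\sigma = \varnothing$ when $G_\sigma$ is finite. You should do the same before assuming nonemptiness.
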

\begin{proof}
    If $G_\sigma$ is finite, then $\partial G_{\sigma} = \Lambda G_{\sigma} = \varnothing$. If $G_\sigma$ is infinite, then $\partial G_{\sigma}$ is nonempty, and $G_{\sigma}$--invariant. Further $\partial G_\sigma$ is compact by \thref{induced topologies}, hence closed because $\overline{Z}$ is Hausdorff.
\end{proof}

\begin{lemma}\thlabel{Conical in Vertices}
    If $\sigma$ is a simplex of $X$ and $\xi \in \partial G_{\sigma}$ is a conical limit point for $G_\sigma$ acting on $\hatt{\sigma}$, then $\xi$ is a conical limit point for $G$ acting on $\overline{Z}$.
\end{lemma}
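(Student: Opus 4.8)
The plan is to unwind the definition of a conical limit point in $\hatt{\sigma}$ and ``transport'' the witnessing sequence into $G$ acting on $\overline{Z}$, using the cone neighborhoods $V_{\UU,\e}(\xi)$ as the model neighborhoods of $\xi$ in $\overline{Z}$. First I would reduce to the case $\sigma = v$ a vertex: since $\varphi_{v,\sigma}$ embeds $\partial G_\sigma$ onto $\Lambda_v G_\sigma \subset \partial G_v$ and $G_\sigma < G_v$, a conical limit point for $G_\sigma$ on $\hatt\sigma$ is identified with a point of $\partial G_v$, and it suffices to produce a conical sequence for $G$ on $\overline Z$. So assume $\xi \in \partial G_v$ and there is a sequence $(h_n)_n$ in $G_v$ and points $\zeta_+ \neq \zeta_-$ in $\partial G_v = \partial\hatt{v}$ with $h_n\xi \to \zeta_-$ and $h_n\eta \to \zeta_+$ for all $\eta \in \partial G_v \setminus \{\xi\}$ — in fact (by the convergence-group characterization of conical points in $\hatt v$, which is just a space on which $G_v$ acts) we may take this to hold for all $\eta \in \hatt v \setminus \{\xi\}$, i.e.\ $(h_n, \zeta_+, \zeta_-)$ is an ART for $G_v$ on $\hatt v$ with $\xi$ mapped away from $\zeta_+$.

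The key step is then to upgrade this ART on $\hatt v$ to the statement that $h_n w \to \pi_v(\zeta_-)$ and $h_n w' \to \pi_v(\zeta_+)$ for all $w' \in \overline Z$ with $w' \neq \pi_v(\xi)$ — which, taking $v_0 = v$ as the basepoint of the topology, is exactly the content already proved inside \thref{Small translation finite}. Indeed $(h_n)_n$ is a sequence in $G_v = G_{v_0}$ fixing $v_0$, and the argument in the proof of \thref{Small translation finite} shows precisely that if $(h_n, \zeta_+, \zeta_-)$ is an ART for $G_{v_0}$ on $\hatt{v_0}$ then, after the (harmless, for our purposes) translations and subsequence, it extends to an ART $(h_n, \pi_v(\zeta_+), \pi_v(\zeta_-))$ for $G$ on $\overline Z$; the extra data $F$ there is needed only to handle simplices with infinite fixed set, but the conclusion we need is simply ``$h_n(\overline Z \setminus V_{\UU,\e}(\pi_v(\zeta_-))) \subset V_{\VV,\e}(\pi_v(\zeta_+))$ eventually.'' I would either cite that proof directly or, more cleanly, record the needed extension as a small standalone consequence: a convergence sequence in $G_v$ with attracting/repelling points $\zeta_\pm \in \partial G_v$ gives a convergence sequence in $G$ with attracting/repelling points $\pi_v(\zeta_\pm)$. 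Since conical limit points for $G$ on $\overline Z$ are defined by $g_n z \to \xi_-$ and $g_n z' \to \xi_+$ for all $z' \neq z$, setting $z = \pi_v(\xi)$, $\xi_- = \pi_v(\zeta_-)$, $\xi_+ = \pi_v(\zeta_+)$ and using $g_n = h_n$ gives exactly a conical limit point, once we check $\pi_v(\zeta_+) \neq \pi_v(\zeta_-)$ (immediate from \thref{boundaries embed} since $\zeta_+ \neq \zeta_-$ in $\partial G_v$) and that $h_n \pi_v(\xi) \to \pi_v(\zeta_-)$ rather than to $\pi_v(\zeta_+)$ (which is where we use that $\xi$ is the \emph{repelling} point, i.e.\ $\xi \neq \zeta_+$, in the conical-point hypothesis).

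The main obstacle I anticipate is bookkeeping rather than conceptual: the extension-of-ART statement buried in \thref{Small translation finite} was proved while also establishing a finite subcomplex $F$ and a host of auxiliary claims (F1--F3, the three numbered items of \thref{Claim2ofCase1}), so I must be careful to extract exactly the implication ``ART on $\hatt{v_0}$ $\Rightarrow$ ART on $\overline Z$'' without inheriting hypotheses that do not hold here, and to make sure the translations/relabelings performed there (which replace $(h_n)$ by $(h_1^{-1}h_n)$ etc.) do not corrupt the conical-point conclusion. The clean fix is the first lemma in Section~\ref{section:Dynamics} on equivalence of ARTs under $h_n \mapsto hg_nk$ and $h_n \mapsto h_n^{-1}$: a conical-limit-point witness can be pre/post-composed freely, so the translations are harmless. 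A secondary, genuinely small point: the definition of conical limit point in the excerpt requires $g_n z' \to \xi_+$ for all $z' \neq z$ in $\overline Z$ including $z'$ ranging over all of $Z$ and both boundary pieces, so I should confirm the ART conclusion from \thref{Small translation finite} is stated for arbitrary $z' \in \overline Z$ (it is — it is phrased as $g_n(\overline Z \setminus V_{\UU,\e}(\xi_-)) \subset V_{\VV,\e}(\xi_+)$ for arbitrary neighborhoods), and this uniform-convergence statement is strictly stronger than the pointwise $g_nz' \to \xi_+$ needed, so we are done.
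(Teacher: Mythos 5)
Your overall strategy — transport the ART witnessing conicality from a simplex‑stabilizer boundary into $\overline Z$ — is sound, but as written the proposal has a genuine gap at the very first step, and then takes a longer route than the paper does.

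The gap is in the reduction to $\sigma = v$ a vertex. You pass from ``$\xi$ is conical for $G_\sigma$ on $\hatt{\sigma}$'' to assuming there is a sequence $(h_n)_n$ in $G_v$ with $h_n\eta \to \zeta_+$ for \emph{all} $\eta \in \partial G_v \setminus \{\xi\}$, i.e.\ that $\xi$ is conical for $G_v$ on $\hatt{v}$. The hypothesis only gives a sequence in $G_\sigma$ together with convergence for $\eta$ ranging over $\partial G_\sigma \setminus \{\xi\}$; it says nothing about what $h_n$ does to the points of $\partial G_v$ that lie outside $\Lambda_v G_\sigma = \partial G_\sigma$. Upgrading to conicality for $G_v$ is a real step, and the natural way to do it is precisely the ``pass to a subsequence that is an ART for $G_v$ on $\hatt v$, then match the attracting and repelling points against the given data by plugging in a point of $\hatto\sigma$'' argument. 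Once you see that this matching argument is what fills the reduction gap, the reduction itself becomes superfluous: run the same matching argument one step higher, against $G$ acting on $\overline Z$, and you are done.

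That is in fact exactly what the paper does, and it is both shorter and avoids re-engaging with the internals of \thref{Small translation finite}. The paper's proof does not reduce to vertices at all: it takes the conical data $(g_n) \subset G_\sigma$, invokes \thref{Convergence Group} (already established) to extract a subsequence with an ART $(g_n,\xi_+',\xi_-')$ on $\overline Z$, and then matches limit points. Fixing $z \in \hatto\sigma$ (so $z \neq \xi$ and $z \notin \partial G$) forces $\xi_+' = \xi_+$; and if $\xi \neq \xi_-'$ then $g_n\xi$ would converge to both $\xi_+$ and $\xi_-$, contradicting $\xi_- \neq \xi_+$. That one matching step replaces both your ``reduce to a vertex'' step and your extraction of the extension-of-ART statement from the proof of \thref{Small translation finite}, with none of the bookkeeping about $F$, translations, and relabeling you rightly flagged as a danger. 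Your observation that the ART conclusion of \thref{Small translation finite} is stated for arbitrary $z' \in \overline Z$, and your use of \thref{boundaries embed} to check $\pi_v(\zeta_+) \neq \pi_v(\zeta_-)$, are both correct; the issue is purely that the reduction step needs an argument and, once supplied, makes the rest of your plan redundant.
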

\begin{proof}
    If $\xi \in \partial G_{\sigma}$ is a conical limit point, then there is a sequence $(g_n)_n$ in $G_\sigma$ and distinct points $\xi_+,\xi_- \in \partial G_{\sigma}$ so that $g_n \xi \longrightarrow \xi_-$ and $g_n \xi'\longrightarrow \xi_+$ for all $\xi' \in \partial G_{\sigma} \setminus \{\xi\}$. It follows from \thref{induced topologies} that $g_n\xi \longrightarrow \xi_-$ as a sequence in $\overline{Z}$, so to show $\xi$ is a conical limit point for $G$ acting on $\overline{Z}$, we just need to upgrade the last part of the previous sentence to `for all $\xi' \in \partial G \setminus \{\xi\}$'. 
    
    Because $G$ acts as a convergence group on $\overline{Z}$, we can pass to a subsequence and find $\xi_+',\xi_-' \in \partial G$ so that $(g_n,\xi_+',\xi_-')$ is an ART. 
    Fix some $z \in \hatto{\sigma}$. Viewing $(g_nz)_n$ as a sequence in $\hatt{\sigma}$ and knowing that $z \neq \xi$, we know that $g_n z \longrightarrow \xi_+$. On the other hand, $z \neq \xi_-$ so $g_nz \longrightarrow \xi_+'$ too, so $\xi_+ = \xi_+'$. For a contradiction, suppose $\xi \neq \xi_-'$. Because $(g_n,\xi_+',\xi_-')$ is an ART, we know $g_n \xi \longrightarrow \xi_+' = \xi_+$. But we also know that $g_n\xi \longrightarrow \xi_-$ and that $\xi_- \neq \xi_+$, so this is a contradiction. Thus $(g_n,\xi_+,\xi)$ is an ART, which completes the upgrade explained at the end of the previous paragraph.  
\end{proof}

\begin{lemma}[Parabolic Points]\thlabel{Parabolic Points}
    Suppose $v_0$ is a vertex of $X$ and $\xi \in \partial G_{v_0}$ is a bounded parabolic point for $G_{v_0}$ acting on $\partial G_{v_0}$. Then $\xi$ is a bounded parabolic point for $G$ acting on $\partial_{Stab}G$ and the stabilizer of $\xi$ in $G$ is finitely generated.
\end{lemma}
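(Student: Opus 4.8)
The strategy is to promote the bounded-parabolicity of $\xi$ inside the single vertex boundary $\partial G_{v_0}$ to bounded-parabolicity inside all of $\overline{Z}$, using the fact that $D(\xi)$ is a finite complex and the cone/family machinery from Section~\ref{section:Geometric Tools}. First I would set $P_0 = \mathrm{Stab}_{G_{v_0}}(\xi)$, which acts properly discontinuously and cocompactly on $\partial G_{v_0}\setminus\{\xi\}$ by hypothesis. Since $\overline Z$ is metrizable, showing $\xi$ is a bounded parabolic point amounts to producing a subgroup $P < \mathrm{Stab}_G(\xi)$ and a compact set $L \subset \overline Z \setminus \{\xi\}$ with $PL = \overline Z \setminus \{\xi\}$, and to checking $\mathrm{Stab}_G(\xi)$ contains no loxodromics and is infinite; the latter is immediate because $P_0$ is infinite parabolic and, by \thref{boundaries are limit sets} together with the convergence-group structure of \thref{Convergence Group}, a loxodromic fixing $\xi$ would also fix a second point and contradict $P_0$ being parabolic in $\partial G_{v_0}$. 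The natural candidate for $P$ is (a finite-index subgroup of) the subgroup of $\mathrm{Stab}_G(\xi)$ that fixes the finite complex $D(\xi)$ pointwise: since $\mathrm{Stab}_G(\xi)$ permutes the finitely many simplices of $D(\xi)$, this is finite index, and on $D(\xi)$ it reduces to a parabolic subgroup of $G_{v_0}$ because $\xi$ is represented in $\partial G_{v_0}$.

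Next I would build the compact fundamental domain $L$ in three pieces, matching the three pieces of $\overline Z = Z \sqcup \partial_{Stab}G \sqcup \partial X$. The first ingredient is \thref{Parabolics are almost cocompact remark} applied with the vertex $v_0$ and $P$: it gives a compact $K \subset \total{v_0}\setminus\{\xi\}$ that is a coarse fundamental domain for $P$ acting on $\partial G_{v_0}\setminus\{\xi\}$ and, simultaneously, whose $P$-translates eventually avoid $\total{\sigma}$ for every simplex $\sigma \subset st(v_0)$ with $G_\sigma$ finite. From this $K$ I would extract a $\xi$-family $\UU$ with $U_{v_0}$ containing $\{\xi\}$ but $U_{v_0}$ small enough (using \thref{balloon prop} and \thref{xi families exist}) so that $V_{\UU,\e}(\xi)$ is a "small" neighborhood of $\xi$; the point is that $\overline Z \setminus V_{\UU,\e}(\xi)$ should be essentially $P$-cocompact. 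Then I would take $L$ to be: a compact region of $Z$ lying over a compact fundamental domain for $G_{v_0}$ acting on the part of $X$ near $v_0$ together with a compact slice of the cusped coordinates governed by $K$; the finitely many cones $\Cone_{\UU,\e}(g\xi')$ over the other boundary points at bounded distance; and, for $\partial X$, a compact set of directions escaping to infinity — but here I would argue that any $\eta \in \partial X$ is either swallowed into some cone of $V_{\UU,\e}(\xi)$ or else lies "behind" $D(\xi)$ from $v_0$ in a controlled way, so that a single $P$-translate brings it into $L$. The Genuine Shadow \thref{Genuine Shadows} and the Crossing/Refinement Lemmas are exactly the tools that control how $P$ moves cones and exit simplices, since $P$ fixes $D(\xi)$ pointwise and hence fixes the relevant exit simplices' incidence to $D(\xi)$.

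The verification that $PL = \overline Z \setminus \{\xi\}$ then splits by the type of point $w$. If $w \in Z$ with $p(w)$ in the open star of a vertex of $D(\xi)$, I would use that $P$ acts on $\overline{X_{v_0}}$ with $K$ a coarse fundamental domain for the boundary and (via the second conclusion of \thref{Parabolics are almost cocompact remark}) handle the finite-stabilizer simplices of $st(v_0)$, so some $p \in P$ carries $w$ into $L$; for $p(w)$ outside $N(\xi)$, I would use \thref{Martin's}-style geodesic tracking to see $w$ lies in one of finitely many cone-shadows, each of which is $P$-translated appropriately. If $w = \xi' \in \partial_{Stab}G$ with $\xi' \neq \xi$, the domain $D(\xi')$ is finite with $\diam \leq A$, so either $D(\xi')$ meets $D(\xi)$ — then $\xi'$ corresponds to a point of $\partial G_{v_0}\setminus\{\xi\}$ (or of a nearby simplex boundary) and $K$-cocompactness of $P$ finishes it — or $D(\xi')$ is disjoint from $D(\xi)$, in which case, using the convergence dynamics and that $g_n^{-1}v_0$ can be pulled back toward $v_0$, a single $p \in P$ brings $D(\xi')$ into a bounded region. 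If $w = \eta \in \partial X$, the same dichotomy via \thref{Genuine Shadows} applies: $\eta$ is either in a cone already contained in $L$ after translation, or its geodesic from $v_0$ enters $D(\xi)$ and $P$-cocompactness on $\partial G_{v_0}$ handles the exit behavior. Finally, the finite generation of $\mathrm{Stab}_G(\xi)$ follows from bounded parabolicity: once $\xi$ is a bounded parabolic point of a convergence action on a compact metrizable space, $\mathrm{Stab}_G(\xi)$ is finitely generated provided it is finitely generated as a peripheral subgroup — but here $\mathrm{Stab}_G(\xi)$ contains $P$ with finite index, and $P$ maps onto a finite-index subgroup of a maximal parabolic of $G_{v_0}$, which is finitely generated by the standing hypotheses on the local groups; hence $\mathrm{Stab}_G(\xi)$ is finitely generated.

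\textbf{Main obstacle.} The hard part will be the $\partial X$ case and, more generally, assembling a genuinely \emph{compact} $L$: one must show that the "escaping" directions and the domains $D(\xi')$ disjoint from $D(\xi)$ do not require infinitely many distinct $P$-translates, i.e. that $P$ — which is only parabolic, not the full vertex group — already acts cocompactly on the complement of $V_{\UU,\e}(\xi)$. This is where \thref{Parabolics are almost cocompact remark} (handling the finite-stabilizer simplices of $st(v_0)$, which is precisely the extra subtlety absent from Martin's torsion-free, hyperbolic setting) must be combined carefully with the cone machinery, and it is the step most likely to need a delicate choice of $\UU$ and $\e$.
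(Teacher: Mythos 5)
Your high-level strategy matches the paper's: take $P$ a finite-index subgroup of $\mathrm{Stab}_G(\xi)$ fixing $D(\xi)$ pointwise, apply \thref{Parabolics are almost cocompact remark}, choose a $\xi$-family $\UU$ so small that the complement of $V_{\UU,\e}(\xi)$ in the boundary is compact, and verify $P$-cocompactness by a case analysis on exit simplices. The finite-generation argument is also essentially right (with a small slip: $P$ is a finite-index \emph{subgroup} of, not a surjection onto, the finitely generated $\mathrm{Stab}_{G_{v_0}}(\xi)$, which is in turn finite-index in $\mathrm{Stab}_G(\xi)$).

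The gap is in the scope of \thref{Parabolics are almost cocompact remark}: you apply it only at the single vertex $v_0$ and then extend $U_{v_0}$ to a $\xi$-family by \thref{balloon prop}. But the verification step crucially requires control at \emph{every} vertex of $V(\xi)$. Given a point $x$, its exit simplex $\sigma_{\xi,\frac12}(x)$ is some simplex of $Lk(\xi)$, and the vertices of $\sigma_{\xi,\frac12}(x) \cap D(\xi)$ need not include $v_0$; to move $px$ out of the cone you need to move $p\partial G_{\sigma_{\xi,\frac12}(x)}$ (or $p\total{\sigma_{\xi,\frac12}(x)}$ when the stabilizer is finite) into a compactum $K_v \subset \total{v} \setminus U_v$ for a vertex $v$ of that exit simplex. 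Extending $U_{v_0}$ by \thref{balloon prop} gives you a consistent $\xi$-family but provides no guarantee that $U_v$ misses a $P$-coarse fundamental domain for $\partial G_v \setminus \{\xi\}$ at $v \neq v_0$. The fix, as in the paper, is to apply \thref{Parabolics are almost cocompact remark} at each $v \in V(\xi)$ (this is legitimate because $P$ is finite-index in each $\mathrm{Stab}_{G_v}(\xi)$, not just in $\mathrm{Stab}_{G_{v_0}}(\xi)$), obtaining compacta $K_v$, and then use \thref{xi families exist} to get a $\xi$-family with $U_v \cap K_v = \varnothing$ for every $v$; then the exit-simplex argument goes through uniformly. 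One further simplification you are missing: you only need cocompactness of $P$ on $\partial G \setminus \{\xi\}$, not on all of $\overline Z \setminus \{\xi\}$, so the compact set to aim for is $K = \partial G \setminus V_{\UU,\frac12}(\xi)$ and there is no need to cover $Z$ at all; the three-piece assembly of $L$ over-complicates the argument and is part of why your "main obstacle" paragraph cannot close the loop.
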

\begin{proof}
    Let $P'$ be the stabilizer of $\xi$ in $G$. Then $P'$ fixes $D(\xi)$ set--wise and because domains have finitely many simplices, there is a finite index subgroup $P$ which fixes $D(\xi)$ point--wise. If $v \in V(\xi)$, then $P < G_v$ and $P$ is infinite, so $\xi$ is also a parabolic point for $G_v$ acting on $\partial G_v$. Let $P_v$ be the stabilizer of $\xi$ in $G_v$ and recall that $P_v$ is finitely generated by our definition of relatively hyperbolic groups. We have $P < P_v < P'$ and $P$ finite index in $P'$, so $P_v$ is finite index in $P'$, and $P'$ must be finitely generated because it has a finitely generated finite index subgroup.
    
    Applying \thref{Parabolics are almost cocompact remark} for each $v \in V(\xi)$, we receive a collection of compact sets $\{K_v \subset \total{v} \setminus \{\xi\}, v \in V(\xi)\}$. Each $K_v$ is closed, so we can choose a $\xi$--family $\UU$ with $K_v \cap U_v = \varnothing$ in each $\total{v}$. Using $v_0$ as our basepoint, $K := \partial_{Stab}G \setminus V_{\UU,\frac{1}{2}}(\xi)$ is compact because it is closed in the compact space $\partial_{Stab}G$.
    
    \begin{claim}\thlabel{K is almost fund domain}
        For any $x \in \overline{X} \setminus D(\xi)$, there is some $p \in P$ so that $px \notin \tCone_{\UU,\frac{1}{2}}(\xi)$. 
    \end{claim}
    \begin{proof}[Proof of \thref{K is almost fund domain}]
        Because $x \notin D(\xi)$ and $v_0 \in D(\xi)$ is our basepoint, $[v_0,x]$ leaves $D(\xi)$. Let $y$ be the last point of $[v_0,x]$ in $D^\frac{1}{2}(\xi)$ so that $\sigma_y = \sigma_{\xi,\frac{1}{2}}(x)$ (it's possible that $y = x$ if $x$ is close to $D(\xi)$). Because $P$ fixes $D(\xi)$ point wise and acts by isometries, it follows that for any $p \in P$, $py$ is the last point of $p[v_0,x] = [v_0,px]$ in $D^\frac{1}{2}(\xi)$, hence
        
        \[p\sigma_y = p\sigma_{\xi,\frac{1}{2}}(x) = \sigma_{\xi,\frac{1}{2}}(px).\]
        Fix $v \in \sigma_y \cap D(\xi)$ and consider $G_{\sigma_y}$. If $G_{\sigma_y}$ is infinite, then $\partial G_{\sigma_y} \neq \varnothing$ in $\partial G_v$, and because $K_v$ is a fundamental domain for $P$ acting on $\partial G_v \setminus \{\xi\}$, there is some $p \in P$ so that $p\partial G_{\sigma_y} \cap K_v \neq \varnothing$. Since $K_v \cap U_v = \varnothing$, this means $p\partial G_{\sigma_y} = \partial G_{\sigma_{\UU,\frac{1}{2}}(px)} \nsubseteq U_v$, hence $px \notin \tCone_{\UU,\frac{1}{2}}(\xi)$, as desired. On the other hand, if $G_{\sigma_y}$ is finite, then by the choice of $K_v$, there is some $p \in P$ so that $p\total{\sigma_y} \cap K_v \neq \varnothing$ in $\total{v}$. Just as before, this implies $px \notin \tCone_{\UU,\frac{1}{2}}(\xi)$.
    \end{proof}

    We show that for any $z \in \partial_{Stab}G \setminus \{\xi\}$, there is some $p \in P$ so that $pz \in K$, hence $K \cap \partial G$ is a compact coarse fundamental domain for the action of $P$ on $\partial_{Stab}G \setminus \{\xi\}$.
    
    If $z = \eta \in \partial X$, \thref{K is almost fund domain} immediately implies there is some $p \in P$ so that $p \eta \notin \Cone_{\UU,\frac{1}{2}}(\xi)$, hence $p \eta \in K$.

    If $z = \xi' \in \partial_{Stab}G \setminus \{\xi\}$, then we need to find $p\in P$ so that $p\xi'$ fails one of the two conditions necessary for $p\xi' \in V_{\UU,\frac{1}{2}}(\xi)$. If $D(\xi') \cap D(\xi)$ is empty, then we can choose any $x \in D(\xi')$, apply \thref{K is almost fund domain} to find $p \in P$ so that $px \notin \tCone_{\UU,\frac{1}{2}}(\xi)$, and conclude that $p\xi' \notin V_{\UU,\frac{1}{2}}(\xi)$. If $D(\xi') \cap D(\xi)$ nonempty, then we can choose a vertex $v \in D(\xi')\cap D(\xi)$. Because $K_v$ is a fundamental domain for $P$ acting on $K_v$, there is some $p \in P$ so that $p\xi' \in K_v$, hence $p\xi'\notin V_{\UU,\frac{1}{2}}(\xi)$.
\end{proof}

\begin{lemma}\thlabel{Conical on boundary}
    Every point $\eta \in \partial X$ is a conical limit point.     
\end{lemma}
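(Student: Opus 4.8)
The goal is to show that an arbitrary $\eta \in \partial X$ is a conical limit point for $G$ acting on $\overline{Z}$. The natural strategy is to use the fact that $G$ acts cocompactly (hence properly cocompactly on the locally finite complex $X$) and $\eta \in \partial X$, where $X$ is $\delta$--hyperbolic, so $\eta$ should be a conical limit point for $G$ acting on $\overline{X}$ in the classical sense: there is a geodesic ray $r$ from the basepoint $v_0$ to $\eta$, and since $G$ acts cocompactly there is a sequence $g_n \in G$ and a constant $C$ so that $g_n^{-1} v_0$ lies within $C$ of $r$ and $d(v_0, g_n^{-1}v_0) \to \infty$ (pull back points $r(n)$ into a compact fundamental domain). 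Equivalently, writing $x_n = g_n^{-1}v_0$, the $x_n$ march to $\eta$ along $r$ while staying boundedly close to $r$. This is exactly the situation controlled by \thref{Large translation}.

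\textbf{Key steps.} First I would invoke the cocompactness of the $G$--action on $X$ to produce the sequence $(g_n)_n$ as above: choosing $v_0$ as basepoint, fix a geodesic ray $r = [v_0,\eta)$ and for each $n$ pick $g_n \in G$ so that $g_n r(n)$ lies in a fixed compact set $K_0 \subset X$ meeting every orbit; then $x_n := g_n^{-1}v_0 = g_n^{-1}(g_n r(n))\cdot(\text{bounded correction})$ — more precisely $d(r(n), x_n)$ is bounded by $\mathrm{diam}(K_0) + d(v_0,K_0)$, a constant $C$ independent of $n$. Since $r(n) \to \eta$ and $d(v_0,r(n)) = n \to \infty$, the CAT$(0)$/hyperbolic geometry gives $x_n \to \eta$ in $\overline{X}$ and $d(v_0,x_n)\to\infty$. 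Passing to a subsequence, $(g_n)_n$ is injective (the $x_n$ are eventually distinct). Second, apply \thref{Large translation}: since $d(v_0,g_nv_0)\to\infty$, the sequence $(g_n)_n$ is a convergence sequence, so after a subsequence there are $\xi_+,\xi_- \in \partial G$ with $(g_n,\xi_+,\xi_-)$ an ART. Third, I must identify the repelling point: I claim $\xi_- = \eta$. Indeed $g_n^{-1}v_0 = x_n \to \eta$ in $\overline{X}$, hence in $\overline{Z}$ (by \thref{induced topologies}, the topology of $\overline{Z}$ restricts to the standard one on $\partial X$ and $Z$), and by the proof of \thref{Large translation} — or directly, since $(g_n^{-1},\xi_-,\xi_+)$ is also an ART and $g_n^{-1}$ of a fixed compact set away from $\xi_+$ limits to $\xi_-$ — the limit of $g_n^{-1}z_0$ for a generic point $z_0$ is $\xi_-$. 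Taking $z_0$ to be (the image of) $v_0 \in Z$, we get $\xi_- = \eta$. Fourth, to see $\eta$ is conical I need $g_n z' \to \xi_+$ for \emph{all} $z' \in \overline{Z}\setminus\{\eta\}$, which is precisely the ART condition $(g_n, \xi_+, \xi_-) = (g_n,\xi_+,\eta)$ with $\xi_+ \ne \eta$; the inequality $\xi_+ \ne \xi_- = \eta$ is part of being an ART with distinct endpoints (if $\xi_+ = \xi_-$ one can perturb the sequence, but in fact here $\xi_+$ can equal $\xi_-$ a priori — however $\eta$ conical only requires a sequence with $g_n\eta \to$ one point and $g_n\xi' \to$ a \emph{different} point, so I should argue $\xi_+ \ne \eta$ separately). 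To get $\xi_+ \ne \eta$: the $x_n$ converge to $\eta$ along the ray $r$, so $(v_0, x_n)_{v_0}$ based computations show $g_n$ pushes everything except a neighborhood of $\eta$ uniformly to the far end; concretely, $(g_n v_0, g_n z')_{v_0} \to \infty$ for $z' \in \partial X \setminus\{\eta\}$ would force $\xi_+$ to be the limit of $g_n v_0$, and since $d(v_0, g_nv_0) = d(v_0, x_n)\to\infty$ with the geodesics $[v_0, g_n v_0]$ heading in the $g_n\eta$--direction which is \emph{not} along $r$, one checks $\xi_+ \ne \eta$.

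\textbf{Main obstacle.} The substantive point is the last one: ensuring $\xi_+ \neq \xi_- = \eta$, i.e. that the attracting and repelling points are genuinely distinct, so that $\eta$ satisfies the definition of a conical limit point rather than a degenerate "parabolic-like" ART. The clean way is to observe that $g_n v_0 \to \xi_+$ and to compute Gromov products: since $x_n = g_n^{-1}v_0 \to \eta$ and $[v_0, x_n]$ fellow-travels $r$ for time $\to\infty$, applying $g_n$ and using that $g_n$ is an isometry, the geodesic $[g_n v_0, v_0]$ fellow-travels $g_n r$ which starts at $g_n v_0$; a four-point/thin-triangle argument (exactly of the kind used in the proof of \thref{Large translation}'s final claim) shows $(g_n v_0, \eta)_{v_0}$ stays bounded, so $g_n v_0 \not\to \eta$ and hence $\xi_+ \ne \eta$. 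Alternatively, and perhaps most efficiently, one can note that this is the classical fact that every boundary point of a proper geodesic hyperbolic space admitting a cocompact (not necessarily proper, but cobounded-orbit) isometric action is a conical limit point for that action on the boundary, and then transfer conicality from $\partial X \subset \overline{X}$ to $\overline{Z}$ using \thref{induced topologies} together with the ART produced by \thref{Large translation} as above; the transfer is routine once $\xi_- = \eta$ and $\xi_+\neq\eta$ are pinned down.
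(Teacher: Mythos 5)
Your overall strategy — use cocompactness to march $g_n^{-1}v_0$ out to $\eta$ along a ray $r$, get an ART from convergence, identify the repelling point with $\eta$, then argue distinctness of the two marked points — is the right shape and close to what the paper does. But the write-up contains a substantive error and a genuine gap.

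The error: you set yourself the target of proving $\xi_+ \neq \eta$, where $\xi_+$ is the attracting point of the ART and $\eta = \xi_-$ is its repelling point. That is not what the definition of conical limit point requires. You need $\lim_n g_n\eta \neq \xi_+$, and this is a different statement — the ART $(g_n,\xi_+,\eta)$ says nothing at all about where $g_n\eta$ goes, and in general $g_n\eta$ can converge to $\xi_+$ even when $\xi_+\neq\eta$. (Standard example: for $a$ loxodromic with fixed points $\alpha_\pm$ and $p$ parabolic with fixed point $\zeta$ all distinct, the sequence $g_n=p^na^n$ has ART $(g_n,\zeta,\alpha_-)$ with $\zeta\neq\alpha_-$, yet $g_n\alpha_-=p^n\alpha_-\to\zeta=\xi_+$.) You explicitly write that you should ``argue $\xi_+\ne\eta$ separately,'' which is a red herring; and in the case $\xi_+\in\partial_{Stab}G$, the fact that $\xi_+\neq\eta$ is automatic (since $\eta\in\partial X$) and tells you nothing.

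The gap: once the correct target is in view — that some subsequential limit of $g_n\eta$ is $\neq\xi_+$ — your Gromov product argument (the rays $g_nr$ all pass through a fixed bounded set, so $(g_nv_0,g_n\eta)_{v_0}$ stays bounded) only works when $\xi_+\in\partial X$. When $\xi_+\in\partial_{Stab}G$, the points $g_nv_0$ go to infinity in $X$ while converging in $\overline{Z}$ to a point of $\partial_{Stab}G$, and the Gromov-product bound gives no contradiction because neither $\xi_+$ nor $\lim g_n\eta$ need be a point of $\partial X$. The paper handles exactly this case by first correcting the sequence to $b_n=h_ng_n$ with $h_n\in G_v$, chosen via a nontrivial iterative argument (the paper's inner Claim, using \thref{nosubsequence3 remark}, \thref{OffToInfinity}, acylindricity and the height bound) so as to guarantee $\xi_+\notin\partial G_v$. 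That arrangement is then exploited via the Genuine Shadow \thref{Genuine Shadows}: because $\xi_+\notin\partial G_v$, both pieces of the concatenation $[b_nv_0,x]\cup[x,b_n\eta)$ through $x\in st(v)$ would have to meet $D(\xi_+)$, forcing $v\in D(\xi_+)$ by convexity of domains — a contradiction. Your proposal has no mechanism that plays the role of the $h_n$-correction, so the case $\xi_+\in\partial_{Stab}G$ is left unaddressed. This is not a routine transfer; it is where the bulk of the difficulty in this lemma lives.
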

\begin{proof}
    Fix a point $\eta \in \partial X$ and a vertex $v_0$ of $X$ with $\partial G_{v_0} \neq \varnothing$. The geodesic $[v_0,\eta)$ meets infinitely many simplices and there are finitely many $G$ orbits, so it meets some $G$ orbit infinitely many times. This gives a simplex $\sigma$ and a sequence $(g_n)_n$ so that $[v_0,\eta)$ meets the interior of $g_n^{-1}\sigma$ for each $n$ (we take $g_n^{-1}$ instead of $g_n$ for notational convenience). Then $g_n[v_0,\eta)$ meets the interior of $\sigma$ for each $n$. Fix a vertex $v$ of $\sigma$.

\begin{claim}
    There is a subsequence of $(g_n)_n$, elements $h_n \in G_v$, and a point $\xi_+ \in \partial G \setminus \partial G_v$ so that $h_ng_n\total{v} \longrightarrow \xi_+$ uniformly. 
\end{claim}
\begin{proof}[Proof of Claim]
    We use $v$ as the basepoint of our topology. Let $\sigma^1_n$ be the first open simplex met by $[v,g_nv]$ after leaving $v$. After a subsequence, all $\sigma^1_n$ lie above the same simplex of $Y$, so they correspond to cosets of the same subgroup of $G_v$, and we can choose $h_n \in G_v$ so that all $h_n\sigma^1_n = \sigma_1$ for some simplex $\sigma \subset st(v)$. Then each geodesic $[v,h_ng_nv]$ meets the simplices $v,\sigma_1,\ldots$, and after a further subsequence, each $[v,h_ng_nv]$ leaves $\sigma_1$ along the same face, say $\tau_1$. Let $\sigma_n^2\subset st(\tau_1)$ be the next simplex met by $[v,h_ng_nv]$ after $\tau_1$. We begin an iterative argument.

    \begin{enumerate}
        \item Take a subsequence so that all $\sigma_n^2$ lie over the same simplex of $Y$, which means there is a simplex $\sigma_2 \subset st(\tau_1)$ and a sequence $(a_n)_n$ in $G_{\tau_1}$ so that $\sigma^2_n = a_n\sigma_2$. Applying \thref{nosubsequence3 remark} to $G_{\sigma_1}, G_{\sigma_2}$ in $G_{\tau_1}$, we get a sequence $k_n \in G_{\sigma_1} < G_v$ so that the sets $k_na_n\total{\sigma_2}$ are either constant or converge to a point $\xi_+ \in \partial G_{\tau_1} \setminus \partial G_{\sigma_1}$.

        Suppose we are in this second case where the $k_na_n\total{\sigma_2}$ converge to some $\xi_+ \in \partial G_{\tau_1} \setminus \partial G_{\sigma_1}$. Then the sequence $k_na_n\sigma_2$ must be infinite, so we can apply the second case of \thref{OffToInfinity} to the sequence $(k_nh_ng_nv)_n$. We receive a subsequence and a $\xi_0 \in \partial G_{\tau_1}$ so that $\total{v} \longrightarrow \xi_0$ uniformly. Since the $k_na_n\total{\sigma_2} = k_n\total{\sigma^2_n}$ already converge to $\xi_+$, we must have $\xi_0 = \xi_+$. If $\xi_+ \in \partial G_v$, then the initial portion of each $[v,k_nh_ng_nv]$ connects two simplices of $D(\xi_+)$, namely $v, \tau_1$. Because domains are convex, this initial portion must be contained in $D(\xi_+)$, but it goes through $\sigma_1$ and $\xi_+ \notin \partial G_{\sigma_1}$. This contradiction implies $\xi_+ = \xi_0 \notin \partial G_v$, and we have proven the claim. 

        So either we are finished as in the previous paragraph, or the simplices $k_na_n\sigma_2 = k_n\sigma_n^2$ are all the same, and we relabel this simplex as $\sigma_2$ and all the geodesics $[v_,k_nh_ng_n v]$ meet the simplices $v,\sigma_1,\tau_1,\sigma_2,\ldots$. Let $g^1_n = k_nh_ng_n$, and proceed to 2 or 3 depending on whether or not $G_{\sigma_1} \cap G_{\sigma_2}$ is finite or infinite.

        \item If $G_{\sigma_1}\cap G_{\sigma_2}$ is finite, we apply \thref{OffToInfinity} to $(g_n^1v)_n$. If $g_n^1\total{v}$ converges uniformly to some $\eta' \in \partial X$, then we can take $\xi_+ = \eta'$ and prove the claim. Otherwise, as in the proof of \thref{OffToInfinity}, the path of simplices $\sigma_1, \tau_1, \sigma_2$ extends to a path of simplices $\sigma_1,\tau_1,\sigma_2,\ldots,\sigma_m$ crossed by every geodesic $[v,g_n^1v]$, and $g_n^1\total{v}\longrightarrow \xi_+ \in \partial G_{\sigma_m}$. If $\xi_+ \in \partial G_v$, then the convexity of domains implies this path of simplices is contained in $D(\xi_+)$. We can construct a $\xi_+$--path going through each vertex of this path of simplices, and then \thref{H along xi path is infinite} tells us the path of simplices is stabilized by an infinite subgroup $H$. But $H < G_{\sigma_1} \cap G_{\sigma_2}$, so this is a contradiction. Therefore $\xi_+ \notin \partial G_v$, and the claim is proven. 
        
        \item If $G_{\sigma_1} \cap G_{\sigma_2}$ is infinite, then we pass to a subsequence so that each $[v,g^1_nv]$ leaves $\sigma_2$ in the same face $\tau_2$ and let $\sigma_n^3$ be the next simplex met by $[v,g_n^1v]$ after leaving $\tau_2$. We know $G_{\sigma_1} \cap G_{\sigma_2}$ is full RQC in $G_{\sigma_2}$ by \thref{Limit Set Property} and $G_{\sigma_2}$ is full RQC in $G_{\tau_2}$ so $G_{\sigma_1} \cap G_{\sigma_2}$ is full RQC in $G_{\tau_2}$ as well. This means we can repeat the argument in step 1 replacing $G_{\sigma_1}$ with $G_{\sigma_1} \cap G_{\sigma_2}$ and $\sigma_n^2$ with $\sigma_n^3$. We either prove the claim, or find a simplex $\sigma_3$ and a sequence $k_n' \in G_{\sigma_1} \cap G_{\sigma_2} < G_v$ so that $[v,k_n'g_n^1v]$ all cross the simplices $v,\sigma_1,\tau_1,\sigma_2,\tau_2, \sigma_3,\ldots$. Letting $g_n^2 = k_n'g_n^1$, we again proceed to either step 2 or 3 depending on the cardinality of $G_{\sigma_1} \cap G_{\sigma_2} \cap G_{\sigma_3}$. Hopefully it is clear how to iterate this argument.
    \end{enumerate}    

    Each time we repeat step 3, we add a pair of simplices to a path of simplices $v, \sigma _1, \tau_1, \sigma_2, \ldots$. By \thref{Bounded Length To Bounded Number Of Simplices}, there is a constant $k$ so that any geodesic of length $A+1$ meets at most $k$ simplices. Contrapositively, a geodesic which meets more than $k$ simplices must be longer than $A+1$, hence have finite stabilizer by the definition of $A$ as the acylindricity constant. This means we can repeat step 3 at most $k$ times before getting to step 2, which finishes the proof of the claim. 
\end{proof}

    We pass to a subsequence of $(g_n)_n$ and choose $h_n \in G_v, \xi_+ \in \partial G \setminus \partial G_{v}$ as in the claim and let $b_n = h_ng_n$. Recall that $[v_0,\eta]$ meets the interior of $g_n^{-1}\sigma$ for each $n$, so $b_ng_n\sigma = h_n\sigma \subset st(v)$ and $b_n[v_0,\eta)$ goes through the interior of some simplex of $st(v)$ for each $n$. 
    
    Because $G$ is a convergence group, we can pass to a subsequence and find points $\xi_+',\xi_-' \in \partial G$ so that $(b_n,\xi_+',\xi_-')$ is an ART. Fix some $z \in \internal{v}$ and note that $p(z) = v$. Because $z \neq \xi_+',\xi_-'$, the sequences $b_nz, b_n^{-1}z$ converge to $\xi_+',\xi_-'$ respectively. From the claim, $b_nz \longrightarrow \xi_+$, hence $\xi_+' = \xi_+$. If $U \subset \overline{X}$ is any neighborhood of $\eta$, then for large enough $n$, we have $b_n^{-1}v = g_n^{-1}h_n^{-1}v = g_n^{-1}v \in U$. This shows $b_n^{-1}z \longrightarrow \eta$, hence $\eta = \xi_-'$ and this ART is $(b_n,\xi_+,\eta)$. From the definition of an ART, we know that every point $\xi \in \partial G \setminus \{ \eta\}$ has $b_n \xi \longrightarrow \xi_+$, and after a subsequence we can assume $b_n\eta$ converges to some point of $\partial G$, say $\xi_-$. To show $(b_n)_n$ makes $\eta$ into a conical limit point, it remains to show that $\xi_- \neq \xi_+$. For a contradiction, suppose that $b_n\eta \longrightarrow \xi_+$.

    If $\xi_+ \in \partial X$, then $b_nv_0 \longrightarrow \xi_+$. Each geodesic ray $b_n[v_0,\eta)$ meets the bounded set $st(v)$, hence the Gromov products $(b_nv_0,b_n\eta)_v$ are bounded. But if both $b_nv_0, b_n\eta$ converge to $\xi_+$, then these Gromov products should go to infinity. This contradiction implies $\xi_+ \notin \partial X$, hence $\xi_+ \in \partial_{Stab}G$. 

    Let $\UU$ be any $\xi_+$--family which is admissible with $v$ as a basepoint and recall the last part of the Genuine Shadow \thref{Genuine Shadows}, namely that because $\xi_+ \notin \partial G_v$, any geodesic from a point of $st(v)$ and a point of $\tCone_{\UU,\frac{1}{2}}(\xi_+)$ meets $D(\xi_+)$. For each $n$, the ray $b_n[v_0,\eta)$ can be written as a concatenation $[b_nv_0,x] \cup [x,b_n\eta)$ with $x \in st(v)$. For large enough $n$, $b_nv_0, b_n\eta\in \tCone_{\UU,\frac{1}{2}}(\xi_+)$, hence both pieces of the concatenation meet $D(\xi_+)$ and $x$ is in a geodesic connecting two points of $D(\xi_+)$. Because domains are convex, this means $v \in \sigma_x \subset D(\xi_+)$, but this contradicts $\xi_+ \notin \partial G_v$. 

    Thus $b_n\eta$ cannot converge to $\xi_+$, so $\xi_+$ and $\xi_-$ are distinct and $(b_n)_n$ makes $\eta$ into a conical limit point.
\end{proof}

\begin{proposition}\thlabel{geometrically finite}
    $G$ acts as a convergence group on $\overline{Z}$ with limit set $\partial G$. Further, $G$ acts as a geometrically finite convergence group on $\partial G$, and the parabolic subgroups of $G$ are finitely generated. 
\end{proposition}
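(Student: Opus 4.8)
The statement is a clean corollary of the work done in the previous two subsections, so the plan is mostly to assemble the pieces in the right order. First I would recall that $G$ acts as a convergence group on $\overline{Z}$: this is exactly \thref{Convergence Group}. Next I would identify the limit set. Since $\overline{Z}$ is compact and metrizable and $Z$ is dense in $\overline{Z}$ by \thref{Oz is a topology}, the limit set $\Lambda G$ is contained in the closure of any orbit; picking an orbit of a point with infinite vertex stabilizer and using \thref{boundaries are limit sets} one gets $\partial G_{v} \subset \Lambda G$ for each vertex $v$ with $G_v$ infinite, and then $\partial_{Stab}G \subset \Lambda G$ follows because each $\xi \in \partial_{Stab}G$ lies in $\pi_v(\partial G_v)$ for some vertex $v \in V(\xi)$. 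Combined with \thref{Conical on boundary}, which shows every $\eta \in \partial X$ is a conical limit point (hence in $\Lambda G$ by \thref{limit points are limit points}), we get $\partial G = \partial X \sqcup \partial_{Stab}G \subset \Lambda G$. For the reverse inclusion: $Z$ consists of points $z$ with $p(z) \in X$ a point of a genuine cell, and the $G$-action on $X$ is cocompact, so no point of $Z$ can be an attracting point of a convergence sequence (any orbit in $Z$ projects to an orbit in $X$, which cannot accumulate inside $X$); hence $\Lambda G \subset \partial G$. This gives $\Lambda G = \partial G$, and in particular $\partial G$ is perfect (it is the limit set of a non-elementary convergence group — here one should note $G$ is not elementary, which is clear since e.g. $\partial X$ is infinite whenever $X$ is unbounded, and if $X$ is bounded then $G$ is a finite extension of a vertex group and the statement reduces to the relative hyperbolicity of that group).

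The geometric finiteness is the substantive part. Every point of $\partial G$ is either in $\partial X$ or in $\partial_{Stab}G$. For $\eta \in \partial X$, \thref{Conical on boundary} shows $\eta$ is a conical limit point, so there is nothing more to do. For $\xi \in \partial_{Stab}G$, pick a vertex $v_0 \in V(\xi)$, so that $\xi$ is represented by some $\xi_{v_0} \in \partial G_{v_0}$. Since $(G_{v_0}, \mathbb{O})$ is relatively hyperbolic (by hypothesis) and acts on its Bowditch boundary $\partial G_{v_0}$ as a geometrically finite convergence group, the point $\xi_{v_0}$ is either a conical limit point for $G_{v_0}$ acting on $\partial G_{v_0} = \hatt{v_0}$, or a bounded parabolic point for that action. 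In the first case, \thref{Conical in Vertices} promotes $\xi$ to a conical limit point for $G$ acting on $\overline{Z}$. In the second case, \thref{Parabolic Points} shows $\xi$ is a bounded parabolic point for $G$ acting on $\partial_{Stab}G$ (hence on $\partial G$, using that $\partial X$ is controlled by the cone/neighborhood bases — this is built into the proof of \thref{Parabolic Points}, which produces a compact coarse fundamental domain in $\overline{Z} \setminus \{\xi\}$) and that $\mathrm{Stab}_G(\xi)$ is finitely generated. So every point of $\partial G$ is a bounded parabolic point or a conical limit point, which is the definition of geometric finiteness (\thref{Geometrically Finite}).

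Finally, for the parabolic subgroups being finitely generated: a parabolic subgroup $P'$ fixes a unique parabolic point, which by the dichotomy above must be some $\xi \in \partial_{Stab}G$ (a point of $\partial X$ is conical, hence not parabolic by \thref{limit points are limit points}). The finite generation of $\mathrm{Stab}_G(\xi)$, and hence of the maximal parabolic $P'$, is exactly the last assertion of \thref{Parabolic Points}. I would also remark here that this is where the hypothesis that the peripheral subgroups of the local groups $G_\sigma$ are finitely generated is used, inherited through \thref{Parabolic Points}.

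\textbf{Main obstacle.} Almost all the difficulty has been front-loaded into \thref{Conical on boundary}, \thref{Conical in Vertices}, and \thref{Parabolic Points}; given those, this proposition is a matter of case-checking and bookkeeping. The one point requiring a little care is the clean identification $\Lambda G = \partial G$ — specifically arguing that no point of $Z$ lies in the limit set, which uses cocompactness of the action on $X$ and properness of $X$, and handling the degenerate case $\partial_{Stab}G = \varnothing$ or $X$ bounded separately (in which case the result is either vacuous or immediate). The other item to be careful about is making sure the "bounded parabolic point" condition is verified with respect to $\partial G = \partial X \sqcup \partial_{Stab}G$ and not merely $\partial_{Stab}G$; this follows because \thref{Parabolic Points}'s fundamental domain $K$ is compact in $\overline{Z}$ and meets $\partial X$ appropriately, but it is worth stating explicitly.
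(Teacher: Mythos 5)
Your overall plan matches the paper's proof almost step for step: cite \thref{Convergence Group} for the convergence action, establish $\Lambda G = \partial G$, deduce geometric finiteness from \thref{Conical on boundary}, \thref{Conical in Vertices}, and \thref{Parabolic Points}, and read off finite generation of parabolics from \thref{Parabolic Points}. The one genuine gap is in your argument that $\Lambda G \subset \partial G$.

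You claim ``any orbit in $Z$ projects to an orbit in $X$, which cannot accumulate inside $X$,'' and from this conclude that no $z' \in Z$ can be an attracting point. But this reasoning breaks precisely when simplex stabilizers are infinite, which is the interesting case here. If $z \in Z$ has $p(z) = v$ a vertex with $G_v$ infinite and $(g_n)_n$ is an infinite sequence in $G_v$, then $g_n p(z) = v$ is constant -- the projected orbit trivially converges inside $X$ -- yet this provides no contradiction. The reason $z'$ cannot be an attracting point in this situation is that $(g_n z)_n$ converges (if at all) to a point of $\partial G_v \subset \partial_{Stab}G$, not to a point of $Z$; but that is exactly what you were trying to prove, so the argument is circular. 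The paper sidesteps this entirely with a cleaner observation: open subsets of $Z$ are themselves basic open sets in $\overline{Z}$, so $Z$ is open in $\overline{Z}$, hence $\partial G = \overline{Z} \setminus Z$ is closed. Since $\partial G$ is also $G$-invariant (and nonempty whenever $G$ is infinite), the minimality characterization of the limit set in \thref{limit set} immediately gives $\Lambda G \subset \partial G$. You should replace your projection argument with this.

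Two smaller remarks. First, the perfectness of $\partial G$ and the degenerate cases ($X$ bounded, $\partial_{Stab}G = \varnothing$) are not part of this proposition; the paper postpones them to the final proof of \thref{main theorem complex of groups} via Tukia's Theorem 2S, and including them here is harmless but out of scope. Second, your observation that every $\xi \in \partial_{Stab}G$ is conical or bounded parabolic \emph{for $G_{v_0}$ acting on $\partial G_{v_0}$} before applying \thref{Conical in Vertices} or \thref{Parabolic Points} is actually a useful step the paper leaves implicit, so keep it.
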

\begin{proof}
    $G$ acts as a convergence group on $\overline{Z}$ by \thref{Convergence Group}. To understand its limit set, note that $\partial G$ is clearly $G$--invariant, and is closed because every point of $Z$ admits a neighborhood avoiding $\partial G$. Because $\Lambda G$ is the smallest closed $G$--invariant subset of $\overline{Z}$, this implies $\Lambda G \subset \partial G$. On the other hand, every point of $\partial G$ is either a conical limit point or a bounded parabolic point by \thref{Conical in Vertices}, \thref{Parabolic Points}, and \thref{Conical on boundary}. Conical limit points are in $\Lambda G$ by \thref{limit points are limit points}. Because $\Lambda G$ can also be characterized as the points of $\overline{Z}$ on which $G$ does \emph{not} act properly discontinuously and parabolic points have infinite stabilizer, parabolic points are also in $\Lambda G$. Thus $\partial G \subset \Lambda G$ and we have equality. 

    Since every point of $\partial G$ is either a conical limit point or a bounded parabolic point, $G$ satisfies \thref{Geometrically Finite} and is a geometrically finite convergence group on $\partial G$.

    Every point in $\partial X$ is a conical limit point by \thref{Conical on boundary}, hence not a parabolic point by \thref{limit points are limit points}. Thus all parabolic points are points of $\partial_{Stab}G$, and these have finitely generated stabilizer by \thref{Parabolic Points}.
\end{proof}

We are finally ready to apply \thref{Geometrically Finite Convergence implies RelHyp}. We restate the main theorem and finish its proof. 

\MainTheorem*

\begin{theorem}
    $G$ is relatively hyperbolic. The maximal parabolic subgroups of $G$ are virtually maximal parabolic subgroups of vertex stabilizers and each simplex stabilizer is a full RQC subgroup of $G$.
\end{theorem}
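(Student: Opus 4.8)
The plan is to invoke Yaman's theorem (\thref{Geometrically Finite Convergence implies RelHyp}) using the work already assembled. By \thref{geometrically finite}, $G$ acts as a geometrically finite convergence group on the compact metrizable space $\partial G = \partial_{Stab}G \sqcup \partial X$, and by \thref{Parabolic Points} every maximal parabolic subgroup is finitely generated. To apply Yaman's theorem we need $\partial G$ to be perfect; this follows because $Z$ is dense in $\overline{Z}$ with $\partial G$ closed and $G$--invariant, so $\partial G = \Lambda G$ contains no isolated points (an isolated point of a limit set of a nonelementary convergence group cannot exist, and if $G$ were elementary the statement is degenerate or trivial as noted in the $\partial_{Stab}G = \varnothing$ case inside \thref{Small translation infinite}). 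Letting $\mathbb{P}$ be a set of representatives of conjugacy classes of maximal parabolic subgroups, \thref{Geometrically Finite Convergence implies RelHyp} then gives that $(G,\mathbb{P})$ is relatively hyperbolic with $\partial G$ equivariantly homeomorphic to $\partial_{\mathbb{P}}G$.

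Next I would identify the maximal parabolic subgroups. By \thref{geometrically finite}, all parabolic points lie in $\partial_{Stab}G$ (points of $\partial X$ are conical by \thref{Conical on boundary}, hence non-parabolic by \thref{limit points are limit points}). Given a parabolic point $\xi \in \partial_{Stab}G$, pick a vertex $v_0 \in V(\xi)$. As in the proof of \thref{Parabolic Points}, the full stabilizer $P' = \mathrm{Stab}_G(\xi)$ contains, with finite index, the subgroup $P$ fixing $D(\xi)$ pointwise, and for any $v \in V(\xi)$ we have $P < P_v := \mathrm{Stab}_{G_v}(\xi) < P'$ with $P$ finite index in $P'$; hence $P_v$ is finite index in $P'$, i.e.\ $P'$ is commensurable with the maximal parabolic subgroup $P_v$ of the vertex stabilizer $G_v$. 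This is exactly the assertion that maximal parabolic subgroups of $G$ are virtually maximal parabolic subgroups of vertex stabilizers.

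Finally I would show each simplex stabilizer $G_\sigma$ is full RQC in $G$. By \thref{boundaries are limit sets}, $\Lambda G_\sigma = \partial G_\sigma$, and by \thref{induced topologies} the topology $\overline{Z}$ induces on $\partial G_\sigma$ (via $\hatt{v}$ for a vertex $v$ of $\sigma$, then restricting along the embedding $\varphi_{v,\sigma}$) is its intrinsic topology, so the action of $G_\sigma$ on $\Lambda G_\sigma$ is just the Bowditch action of $G_\sigma$ on $\partial G_\sigma$, which is geometrically finite --- this gives RQC. For fullness in the sense of \thref{RQC}(3), I would use \thref{Finite Height}: it suffices to verify the ``fully RQC'' condition, that $\bigcap_n g_n\Lambda G_\sigma = \varnothing$ for any sequence $(g_n)_n$ in distinct $G_\sigma$-cosets. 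If not, some $\zeta \in \partial G$ lies in infinitely many translates $g_n\partial G_\sigma$; reducing to a vertex $v$ of $\sigma$ via $\varphi_{v,\sigma}$, each $g_n v$ is a distinct vertex in the orbit $G v$, all lying in $V(\zeta)$, contradicting the uniform bound $\diam(D(\zeta)) \leq A$ from \thref{Finiteness} together with the properness of the $G$--action on $X$ (only finitely many orbit points fit in a ball of radius $A$). Hence $G_\sigma$ is fully RQC, therefore full RQC by \thref{Finite Height}. The main obstacle is the bookkeeping in this last step: carefully reducing the intersection-of-limit-sets condition for $G_\sigma$ to the geometry of domains $D(\zeta)$ in $X$, and making sure the finitely-many-orbit-points-in-a-ball argument is airtight given that $G$ need not act freely.
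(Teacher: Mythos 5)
Your overall strategy mirrors the paper's: handle the degenerate/elementary cases, establish perfectness, invoke Yaman's theorem, identify maximal parabolics via domains, and establish full relative quasiconvexity of simplex stabilizers. The first three parts are essentially the paper's argument (the paper cites Tukia's Theorem 2S for the ``$|\partial G|\le 2$ or perfect'' dichotomy, which is the cleaner way to make your ``nonelementary $\Rightarrow$ perfect'' remark precise), and the parabolic identification is verbatim the argument inside \thref{Parabolic Points}.

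Where you genuinely diverge is the last step. The paper verifies the full RQC condition (definition~(3) of \thref{RQC}) directly: given a maximal parabolic $P=\mathrm{Stab}_G(\xi)$ with $G_\sigma\cap P$ infinite, it extracts an ART from a sequence in $G_\sigma\cap P$, concludes $\xi\in\partial G_\sigma$ so $\sigma\subset D(\xi)$, and then the finite-index subgroup of $P$ fixing $D(\xi)$ pointwise sits inside $G_\sigma\cap P$. You instead verify the \emph{fully} RQC condition (definition~(2)) and appeal to \thref{Finite Height}. This route is legitimate and arguably more geometric.

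However, your execution has a gap. You claim that distinct cosets $g_nG_\sigma$ yield distinct vertices $g_nv$ for a chosen vertex $v\in\sigma$, but this fails when $G_\sigma\subsetneq G_v$: if $g_m^{-1}g_n\in G_v\setminus G_\sigma$, the cosets $g_nG_\sigma$, $g_mG_\sigma$ are distinct yet $g_nv=g_mv$. The correct observation is that the \emph{simplices} $g_n\sigma$ are pairwise distinct, since $g_n\sigma=g_m\sigma$ forces $g_m^{-1}g_n\in\mathrm{Stab}_G(\sigma)=G_\sigma$ by Convention~\ref{stabilizer convention}. If $\zeta\in\bigcap_n g_n\Lambda G_\sigma$, then $\zeta\in\partial_{Stab}G$ and $g_n\sigma\subset D(\zeta)$ for all $n$, but $D(\zeta)$ contains at most $d_{\max}$ simplices by \thref{Finiteness} --- an immediate contradiction, with no appeal to properness needed. (Your invocation of ``properness of the $G$--action'' is also misplaced: vertex stabilizers are infinite relatively hyperbolic groups, so the action on $X$ is not proper. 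What you were reaching for is local finiteness of $X$ as a metric space, but the $d_{\max}$ bound is cleaner and already available.) With that repair, your alternative route to full RQC goes through.
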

\begin{proof}
    $G$ acts on the compact metrizable space $\partial G$ as a geometrically finite convergence group with finitely generated parabolic subgroups by \thref{Compactness} and \thref{geometrically finite}. By Theorem 2S of \cite{Tukia1994}, either $|\partial G| \leq 2$ or $\partial G$ is an infinite perfect set. If $|\partial G|=0,1$ or 2, then $G$ is either finite, hyperbolic relative to itself, or virtually cyclic, so our main theorem is trivial. Therefore we can assume $\partial G$ is infinite and perfect and \thref{Geometrically Finite Convergence implies RelHyp} implies $G$ is relatively hyperbolic.
    
    Towards understanding the maximal parabolic subgroups, all points of $\partial X$ are conical limit points by \thref{Conical on boundary}, so if $\xi \in \partial_{Stab}G$ is a parabolic point with $P = Stab_G(\xi)$, then $P$ permutes the simplices of $D(\xi)$. Because domains are finite, we can pass to a finite index subgroup, say $P'$, which fixes $D(\xi)$ pointwise, hence $P' < G_\sigma$ for every simplex $\sigma \subset D(\xi)$, and $P'$ makes $\xi$ a parabolic point for $G_v$ acting on $\partial G_v$ for any $v \in V(\xi)$.

    It follows from \thref{boundaries are limit sets} that simplex stabilizers are RQC subgroups of $G$. To see they are \emph{full} RQC subgroups, take a simplex $\sigma \subset X$ and a maximal parabolic subgroup $P = Stab_G(\xi)$ with $\xi \in \partial_{Stab}G$ so that $G_\sigma \cap P$ is infinite. 
    
    We claim $\sigma \subset D(\xi)$. Take an infinite sequence $g_n \in G_\sigma \cap P$, then pass to a subsequence and find points $\xi_+,\xi_- \in \partial G$ so that $(g_n,\xi_+,\xi_-)$ is an ART. Fixing any $z \in \hatto{\sigma}$, we know that $z \neq \xi_+,\xi_-$ so $g_nz \longrightarrow \xi_+$ and $g_n^{-1}z \longrightarrow \xi_-$. But $\hatt{\sigma}$ is closed in $\overline{Z}$, so it contains these limit points and $\xi_+,\xi_- \in \partial G_\sigma$. Now either $\xi = \xi_- \in \partial G_\sigma$, or $\xi \neq \xi_-$ so $g_n\xi = \xi \longrightarrow \xi_+$, hence $\xi = \xi_+ \in \partial G_\sigma$. Either way, $\xi \in \partial G_\sigma$, which means $\sigma \subset D(\xi)$ and our claim is proven. As above, we use that domains are finite to pass to a finite index subgroup $P' <P$ which fixes $D(\xi)$ pointwise, hence fixes $\sigma$. Thus $P' < G_{\sigma} \cap P < P$ and $G_{\sigma} \cap P$ is finite index in $P$ as needed.
\end{proof}


\end{document}